\tikzstyle{tikzfig}=[baseline=-0.25em,scale=0.5]
\tikzstyle{none}=[inner sep=0mm]
\newcommand{\tikzfig}[1]{%
{\tikzstyle{every picture}=[tikzfig]
\IfFileExists{#1.tikz}
  {\input{#1.tikz}}
  {%
      \IfFileExists{./figures/#1.tikz}
        {\input{./figures/#1.tikz}}
        {\tikz[baseline=-0.5em]{\node[draw=red,font=\color{red},fill=red!10!white] {\textit{#1}};}}%
    }
  }%
}
\tikzstyle{every loop}=[]
\tikzstyle{new style 0}=[fill=white, draw=black, shape=circle]
\tikzset{curve/.style={settings={#1},to path={(\tikztostart)
    .. controls ($(\tikztostart)!\pv{pos}!(\tikztotarget)!\pv{height}!270:(\tikztotarget)$)
    and ($(\tikztostart)!1-\pv{pos}!(\tikztotarget)!\pv{height}!270:(\tikztotarget)$)
    .. (\tikztotarget)\tikztonodes}},
    settings/.code={\tikzset{quiver/.cd,#1}
        \def\pv##1{\pgfkeysvalueof{/tikz/quiver/##1}}},
    quiver/.cd,pos/.initial=0.35,height/.initial=0}
\tikzset{tail reversed/.code={\pgfsetarrowsstart{tikzcd to}}}
\tikzset{2tail/.code={\pgfsetarrowsstart{Implies[reversed]}}}
\tikzset{2tail reversed/.code={\pgfsetarrowsstart{Implies}}}
\tikzset{no body/.style={/tikz/dash pattern=on 0 off 1mm}}
\theoremstyle{plain}
\newtheorem{theorem}{Theorem}[section]
\newtheorem{definition}[theorem]{Definition}
\newtheorem{observation}[theorem]{Observation}
\newtheorem{proposition}[theorem]{Proposition}
\newtheorem{corollary}[theorem]{Corollary}
\newtheorem{remark}[theorem]{Remark}
\newtheorem{lemma}[theorem]{Lemma}
\newtheorem{example}[theorem]{Example}
\newtheorem{notation}[theorem]{Notation}
\author{Benjamin MacAdam}
\title{The functorial semantics of Lie theory}
\renewcommand{\o}{\circ}
\newcommand{\anc}{\varrho}
\newcommand{\prolong}{A \ts{\anc}{T\pi} TA}
\newcommand{\prol}{\mathcal{L}}
\renewcommand{\d}[0]{\mathcal{D}}
\newcommand{\s}[0]{\mathsf{Set}}
\newcommand{\wone}[0]{\mathsf{Weil}_1}
\newcommand{\weil}[0]{\mathsf{Weil}}
\newcommand{\w}[0]{{\mathcal{W}}}
\newcommand{\W}[0]{{\mathcal{W}}}
\renewcommand{\c}[0]{\mathcal{C}}
\renewcommand{\a}[0]{\mathcal{A}}
\newcommand{\e}[0]{\mathcal{E}}
\newcommand{\m}[0]{\mathcal{M}}
\renewcommand{\j}[0]{\mathcal{J}}
\newcommand{\vv}[0]{\mathcal{V}}
\newcommand{\C}[0]{\mathbb{C}}
\newcommand{\D}[0]{\mathbb{D}}
\newcommand{\N}[0]{\mathbb{N}}
\newcommand{\R}[0]{\mathbb{R}}
\newcommand{\Z}[0]{\mathbb{Z}}
\newcommand{\sh}[0]{\mathcal{S}}
\renewcommand{\th}[0]{\mathcal{T}}
\newcommand{\tnat}[0]{\alpha} 
\newcommand{\x}[0]{\times}
\newcommand{\ox}[0]{\otimes}
\newcommand{\phat}[0]{\hat{p}}
\newcommand{\yon}[0]{\mathcal{Y}}
\newcommand{\cat}[0]{\mathsf{Cat}}
\newcommand{\<}{\langle}
\renewcommand{\>}{\rangle}
\newcommand{\ts}[2]{{_{#1}}\!\times_{#2}\!}
\newcommand{\Diff}[0]{\mathsf{Diff}}
\newcommand{\Dlin}[0]{\mathsf{DLin}}
\pgfplotsset{compat=1.16}
\begin{document}

\frontmatter



\makethesistitle



\chapter{Abstract}


Ehresmann's introduction of differentiable groupoids in the 1950s may be seen as a starting point for two diverging lines of research, many-object Lie theory (the study of Lie algebroids and Lie groupoids) and sketch theory. This thesis uses tangent categories to build a bridge between these two lines of research, providing a structural account of Lie algebroids and the Lie functor. 

To accomplish this, we develop the theory of involution algebroids, which are a tangent-categorical sketch of Lie algebroids. We show that the category of Lie algebroids is precisely the category of involution algebroids in smooth manifolds, and that the category of Weil algebras is precisely the classifying category of an involution algebroid. This exhibits the category of Lie algebroids as a tangent-categorical functor category, and the Lie functor via precomposition with a functor
\[
    \partial: \wone \to \th_{\mathsf{Gpd}},
\]bringing Lie algebroids and the Lie functor into the realm of \emph{functorial semantics}.


\chapter{Preface}

This thesis is the original work of the author.

This thesis project arose from attempts to broadly understand differential geometry, and in particular the differential geometry of mechanics. First steps were taken with Jonathan Gallagher in understanding how the enriched perspective on category theory introduced in \cite{Garner2018} might relate to differential geometric structures. The basic structures in this thesis, however, came out of discussions and collaboration with Matthew Burke in the Lie theory context.

\Cref{ch:tangent_categories} is an introduction to the theory of tangent categories, and contains no new results.  
\Cref{ch:differential_bundles} began as a joint project with Matthew, who ultimately had to leave the project due to time constraints, although by that time we had already found the basic structure of the proof that differential bundles are vector bundles in the category of smooth manifolds (proved in Theorem \ref{iso-of-cats-dbun-sman}).
The current structure of the chapter, particularly with the emphasis on associative coalgebras of the weak tangent comonad $(T,\ell)$ and the tight connection to Grabowski's previous work on the Euler Vector Field construction (\cite{Grabowski2009}), is original work.
The basic results in that chapter appear in \cite{MacAdam2021}; however, the results have been streamlined (there was originally a notion of a \emph{strong} differential bundle, in Section \ref{sec:lifts-pdbs-dbs} it is observed that all differential bundles are strong), and there are some new observations about linear connections (Theorem \ref{thm:linear-connection-from-total-space}).

\Cref{ch:involution-algebroids} is a rewrite of a preprint written with Matthew on involution algebroids (\cite{Burke2019}), while \Cref{sec:connections_on_an_involution_algebroid} is due to conversations with Richard Garner (we expect to release a new paper on involution algebroids based on these results as a joint work). The original idea of augmenting an anchored bundle with an involution map is entirely due to Matthew, and the isomorphism on objects between involution and Lie algebroids follows calculations shared by Richard Garner (Proposition \ref{prop:inv-algd-mor-conn}). My own contribution in this section comes from connecting this to the work of \cite{Martinez2001}, as well as giving the bijection on morphisms for involution and Lie algebroids (Theorem \ref{thm:iso-of-cats-Lie}).

The first three chapters provide the background required for the deeper results in \Cref{chap:weil-nerve}. 
In the work of \cite{Weinstein1996,Martinez2001,Leon2005} on Lie algebroids, it was observed that the ``prolongation'' of a Lie algebroid acted like a tangent bundle. Proposition \ref{prop:second-tangent-structure-inv-algds-2} makes this intuition precise by showing the prolongation is a second tangent structure on the category of Lie algebroids. The main theorem in this chapter (Theorem \ref{thm:iso-of-cats-inv-emcs}) shows that involution algebroids in a tangent category $\C$ are equivalent to tangent functors $(A,\alpha)$ from $\wone$ to $\C$ so that the functor $A$ preserves transverse limits and the natural transformation $\alpha$ is $T$-cartesian (Definition \ref{def:cart-nat}). These are entirely new observations about Lie algebroids and are original work of the author. 


Finally, \Cref{ch:inf-nerve-and-realization} puts the first four chapters into the language of enriched category theory, using Garner's enriched perspective on tangent categories (\cite{Garner2018}). The first original results in this chapter demonstrate that differential bundles and anchored bundles are models of $\w$-sketches (Propositions \ref{prop:Lambda-is-refl-subcat} and \ref{prop:nerve-anc-work}), where $\w$ is the site of enrichment for tangent categories.
Next, the enriched theories framework of \cite{Bourke2019} is used to prove Theorem \ref{thm:pullback-in-cat-of-cats-inv-algd}, that involution algebroids are models of a nervous theory, which is the enriched version of Theorem \ref{thm:iso-of-cats-inv-emcs}. 
The thesis concludes with the \emph{Lie Realization}, Theorem \ref{thm:lie-realization}, which is a new characterization of Lie differentiation and introduces an entirely new way to construct adjunctions between categories of ``smooth groupoids'' and categories of ``Lie algebroids'' using purely enriched-categorical methods (this is in contrast to the geometric approach used in \cite{Crainic2003} and the homotopy theoretic approach in \cite{Sullivan1977}).

This thesis touches on relatively advanced topics in two areas of math: differential geometry (Lie algebroids) and enriched category theory (enriched nerve constructions).
I have striven to keep it as self-contained as possible, introducing the category of smooth manifolds and tangent categories and including an appendix with the basics of enriched category theory and locally presentable category theory. Material on foundational category theory (which is to say, anything that can be found in \cite{MacLane1988}) and basic differential calculus (see any calculus textbook) is used without citation or introduction; this includes limit, adjunction, monads and monadicity theorems, and the calculus of Kan extensions and coends. Some facts about horizontal composition spans are used in \Cref{chap:weil-nerve}, but nothing that goes beyond the basic definition.

  
\chapter{Acknowledgments}

I would first like to thank my supervisor, Robin Cockett, for guiding this project and for his investment of time in reading various preprints and walking through proofs with me. I also owe a special debt of thanks to Matthew Burke, who was a close collaborator on this project and spearheaded the application of tangent categories to Lie theory, and to Jonathan Gallagher for extensive discussions about tangent categories. I am also grateful to Rory Lucyshyn-Wright for spending a summer teaching me enriched category theory, and to Kristine Bauer for helpful conversations over the years. I would also like to thank all of the members of the Peripatetic Seminar and others over the years for friendly discussion and feedback: Chad Nester, Prashant Kumar, J.S. Lemay, Priyaa Srinivasan, Cole Comfort, Daniel Satanove, Rachel Hardeman, and Geoff Vooys. 

Finally, I am grateful to my parents and family, and to my wife Niloofar for her support as I wrote this thesis.

\dedication{For my wife, Niloofar.}

\chapter{Notation}\label{ch:notation}

We start with a table of symbols:
\begin{center}
    \begin{tabular}{|p{2cm}|p{13cm}|}
    \hline
    \textbf{Notation}  &
       \\\hline
    $\C, \D, \dots$  &
      A (usually tangent) category, treated as a general context for mathematics, denoted using mathbb \\ \hline
    $\a, \c, \dots$  &
     A small category treated as a mathematical object, denoted by mathcal \\ \hline
    $A, B, \dots$ &
     Objects in a category and also functors. written using capital letters \\ \hline
    $f, \phi$ &
     Morphisms in a category and natural transformations: lower-case Roman and Greek letters \\ \hline
    $f \o g$ &
     The composition of two maps $g:A \to B, f:B \to C$ (applicative notation) \\ \hline
    $\pi_i$ &
     The projection from the $i^{th}$ component of an $n$-fold pullback $A_0 \ts{q_0}{q_1} \dots \ts{q_{n-1}}{q_n}A_n$ or a product $\prod^n A_i$ \\ \hline
    $F.G$ &
     Composition of two functors, $F:\mathbb{D} \to \mathbb{E}, G: \mathbb{C} \to \mathbb{D}$ (applicative notation) \\ \hline
    $\phi.G$ &
     Whiskering of a natural transformation \\ \hline
    $\ox$ &
     Tensor product in a monoidal category \\ \hline
    $\boxtimes$ &
     A restricted notion of span composition, introduced in Definition \ref{def:boxtimes-span} \\ \hline
     $T, p, 0, +, \ell, c$ &
    The data for an arbitrary tangent category, introduced in Definition \ref{def:tangent-cat} \\ \hline
    $D, \odot, 0, !, \delta$ &
     The data for an infinitesimal object, introduced in Definition \ref{def:inf-object} \\ \hline
    $T^n$ &
     Iterated application of an endofunctor $T$ \\ \hline
    $T_n$ &
     The $n$-fold pullback power of $p:T \to id$ for a tangent category, $T \ts{p}{p} \dots \ts{p}{p} T$ \\ \hline
    \end{tabular}
\end{center}

\newpage 

We also provide a table of categories:
\begin{center}
    \begin{tabular}{|p{2cm}|p{13cm}|}
    \hline
    \textbf{Notation}  &
       \\ \hline
    $\mathsf{SMan}$ &
     The category of smooth manifolds \\ \hline
    $\C^\ell$ &
     The category of lifts in a tangent category $\C$, introduced in \Cref{def:lift} \\ \hline
    $\mathsf{NonSing}(\C)$ &
     The category of non-singular lifts in a tangent category $\C$, introduced in \Cref{def:non-singular-lift} \\ \hline
    $\mathsf{PDiff}(\C)$ &
     The category of pre-differential bundles in a tangent category $\C$, introduced in \Cref{def:pdb}(i) \\ \hline
    $\mathsf{Diff}(\C)$ &
     The category of differential bundles in a tangent category $\C$, introduced in \Cref{def:pdb}(ii) \\ \hline
    $\mathsf{LieAlgd}$ &
     The category of Lie algebroids, introduced in \Cref{sec:Lie_algebroids} \\ \hline
    $\mathsf{InvAlgd}(\C)$ &
    The category of anchored bundles in a tangent category $\C$, introduced in \Cref{def:anchored_bundles} \\ \hline
    $\mathsf{Anc}(A)$ &     
    The category of anchored bundles in a tangent category $\C$, introduced in \Cref{def:anchored_bundles} \\ \hline
    $\mathsf{Anc}^\prol(A)$ &
     The category of involution algebroids with chosen prolongations in a tangent category $\C$ \\ \hline
    $\wone$ &
     The category of Weil algebras, introduced in \Cref{sec:weil-algebras-tangent-structure} \\ \hline
    $W$ &
     Notation for the Weil algebra $\N[x]/x^2$ \\ \hline
    $\w$ &
     The category of transverse-limit-preserving functors $\wone \to \s$, introduced in \Cref{def:weil-space} \\ \hline
    $\wone^n$ & The category of Weil algebras with width $n$, used in \Cref{def:truncated-wone} \\ \hline
    $\wone^*$ & The full subcategory of $\wone$ spanned by $\{ \N, W\}$ \\ \hline
    $\prol$ &
      The classifying $\w$-category of an anchored bundles and all of its prolongations, introduced in Definition \ref{def:prol} \\ \hline
    \end{tabular}
\end{center}
\tableofcontents







\mainmatter

\chapter*{Introduction}


The study of Lie groupoids and Lie algebroids goes back to Charles Ehresmann and his student Jean Pradines in the late 1950s, building upon Sophus Lie's original research into the application of groups of smooth symmetries to solving ordinary differential equations (\cite{lie1893theorie}). Motivated by partial differential equations,  \cite{ehresmann1959categories} introduced the notion of a \emph{differentiable groupoid}, which models the \emph{internal} symmetries of a smooth manifold, in contrast to the \emph{external} symmetries given by a Lie group (a group object in the category of smooth manifolds) or Lie group action.  \cite{Pradines1967} extended the Lie functor (which sends Lie groups and Lie group actions to Lie algebras and Lie algebra actions) from external to internal symmetries and introduced the notion of a \emph{Lie algebroid}. In doing so, he identified some shortcomings in Ehresmann's original definition of differentiable groupoids, introducing the modern notion of a \emph{Lie groupoid}. 

Ehresmann's investigations into differentiable groupoids initiated one of the major differential geometry research programmes of the second half of the twentieth century, the study of Lie groupoids and Lie algebroids (which one may refer to as \emph{many-object} Lie theory to distinguish it from the ``single-object'' Lie groups and Lie algebras that had classically been studied). Research into many-object Lie theory in the 80s and 90s focused on extending Lie's second theorem and the Cartan--Lie theorem, which in modern terms state that Lie algebras form a \emph{coreflective subcategory} of Lie groups; that is, the Lie functor has a fully faithful left adjoint (\cite{MACKENZIE2000445,moerdijk2002integrability,nistor2000groupoids}). The left adjoint is often called \emph{Lie integration}, and in their famous paper \cite{Crainic2003} found the exact conditions governing whether a Lie algebroid integrates to a Lie groupoid. \cite{Weinstein1996} initiated a line of research into classical mechanics on Lie algebroids and groupoids, extending Poincar\`{e}'s development of mechanics on a space with a Lie group action and the Euler--Poincar\`{e} equations (\cite{poincare1901}; see \cite{marle2013} for a modern treatment);  this has been further developed by Eduardo Martinez and his collaborators (\cite{de2005lagrangian,Martinez2001,Martinez2018,fusca2018}). 

However, Ehresmann's work in differentiable groupoids signalled a change in focus for his own research, as he increasingly focused within the then-new area of category theory. Over the course of the 60s and 70s Ehresmann published a string of influential papers in the nascent area of \emph{functorial semantics}, developing the formalism of sketch theory. 
Sketch theory has proven to be highly influential in mathematical logic, and has been active for some 40 years, being extended to syntax/semantics adjunctions \cite{gabriel2006lokal},  enriched category theory \cite{Kelly1982} and generalized limit doctrines \cite{ADAMEK20027}.
The influence of sketch theory can still be seen on Lie theory in the work of Kirill Mackenzie and his collaborators to develop the theory of double Lie algebroids, Lie-algebroid groupoids, double-vector bundles, and other tensor-product theories (\cite{Mackenzie1992, Mackenzie2011}).

This thesis aims to provide a structural account of the Lie functor from Lie groupoids to Lie algebroids, using tangent categories \cite{Cockett2014} to unify Ehresmann's many-object Lie theory and sketch theory. Tangent categories provide a syntactic description of tangent structure based on Kock and Lawvere's synthetic differential geometry (\cite{Kock2006}, \cite{Lawvere1979}) and the Weil functor formalism (a comprehensive account may be found in \cite{Kolar1993}, while the explicit link to abstract tangent structure is found in \cite{Leung2017}, another line of research in differential geometry that has run parallel to modern Lie theory (albeit with some exchange, e.g. \cite{kolar2007functorial}). Recent work has recast tangent categories as a class of \emph{enriched categories} (\cite{Garner2018}), making it possible for modern techniques from sketch theory and functorial semantics to be applied to differential geometry. In doing so, we demonstrate that the language of tangent categories sheds light on the study of classical mechanics on Lie algebroids and groupoids, as well as Mackenzie's investigations into ``Ehresmann doubles'' of vector bundles and Lie algebroids.

\subsection*{Overview}
The first three chapters of this thesis build on previous work in the tangent category literature (for example, \cite{Cockett2017,Cockett2018,LucyshynWright2018}), providing tangent-categorical sketches of differential-geometric structures.  These structures follow Ehresmann's original notion of a sketch quite closely; they are specified as graphs (a collection of objects and arrows in the category) with a set of diagrams that must commute and cones that must be universal, except that the data may now include the tangent functor $T$ and the tangent natural transformations $p$, 0, +, $\ell$, and $c$. Each sketch is accompanied by a proof that its category of models in smooth manifolds (which we shall often write $\mathsf{SMan}$) is precisely the category of geometric structures it seeks to model. 

The first chapter reviews the basic theory of tangent categories, paying particular attention to the category of smooth manifolds. The first examples of ``sketches'' from the tangent categories literature are covered, namely differential objects and affine connections, which model vector spaces and connections respectively (\cite{Cockett2017,Cockett2018}). The chapter concludes with a study of \emph{tangent submersions}, which model submersions from classical differential geometry and are a useful example of the sort of work that occurs in Chapters \ref{ch:differential_bundles} and \ref{ch:involution-algebroids}.

The second and third chapters develop tangent categorical sketches for vector bundles and Lie algebroids respectively. Chapter \ref{ch:differential_bundles} extends the observation due to \cite{Grabowski2009} that the category of vector bundles is a full subcategory of multiplicative monoid actions by the non-negative reals $\R^+$, and then applies the \emph{Euler vector field} construction (Definition \ref{def:evf}). The tangent categorical sketch for a vector bundle, called a \emph{differential bundle}, is then developed based on a morphism $\lambda:E \to TE$,  and an isomorphism of categories between differential bundles in $\mathsf{SMan}$ and the category of smooth vector bundles is proved. Chapter \ref{ch:involution-algebroids} introduces \emph{involution algebroids}, which replace the bracket of a Lie algebroid with an involution map
\[
   \sigma: \prolong \to \prolong
\]
(where $\anc:A \to TM$ is the \emph{anchor} of the Lie algebroid). Using Martinez's presentation of the \emph{structure equations} for a Lie algebroid (\cite{Martinez2001}), we are once again able to prove an isomorphism of categories, this time that the category of Lie algebroids is isomorphic to that of involution algebroids in $\mathsf{SMan}$. This provides the initial bridge between differential geometric structures and tangent categorical sketches, making it possible to apply more sophisticated techniques in Chapters \ref{chap:weil-nerve} and \ref{ch:inf-nerve-and-realization}.



The fourth chapter constructs a syntactic tangent category for Lie algebroids, and demonstrates that Lie algebroids are precisely generalized tangent bundles. We call this result the \emph{Weil nerve}, as it follows the same structure as Grothendieck's original nerve theorem \cite{Segal1974}, in this case using the \emph{categories} presentation of tangent categories due to \cite{Leung2017}. This result has useful implications for the study of generalized mechanics and geometric structures on Lie algebroids, as it introduces a novel tangent structure on the category of Lie algebroids. This novel tangent structure corresponds to Poincar\`{e}/Weinstein/Martinez's characterization of classical mechanics on a Lie algebroid.

The fifth and final chapter introduces the enriched categories perspective on tangent categories from \cite{Garner2018}, so that the work in Chapters \ref{ch:differential_bundles} and \ref{ch:involution-algebroids} may be rephrased using the enriched sketches of \cite{Kelly2005}.  We construct a functor from the syntactic category of involution algebroids to the syntactic category of a groupoid-in-a-tangent-category, thus giving a presentation of the Lie functor in the spirit of Ehresmann's sketch theory. The syntactic version of the Lie functor is built by constructing another novel tangent structure on the category of Lie groupoids (or more generally, groupoids in a tangent category), which also agrees with previous investigations into classical mechanics on a Lie groupoid. As a final result, we demonstrate that in a \emph{locally presentable tangent category} we may use a left Kan extension to construct a left adjoint to the Lie functor, which we call the the \emph{Lie realization}.

\chapter{Tangent categories}%
\label{ch:tangent_categories}

Tangent categories are an example of convergent evolution in mathematics, in which two unrelated lines of research with very different aims have arrived at a common endpoint, in this case the same formal setting for abstract differential geometry. The older line of research has its roots in differential geometry proper and, in particular, Weil's algebraic characterization of the tangent bundle of a smooth manifold (\cite{Weil1953}). Weil's work motivated Kock and Lawvere's development of synthetic differential geometry, presented in the book of the same name \cite{Lawvere1979,Kock2006} as well as the Weil functor formalism of \cite{Kolar1993}. The second, more recent line of research has its foundations in theoretical computer science following the publication of \emph{Linear Logic} by \cite{girard1987linear}. Ehrhard and Regnier noticed that some models of linear logic have a notion of the ``Taylor series approximation'' of a proof; this led to the development of differential linear logic by \cite{ehrhard2003differential}. Blute, Cockett, and Seely studied the categorical semantics of models of linear logic equipped with the derivative operation - that is, they identified those categories whose internal language are were models of differential linear logic - developing a categorical theory of differentiation in \cite{blute2006differential,Blute2009}.

Tangent categories arise naturally in each line of research: on the first path with the distillation of synthetic differential geometry into abstract tangent functors in \cite{Rosicky1984}, and more recently when Cockett and Cruttwell refined abstract tangent functors following their investigations into the manifold categories of cartesian differential restriction categories in \cite{MR2861119,Cockett2014}. In a sense, they are categories that axiomatize Weil's characterization of the tangent bundle as an endofunctor in a way that captures the combinatorics of higher-order derivatives when looking at a certain class of internal commutative monoids (\cite{cockett2011faa}), as will be made precise in Chapter \ref{chap:weil-nerve}. Tangent categories also pull Kock and Lawvere's synthetic differential geometry into the framework of enriched category theory, which is explored in Chapter \ref{ch:inf-nerve-and-realization}.

Instances of tangent structure abound throughout mathematics and computer science. For example,  many categories of geometric spaces have natural tangent structure, such as the category of \emph{convenient manifolds} (the category of manifolds modelled locally by \emph{convenient vector spaces} \cite{Kriegl1997}) and the category of schemes (see point (ii) in Example 2 of \cite{Garner2018}). An example from mathematical logic is the category of K\"{o}the sequence spaces( \cite{MR1934421}), and categorical models of the differential lambda-calculus (\cite{MR4037417}). More recently, tangent and differential categories have found applications in differentiable programming and machine learning (\cite{wilson2021reverse}), and to understanding Johnson and McCarthy's \emph{functor calculus} \cite{Bauer2016}.

This thesis studies differential geometric structures using the language of tangent categories, following the tradition of synthetic differential geometry. As such, this chapter will develop tangent categories with a focus on the category of smooth manifolds. Extending the study of these formal structures in the context of novel tangent categories is a significant endeavour and should be treated as a direction for future research. The first section introduces Cartesian differential categories as the categorical axiomatization of multivariable calculus. The second section introduces the category of smooth manifolds and two characterizations of its tangent bundle (kinematic versus operational),  while the third section identifies the structure of the kinematic tangent bundle that characterizes abstract tangent structures. The fourth section presents a pair of structures that allow for ``local-coordinate calculations'' in the tangent category of differential objects and connections. The final section introduces \emph{tangent submersions}. A submersion is a differentiable map between differentiable manifolds whose differential is everywhere surjective; as a preview of the work in Chapters \ref{ch:differential_bundles} and \ref{ch:involution-algebroids}, this section shows that in the category of smooth manifolds, a tangent submersion is precisely a submersion. Section \ref{sec:submersions} first appeared in \cite{MacAdam2021}, and is the only original work in this chapter.

\section{Differential calculus}%
\label{sec:differential-calculus}

As with most treatments of synthetic differential geometry, e.g. \cite{Kock2006}, it makes sense to begin with the differential calculus - in this case, an introduction to the categorical theory of differentiation. Categorical differentiation has recently gained quite a bit of attention due to its relationship with machine learning \cite{Cockett2019}, and applications to homotopy theory \cite{Bauer2016}. This section will just consider the basic structures introduced in \cite{Blute2009}, and the canonical example of a cartesian differential category (the category of finite-dimensional real vector spaces and smooth maps between them).

\begin{definition}\label{def:clac}[Definition 1.2.1 \cite{Blute2009}]
    A cartesian left additive category is a cartesian category\footnote{We use the standard notation where $1$ is the terminal object, $\x$ is product, and $\pi_i$ is the $i^{th}$ projection.} $\C$ so that:
    \begin{enumerate}[(i)]
        \item Each hom-set $\C(A,B)$ is a commutative monoid with addition $+_{AB}$ and zero map $0_{AB}:A \to B$ (the subscript $AB$ will be suppressed when the context is clear).
        \item The composition operation $\o$ preserves addition \textit{on the left}: \[(g+h) \o f= g \o f + h \o f\]
        \item Projection is an additive map (preserves addition): \[\pi_i \o (f + g) = (\pi_i \o g \o f) + (\pi_i \o g)\]
        Where $\pi_i$ denotes the projection from the $i^{th}$ component of a product or pullback.
j    \end{enumerate}
\end{definition}
There are various examples of cartesian left additive categories - they all fit the same pattern of a category where each object is equipped with a non-natural, but coherent, choice of linear structure:
\begin{example}\label{ex:clacs}
    ~\begin{enumerate}[(i)]
        \item Any category with biproducts is a cartesian left additive category where every map is additive. 
        \item The category of cartesian spaces $\mathsf{CartSp}$, whose objects are finite-dimensional real vector spaces and morphisms are smooth maps between them, is a cartesian left additive category."
        Clearly smooth maps from $A \to B$ are closed under addition, projection is an additive map, and $(g+h) \o f= g \o f + h \o f$.
        \item   The category of topological vector spaces and continuous morphisms is a cartesian left additive category. 
    \end{enumerate}
\end{example}
In fact, cartesian left additive categories may equivalently be described as cartesian categories where each object has a coherent choice of commutative monoid structure.
\begin{proposition}[\cite{Blute2009}]%
    \label{prop:clac-defs}
    The following are equivalent:
    \begin{enumerate}[(i)]
        \item $\C$ is a cartesian left additive category
        \item $\C$ is a cartesian category so that each object has a chosen commutative monoid structure $(A,+_A,0_A)$ where the following coherence holds: \[
    \begin{tikzcd}
        (A \x B)^2 \rar{+_{A\x B}} \dar[swap]{\tau} & A \x B \\
        A^2 \x B^2 \urar[swap]{+_A \x +_B}
    \end{tikzcd}
\] (where $\tau = ((\pi_0\o \pi_0,\pi_0\o \pi_1),(\pi_1\o\pi_0, \pi_1\o\pi_1))$).
        \item There is a category with biproducts $\C^+$ and a bijective-on-objects subcategory inclusion $i: \C^+ \hookrightarrow\C$ that creates products.
    \end{enumerate}
\end{proposition}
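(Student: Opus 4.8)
The plan is to set up a dictionary between the ``hom-level'' additive structure of (i) and the ``object-level'' monoid structure of (ii), and then to realise (iii) as the assertion that the maps respecting this structure assemble into a category with biproducts. Concretely, given a cartesian left additive category I would define, for each object $A$, the map $+_A := \pi_0 + \pi_1 : A \x A \to A$ (the sum, taken in the commutative monoid $\C(A \x A, A)$, of the two projections) together with $0_A := 0_{1,A} : 1 \to A$; conversely, given the data of (ii) I would define, for $f,g : A \to B$, the sum $f + g := +_B \o \langle f, g\rangle$ and the zero $0_{AB} := 0_B \o {!}_A$. The first task is to check that these two assignments are mutually inverse, so that (i) and (ii) really describe the same structure presented in two ways.

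For (i) $\Rightarrow$ (ii), the commutative-monoid axioms for $(A, +_A, 0_A)$ follow from the commutative-monoid axioms of the hom-set $\C(A\x A, A)$ together with left additivity of composition: for instance commutativity is $+_A \o \tau = (\pi_0 + \pi_1)\o\tau = \pi_0\o\tau + \pi_1\o\tau = \pi_1 + \pi_0 = +_A$, and the unit law reads $+_A \o \langle \mathrm{id}, 0_A\o {!}\rangle = \pi_0\o\langle\mathrm{id},0\rangle + \pi_1\o\langle\mathrm{id},0\rangle = \mathrm{id} + 0 = \mathrm{id}$. The one genuinely structural point is that the coherence square of (ii) is equivalent to additivity of the projections in (i): postcomposing both $+_{A\x B}$ and $(+_A \x +_B)\o\tau$ with the two product projections $A\x B \to A$ and $A\x B\to B$ and using left additivity reduces the square to the statement that each projection preserves $+$. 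Running the same computations backwards gives (ii) $\Rightarrow$ (i): the hom-monoid laws come from those of $(B,+_B,0_B)$, left additivity of composition is immediate from $\langle f,g\rangle \o h = \langle f\o h, g\o h\rangle$, and additivity of projections is exactly the coherence square.

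For the equivalence with (iii), I would let $\C^+$ be the bijective-on-objects subcategory of $\C$ whose morphisms are the \emph{additive} maps, i.e. those $f$ with $f\o +_A = +_B \o (f\x f)$ and $f \o 0_A = 0_B$; identities and composites of additive maps are additive, so this is a genuine subcategory. To see it has biproducts, I would show that the product $A \x B$ of $\C$, equipped with the (additive) projections $p_A, p_B$ and the injections $\iota_A := \langle \mathrm{id}, 0\rangle$, $\iota_B := \langle 0, \mathrm{id}\rangle$, is a biproduct in $\C^+$: the equations $p_A\iota_A = \mathrm{id}$, $p_B\iota_B = \mathrm{id}$, $p_B\iota_A = 0 = p_A\iota_B$ are immediate, and the remaining identity $\iota_A p_A + \iota_B p_B = \mathrm{id}$ is checked by postcomposing with $p_A$ and $p_B$. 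Creation of products amounts to showing that the pairing $\langle f,g\rangle$ of two additive maps is again additive and is the unique additive mediating map, which once more reduces to the coherence square. Conversely, from (iii) the biproducts of $\C^+$ supply each object with a commutative monoid $(A, \nabla_A, 0)$; since $i$ creates products the biproduct $A\oplus A$ is the product $A\x A$ in $\C$ and the zero object is the terminal object $1$, so this monoid is exactly the data of (ii), giving (iii) $\Rightarrow$ (ii).

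The recurring obstacle --- and the real content of the proposition --- is the coherence square of (ii). It is what makes ``addition on a product is computed coordinatewise'', and precisely this fact is needed three times: to match (ii) with the additivity-of-projections axiom of (i), to verify the biproduct identity $\iota_A p_A + \iota_B p_B = \mathrm{id}$, and to show that the inclusion $\C^+ \hookrightarrow \C$ creates products by keeping pairings additive. Everything else is a bookkeeping translation between composing-with-$+_B$ and summing-in-hom-sets, so I would organise the write-up around establishing the coherence square once and reusing it.
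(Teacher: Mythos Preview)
The paper does not actually prove this proposition: it is stated with attribution to \cite{Blute2009} and no argument is given. There is therefore nothing in the paper to compare your proposal against.

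That said, your proof is correct and is the standard one. The dictionary you set up between hom-level addition and the object-level monoid $(+_A,0_A)$ is exactly right, and your identification of the coherence square with additivity of projections is the key point. The construction of $\C^+$ as the wide subcategory of additive maps, together with the verification that $A\times B$ becomes a biproduct there via the injections $\langle \mathrm{id},0\rangle$ and $\langle 0,\mathrm{id}\rangle$, is also standard and correctly sketched. One small addition worth making explicit in (iii) $\Rightarrow$ (ii): you should check that the codiagonal-induced monoid on $A\times B$ really satisfies the coherence square, which amounts to the fact that in any category with biproducts the codiagonal on a biproduct is computed componentwise; this is routine but is the step that closes the loop back to (ii).
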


Cartesian left additive categories provide an appropriate to define a differentiation operation. Recall that the usual derivative of a map $f: \R \to \R$ from elementary calculus can be written
\[
    \frac{\partial f}{\partial x}: \R \to \R. 
\]
More generally, for a map $f: \R^n \to \R$, one writes the \emph{Jacobian} of $f$ at $x$:
\[
    J[f]: \R^n \to (\R^n \multimap \R^m) :=
    \begin{bmatrix}
        \frac{\partial f_1}{\partial x_1} & \dots & \frac{\partial f_1}{\partial x_n} \\
        \vdots & \ddots & \vdots \\
        \frac{\partial f_1}{\partial x_1} & \dots & \frac{\partial f_1}{\partial x_n} 
    \end{bmatrix}
\]\pagenote{tidied notation, removed jargon}
The Jacobian, however, requires some notion of a ``matrix''\footnote{This would be called an \emph{internal hom} in the 
 categorical logic literature.} representing a linear map from $\mathbb{R}^n$ to $\mathbb{R}^m$---not every category that has a notion of differentiation supports that operation. Instead, the \emph{directional derivative}:
\[
    D[f](x,v) := \lim_{d \to 0} \frac{f(x + t\cdot v)}{t}
\]
gives an appropriately general notion of differentiation that extends to categories where the space of linear maps $A \to B$ is not representable by an object in the category \footnote{In the case of automatic differentiation, it is also worth noticing that computing the directional derivative of a map $\mathbb{R}^n \to \mathbb{R}^m$ has complexity $2\mathcal{O}(f)$, while forming the Jacobian has complexity $n\mathcal{O}(f)$, so the directional derivative is a more appropriate primitive for purely practical computational reasons (see Section 5 of \cite{hoffmann2016hitchhiker} for a discussion of the computational complexity of forward-mode automatic differentiation).}.\pagenote{Spelled out AD.} 
A cartesian differential category axiomatizes the directional derivative as a combinator on a cartesian left-additive category.

\begin{definition}\label{def:cdc}[Definition 2.1.1 in \cite{Blute2009}]
    A cartesian differential category is a cartesian left additive category equipped with a combinator (e.g. a function on hom-sets)\pagenote{
    I have added a precise reference to definition of a CDC, and included a brief definition for what a combinator is.}
    \[
        \infer{A \x A \xrightarrow[{D[f]}]{} B}{A \xrightarrow{f} B}
    \]
    satisfying the following axioms:
  \begin{enumerate}[{[CD.1]}]
      \item Additive:
        \[D[f+g] = D[f] + D[g] \hspace{1cm} D[0] = 0\]
      \item Additive in the second variable:
        \[D[f] \o (g, h+k) = D[f]\o (g,h) + D[f]\o (g,k) \hspace{1cm} D[f] \o (g,0) = 0\]
      \item Projection is linear:
        \[D[\pi_i] =  \pi_i \o \pi_1 \hspace{1cm} D[id] = \pi_1\]
      \item Pairing:
        \[D[(f,g)] = (D[f],D[g])\]
      \item Chain rule:
        \[D[g\o f] = D[g] \o (\pi_0, D[f])\]
      \item Linear in the second variable:
        \[D[D[f]] \o ((a,0),(0,d)) = D[f] \o (a,d)\]
      \item Symmetry of partial differentiation:
        \[D[D[f]] \o ((a,b),(c,d)) = D[D[f]] \o ((a,c),(b,d))\]
  \end{enumerate}
\end{definition}
\begin{example}
    The category of cartesian spaces (Example \ref{ex:clacs}(ii)), $\mathsf{CartSp}$, is the canonical cartesian differential category. Let $f: \mathbb{R}^n \to \mathbb{R}^m $, and consider its Jacobian at $x \in \mathbb{R}^n , J[f](x) \in \mathbb{R}^{n \x m}$.  Define the differential combinator:
	    \[
	        D[f]\o (u,v) = J[f](v) \cdot u = \lim_{t \to 0} \frac{f(x + t\cdot v)}{t}.
	    \]\pagenote{Removed extra examples, since they distracted from the original point.}
	\item In \cite{Bauer2016}, the authors construct a cartesian differential category based on the Abelian functor calculus of \cite{MR1451606}.
	\item In \cite{wilson2021reverse}, the authors consider a cartesian differential category whose objects are $\mathbb{Z}_2$-modules to apply gradient-based methods to learn the parameters of of Boolean circuits. 
	\pagenote{
	We have added to 
	}
\end{example}
Every cartesian differential category comes with a notion of linearity.
This notion of linearity is strictly stronger than additivity - there do exist examples of non-linear additive maps.
\begin{definition}[Definition 2.2.1 of \cite{Blute2009}]\label{def:linear-map-in-cdc}
    A map $f:A \to B$ is \textit{linear} whenever $D[f] \o (0_{AB},id) = f$. 
    \pagenote{
        I have added a precise reference to the definition of a linear map, and the $0$ has now been defined in the definition of a cartesian left additive category.
    }
\end{definition}
We denote the category of linear maps in a cartesian differential category $\C$ as $\mathsf{Lin}(\C)$.
The category $\mathsf{Lin}(\C)$ will have biproducts, and will be a cartesian differential subcategory of $\C$.
\begin{lemma}[Corollary 2.2.3 in \cite{Blute2009}]\label{lem:collected-remarks-lin}\pagenote{
    I have added the reference to this lemma, this was originally missing.
}
    Let $\C$ be a cartesian differential category, and denote its category of linear maps as $\mathsf{Lin}(\C)$.
    \begin{enumerate}[(i)]
        \item Linear maps preserve addition.
        \item The category $\mathsf{Lin}(\C)$ is a bijective-on-objects subcategory of $\C$ with biproducts, and the inclusion $\mathsf{Lin}(\C) \hookrightarrow \C$ creates products. 
        \item Every category with biproducts is a cartesian differential category, where
        \[
            D[f] = f \o \pi_1,
        \]
        and this differential structure makes the inclusion $\mathsf{Lin}(\C) \hookrightarrow \C$ preserve the left additive structure and differential combinator (that is, it is a cartesian differential functor). 
    \end{enumerate}
\end{lemma}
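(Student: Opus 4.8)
The plan is to reduce all three parts to the single characterization that a map $f \colon A \to B$ is linear if and only if $D[f] = f \o \pi_1$. The backward direction is immediate from Definition \ref{def:linear-map-in-cdc}: if $D[f] = f \o \pi_1$ then $D[f] \o (0,id) = f \o \pi_1 \o (0,id) = f$. The forward direction is the real content and, I expect, the main obstacle; once it is in hand, the rest is bookkeeping against the axioms [CD.1]--[CD.7].

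To prove the forward direction, suppose $f$ is linear, so $f = D[f] \o (0,id)$. I would first record the easy identity $D[f] \o (0, b) = f \o b$, obtained by precomposing the linearity equation with $b$ and using $0 \o b = 0$. The goal is then to show $D[f] \o (a,b) = f \o b$ for arbitrary $a,b$, i.e. that $D[f]$ is independent of its first argument. Applying the differential combinator to the linearity equation and simplifying the right-hand side with the chain rule [CD.5] together with $D[(0,id)] = (0,\pi_1)$ (from [CD.3] and [CD.4]) yields an expression for $D[f]$ in terms of the second derivative $D[D[f]]$. Evaluating at $(a,b)$, splitting the second argument additively with [CD.2], and transposing the two arguments via the symmetry axiom [CD.7], the ``$a$'' contribution lands in a slot killed by the vanishing clause of [CD.2], while the remaining term collapses under the linearity-in-the-second-variable axiom [CD.6] back to $D[f] \o (0,b) = f \o b$. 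This identifies $D[f] = f \o \pi_1$. This calculation, juggling [CD.2], [CD.6], [CD.7], and the chain rule simultaneously, is where all the work lives.

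With the characterization established, part (i) is a short computation: for linear $f$ we have $f \o (h+k) = D[f]\o(0,h+k)$ and $f \o 0 = D[f]\o(0,0)$, so the additivity and zero clauses of [CD.2] give $f \o (h+k) = f\o h + f \o k$ and $f \o 0 = 0$. For part (ii), I would check that $\mathsf{Lin}(\C)$ is a subcategory using $D[id]=\pi_1$ ([CD.3], so $id$ is linear) and the chain rule ([CD.5], so a composite of linear maps is linear); it is bijective on objects by definition. The biproduct structure is inherited from $\C$: the projections $\pi_i$ are linear by [CD.3], the injections $(id,0)$ and $(0,id)$ are linear by [CD.4] and [CD.3], and the biproduct equations are exactly the cartesian left additive identities already available (cf. Proposition \ref{prop:clac-defs}). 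That the inclusion creates products follows because the pairing $(f,g)$ of linear maps is linear by [CD.4].

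Finally, for part (iii), a category with biproducts is a cartesian left additive category in which every map is additive (Example \ref{ex:clacs}(i)), so I would verify the axioms [CD.1]--[CD.7] directly for $D[f] := f \o \pi_1$; each reduces to the biproduct identity $\pi_1 \o \pi_1 \o ((a,b),(c,d)) = d$ together with additivity of composition, and all seven hold routinely. Applying this to $\mathsf{Lin}(\C)$, which has biproducts by part (ii), equips it with the differential combinator $f \mapsto f \o \pi_1$. The inclusion $\mathsf{Lin}(\C) \hookrightarrow \C$ preserves the left additive structure by part (i), and preserves the differential combinator precisely because the characterization gives $D_\C[f] = f \o \pi_1$ for every linear $f$; hence it is a cartesian differential functor.
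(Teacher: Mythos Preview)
The paper does not give its own proof of this lemma; it is imported wholesale as Corollary~2.2.3 of \cite{Blute2009}. Your proposal is correct and is essentially the standard argument from that reference: the characterization ``$f$ linear $\iff D[f]=f\o\pi_1$'' via the second derivative (using CD.5, CD.7, CD.2, CD.6 in that order) is exactly how Blute--Cockett--Seely establish it, and parts (i)--(iii) then follow by the bookkeeping you describe.
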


\section{The category of smooth manifolds}%
\label{sec:smooth-manifolds}


Tangent categories axiomatize a more general structure than differential calculus, one in which spaces are only ``locally linear.''
The category of smooth manifolds gives the historically canonical example, and a good portion of this thesis relates to structures internal to that category,
so it seems worthwhile to set a working definition for that context.
We follow \cite{Tu2011}, and allow for disconnected components of a manifold to have different dimensions.

\begin{definition}%
\label{def:smooth-manifold}[Definitions 5.5--5.7 in \cite{Tu2011}]
    A \emph{chart} on a topological space $M$ is pair $(U_i, \phi_i:U_i \hookrightarrow \R^n)$, where $U_i$ is an open subset $U_i \subseteq M$ and $\phi_i:U_i \to \R^n$ is a local homeomorphism.
    An \emph{atlas} is a collection of charts $\{ (U_i,\phi_i:U_i \to \R^n) | i \in I \}$ (where $n$ is fixed for each connected component of $M$) so that for each $i,j \in I$, the transition function $\psi_{i,j}$ that completes the diagram
    \[\begin{tikzcd}
        {\phi_{i}^{-1}(U_{i} \cap U_j)} & {\R^n} \\
        {U_{i} \cap U_j} & {U_j}
        \arrow["{\phi_i}", from=1-1, to=2-1]
        \arrow["\subseteq", from=2-1, to=2-2]
        \arrow["{\phi_j}", from=2-2, to=1-2]
        \arrow["{\psi_{i,j}}", dashed, from=1-1, to=1-2]
    \end{tikzcd}\]
    is a smooth map.
    \begin{figure}
        \centering
        \begin{tikzpicture}

    \path[->] (0.8, 0) edge [bend right] node[left, xshift=-2mm] {$\phi_i$} (-1, -2.9);
    \draw[white,fill=white] (0.06,-0.57) circle (.15cm);
    \path[->] (-0.7, -3.05) edge [bend right] node [right, yshift=-3mm] {$\phi^{-1}_i$} (1.093, -0.11);
    \draw[white, fill=white] (0.95,-1.2) circle (.15cm);

    \path[->] (5.8, -2.8) edge [bend left] node[midway, xshift=-5mm, yshift=-3mm] {$\phi^{-1}_j$} (3.8, -0.35);
    \draw[white, fill=white] (4,-1.1) circle (.15cm);
    \path[->] (4.2, 0) edge [bend left] node[right, xshift=2mm] {$\phi_j$} (6.2, -2.8);
    \draw[white, fill=white] (4.54,-0.12) circle (.15cm);

    \draw[smooth cycle, tension=0.4, fill=white, pattern color=brown, pattern=north west lines, opacity=0.7] plot coordinates{(2,2) (-0.5,0) (3,-2) (5,1)} node at (3,2.3) {$M$};


    \draw[smooth cycle, pattern color=orange, pattern=crosshatch dots] 
        plot coordinates {(1,0) (1.5, 1.2) (2.5,1.3) (2.6, 0.4)} 
        node [label={[label distance=-0.3cm, xshift=-2cm, fill=white]:$U_i$}] {};
    \draw[smooth cycle, pattern color=blue, pattern=crosshatch dots] 
        plot coordinates {(4, 0) (3.7, 0.8) (3.0, 1.2) (2.5, 1.2) (2.2, 0.8) (2.3, 0.5) (2.6, 0.3) (3.5, 0.0)} 
        node [label={[label distance=-0.8cm, xshift=.75cm, yshift=1cm, fill=white]:$U_j$}] {};

    \draw[thick, ->] (-3,-5) -- (0, -5) node [label=above:$\phi_i(U_i)$] {};
    \draw[thick, ->] (-3,-5) -- (-3, -2) node [label=right:$\mathbb{R}^m$] {};

    \draw[->] (0, -3.85) -- node[midway, above]{$\psi_{ij}$} (4.5, -3.85);

    \draw[thick, ->] (5, -5) -- (8, -5) node [label=above:$\phi_j(U_j)$] {};
    \draw[thick, ->] (5, -5) -- (5, -2) node [label=right:$\mathbb{R}^m$] {};

    \draw[white, pattern color=orange, pattern=crosshatch dots] (-0.67, -3.06) -- +(180:0.8) arc (180:270:0.8);
    \fill[even odd rule, white] [smooth cycle] plot coordinates{(-2, -4.5) (-2, -3.2) (-0.8, -3.2) (-0.8, -4.5)} (-0.67, -3.06) -- +(180:0.8) arc (180:270:0.8);
    \draw[smooth cycle] plot coordinates{(-2, -4.5) (-2, -3.2) (-0.8, -3.2) (-0.8, -4.5)};
    \draw (-1.45, -3.06) arc (180:270:0.8);

    \draw[white, pattern color=blue, pattern=crosshatch dots] (5.7, -3.06) -- +(-90:0.8) arc (-90:0:0.8);
    \fill[even odd rule, white] [smooth cycle] plot coordinates{(7, -4.5) (7, -3.2) (5.8, -3.2) (5.8, -4.5)} (5.7, -3.06) -- +(-90:0.8) arc (-90:0:0.8);
    \draw[smooth cycle] plot coordinates{(7, -4.5) (7, -3.2) (5.8, -3.2) (5.8, -4.5)};
    \draw (5.69, -3.85) arc (-90:0:0.8);

\end{tikzpicture}     
        \caption{Overlapping charts in an atlas (Credit for this tex code belongs to user Cragfelt \url{https://tex.stackexchange.com/a/388493/101171}.)}
        \label{fig:overlapping-charts}
    \end{figure}
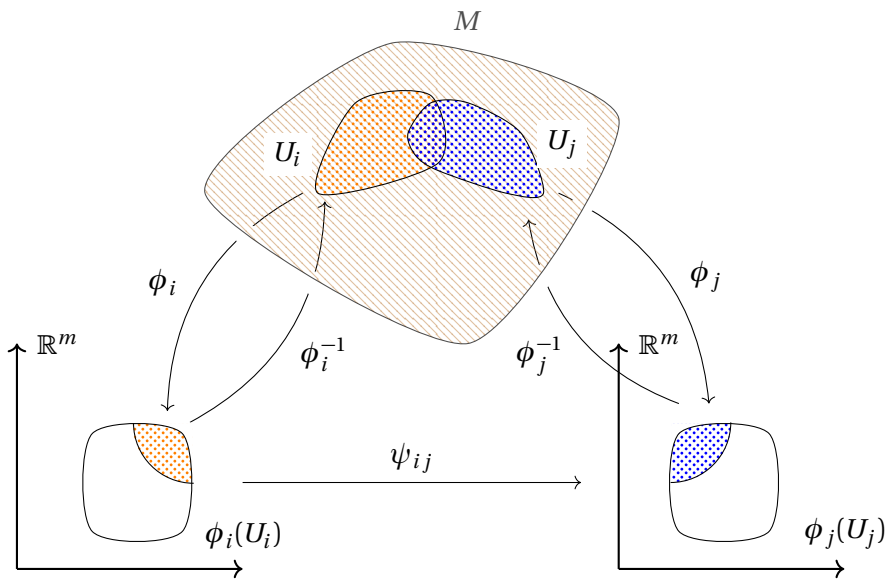
    A \emph{smooth manifold} is a topological space equipped with a (maximal\footnote{With respect to subset inclusion.}) atlas. \pagenote{This definition was originally incomplete, and it lacked a reference. I have added the missing details and included a reference.
    } A morphism of smooth manifolds is a topological map $f: M \to N$ that is locally smooth - for each chart pair of charts $(U_i, \phi_i)$ on $M$ and $(V_j, \theta_j)$, see Figure \ref{fig:overlapping-charts} for an illustration. The map $f$ is smooth whenever each map $f_{i,j}$ that completes the diagram is smooth 
    \[\begin{tikzcd}
        {U_i} & {V_j} \\
        M & N
        \arrow[from=1-1, to=2-1]
        \arrow[from=1-2, to=2-2]
        \arrow["f", from=2-1, to=2-2]
        \arrow["{f_{i,j}}", dashed, from=1-1, to=1-2]
    \end{tikzcd}\]

    The \emph{category of smooth manifolds}, $\mathsf{SMan}$, is the category of smooth manifolds and their morphisms.
\end{definition}
\begin{remark}
    In \Cref{ch:differential_bundles}, some results will implicitly use partition of unity arguments, which require that the underlying topological space for a manifold is \emph{Hausdorff} and has a countable basis (i.e. it is \emph{second-countable}). We will avoid any direct reference to these properties, and so we omit them from the definition of a smooth manifold.
\end{remark}

\begin{example}
    ~\begin{enumerate}[(i)]
        \item Each vector space $\R^n$, for $n\in \N$, has a canonical smooth manifold structure whose atlas is a single chart (the identity map $\mathbb{R}^n \to \mathbb{R}^n$).
        \item Most geometric shapes that do not have any singularities or sharp edges can be equipped with an atlas without any issue.
        For example, consider the circle:  $\{ (\cos(x), \sin(x)) : x \in [-\pi, \pi) \}$
        \[\begin{tikzpicture}
	\begin{pgfonlayer}{nodelayer}
		\node [style=none] (0) at (-1, 0) {};
		\node [style=none] (1) at (1, 0) {};
	\end{pgfonlayer}
	\begin{pgfonlayer}{edgelayer}
		\draw [bend left=90, looseness=1.75] (0.center) to (1.center);
		\draw [bend right=90, looseness=1.75] (0.center) to (1.center);
	\end{pgfonlayer}
\end{tikzpicture}
\]
        For any appropriately small $\epsilon>0$, there are two charts from $I_\epsilon = (-\epsilon, \pi+\epsilon)$; 
        \[
            \phi_0(t) = (\cos(t), \sin(t)), \hspace{0.25cm} \phi_1(t) = (\cos(t-\pi), \sin(t-\pi))
        \]
        making the circle a smooth manifold. \pagenote{
           I have added more details to the chart maps here.
        }
    \end{enumerate}
\end{example}
Category theory has not seen as many applications in differential geometry as it has in topology or algebra, likely because the category of smooth manifolds (Definition \ref{def:smooth-manifold}) is somewhat poorly behaved.
The two main reasons that the category of smooth manifolds is ``inconvenient'' are as follows:
\begin{itemize}
    \item The set of smooth maps between two manifolds $M, N$ fails, in general, to form a smooth manifold; thus, the category is not cartesian closed.
    \item The category of manifolds does not have quotients or arbitrary fibre products.
    \pagenote{
       As per Kristine's recommendation, we give more specific limits and colimits that fail to exist in the category of smooth manifolds.
    }
\end{itemize}
However, the category of smooth manifolds admits some limits, for example finite products.
\begin{proposition}[1.12 \cite{Kolar1993}]
    The category \emph{SMan} of smooth manifolds has finite products.
\end{proposition}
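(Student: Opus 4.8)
The plan is to construct the product explicitly and verify the universal property directly in local coordinates, reducing everything to the corresponding facts about smooth maps between Cartesian spaces (Example \ref{ex:clacs}(ii)). Since finite products are generated by the terminal object together with binary products, it suffices to exhibit both. For the terminal object I would take the one-point space $\{*\}$, viewed as a $0$-dimensional manifold with its unique atlas (the single chart $\{*\} \to \R^0$); for any manifold $M$ there is exactly one map $M \to \{*\}$, and it is trivially smooth, so $\{*\}$ is terminal.

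For binary products, given manifolds $M$ and $N$ I would take the underlying space to be the topological product $M \times N$ and build an atlas out of products of charts: for each chart $(U_i, \phi_i \colon U_i \to \R^m)$ on $M$ and each chart $(V_j, \theta_j \colon V_j \to \R^n)$ on $N$, form the chart $(U_i \times V_j,\ \phi_i \times \theta_j \colon U_i \times V_j \to \R^{m+n})$. The essential verification is that the transition maps of this atlas are smooth in the sense of Definition \ref{def:smooth-manifold}: the transition map between two product charts factors as a product $\psi_{i,i'} \times \chi_{j,j'}$ of the transition maps of $M$ and $N$, and a product of smooth maps between Cartesian spaces is again smooth, since smoothness is checked componentwise. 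This makes $M \times N$ a smooth manifold, and the two projections $\pi_M, \pi_N$ are smooth because in the product charts they are simply the coordinate projections $\R^{m+n} \to \R^m$ and $\R^{m+n} \to \R^n$.

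The remaining step is the universal property. Given smooth maps $f \colon P \to M$ and $g \colon P \to N$, the set-theoretic pairing $\langle f, g \rangle \colon P \to M \times N$ is the only candidate, so it remains to check it is smooth. Working in a chart of $P$ and a product chart of $M \times N$, the local representative of $\langle f, g \rangle$ is exactly the pairing of the local representatives of $f$ and $g$; and a map into $\R^{m+n}$ is smooth precisely when each of its two blocks of component functions is smooth, which holds by the smoothness of $f$ and $g$. Uniqueness of the pairing is inherited from the universal property of the product in $\s$, since the forgetful functor $\mathsf{SMan} \to \s$ is faithful and the underlying map of $\langle f, g \rangle$ is forced.

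I do not expect a serious obstacle here; the content is entirely the reduction of manifold smoothness to Euclidean smoothness via charts, together with the componentwise characterization of smooth maps into a product of Cartesian spaces. The only mild subtlety is the dimension bookkeeping, since the convention adopted after Definition \ref{def:smooth-manifold} allows distinct connected components to carry different dimensions; this is harmless, because each product of connected components has a well-defined dimension $m + n$, and the atlas construction and the coordinate arguments proceed component by component.
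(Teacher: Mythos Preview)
Your proposal is correct and follows the same approach as the paper: construct $M \times N$ as the topological product equipped with the product charts $(\phi_i \times \psi_j): U_i \times V_j \to \R^m \times \R^n$. The paper's proof is a two-line sketch giving only this construction, whereas you have filled in the verification of transition maps, smoothness of projections, and the universal property; these details are routine and your account of them is accurate.
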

\begin{proof}
    Given two manifolds $M, N$, take the product of their underlying topological spaces together with the product charts
    \[
        (\phi_i \x \psi_j): U_i \x V_j \to \R^n \x \R^m.
    \]
\end{proof}
The category of smooth manifolds---using our definition where disconnected components of a manifold may have different dimensions---does have a class of (co)limits used throughout this paper, namely idempotent splittings.\pagenote{
   I have added the definition of an idempotent splitting.
}
\begin{definition}[\cite{MR850528}]
    An \emph{idempotent} is an endomorphism $e:E \to E$ so that $e \o e = e$. The \emph{splitting} of an idempotent is given by a pair of maps $e = s\o r$ so that $r \o s = id$. The existence of a splitting of an idempotent $e$ is equivalent to asking that the following pair of parallel arrows has a (co)equalizer:
    \[\begin{tikzcd}
        E & E
        \arrow["e", shift left=1, from=1-1, to=1-2]
        \arrow[shift right=1, Rightarrow, no head, from=1-1, to=1-2]
    \end{tikzcd}\]
    The idempotent splitting $\C$ of a category (also known as the \emph{Cauchy completion} of the category), is the full subcategory of presheaves $[\C^{op}, \s]$ that are retracts of representable functors.
    Any functor into a category with idempotent splittings will factor through this inclusion of categories, so it is the \emph{free cocompletion} of $\C$ under idempotent splittings.
\end{definition}
\begin{proposition}[\cite{MR1003203}]
    The category of smooth manifolds is the idempotent splitting of the category whose objects are open subsets of Cartesian spaces and whose morphisms are smooth maps $f: U \to V$.
\end{proposition}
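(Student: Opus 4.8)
The plan is to identify $\mathsf{SMan}$ with the idempotent splitting $\overline{\mathsf{Open}}$ of the category $\mathsf{Open}$ of open subsets of Cartesian spaces and smooth maps, using the universal property recalled above: $\overline{\mathsf{Open}}$ is the free completion of $\mathsf{Open}$ under splittings of idempotents. Concretely, I would invoke the standard recognition criterion—if $J \colon \mathsf{Open} \to \mathsf{SMan}$ is fully faithful, $\mathsf{SMan}$ is idempotent complete, and every manifold is a retract of an object in the image of $J$, then the canonical comparison functor $\overline{\mathsf{Open}} \to \mathsf{SMan}$ (obtained from $J$ by the universal property, since $\mathsf{SMan}$ splits idempotents) is fully faithful and essentially surjective, hence an equivalence. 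So the proof reduces to three checks.

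Two of these are routine. The inclusion $J$ sending an open $U \subseteq \mathbb{R}^n$ to the manifold it determines is fully faithful, since morphisms on both sides are exactly the smooth maps. For retract-density, I would embed an arbitrary manifold $M$ as a closed submanifold $M \hookrightarrow \mathbb{R}^N$ by the Whitney embedding theorem and then take a tubular neighbourhood: the nearest-point projection $r \colon U \to M$ on an open $U \supseteq M$ is smooth and satisfies $r \circ \iota = \mathrm{id}_M$, exhibiting $M$ as a retract of $U \in \mathsf{Open}$. (This step tacitly uses that the manifolds in question are second countable and Hausdorff, as flagged in the remark following \Cref{def:smooth-manifold}.)

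The substance of the proof is idempotent completeness: I must show that a smooth idempotent $e \colon M \to M$, $e \circ e = e$, splits, i.e. that its image $N = e(M) = \mathrm{Fix}(e)$ is an embedded submanifold; then $M \xrightarrow{e} N \xrightarrow{\iota} M$ is the splitting. The key is to prove that $e$ has locally constant rank near each $p \in N$, so that the constant rank theorem presents $N$ locally as a coordinate slice. At a fixed point $q \in N$, differentiating $e \circ e = e$ gives $(de_q)^2 = de_q$, so $de_q$ is a linear idempotent and $\mathrm{rank}(de_q) = \mathrm{tr}(de_q)$; as an integer-valued continuous function this rank is locally constant along $N$. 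For $x$ near $p$ the chain rule gives $de_x = de_{e(x)} \circ de_x$ with $e(x) \in N$, so $\mathrm{rank}(de_x) \le \mathrm{rank}(de_{e(x)}) = \mathrm{rank}(de_p)$ by local constancy along $N$, while lower semicontinuity of rank gives $\mathrm{rank}(de_x) \ge \mathrm{rank}(de_p)$; hence $e$ has constant rank on a neighbourhood of $p$ and $N$ is a submanifold, and the corestriction $M \to N$ together with $\iota$ splits $e$.

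The main obstacle is exactly this last step—turning the purely algebraic idempotency of $e$ into the geometric statement that its image is a submanifold—since it requires the constant-rank argument rather than a formal manipulation; the retract-density step is standard differential topology, and full faithfulness is immediate.
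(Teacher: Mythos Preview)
The paper does not give its own proof of this proposition; it is stated with a citation to the literature and then used as a black box to deduce the subsequent corollary that $\mathsf{SMan}$ splits idempotents. Your argument is correct and follows the standard route: full faithfulness of the inclusion is immediate, retract-density follows from Whitney embedding plus a tubular neighbourhood, and idempotent completeness follows from the constant-rank argument you outline (the trace trick for linear idempotents, combined with lower semicontinuity of rank, pins down the rank of $de$ near each fixed point, so the constant rank theorem exhibits the fixed-point set locally as a coordinate slice). One caveat worth making explicit: the paper's definition of smooth manifold deliberately omits the Hausdorff and second-countability hypotheses, which your Whitney-embedding step needs; you already flag this, and the paper's remark immediately following the definition acknowledges that such assumptions are tacitly in force where required.
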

An idempotent in the idempotent splitting of a category is also an idempotent in the base category, and thus admits a splitting.
\begin{corollary}
    The category of smooth manifolds is closed to idempotent splittings: for every map $e:M \to M$ so that $e = e \o e$ there exists a pair of maps $r:Q \to M, s:M \to Q$ so that $e = s\o r, r \o s = id$.\pagenote{
       I have added the coherences on $r,s$, so that $r \o s = id, s \o r = e$.    }
\end{corollary}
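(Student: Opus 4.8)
The plan is to deduce the statement from the two preceding propositions together with the general categorical fact that a Cauchy (idempotent) completion is itself idempotent-complete. By the preceding proposition, $\mathsf{SMan}$ is the idempotent splitting of the category $\u$ of open subsets of Cartesian spaces and smooth maps; unwinding the definition of idempotent splitting, this realizes $\mathsf{SMan}$ as the \emph{full} subcategory of the presheaf category $[\u^{op},\s]$ whose objects are the retracts of representable functors. I would carry the whole argument out inside this presheaf category.

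First I would take an idempotent $e\colon M \to M$ in $\mathsf{SMan}$ and view it as an idempotent endomorphism of the presheaf $M \in [\u^{op},\s]$. Since $\s$ is complete and (co)limits in a presheaf category are computed pointwise, every idempotent splits in $[\u^{op},\s]$: forming the (co)equalizer of $e$ and the identity produces a presheaf $Q$ together with maps $r'\colon M \to Q$ and $s'\colon Q \to M$ satisfying $s' \o r' = e$ and $r' \o s' = \mathrm{id}_Q$. This is the routine part.

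The key step — and the only one where anything must be checked — is that $Q$ again lies in $\mathsf{SMan}$, i.e. that $Q$ is once more a retract of a representable. This follows from transitivity of retracts: the equations $s'\o r' = e$ and $r'\o s' = \mathrm{id}_Q$ exhibit $Q$ as a retract of $M$, while $M$ is by hypothesis a retract of some representable $\u(-,U)$; composing the two section/retraction pairs exhibits $Q$ as a retract of $\u(-,U)$. Hence $Q$ is an object of $\mathsf{SMan}$, and since $\mathsf{SMan}$ is a full subcategory of $[\u^{op},\s]$, the maps $r'$ and $s'$ are automatically morphisms of $\mathsf{SMan}$. Thus $e$ splits in $\mathsf{SMan}$, with splitting object $Q$; setting $s \coloneqq r'$ and $r \coloneqq s'$ (a matter of matching the naming and variance in the assertion) gives the required pair $r\colon Q \to M$, $s\colon M \to Q$ realizing the split.

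The main (indeed essentially only) obstacle is this stability-of-retracts observation; everything else is bookkeeping, the one subtlety being to keep track of the direction of the section and retraction so that they line up with the maps named in the statement. An alternative route that avoids presheaves altogether is to use the Karoubi-envelope description of the idempotent splitting, whose objects are pairs $(U,p)$ with $p$ an idempotent of $\u$ and whose morphisms are suitably compatible maps; there an idempotent on $(U,p)$ is literally an idempotent endomorphism $\phi$ of $U$ in $\u$ with $p\phi p = \phi$, and the formally adjoined object $(U,\phi)$ splits it directly. This is exactly the content of the remark preceding the statement, that an idempotent in the idempotent splitting is already an idempotent of the base whose splitting has been freely added.
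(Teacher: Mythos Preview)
Your proposal is correct. The paper's entire argument is the single sentence preceding the corollary (``An idempotent in the idempotent splitting of a category is also an idempotent in the base category, and thus admits a splitting''), which is precisely the Karoubi-envelope route you sketch at the end. Your primary argument via the presheaf description and transitivity of retracts is a more careful unpacking that stays closer to the paper's actual \emph{definition} of the idempotent splitting (as retracts of representables in $[\u^{op},\s]$), whereas the paper's one-line justification tacitly switches to the Karoubi picture; both arrive at the same place, and your version has the virtue of making the stability-under-retracts step explicit.
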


The main construction of interest on the category of smooth manifolds, for tangent categories at least, is the \emph{tangent bundle}. Given  a smooth map $f:M \to N$, restrict it to a morphism between coordinate patches, so it may be regarded as a map $f|_U: U \to V$, where $U \subseteq \R^m, V \subseteq \R^n$. This gives a local derivative operation (remembering that $\pi_i$ denotes the $i^{th}$ projection from a product $\prod^n A_i$)\footnote{
The wording here was originally muddled, it has since been corrected.
}
\begin{gather*}
    \infer{(f|_U \o \pi_0, D[f|_U]):U \x \R^m \to V \x \R^n.}{ D[f|_U]: U \x \R^m \to \R^n,}
\end{gather*}
The tangent bundle makes this construction global; that is, there is a functor $T:\mathsf{SMan \to SMan}$ giving an assignment \[T.M \xrightarrow{T.f} T.N\] that agrees with the local derivative on coordinate patches of $M, N$.

\begin{definition}[\cite{Kolar1993}]%
    \label{def:tang-vector}
    Write the algebra of smooth functions on a manifold as $C^\infty(M):=\mathsf{SMan}(M,\R)$.
    The set $\mathsf{SMan}(\R, M)/\cong$ of tangent vectors on a smooth manifold $M$ comprises the curves $\R \to M$ subject to the equivalence relation that for a pair of curves $\phi, \theta:\R \to M$, $\phi \cong \theta$ if and only if $\phi(0) = \theta(0)$ and for every $f \in C^\infty(M)$,
    \[
        \frac{\partial f \o \phi}{\partial x}(0) = \frac{\partial f \o \theta}{\partial x}(0).
    \] 
    The set $\mathsf{SMan}(\R,M)/\cong$ has a naturally determined smooth manifold structure which we call the \emph{tangent bundle} over $M$, $TM$.\pagenote{
    The original definition forgot to set the notation $TM$ for the tangent bundle of $M$.}
\end{definition}

\begin{example}
    ~\begin{enumerate}[(i)]
        \item For a vector space, the space of linear paths crossing through a point $v \in V$ is isomorphic to $V$, so $TV \cong V \x V$. 
        \item The tangent bundle above the circle is diffeomorphic to the cylinder. This is follows from the classical result that a tangent vector on the circle must be perpendicular to its position vector.
    \end{enumerate}
\end{example}

The tangent bundle lifts the ``local derivative'' into a globally defined construction, so the tangent bundle construction is functorial.\pagenote{
   This proof has been cleaned up, it was originally quite messy.
}
\begin{proposition}
    The tangent bundle is a product-preserving endofunctor on the category of smooth manifolds.
\end{proposition}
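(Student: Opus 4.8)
The plan is to define $T$ on morphisms by postcomposition with curves, establish functoriality and smoothness by passing to the coordinate description, and then read off product preservation from the local model $TU \cong U \times \R^m$.

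For a smooth map $f:M \to N$, set $T.f: TM \to TN$ to be $[\phi] \mapsto [f \circ \phi]$. This is well-defined straight from the equivalence relation defining tangent vectors: if $\phi \cong \theta$ then $f\circ\phi$ and $f\circ\theta$ agree at $0$, and for every $g \in C^\infty(N)$ we have $g\circ f \in C^\infty(M)$, so $\frac{\partial(g\circ f\circ\phi)}{\partial x}(0) = \frac{\partial(g\circ f\circ\theta)}{\partial x}(0)$ by hypothesis, whence $f\circ\phi \cong f\circ\theta$. Functoriality is then immediate, since postcomposition is strictly unital and associative: $T.\mathrm{id} = \mathrm{id}$ and $T.(g\circ f) = T.g \circ T.f$.

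Next I would check that $T.f$ is a morphism of $\mathsf{SMan}$ and recovers the local derivative. Restricting to charts $U \subseteq \R^m$ on $M$ and $V \subseteq \R^n$ on $N$, a tangent vector $[\phi]$ is determined by the pair $(\phi(0),\phi'(0)) \in U \x \R^m$, giving the local trivialization $TU \cong U \x \R^m$. In these coordinates $T.f$ sends $(\phi(0),\phi'(0)) \mapsto ((f\circ\phi)(0),(f\circ\phi)'(0))$, and by the chain rule $(f\circ\phi)'(0) = J[f|_U](\phi(0))\cdot\phi'(0)$, so $T.f$ is exactly the local derivative map $(f|_U \circ \pi_0, D[f|_U])$ and is therefore smooth. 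The one point requiring care is independence of the chosen charts: on overlaps the trivializations differ by the derivatives of the transition functions, and the chain rule guarantees that the two coordinate expressions for $T.f$ agree, so the locally smooth pieces glue into a single global morphism.

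Finally, for product preservation I would use this local model. The projections induce a comparison map $(T.\pi_0, T.\pi_1): T(M\x N) \to TM \x TN$, which over product charts $U \x V$ is the reshuffling $(U\x V)\x\R^{m+n} \to (U\x\R^m)\x(V\x\R^n)$, a linear isomorphism and hence a local diffeomorphism. These local isomorphisms are compatible with the transition functions because the derivative of a product of maps decomposes as the product of the derivatives of its factors, a consequence of the pairing rule [CD.4]; thus they glue to a global diffeomorphism. Since the tangent bundle of a point $\R^0$ is again a point, $T$ also preserves the terminal object, and therefore all finite products. The main obstacle is the middle step: reconciling the intrinsic, curve-based definition of the tangent bundle and its functorial action with the extrinsic coordinate computation, and verifying that these coordinate descriptions glue consistently across overlapping charts. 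Once the local model and its transition maps are in hand, both smoothness of $T.f$ and preservation of products follow routinely from the chain rule and the pairing rule for derivatives.
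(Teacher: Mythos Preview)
Your proof is correct. The functoriality argument is essentially identical to the paper's: both check that postcomposition by $f$ respects the equivalence relation on curves, using that $g \circ f \in C^\infty(M)$ for each $g \in C^\infty(N)$.

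For product preservation the approaches diverge. The paper stays entirely inside the curve definition: it argues that the equivalence relation is stable under pairing, i.e.\ if $\phi_M \cong \theta_M$ in $M$ and $\phi_N \cong \theta_N$ in $N$ then $(\phi_M,\phi_N) \cong (\theta_M,\theta_N)$ in $M\times N$, by factoring the paired curve so as to vary one coordinate at a time. This yields the bijection $T(M\times N) \to TM \times TN$ directly from the equivalence classes, with no charts in sight. You instead pass to the local model $TU \cong U \times \R^m$ and check that the comparison map $(T.\pi_0, T.\pi_1)$ is a diffeomorphism chart-by-chart, invoking the pairing rule [CD.4] for compatibility on overlaps. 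Your route has the advantage of making the smooth structure on $TM$ and the smoothness of $T.f$ completely explicit (the paper leaves this to the discussion preceding the proposition), at the cost of the gluing bookkeeping you flag as the main obstacle; the paper's route avoids charts altogether but is a bit terser about why the resulting bijection is a diffeomorphism.
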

\begin{proof}
    Functoriality follows by showing that a morphism of smooth manifolds preserves the equivalence relation on curves that defines a tangent vector: 
    \[
        \forall g \in C^\infty(M),  \frac{\partial g \o \phi}{\partial x}(0) = \frac{\partial g \o \theta}{\partial x}(0)
    \]
    Note that if $f:N \to M \in C^\infty(N)$, so that $g \o f \in C^\infty(M)$, the chain rule ensures that
    \[
        \forall g \in C^\infty(N), 
        \frac{\partial f\o g \o \phi}{\partial x}(0) = \frac{\partial f\o g \o \theta}{\partial x}(0)
    \]

    To show that $T$ is product-preserving, it suffices to show that the equivalence classes of curves are stable under pairing. First, note that for any $M$ and $\phi \cong \theta:\R \to M$, 
    \[
        (\phi,id) \cong (\theta,id): \R \to M\x\R
    \] 
    Given a pair of curves $\theta_M, \psi_M:\R \to M$, where $\theta_M \cong \psi_M$ and similarly $\theta_N \cong \psi_N$ for $N$, this implies that
    \pagenote{Cleaned up proof per Kristine's comments.}
    \begin{gather*}
        f \o (\phi_M, \phi_N) \\
        = f \o (\phi_M \x id) \o (id, \phi_N) \\
        = f \o (\phi_M \x id) \o (id, \theta_N) \\
        = f \o (id, \theta_N) \o (\phi_M \x id) \\
        = f \o (id, \theta_N) \o (\theta_M \x id) \\
        = f \o (\theta_M \x \theta_N)
    \end{gather*}
    so $(\phi_M, \phi_N) \cong (\theta_M, \theta_N)$.
\end{proof}

The scalar action by $\R$ on tangent vectors and a partially defined addition additionally give the tangent bundle the structure of a fibered $\R$-module (that is, an $\R$-module in the slice category $\mathsf{SMan}/M$ whose objects are morphisms into $M$, $f:X \to M$, and morphisms are commuting triangles).
\begin{proposition}
    The tangent bundle over $M$ is an $\R$-module in $\mathsf{SMan}/M$, as follows:
    let $\gamma, \omega$ be tangent vectors on $M$, and define
    \begin{itemize}
        \item $p: TM \to M; p(\gamma) = \gamma(0)$.
        \item $0: M \to TM; 0(m) =  [r \mapsto m]$ (the constant map $\R \to M$ sending all $r\in \R$ to $m\in M$)
        \item $\cdot_p: TM \x \R \to TM; \gamma \cdot_p r = [x \mapsto \gamma(r \cdot x)]$
        \item $+: TM \ts{p}{p} TM \to TM := [\gamma],[\omega] \mapsto [\gamma + \omega]$ (where addition around $\gamma(0)=\omega(0)$ is defined using local coordinates). \pagenote{The notation in this proposition has been changed to be more clear, and it was clarified that addition is defined using local coordinate charts around points in $TM$.}
    \end{itemize}
\end{proposition}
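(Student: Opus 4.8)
The plan is to reduce everything to local coordinates, where the tangent bundle trivialises and all four structure maps become the standard fibrewise vector-space operations on $\R^n$. Over a chart $(U_i,\phi_i\colon U_i \to \R^n)$, sending a class $[\gamma]$ with $\gamma(0)\in U_i$ to the pair $(\phi_i(\gamma(0)),(\phi_i\o\gamma)'(0))$ gives a bijection $TM|_{U_i}\cong \phi_i(U_i)\x\R^n$: two curves through the same point are $\cong$-equivalent precisely when their velocity vectors agree, as one sees by taking $f$ in \Cref{def:tang-vector} to be the coordinate functions $\pi_k\o\phi_i$ (extended off the chart by bump functions), since then $\frac{\partial f\o\gamma}{\partial x}(0)=\frac{\partial f\o\theta}{\partial x}(0)$ for all $k$ is exactly equality of the two velocity vectors. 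This is precisely the identification under which $T$ agrees with the local derivative on coordinate patches.

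First I would record the local form of each operation under this identification. Writing a tangent vector in the chart as $(x,v)$, the chain rule gives $(\phi_i\o[\,t\mapsto\gamma(r\cdot t)\,])'(0)=r\cdot v$, so $\cdot_p$ becomes $((x,v),r)\mapsto(x,rv)$; likewise $p$ becomes $(x,v)\mapsto x$, the zero $0$ becomes $x\mapsto(x,0)$, and $+$ becomes $((x,v),(x,w))\mapsto(x,v+w)$. In each case the map is manifestly smooth and manifestly independent of the chosen representing curve, so each structure map is a well-defined morphism of $\mathsf{SMan}$; and since in every chart $p$ is the first projection, each operation commutes with $p$ and hence is a morphism of the slice $\mathsf{SMan}/M$.

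The one point that genuinely needs checking is that these local descriptions glue, i.e.\ that the operations, and in particular the addition defined ``in local coordinates'', do not depend on the chart. This is where I would invoke linearity: on an overlap $U_i\cap U_j$ the two trivialisations differ by the transition map $(x,v)\mapsto(\psi_{ij}(x),D[\psi_{ij}](x,v))$, and since $D[\psi_{ij}](x,-)$ is linear (it is the Jacobian of the smooth map $\psi_{ij}$ at $x$), it sends $v+w\mapsto D[\psi_{ij}](x,v)+D[\psi_{ij}](x,w)$ and $rv\mapsto r\,D[\psi_{ij}](x,v)$. Hence the fibrewise vector-space structure is preserved across charts, the chart-wise definitions agree on overlaps, and the global operations are well-defined. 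I expect this coordinate-independence of $+$ and $\cdot_p$ to be the main (indeed essentially the only) obstacle; everything else is formal.

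Finally I would conclude the module axioms. Commutativity, associativity and unitality of $+$, the unit law $0+\gamma=\gamma$, the distributive laws $r\cdot_p(\gamma+\omega)=r\cdot_p\gamma+r\cdot_p\omega$ and $(r+s)\cdot_p\gamma=r\cdot_p\gamma+s\cdot_p\gamma$, the compatibility $(rs)\cdot_p\gamma=r\cdot_p(s\cdot_p\gamma)$, and the unit action $1\cdot_p\gamma=\gamma$ are all equational identities; under the trivialisation above each reduces to the corresponding identity for the standard $\R$-module $\R^n$ applied fibrewise, which holds. Since these identities hold in every chart and the maps are globally well-defined, they hold globally, exhibiting $p\colon TM\to M$ as an $\R$-module in $\mathsf{SMan}/M$.
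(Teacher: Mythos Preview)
The paper states this proposition without proof; it is treated as a standard fact about the tangent bundle and no argument is supplied. Your local-coordinate approach is correct and is exactly the standard verification: trivialise over a chart, read off the fibrewise $\R^n$-module operations, and use linearity of the transition Jacobians to show the operations glue. There is nothing to compare against in the paper itself.
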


The second derivative is involved in the more nuanced axioms for a cartesian differential category, namely linearity in the vector argument and the symmetry of mixed partial derivatives.  First, set $f|_U$ to be the restriction of $f:M \to N$ to a map between local coordinate patches $U \subseteq M, V \subseteq N$, and then define
\[
  f_0 = f|_U \o \pi_0 \o \pi_1, \hspace{0.25cm},
\]
and
\[
   f_1 = D[f] \o (\pi_0 \o \pi_0, \pi_1 \o \pi_0), \hspace{0.25cm}
   f_2 = D[f] \o (\pi_0 \o \pi_0, \pi_0 \o \pi_1) \hspace{0.25cm}.
\]
The axioms $[CDC.6],[CDC.7]$ then give:\pagenote{
    I have added the definition of the maps ahead of the diagrams to avoid any confusion, and made a more direct reference to the CDC axioms.
}
\begin{center}
\begin{tikzcd}
	{(U \x \R^n) \x (\R^n \x \R^n) } &&& {(V \x \R^m) \x (\R^m \x \R^m) } \\
	{} & {T^2M} & {T^2N} \\
	& TM & {T.N} \\
	{(U \x \R^n) } &&& {(V \x \R^m)}
	\arrow[hook, from=1-1, to=2-2]
	\arrow[hook, from=4-1, to=3-2]
	\arrow["{T^2.f}"', from=2-2, to=2-3]
	\arrow["{T.f}"', from=3-2, to=3-3]
	\arrow["{((f_0,f_1),(f_2, D[D[f|_U]]))}", from=1-1, to=1-4]
	\arrow["{(f, D[f|_U])}"', from=4-1, to=4-4]
	\arrow[hook', from=1-4, to=2-3]
	\arrow[hook', from=4-4, to=3-3]
	\arrow["\ell", from=3-2, to=2-2]
	\arrow["\ell", from=3-3, to=2-3]
	\arrow["{((\pi_0,0),(0,\pi_1))}"{description}, from=4-1, to=1-1]
	\arrow["{((\pi_0,0),(0,\pi_1))}"{description}, from=4-4, to=1-4]
\end{tikzcd}
    \begin{tikzcd}
	{(U \x \R^n) \x (\R^n \x \R^n) } &&& {(V \x \R^m) \x (\R^m \x \R^m) } \\
	{} & {T^2M} & {T^2N} \\
	& {T^2M} & {T^2.N} \\
	{(U \x \R^n) \x (\R^n \x \R^n) } &&& {(V \x \R^m) \x (\R^m \x \R^m) }
	\arrow["c", from=2-2, to=3-2]
	\arrow[hook, from=1-1, to=2-2]
	\arrow["{((\pi_0\o \pi_0,\pi_0 \o \pi_1), (\pi_1 \o \pi_0, \pi_1\o \pi_1)}"{description}, from=1-1, to=4-1]
	\arrow[hook, from=4-1, to=3-2]
	\arrow["{T^2.f}"', from=2-2, to=2-3]
	\arrow["{T^2.f}"', from=3-2, to=3-3]
	\arrow["c", from=2-3, to=3-3]
	\arrow["{((\pi_0\o \pi_0,\pi_0 \o \pi_1), (\pi_1 \o \pi_0, \pi_1\o \pi_1)}"{description}, from=1-4, to=4-4]
	\arrow["{((f_0,f_1),(f_2,D[D[f|_U]])}", from=1-1, to=1-4]
	\arrow["{((f_0,f_2),(f_1,D[D[f|_U]])}"', from=4-1, to=4-4]
	\arrow[hook', from=1-4, to=2-3]
	\arrow[hook', from=4-4, to=3-3]
\end{tikzcd}
\end{center}

The two natural transformations $\ell$ and $c$---the vertical lift and canonical flip---capture these coherences.
Locally, $\ell$ is the map inserting zeros into the second and third coordinates, while $c$ flips the second and third arguments, leading to the coherences established in the next proposition.
To capture these coherences on the tangent bundle, first note that a tangent vector on $TM$ is equivalent to an equivalence class of surfaces on $M$,\pagenote{I have clarified the equivalence relation that defines the second tangent bundle by using normal calculus notation.}
\begin{align*}
    \phi \cong \theta :\R^2 \to M
    \iff   &  \phi(0,0) = \theta(0,0) \\
    \text{and } &  \forall f \in C^\infty(M), \frac{\partial f \o \phi}{\partial x_i}(0,0) = \frac{\partial f \o \theta}{\partial x_i}(0,0), i = 0,1
\end{align*}

\begin{proposition}[\cite{Cockett2014}]
    There are two natural transformations\pagenote{I have fixed the notation in this proposition.}
    \[
        \ell: TM \to T^2M; \ell([\gamma]) = [\gamma \o (\pi_0 \cdot_\R \pi_1)] \hspace{0.5cm}
        c: T^2M \to T^2M; c([\gamma]) = [\gamma \o (\pi_1, \pi_0)]
    \]
    satisfying the following coherences:
    \begin{enumerate}[(i)]
        \item $\ell.T \o \ell = T.\ell \o \ell$\footnote{Recall that we are using the 2-categorical notation described in the front-matter}
        \item The following maps are morphisms of fibred $\R$-modules. 
        \[\begin{tikzcd}
            TM & {T^2M} \\
            M & TM
            \arrow["\ell", from=1-1, to=1-2]
            \arrow["p", from=1-1, to=2-1]
            \arrow["{p.T}", from=1-2, to=2-2]
            \arrow["0", from=2-1, to=2-2]
        \end{tikzcd}
        \begin{tikzcd}
            TM & {T^2M} \\
            M & TM
            \arrow["\ell", from=1-1, to=1-2]
            \arrow["p", from=1-1, to=2-1]
            \arrow["{T.p}", from=1-2, to=2-2]
            \arrow["0", from=2-1, to=2-2]
        \end{tikzcd}\]
        \item $c \o c = id$
        \item $T.c \o c.T \o T.c = c.T \o T.c \o c.T$
        \item $c \o \ell = \ell$
        \item $T.c \o c \o T.\ell = \ell.T \o c$.
    \end{enumerate}
\end{proposition}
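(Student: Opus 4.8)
The plan is to work throughout with the iterated representative description of the tangent bundles: reading $T^nM$ as tangent vectors on the manifold $T^{n-1}M$, an element of $T^nM$ is represented by a smooth map $\R^n \to M$ (a curve for $n=1$, a surface for $n=2$, a $3$-cube for $n=3$), with $T.f$ acting by postcomposition, $[\gamma] \mapsto [f \o \gamma]$. Under this description $\ell_M$ is ``precompose with the multiplication $\pi_0 \cdot_\R \pi_1$'' and $c_M$ is ``precompose with the swap $(\pi_1,\pi_0)$''. First I would check that both assignments are well defined and natural. Since at each stage the datum is a tangent vector on $T^{n-1}M$, well-definedness and naturality reduce to the $n=1$ statements applied to the manifold $T^{n-1}M$; both follow from the chain rule, exactly as in the functoriality proof for $T$ above, because postcomposition with $f:M \to N$ carries the test functions in $C^\infty(N)$ to test functions in $C^\infty(M)$.

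The coherences (i), (iii), (iv), (v) then reduce to elementary identities about the reparametrising maps. Writing $\ell.T$ and $T.\ell$ on a surface, they send it to a $3$-cube by precomposition with the maps $\R^3 \to \R^2$ given by $(a,b,c) \mapsto (ab,c)$ and $(a,b,c)\mapsto(a,bc)$ respectively, while $c.T$ and $T.c$ send a cube to a cube by precomposition with the adjacent transpositions $(a,b,c)\mapsto(b,a,c)$ and $(a,b,c)\mapsto(a,c,b)$. Consequently: (iii) $c \o c = \mathrm{id}$ is involutivity of the swap; (v) $c \o \ell = \ell$ is commutativity $ab=ba$ of the multiplication; (i) $\ell.T \o \ell = T.\ell \o \ell$ is its associativity, since both composites send $[\gamma]$ to the class of $(a,b,c) \mapsto \gamma(abc)$; and (iv) $T.c \o c.T \o T.c = c.T \o T.c \o c.T$ is precisely the braid relation $s_2 s_1 s_2 = s_1 s_2 s_1$ in $S_3$ acting on the three coordinates. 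In each case equality of the reparametrising maps $\R^3 \to \R^2$ (or $\R^3 \to \R^3$) being precomposed forces equality of the induced natural transformations.

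For (vi) I would again track representatives, reading each bare $\ell$ and $c$ at the object forced by composability and recording at every application of a whiskered map which of the three coordinates is ``outer'', ``middle'', and ``inner''; carrying a surface $\Sigma$ through both $T.c \o c \o T.\ell$ and $\ell.T \o c$, both composites are found to send $\Sigma$ to the class of $(s,x_0,x_1) \mapsto \Sigma(x_1, s x_0)$, so they agree. For the two squares in (ii) I would first check they commute on the nose: $p.T \o \ell$ and $T.p \o \ell$ both send $[\gamma]$ to the zero tangent vector at $\gamma(0)$, because setting either the outer or the inner curve parameter of $\gamma(su)$ to $0$ collapses it to the constant $\gamma(0)$. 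I would then verify that $\ell$ is fibrewise $\R$-linear for each of the two module structures $p.T$ and $T.p$ on $T^2M$. Compatibility with the scalar action is immediate from the representatives, since $\ell(\gamma \cdot_p r)$ and either scaling of $\ell(\gamma)$ both give the surface $(s,u)\mapsto\gamma(rsu)$; compatibility with the fibrewise addition is cleanest in local coordinates, where $\ell$ is the zero-insertion $v \mapsto ((x,0),(0,v))$ and additivity reduces to the additive axioms [CD.1]--[CD.3] of the cartesian differential structure on $\mathsf{CartSp}$.

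The main obstacle I anticipate is purely bookkeeping: keeping the coordinate conventions straight across iterated applications of $T$, since a point of $T^3M$ carries three tangent directions whose outer, middle, and inner roles must be matched precisely against the whiskerings $\ell.T$ versus $T.\ell$ and $c.T$ versus $T.c$. Once a fixed convention is pinned down (say, reading $T^nM$ as maps $\R^n \to M$ with the last-applied $T$ indexing the first coordinate), each coherence collapses to a transparent identity of reparametrisations; the only genuinely analytic input beyond this is the local-coordinate verification of fibrewise additivity in (ii).
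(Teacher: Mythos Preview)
The paper does not actually prove this proposition: it is stated with a citation to \cite{Cockett2014}, and the author's pagenote immediately afterward records that the proofs were removed as they ``are in the literature and they proved to be a distraction.'' So there is no in-paper proof to compare your proposal against.

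That said, your approach is the standard one and is essentially correct. Representing elements of $T^nM$ by germs of maps $\R^n \to M$ and observing that $\ell$ and $c$ act by precomposition with multiplication and with the swap reduces each coherence to a trivial identity of reparametrising maps: associativity for (i), involutivity for (iii), the braid relation for (iv), commutativity for (v), and a direct coordinate chase for (vi). Your treatment of (ii) via local coordinates and the cartesian differential structure on $\mathsf{CartSp}$ is also in the spirit of how the paper motivates $\ell$ and $c$ just before the proposition. The only caveat is the one you already flag: the bookkeeping of which coordinate is outer versus inner under whiskering must be fixed once and applied consistently, but once that is done the argument goes through.
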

\pagenote{Removed the proofs - they are in the literature and they proved to be a distraction.}
\begin{observation}
\label{obs:yb-eq}
    Equation (iv) is known as the \emph{Yang--Baxter} equation. 
    It is one of the coherences for a symmetric monoidal category, and states that the twisting operation between two variables is coherent.
    We may regard the category of endofunctors on a category as a strict monoidal category and use string diagram notation (see e.g. \cite{selinger2010survey}).
    Interpreting the map $c$ as twisting two strings, the coherence becomes \pagenote{I added a reference to string calculus.}
    \begin{center}
        \begin{tikzpicture}
	\begin{pgfonlayer}{nodelayer}
		\node [style=none] (0) at (-6, 1) {};
		\node [style=none] (1) at (-6, 0) {};
		\node [style=none] (2) at (-6, -1) {};
		\node [style=none] (3) at (-4.25, 0) {};
		\node [style=none] (4) at (-4.25, 1) {};
		\node [style=none] (5) at (-4.25, -1) {};
		\node [style=none] (6) at (-2.75, 1) {};
		\node [style=none] (7) at (-2.75, 0) {};
		\node [style=none] (8) at (-2.75, -1) {};
		\node [style=none] (9) at (-1, 1) {};
		\node [style=none] (10) at (-1, 0) {};
		\node [style=none] (11) at (-1, -1) {};
		\node [style=none] (12) at (0.5, 1) {};
		\node [style=none] (13) at (0.5, 0) {};
		\node [style=none] (14) at (0.5, -1) {};
		\node [style=none] (15) at (1.75, 1) {};
		\node [style=none] (16) at (1.75, -1) {};
		\node [style=none] (17) at (1.75, 0) {};
		\node [style=none] (18) at (-2, 0) {};
		\node [style=none] (19) at (-2, 0) {};
		\node [style=none] (20) at (-2, 0) {};
		\node [style=none] (21) at (-2, 0) {=};
	\end{pgfonlayer}
	\begin{pgfonlayer}{edgelayer}
		\draw [in=180, out=0] (0.center) to (5.center);
		\draw [in=180, out=0] (1.center) to (4.center);
		\draw [in=180, out=0] (2.center) to (3.center);
		\draw (5.center) to (8.center);
		\draw [in=180, out=0] (4.center) to (7.center);
		\draw [in=-180, out=0] (3.center) to (6.center);
		\draw [in=180, out=0] (11.center) to (12.center);
		\draw [in=-180, out=0] (9.center) to (13.center);
		\draw [in=180, out=0] (10.center) to (14.center);
		\draw [in=180, out=0] (14.center) to (17.center);
		\draw [in=180, out=0] (13.center) to (16.center);
		\draw (12.center) to (15.center);
	\end{pgfonlayer}
\end{tikzpicture}

    \end{center}
\end{observation}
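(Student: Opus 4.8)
The plan is to make the translation between the two-dimensional data of the tangent functor and the one-dimensional string calculus of a strict monoidal category precise, and then to read equation~(iv) off as the standard braid relation. First I would fix the ambient monoidal category to be the endofunctor category $[\C,\C]$, with functor composition as the monoidal product and the identity functor as unit; this is strict monoidal, its objects are endofunctors and its morphisms are natural transformations. In the associated string calculus a functor such as $T$ is drawn as a single strand, so the object $T^2 = T.T$ is a pair of parallel strands and the natural transformation $c : T^2 \to T^2$ is drawn as a single crossing of those two strands.

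Next I would translate the two whiskerings occurring in~(iv). Both $c.T$ and $T.c$ are natural transformations $T^3 \to T^3$, hence morphisms on a diagram of three $T$-strands; one of them crosses the adjacent pair in positions $(1,2)$ while leaving the third strand straight, and the other crosses the adjacent pair in positions $(2,3)$. Writing $\sigma_1 := c.T$ and $\sigma_2 := T.c$ for these two elementary crossings, the left- and right-hand sides of~(iv) become $\sigma_2 \o \sigma_1 \o \sigma_2$ and $\sigma_1 \o \sigma_2 \o \sigma_1$ respectively, so the coherence is exactly $\sigma_1\sigma_2\sigma_1 = \sigma_2\sigma_1\sigma_2$---the braid, or Yang--Baxter, relation on three strands, which is precisely the picture drawn in the observation.

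I expect the only real obstacle to be bookkeeping about conventions: one must check that the reading order of the vertical composite $\o$ in the string diagram agrees with the applicative composition order used in the paper, and that the assignment of left/right whiskering to the $(1,2)$- and $(2,3)$-crossings is made consistently with the chosen left-to-right layout of $T^3 = T.T.T$ as three strands. Once a single orientation is fixed these two assignments are forced and the identification of~(iv) with the braid relation is immediate; the roles of $\sigma_1$ and $\sigma_2$ may swap under the opposite convention, but this only reflects the diagram across its vertical axis and leaves the equation unchanged. Finally I would note that coherence~(iii), $c \o c = \mathrm{id}$, upgrades $c$ from a mere braiding to a symmetry, so that together (iii) and~(iv) furnish exactly the crossing data of a symmetric monoidal structure, justifying the observation's remark that~(iv) is one of the coherences for a symmetric monoidal category.
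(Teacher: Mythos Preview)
Your elaboration is correct, but note that this is an \emph{observation} in the paper, not a proved statement: the paper simply states the identification with the Yang--Baxter equation and draws the string picture, without any argument. Your write-up supplies the bookkeeping the paper omits (identifying $c.T$ and $T.c$ with the two elementary crossings $\sigma_1,\sigma_2$ and reading (iv) as $\sigma_1\sigma_2\sigma_1=\sigma_2\sigma_1\sigma_2$), which is exactly the intended content; there is nothing further to compare.
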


The projection $p:TM \to M$ is \emph{locally trivial}: for each connected component of $M$ that is modeled on $\R^n$, each point $m$ lies in an open subset $U_m$ so that $p^{-1}(U_m) \cong U_m \x \R^n$. 
This local triviality property leads to the following universality condition.
\begin{proposition}\label{prop:ell-universal}
    The following diagram is an equalizer:
    \[\begin{tikzcd}
        TM & {T^2M} & TM
        \arrow["p"{description}, from=1-2, to=1-3]
        \arrow["{T.p}", shift left=2, from=1-2, to=1-3]
        \arrow["{0\o p \o p}"', shift right=2, from=1-2, to=1-3]
        \arrow["\ell"', from=1-1, to=1-2]
    \end{tikzcd}\]
\end{proposition}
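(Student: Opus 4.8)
The plan is to split the equalizer property into two parts---that $\ell$ is a cone over the three parallel maps $p = p.T\colon T^2M \to TM$, $T.p$, and $0 \o p \o p$, and that it is the universal such cone---and to settle the universal part by recognizing the image of $\ell$ as a well-behaved submanifold cut out by these maps, exploiting the local triviality of $p\colon TM \to M$ recorded just above.

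That $\ell$ is a cone follows formally from the preceding proposition. Its part (ii) gives $p.T \o \ell = 0 \o p$ and $T.p \o \ell = 0 \o p$, and since $p \o 0 = \mathrm{id}$ the remaining leg satisfies
\[
(0 \o p \o p) \o \ell = 0 \o p \o (p.T \o \ell) = 0 \o p \o 0 \o p = 0 \o p.
\]
Hence all three composites with $\ell$ coincide (each equals $0 \o p\colon TM \to TM$), so $\ell$ equalizes the three maps.

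For universality I would pass to the charts on $T^2M$ induced from an atlas of $M$. Over a component modelled on $\R^n$, local triviality gives $TM \cong U \x \R^n$ with coordinates $(x,a)$ and $T^2M \cong U \x \R^n \x \R^n \x \R^n$ with coordinates $(x,a,b,e)$, in which $p(x,a,b,e) = (x,a)$, $T.p(x,a,b,e) = (x,b)$, $(0 \o p \o p)(x,a,b,e) = (x,0)$, and $\ell(x,a) = (x,0,0,a)$. The locus where the three maps agree is therefore $\{a = b = 0\}$, which is exactly the image of $\ell$; globally this reads
\[
E := \mathrm{im}(\ell) = (p.T)^{-1}(0(M)) \cap (T.p)^{-1}(0(M)),
\]
where $0(M) \subseteq TM$ is the (closed, embedded) zero section. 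As $p.T$ and $T.p$ are submersions and the two defining equations are locally independent, $E$ is a closed embedded submanifold and $\ell\colon TM \to E$ is a diffeomorphism, being a smooth bijection that is a local diffeomorphism in the charts above.

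Finally, given any $g\colon X \to T^2M$ equalizing the three maps, its image lies set-theoretically in $E$; because $E$ is an \emph{embedded} submanifold, the corestriction $X \to E$ of $g$ is smooth, so $h := \ell^{-1} \o g\colon X \to TM$ is a smooth map with $\ell \o h = g$, unique since $\ell$ is monic. This exhibits $\ell$ as the equalizer. The one point demanding genuine care---and hence the main obstacle---is the smoothness of this factorization: it relies on $E$ being embedded rather than merely immersed, which is precisely what the explicit local product form of $\ell$ (supplied by local triviality, with functoriality of $T$ ensuring these local forms are uniform across charts) is used to guarantee.
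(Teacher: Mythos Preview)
Your proof is correct and is precisely the argument the paper is gesturing at: the paper does not actually give a proof of this proposition, it only remarks that ``this local triviality property leads to the following universality condition'' and states the result. Your write-up fills in the omitted details along exactly the intended line---local charts, the coordinate formulas from the tangent-category example table, identification of the equalizing locus $\{a=b=0\}$ as the image of $\ell$, and the embedded-submanifold argument for smooth factorization. So you are not taking a different route; you are supplying the proof the paper left implicit.
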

Therefore the diagram in the following corollary is a pullback: \pagenote{In the original draft it was unclear whether there was a missing diagram or I was referring to the diagram in the following lemma. This change addresses that ambiguity.}
\begin{corollary}\label{cor:mu-universal}
    Write the map $\mu:TM \ts{p}{p} TM \to T^2M$ to be $T.+ \o (\ell \x 0)$. The following diagram is a pullback:
    \[\begin{tikzcd}
        {TM \ts{p}{p} TM} & {T^2M} \\
        M & TM
        \arrow["0"', from=2-1, to=2-2]
        \arrow["T.p", from=1-2, to=2-2]
        \arrow["\mu", from=1-1, to=1-2]
        \arrow[from=1-1, to=2-1]
        \arrow["\lrcorner"{anchor=center, pos=0.125}, draw=none, from=1-1, to=2-2]
    \end{tikzcd}\]
\end{corollary}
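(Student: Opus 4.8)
The plan is to exhibit the comparison map into the pullback and then prove it invertible by a generalized-element argument that feeds directly off the equalizer of \Cref{prop:ell-universal}. Write $P := T^2M \ts{T.p}{0} M$ for the pullback of $T.p$ along $0$, let $\pi_0,\pi_1$ be the projections of $TM \ts{p}{p} TM$, and write $0.T \colon TM \to T^2M$ for the zero section of the outer tangent bundle $p.T \colon T^2M \to TM$. First I would check that the square commutes, i.e. that $T.p \o \mu = 0 \o p \o \pi_0$: since $T.+$ is the fibre addition of the additive bundle $T.p$ (obtained by applying $T$ to the bundle $p$, which preserves the pullback power $TM \ts{p}{p} TM$), one has $T.p \o T.+ = T.(p \o \pi_0)$, and precomposing with $\ell \x 0$ and using the coherence $T.p \o \ell = 0 \o p$ gives the claim. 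This yields a comparison map $\<\mu, p\o\pi_0\> \colon TM \ts{p}{p} TM \to P$, and the goal is to show it is an isomorphism.

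Next I would record the two ``projection'' identities $p.T \o \mu = \pi_1$ and $T.p \o \mu = 0 \o p \o \pi_0$. The first follows from naturality of $p$ at $TM \ts{p}{p} TM$ together with $p.T \o \ell = 0 \o p$ and $p.T \o (0.T) = \mathrm{id}$, which collapse $p.T \o \mu$ to the addition of a zero summand, namely the second projection. Injectivity of the comparison map is then immediate: if $\mu(\gamma,\omega) = \mu(\gamma',\omega')$, applying $p.T$ forces $\omega = \omega'$, and cancelling the common summand $0.T\,\omega$ in the (vector-space-valued) fibres of $T.p$ leaves $\ell\gamma = \ell\gamma'$, whence $\gamma = \gamma'$ because $\ell$, being an equalizer, is monic. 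For surjectivity, given $w \in P$ I set $\omega := p.T\,w$ and form the vertical remainder $w' := w - (0.T\o\omega)$ (fibrewise subtraction in $T.p$, available since these fibres are real vector spaces in $\mathsf{SMan}$). A short calculation shows that $p.T$, $T.p$ and $0\o p \o p$ all send $w'$ to the single point $0 \o p\,(\omega)$, so $w'$ equalizes the three parallel arrows of \Cref{prop:ell-universal}; the equalizer then produces a unique $\gamma$ with $\ell\gamma = w'$, and $w = \ell\gamma + 0.T\,\omega = \mu(\gamma,\omega)$, with $p\gamma = p\omega$ forced by applying $T.p$.

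Because \Cref{prop:ell-universal} is an equalizer in $\mathsf{SMan}$ itself, this argument upgrades verbatim from points to an arbitrary test manifold $X$: given $a \colon X \to T^2M$ and $b \colon X \to M$ with $T.p \o a = 0 \o b$, the maps $\omega = p.T \o a$ and $w' = a - (0.T\o\omega)$ are smooth, the induced $\gamma \colon X \to TM$ is smooth by the universal property of the equalizer, and $\<\gamma,\omega\>$ is the unique factorization through $\mu$ over $b$; hence the square is a genuine pullback in $\mathsf{SMan}$, not merely a set-level bijection. I expect the main obstacle to be the surjectivity step: one must hit on the decomposition of $w$ into a vertical lift plus an outer zero, and then verify that the remainder $w'$ really equalizes all three arrows $p.T$, $T.p$, $0\o p \o p$ at once, which is exactly where \Cref{prop:ell-universal} is invoked. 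Keeping the two distinct additive-bundle structures on $T^2M$ straight — the outer one along $p.T$ and the one along $T.p$ — is the delicate bookkeeping. As a cross-check, the whole computation can be run in a trivializing chart $p^{-1}(U) \cong U \x \R^n$, where $\mu$ sends $((x,a),(x,c)) \mapsto ((x,c),(0,a))$ and the pullback property becomes visible linear algebra.
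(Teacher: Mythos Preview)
Your proposal is correct and follows exactly the approach the paper indicates: the paper gives no explicit proof beyond the word ``Therefore'' after \Cref{prop:ell-universal}, and your argument is precisely the natural unpacking of that implication, decomposing an element of the pullback into its $p.T$-component plus a vertical remainder and invoking the ternary equalizer to recognise the remainder as $\ell\gamma$. The bookkeeping with the two additive structures and the use of subtraction (legitimate here since we are in $\mathsf{SMan}$) is handled correctly, and the upgrade from points to test objects via the universal property of the equalizer is the right way to ensure the pullback is genuine in $\mathsf{SMan}$.
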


\pagenote{I now introduce the definition of the operational tangent bundle at this point addresses confusions about notation that comes up later in the chapter/thesis regarding the $C^\infty(M)$-module structure on $\chi(M)$}
The five maps $(p, 0, +, c, \ell)$, along with their coherences and universal properties, characterize the \emph{kinematic} tangent bundle, axiomatized as a tangent structure in the next section. However, there is an equivalent characterization of the tangent bundle for a finite-dimensional smooth manifold that will be important throughout this thesis: the \emph{operational} tangent bundle. We first need to define the module of vector fields on a manifold, where a vector field is essentially an ordinary differential equation defined on a manifold rather than a cartesian space. 
\begin{definition}[3.1,3.3 of \cite{Kolar1993}]\label{def:operational-tang}
    A \emph{vector field} on a manifold $M$ is a section $X:M \to TM$ of the projection $p:TM \to M$ so that $p \o X = id_M$. 
    The set of vector fields on a manifold $M$ is written $\chi(M)$ and carries a $C^\infty(M)$-module structure using the fibered $\R$-module structure on $p:TM \to M$:
    \[
        X +_{\chi(M)} Y := +.M \o (X,Y), \hspace{0.5cm} 0_{\chi(M)} := 0.M,\hspace{0.5cm} f \cdot_{\chi(M)} X (m) := f(m) \cdot_{TM} X(m)
    \]
    where $X,Y \in \chi(M), f \in C^\infty(M)$.
\end{definition}
The module $\chi(M)$ has an important universal property as a $C^\infty(M)$-module---it is precisely the module of \emph{derivations} of $C^\infty(M)$:
 \[
        \chi(M) = \{ X: C^\infty(M) \to C^\infty(M) : \forall f, g \in C^\infty(M), X(f\cdot g) = X(f)\cdot g + f \cdot X(g) \}
\]
\begin{proposition}[3.4 of \cite{Kolar1993}]\label{prop:derivations-tangent-bundle}
    There is an isomorphism of $C^\infty(M)$ modules:
    \[
        \mathsf{Der}(C^\infty(M)) \cong \chi(M).  
    \]
\end{proposition}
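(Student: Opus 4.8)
The plan is to exhibit the two maps explicitly and verify that they are mutually inverse $C^\infty(M)$-module homomorphisms, where $\mathsf{Der}(C^\infty(M))$ carries the evident pointwise module structure $(f \cdot D)(g) = f \cdot D(g)$. In the forward direction, I would send a vector field $X : M \to TM$ to the operator $\hat{X}$ that differentiates functions along $X$: using the kinematic description of tangent vectors from \Cref{def:tang-vector}, if $X(m)$ is represented by a curve $\gamma_m : \R \to M$, set
\[
    \hat{X}(f)(m) = \frac{\partial (f \o \gamma_m)}{\partial x}(0).
\]
This is well defined precisely because the equivalence relation on curves in \Cref{def:tang-vector} is the one identifying curves that agree to first order against all smooth test functions. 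In a local chart the Leibniz rule for $\hat{X}$ reduces to the ordinary product rule for the directional derivative, so $\hat{X}$ is a derivation; and $X \mapsto \hat{X}$ is $C^\infty(M)$-linear with respect to the module structure of \Cref{def:operational-tang}, since scaling $X$ pointwise by $f$ scales the directional derivative pointwise by $f$.

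Injectivity would be immediate: if $\hat{X} = \hat{Y}$, then $X(m)$ and $Y(m)$ induce the same directional derivative on every $f \in C^\infty(M)$ at every $m$, which is exactly the condition defining the equivalence relation of \Cref{def:tang-vector}, whence $X = Y$.

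The substance of the argument is surjectivity: I would show every derivation $D$ arises as some $\hat{X}$. The first step is to establish that derivations are \emph{local} operators, i.e.\ if $f$ and $g$ agree on a neighbourhood of $m$ then $D(f)(m) = D(g)(m)$. This is where the standing hypothesis that manifolds are Hausdorff and second-countable enters (see the Remark following \Cref{def:smooth-manifold}), since it guarantees smooth bump functions; choosing a bump $\rho$ supported near $m$ with $\rho(m) = 1$, applying $D$ to $\rho \cdot (f - g)$, and using the Leibniz rule together with the fact that $D$ annihilates constants (as $D(1) = D(1 \cdot 1) = 2D(1)$ forces $D(1) = 0$) forces $D(f-g)(m) = 0$. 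Given locality, I would pass to a coordinate chart around $m$ and invoke Hadamard's lemma to write any smooth $f$ as $f(x) = f(m) + \sum_i (x_i - m_i)\, g_i(x)$ with $g_i$ smooth and $g_i(m) = \partial f/\partial x_i(m)$; applying $D$ and the Leibniz rule then yields $D(f)(m) = \sum_i D(x_i)(m)\, \partial f/\partial x_i(m)$. Hence at each point $D$ is given by the tangent vector with local components $\bigl(D(x_1)(m), \dots, D(x_n)(m)\bigr)$, and since the $D(x_i)$ are themselves smooth, these components vary smoothly and assemble into a bona fide vector field $X$ with $\hat{X} = D$.

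Finally, a short computation that the two constructions are mutually inverse — for which the coordinate formula $D(f)(m) = \sum_i D(x_i)(m)\,\partial f/\partial x_i(m)$ does double duty, since $\hat{X}(x_i) = X^i$ recovers the components — completes the isomorphism. The main obstacle is the surjectivity argument, and within it the two points where one must step outside the purely kinematic/categorical setting and use analysis: the bump-function argument for locality, and Hadamard's lemma for the local normal form of a derivation.
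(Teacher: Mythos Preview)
The paper does not supply its own proof of this proposition: it is stated with a citation to \cite{Kolar1993} and treated as a classical fact from the differential-geometry literature, so there is no in-paper argument to compare against. Your sketch is the standard proof of this result and is correct in outline. One small remark: the locality step only requires a smooth bump function supported in a single chart, which exists on any smooth manifold without needing second-countability; the latter is used elsewhere in the paper for partition-of-unity arguments but is not strictly necessary here.
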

Finally, we observe that there is a $C^\infty(M)$-Lie algebra structure on $\chi(M)$, with two equivalent definitions. First, there is the kinematic definition of the bracket, which is induced using the universality of the vertical lift. Given vector fields $X,Y$ on $M$, note that
\[
   X = p.T \o (T.Y \o X -_{TM} c \o T.X \o Y), \hspace{0.25cm}
   0 = T.p \o (T.Y \o X -_{TM} c \o T.X \o Y)
\]
so by Corollary \ref{cor:mu-universal}, there is a unique map $[X,Y]:M \to TM$ so that
\begin{equation}\label{eq:hard}
    (T.Y \o X -_{TM} c \o T.X \o Y) = \mu([X,Y], X).
\end{equation}
Similarly, there is a bracket defined on $\mathsf{Der}(M)$ using the \emph{anticommutator} of derivations:
\begin{equation}\label{eq:anticom}
    [X,Y](f) = X(Y(f)) - Y(X(f)).
\end{equation}
These brackets are equivalent for finite-dimensional smooth manifolds.
\begin{proposition}\label{prop:anti-commm-lie}[\cite{Mackenzie2013}]
    Recall that a Lie algebra over a ring $R$ is an $R$-module $A$ equipped with a bilinear map
    \[
        [-.-]: A \ox A \to A   
    \]
    that is alternating and satisfies the Jacobi identity:
    \[
        [X,[Y,Z]] + [Z,[X,Y]] + [Y, [Z,X]] = 0.  
    \]
    The two brackets on $\chi(M)$ (viewed as a Lie algebra) from Equations \ref{eq:hard} and \ref{eq:anticom} coincide.
\end{proposition}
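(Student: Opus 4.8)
The plan is to exploit the fact that both brackets are \emph{local} operators and to reduce the statement to a computation in a single coordinate chart $U \subseteq \R^n$, where each bracket is shown to equal the classical Jacobian formula $m \mapsto DY(m)\cdot X(m) - DX(m)\cdot Y(m)$ (writing $X, Y : U \to \R^n$ for the principal parts of the vector fields). The anticommutator bracket of Equation \ref{eq:anticom} is manifestly local, being defined pointwise through the action of derivations on $C^\infty(M)$. For the kinematic bracket of Equation \ref{eq:hard}, locality follows because all of the data used to define it---the functor $T$, the natural transformations $c$, $\ell$, $+$, and the pullback of Corollary \ref{cor:mu-universal}---are preserved under restriction along an open inclusion $U \hookrightarrow M$, so restricting $X, Y$ restricts their bracket. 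Since a vector field is determined by its restrictions to the charts of an atlas, it suffices to prove the two principal parts agree on each such $U$.

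First I would compute the anticommutator bracket in coordinates. Viewing $X$ as the derivation $f \mapsto \sum_i X^i \partial_i f$, expanding $X(Y(f)) - Y(X(f))$ produces the first-order terms $\sum_{i,j}(X^i \partial_i Y^j - Y^i \partial_i X^j)\,\partial_j f$ together with the second-order terms $\sum_{i,j}(X^i Y^j - Y^i X^j)\,\partial_i \partial_j f$, and the latter vanish by the symmetry of mixed partials (the concrete instance of axiom [CD.7]). Reading off the coefficient of $\partial_j f$ identifies this derivation, via the isomorphism $\mathsf{Der}(C^\infty(M)) \cong \chi(M)$ of Proposition \ref{prop:derivations-tangent-bundle}, with the vector field whose principal part is $DY \cdot X - DX \cdot Y$.

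Next I would compute the kinematic bracket in the same chart. Using the local form $T.f(m,v) = (f(m), Df(m)\cdot v)$ of the tangent functor, one obtains
\[
    T.Y \o X (m) = \big((m, Y(m)),\, (X(m),\, DY(m)\,X(m))\big), \qquad
    T.X \o Y (m) = \big((m, X(m)),\, (Y(m),\, DX(m)\,Y(m))\big).
\]
Applying the flip $c$ (which swaps the two middle coordinates) to the second expression and subtracting over the appropriate fibred-module structure cancels the matched coordinates and leaves exactly $DY \cdot X - DX \cdot Y$ in the genuinely second-order slot. I would then verify the two conditions $p.T \o (\,\cdot\,) = X$ and $T.p \o (\,\cdot\,) = 0$ recorded before Equation \ref{eq:hard}, so that Corollary \ref{cor:mu-universal} applies, and read off $[X,Y]$ from $\mu([X,Y], X)$ using the local formula $\mu((m,v),(m,v')) = ((m, v'),(0, v))$. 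This exhibits the kinematic principal part as $DY \cdot X - DX \cdot Y$ as well, and comparing with the previous step concludes the proof.

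The hard part will be the bookkeeping in the kinematic computation: keeping the conventions for $\ell$, $c$, the two projections $p.T$ and $T.p$, and the map $\mu$ mutually consistent, and checking that the subtraction $-_{TM}$ is taken in the fibred module for which the two defining conditions hold, so that the matched first-order contributions cancel and only the Jacobian terms survive. Once the local form $\mu((m,v),(m,v')) = ((m,v'),(0,v))$ is pinned down, solving Equation \ref{eq:hard} for $[X,Y]$ is immediate, and the symmetry axiom [CD.7] plays the same role here (through the coherence $c$) that the equality of mixed partials played in the derivation computation.
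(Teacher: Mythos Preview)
The paper does not supply its own proof of this proposition; it is stated with a citation to \cite{Mackenzie2013} and left at that. Your local-coordinate approach is the standard way to verify the result and is sound: both brackets are local, and in a chart each reduces to the Jacobian expression $DY\cdot X - DX\cdot Y$.

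The only caveat is the one you already flag, and it is real. With the coordinate conventions of Example~\ref{example:tangcat-sman} one computes $T.Y\o X(m)=(m,Y(m),X(m),DY\,X)$ and $c\o T.X\o Y(m)=(m,Y(m),X(m),DX\,Y)$; these agree in the first three slots, and subtracting in the $+.T$ structure (fibred over $p.T$) yields $(m,Y(m),0,DY\,X-DX\,Y)$, which has $T.p$-value $0$ and $p.T$-value $Y$. Matching against $\mu((m,a),(m,b))=(m,b,0,a)$ then gives $[X,Y]=DY\cdot X-DX\cdot Y$ with residual coordinate $Y$, not $X$. In other words, the display immediately preceding Equation~\eqref{eq:hard} appears to have $X$ and $Y$ (equivalently $p.T$ and $T.p$) interchanged, so trust your own computation if it disagrees with the paper on that point; the conclusion is unaffected.
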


\section{Tangent structures}%
\label{sec:tangent-structure}
This section develops the categorical framework to study more general categories of smooth manifolds by axiomatizing the tangent bundle, tangent categories, which first appeared in \cite{Rosicky1984}. An arbitrary tangent category is significantly more general than smooth manifolds and captures examples of categories with ``tangent bundle'' from computer science and logic, as developed in \cite{Cockett2014}. First, observe that tangent categories forget the base ring from the previous section and only consider the fibered commutative monoid structure of the tangent bundle.
\begin{definition}\label{def:add-bun}
    An additive bundle in a category $\C$ is a triple $E \xrightarrow{q} M, +:E\ts{q}{q}E \to E, \xi:M \to E$ which gives $(q,+,\xi)$ the structure of a commutative monoid in the slice category $\C/M$.
    If $(q,\xi,+), (q',\xi',+')$ are both additive bundles, a bundle morphism
    \[
        \begin{tikzcd}
            E \dar[swap]{q} \rar{f} & E' \dar{q} \\
            M \rar{f_0} & M'
        \end{tikzcd}
    \]
    is \textit{additive} if $f \o + = +' \o (f\o \pi_0,f\o \pi_1)$ and $f \o \xi = \xi' \o f_0$. The category of additive bundles in a a category $\C$ is given by additive bundles in $\C$ and additive bundle morphisms.
\end{definition}
We will often write pullback powers of an additive bundle $E \ts{q}{q} \dots \ts{q}{q} E$ as $E_n$, and use infix notation to write addition so $+ \o (a,b)$ becomes $a + b$.
In the category of smooth manifolds, the tangent bundle functor gives a well-behaved functorial vector bundle. 
A tangent category has a functorial additive bundle and axiomatizes the coherences and universal properties of the tangent bundle from the category of smooth manifolds.
\begin{definition}[\cite{Rosicky1984,Cockett2014}]\label{def:tangent-cat}
 A tangent structure consists of a functor $T:\C \to \C$ equipped with natural transformations
  \begin{gather*}
    p:T \Rightarrow id, \hspace{0.15cm} 0:id \Rightarrow T, +: T\ts{p}{p} T \Rightarrow T \\
    \ell: T \Rightarrow T.T \hspace{0.30cm} c:T.T \Rightarrow T.T
  \end{gather*}
  satisfying the following axioms; we call a category equipped with a tangent structure a \emph{tangent category}.\pagenote{I have added "."'s to the diagrams to keep my notation consistent.}
  \begin{enumerate}[{[TC.1]}]
  \item Additive bundle axioms:
    \begin{enumerate}[(i)]
        \item Pullback powers of $p$ exist and are preserved by $T$; write these $T_n$.
        \item Each triple $(p.M: TM \to M, 0.M:M \to TM, +.M:T_2M \to TM)$ is an additive bundle.
        \begin{equation}
            \label{eq:abun-axioms}
\begin{tikzcd}
	T & T & {T_2} &[-2em]&[-2em] T \\
	id & id && id
	\arrow["{+}", from=1-3, to=1-5]
	\arrow["{p \o \pi_i}"', from=1-3, to=2-4]
	\arrow["p", from=1-5, to=2-4]
	\arrow["p", from=1-1, to=2-1]
	\arrow["0"', from=2-2, to=1-2]
\end{tikzcd}
        \end{equation}
    \end{enumerate}
  \item Symmetry axioms:
    \begin{enumerate}[(i)]
        \item Involution: 
        \begin{equation}\label{eq:tc-2-1}
\begin{tikzcd}
	T.T & T.T \\
	& T.T
	\arrow["c", from=1-1, to=1-2]
	\arrow["c", from=1-2, to=2-2]
	\arrow[Rightarrow, no head, from=1-1, to=2-2]
\end{tikzcd}
        \end{equation}
        \item Yang--Baxter: $c.T \o T.c \o c.T = T.c \o c.T \o T.c$
        \begin{equation}\label{eq:tc-2-2}
\begin{tikzcd}
	& T.T.T & T.T.T \\
	T.T.T &&& T.T.T \\
	& T.T.T & T.T.T
	\arrow["{T.c}", from=2-1, to=1-2]
	\arrow["{c.T}", from=1-2, to=1-3]
	\arrow["{T.c}", from=1-3, to=2-4]
	\arrow["{c.T}"', from=2-1, to=3-2]
	\arrow["{c.T}"', from=3-3, to=2-4]
	\arrow["{T.c}"', from=3-2, to=3-3]
\end{tikzcd}
        \end{equation}
        \item Naturality equations:
        \begin{equation}\label{eq:tc-2-3}
\begin{tikzcd}
	T.T && T.T & T & T \\
	T.T.T & T.T.T & T.T.T & T.T & T.T \\
	{T.T_2} && {T_2T} & T.T & T.T \\
	T.T && T.T & T & T
	\arrow["{T.\ell}"', from=1-1, to=2-1]
	\arrow["c", from=1-1, to=1-3]
	\arrow["{c.T}", from=2-1, to=2-2]
	\arrow["{T.c}", from=2-2, to=2-3]
	\arrow["{\ell.T}", from=1-3, to=2-3]
	\arrow["{(c \o T.\pi_0, c \o T.\pi_1)}", from=3-1, to=3-3]
	\arrow["{T.+}"', from=3-1, to=4-1]
	\arrow["{+.T}", from=3-3, to=4-3]
	\arrow["c"', from=4-1, to=4-3]
	\arrow["c", from=3-4, to=3-5]
	\arrow["{p.T}"', from=3-4, to=4-4]
	\arrow["{T.p}", from=3-5, to=4-5]
	\arrow[Rightarrow, no head, from=4-4, to=4-5]
	\arrow["{0.T}"', from=1-4, to=2-4]
	\arrow["c", from=2-4, to=2-5]
	\arrow[from=1-4, to=1-5]
	\arrow["{T.0}", from=1-5, to=2-5]
\end{tikzcd}
        \end{equation}
    \end{enumerate}
  \item Lift axioms:
    \begin{enumerate}[(i)]
        \item Naturality with addition:
        \begin{equation}
            \label{eq:tc-3-0}
\begin{tikzcd}
	T & T.T &[-1em] T.T & T.T &[-1em] {T_2} & {T.T_2} \\
	id & T & T & T & T & T.T
	\arrow["{(\ell\o\pi_0,\ell\o\pi_1)}", from=1-5, to=1-6]
	\arrow["{+}"', from=1-5, to=2-5]
	\arrow["{+}", from=1-6, to=2-6]
	\arrow["\ell"', from=2-5, to=2-6]
	\arrow["{T.\ell}", from=1-3, to=1-4]
	\arrow["0", from=2-3, to=1-3]
	\arrow["0"', from=2-3, to=2-4]
	\arrow["{0.T}"', from=2-4, to=1-4]
	\arrow["\ell", from=1-1, to=1-2]
	\arrow["p", from=1-1, to=2-1]
	\arrow["0"', from=2-1, to=2-2]
	\arrow["{T.p}", from=1-2, to=2-2]
\end{tikzcd}
        \end{equation}
        \item Coassociativity:
        \begin{equation}
            \label{eq:tc-3-1}
\begin{tikzcd}
	T & T.T \\
	T.T & T.T.T
	\arrow["\ell", from=1-1, to=1-2]
	\arrow["\ell"', from=1-1, to=2-1]
	\arrow["{T.\ell}"', from=2-1, to=2-2]
	\arrow["{\ell.T}", from=1-2, to=2-2]
\end{tikzcd}
        \end{equation}
        \item Symmetric co-multiplication:
        \begin{equation}
            \label{eq:tc-3-2}
\begin{tikzcd}
	T & T.T \\
	& T.T
	\arrow["\ell", from=1-1, to=1-2]
	\arrow["c", from=1-2, to=2-2]
	\arrow["\ell"', from=1-1, to=2-2]
\end{tikzcd}
        \end{equation}
        \item Universality: for $\mu:= T.+ \o (0\o \pi_0, \ell \o  \pi_1)$, the following diagram is a pullback for all $X$:
        \[\begin{tikzcd}
    T_2X \ar[r, "\mu"] \dar[description]{p\o \pi_i} & T^2X \dar{T.p}\\
    X \rar{0} & TX
\end{tikzcd}\]
    \end{enumerate}
  \end{enumerate}
\end{definition}
\begin{definition}[\cite{Cockett2014}]
    \pagenote{I pulled out the definition of a cartesian tangent category to make it clear that it had been defined, and added a note regarding notation to handle any confusion.}
    A tangent category is \textit{monoidal} whenever $\C$ is a monoidal category, $T$ is a monoidal functor, and $+,p,c,\ell$ are monoidal natural transformations. A tangent category is \textit{cartesian} whenever $\C$ is cartesian and is a strict monoidal tangent category for products.  
\end{definition}
\begin{notation}
Throughout this thesis, two key pieces of notation apply:
    \begin{itemize}
        \item $T_n$ denotes pullback powers of $p:T \Rightarrow id$ (and more generally $E_n$ for $q:E \to M$); iterated powers of $T$ are written $T^n$.
        \item There will often be long strings of $T_n$ pullbacks and functors $F:A \to B$, so a 2-categorical notation where functor composition is written with a period, $T_n.F.T'_m$, will often be adopted. While the natural transformation $c$ at $T_n.T_m.M$ under the image of the functor $T$ would often be written $T(c_{T_n.T_m.M})$, in this thesis it will be written as
        \[
            T.c.T_n.T_m.M:T.T^2.T_n.T_m \Rightarrow T.T^2.T_n.T_m
        \]  
        while the composition of 2-cells will be written using $\o$ in applicative order, rather than the diagrammatic order typically used in the tangent category literature. That is, the composition
        \[
            A \xrightarrow[]{g} B \xrightarrow[]{f} C
        \]
        would be written $f \o g$ rather than $gf$.
    \end{itemize}
\end{notation}
\begin{example}\label{example:tangcat-sman}
    Applying the results from \cref{sec:smooth-manifolds}, the category of smooth manifolds is a tangent category. 
    Recall that if $M$ is a smooth manifold, there is a coordinate patch $m \in U \hookrightarrow M$ around each point $m \in M$ so that $U \cong U' \subseteq \mathbb{R}^n$.
    fibre above $U$, $p^{-1}(U)$, is locally isomorphic to $U' \x \mathbb{R}^n$ and similarly $(p \o p)^{-1}(U) \cong U' \x (\R^n)^3$, so that:
    \begin{center}
        \begin{tabular}{|l|l|}
            \hline
            $p:U \x \R^n \to U$  & $(m,x) \mapsto m$ \\ \hline
            $0: U \to U \x \R^n$ & $m \mapsto (m,0)$ \\ \hline
            $+:U \x \R^n \x R^n \to U \x \R^n$                         & $(m,x,y) \mapsto (m, x+y)$      \\ \hline
            $\ell:U \x \R^n \to U \x \R^n \x \R^n\x \R^n$              & $(m,x) \mapsto (m,0,0,x)$       \\ \hline
            $c: U \x \R^n \x \R^n\x \R^n \to U \x \R^n \x \R^n\x \R^n$ & $(m, x,y,z) \mapsto (m, y,x,z)$ \\ \hline
        \end{tabular}
    \end{center}
\end{example}
\pagenote{removed the example of Lex - there are some coherence issues to sort out, and that would further distract from the main point of this chapter (introducing the tangent categories and smooth manifolds).}

The study of tangent categories is closely related to Lawvere's \emph{synthetic differential geometry}, first introduced in \cite{Lawvere1979} and later developed in \cite{Kock2006, Lavendhomme1996, Moerdijk1991}. The setting of synthetic differential geometry is a topos $\e$ (for our purposes, we need only a complete, cartesian closed category) equipped with a chosen ring object $R$ that satisfies the \emph{Kock-Lawvere axiom}: given the object of nilpotent elements in $R$, $D = [d : R | d^2 = 0]$, the following map is an isomorphism:
\[
    \alpha: R \x R \to [D,R]; \hspace{0.25cm} \alpha(a,b) = (d \mapsto a + db).  
\]
One can find a class of objects that form a tangent category: the infinitesimally linear objects.\pagenote{
   This passage has been edited substantially to clear up potential confusion about the definition of the Kock-Lawvere axiom (it originally seemed like it was defined twice), and the notions of infinitesimal linearity (the language of ``perceiving'' a diagram as a colimit is a needlessly confusing bit of jargon in synthetic differential geometry that doesn't need to be in this thesis). 
}
\begin{definition}%
\label{def:inf-linear}
    Let $\C$ be a model of synthetic differential geometry where the ring object is $(R, \cdot, 1,+, 0)$.
    An object $M$ in a model of synthetic differential geometry is \emph{infinitesimally linear} when it satisfies the following axioms.
    \begin{enumerate}[(i)]
        \item The following natural morphism must be an isomorphism: 
            \[
                [D(2),M] \cong [D,M]\ts{0^*}{0^*} [D,M];
                \text{ where } 
                D(2) := [(d_0,d_1) \in D^2 : d_i\cdot d_j = 0].
            \]
        \item $M$ satisfies \emph{property W} (credited by  \cite{Kock2006} to Gavin Wraith), namely that the following diagram is a ternary equalizer:
\[\begin{tikzcd}
    {[D,M]} & {[D\x D,M]} &&& {[D,M]}
    \arrow["{[0\o !\x D, M]}", shift left=4, from=1-2, to=1-5]
    \arrow["{[(0 \x 0) \o !, M]}"{description}, from=1-2, to=1-5]
    \arrow["{[D\x 0\o !, M]}"', shift right=5, from=1-2, to=1-5]
    \arrow["{[\cdot,M]}", from=1-1, to=1-2]
\end{tikzcd}\]
        (We will often use $M$ as shorthand for $id_M$ in diagrams).
    \end{enumerate}
\end{definition}
\pagenote{The treatment of the operational tangent bundle at the end of the last section made the observation regarding the Lie bracket here unnecessary.}

An \emph{infinitesimal object} generalizes the object $D$ in the category of infinitesimally linear objects in a model of synthetic differential geometry. The definition adopted in this thesis is a strict generalization of the definition given in \cite{Cockett2014}: here, we work with a symmetric monoidal closed category rather than a cartesian closed category in order to capture some examples from logic.\pagenote{I have expanded on this definition to fix  ambiguities in the original draft.}
\begin{definition}[Definition 5.6 \cite{Cockett2014}]
    \label{def:inf-object}
    An \emph{infinitesimal object} in a symmetric monoidal category \[(\C, \ox, I, \alpha, \rho, \sigma)\] is a tuple $(\odot:D \ox D \to D, 0:I \to D, \epsilon: D \to I, \delta: D \to D(2))$ (where $D(n)$ denotes pushout powers of $0$) so that:
    \begin{enumerate}[{[{I}O.1]}]
        \item Pushout powers $D(n)$ of $\hspace{0.05cm} 0:I \to D$ exist, and $\epsilon \o 0 = id_I$.
        \item $\odot$ is a commutative semigroup with zero, so that the following diagrams commute:
\[\begin{tikzcd}
	{D \ox D} & {D \ox D} &[-2em] {D \ox D \ox D} & {D\ox D} &[-2em] D & {D\ox D} \\
	& D & D & D && D
	\arrow["\odot"{description}, from=1-2, to=2-2]
	\arrow["\sigma"{description}, from=1-1, to=1-2]
	\arrow["\odot"{description}, from=1-1, to=2-2]
	\arrow["{\odot \ox D}", from=1-3, to=1-4]
	\arrow["\odot"{description}, from=1-4, to=2-4]
	\arrow["{D \ox \odot}"', from=1-3, to=2-3]
	\arrow["\odot"', from=2-3, to=2-4]
	\arrow["{(D,0)}", from=1-5, to=1-6]
	\arrow["\odot"{description}, from=1-6, to=2-6]
	\arrow["{0_D \o \epsilon}"', from=1-5, to=2-6]
\end{tikzcd}\]
        The third diagram shows that $0$ is an absorbing element rather than a unit (think of $0$ as being in the commutative semigroup on the set $[0,1)$ given by multiplication).
        \item The map $\delta: D \to D(2)$ makes $(0:I \to D, \delta, \epsilon)$ into a commutative comonoid in the coslice $I/\C$:
\[\begin{tikzcd}
	D & {D(2)} & D & {D(2)} & D & {D(2)} \\
	& D & {D(2)} & {D(3)} & {D(2)}
	\arrow["\delta"{description}, from=1-3, to=2-3]
	\arrow["\delta"{description}, from=1-3, to=1-4]
	\arrow["{D+_I\delta}", from=1-4, to=2-4]
	\arrow["{\delta+_ID}"', from=2-3, to=2-4]
	\arrow["\delta"', from=1-5, to=2-5]
	\arrow["\delta", from=1-5, to=1-6]
	\arrow["{(\iota_1|\iota_0)}"', from=2-5, to=1-6]
	\arrow["{(0 \o \epsilon +_I D)}", from=1-2, to=2-2]
	\arrow["\delta", from=1-1, to=1-2]
	\arrow[Rightarrow, no head, from=1-1, to=2-2]
\end{tikzcd}\]
        \item The following diagram commutes ($\odot$ is coadditive):
\[\begin{tikzcd}
    {D\ox D} & D \\
    {D\ox D(2)} & {D(2)}
    \arrow["{1 \x \delta}", from=1-1, to=2-1]
    \arrow["{(\iota_0\o \odot|\iota_1\o \odot)}"', from=2-1, to=2-2]
    \arrow["\delta", from=1-2, to=2-2]
    \arrow["\odot", from=1-1, to=1-2]
\end{tikzcd}\]
        The notation $(f | g):A + B \to C$ for pushouts/coproducts is dual to pairing $(f',g'):C \to A \x B$ for pullbacks/products, just as $\iota_i$ is dual to projection. Therefore, $(f | g) \o \iota_0 = f$ just as $\pi_0 \o (f,g) = f$.
        \item The following diagram is a coequalizer:
\[\begin{tikzcd}
    {D\ox I} & {D\ox D} && {} & {D(2)}
    \arrow["{0 \o \epsilon \ox 0}", shift left=1, from=1-1, to=1-2]
    \arrow["{D \ox 0}"', shift right=1, from=1-1, to=1-2]
    \arrow["{(\odot|_I\epsilon \ox D)\o (\delta \ox id)}"', from=1-2, to=1-5]
\end{tikzcd}\]
    \end{enumerate} 
\end{definition}
There are two tangent structures associated to an infinitesimal object in a symmetric monoidal category $\C$. The first relies on the \emph{exponentiability} of the infinitesimal object, while the second tangent structure is on the opposite category of $\C$. The enriched perspective on tangent structure in Section \ref{sec:tang-cats-enrichment} will clarify the relationship between these two tangent structures.
\begin{proposition}%
    \label{prop:inf-object-tangent-structures}
    Let $(\C, \ox, I, \alpha, \rho, \sigma)$ be a symmetric monoidal category, and \[\odot:D \ox D \to D, 0:I \to D, \epsilon: D \to I, \delta: D \to D(2)\] define an infinitesimal object in $\C$.\pagenote{
        The original proposition was unclear, so this passage has been changed to fix some ambiguities. The full structure of the symmetric monoidal category was added, the order of (i) and (ii) are swapped, the wording in part (ii) has been clarified, as has the proof itself.
    }
    \begin{enumerate}[(i)]
        \item There is a tangent structure on $\C^{op}$, where $T = D \ox (-)$.
        \item If $\C$ is also a symmetric monoidal closed category, then it is a tangent category with $T = [D, -]$.
    \end{enumerate}
\end{proposition}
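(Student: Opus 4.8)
The plan is to set up a dictionary translating the tangent-structure data into the infinitesimal-object data, and then to check that each tangent category axiom becomes one of [IO.1]--[IO.5] or a coherence of the symmetric monoidal structure. For part (i), I work in $\C^{op}$ with $T = D \ox (-)$, reading a natural transformation in $\C^{op}$ as a morphism in $\C$ with its arrows reversed. Under this reading the tangent maps are built directly from the infinitesimal-object maps: the projection $p$ is $0 \ox (-)$, the zero section $0$ is $\epsilon \ox (-)$, the lift $\ell$ is $\odot \ox (-)$, the addition $+$ is $\delta \ox (-)$ (so that $T_n$ is $D(n) \ox (-)$), and crucially the canonical flip $c$ is the symmetry $\sigma_{D,D} \ox (-)$. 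Reading each tangent axiom backwards through this dictionary: [TC.1](ii) (that $(p,0,+)$ is a commutative monoid in the slice) becomes exactly the commutative comonoid axioms [IO.3]; coassociativity [TC.3](ii) and symmetric comultiplication [TC.3](iii) become the associativity and commutativity of $\odot$ from [IO.2]; the involution and Yang--Baxter laws [TC.2](i),(ii) become $\sigma \o \sigma = id$ and the hexagon, which hold because $\sigma$ is a symmetry; and the naturality equations [TC.2](iii) together with the lift--addition compatibilities [TC.3](i) unwind to the naturality of $\sigma$ combined with the coadditivity axiom [IO.4] and the absorbing-zero law of [IO.2].

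The two axioms requiring genuine work are the limit conditions. For [TC.1](i) I must check that the pullback powers $T_n$ in $\C^{op}$ exist and are preserved by $T$; dually these are the pushout powers of $0 \ox (-) : M \to D \ox M$, and I would identify them with $D(n) \ox M$, which exist by [IO.1] provided $- \ox M$ carries the defining pushout $D(n) = D \po{0}{0} \cdots \po{0}{0} D$ to the corresponding pushout in $\C$. For the universality axiom [TC.3](iv) I must show that $\mu$ exhibits the required pullback in $\C^{op}$, that is, a pushout in $\C$; this is precisely the content of the coequalizer axiom [IO.5] once $\mu$ is transcribed through the dictionary. I expect matching the $\mu$-pullback to the coequalizer [IO.5] to be the main obstacle, since it is the one axiom where the combinatorics of $D(2)$, $\delta$, $\odot$ and $\epsilon$ genuinely interact rather than translating termwise.

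For part (ii), with $\C$ symmetric monoidal closed, the adjunction $D \ox (-) \dashv [D,-]$ lets me transport everything along the contravariant functor $[-,M] : \C^{op} \to \C$. Concretely I set $p = [0,-]$, $0 = [\epsilon,-]$, $\ell = [\odot,-]$ (using $[D,[D,-]] \cong [D \ox D, -]$), $+ = [\delta,-]$, and $c = [\sigma_{D,D},-]$; that is, I apply the hom functor to the same infinitesimal-object maps. Because $[-,M]$ sends colimits to limits and $[D,-]$ is a right adjoint, the two delicate points from part (i) now come for free: the pushout powers $D(n)$ become pullback powers $[D(n),M] \cong [D,M]_n$, these are automatically preserved by the right adjoint $[D,-]$, and the coequalizer [IO.5] is carried to the equalizer underlying the universality pullback for $\mu$. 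The remaining equational axioms hold because $[-,M]$ is a functor and so preserves the commuting diagrams established in part (i); thus part (ii) reduces to part (i) together with the exactness properties of the internal hom.
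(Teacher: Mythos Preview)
Your approach is essentially the same as the paper's: you build the same dictionary between infinitesimal-object data and tangent-structure data, and for part (ii) you likewise transport along the continuous contravariant functor $[-,M]$. If anything, you supply more detail than the paper, which in its proof of (i) simply lists the structure maps (projection from $0^{op}$, zero from $\epsilon^{op}$, addition from $\delta^{op}$, lift from $\odot$ together with the associator, flip from the symmetry $\sigma$) without walking through the axiom-by-axiom correspondence; for (ii) the paper invokes the isomorphisms $u:id\Rightarrow[I,-]$ and $b:[A,[B,-]]\Rightarrow[A\ox B,-]$ and appeals to continuity of $[-,M]$ to carry the couniversality conditions to universality conditions, exactly as you do.

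One point worth flagging: you correctly note that the universality axioms in part (i) require $(-)\ox M$ to preserve the pushouts defining $D(n)$ and the coequalizer [IO.5]. The paper does not verify this explicitly in its proof of (i), and in a bare symmetric monoidal category it is not automatic. In the closed case of part (ii) this worry evaporates since $(-)\ox M$ is a left adjoint; for part (i) one should either read the hypothesis as including closedness (as the preamble to the definition suggests) or add the assumption that tensoring preserves the relevant colimits. Your instinct to isolate this as the main obstacle is sound.
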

\begin{proof}
   ~\begin{enumerate}[(i)]
        \item The opposite category of $\C$ is a symmetric monoidal category:
        \[
            (\C^{op}, \ox, I, \alpha^{-1}, \rho^{-1}, \sigma).
        \]
        The tangent functor is $D \ox (-)$, and the projection is \[p = D \ox (-) \xrightarrow[]{0^{op}} I \ox (-) \xrightarrow[]{\rho^{-1}} (-).\] The zero map is given by
        \[
            (-) \xrightarrow[]{\rho} I \ox (-) \xrightarrow[]{\epsilon^{op}} D \ox (-).
        \]
        Addition in $\C^{op}$ is given by co-addition in $\C$:
        \[
            D(2) \ox (-) \xrightarrow[]{\delta^{op} \ox (-)} D \ox (-). 
        \]
        The lift is given by the semigroup structure (along with the monoidal coherences):
        \[
            D \ox (-) \xrightarrow[]{\odot \ox (-)} (D \ox D) \ox (-) \xrightarrow[]{\alpha^{-1}} D \ox (D \ox (-)).
        \]
        The flip is also given by monoidal coherences, combined with the symmetry on the monoidal category:
        \[
            D \ox (D \ox (-))  \xrightarrow[]{\alpha} (D \ox D) \o (-) \xrightarrow[]{\sigma \ox (-)}
            (D \ox D) \ox (-) \xrightarrow[]{\alpha^{-1}} D \ox (D \ox (-)).
        \]
      \item First, we identify the following natural isomorphisms:
        \[
            u: id \Rightarrow [I,-],\hspace{0.5cm}
            b: [A,[B,-]] \Rightarrow [A \ox B, -].
        \]
        \begin{itemize}
            \item The tangent functor is $[D,-]$, and the triple $(0:I \to D, \delta:D \to D(2), \linebreak \epsilon:D \to I)$ gives the additive bundle structure
            \begin{gather*}
                p:[D,-] \xrightarrow{0^*} [I,-] \xrightarrow{u} id, \hspace{1cm}
                0: id \xrightarrow{u} [I,-] \xrightarrow{[\epsilon, -]} [D,-] \\
                +:[D(2),-] \xrightarrow{\delta^*} [D,-].
            \end{gather*}
    
            Note that by the continuity of $[-, M]: \C^{op} \to \C$, we have $[D(2),M] \cong [D,M] \ts{p}{p} [D,M]$.
            \item The lift is given by
                \[
                    \ell: [D, -] \xrightarrow{\odot^*} [D \ox D, -] \xrightarrow{b^{-1}} [D,[D,-]]. 
                \]
            \item The canonical flip is given by
                \[
                    c: [D,[D,-]] \xrightarrow{b} [D\ox D, -] \xrightarrow{\sigma^*} [D\ox D, -]\xrightarrow{b^{-1}} [D,[D,-]].
                \]
        \end{itemize}
        The coherences and couniversality properties of an infinitesimal object, along with the continuity of 
        \[
            [-,M]: \C^{op} \to \C 
        \]
        then induce the coherences and universality properties for the tangent bundle. This completes the proof.
    \end{enumerate}
\end{proof}
The tangent structure on $\C^{op}$ induced by the infinitesimal object is the \emph{dual} tangent structure on $\C$. This will be revisited in Chapters \ref{chap:weil-nerve} and \ref{ch:inf-nerve-and-realization} when looking at tangent categories from the enriched perspective.

Returning to synthetic differential geometry, the co-universality conditions on an infinitesimal object corresponds to infinitesimal linearity (Definition \ref{def:inf-linear}). In some sense, the category of infinitesimally linear objects is the largest subcategory of $\e$ for which $D$ is an infinitesimal object. 
\begin{corollary}
    In a model of synthetic differential geometry $(\e, R)$, the object $D = [d \in R | d^2 = 0]$ is an infinitesimal object in the category of infinitesimally linear objects in $\e$.
\end{corollary}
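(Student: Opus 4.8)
The plan is to exhibit the structure tuple $(\odot, 0, \epsilon, \delta)$ on $D$ explicitly and then check the five axioms of Definition \ref{def:inf-object}, working in the cartesian symmetric monoidal structure carried by the category $\mathsf{InfLin}(\e)$ of infinitesimally linear objects (so $\ox = \x$ and $I = 1$). The multiplication $\odot$ is the restriction of the ring multiplication $\cdot : R \x R \to R$; it lands in $D$ because $R$ is commutative and $(d_1 \cdot d_2)^2 = d_1^2 \cdot d_2^2 = 0$. The map $0: 1 \to D$ is the inclusion of the ring's zero, $\epsilon: D \to 1$ is the unique map to the terminal object, and $\delta: D \to D(2)$ is the diagonal $d \mapsto (d,d)$, which factors through $D(2) = [(d_0,d_1) \in D^2 : d_i \cdot d_j = 0]$ since $d \cdot d = d^2 = 0$. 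Before starting the verification I would record the standard fact that $1$, finite products, and each $D(n)$ are infinitesimally linear, so that all of these maps live in $\mathsf{InfLin}(\e)$ and the ambient monoidal structure is genuinely available.

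The algebraic axioms [IO.2], [IO.3] and [IO.4] are the routine part: each is a commuting diagram asserting an equality between maps out of a finite power of $D$, and each can be checked on generalized elements using the Kock--Lawvere axiom. Commutativity, associativity, and the absorbing-element law for $\odot$ in [IO.2] are immediate from the corresponding laws for $\cdot$ in $R$ together with $0 \cdot d = 0$; the comonoid laws for $\delta$ in [IO.3] and the coadditivity square [IO.4] involve the pushout powers supplied by [IO.1], but once those objects are available the diagrams commute equationally (the relevant maps are diagonals, coassociative and cocommutative, and [IO.4] unwinds to distributivity of $\cdot$ over $+$). These comonoid laws are precisely the maps dual to the additive-bundle structure of the resulting tangent bundle, so nothing beyond identities among square-zero elements of $R$ is needed.

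The substance of the proof lies in the two couniversal axioms [IO.1] and [IO.5], and this is exactly where the restriction to $\mathsf{InfLin}(\e)$ is used. For [IO.1] I would argue that the subobject $D(n) = [(d_1,\dots,d_n) \in D^n : d_i \cdot d_j = 0]$ is the $n$-fold pushout power of $0 : 1 \to D$ \emph{computed in} $\mathsf{InfLin}(\e)$: testing against an infinitesimally linear $M$ through the closed structure of $\e$, the pushout universal property is precisely the (iterated) isomorphism $[D(n), M] \cong [D,M] \ts{0^*}{0^*} \dots \ts{0^*}{0^*} [D,M]$, which is condition (i) of Definition \ref{def:inf-linear}; the identity $\epsilon \o 0 = id_1$ is trivial. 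For [IO.5] I would show that the required coequalizer is the dual of \emph{property W}: applying $[-,M]$ and using continuity of $[-,M]$ turns the coequalizer diagram into the ternary equalizer appearing in Definition \ref{def:inf-linear}(ii), so the diagram is a coequalizer exactly when tested against infinitesimally linear objects.

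The hard part will be the bookkeeping of these two dualizations: matching the precomposition maps in property W and in the infinitesimal-linearity isomorphism with the structure maps $0$, $\epsilon$, $\delta$, $\odot$ as they appear in the pushout-power and coequalizer diagrams of Definition \ref{def:inf-object}, and confirming throughout that every object in sight (each $D(n)$ and the relevant products) remains infinitesimally linear, so that these colimits may legitimately be formed inside $\mathsf{InfLin}(\e)$ rather than in $\e$. Once the couniversal conditions are identified with conditions (i) and (ii) of infinitesimal linearity, the corollary follows.
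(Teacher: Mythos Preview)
Your proposal is correct and in fact supplies the argument that the paper leaves implicit: the corollary is stated without proof, preceded only by the remark that ``the co-universality conditions on an infinitesimal object correspond to infinitesimal linearity.'' Your plan of exhibiting $(\odot,0,\epsilon,\delta)$ explicitly, verifying the equational axioms [IO.2]--[IO.4] from the ring structure, and then identifying [IO.1] and [IO.5] with the two clauses of Definition~\ref{def:inf-linear} via $[-,M]$ is exactly the intended unpacking.

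One small imprecision to flag for the write-up: [IO.5] is a coequalizer with \emph{two} parallel arrows, whereas property~W in Definition~\ref{def:inf-linear}(ii) is a \emph{ternary} equalizer, so the two are not literally dual. Rather, property~W (dualized) implies the binary coequalizer of [IO.5], in the same way that Corollary~\ref{cor:mu-universal} (universality of $\mu$, a pullback/binary equalizer) is deduced from Proposition~\ref{prop:ell-universal} (the ternary equalizer for $\ell$) in the smooth-manifold case. Your ``bookkeeping'' step should include this reduction rather than a direct identification.
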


This thesis makes use of the 2-category of tangent categories (Section 2.3 of \cite{Cockett2014}), which formalizes the notion of a morphism of tangent structure and 2-cells between them. This 2-categorical framework is a departure from the classical theory of synthetic differential geometry, where the literature only really addresses morphisms of tangent structure in the form of fully faithful embeddings $\mathsf{SMan} \hookrightarrow \mathsf{Microl}(\e)$.\pagenote{Removed ``the formal theory of tangent categories, added Microl to $\e$.''}

\begin{definition}\label{def:tang-functor}
    Let $(\C, \mathbb{T}), (\D, \mathbb{T}')$ be a pair of tangent categories. A pair $(F: \C \to \D, \alpha:F.T \Rightarrow T'.F)$ is a \emph{tangent functor} if the following diagrams commute:\pagenote{I have fixed up the notation in this definition by adding a "." to $F.T$, and by setting the notation $(F,\alpha)$ for tangent functors. }
    \begin{center}
\begin{tikzcd}
	F & {F.T} & {F.T_2} & {F.T} & {F.T} & F \\
	& {T.F} & {T_2.F} & {T.F} & {T.F} \\
	{F.T} && {T.F} & {F.T^2} & {T.F.T} & {T^2.F} \\
	{F.T^2} & {T.F.T} & {T^2.F} & {F.T^2} & {T.F.T} & {T^2.F}
	\arrow["{F.+}", from=1-3, to=1-4]
	\arrow["{\alpha_2}"{description}, from=1-3, to=2-3]
	\arrow["{+.F}"', from=2-3, to=2-4]
	\arrow["\alpha"{description}, from=1-4, to=2-4]
	\arrow["\alpha"{description}, from=1-5, to=2-5]
	\arrow["{F.p}", from=1-5, to=1-6]
	\arrow["{p.F}"', from=2-5, to=1-6]
	\arrow["{0.F}"', from=1-1, to=2-2]
	\arrow["{F.0}", from=1-1, to=1-2]
	\arrow["\alpha"{description}, from=1-2, to=2-2]
	\arrow["{\ell.F}", from=3-3, to=4-3]
	\arrow["{F.\ell}"', from=3-1, to=4-1]
	\arrow["\alpha", from=3-1, to=3-3]
	\arrow["{\alpha.T}"', from=4-1, to=4-2]
	\arrow["{T.\alpha}"', from=4-2, to=4-3]
	\arrow["{\alpha.T}", from=3-4, to=3-5]
	\arrow["{T.\alpha}", from=3-5, to=3-6]
	\arrow["{\alpha.T}"', from=4-4, to=4-5]
	\arrow["{T.\alpha}"', from=4-5, to=4-6]
	\arrow["{F.c}"', from=3-4, to=4-4]
	\arrow["{c.F}", from=3-6, to=4-6]
\end{tikzcd}
    \end{center}
    A tangent functor is \emph{strong} whenever $\tnat$ is a natural isomorphism; for a sub-(tangent category) inclusion, $\alpha = id$.  A tangent functor $(F, \alpha)$ between cartesian tangent categories is a \emph{cartesian tangent functor} if $F$ is an isomonoidal functor and $\tnat$ is a monoidal natural transformation. 
\end{definition}
\begin{example}\label{ex:composition-of-tangent-functors}
    ~\begin{enumerate}[(i)]
        \item The coherences on the canonical flip $c$ guarantee that $(T:\C \to \C,c:T.T \Rightarrow T.T)$ is a strong tangent endofunctor on any tangent category. 
        \item Given a pair of tangent functors $(A,\alpha):\C \to \D, (B, \beta): \D \to \mathbb{E}$, the composition 
        \[
            (B.A:\C \to \mathbb{E}, \beta.A \o B.\alpha: 
            \begin{tikzcd}
                {B.A.T^C} & {B.T^D.A} & {T^E.B.A}
                \arrow["{B.\alpha}", Rightarrow, from=1-1, to=1-2]
                \arrow["{\beta.A}", Rightarrow, from=1-2, to=1-3]
            \end{tikzcd})
        \]
        is a tangent functor.
        \item A model of synthetic differential geometry $(\e,R)$ is \emph{well-adapted} whenever there is a fully faithful, strict tangent functor from $\mathsf{SMan}$ to the category of microlinear spaces of $\e$,  $\mathsf{SMan} \hookrightarrow \mathsf{Microl}(\e)$. The original development of well-adapted models for synthetic differential geometry may be found in \cite{Dubuc1981}, and the reader may check \cite{Bunge2018} for a recent account of the construction of such models, or section 3 of \cite{Kock2006}.\pagenote{Clarified that the inclusion into $\mathsf{Microl}(\e)$ is a tangent functor.}
    \end{enumerate}
\end{example}

\begin{definition}\label{def:tang-nat}
    A \emph{tangent natural transformation} $\gamma$ between tangent functors $(F,\alpha), (G,\beta)$ is a natural transformation so that the following diagram commutes:
    \[
        \begin{tikzcd}
    F . T(A) \rar{\gamma_{TA}} \dar{\tnat_A} & G . T(A) \dar{\beta_{TA}} \\
    T . F(A) \rar{T\gamma_A} & T . G(A)
\end{tikzcd}
    \] 
    If $F, G$ are cartesian tangent functors, then $\gamma$ is cartesian whenever it is an isomonoidal natural transformation.
\end{definition}
\begin{definition}\label{def:tang-2cat}
    We will call the 2-category of tangent categories, tangent functors, and tangent natural transformations $\mathsf{TangCat}$. The 2-category of cartesian tangent categories is $\mathsf{CartTangCat}$.
\end{definition}

\section{Local coordinates in a tangent category}%
\label{sec:diff-and-tang-struct}

This section develops some structures to facilitate reasoning about higher tangent bundles, using ``local coordinates.'' In the case of a cartesian differential category,  $T^n(A) = \prod_{2^n} A$, whereas for an open subset $U \subseteq \R^m$ the tangent bundle decomposes as $T^nU \cong U \x (\R^m)^{2^n - 1}$. This section introduces two structures that allow for these arguments in an arbitrary tangent category: differential objects and connections.\pagenote{
   When I originally wrote this thesis, differential objects figured into the story more prominently. I have cut a few results here that were messily written and were no longer used in subsequent chapters.
}

\begin{definition}[\cite{Cockett2018}]
    \label{def:differential-object}
    A\pagenote{
        This definition was very unclear/inconsistent. I have since removed the term ``linear'', and fixed the notation on the commutative monoid structure on $A$, which addresses the main issued raised. 
    } \emph{ differential object}\footnote{Not to be confused with 4.1 from \cite{Barr2002}.} in a cartesian tangent category is a commutative monoid $(A, +_A,0_A)$ such that there is a section-retract pair $A \xrightarrow{\lambda} TA \xrightarrow{\hat{p}} A$ which exhibits $T(A)$ as a biproduct in the category of commutative monoids:
        \[A \oplus A \cong TA.\]
    Concretely, a differential object is a commutative monoid equipped with $\lambda:A \to TA, \phat:TA \to A, \phat \o \lambda = id$ so that the following axioms hold:
    \begin{enumerate}[DO.1]
    \item Coherence between $+$ and $\lambda, \phat$:
        \[
\begin{tikzcd}
	{T(A \x A)} & TA & {A\x A} & A \\
	{TA \x TA} && {TA \x TA} \\
	{A \x A} & A & {T(A \x A)} & TA
	\arrow["\phat"{description}, from=1-2, to=3-2]
	\arrow["{T.+_A}", from=1-1, to=1-2]
	\arrow["{(T\pi_0,T\pi_1)}"{description}, from=1-1, to=2-1]
	\arrow["{\phat \x \phat}"{description}, from=2-1, to=3-1]
	\arrow["{+_A}"{description}, from=3-1, to=3-2]
	\arrow["{+_A}", from=1-3, to=1-4]
	\arrow["{\lambda \x \lambda}", from=1-3, to=2-3]
	\arrow["{(T\pi_0,T\pi_1)^{-1}}"{description}, from=2-3, to=3-3]
	\arrow["{T.+_A}", from=3-3, to=3-4]
	\arrow["\lambda"{description}, from=1-4, to=3-4]
\end{tikzcd}\]
    \item Coherence between $0_A$ and $0.A$:
      \[ 
\begin{tikzcd}
	A & 1 & A & 1 \\
	TA & A & TA & A
	\arrow["0"', from=1-1, to=2-1]
	\arrow["{!}"{description}, from=1-1, to=1-2]
	\arrow["{0_A}", from=1-2, to=2-2]
	\arrow["\phat"{description}, from=2-1, to=2-2]
	\arrow["\lambda"{description}, from=1-3, to=2-3]
	\arrow["p"{description}, from=2-3, to=2-4]
	\arrow["{!}"{description}, from=1-3, to=1-4]
	\arrow["{0_A}"{description}, from=1-4, to=2-4]
\end{tikzcd} \]    
    \item Coherence between $+_A$ and $+.A$:
        \[
\begin{tikzcd}
	{T_2A} & {A \x A} & {A \x A} & {T_2A} \\
	TA & A & A & TA
	\arrow["{\phat \x \phat}", from=1-1, to=1-2]
	\arrow["{+_A}", from=1-2, to=2-2]
	\arrow["{+}"', from=1-1, to=2-1]
	\arrow["\phat"', from=2-1, to=2-2]
	\arrow["{\lambda \x \lambda}", from=1-3, to=1-4]
	\arrow["{+_A}"', from=1-3, to=2-3]
	\arrow["\lambda"', from=2-3, to=2-4]
	\arrow["{+}", from=1-4, to=2-4]
\end{tikzcd}\]
    \item Coherence between $\lambda$ and $\phat$ with $\ell$:
      \[
\begin{tikzcd}
	TA & {T^2A} & A & TA \\
	A & TA & TA & {T^2A}
	\arrow["\ell", from=1-1, to=1-2]
	\arrow["{T.\phat}"{description}, from=1-2, to=2-2]
	\arrow["\phat", from=2-2, to=2-1]
	\arrow["\phat"{description}, from=1-1, to=2-1]
	\arrow["\lambda"{description}, from=1-3, to=2-3]
	\arrow["\lambda", from=1-3, to=1-4]
	\arrow["\ell"{description}, from=1-4, to=2-4]
	\arrow["{T.\lambda}"', from=2-3, to=2-4]
\end{tikzcd}\]
  \end{enumerate}
  A map $f:A \to B$ between differential objects $(A,\lambda_A, \phat_A, +_A,0_B) \to (B,\lambda_B,\phat_B, +_B, 0_B)$ is \emph{linear} whenever $f$ preserves the lifts and projections
  \[
      (T.f \o \lambda_A = \lambda_B \o f) \text{ and } (\phat_B \o T.f = f \o \phat_A).
  \]
\end{definition}
 Following the work in Section 3 of \cite{Cockett2018}, it is sufficient to check that $f$ preserves $\lambda$ or $\phat$ (each condition implies the other).
\begin{example}\label{ex:diffob-sman}
    ~\begin{enumerate}[(i)]
        \item In the category of smooth manifolds, the real numbers are a differential object, as $T.\R = \R[x]/x^2$, so the lift map in this case is
        \[
            \lambda(a) = 0 + a\cdot x,  \hspace{1cm} \phat(a + b\cdot x) = b. 
        \]
        More generally, finite-dimensional real vector spaces, with their canonical smooth manifold structure, are exactly differential objects in the category of smooth manifolds. The lift map is defined using $T\R \cong \R[x]/x^2$, so that
        \[ V \xrightarrow[]{(0,1^\R)} T(V \x \R) \xrightarrow[]{T\cdot_V} TV. \]
        This is equivalent to the isomorphism $T.V = R[x]/x^2 \ox V$.
        \pagenote{I have removed the convenient manifolds example because I had not introduced that example tangent category.}
        \item Every object in a cartesian differential category has a canonical differential object structure, as $TA := A \x A$.
    \end{enumerate}
\end{example}

Classically, the tangent space above each point of a smooth manifold is a vector space. 
We have a similar result for differential objects from \cite{Cockett2018}.
\begin{lemma}\label{lem:tang-space-diff-ob}
    Suppose we have the following pullback in a cartesian tangent category $\C$, and all powers of $T$ preserve it.
    \[
        \begin{tikzcd}
            E \rar{\iota} \dar{!} & TM \dar{p} \\
            1 \rar{m} & M
        \end{tikzcd}
    \]
    There is a unique differential object structure $(E,\lambda,\phat)$ so that $\ell \o \iota = T.\iota\o \lambda$.\footnote{Some diagrammatic notation had creeped into the original draft here, I have fixed it.}
\end{lemma}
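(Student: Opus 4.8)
The plan is to realise $E$ as the fibre of the tangent bundle over the point $m$ and to show that this fibre is a ``vector space'' internal to $\C$, with $\lambda, \phat$ the associated embedding and projection. First I would equip $E$ with its commutative monoid structure. Since $p \o \iota = m \o\, !$, the two legs $\iota\o\pi_0, \iota\o\pi_1 : E \x E \to TM$ become equal after post-composing with $p$, hence factor through the pullback $T_2M = TM \ts{p}{p} TM$; composing with $+.M$ and using the additive-bundle identity $p \o +.M = p \o \pi_i$, the universal property of the pullback $E$ produces a unique $+_E : E \x E \to E$ with $\iota \o +_E = +.M \o (\iota\o\pi_0, \iota\o\pi_1)$, and likewise $0.M \o m$ factors uniquely through $\iota$ to give $0_E$. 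Because $m : 1 \to M$ is monic (any two maps into $1$ agree) its pullback $\iota$ is monic, so the commutative monoid axioms for $(E,+_E,0_E)$ are inherited from the additive bundle $(p.M, 0.M, +.M)$ by cancelling $\iota$.

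Next I would build $\lambda$ from a universal property. As $T$ is a cartesian functor we have $T1 \cong 1$ and, by naturality of $0$, $Tm = 0.M \o m$; thus the hypothesis that $T$ preserves the defining pullback exhibits $TE$ as the pullback of $T.p : T^2M \to TM$ along $0.M \o m : 1 \to TM$, with monic comparison $T.\iota$. The lift-unit axiom $T.p \o \ell = 0 \o p$ (the leftmost square of Equation \ref{eq:tc-3-0}) then gives
\[
    T.p \o \ell \o \iota = 0.M \o p \o \iota = (0.M \o m) \o\, !,
\]
so $\ell \o \iota$ factors through this pullback: there is a \emph{unique} $\lambda : E \to TE$ with $T.\iota \o \lambda = \ell \o \iota$. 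This pins down $\lambda$ and already supplies its uniqueness.

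For $\phat$ and the biproduct I would paste pullbacks. The universality axiom [TC.3](iv) of Definition \ref{def:tangent-cat} presents $T_2M$ as the pullback of $T.p$ along $0.M$ via $\mu := T.+ \o (0\o\pi_0, \ell\o\pi_1)$; pasting this against the pullback of $p\o\pi_0 : T_2M \to M$ along $m$ and applying the pasting lemma identifies $TE$ with the pullback of $p\o\pi_0$ along $m$, which is visibly the object of pairs of elements of the fibre, i.e. $E \x E$, with $T.\iota$ corresponding to $\mu$ composed with the left-square projection. Under this isomorphism the tangent projection $p.E$ and zero section $0.E$ become the first product projection and injection and $\lambda$ becomes the second injection, so I \emph{define} $\phat : TE \to E$ to be the second projection. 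Since $T$ preserves products and $+_E$ is itself additive, $T(+_E)$ is coordinatewise addition, so the isomorphism is one of commutative monoids and exhibits $TE \cong E \oplus E$ as a biproduct in $\mathsf{CMon}(\C)$; in particular $\phat \o \lambda = id$.

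Finally I would discharge the coherence axioms DO.1--DO.4 and uniqueness. Each is an equation with codomain some $E$, $TE$ or $T^2E$; since $T^n$ preserves the defining pullback, every $T^n.\iota$ is monic, so it suffices to check each identity after post-composing into the ambient $T^nM$, where it collapses to an instance of the additive-bundle, lift and symmetry axioms already in hand (for example DO.4, relating $\lambda,\phat$ to $\ell$, reduces after applying $T.\iota$ and $T^2.\iota$ to lift coassociativity \ref{eq:tc-3-1} together with $c \o \ell = \ell$). Uniqueness of the whole structure is then immediate: $\lambda$ is forced by the defining equation and $\phat$ is forced as the complementary biproduct projection. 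The main obstacle is the bookkeeping around the two additive-bundle structures on $T^2M$ when verifying DO.1 and DO.4 --- one must track which summand of $TE \cong E \oplus E$ each map meets --- and, relatedly, confirming that the pasted isomorphism is genuinely monoidal rather than merely an isomorphism of objects, which is the one place where the argument is more than formal.
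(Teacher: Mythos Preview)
The paper does not prove this lemma: it is stated as a result from \cite{Cockett2018} (``we have a similar result for differential objects from \cite{Cockett2018}'') and no argument is given in the text. So there is nothing in the paper to compare against directly.

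Your proof is correct and follows the natural line of argument. The key steps --- inducing the monoid structure on $E$ from the additive bundle on $TM$ via the monic $\iota$, producing $\lambda$ uniquely from the $T$-preserved pullback and the identity $T.p \o \ell = 0 \o p$, and then pasting the universality-of-lift square [TC.3](iv) against the fibre pullback to identify $TE \cong E \x E$ --- are exactly what one needs, and your observation that every DO axiom can be verified after post-composition with a monic $T^n.\iota$ is the right reduction. The one place to be slightly more explicit is in tracking the two additive structures on $T^2M$ when reading off which projection of $E \x E$ corresponds to $p.E$ versus $\phat$ (you flag this yourself); concretely, under $\mu = T.+ \o (0 \o \pi_0, \ell \o \pi_1)$ the first factor of $T_2M$ maps via $0$ and the second via $\ell$, so after pulling back to $E \x E$ the first coordinate is recovered by $p.E$ and the second by the descent along $\ell$, which is your $\phat$. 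With that bookkeeping made precise, the argument goes through without difficulty.
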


There are two natural classes of morphisms between differential objects, linear and ``smooth'' (that is, arbitrary morphisms).
\begin{definition}[\cite{Cockett2018}]\label{def:diff-dlin}
    Let $\C$ be a cartesian tangent category. 
    We define the following categories:
    \begin{enumerate}[(i)]
        \item $\Diff(\C)$ is the category of differential objects and arbitrary morphisms, so for any differential objects $A,B$, we have $\Diff(\C)(A,B) := \C(A,B)$.
        \item $\Dlin(\C)$ is the category of differential objects and linear morphisms.
    \end{enumerate}
\end{definition}

The category of differential objects and smooth maps is a cartesian differential category, exhibiting differential calculus as a specialized logic in a tangent category. 
\begin{proposition}[Section 3.5 of \cite{Cockett2018}]\label{prop:diff-obs-cdc}
    Let $\C$ be a cartesian tangent category.
    Then:
    \begin{enumerate}[(i)]
        \item $\Diff(\C)$ is a cartesian differential category, where we define the differential combinator $D$ to be
        \[
            \infer{A \x A \xrightarrow{\lambda \x 0} T(A \x A) \xrightarrow[]{T.+_A} T(A) \xrightarrow{T(f)} T(B) \xrightarrow{\phat_B} B}{A \xrightarrow{f} B}
        \]
        (where $\lambda_A, +_A$ are the lift and addition for the differential object structure on $A$, and $\phat_B$ is the projection map on the differential object structure on $B$).
        \item There is an equality of categories, $\mathsf{Lin}(\Diff(\C)) = \Dlin(\C)$, meaning that a morphism between differential objects in the cartesian tangent category $\C$ is linear if and only if it is linear in the cartesian differential category $\Diff(\C)$.
    \end{enumerate}
\end{proposition}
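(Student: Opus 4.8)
The plan is to verify the seven axioms [CD.1]--[CD.7] for the proposed combinator, which I will write compactly as $D[f] = \phat_B \o T(f) \o \mu_A$, where $\mu_A := T.+_A \o (\lambda_A \x 0): A \x A \to TA$ is the canonical biproduct comparison map attached to the isomorphism $A \oplus A \cong TA$. Before touching the combinator I would first record that $\Diff(\C)$ is a cartesian left additive category: finite products of differential objects are again differential objects (taking $\lambda_{A\x B}$ and $\phat_{A\x B}$ to be the evident maps obtained from $TA \x TB \cong T(A \x B)$, with the terminal object carrying the trivial structure), so $\Diff(\C)$ is cartesian; and since each object carries its chosen commutative monoid $(A,+_A,0_A)$ and $A \x B$ carries the product monoid, Proposition \ref{prop:clac-defs}(ii) immediately yields the cartesian left additive structure with $f+g = +_B \o (f,g)$ and $0 = 0_B \o {!}$.

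Everything else rests on a few identities for $\mu_A$ read off from Definition \ref{def:differential-object}. Using $\phat \o \lambda = id$, the zero coherences DO.2, and the additivity coherence DO.1 one gets $\phat_A \o \mu_A = \pi_0$, and using naturality of $p$ together with DO.2 one gets $p_A \o \mu_A = \pi_1$; hence $\mu_A$ is an \emph{isomorphism} with inverse $(\phat_A, p_A)$. Axioms [CD.1] and [CD.2] (additivity in the map and in the vector variable) then follow from functoriality of $T$, the preservation of products by $T$, and DO.1/DO.3 pushing $\phat_B$ past $T(+_B)$; [CD.3] and [CD.4] are direct consequences of $\phat_A \o \mu_A = \pi_0$ together with the product differential-object structure. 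The chain rule [CD.5] is clean rather than hard: since $\mu_B^{-1} = (\phat_B, p_B)$ and $p$ is natural, $\mu_B^{-1} \o T(f) \o \mu_A = (D[f],\, f \o \pi_1)$, whence $D[g \o f] = \phat_C \o T(g) \o T(f) \o \mu_A = D[g] \o (D[f],\, f \o \pi_1)$.

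I expect the real work to be the two second-order axioms. Both require unwinding the second derivative $D[D[f]] = \phat_B \o T(\phat_B) \o T^2(f) \o T(\mu_A) \o \mu_{A\x A}$ and controlling the second-order encoding $T(\mu_A) \o \mu_{A \x A} : A^{\x 4} \to T^2 A$, i.e. tracking the decomposition $T^2A \cong A^{\oplus 4}$. For [CD.6] (linearity in the vector variable) the point is that precomposing with $((a,0),(0,d))$ selects exactly the component on which $T(\mu_A)\o \mu_{A\x A}$ agrees with the vertical lift, so the coherences DO.4 between $\lambda,\phat$ and $\ell$ (with the lift axioms [TC.3]) collapse the $T^2$-term back to the first-order $D[f]\o(a,d)$. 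For [CD.7] (symmetry of mixed partials) the swap $((a,b),(c,d)) \mapsto ((a,c),(b,d))$ corresponds to inserting the canonical flip $c$ into the second-order encoding; the symmetry axioms [TC.2], the compatibility of $c$ with $\lambda,\phat$, and naturality $c_B \o T^2(f) = T^2(f) \o c_A$ then force the two sides to coincide. This bookkeeping of the four-fold biproduct and the precise interaction of $\mu$ with $\ell$ and $c$ is the main obstacle, and is where the argument follows Section 3.5 of \cite{Cockett2018} most closely.

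For part (ii) I would exploit that $\mu_A$ is an isomorphism. A morphism $f$ of differential objects is linear in the sense of Definition \ref{def:diff-dlin} precisely when $\phat_B \o T(f) = f \o \phat_A$; composing on the right with the iso $\mu_A$ and using $\phat_A \o \mu_A = \pi_0$, this is equivalent to $\phat_B \o T(f) \o \mu_A = f \o \pi_0$, i.e. to $D[f] = f \o \pi_0$ (the projection onto the vector variable), which is the standard description of a linear map in a cartesian differential category and matches Definition \ref{def:linear-map-in-cdc} up to the argument-order convention. The forward implication is the one-line computation $D[f]$ evaluated at a zero basepoint equals $\phat_B \o T(f) \o \lambda_A = f$, and the converse simply cancels the invertible $\mu_A$; the fact that CDC-linear maps are automatically additive (Lemma \ref{lem:collected-remarks-lin}) is what guarantees the two notions agree on the nose, giving $\mathsf{Lin}(\Diff(\C)) = \Dlin(\C)$.
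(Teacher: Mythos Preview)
The paper does not actually prove this proposition: it is stated with a citation to Section~3.5 of \cite{Cockett2018} and no proof is given in the thesis. So there is nothing to compare against in the paper itself; your sketch is a direct verification in the style of the cited reference, and the overall strategy---showing $\mu_A = T.+_A \o (\lambda_A \times 0)$ is the biproduct isomorphism with inverse $(\phat_A, p_A)$, then reading off [CD.1]--[CD.5] from naturality and the DO axioms, and using the $\ell$/$c$ coherences (DO.4 together with [TC.2],[TC.3]) for [CD.6],[CD.7]---is exactly the standard one.

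One thing to watch: there is an argument-order inconsistency already present in the thesis (the example $D[f](u,v)=J[f](v)\cdot u$ and the combinator $\lambda\times 0$ both make the \emph{first} variable the vector, whereas [CD.2],[CD.3] as written make the \emph{second} variable the vector, and [CD.5] seems to be missing an $f\o{-}$). Your computation $\phat_A\o\mu_A=\pi_0$, $p_A\o\mu_A=\pi_1$ is correct for the combinator as stated, so you obtain $D[id]=\pi_0$ and the chain rule in the form $D[g\o f]=D[g]\o(D[f],\,f\o\pi_1)$; you correctly flag this as ``up to the argument-order convention,'' but be explicit that you are fixing the convention dictated by $\lambda\times 0$ and that the displayed [CD.3],[CD.5] in the thesis need the obvious swap to match. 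With that caveat, your argument for (ii) via cancelling the isomorphism $\mu_A$ is clean and correct; the appeal to Lemma~\ref{lem:collected-remarks-lin} is not actually needed for the equivalence, since $\phat_B\o T(f)=f\o\phat_A \iff D[f]=f\o\pi_0$ is literally the same equation composed with the invertible $\mu_A$.
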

Recall that if $M$ is an open subset of $\R^n$, the second tangent bundle of an open subset $U \subseteq \R^n$ splits as
\[
    T^2(U) = T(U \x \R^n) = (U \x \R^n) \x (\R^n \x \R^n) = T_3U.
\]
Every smooth manifold admits such a decomposition on its second tangent bundle; these are known as a \emph{affine connections}\footnote{The prefix ``affine'' differentiates these from more general connections on differential bundles, which are introduced in Section \ref{sec:connections-on-a-differential-bundle}.} and provide a way to reason about an object as though it has local coordinates in an arbitrary tangent category.\pagenote{Added a footnote to explain the prefix ``affine''.}
\pagenote{slightly changed the passage between differential objects -> affine connections}


\begin{definition}[\cite{Cockett2017}]
    In a tangent category $\C$, define the following:
    \begin{enumerate}[(i)]
        \item An affine vertical connection is a map $\kappa:T^2M \to TM$ so that 
        \begin{enumerate}
            \item $\kappa$ is a \emph{vertical descent}, namely a section of the vertical lift $\ell:TM \to T^2M$, so $\kappa \o \ell = id$;
            \item $\kappa$ is compatible with both lifts on $T^2M$: $T.\kappa \o \ell.T = T.\kappa \o T.\ell = \ell\o \kappa$.
        \end{enumerate}
        \item An affine horizontal connection is a map $\nabla:T_2 \to T^2M$ so that
        \begin{enumerate}
            \item $\nabla$ is a \emph{horizontal lift}, namely a section to the horizontal descent $(p.T, T.p): T^2 \Rightarrow T_2$, so that $(p.T,T.p) \o \nabla = id$;
            \item $\nabla$ is compatible with the linear structures on $T^2, T_2$:$ T.\nabla\o (\ell\x 0) \linebreak = \ell \o \nabla and T.\nabla(0\x \ell) = T.\ell \o \nabla$.
        \end{enumerate}
        \item An \emph{affine connection} is a pair $(\kappa, \nabla)$ comprising a vertical and horizontal connection on $M$ satisfying the compatibility conditions: 
        \begin{enumerate}
            \item $T.+ \o (+.T \o (\ell \o \kappa, T.0 \o p.T), \nabla(p.T, T.p)) = id_{T^2M}$, 
            \item $\kappa \o \nabla = 0 \o p \o \pi_i$.
        \end{enumerate}
        An affine connection is \emph{torsion-free} if $\kappa \o c = \kappa$.
    \end{enumerate}
\end{definition}


The data of a vertical connection is sufficient to define a full connection, as observed in \cite{LucyshynWright2018}.
\begin{lemma}[\cite{LucyshynWright2018}]
    A full connection is equivalent to a vertical connection in which the following diagram is a fiber product:
\[\begin{tikzcd}
    & TM \\
    {T^2M} & TM & M \\
    & TM
    \arrow["p", from=1-2, to=2-3]
    \arrow["p"{description}, from=2-2, to=2-3]
    \arrow["p"', from=3-2, to=2-3]
    \arrow["{p.T}", from=2-1, to=1-2]
    \arrow["{T.p}"', from=2-1, to=3-2]
    \arrow["\kappa"{description}, from=2-1, to=2-2]
\end{tikzcd}\]
\end{lemma}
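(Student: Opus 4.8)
The plan is to realise the correspondence $(\kappa,\nabla)\mapsto\kappa$ as a bijection between full connections on $M$ and those vertical connections for which the displayed cone is a limit. Since the pullback powers $T_nM$ exist and are preserved by $T$ by [TC.1], the ternary fibre product $TM\ts{p}{p}TM\ts{p}{p}TM$ is already available as $T_3M$, so the assertion that the diagram is a fibre product is equivalent to the assertion that the comparison map
\[
    \langle p.T,\kappa,T.p\rangle : T^2M \to T_3M
\]
is an isomorphism. I would prove the two directions of the equivalence and then check that the two assignments are mutually inverse.

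\emph{Full connection gives the fibre-product property.} Given a full connection $(\kappa,\nabla)$, the left-hand side of its first compatibility axiom factors through the comparison map: writing $\phi := T.+ \o (+.T \o (\ell \o \pi_1, T.0 \o \pi_0),\ \nabla \o (\pi_0,\pi_2)) : T_3M \to T^2M$ for the evident recombination of the three legs, that axiom reads $\phi \o \langle p.T,\kappa,T.p\rangle = id_{T^2M}$. Thus $\langle p.T,\kappa,T.p\rangle$ is a split monomorphism with retraction $\phi$. To upgrade this to an isomorphism I would verify $\langle p.T,\kappa,T.p\rangle \o \phi = id_{T_3M}$ by comparing legs: computing $p.T\o\phi$, $\kappa\o\phi$ and $T.p\o\phi$ and simplifying with $\kappa\o\ell=id$, the second compatibility axiom $\kappa\o\nabla = 0\o p\o\pi_i$, the section identity $(p.T,T.p)\o\nabla = id$, and the additive-bundle and lift axioms [TC.1] and [TC.3] recovers $\pi_0,\pi_1,\pi_2$ respectively. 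Hence the cone is a limit, and the two compatibility axioms of a full connection are exactly the two equations witnessing invertibility of $\langle p.T,\kappa,T.p\rangle$.

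\emph{Vertical connection with the fibre-product property gives a full connection.} Suppose conversely that $\kappa$ is a vertical connection and $\langle p.T,\kappa,T.p\rangle$ is invertible. I define $\nabla: T_2M \to T^2M$ as the unique map into the fibre product cut out by the three legs
\[
    p.T \o \nabla = \pi_0, \qquad \kappa \o \nabla = 0 \o p \o \pi_0, \qquad T.p \o \nabla = \pi_1 ,
\]
which exists and is unique since $T^2M$ is the ternary fibre product and the three maps $T_2M\to TM$ share the base point $p\o\pi_0 = p\o\pi_1$. By construction $(p.T,T.p)\o\nabla = id$, so $\nabla$ is a horizontal lift, and $\kappa\o\nabla = 0\o p$ is the second full-connection axiom for free. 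What remains is to check that $\nabla$ respects the linear structures on $T^2M$ and $T_2M$ and that the first full-connection axiom holds; for each of these I would prove the identity by postcomposing both sides with $p.T$, $\kappa$ and $T.p$, reducing to an equality of three legs in $T_3M$ that follows from the vertical-connection axioms $\kappa\o\ell=id$ and $T.\kappa\o\ell.T = T.\kappa\o T.\ell = \ell\o\kappa$ together with [TC.1] and [TC.3].

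Finally, the two assignments are mutually inverse. Passing from $(\kappa,\nabla)$ to $\kappa$ and back, the reconstructed horizontal connection satisfies the same three defining legs as $\nabla$ — the decisive one being $\kappa\o\nabla = 0\o p$, which is precisely the second full-connection axiom — so uniqueness in the fibre product forces it to equal $\nabla$; the reverse round-trip returns $\kappa$ unchanged. I expect the main obstacle to be the second direction: verifying that the $\nabla$ cut out by the universal property genuinely satisfies the linear-compatibility equations of a horizontal connection and the first full-connection axiom. These are genuine coherence computations rather than bookkeeping, and the natural way to discharge them is to re-express each side as a map into $T_3M$ and compare legs, reducing every identity to the vertical-connection axioms and the universality of $\mu$ from Corollary \ref{cor:mu-universal} (equivalently [TC.3](iv)); the step most likely to cause trouble is keeping track of which of the two fibrewise additions on $T^2M$ is in play at each stage.
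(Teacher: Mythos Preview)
The paper does not actually prove this lemma: it is stated with a citation to \cite{LucyshynWright2018} and no proof is given (the same holds for the analogous Proposition~\ref{prop:rory-connection} for differential bundles). So there is no ``paper's own proof'' to compare against.

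Your reconstruction is the natural one and is correct in outline. Rewriting the fibre-product condition as invertibility of the comparison $\langle p.T,\kappa,T.p\rangle:T^2M\to T_3M$ is the right move; the first compatibility axiom of a full connection is then literally one of the two inverse equations, and checking the other leg-by-leg using $\kappa\o\ell=id$, $\kappa\o\nabla=0\o p\o\pi_i$, and $(p.T,T.p)\o\nabla=id$ is routine. In the reverse direction, defining $\nabla$ as the unique map with legs $(\pi_0,\,0\o p\o\pi_0,\,\pi_1)$ and then verifying the horizontal-connection linearity and the first compatibility axiom by postcomposing with $p.T,\kappa,T.p$ is exactly how Lucyshyn-Wright proceeds. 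Your honest flagging of the linearity verification for $\nabla$ as the delicate step is appropriate: that is where the vertical-connection compatibilities $T.\kappa\o\ell.T=\ell\o\kappa=T.\kappa\o T.\ell$ are genuinely used, and keeping the two additive structures on $T^2M$ straight is the only real hazard.
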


\begin{example}
    ~\begin{enumerate}[(i)]
        \item Every differential object in a tangent category has a canonical vertical connection given by $(p \o p.T, \phat \o T.\phat)$. A morphism of differential objects will preserve this vertical connection.
        \pagenote{Fixed the notation for the affine vertical connection on a differential object induced by a differential object's $\phat$ map. }
        \item Every smooth manifold has a non-natural choice of Riemannian metric. By the fundamental theorem of Riemannian geometry, there is a torsion-free connection associated with the metric. (See any standard reference on Riemannian geometry, e.g. \cite{Carmo1992}.)
    \end{enumerate}
\end{example}

Connections allow for arguments on higher powers of the tangent bundle to be pushed down to pullback powers of $T$.
\begin{observation}
    Suppose $M, N$ each have connections $(\kappa_{-}, \nabla_{-})$. 
    The map $T^2.f: T^2M \to T^2N$ can be written using local coordinates $\widehat{T^2.f}: T_3M \to T_3N$ as
    \[\begin{tikzcd}
        {T^2M} & {T^2N} \\
        {T_3M} & {T_3N}
        \arrow["{T.+ \o (+.T \o (\ell \o \pi_2, T.0 \o \pi_0), \nabla(\pi_0, \pi_1))}", from=2-1, to=1-1]
        \arrow["{T^2.f}", from=1-1, to=1-2]
        \arrow["{(p.T, T.p, \kappa_N)}", from=1-2, to=2-2]
        \arrow[dashed, from=2-1, to=2-2]
    \end{tikzcd}\]
    where $\widehat{T^2.f}$ is given by
    \[
        (T.f\o\pi_0, T.f \o\pi_1, T.f \o \pi_2 +_N \nabla[f](\pi_0,\pi_1))
    \]
    with $\nabla[f] := \kappa_N \o T^2.f \o \nabla_M$.
\end{observation}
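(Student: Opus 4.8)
The plan is to verify that the dashed arrow $\widehat{T^2.f}$ is forced to have the stated form by observing that the two vertical maps of the square are mutually inverse coordinatizations supplied by the connections, and then computing the conjugate $\rho_N \o T^2.f \o \iota_M$ one component at a time. Write $\iota_M := T.+ \o (+.T \o (\ell \o \pi_2, T.0 \o \pi_0), \nabla_M(\pi_0, \pi_1)): T_3M \to T^2M$ for the lower-left map and $\rho_N := (p.T, T.p, \kappa_N): T^2N \to T_3N$ for the right-hand map. First I would record that $\iota_M$ and $\rho_M$ are inverse isomorphisms: substituting $\pi_0 = p.T$, $\pi_1 = T.p$, $\pi_2 = \kappa_M$ into $\iota_M$ and precomposing with $\rho_M$ reproduces exactly the affine-connection compatibility equation $T.+ \o (+.T \o (\ell \o \kappa, T.0 \o p.T), \nabla(p.T, T.p)) = id_{T^2M}$, so $\iota_M \o \rho_M = id$; the reverse identity $\rho_M \o \iota_M = id$ follows by a componentwise check using $\kappa \o \ell = id$, $\kappa \o \nabla = 0 \o p \o \pi_i$, and the bundle conditions on $\nabla$. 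Since $T_3N$ is a triple $p$-pullback, a map into it is determined by its three projections, so it suffices to compute $\pi_i \o \rho_N \o T^2.f \o \iota_M$ for $i = 0,1,2$.

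The first two components are immediate from naturality. Naturality of $p$ (whiskered by $T$ on either side) gives $p.T \o T^2.f = T.f \o p.T$ and $T.p \o T^2.f = T.f \o T.p$, while the inverse relation $\rho_M \o \iota_M = id$ gives $p.T \o \iota_M = \pi_0$ and $T.p \o \iota_M = \pi_1$; hence the $0$- and $1$-components are $T.f \o \pi_0$ and $T.f \o \pi_1$, as claimed.

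The substance of the statement is the $\kappa_N$-component, and here the main subtlety is that $T^2M$ carries two distinct additive bundle structures, the $p.T$-bundle (addition $+.T$, zero $0.T$) and the $T.p$-bundle (addition $T.+$, zero $T.0$), and the computation interleaves both. I would first note that $\iota_M$ is a $T.+$-sum of $A := +.T \o (\ell \o \pi_2, T.0 \o \pi_0)$ and $B := \nabla_M(\pi_0, \pi_1)$, and that $T^2.f = T(T.f)$ is an additive bundle morphism for both structures (naturality of $+$ together with functoriality of $T$). Likewise the connector $\kappa_N$ is an additive bundle morphism for both structures into the $p$-bundle on $TN$, a fact I would cite from the theory of connections (\cite{Cockett2017,LucyshynWright2018}), from which $\kappa_N \o \ell_N = id$ and $\kappa_N \o T.0_N = 0_N \o p_N$ also follow. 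Distributing $\kappa_N \o T^2.f$ over the outer $T.+$-sum yields $(\kappa_N \o T^2.f \o A) +_N (\kappa_N \o T^2.f \o B)$. The second summand is $\kappa_N \o T^2.f \o \nabla_M \o (\pi_0, \pi_1) = \nabla[f](\pi_0,\pi_1)$ by the defining equation $\nabla[f] := \kappa_N \o T^2.f \o \nabla_M$. For the first, pushing $T^2.f$ through the inner $+.T$-sum and using naturality of $\ell$ and $0$ rewrites $T^2.f \o A$ as $+.T \o (\ell_N \o T.f \o \pi_2, T.0_N \o T.f \o \pi_0)$; applying $\kappa_N$, distributing over $+.T$, and using $\kappa_N \o \ell_N = id$ and $\kappa_N \o T.0_N = 0_N \o p_N$ collapses this to $(T.f \o \pi_2) +_N (0_N \o f \o p_M \o \pi_0)$. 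Because all three legs of $T_3M$ share a base point ($p \o \pi_0 = p \o \pi_2$), the second term is the additive unit of the fibre containing $T.f \o \pi_2$, so the first summand is simply $T.f \o \pi_2$. Assembling the three components gives $\widehat{T^2.f} = (T.f \o \pi_0, T.f \o \pi_1, T.f \o \pi_2 +_N \nabla[f](\pi_0,\pi_1))$.

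The step I expect to be the main obstacle is exactly this last component: keeping the two additive structures on $T^2$ straight and justifying that the connector distributes over each of $+.T$ and $T.+$ (equivalently, that $\kappa_N$ is bi-additive). If the bi-additivity of $\kappa$ is not already available from the connection axioms, it must be established as a preliminary lemma; with it in hand, the remaining manipulations are routine naturality and the defining equations $\kappa \o \ell = id$ and $\kappa \o T.0 = 0 \o p$.
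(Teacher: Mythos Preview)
The paper states this as an \emph{Observation} with no accompanying proof; it simply records the formula and moves on to note the special case where $f$ preserves the connections. Your proposal therefore supplies exactly the verification the paper leaves to the reader, and the route you take---invert the coordinatization using the full-connection compatibility equation, then compute the three projections of $\rho_N \o T^2.f \o \iota_M$ using naturality for the first two and bi-additivity of $\kappa_N$ for the third---is the natural one and is correct.

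One small remark on your identified obstacle: the bi-additivity of $\kappa$ you need is indeed available. In the paper's Definition of an affine vertical connection, $\kappa$ is required to be compatible with both lifts on $T^2M$, i.e.\ a lift morphism for both $(T^2M,\ell.T)$ and $(T^2M,T.\ell)$ into $(TM,\ell)$; by the results of Chapter~\ref{ch:differential_bundles} (specifically Proposition~\ref{prop:induce-abun} and the surrounding discussion), lift morphisms between differential bundles are automatically additive bundle morphisms, which gives distribution over both $+.T$ and $T.+$ as well as the identities $\kappa \o \ell = id$ and $\kappa \o T.0 = 0 \o p$ you use. So no separate preliminary lemma is required.
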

Note that in the case that $f$ preserves the connections, $\nabla[f] = 0$, as 
\[
    \kappa_N \o T^2.f \o \nabla_M = Tf \o \kappa_M \o \nabla_M = Tf \o 0 \o p = 0 \o f \o p.
\]


\section{Submersions}\label{sec:submersions}

The category of smooth manifolds is incomplete: there are cospans\footnote{Following a general convention in category theory, where the prefix "co"-X means an X in the opposite category, a cospan in $\C$ is a span in the dual category of $\C$.} $X \xrightarrow[]{f} M \xleftarrow[]{g} Y$ for which the pullback fails to exist. Following \cite{thom1954}, the pullback of a cospan exists and is preserved by $T$ (i.e. it is a \emph{T-limit}) whenever for each point $f(x) = g(y)$, the direct sum of the images of $T_xf$ and $T_yg$ is the full vector space $T_{f(x)}M$, such cospans are called \emph{transverse}. Submersions, then, form a convenient class of maps, as any cospan where one map is a submersion will be transverse. More precisely:
\begin{definition}
    \label{def:submersion-sman}
    If $A$ and $B$ are smooth manifolds, a smooth function $f:\linebreak A \rightarrow B$ is a \emph{submersion} if and only if the derivative $Df|_a$ of $f$ at every point $a\in A$ is a surjective linear map.
\end{definition}
With this definition we have the following result:
\begin{proposition}%
    \label{prop:submersion-properties}
    In the category of smooth manifolds, let the class of submersions be denoted by $\sh$.
    \begin{enumerate}[(i)]
        \item Submersions are closed under the tangent functor: $f \in \sh \Rightarrow T.f \in \sh$.
        \item Submersions are closed to pullback along arbitrary maps:
\[\begin{tikzcd}
	& X & {g^*X} & X \\
	N & M & N & M
	\arrow["g"', from=2-1, to=2-2]
	\arrow[""{name=0, anchor=center, inner sep=0}, "{f\in\mathcal{S}}"', from=1-2, to=2-2]
	\arrow[""{name=1, anchor=center, inner sep=0}, "{g^*f\in \mathcal{S}}", from=1-3, to=2-3]
	\arrow["g"', from=2-3, to=2-4]
	\arrow["f", from=1-4, to=2-4]
	\arrow["{\bar{g}}", from=1-3, to=1-4]
	\arrow["\lrcorner"{anchor=center, pos=0.125}, draw=none, from=1-3, to=2-4]
	\arrow[shorten <=10pt, shorten >=10pt, Rightarrow, from=0, to=1]
\end{tikzcd}\] 
        This will often be referred to as \emph{$T$-stability under reindexing}, as it induces a functor between slice categories: 
        \[g^*: \mathsf{Submersions}/M \to \mathsf{Submersions}/N.\]
    \end{enumerate}
    \pagenote{ Based on a comment from Michael, it seemed appropriate to move the T-prefix to stability, as the reindexing operation is unchanged.}
\end{proposition}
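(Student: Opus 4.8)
The plan is to handle the two parts separately, reducing each to an elementary surjectivity check in local coordinates, while leaning on the cited transversality result of \cite{thom1954} for the existence of the pullback in part (ii). For (i), I would work in a coordinate patch: by Example \ref{example:tangcat-sman}, restricting $f$ to $U \subseteq \R^m \to V \subseteq \R^n$, the tangent functor acts as $Tf(x,v) = (f(x),\, Df|_x[v])$. Differentiating this at a point $(x,v)$ in the direction $(h,k)$ gives
\[
    D(Tf)|_{(x,v)}(h,k) = \big(\, Df|_x[h],\ D^2 f|_x[h,v] + Df|_x[k] \,\big).
\]
To show this is surjective onto $\R^n \times \R^n$, given a target $(a,b)$ I would first use surjectivity of $Df|_x$ (the submersion hypothesis) to solve $Df|_x[h] = a$, and then use surjectivity again to solve $Df|_x[k] = b - D^2 f|_x[h,v]$. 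Hence $D(Tf)$ is surjective everywhere and $Tf \in \sh$. (An alternative, more synthetic route is to put $f$ in submersion normal form as a projection $\pi$ and use that $T$ preserves products, so $T\pi$ is again a projection onto a factor, hence a submersion.)

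For (ii), the first step is to observe that any cospan $N \xrightarrow{g} M \xleftarrow{f} X$ with $f \in \sh$ is automatically transverse: at a point with $g(y) = f(x) = m$ the image of $T_x f$ already fills $T_m M$, so its sum with the image of $T_y g$ is all of $T_m M$. By the cited result, the transverse pullback $g^* X$ then exists as a smooth manifold and is a $T$-limit, so $T(g^* X) \cong TN \times_{TM} TX$. Consequently the tangent space at $(y,x)$ is the fibred product $\{(w,v) \in T_y N \times T_x X : T_y g(w) = T_x f(v)\}$, and the derivative of the projection $g^* f : g^* X \to N$ is the first-factor map $(w,v) \mapsto w$. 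This is surjective onto $T_y N$: for any $w$, surjectivity of $T_x f$ lets us solve $T_x f(v) = T_y g(w)$. Therefore $g^* f \in \sh$.

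Finally I would establish functoriality. A morphism in $\mathsf{Submersions}/M$ from $(X \xrightarrow{f} M)$ to $(X' \xrightarrow{f'} M)$ is a map $h : X \to X'$ with $f' \circ h = f$; the universal property of the pullback $g^* X' = N \times_M X'$ yields a unique $g^* h : g^* X \to g^* X'$ over $N$ with $g^* f' \circ g^* h = g^* f$, and uniqueness forces preservation of identities and composites, giving the functor $g^* : \mathsf{Submersions}/M \to \mathsf{Submersions}/N$. The main obstacle is not the derivative checks, which are routine linear algebra, but rather the existence of the pullback as a smooth manifold in the first place; this is exactly the classical transversality theorem we are permitted to cite, so the real care goes into correctly identifying $T(g^* X)$ as a fibred product of tangent spaces (using that the pullback is a $T$-limit) so that the derivative of $g^* f$ is manifestly the surjective first projection.
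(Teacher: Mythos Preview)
Your proof is correct. In the paper, Proposition~\ref{prop:submersion-properties} is stated without proof as a classical fact about smooth manifolds, serving as motivation for the abstract notion of a tangent display system (Definition~\ref{def:display-system}). So there is no paper proof to compare against directly.

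That said, it is worth noting how your approach relates to what the paper does next. You argue concretely: local coordinates and the explicit formula for $D(Tf)$ for part (i), and the transversality theorem plus the identification $T(g^*X) \cong TN \times_{TM} TX$ for part (ii). The paper instead introduces the abstract notion of a \emph{tangent submersion} (Definition~\ref{def:tangent-submersion}), shows it coincides with the classical notion in $\mathsf{SMan}$, and then proves the analogous stability properties (Lemma~\ref{lem:submersion}) purely from weak-pullback diagram chasing, with no coordinates at all. Your argument is the differential-geometric one the paper is abstracting away from; the paper's later argument is the synthetic one that works in any tangent category. Both are valid, and yours is exactly the kind of concrete verification one would want to see that the abstract definition captures the classical examples.
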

The properties of the class of submersions in the category of smooth manifolds were studied in \cite{Cockett2018} and axiomatized as a \emph{tangent display system}.

\begin{definition}\label{def:display-system}
    A \emph{tangent display system} in a tangent category $\C$ is a class of maps $\mathcal{D}$ in $\C$ that is
    \begin{itemize}
        \item stable under the tangent functor, $d \in \d \Rightarrow T.d \in \d$,
        \item $T$-stable under reindexing (as in Proposition \ref{prop:submersion-properties}).
    \end{itemize}
    We call any tangent display system that is closed to retracts in the arrow category a \textit{retractive} display system. If for all $M$, $p_M \in \mathcal{D}$, we call $\mathcal{D}$ a proper (retractive)\pagenote{Brackets are a common notation for optional prefixes, e.g. (co)limits.} display system. 
\end{definition}
This section will show that the submersions in the category of smooth manifolds give a retractive display system, yielding a general construction of retractive display systems from display systems.

The definition of a submersion may be rephrased as follows: $f$ is a submersion if and only if for all $a\in A$ and all $v\in T(B)$ such that $fa = pv$, there exists a $w\in T(A)$ such that $T.f \o w = v$.
This is a \textit{weakly} universal cone over $A \xrightarrow{f} B \xleftarrow{p} TB$: there exists \textit{at least} one morphism into it for any other cone over the diagram. 
\begin{definition}\label{def:weak-pullback}
    A commuting square is a \textit{weak pullback} if for any $x:X \to A$ and $y:X \to B$ so that $fx = gy$, there exists a map $X \to W$ making the following diagram commute:
    \[
        \begin{tikzcd}
        X \ar[bend left]{rrd}{x} \ar[bend right]{ddr}[swap]{y} \ar[dashed]{rd}{\exists} \\
            & W \rar{a} \dar[swap]{b} & A \dar{f} \\
            & B \rar[swap]{g} & C
        \end{tikzcd}
    \]
    If the above diagram is a weak pullback for each $T^n$, then it is a \emph{weak $T$-pullback}.
\end{definition}
\begin{lemma}\label{lem:pb-retract}
    Should the pullback of $A \xrightarrow{f} C \xleftarrow{g} B$ exist, Definition \ref{def:weak-pullback} is equivalent to asking that the induced map $(a, b):W \to A \ts{f}{g} B$ be a split epimorphism.
\end{lemma}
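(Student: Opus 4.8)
The plan is to run the standard universal-property argument for the pullback $P := A \ts{f}{g} B$, whose projections I will write $\pi_0 : P \to A$ and $\pi_1 : P \to B$. These satisfy $f \o \pi_0 = g \o \pi_1$ and are jointly monic, and the induced map $(a,b) : W \to P$ is the unique arrow with $\pi_0 \o (a,b) = a$ and $\pi_1 \o (a,b) = b$. The crux of the whole lemma is a single bookkeeping observation: a map $s : P \to W$ is a section of $(a,b)$ if and only if $a \o s = \pi_0$ and $b \o s = \pi_1$. This holds because joint monicity of the projections lets one verify $(a,b) \o s = id_P$ after postcomposing with $\pi_0$ and $\pi_1$, and $\pi_0 \o (a,b) \o s = a \o s$, $\pi_1 \o (a,b) \o s = b \o s$.

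For the forward implication I would apply the weak pullback property of Definition \ref{def:weak-pullback} to the universal cone itself, taking $X = P$, $x = \pi_0$ and $y = \pi_1$; this is a legitimate test cone since $f \o \pi_0 = g \o \pi_1$. The property yields a map $s : P \to W$ with $a \o s = \pi_0$ and $b \o s = \pi_1$, which by the observation above is exactly a section of $(a,b)$, so $(a,b)$ is a split epimorphism.

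For the converse, suppose $(a,b)$ admits a section $s$ with $(a,b) \o s = id_P$, so that $a \o s = \pi_0$ and $b \o s = \pi_1$. Given any test cone $x : X \to A$, $y : X \to B$ with $f \o x = g \o y$, the universal property of the pullback produces the mediating map $(x,y) : X \to P$ with $\pi_0 \o (x,y) = x$ and $\pi_1 \o (x,y) = y$. Then $s \o (x,y) : X \to W$ satisfies $a \o s \o (x,y) = \pi_0 \o (x,y) = x$ and $b \o s \o (x,y) = \pi_1 \o (x,y) = y$, so it is the required factorization through $W$; hence the square is a weak pullback.

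The argument is entirely formal, so I do not anticipate a genuine obstacle; the only point requiring care is the identification of a section of $(a,b)$ with a weak-pullback factorization of the universal cone $(\pi_0, \pi_1)$, which rests on the joint monicity of the pullback projections. Once that equivalence is isolated, both implications fall out by applying it to the universal cone and by precomposing the section with the mediating maps of arbitrary cones, respectively.
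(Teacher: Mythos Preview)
Your proof is correct and follows essentially the same approach as the paper: apply the weak pullback property to the universal cone $(\pi_0,\pi_1)$ to obtain a section of $(a,b)$, and conversely precompose a given section with the mediating map of an arbitrary cone. You have simply spelled out more carefully the role of joint monicity in identifying a section of $(a,b)$ with a factorization of $(\pi_0,\pi_1)$ through $W$.
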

\begin{proof}
    Let $r$ be a retract of $(a,b):W \to A\ts{f}{g} B$. For any $X \xrightarrow[]{(x,y)}  A\ts{f}{g} B$, the map $ r \o (x,y)$ exhibits the diagram as a weak pullback. For the converse, the unique map $(a,b):W \to A \ts{f}{g} B$ will be a section of any map $A \ts{f}{g} B \to W$ induced by weak univerality.\pagenote{
    This result was originally stated without proof.
    }
\end{proof}
We now restate the submersion property for a map $f$ using global elements (for all $a\in A$ and all $v\in T(B)$ such that $fa = pv$, there exists a $w\in T(A)$ such that $T(f)w = v$) using generalized elements.
\begin{definition}\label{def:tangent-submersion}
    An arrow $f:A \rightarrow B$ in a tangent category is a \emph{tangent submersion} if and only if the naturality diagram
    \[
        \begin{tikzcd}
            TA \rar{Tf} \dar[swap]{p} & TB \dar{p} \\
            A \rar[swap]{f} & B
        \end{tikzcd}
    \] 
    is a weak $T$-pullback. 
\end{definition}{}
Following Lemma \ref{lem:pb-retract}, in the case that the pullback exists this is equivalent to asking for a section $h: A \ts{f}{p} TB \to TA$ of the horizontal descent $(p,Tf): TA \to A \ts{f}{p} TB$ (this section is sometimes called a \textit{horizontal lift} in differential geometry literature \cite{Cordero1989}). 
In smooth manifolds, the $T$-pullback along the projection $p:T \Rightarrow id$ always exists, so to prove that every submersion is a tangent submersion it suffices to show the existence of a horizontal lift.
\begin{proposition}
    In the category of smooth manifolds, the tangent submersions are precisely the submersions (Definition \ref{def:submersion-sman})
\end{proposition}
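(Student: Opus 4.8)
The plan is to prove the two containments separately. The containment \emph{tangent submersion} $\Rightarrow$ \emph{submersion} is immediate from the reformulation recorded above, so the real content is the converse, that every submersion is a tangent submersion; the difficulty there is that Definition~\ref{def:tangent-submersion} asks for a \emph{weak $T$-pullback}, i.e.\ a weak pullback after applying $T^n$ for every $n$, not merely at $n=0$. Throughout, write $S_f$ for the naturality square of $p$ at $f$ appearing in Definition~\ref{def:tangent-submersion}.

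For the easy direction, suppose $f$ is a tangent submersion, so in particular $S_f$ itself (the case $n=0$) is a weak pullback. Testing this square against the one-point manifold, a point $a \in A$ together with a tangent vector $v \in TB$ satisfying $f(a) = p(v)$ --- that is, $v \in T_{f(a)}B$ --- lifts to some $w \in T_a A$ with $T.f(w) = v$. As this holds for every such $v$, the linear map $Df|_a : T_a A \to T_{f(a)}B$ is surjective at each $a \in A$, so $f$ is a submersion in the sense of Definition~\ref{def:submersion-sman}.

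For the converse I would collapse the entire tower of weak pullbacks into a single horizontal lift. Since $p:TB \to B$ is a submersion, the cospan $A \xrightarrow{f} B \xleftarrow{p} TB$ is transverse, so the pullback $A \ts{f}{p} TB$ exists and is a $T$-limit, hence preserved by every $T^n$. By Lemma~\ref{lem:pb-retract} it therefore suffices to construct one section $h : A \ts{f}{p} TB \to TA$ of the horizontal descent $(p, T.f)$: applying $T^n$ to $h$ and using both $T^n(A \ts{f}{p} TB) \cong T^n A \ts{T^n.f}{T^n.p} T^{n+1}B$ and $T^n(p, T.f) = (T^n.p, \, T^{n+1}.f)$ produces a section of the comparison map for $T^n S_f$, so that $T^n S_f$ is a weak pullback for every $n$ and $f$ is a tangent submersion. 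To build $h$, equip $A$ with a Riemannian metric (available by a partition-of-unity argument, as flagged in the remark following Definition~\ref{def:smooth-manifold}). Because $f$ is a submersion, the vertical distribution $\ker(Df) \subseteq TA$ is a smooth subbundle of locally constant rank, its orthogonal complement $H \subseteq TA$ is again a smooth subbundle, and $Df$ restricts on each fibre to an isomorphism $H_a \to T_{f(a)}B$; setting $h(a,v) = (Df|_{H_a})^{-1}(v)$ gives a smooth section with $(p, T.f) \o h = \mathrm{id}$.

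I expect the genuine obstacle to be the passage from the \emph{pointwise} surjectivity of $Df$ to a \emph{globally smooth} section: a weak pullback in $\mathsf{SMan}$ requires an actual smooth horizontal lift, not just a set-theoretic lift of each point, and this is precisely where the Riemannian metric (equivalently, a partition-of-unity gluing of the local normal-form lifts of a submersion) carries the argument. Once $h$ is in hand, relaying it up the tower through $T$-limit preservation and Lemma~\ref{lem:pb-retract} is purely formal.
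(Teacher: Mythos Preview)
Your proof is correct and follows the same strategy as the paper: reduce to the existence of a horizontal lift and supply one. The paper's proof is a single-line citation to \cite{Cushman2015} for the lift construction, relying on the sentence just before the proposition (``in smooth manifolds, the $T$-pullback along the projection $p$ always exists, so \dots\ it suffices to show the existence of a horizontal lift'') to justify why that is enough. You fill in both pieces the paper leaves implicit --- the easy direction via global points, and the passage from one section $h$ to sections for every $T^n$ via $T$-preservation of the pullback and Lemma~\ref{lem:pb-retract} --- and you give the actual construction of $h$ using the orthogonal complement of $\ker(Df)$ with respect to a Riemannian metric, which is a standard realization of what the cited reference provides.
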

\begin{proof}
    There is an explicit construction of a horizontal lift for a classical smooth submersion in VII.1 of \cite{Cushman2015}.
\end{proof}
It is possible to show that the $T$-stability properties for submersions in the category of smooth manifolds follow from the general theory of weak pullbacks.
We begin by showing that weak pullbacks satisfy a weakened version of the pullback lemma and then show that the retract of a weak pullback is a weak pullback (the second lemma is Lemma 2.1 of \cite{Adamek2010}).
\begin{lemma}[Pullback lemma]\label{lem:weak-pullback}
    Consider the diagram
    \[
        \begin{tikzcd}
            \bullet \rar \dar \ar[rd, phantom, "(A)"] & \bullet \rar{f}\dar{g} \ar[rd, phantom, "(B)"] & \bullet\dar \\
            \bullet \rar & \bullet \rar & \bullet 
        \end{tikzcd}
    \]
    \begin{enumerate}[(i)]
        \item If $f,g$ are jointly monic and $(A)+(B)$ is a weak pullback, then $(A)$ is a weak pullback.
        \item If $(A),(B)$ are weak pullbacks then $(A)+(B)$ is a weak pullback.
    \end{enumerate}
\end{lemma}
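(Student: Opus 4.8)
The plan is to run two diagram chases using the weak universal property of Definition \ref{def:weak-pullback}, invoking the pieces $(A)$ and $(B)$ in an order that differs between the two parts. First I would fix notation by labelling the diagram
\[
\begin{tikzcd}
    P \rar{a} \dar{b} & Q \rar{f}\dar{g} & R \dar{h} \\
    S \rar{u} & U \rar{v} & V,
\end{tikzcd}
\]
so that $(A)$ is the left square with cospan $S \xrightarrow{u} U \xleftarrow{g} Q$, and $(B)$ is the right square with cospan $U \xrightarrow{v} V \xleftarrow{h} R$. The composite rectangle $(A)+(B)$ then has top edge $f \o a$ and bottom edge $v \o u$, with cospan $S \xrightarrow{v \o u} V \xleftarrow{h} R$; it commutes since $(A)$ and $(B)$ do.

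For part (ii), suppose both squares are weak pullbacks and take a test cone $x \colon X \to R$, $y \colon X \to S$ with $h \o x = v \o u \o y$. I would feed $x$ and $u \o y$ into $(B)$: because $v \o (u \o y) = h \o x$, weak universality of $(B)$ produces $q \colon X \to Q$ with $f \o q = x$ and $g \o q = u \o y$. I would then feed $q$ and $y$ into $(A)$, where the equation $g \o q = u \o y$ is precisely the required cone condition, obtaining $w \colon X \to P$ with $a \o w = q$ and $b \o w = y$. This $w$ factors the original cone through the rectangle, since $(f \o a) \o w = f \o q = x$ and $b \o w = y$.

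For part (i), suppose $f, g$ are jointly monic and $(A)+(B)$ is a weak pullback, and take a test cone $x \colon X \to Q$, $y \colon X \to S$ over $(A)$ with $g \o x = u \o y$. I would manufacture a cone over the \emph{rectangle} from $f \o x \colon X \to R$ and $y \colon X \to S$; its cone condition $h \o (f \o x) = v \o u \o y$ follows from $h \o f = v \o g$ (commutativity of $(B)$) combined with $g \o x = u \o y$. Weak universality of the rectangle then yields $w \colon X \to P$ with $(f \o a) \o w = f \o x$ and $b \o w = y$. The remaining task is to promote $f \o (a \o w) = f \o x$ to $a \o w = x$, and this is exactly where joint monicity enters: computing $g \o (a \o w) = u \o (b \o w) = u \o y = g \o x$ (using commutativity of $(A)$ and the cone condition), I find that $a \o w$ and $x$ agree after postcomposition with both $f$ and $g$, so $a \o w = x$.

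The argument is two essentially formal bookkeeping exercises, and the only genuinely substantive step—the sole place a hypothesis does real work—is the final move in (i), where the rectangle's factorization pins down $a \o w$ only after composing with $f$, and joint monicity of $(f,g)$ is precisely what upgrades this to an honest equality. The main thing to watch is that all cone conditions are actually \emph{derived} rather than assumed, since the squares are traversed in opposite orders in the two parts; I would verify each commuting identity explicitly to avoid silently using a universal property in the wrong direction.
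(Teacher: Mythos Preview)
Your proposal is correct and follows essentially the same approach as the paper's proof, which is just a terse sketch of the same two diagram chases. In part (i) you concatenate a cone over $(A)$ with $(B)$ to get a cone over the rectangle and then use joint monicity of $(f,g)$ to verify the factorization commutes for $(A)$; in part (ii) you factor a rectangle cone first through $(B)$ and then through $(A)$---exactly as the paper outlines, with the details filled in.
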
    
\begin{proof}
    ~\begin{enumerate}[(i)]
        \item If $(A)+(B)$ is a weak pullback, a map can be induced for a cone over $(A)$ by concatenating it with $(B)$; the jointly monic condition on $f,g$ guarantees that the map induced for $(A)+(B)$ will commute for $(A)$. \pagenote{caught a broken sentence here.}
        \item Given a cone for $(A)+(B)$ induce a map for $(B)$, which then induces a cone for $(A)$.
    \end{enumerate}
\end{proof}

\begin{lemma}\label{lem:wpb-retract}
    (Weak) pullbacks are closed to retracts.
\end{lemma}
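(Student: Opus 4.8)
The plan is to read \emph{retract} in the category of commuting squares, i.e.\ in the functor category whose objects are commutative squares and whose morphisms are four-tuples of maps commuting with all edges. So I take the given weak pullback square $D$ with corners $W,A,B,C$ and edges $a\colon W\to A$, $b\colon W\to B$, $f\colon A\to C$, $g\colon B\to C$, together with a second square $D'$ with corners $W',A',B',C'$ and edges $a',b',f',g'$, and I assume square morphisms $\iota\colon D'\to D$ and $\rho\colon D\to D'$ with $\rho\o\iota=\mathrm{id}_{D'}$. Concretely this supplies, at each corner $X$, maps $\iota_X\colon X'\to X$ and $\rho_X\colon X\to X'$ satisfying $\rho_X\o\iota_X=\mathrm{id}$, with $\iota$ and $\rho$ commuting with all four edges.

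First I would take an arbitrary test cone over $D'$: maps $x\colon Y\to A'$ and $y\colon Y\to B'$ with $f'\o x=g'\o y$. Transporting along $\iota$ yields a cone over $D$, since $f\o(\iota_A\o x)=\iota_C\o f'\o x=\iota_C\o g'\o y=g\o(\iota_B\o y)$, using that $\iota$ commutes with the right and bottom edges. Because $D$ is a weak pullback there is a (not necessarily unique) map $k\colon Y\to W$ with $a\o k=\iota_A\o x$ and $b\o k=\iota_B\o y$.

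Then I would push back along $\rho$, setting $h:=\rho_W\o k\colon Y\to W'$. Using that $\rho$ commutes with the top and left edges, $a'\o\rho_W=\rho_A\o a$ and $b'\o\rho_W=\rho_B\o b$, together with the section identities $\rho_A\o\iota_A=\mathrm{id}$ and $\rho_B\o\iota_B=\mathrm{id}$, I compute $a'\o h=\rho_A\o a\o k=\rho_A\o\iota_A\o x=x$ and symmetrically $b'\o h=y$. Hence $h$ witnesses the weak pullback property for $D'$, proving that weak pullbacks are closed under retracts.

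Finally, to justify the parenthetical \textbf{(}that genuine pullbacks are likewise closed under retracts\textbf{)}, I would add a uniqueness step: if $h_1,h_2\colon Y\to W'$ both satisfy $a'\o h_i=x$ and $b'\o h_i=y$, then $\iota_W\o h_1$ and $\iota_W\o h_2$ both induce the \emph{same} cone $(\iota_A\o x,\iota_B\o y)$ over the genuine pullback $D$, so they coincide; applying $\rho_W$ and using $\rho_W\o\iota_W=\mathrm{id}$ gives $h_1=h_2$. The whole argument is pure diagram-chasing, so I anticipate no real obstacle; the only point demanding care is the bookkeeping — consistently invoking that $\iota,\rho$ commute with the correct edges and that $\rho\o\iota$ is the identity at each corner. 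The weak $T$-pullback version is then immediate, since any functor $T^n$ preserves the retract data $(\iota,\rho)$ and hence carries a retract of squares to a retract of squares levelwise.
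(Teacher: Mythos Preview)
Your proposal is correct and follows essentially the same approach as the paper: transport a cone on the retract square along the section to obtain a cone on the weak pullback, induce a map there, then push back along the retraction. Your write-up is more detailed than the paper's (which is quite terse), and you additionally supply the uniqueness argument justifying the parenthetical ``(Weak)'' for genuine pullbacks, which the paper leaves implicit.
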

\begin{proof}
    Suppose that $S'$ is a weak pullback, and $S$ is a retract of it in the category of commuting squares. Consider the following diagram (suppressing the subscripts for $s,r$):
        \[
        \begin{tikzcd}[column sep = small, row sep = small]
            Z \ar[bend right]{rrd} \ar[bend right]{ddr}  \ar[bend left, dashed]{rrr} & A \ar{rd}{x} \ar{dd}[swap, near end]{y} \ar{rr}{s} & & A'\ar{rd}{x'} \ar{dd}[swap, near end]{y'} \ar{rr}{r} & & A\ar{rd}{x} \ar{dd}[near end]{y} \\
            & & B \ar{dd}[near start]{w} \ar{rr}[near start]{s} 
            & & B' \ar{dd}[near start]{w'} \ar{rr}[near start]{r} & & B \ar{dd}[near start]{w} \\
            & C \ar{rd}[swap]{z} \ar{rr}[near start]{s} & & C' \ar{rd}[swap]{z'} \ar{rr}[near start]{r} & & C \ar{rd}[swap]{z}  \\
            & & D \ar{rr}{s} & & D' \ar{rr}{r}& & D
        \end{tikzcd}
    \]
    Given a cone for $S$, there is a corresponding cone for $S'$ which induces a map $Z \to A'$ and postcomposition with $r_A$ gives the desired map into $A$.
\end{proof}
Using these lemmas, it is straightforward to prove that the following $T$-stability properties hold for tangent submersions.
\begin{lemma}\label{lem:submersion}
    In any tangent category $\mathbb{X}$,
    \begin{enumerate}[(a)]
        \item tangent submersions are closed to composition;
        \item tangent submersions are closed to retracts; 
        \item any $T$-pullback of a tangent submersion is a tangent submersion.
    \end{enumerate}
\end{lemma}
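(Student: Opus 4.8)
The plan is to reduce each of the three closure properties to a closure property of weak pullbacks that has already been established. Recall (Definition \ref{def:tangent-submersion}) that a map $h\colon X \to Y$ is a tangent submersion precisely when its naturality square---with top edge $T.h$, bottom edge $h$, and both verticals given by $p$---is a weak $T$-pullback, which by Definition \ref{def:weak-pullback} means that applying $T^n$ yields a weak pullback for every $n$. Since each $T^n$ is a functor, it preserves composites, retracts, and commuting squares; in every case it therefore suffices to prove that the \emph{base} naturality square has the desired property by an argument that applies verbatim to the image of the square under $T^n$. I will run each argument at the base level and note its stability under $T^n$.

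For (a), let $f\colon A \to B$ and $g\colon B \to C$ be tangent submersions. Functoriality gives $T(g\o f) = T.g \o T.f$, so the naturality square of $g \o f$ is exactly the horizontal pasting of the naturality square of $f$ (on the left) with that of $g$ (on the right), glued along the shared edge $p\colon TB \to B$. Applying $T^n$ and using that $T^n$ preserves this pasting, the two halves are weak pullbacks by hypothesis, so Lemma \ref{lem:weak-pullback}(ii) shows the pasted rectangle is a weak pullback. Hence $g \o f$ is a tangent submersion.

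For (b), suppose $f$ is a retract of a tangent submersion $g$ in the arrow category, witnessed by section/retraction pairs on source and target. Whiskering these data with $T$ and using naturality of $p$ exhibits the naturality square of $f$ as a retract, in the category of commuting squares, of the naturality square of $g$. After applying $T^n$ this remains a retract of a weak pullback, so Lemma \ref{lem:wpb-retract} shows the image is again a weak pullback, and $f$ is a tangent submersion.

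Part (c) is where I expect the real work. Take a $T$-pullback square with $f$ a tangent submersion and let $f'$ be the opposite projection; assembling this square, its image under $T$, and the four projection maps $p$ produces a commuting cube. Its bottom face is the given $T$-pullback and its top face is $T$ applied to that square, so both are \emph{strict} pullbacks; its back face is the naturality square of $f$, a weak pullback; and its front face is the naturality square of $f'$, which is what must be shown to be a weak pullback. The argument is a cube chase: given a test cone over the front face, push it along the pullback projections to a cone over the back face, use the weak pullback property there to obtain a (non-unique) lift, then pair this lift with the original data against the strict top-face pullback to produce the required map into the front face's apex, and finally verify it lies over the test cone by invoking uniqueness in the strict bottom-face pullback together with naturality of $p$. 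Running the identical chase on the $T^n$-image of the cube---legitimate because the base square is a \emph{$T$-}pullback, so all its powers remain pullbacks---shows the front face is a weak $T$-pullback, i.e.\ $f'$ is a tangent submersion. The main obstacle is organizing this chase so that strictness is invoked exactly where uniqueness is needed (the top and bottom faces) while only weak universality is demanded of the back face; this asymmetry is precisely what lets the weak notion propagate correctly through the cube.
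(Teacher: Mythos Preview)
Parts (a) and (b) match the paper exactly. For (c) your cube chase is correct, but the paper takes a shorter two-dimensional route. Writing $u$ for the given submersion and $v$ for its pullback, naturality of $p$ makes the outer rectangles of
\[
\begin{tikzcd}
TA \rar{T.f} \dar[swap]{T.v} & TM \rar{p} \dar{T.u} & M \dar{u} \\
TB \rar{T.g} & TN \rar{p} & N
\end{tikzcd}
\quad\text{and}\quad
\begin{tikzcd}
TA \rar{p} \dar[swap]{T.v} & A \rar{f} \dar{v} & M \dar{u} \\
TB \rar{p} & B \rar{g} & N
\end{tikzcd}
\]
coincide. The left rectangle is a weak pullback by Lemma~\ref{lem:weak-pullback}(ii) (its left square is $T$ of a $T$-pullback, its right square is the naturality square of $u$); in the right rectangle the right-hand square is the original pullback, so its legs out of $A$ are jointly monic, and Lemma~\ref{lem:weak-pullback}(i) cancels to give that the left square---the naturality square of $v$---is a weak pullback. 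Your chase is self-contained and avoids the cancellation half of Lemma~\ref{lem:weak-pullback}; the paper's version keeps all three parts uniform by reducing everything to Lemmas~\ref{lem:weak-pullback} and~\ref{lem:wpb-retract}. The equality of the two rectangles is precisely the commutativity of your cube, so the two arguments carry the same content, packaged differently.
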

\begin{proof}
    (a) follows from Lemma \ref{lem:weak-pullback} while (b) follows from Lemma \ref{lem:wpb-retract}. It remains to prove (c).     Consider a $T$-pullback, where $u$ is a tangent submersion:
    \[
    \begin{tikzcd}
        A \rar{f} \dar{v} & M \dar{u} \\
        B \rar{g} & N
    \end{tikzcd}
    \]
    By naturality, the outer paths of the following two diagrams are equal: 
    \pagenote{The equality of the two diagrams was originally asserted without explaining why they were equal.}
    \[
        \begin{tikzcd}
            TA \rar{Tf} \dar{Tv} & TM \rar{p} \dar{Tu} & M \dar{u} \\
            TB \rar{Tg} & TN \rar{p} & N 
        \end{tikzcd}
        =
        \begin{tikzcd}
            TA \rar{p} \dar{Tv} & A \rar{f} \dar{v} & M \dar{u} \\
            TB \rar{p} & B \rar{g} & N
        \end{tikzcd}
    \]
   Note that the left diagram is a weak pullback by composition.  Therefore the outer perimeter of the right diagram is a weak pullback, and the right square is a pullback, so the left square is a weak pullback by Lemma \ref{lem:weak-pullback}, as desired.
\end{proof}
The class of tangent submersions is closed to retracts in the arrow category and is conditionally $T$-stable under reindexing (if the $T$-pullback of a tangent submersion exists, it is a tangent submersion). This stability property leads to the following result:
\begin{proposition}\label{prop:display-submersions-are-r-display}
    Let $\mathbb{X}$ be a tangent category that allows for reindexing of the class of tangent submersions $\mathcal{R}$. Then the class of tangent submersions is a display system.
\end{proposition}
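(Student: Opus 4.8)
The plan is to verify the two defining conditions of a tangent display system (Definition \ref{def:display-system}) for the class $\mathcal{R}$ of tangent submersions: stability under the tangent functor, and $T$-stability under reindexing. The second condition is essentially handed to us. The hypothesis that $\mathbb{X}$ allows for reindexing of $\mathcal{R}$ guarantees the relevant $T$-pullbacks exist, and Lemma \ref{lem:submersion}(c) states that any $T$-pullback of a tangent submersion is again a tangent submersion; this is exactly $T$-stability under reindexing in the sense of Proposition \ref{prop:submersion-properties}(ii). So the real content of the proposition is establishing that $\mathcal{R}$ is closed under $T$, i.e. that $T.f$ is a tangent submersion whenever $f$ is.

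To set this up I would first record an easy bookkeeping fact: weak $T$-pullbacks are closed under $T$. If a square $S$ has the property that $T^n.S$ is a weak pullback for every $n \geq 0$ (Definition \ref{def:weak-pullback}), then $T.S$ satisfies $T^n.(T.S) = T^{n+1}.S$, a weak pullback for every $n$, so $T.S$ is again a weak $T$-pullback. Since $f \in \mathcal{R}$ means precisely that the naturality square of $p$ at $f$ is a weak $T$-pullback, its $T$-image is one as well.

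The key step, and the point where I expect the main obstacle, is that the $T$-image of the naturality square of $p$ at $f$ is \emph{not literally} the naturality square of $p$ at $T.f$: the former has left and right legs $T.p$ (that is, $T(p_A)$ and $T(p_B)$), while the latter has legs $p.T$ (that is, $p_{TA}$ and $p_{TB}$), and these two natural transformations $T.T \Rightarrow T$ genuinely differ. I would reconcile them using the canonical flip. The involution and naturality axioms [TC.2] give $p.T = T.p \o c$, hence $p_{TA} = T(p_A) \o c_A$ and $p_{TB} = T(p_B) \o c_B$; together with the naturality square $c_B \o T^2.f = T^2.f \o c_A$ of $c$, the tuple $(c_A, c_B, id_{TA}, id_{TB})$ assembles into an isomorphism of commuting squares between the naturality square of $p$ at $T.f$ and the $T$-image of the naturality square of $p$ at $f$. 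Because $c$ is a natural isomorphism, applying $T^n$ to this isomorphism of squares again yields an isomorphism of squares (with legs $T^n.c_A$, $T^n.c_B$ and identities), so the two squares are isomorphic simultaneously at every iterate $T^n$.

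Finally I would conclude the closure argument: the $T$-image of the naturality square of $p$ at $f$ is a weak $T$-pullback by the second step, and it is isomorphic — compatibly with all $T^n$ — to the naturality square of $p$ at $T.f$, so the latter is a weak $T$-pullback as well, i.e. $T.f \in \mathcal{R}$. Combined with the reindexing condition noted at the outset, $\mathcal{R}$ satisfies both clauses of Definition \ref{def:display-system} and is therefore a display system. The only delicate bit is the $p.T$ versus $T.p$ distinction: one must check that $c$ intertwines them in exactly the way the weak-pullback comparison requires, and that this comparison survives each application of $T$.
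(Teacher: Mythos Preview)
Your proof is correct, and in fact supplies substantially more than the paper's own argument. The paper's proof is a one-liner: ``Any class of maps that is closed to reindexing is a tangent display system, and the class of submersions is closed to retracts in the arrow category.'' This treats closure under $T$ as automatic (or as subsumed by reindexing), without justification; the second clause about retracts is there only to feed into the subsequent corollary about \emph{retractive} display systems.

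You correctly identify closure under $T$ as the point requiring work, and your argument via the canonical flip is exactly right: the weak $T$-pullback condition already gives that $T$ applied to the $p$-naturality square at $f$ is a weak $T$-pullback, and the axiom $T.p \o c = p.T$ from [TC.2](iii) together with naturality of $c$ furnishes an isomorphism of squares (stable under every $T^n$) converting this into the $p$-naturality square at $T.f$. So your proof fills precisely the gap the paper leaves implicit. The reindexing half is, as you note, immediate from the hypothesis plus Lemma~\ref{lem:submersion}(c).
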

\begin{proof}
    Any class of maps that is closed to reindexing is a tangent display system, and the class of submersions is closed to retracts in the arrow category.
\end{proof}
\begin{corollary}\label{cor:sman-r-display}
    The class of submersions in the category of smooth manifolds is a  proper retractive display system.
\end{corollary}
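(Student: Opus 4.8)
The plan is to assemble the corollary almost entirely from results already proved for tangent submersions, specialized to $\mathsf{SMan}$ via the earlier identification of tangent submersions with classical submersions, and then to verify the one genuinely new clause—properness—by a short local computation.

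First I would recall that in the category of smooth manifolds the tangent submersions coincide with the submersions of Definition \ref{def:submersion-sman}, so that the abstract machinery of Lemma \ref{lem:submersion} and Proposition \ref{prop:display-submersions-are-r-display} applies directly to the class $\sh$. The key input is Proposition \ref{prop:submersion-properties}: part (i) gives stability of $\sh$ under the tangent functor, while part (ii) gives $T$-stability under reindexing (the pullback of a submersion along an arbitrary map exists, is again a submersion, and is preserved by $T$). The latter is exactly the hypothesis that $\mathsf{SMan}$ ``allows for reindexing'' of $\sh$, so Proposition \ref{prop:display-submersions-are-r-display} immediately yields that $\sh$ is a tangent display system. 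Closure to retracts in the arrow category is Lemma \ref{lem:submersion}(b), which upgrades this to a \emph{retractive} display system.

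It then remains to establish properness, i.e. that $p_M : TM \to M$ lies in $\sh$ for every manifold $M$. This I would check in local coordinates: over a chart $U \cong U' \subseteq \R^n$ the projection $p_M$ restricts to the product projection $U' \times \R^n \to U'$, $(m,x) \mapsto m$, whose derivative at each point is the surjective linear projection $\R^n \times \R^n \to \R^n$. Since surjectivity of the differential is a pointwise, chart-local condition, $p_M$ is a submersion everywhere, hence $p_M \in \sh$ for all $M$, and the retractive display system is proper.

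There is essentially no genuine obstacle here: every structural property (stability under $T$, reindexing, retracts) has already been proved, and the only new content is the elementary observation that a tangent-bundle projection is a submersion. The single point deserving a moment's care is confirming that the ``allows for reindexing'' hypothesis of Proposition \ref{prop:display-submersions-are-r-display} is genuinely satisfied in $\mathsf{SMan}$—this is precisely Proposition \ref{prop:submersion-properties}(ii), which in turn rests on transversality: any cospan with one leg a submersion is transverse, guaranteeing that the relevant pullbacks exist and are $T$-limits.
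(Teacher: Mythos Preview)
Your proposal is correct and follows essentially the same approach the paper intends: the corollary is stated without proof as an immediate consequence of Proposition \ref{prop:display-submersions-are-r-display} (which already handles both the display-system and retract-closure clauses), the identification of tangent submersions with classical submersions, and Proposition \ref{prop:submersion-properties}(ii) supplying the reindexing hypothesis. Your explicit local-coordinate verification that $p_M$ is a submersion fills in the one detail the paper leaves to the reader for properness.
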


\chapter{Differential bundles}%
\label{ch:differential_bundles}

Our principal aim in this thesis is to provide an abstract tangent-categorical axiomatization for Lie algebroids.  To accomplish this, we must provide an axiomatization for Lie algebroids which is essentially algebraic (in the sense of \cite{Freyd1972}).  However, in the category of smooth manifolds, Lie algebroids are defined in terms of vector bundles and these are prima facie a highly non-algebraic notion.  

In addition to algebraic axioms which make it an $\R$-module in the slice over its base $M$, a vector bundle $q:E \to M$ satisfies a crucial topological requirement:  it must be locally trivial.  This means that the projection $q : E \to M$ must be locally isomorphic to a projection $\pi_0:U \x \R^n \to U$ for some open subset $U$ of $M$ and natural number $n$.  It is this property that permits calculations using local coordinates, an approach deeply enshrined in the culture of differential geometry.

\cite{Cockett2017} introduced the algebraic notion of a differential bundle.  Evidence that differential bundles are the appropriate generalization of vector bundles was provided by showing how classical results for vector bundles could be generalized to differential bundles in any tangent category  \cite{Cockett2017,Cockett2018}.  However, the precise relationship in the category of smooth manifolds between vector bundles and differential bundles was left open. The main result of this chapter (see \cite{MacAdam2021}) is that vector bundles and differential bundles coincide in the category of smooth manifolds.

The axiomatization of differential bundles focuses on another important property of vector bundles: given a vector bundle $q:E \to M$ and a vector $v$ in the fibre $E_x$ above $x \in M$, the tangent space $T_v(E_x)$ can be naturally identified with $E_x$.    This gives a lift map $\lambda:E \to TE$ which can be axiomatized.   While the lift map had long been noted in the differential geometry literature in the guise of the Euler vector field (see 6.11 of \cite{Kolar1993} and also Section 1 of \cite{Michor1996} which explicitly uses the term "lift"), it had not been adopted as the basis of an abstract axiomatization.  

More recently, in the differential geometry literature, \cite{Grabowski2009} and \cite{Bursztyn2016} realized that the multiplicative $\R^+$-action on the total space $E$ determines the vector bundle structure of $q:E \to M$, and conversely such a multiplicative action determines a vector bundle precisely when its Euler vector field (Definition \ref{def:evf}) satisfies an additional ``non-singular'' property.  This chapter extends these more recent observations on vector bundles to differential bundles.

The chapter begins by reviewing vector bundles, describing the Euler vector field construction that sends a vector bundle $q:E \to M$ to a "lift" map $\lambda:E \to TE$ from \cite{Grabowski2009}, whereas the rest of the chapter contains new results developed in collaboration with Matthew Burke. The second section establishes that these lifts are associative coalgebras for the weak comonad $(T,\ell)$, and that there is a fully faithful functor from vector bundles into the category of lifts for smooth manifolds. The third section identifies the universal property satisfied by the lift (or equivalently Euler vector field), while the fourth section shows that a non-singular lift corresponds precisely to a differential bundle. The fifth section proves the main theorem of the chapter: vector bundles are precisely differential bundles for smooth manifolds. The final section contains some remarks on extending affine connections to arbitrary differential bundles, which will be useful in Chapter \ref{ch:involution-algebroids}. 

\pagenote{Each chapter has been given a new introduction.}

\section{Vector bundles}%
\label{sec:vector-bundles}

A vector bundle over a manifold $M$ axiomatizes the notion of a smoothly varying family of vector spaces indexed by the points $m \in M$.
The driving example is that of the tangent bundle over a smooth manifold $M$, where the fibre above each point $m \in M$ is the tangent space $T_mM$.
The manifold structure guarantees that the projection is \emph{locally trivial}: given a chart $U \hookrightarrow M$, the next pullback splits as a product:
\[
\begin{tikzcd}
	{U \x \R^n} & TM \\
	U & M
	\arrow["p", from=1-2, to=2-2]
	\arrow[hook, from=2-1, to=2-2]
	\arrow[from=1-1, to=2-1]
	\arrow[from=1-1, to=1-2]
	\arrow["\lrcorner"{anchor=center, pos=0.125}, draw=none, from=1-1, to=2-2]
\end{tikzcd}\]
The local triviality of the tangent bundle is essential for various constructions and is part of the definition of a vector bundle.

\begin{definition}\label{def:vector-bundle}
  A \emph{vector bundle} is a tuple \[(q:E \to M, \xi:M \to E, +: E \ts{q}{q} E \to E, \cdot: \R \x E \to E)\] of morphisms in $\mathsf{SMan}$ so that
  \begin{enumerate}[(i)]
    \item the tuple $(q,\xi,+, \cdot)$ defines an $\R$-module in $\mathsf{SMan}/M$;
    \item the map $q:E \to M$ is locally trivial.
  \end{enumerate}
  The fibred $\R$-module structure means $E$ is a family of vector spaces indexed by $M$, $\{ E_m | m \in M \}$.
\end{definition}
It is important to note that the local triviality axiom guarantees that the projection of a vector bundle is a submersion (Definition \ref{def:submersion-sman}); thus pullback powers of $q:E \to M$ exist and are preserved by the tangent functor.\pagenote{This addresses a concern Kristine had raised about the preservation of $E_2$ by $T$.}
\begin{example}
    Consider the cylinder, defined as  the subset of $\R^3$ spanned by $C = \{ (x,y,z) | x^2 + y^2 = 1, z \in \R \}$:
    \[ 
\begin{tikzpicture}
    \draw[rounded corners=35pt](0,0)--(8,0);
    \draw[rounded corners=35pt](0,-2)--(8,-2);
    \draw[dashed] (8.5,-1) arc (0:360:0.5cm and 1cm);
    \draw (0.5,-1) arc (0:360:0.5cm and 1cm);
    \node (b) at (4,-2.5) {$\mathbb{R}$};
\end{tikzpicture}
\]
    Above each point $i \in S^1 = \{ (x,y) |  x^2 + y^2 = 1\}$ the fibre over $i$ is $\R$. For each point $i$, we can choose a sufficiently small $\epsilon$ and take the open set \[U_i = \{ (x,y) \in S^1 |  (i_x - x)^2 + (i_y - y)^2 \le \epsilon \},\] which may be flattened to $(-(1+\epsilon), 1+\epsilon) \x \R$.
\end{example}
\pagenote{Carried forward the change in chapter 1 re:notation for $\chi(M)$, then defined the $C^\infty$ module structure for the sections of a vector bundle projection (this is a more appropriate place to introduce this notation than the Lie algebroids section)}
The sections of a vector bundle also give rise to a $C^\infty(M)$-module, generalizing that aspect of the tangent bundle's fibred $\R$-module structure.
\begin{lemma}\label{lem:Cinfty-module-vbun}
  Given a vector bundle $q:E \to M$, write the set of sections of $q$ as $\Gamma(q)$; as, for example, $\Gamma(p.M) = \chi(M)$ (recall the notation from Definition \ref{def:operational-tang}). The set $\Gamma(q)$ has a $C^\infty(M)$-module structure in much the same way as $\chi(M)$:
  \[
      X +_{\Gamma(q)} Y := + \o (X,Y), \hspace{0.25cm}
      0_{\Gamma(q)} := \xi, \hspace{0.25cm}
      (f \cdot_{\Gamma(q)} X) (m) := f(m) \cdot X(m)
  \]
\end{lemma}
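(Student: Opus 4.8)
The plan is to deduce everything from the fibred $\R$-module structure $(q,\xi,+,\cdot)$ on $E$ supplied by Definition \ref{def:vector-bundle}, first checking that the three operations genuinely produce sections and then that the $C^\infty(M)$-module axioms hold. For well-definedness: given $X,Y\in\Gamma(q)$ we have $q\o X = q\o Y = id_M$, so the pairing $(X,Y)$ factors through the pullback $E\ts{q}{q}E$; since $+$ is a morphism of the slice $\mathsf{SMan}/M$ we have $q\o + = q\o\pi_i$, whence $q\o(+\o(X,Y)) = q\o X = id_M$, so $X+_{\Gamma(q)}Y$ is again a smooth section. The zero section $\xi$ is a section by hypothesis. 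For the scalar action I would rewrite $f\cdot_{\Gamma(q)}X$ as the composite $\cdot\o(f,X):M\to\R\x E\to E$, which is smooth as a composite of smooth maps; and because scalar multiplication is fibrewise (so $q\o\cdot = q\o\pi_E$ for the projection $\pi_E:\R\x E\to E$), we again obtain $q\o(\cdot\o(f,X)) = q\o X = id_M$.

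Next I would establish the axioms. The abelian-group structure $(\Gamma(q),+_{\Gamma(q)},0_{\Gamma(q)})$ follows directly from $(q,\xi,+)$ being a commutative monoid in $\mathsf{SMan}/M$, with additive inverses supplied by the $\R$-action of $-1$; associativity, commutativity, units and inverses are then inherited by postcomposition with sections. The module identities $(fg)\cdot X = f\cdot(g\cdot X)$, $(f+g)\cdot X = f\cdot X + g\cdot X$, $f\cdot(X+Y) = f\cdot X + f\cdot Y$, and $1\cdot X = X$ each reduce to the corresponding $\R$-module identity in the fibre $E_m$ evaluated at the scalars $f(m),g(m)\in\R$, so they hold pointwise and hence globally.

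The conceptual content, and the one point worth emphasising, is the passage from an \emph{internal} $\R$-module to an \emph{external} $C^\infty(M)$-module. The ring object governing the slice-module structure is the trivial bundle $M\x\R\to M$, whose object of global sections is exactly the ring $C^\infty(M) = \mathsf{SMan}(M,\R)$, and the global-sections functor sending $q:E\to M$ to $\Gamma(q)$ carries an internal module over this ring object to an ordinary module over $C^\infty(M)$. I expect no serious obstacle; the only care required is to track that the scalars in $f\cdot_{\Gamma(q)}X$ vary over $M$, so that it is the full fibred action $\cdot:\R\x E\to E$ (rather than a constant $\R$-action) that upgrades the naive $\R$-module of sections to a module over the larger ring $C^\infty(M)$.
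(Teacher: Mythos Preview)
Your proof is correct. The paper states this lemma without proof, treating it as a standard fact about vector bundles (the phrase ``in much the same way as $\chi(M)$'' is doing all the work, pointing back to Definition~\ref{def:operational-tang} where the analogous structure on $\chi(M)$ was defined but also not proved). Your argument supplies exactly the routine verification the paper elides: well-definedness of the operations as sections, and the module axioms inherited pointwise from the fibred $\R$-module structure. The only remark I would add is that your final paragraph, while correct, is more categorical than the paper's style here; the lemma is really just recording concrete formulas, so the internal-to-external passage is implicit rather than something the paper dwells on.
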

There are also a variety of general constructions that yield vector bundles.
\begin{example}
  ~\begin{enumerate}[(i)]
    \item The tangent bundle is a vector bundle: the construction in \Cref{sec:smooth-manifolds} makes it clear that the projection $p:TM \to M$ is a locally trivial, fibred $\R$-module over the base space $M$.
    \item A \emph{trivial} vector bundle over $M$ with fibres in $V$ is the product $M \x V$. In particular, every vector space is a trivial vector bundle above the one-point space $\{*\}$.
    \item Each $T_kM$ will be locally trivial; locally it looks like the $k$-fold product of the tangent space $p_k^{-1}(U) \cong U \x (\R^n)^k$ for an $n$-dimensional manifold $M$. More generally, one can take the fibrewise pullback $E_k = E \ts{q}{q} E \ts{q}{q} \dots \ts{q}{q} E$ and discover a vector bundle over $M$.
    \item The \emph{cotangent bundle} of $M$, $T^*M$,  has the \emph{dual} vector space of $T_mM$ above each point $M$: $T^*_m(M) = (T_mM)^*$. This space can be appropriately topologized to be smooth, and a set of sections of $\Gamma(T^*M)$ is isomorphic to the set of morphisms $TM \to \R$ that are linear in each fibre. This construction may be applied to any vector bundle and is called the dual vector bundle.
    \item Consider the space $\Lambda^n(E)$, the alternating tensor product of $E^*$. The set of sections of this vector bundle is equivalent to the alternating $n$-linear morphism $E_n \to \R$; when restricted to the tangent bundle, this is the space of differential $n$-forms.
  \end{enumerate}
\end{example}
There are two constructions on vector bundles that will be necessary to prove the main theorem of this section.
\begin{proposition}%
  \label{prop:retracts-reindexing-of-vbuns}
  Let $(q:E \to M, \xi, +_q, \cdot_q)$ be a vector bundle. 
  ~\begin{enumerate}[(i)]
    \item For any map $f:N \to M$, the $T$-reindexing of $q$ by $f$ is a vector bundle:
    \begin{equation}%
      \label{eq:reindex-vbun}
\begin{tikzcd}
	{f^*E} & E \\
	N & M
	\arrow["q", from=1-2, to=2-2]
	\arrow["f"', from=2-1, to=2-2]
	\arrow["{f^*q}"', from=1-1, to=2-1]
	\arrow["{\bar{f}}", from=1-1, to=1-2]
	\arrow["\lrcorner"{anchor=center, pos=0.125}, draw=none, from=1-1, to=2-2]
\end{tikzcd}
    \end{equation}
    \item Any retract of $q$ in the space of arrows is a vector bundle; that is, given
    \begin{equation}%
      \label{eq:ret-of-idemp}
\begin{tikzcd}
	F & E & F \\
	N & M & N
	\arrow["q"', from=1-2, to=2-2]
	\arrow["{r'}"', from=1-2, to=1-3]
	\arrow["r", from=2-2, to=2-3]
	\arrow["\pi", from=1-3, to=2-3]
	\arrow["s", from=2-1, to=2-2]
	\arrow["{s'}"', from=1-1, to=1-2]
	\arrow["\pi"', from=1-1, to=2-1]
	\arrow[curve={height=-12pt}, Rightarrow, no head, from=1-1, to=1-3]
	\arrow[curve={height=12pt}, Rightarrow, no head, from=2-1, to=2-3]
\end{tikzcd}
    \end{equation}
    if there is a vector bundle structure on $q$, then there is a vector bundle structure on $\pi$.
  \end{enumerate}
\end{proposition}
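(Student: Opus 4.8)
The two parts are handled separately, and part (i) feeds into part (ii).

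For (i), the plan is first to observe that local triviality makes the projection $q$ a submersion (Definition \ref{def:submersion-sman}), so by Proposition \ref{prop:submersion-properties} the pullback $f^*E$ exists, is again a submersion, and is a $T$-limit (preserved by $T$ and all its powers). I would then argue that the reindexing functor $f^*\colon \mathsf{SMan}/M \to \mathsf{SMan}/N$, restricted to the submersions along which these pullbacks exist, preserves the finite slice-limits (terminal object and fibre products) used to express the $\R$-module axioms; since an $\R$-module in $\mathsf{SMan}/M$ is precisely such a finite-limit diagram, its image $(f^*q,\ldots)$ is again an $\R$-module in $\mathsf{SMan}/N$. Local triviality transports directly: given $n \in N$, choose a trivializing open $U \ni f(n)$ with $q^{-1}(U) \cong U \x \R^k$; then $f^{-1}(U)$ trivializes $f^*q$, because pulling $U \x \R^k \to U$ back along $f^{-1}(U) \to U$ yields $f^{-1}(U) \x \R^k$. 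This is the routine half.

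For (ii), the first move is to use part (i) to eliminate the base change. Applying (i) to the reindexing of $q$ along $s\colon N \to M$ produces a vector bundle $s^*q\colon s^*E \to N$, and the identity $q \o s' = s \o \pi$ induces a comparison map $\theta\colon F \to s^*E$ over $N$, while $r'$ composed with the pullback projection $\overline{s}\colon s^*E \to E$ gives $\psi\colon s^*E \to F$ over $N$ with $\psi \o \theta = \psi \o \overline{s}\text{-lift of } s' = r' \o s' = \mathrm{id}_F$. Thus $\pi$ is exhibited as a retract, over the \emph{fixed} base $N$, of the vector bundle $s^*q$, so it suffices to put a vector bundle structure on a retract-over-$N$ of a vector bundle. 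Moreover, since submersions form a retractive display system (Corollary \ref{cor:sman-r-display}) and $\pi$ is a retract of the submersion $q$, the map $\pi$ is itself a submersion; hence the pullback powers $F \ts{\pi}{\pi} F$ exist and are $T$-stable, and it is at least meaningful to ask for a fibred $\R$-module on $\pi$.

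The remaining step, which I expect to be the main obstacle, is to produce the vector bundle structure on $\pi$ together with local triviality. The naive attempt is to transport the operations along the retract, setting $\xi_F = \psi \o \xi$, letting $+_F$ be $\psi$ followed by $+$ precomposed with the induced map $F \ts{\pi}{\pi} F \to s^*E \ts{s^*q}{s^*q} s^*E$, and $\cdot_F = \psi \o \cdot \o (\mathrm{id}_{\R} \x \theta)$. The delicate point is that $\theta$ and $\psi$ are only smooth maps, not fibrewise linear, so the idempotent $\theta \o \psi$ on $s^*E$ need not commute with the fibred $\R$-action; consequently the transported operations are not automatically associative or invertible, and a bare transport does not suffice. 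The genuine content is therefore to recover the correct fibrewise vector space structure and a local trivialization from the splitting data: here I would use that $\mathsf{SMan}$ is closed under idempotent splittings together with the local triviality of $s^*E$ to reduce to a smoothly varying family of idempotents on a trivial bundle $U \x \R^k$, and then argue that their fixed-point images $F_n$ assemble into a locally trivial family of vector spaces. This is precisely the place where the linear structure of $E$ must be exploited directly — rather than formally transported along $\theta,\psi$ — and it is the step that the Euler vector field / lift reformulation developed in the subsequent sections is designed to make clean.
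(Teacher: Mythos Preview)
The paper does not prove this proposition; it is stated without proof as a standard fact about smooth vector bundles, introduced with the remark that these ``two constructions on vector bundles will be necessary to prove the main theorem of this section.'' There is thus no paper proof to compare against, and the question is whether your argument stands on its own.

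Your treatment of (i) is correct and standard.

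For (ii), your reduction via (i) to a retract over a fixed base is sound, and you correctly flag the genuine obstacle: the retract data $(s',r')$ carry no linearity hypothesis, so naively transporting the fibred $\R$-module structure along $\theta,\psi$ is not legitimate. Your proposed fix---work locally over a trivializing chart and analyse the smooth family of idempotents on $\R^k$---points in the right direction, but you stop at the sentence ``argue that their fixed-point images $F_n$ assemble into a locally trivial family of vector spaces'' without supplying the argument. That sentence \emph{is} the content of (ii); everything before it is reduction. What is still owed is, for instance, that $\pi$ being a submersion forces locally constant fibre dimension, and then that the resulting fibre bundle with fibre $\R^d$ can be equipped with a compatible linear structure.

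Your final sentence is where the proposal actually goes wrong. Proposition~\ref{prop:retracts-reindexing-of-vbuns} is not something the later Euler-vector-field material is designed to clean up; it is a classical prerequisite \emph{invoked} in the proof of Theorem~\ref{iso-of-cats-dbun-sman} to establish local triviality of the projection of a differential bundle. Deferring part (ii) to that later theory would make the main argument of the chapter circular.
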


The category of vector bundles has ``locally linear'' bundle morphisms as its maps.
\begin{definition}
  A \emph{morphism of vector bundles} between $q:E \to M$ and $\pi:F \to N$ is a commuting square
\[\begin{tikzcd}
	E & F \\
	M & N
	\arrow["q", from=1-1, to=2-1]
	\arrow["f", from=1-1, to=1-2]
	\arrow["v"', from=2-1, to=2-2]
	\arrow["\pi", from=1-2, to=2-2]
\end{tikzcd}\]
  that is \emph{fibrewise linear}, so that above each fibre \[  f|_m: E_m \to F_{v(m)} \] is a linear morphism of vector spaces. This may equivalently be stated as a morphism of fibred $\R$-modules, so that the following diagrams commute:
  \[
\begin{tikzcd}
	E & F &[-2em] E & F \\
	M & N & M & N \\[-1em]
	{E_2} & {F_2} & {\R \x E} & {\R\x F} \\
	E & F & E & F
	\arrow["q"{description}, from=1-1, to=2-1]
	\arrow["f", from=1-1, to=1-2]
	\arrow["v", from=2-1, to=2-2]
	\arrow["\pi"{description}, from=1-2, to=2-2]
	\arrow["\xi"{description}, from=2-3, to=1-3]
	\arrow["\zeta"{description}, from=2-4, to=1-4]
	\arrow["f", from=1-3, to=1-4]
	\arrow["v", from=2-3, to=2-4]
	\arrow["{+}"{description}, from=3-1, to=4-1]
	\arrow["{+}"{description}, from=3-2, to=4-2]
	\arrow["f", from=4-1, to=4-2]
	\arrow["{f_2}", from=3-1, to=3-2]
	\arrow["{\cdot_E}"{description}, from=3-3, to=4-3]
	\arrow["{\R \x f}", from=3-3, to=3-4]
	\arrow["{\cdot_F}"{description}, from=3-4, to=4-4]
	\arrow["f", from=4-3, to=4-4]
\end{tikzcd}\]
\end{definition}
\begin{example}
  ~\begin{enumerate}[(i)]
    \item For the pullback vector bundle in Diagram \ref{eq:reindex-vbun}, the pair $(\bar{f},f)$ is a linear bundle morphism.
    \item For the section/retract vector bundle structure from Diagram \ref{eq:ret-of-idemp}, the section and retract are linear morphisms. 
    Note that this is exactly the splitting of a linear idempotent on $q:E \to M$.
  \end{enumerate}
\end{example}

The lift on the tangent bundle was defined in Section \ref{sec:smooth-manifolds} as
\[
  [\gamma]_\sim \mapsto [\gamma \o \cdot_\R]_\sim. 
\]
Instead, consider the action of $\R$ on a tangent vector:
\[
  ([\gamma]_\sim, r) \mapsto [\gamma \o (r \cdot x)]_\sim.
\]
Note that $T.\cdot$ gives the equation
\[
  T.\cdot \o ([\omega \o (x,y)], [(a,b) \mapsto a + b\cdot x]) = (\omega \o (a \cdot x, a \cdot b\cdot y));
\]
so the lift map $\ell$ can be rederived as follows:
\[
\begin{tikzcd}[row sep = tiny]
	TM & {TM \x \R} & {T(TM \x \R)} & {T^2M} \\
	{[\gamma]} & {([\gamma], 1)} & {([\gamma \o \pi_0], [r \mapsto 1\bullet r])} & {[\gamma \o \bullet_\R]}
	\arrow["{(id, 1^\R)}", from=1-1, to=1-2]
	\arrow["{0 \x \lambda}", from=1-2, to=1-3]
	\arrow[from=1-3, to=1-4]
	\arrow[shorten <=3pt, shorten >=3pt, maps to, from=2-1, to=2-2]
	\arrow[shorten <=3pt, shorten >=3pt, maps to, from=2-2, to=2-3]
	\arrow[shorten <=2pt, shorten >=2pt, maps to, from=2-3, to=2-4]
\end{tikzcd}\]
This general construction is known as the \emph{Euler vector field} of a multiplicative action by $\R^+$.
\begin{definition}%
  \label{def:evf}
  Consider a multiplicative monoid action $a:\R^+ \x E \to E$. The \emph{Euler vector field}\footnote{Somewhat confusingly, the Euler vector field is almost never a vector field.} of the action is the morphism $\lambda:E \to TE$ constructed as follows:
  \[
    \lambda:= E \xrightarrow[]{(id, 1^\R \o !)} E \x \R \xrightarrow{0 \x \lambda} TE \x T\R \cong T(E \x \R) \xrightarrow{T.a} TE.
  \]
  \pagenote{This definition has been tidied up so that is clear that $\lambda$ is defined by the diagram in the definition.}
\end{definition}
Local triviality for the tangent bundle is encoded by the universality of the vertical lift condition. A similar universality condition holds for vector bundles.
\begin{proposition}%
  \label{prop:ros-for-vbun}
  Let $q:E \to M$ be a vector bundle with corresponding Euler vector field $\lambda$.
  Then the following diagram is a $T$-pullback:
  \begin{equation*}
\begin{tikzcd}
	E & TE \\
	M & {E\x TM}
	\arrow["{(\xi,0)}"', from=2-1, to=2-2]
	\arrow["q", from=1-1, to=2-1]
	\arrow["\lambda", from=1-1, to=1-2]
	\arrow["{(p,T.q)}", from=1-2, to=2-2]
\end{tikzcd}
  \end{equation*}
\end{proposition}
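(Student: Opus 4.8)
The plan is to verify that the square commutes and then to establish the universal property by reducing to local triviality, where the computation parallels the universality of the vertical lift.

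First I would check commutativity. Unwinding \Cref{def:evf}, the Euler vector field $\lambda$ sends a point $e \in E$ to the tangent vector based at the zero-section point $\xi(q(e))$ whose velocity lies purely in the fibre direction. Concretely, on a trivialising patch $E|_U \cong U \x \R^n$ with $q = \pi_0$ and scalar action $a((u,x),r) = (u, r\cdot x)$, tracing through the definition gives $\lambda(u,x) = (0_u, \lambda_{\R^n}(x))$ in $T(U \x \R^n) \cong TU \x T\R^n$, where $\lambda_{\R^n}$ is the differential-object lift of \Cref{ex:diffob-sman}. From this it is immediate that $p \o \lambda = \xi \o q$ (the basepoint is $(u,0)$) and $T.q \o \lambda = 0 \o q$ (the $TU$-component is $0_u$), so $(p, T.q) \o \lambda = (\xi, 0) \o q$. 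This is the routine part.

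For the universal property I would argue with generalised elements, so as not to presuppose that the pullback exists as a manifold. Given a test object $X$ with $a: X \to M$ and $h: X \to TE$ satisfying $p \o h = \xi \o a$ and $T.q \o h = 0 \o a$, I must produce a unique $k: X \to E$ with $q \o k = a$ and $\lambda \o k = h$. The two conditions say precisely that each $h(x)$ is a tangent vector to $E$ based at the zero-section point $\xi(a(x))$ and projecting to $0$ in $TM$, i.e.\ a vertical tangent vector at the zero section. Exploiting local triviality (\Cref{def:vector-bundle}(ii)), over a trivialising patch such vectors are exactly those of the form $(0_u, \lambda_{\R^n}(w))$, and $w$ is recovered smoothly as the fibre-velocity of $h$; this defines and uniquely forces $k$ locally. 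This is exactly the fibrewise version of the universality of the vertical lift (\Cref{prop:ell-universal}, \Cref{cor:mu-universal}), which is the special case $E = TM$, $q = p$, $\lambda = \ell$, and the fibrewise differential-object structure of \Cref{ex:diffob-sman} guarantees the identification is canonical, so the local definitions of $k$ agree on overlaps and assemble into a global smooth factorisation.

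Finally I would check $T$-stability. Applying any $T^n$ and rerunning the argument works because, on each patch, $T^n$ sends every space in sight to a product of Cartesian spaces and commutes with the product decompositions used above; equivalently, all the projections involved are submersions, so the limit is transverse and preserved by $T$ (\Cref{prop:submersion-properties}). I expect the main obstacle to be the coherence in the gluing step: one must confirm that the locally-recovered factorisation $k$ is genuinely independent of the trivialisation, which amounts to checking that the identification of vertical tangent vectors at the zero section with the fibre is transition-invariant rather than merely a fibrewise bijection—this is precisely where local triviality together with the intrinsic differential-object structure does the real work.
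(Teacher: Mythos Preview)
The paper does not actually supply a proof of this proposition: it is stated as a standard fact about vector bundles in $\mathsf{SMan}$ and immediately followed by its corollary. Your local-triviality argument is a correct way to establish this standard result, and the concern you flag about the gluing step is exactly the right one to highlight; since the Euler vector field is defined globally (\Cref{def:evf}) and the local identification of vertical tangent vectors at the zero section with the fibre is natural in vector bundle morphisms, the transition invariance goes through.
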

Exploiting the fact that fibred $\R$-modules have subtraction, the following result holds.
\begin{corollary}%
  \label{cor:two-pullbacks-from-ros}
  The following two diagrams are T-equalizers:
  \[
\begin{tikzcd}
	{E_2} &[6em] TE &[2em] TM \\
	{TM \ts{p}{q}E} & TE & E
	\arrow["{ \mu^E:= 0 \o\pi_0 +_{Tq} \lambda \o \pi_1}", from=1-1, to=1-2]
	\arrow["{\nu^E := T.\xi \o \pi_0 +_p \lambda \o \pi_1}", from=2-1, to=2-2]
	\arrow["{T.q}", shift left=1, from=1-2, to=1-3]
	\arrow["{T.q \o 0 \o p}"', shift right=1, from=1-2, to=1-3]
	\arrow["p", shift left=1, from=2-2, to=2-3]
	\arrow["{p \o T.\xi \o T.q}"', shift right=1, from=2-2, to=2-3]
\end{tikzcd}
  \]
  (recall that $E_2$ is the pullback of a submersion along a submersion and is therefore guaranteed to exist and be preserved by the tangent functor).
\end{corollary}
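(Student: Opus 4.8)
The plan is to derive both equalizers directly from the $T$-pullback of Proposition~\ref{prop:ros-for-vbun}, exploiting the fibrewise subtraction available in the two vector bundles $p:TE \to E$ and $T.q:TE \to TM$. Recall that the pullback square there records exactly the identities $p \o \lambda = \xi \o q$ and $T.q \o \lambda = 0 \o q$, and that $\lambda$, being the pullback of the monic $(\xi,0)$, is itself monic; moreover $T^n$ preserves this pullback and preserves the monicity of $\lambda$. The only other structural facts I would use are that $p:TE \to E$ is additive over the $T.q$-bundle and that $T.q:TE \to TM$ is additive over the $p$-bundle; both are simply the naturality of $p:T \Rightarrow \mathrm{id}$ and of $+:T_2 \Rightarrow T$ applied to $q$ and $\xi$.

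First I would check that $\mu^E$ and $\nu^E$ equalise their respective pairs. For $\mu^E$ this reduces to $T.q \o \mu^E = 0 \o q \o \pi_0$, which holds because $\mu^E$ is built by $+_{Tq}$ out of $0 \o \pi_0$ and $\lambda \o \pi_1$, both of which $T.q$ sends to $0 \o q \o \pi_i$. For $\nu^E$ one checks that $p \o \nu^E$ lands in the image of $\xi$, using $p \o \lambda = \xi \o q$, the naturality square $p \o T.\xi = \xi \o p$, and $p_M \o T.q = q \o p$.

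Next, given a test map $x:X \to TE$ equalising $T.q$ and $T.q \o 0 \o p$ (so $x$ is ``$q$-vertical''), I would subtract off its value along the zero section inside the $T.q$-bundle, setting $x' := x -_{Tq} 0 \o p \o x$. Using additivity of $p$ over $T.q$ one computes $p \o x' = \xi \o q \o p \o x$ and $T.q \o x' = 0 \o q \o p \o x$, which are precisely the two legs of the cospan in Proposition~\ref{prop:ros-for-vbun}; hence $x'$ factors uniquely as $\lambda \o b$. Taking $a := p \o x$, the pair $(a,b):X \to E_2$ satisfies $\mu^E \o (a,b) = x$, and uniqueness follows from $p \o \mu^E = \pi_0$ together with $\lambda$ monic. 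The second equalizer is symmetric: for $x$ equalising $p$ and $p \o T.\xi \o T.q$ (so $p \o x$ lands in $\mathrm{Im}\,\xi$), set $x'' := x -_p T.\xi \o T.q \o x$ and check, using additivity of $T.q$ over the $p$-bundle and $T.q \o T.\xi = \mathrm{id}$, that $x''$ again satisfies both legs of the cospan, so $x'' = \lambda \o e$; then $(T.q \o x,\, e):X \to TM \ts{p}{q} E$ is the required factorisation through $\nu^E$.

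Finally, for the $T$-equalizer claim I would observe that every ingredient is $T$-stable: Proposition~\ref{prop:ros-for-vbun} is a $T$-pullback, the pullbacks $E_2$ and $TM \ts{p}{q} E$ are preserved by each $T^n$ (being pullbacks of submersions), and the subtraction and naturality squares are carried to analogous data by $T^n$, so the entire factorisation argument runs verbatim after applying $T^n$. The main obstacle is the base-point bookkeeping in the middle step: one must keep the two additive bundle structures on $TE$ — the tangent bundle $p$ and the ``vertical'' bundle $T.q$ — carefully apart, since it is exactly the fact that each projection is additive for the \emph{other} structure that forces the corrected maps $x'$ and $x''$ to satisfy both conditions defining the pullback of Proposition~\ref{prop:ros-for-vbun}.
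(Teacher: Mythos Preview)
Your proposal is correct and follows essentially the same approach as the paper: subtract off the zero-section component inside the appropriate fibred additive structure on $TE$, then invoke the $T$-pullback of Proposition~\ref{prop:ros-for-vbun} to factor the remainder through $\lambda$, and observe that $T$-stability propagates the argument to each $T^n$. You are more careful than the paper in verifying that $\mu^E$ and $\nu^E$ actually equalise, in spelling out uniqueness, and in detailing the symmetric $\nu^E$ case, but the underlying strategy is identical.
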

\begin{proof}
  Given $v: X \to TE$ so that
  \[ T.q \o v = T.q \o 0 \o p \o v \]
  then 
  \[ p \o (v -_{T.q} 0 \o p \o v) =  p \o v -_q p \o v = \xi \o q \o v. \]
  So there is a unique $v'$ so that
  \[
    \lambda \o v' = (v -_{T.q} 0 \o p \o v)
  \]
  meaning that
  \[
    v = 0 \o p \o v  +_{T.q} \lambda \o v'  = \mu(0 \o p \o v, v')
  \] as required. The projection $q$ is a submersion, so the pullback $E_2$ is preserved by the tangent functor, as is the pullback in Proposition \ref{prop:ros-for-vbun}, and the same calculation may be applied for each $T^n$. The proof for $\nu$ follows by the same argument.
\end{proof}
Recall that the class of submersions forms a retractive display system in the category of smooth manifolds (Definition \ref{def:display-system}), so they are stable under reindexing and closed to retracts.\pagenote{
   I had originally used the fact that the pullback $E \ts{q}{q} E$ was preserved without explaining why it was preserved, so I have added a reference to the section on submersions and pointed out that vector bundle projections are submersions (as they are local projections).
}
We may now infer the following:
\begin{corollary}
  The projection for a vector bundle is a submersion.
\end{corollary}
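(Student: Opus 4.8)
The plan is to verify the pointwise condition of Definition \ref{def:submersion-sman} directly, exploiting the local triviality that is built into the definition of a vector bundle (Definition \ref{def:vector-bundle}). Equivalently, by the identification of tangent submersions with submersions in $\mathsf{SMan}$, it suffices to exhibit $q:E \to M$ as a tangent submersion, i.e. to produce a horizontal lift $h: E \ts{q}{p} TM \to TE$ splitting the horizontal descent $(p, T.q)$ (Definition \ref{def:tangent-submersion} together with Lemma \ref{lem:pb-retract}).

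First I would reduce to the trivial case. Local triviality gives, around each point, an open $U \subseteq M$ and a vector-bundle isomorphism $q^{-1}(U) \cong U \x \R^n$ carrying $q$ to the product projection $\pi_0: U \x \R^n \to U$. On the trivial bundle the claim is immediate: in fibre coordinates the derivative of $\pi_0$ at $(u,e)$ is the surjection $T_uU \x \R^n \to T_uU$, so $\pi_0$ is a submersion, and lifting $v$ with vanishing fibre component is an explicit horizontal lift.

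Next I would globalise. Since surjectivity of the derivative at a point is a purely local condition, and each point of $M$ has a neighbourhood over which $q$ is isomorphic to a product projection, the derivative of $q$ is surjective everywhere, so $q$ is a submersion. One can also phrase this through the display-system formalism: the product projection is a submersion, and submersions are stable under reindexing and closed to retracts (Definition \ref{def:display-system}, Proposition \ref{prop:submersion-properties}), with the trivialising charts realising $q$ locally as a reindexing of a product projection.

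The main obstacle is conceptual rather than computational: there is no canonical, connection-free section of $(p, T.q)$, so the passage from the local lifts to a global one cannot be carried out purely from the algebraic Euler-vector-field data of Proposition \ref{prop:ros-for-vbun} (a natural, fibrewise-linear splitting would amount to a flat connection). This is precisely why local triviality---the topological input of Definition \ref{def:vector-bundle}---is needed here, and it is exactly enough: the tangent-submersion property demands only the \emph{weak} universal property, so mere existence of a section suffices and no compatibility with the linear structure is required. Either the pointwise rank criterion or a partition-of-unity patching of the local horizontal lifts then closes the argument.
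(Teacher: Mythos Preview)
Your argument is correct: reducing to local trivialisations and observing that surjectivity of the derivative is a pointwise condition is the standard route, and your closing remarks correctly diagnose that the Euler-vector-field universality of Proposition~\ref{prop:ros-for-vbun} alone does not manufacture a canonical horizontal lift.

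The paper proceeds differently. Instead of appealing to charts, it exhibits $q:E\to M$ as a retract of the tangent projection $p.E:TE\to E$ in the arrow category and then invokes closure of submersions under retracts (Lemma~\ref{lem:submersion}(b)); the section half of the retract is $(\lambda,\xi)$, since $p\o\lambda=\xi\o q$, while the retraction half requires a splitting of $\lambda$ over $q$, which is available in $\mathsf{SMan}$. Your approach is more elementary and transparently chart-based; the paper's is phrased entirely in the retractive-display-system idiom of Section~\ref{sec:submersions}, which is the language the chapter is working to establish, and avoids any explicit mention of trivialisations. Both routes ultimately draw on the same smooth-manifold-specific input, just packaged differently.
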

\begin{proof}
  This follows from the fact that $\pi:TE \to E$ is a submersion, so that $q \o \pi_0: E \ts{q}{p} TM \to M$ is a submersion, so the map $q:E \to M$ is a retract of the projection $p.E:TE \to E$ in the arrow category.
\end{proof}
Preservation of the Euler vector field is also sufficient to guarantee that a morphism $f:E \to F$ determines a vector bundle morphism.
\begin{proposition}%
  \label{prop:evf-is-ff}
  Let $q: E \to M, \pi: F \to N$ be a pair of vector bundles with Euler vector fields $\lambda^E,\lambda^F$.
  Then a bundle morphism $(f,v):q \to \pi$ is a vector bundle morphism if and only if \[\lambda^F \o f = T.f \o \lambda^E\]
\end{proposition}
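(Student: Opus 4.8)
The plan is to prove both implications, treating the Euler vector field construction of Definition \ref{def:evf} as a functor from vector bundles to lifts and showing it is faithful (the $\Rightarrow$ direction) and full (the $\Leftarrow$ direction) on morphisms lying over a fixed base map.

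\emph{($\Rightarrow$) A vector bundle morphism preserves $\lambda$.} A vector bundle morphism $(f,v)$ is in particular a morphism of fibred $\R$-modules, so it commutes with scalar multiplication and hence with the multiplicative $\R^+$-action $a$ that generates the Euler vector field: $f \o a^E = a^F \o (\R \x f)$. I would then chase $f$ through the composite defining $\lambda$ in Definition \ref{def:evf}. Each factor is natural in $E$: the map $(id, 1^\R \o !)$ and the canonical iso $T(E \x \R) \cong TE \x T\R$ are natural, $0 \x \lambda_\R$ commutes with $f \x \R$ by naturality of the tangent zero $0 \colon id \Rightarrow T$, and $T.a^F \o T(f \x \R) = T(f \o a^E) = T.f \o T.a^E$ by functoriality of $T$ together with preservation of the action. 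Composing these equalities yields $\lambda^F \o f = T.f \o \lambda^E$.

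\emph{($\Leftarrow$) A $\lambda$-preserving bundle morphism is linear.} Assume $\lambda^F \o f = T.f \o \lambda^E$. The square of Proposition \ref{prop:ros-for-vbun} commutes, giving the identities $p \o \lambda = \xi \o q$ and $T.q \o \lambda = 0 \o q$; it also exhibits $\lambda$ as a pullback of the monomorphism $(\xi, 0)$, so $\lambda$ is itself monic. Preservation of the zero section is then immediate: $f \o \xi^E \o q = f \o p \o \lambda^E = p \o T.f \o \lambda^E = p \o \lambda^F \o f = \xi^F \o q^F \o f = \xi^F \o v \o q$, and cancelling the split epimorphism $q$ (it has section $\xi$) gives $f \o \xi^E = \xi^F \o v$.

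For additivity, the crux is the relation
\begin{equation*}
  \lambda \o +^E = (\lambda \o \pi_0) +_p (\lambda \o \pi_1) \colon E_2 \to TE,
\end{equation*}
whose two sides are defined because $p \o \lambda \o \pi_i = \xi \o q \o \pi_0$ for $i = 0,1$; I would verify it either in a local trivialisation (where $\lambda$ lands in the $p$-fibre over $\xi \o q$, so the two lifts add) or directly from the pullback of Proposition \ref{prop:ros-for-vbun}. Writing $f_2 = (f \o \pi_0, f \o \pi_1) \colon E_2 \to F_2$ (well defined since $(f,v)$ lies over $v$), I then compute, using that $T.f$ is a morphism of the additive bundles $(p.E, +_p)$ and $(p.F, +_p)$ and that $T.f \o \lambda^E = \lambda^F \o f$:
\begin{align*}
  \lambda^F \o f \o +^E
  &= T.f \o \lambda^E \o +^E
   = T.f \o \big((\lambda^E \o \pi_0) +_p (\lambda^E \o \pi_1)\big) \\
  &= (\lambda^F \o f \o \pi_0) +_p (\lambda^F \o f \o \pi_1)
   = \lambda^F \o +^F \o f_2.
\end{align*}
Since $\lambda^F$ is monic, this yields $f \o +^E = +^F \o f_2$. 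Finally, scalar multiplication needs no separate argument: over each point $f$ restricts to a smooth, additive map $E_m \to F_{v(m)}$ of finite-dimensional vector spaces, and a smooth additive map is automatically $\R$-linear, so $(f,v)$ preserves $\cdot$ and is a vector bundle morphism.

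The main obstacle is isolating and justifying the additivity relation for $\lambda$ together with its monicity, since these are exactly what convert preservation of the infinitesimal datum $\lambda$ back into preservation of the global fibred addition; once they are in hand, the remaining steps are naturality bookkeeping and the classical passage from additivity to linearity in $\mathsf{SMan}$.
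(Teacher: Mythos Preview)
Your proof is correct, but the $(\Leftarrow)$ direction takes a genuinely different route from the paper. You argue elementwise: use monicity of $\lambda$ (from the pullback of Proposition~\ref{prop:ros-for-vbun}) to cancel, verify preservation of $\xi$ and $+$ directly via the additivity relation $\lambda \o +^E = (\lambda\pi_0)+_p(\lambda\pi_1)$, and then appeal to the classical fact that a smooth additive map of real vector spaces is linear. The paper instead works with the equalizer $\nu^E = T.\xi\o\pi_0 +_p \lambda\o\pi_1 : TM\ts{p}{q}E \to TE$ of Corollary~\ref{cor:two-pullbacks-from-ros}: it checks that $f$ preserving $\lambda$ forces $\nu^F \o (T.v,f) = T.f \o \nu^E$, and then uses that $T.f$ is automatically a vector bundle morphism together with $\nu$ being a monic vector bundle morphism to conclude that the induced $(T.v,f)$, and hence $f$, is one too. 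Your approach is more hands-on and self-contained but leans on the smooth-additive-implies-linear step specific to $\mathsf{SMan}$; the paper's approach is more structural, packaging everything into a single cancellation along the monic $\nu$ and exploiting functoriality of $T$ on vector bundles rather than recovering scalar multiplication separately. For the $(\Rightarrow)$ direction you spell out what the paper calls ``immediate''; your naturality chase through Definition~\ref{def:evf} is exactly the intended argument.
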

\begin{proof}
  Note that the $\nu^F$ map from Corollary \ref{cor:two-pullbacks-from-ros} is monic, and if $f$ preserves the lift, it preserves $\nu$:
  \[
    \nu^F \o (T.v, f) = + \o (T.\zeta \o T.v, \lambda^F \o f) = + \o (T.f \o T.xi, T.f \o \lambda^E) = T.f \o \nu^E. 
  \]
  Next, observe that $T.v \x f$ is the unique map making the following diagram commute:
  \[
\begin{tikzcd}
	M &&& E \\
	& {TM \ts{p}{q} E} & TE \\
	& {TN \ts{p}{q} F} & TF \\
	N &&& F
	\arrow["{\exists!}"', dashed, from=2-2, to=3-2]
	\arrow["\nu", from=2-2, to=2-3]
	\arrow["\nu"', from=3-2, to=3-3]
	\arrow["{T.f}", from=2-3, to=3-3]
	\arrow["p"{description}, from=2-3, to=1-4]
	\arrow["p"{description}, from=3-3, to=4-4]
	\arrow["f"{description}, from=1-4, to=4-4]
	\arrow["{q \o \pi_1}"{description}, from=2-2, to=1-1]
	\arrow["{\pi \o \pi_1}"{description}, from=3-2, to=4-1]
	\arrow["\zeta"{description}, from=4-1, to=4-4]
	\arrow["\xi"{description}, from=1-1, to=1-4]
	\arrow[from=1-1, to=4-1]
	\arrow["\lrcorner"{anchor=center, pos=0.125, rotate=90}, draw=none, from=2-2, to=1-4]
	\arrow["\lrcorner"{anchor=center, pos=0.125}, draw=none, from=3-2, to=4-4]
\end{tikzcd}\] 
  Now $\nu$ is a vector bundle morphism and monic, and $T.f$ is a vector bundle morphism, so it follows that $T.v \x f$ is a vector bundle morphism and hence $f$ is also a vector bundle morphism. The reverse implication is immediate.
\end{proof}

\section{Lifts for the tangent weak comonad}%
\label{sec:lifts}
The lift $\ell: T \Rightarrow T^2$ gives rise to a \emph{weak comonad}.
Weak comonads were introduced in \cite{Wisbauer2013} and have a natural notion of an associative algebra\footnote{
This is not \emph{strictly} true. Wisbauer has a more nuanced hierarchy of almost-monads, and in his language $(T,\ell)$ would be an endofunctor with an associative product.
}. An associative coalgebra of the weak comonad $(T,\ell)$ is called a \emph{lift}; this chapter will demonstrate that lifts provide the essential structure necessary to formulate vector bundles.\pagenote{
Included some material on weak comonads to clear up confusion, as the map $p$ does have the right type for a counit it should be noted that it is not a counit in general.
}
\begin{definition}
  A weak comonad on a category $\C$ is an endofunctor $S: \C \to \C$ equipped with a coassociative map $\delta:S \Rightarrow S.S$:
  \[\begin{tikzcd}
    S & {S.S} \\
    {S.S} & {S.S.S}
    \arrow["\delta"{description}, from=1-1, to=1-2]
    \arrow["\delta"{description}, from=1-1, to=2-1]
    \arrow["{S.\delta}"{description}, from=1-2, to=2-2]
    \arrow["{\delta.S}"{description}, from=2-1, to=2-2]
  \end{tikzcd}\]
  An \emph{associative algebra} of a weak comonad is an object $E$ equipped with a map $\lambda:E \to SE$ so that
  \[\begin{tikzcd}
    E & {S.E} \\
    {S.E} & {S.S.E}
    \arrow["\lambda"{description}, from=1-1, to=1-2]
    \arrow["\lambda"{description}, from=1-1, to=2-1]
    \arrow["{\delta.E}"{description}, from=1-2, to=2-2]
    \arrow["{S.\lambda}"{description}, from=2-1, to=2-2]
  \end{tikzcd}\]
  A morphism of these algebras is a map $f:(E,\lambda) \Rightarrow (D,\gamma)$ so that
  \[\begin{tikzcd}
    E & D \\
    S.E & S.D
    \arrow["f"{description}, from=1-1, to=1-2]
    \arrow["{S.f}"{description}, from=2-1, to=2-2]
    \arrow["\lambda"{description}, from=1-1, to=2-1]
    \arrow["\gamma"{description}, from=1-2, to=2-2]
  \end{tikzcd}\]
\end{definition}
Recall that for a full (co)monad, there is an adjunction between the base category and the category of (co)algebras. That result is weakened in this case: 
\begin{lemma}
  For every weak comonad on a category $\C$, there is a free coalgebra functor
  \[
      F: \C \to \mathsf{CoAlg}(\C); E \mapsto (S.E, \delta:S.E \to S.S.E)
  \]  
  an underlying object functor
  \[
      U: \mathsf{CoAlg}(\C) \to \C; (E, \lambda) \mapsto E
  \]
  and a natural transformation
  \[
      \lambda:id \Rightarrow F.U; 
      \begin{tikzcd}
        E & S.E \\
        S.E & S.S.E
        \arrow["\lambda"{description}, from=1-1, to=1-2]
        \arrow["{S.\lambda}"{description}, from=2-1, to=2-2]
        \arrow["\lambda"{description}, from=1-1, to=2-1]
        \arrow["\delta"{description}, from=1-2, to=2-2]
      \end{tikzcd}
  \]
\end{lemma}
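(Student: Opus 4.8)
The plan is to verify the three pieces of structure in turn, observing that every equation one must check is, on the nose, an instance of one of the two weak-comonad axioms (coassociativity of $\delta$ and naturality of $\delta$) together with the single associativity axiom defining a coalgebra. No counit is available or needed, which is precisely why the usual comonad/coalgebra adjunction degenerates here to a bare natural transformation.

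First I would check that $F$ genuinely lands in $\mathsf{CoAlg}(\C)$ and is functorial. For an object $E$, the pair $(S.E, \delta_E)$ is an associative coalgebra exactly when $\delta_{S.E} \o \delta_E = S.\delta_E \o \delta_E$, and this is the component at $E$ of the coassociativity square for $\delta$. For a morphism $f:E \to E'$, setting $F$ to send $f$ to $S.f$, the coalgebra-morphism condition $\delta_{E'} \o S.f = S.S.f \o \delta_E$ is exactly naturality of $\delta$ at $f$; preservation of identities and composites is then inherited from the functoriality of $S$. The forgetful assignment $U:(E,\lambda) \mapsto E$, $f \mapsto f$ is patently functorial, so this step costs nothing.

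Next I would exhibit $\lambda$ as a natural transformation $id \Rightarrow F.U$. Its component at a coalgebra $(E,\lambda)$ is the structure map $\lambda:E \to S.E$, which must be read as an arrow $(E,\lambda) \to F.U(E,\lambda) = (S.E,\delta_E)$ in $\mathsf{CoAlg}(\C)$; that it \emph{is} a coalgebra morphism amounts to the equation $\delta_E \o \lambda = S.\lambda \o \lambda$, which is just the associativity axiom of the coalgebra $(E,\lambda)$ — this is precisely the square displayed in the statement. For naturality, given a coalgebra morphism $f:(E,\lambda) \to (D,\gamma)$ one needs $S.f \o \lambda = \gamma \o f$, and this is nothing other than the defining square for $f$ being a morphism of coalgebras.

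I expect no genuine obstacle: the whole content is bookkeeping, in which each diagram to be checked coincides with an axiom already in hand. The only point deserving care is typechecking — confirming that $\lambda_{(E,\lambda)}$ is legitimately an arrow of $\mathsf{CoAlg}(\C)$ and not merely of $\C$ — since it is exactly the absence of a counit that blocks this data from assembling into an adjunction, and keeping that distinction in view is the reason the lemma is stated in its weakened form.
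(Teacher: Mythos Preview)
Your verification is correct and complete. The paper states this lemma without proof, treating it as routine bookkeeping; your proposal supplies exactly the straightforward checks that were left implicit, with each required equation matched to the appropriate axiom (coassociativity and naturality of $\delta$, the coalgebra associativity axiom, and the coalgebra-morphism condition).
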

\begin{definition}\label{def:lift}
  A \emph{lift} in a tangent category $\C$ is an associative coalgebra of $(T,\ell)$, namely, a pair $(E, \lambda: E \to TE)$ so that the following diagram commutes:
  \[
\begin{tikzcd}
	E & TE \\
	TE & {T^2E}
	\arrow["\lambda", from=1-1, to=1-2]
	\arrow["\lambda"', from=1-1, to=2-1]
	\arrow["{\ell.E}"', from=2-1, to=2-2]
	\arrow["{T.\lambda}", from=1-2, to=2-2]
\end{tikzcd}
  \]
  A morphism of lifts is a coalgebra morphism. The category of lifts and lift morphisms in a tangent category $\C$ is written $\mathsf{Lift}(\C)$.
\end{definition}
Note that the tangent bundle is \emph{not}, in general, a comonad: while the tangent projection has the correct type for a counit, $p:T \Rightarrow id$, it does not satisfy $p \o \ell = id$. In fact, if $p$ were a counit, this would force $id = p \o \ell = 0 \o p$ so that $0$ = $p^{-1}$, thus if $(T,\ell,p)$ is a comonad then $T$ is naturally isomorphic to the identity functor.
\begin{example}
  \label{ex:lift-examples}
  ~\begin{enumerate}[(i)]
    \item For every object $M$ in a tangent category $\C$, the pair $(TM, \ell:TM \to T^2M)$ is a lift, called the \emph{free lift} on $M$.
    \item Every object $M$ in a tangent category has a \emph{trivial} lift, $0:M \to TM$, where $T.0 \o 0 = \ell \o 0.$
    \item Every differential object has a lift $\lambda$; the coherence is equivalent to axiom $[D0.3]$ in Definition \ref{def:differential-object}. \pagenote{
        I have added the map $\lambda:E \to TE$ from the definition of a differential object, and the exact differential object axiom that makes it a lift.
    }
    \item 
    \pagenote{I moved the Euler vector field into the examples, fixed notation, and tried to clarify some points Kristine brought up. I make explicit use of the Kock-Lawvere axiom here, so that $T\R$ is $R[x]/x^2$ and $T^2\R$ is $R[x]/(x^2,y^2)$.}
    The Euler vector field of a multiplicative $\R^+$-action $h:R^+ \x E \to E$ in $\mathsf{SMan}$ is a lift.
      Recall that the \textit{Euler vector field} over the scalar action $s_M: TM \x \R \to \R$ induces the vertical lift on a manifold:
      \[
          \ell = T.s_M \o (0, \lambda^\R \o 1^{\R} \o !).
      \]
      In the category of smooth manifolds, $T\R \cong \R[x]/x^2$, where $\lambda(r) = [x \mapsto r\cdot x]$ corresponds to the map $\lambda'(r) = 0 + r\cdot x$. Similarly, there is an isomorphism $\R[x,y]/(x^2,y^2)$ so that for the maps $0.T and T.0$,
      \[
          0.T(a + r\cdot x) \cong a + r\cdot x + 0y + 0xy,\hspace{0.5cm}
          T.0(a + r\cdot x) \cong a + 0x + r\cdot y + 0xy.
      \]
      Since we know that $(0 + x)(0 + y) = (0 + xy)$ in $\R[x,y]/(x^2,y^2)$ (following \ref{ex:diffob-sman}), we can use these isomorphisms to see that
      \[
          \ell \o \lambda^\R \o 1^\R \o != 
          (0.T \o \lambda^\R \o 1^\R \o !) \cdot_{T^2.\R} (T.0\o \lambda^\R\o 1^\R \o !).
      \]
  
      Consider a monoid action $(\R^+, h)$ on a manifold $E$. The Euler vector field of this action, 
      \[
          \lambda: E \xrightarrow[]{(id, 1^\R \o !)} E \x \R \xrightarrow{(0,\lambda^\R)} TE \x T\R \xrightarrow{T.h} TE
      \]
      will define an algebra if the induced scalar action on $TE$ commutes with the natural scalar action:
      \begin{align*}
          T.\lambda \o \lambda
          &= T^2.h \o (T.0 \o\lambda, T.\lambda \o 0\o 1^\R \o !) \\
          &= T^2.h \o (T.0 \o T.h \o (0, \lambda \o 1^\R \o !), 0 \o \lambda \o 1^\R \o !) \\
          &= T^2.h \o (T^2.h\o (T0\o 0, T0 \o \lambda \o 1^\R \o !), 0 \o \lambda \o 1^\R \o !) \\
          &= T^2.h \o (T.0 \o 0, (T.0 \o \lambda \o 1^\R \o !) \cdot_{T^2.\R}( 0.T \o  \lambda \o 1^\R \o !) ) \\
          &= T^2.h \o (\ell \o 0, \ell \o \lambda \o 1^\R \o !)  \\
          &= \ell \o T.h \o  (0, \lambda \o 1^\R \o !) \\
          &= \ell \o \lambda.
      \end{align*}
  \end{enumerate}
\end{example}
\begin{observation}\label{obs:evf-is-ff}
  Recall that by Proposition \ref{prop:evf-is-ff}, morphisms preserve a monoid action if and only if they preserve the associated Euler vector field of the action. This means the Euler vector field construction gives a fully faithful functor from monoid actions to lifts in the category of smooth manifolds, and therefore from the category of vector bundles to the category of lifts in $\mathsf{SMan}$.
\end{observation}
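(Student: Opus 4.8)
The plan is to present the Euler vector field construction as a composite of two fully faithful functors --- Grabowski's full embedding of vector bundles into multiplicative $\R^+$-actions (\cite{Grabowski2009}) and a functor from $\R^+$-actions to $\mathsf{Lift}(\mathsf{SMan})$ sending an action to its Euler vector field --- and to observe that a composite of fully faithful functors is again fully faithful. The decisive structural feature is that the second functor acts as the \emph{identity} on underlying morphisms in $\mathsf{SMan}$, so that faithfulness is automatic and only fullness requires argument.

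For functoriality, the object assignment $a \mapsto (E,\lambda^E)$ lands in $\mathsf{Lift}(\mathsf{SMan})$ by Example \ref{ex:lift-examples}(iv), which verifies the coassociativity law $T.\lambda^E \o \lambda^E = \ell \o \lambda^E$. On morphisms I send an equivariant $f:E \to F$ to itself and check that it is a lift morphism, i.e.\ $T.f \o \lambda^E = \lambda^F \o f$; unwinding Definition \ref{def:evf}, this follows from equivariance $f \o a_E = a_F \o (id \x f)$ together with the naturality of $0$, the naturality of the canonical isomorphism $T(E \x \R) \cong TE \x T\R$, and $!_F \o f = !_E$. Since the assignment is the identity on underlying maps, it preserves composition and identities, and it is faithful with no further work.

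Fullness is exactly the content of Proposition \ref{prop:evf-is-ff}: a lift morphism $(E,\lambda^E) \to (F,\lambda^F)$ is, by Definition \ref{def:lift}, a map $f$ with $T.f \o \lambda^E = \lambda^F \o f$, which that proposition identifies with $f$ being a morphism of vector bundles, and hence --- by the \emph{fullness} of Grabowski's embedding --- a morphism of the underlying $\R^+$-actions. I expect this identification of the three notions of morphism (equivariant map, vector bundle map, and coalgebra morphism of lifts) as one and the same subset of $\mathsf{SMan}(E,F)$ to be the only genuine point of care: the forward inclusion is the naturality computation above, while the reverse inclusion rests entirely on Proposition \ref{prop:evf-is-ff} together with the fullness of Grabowski's embedding, which is the single external fact this observation relies on.
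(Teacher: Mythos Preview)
Your argument is correct for the headline claim (full faithfulness of vector bundles into lifts), and in substance it matches the paper: the paper offers no separate proof for this observation beyond pointing at Proposition~\ref{prop:evf-is-ff}, since that proposition already says that for vector bundles $E,F$ a map $f$ is a vector bundle morphism iff $T.f \o \lambda^E = \lambda^F \o f$, which is precisely the condition for $f$ to be a morphism in $\mathsf{Lift}(\mathsf{SMan})$. Faithfulness is, as you note, automatic because the functor is the identity on underlying maps; fullness is exactly Proposition~\ref{prop:evf-is-ff}. So the direct route is: one functor, one proposition, done.

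Your detour through Grabowski's embedding is harmless but unnecessary, and it introduces a small overreach. You assert that the second factor, from \emph{all} multiplicative $\R^+$-actions to lifts, is fully faithful; but your fullness argument invokes Proposition~\ref{prop:evf-is-ff}, whose statement and proof (via the $\nu$ map of Corollary~\ref{cor:two-pullbacks-from-ros}) require the source and target to already be vector bundles. So what you have actually shown is fullness of the composite on the vector-bundle subcategory, not fullness of the intermediate functor on arbitrary $\R^+$-actions. This gap does not affect the final conclusion about vector bundles, but it means your two-factor decomposition is not doing the work you claim for it; you may as well drop it and argue directly, as the paper does.
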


The following proposition gives a pair of constructions on lifts---closure under the tangent functor and finite $T$-limits---that will be useful in this section.
\begin{lemma}%
  \label{lem:T-limits-of-lifts}
  Let $\C$ be a tangent category.
  ~\begin{enumerate}[(i)]
    \item The tangent functor lifts to an endofunctor on the category of lifts in $\C$.
    \item Given a diagram \pagenote{
       I have clarified the statement and make the proof more concrete, I have also moved this result to a lemma and made the next result a proposition rather than corollary.
    }
    \[ D: \d \to \mathsf{Lift}(\C)\]
    in the category of lifts of $\C$, if the $T$-limit of $U.D$ exists in $\C$, then $\lim U.D$ has a natural lift $\lambda'$ associated to it so that $(\lim U.D, \lambda')$ is the limit of $D$ in $\mathsf{Lifts}(\C)$. (That is, $T$-limits of lifts are computed pointwise in the base category.)
    \pagenote{
       The proof for the 
    }
  \end{enumerate}
\end{lemma}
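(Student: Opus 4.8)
The plan is to view $(T,\ell)$ as a weak comonad and to use the canonical flip $c$ as a distributive law of the functor $T$ over this comonad; this is exactly what produces the lifted endofunctor in part (i). For part (ii) the construction is forced by the hypothesis that the limit is a $T$-limit, hence preserved by every power of $T$, so that all identities can be checked one projection at a time.

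For part (i), given a lift $(E,\lambda)$ I would \emph{not} use the naive candidate $T.\lambda: TE \to T^2E$: a quick calculation gives $T.(T.\lambda)\o T.\lambda = (T.\ell)_E\o T.\lambda$, whereas coassociativity of a lift structure on $TE$ demands $(\ell.T)_E\o T.\lambda = \ell_{TE}\o T.\lambda$, and $T.\ell$ and $\ell.T$ do not agree in general. Instead I set
\[
    \hat T(E,\lambda) := (TE,\ c_E\o T.\lambda), \qquad \hat T(f) := T.f,
\]
which is the image of $\lambda$ under the standard coalgebra-lifting formula for the distributive law $c:T.T \Rightarrow T.T$. To check the coassociativity law $T.(c_E\o T.\lambda)\o(c_E\o T.\lambda) = \ell_{TE}\o(c_E\o T.\lambda)$, I would expand the left-hand side to $(T.c)_E\o T^2.\lambda\o c_E\o T.\lambda$, slide $T^2.\lambda$ past $c$ using naturality of $c$ at $\lambda:E\to TE$ (so $T^2.\lambda\o c_E = c_{TE}\o T^2.\lambda$), and collapse $T^2.\lambda\o T.\lambda$ to $(T.\ell)_E\o T.\lambda$ using coassociativity of the given lift. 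This rewrites the left-hand side as $(T.c\o c.T\o T.\ell)_E\o T.\lambda$, and the naturality equation of axiom [TC.2], namely $T.c\o c.T\o T.\ell = \ell.T\o c$, converts it to $\ell_{TE}\o c_E\o T.\lambda$, as required. Functoriality is then immediate: if $f$ is a coalgebra morphism, one application of naturality of $c$ shows $T.f$ is one too, and plainly $U.\hat T = T.U$.

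For part (ii), write $L := \lim U.D$ with legs $\pi_j:L\to E_j$, and for $\alpha:j\to k$ in $\d$ let $d_\alpha := U.D(\alpha)$. Since the limit is a $T$-limit, $TL = \lim T.(U.D)$ with legs $T.\pi_j$ and $T^2L = \lim T^2.(U.D)$ with legs $T^2.\pi_j$. I would first verify that $(\lambda_j\o\pi_j)_j$ is a cone over $T.(U.D)$ — using that each $D(\alpha)$ is a lift morphism, so $T.d_\alpha\o\lambda_j = \lambda_k\o d_\alpha$, together with $d_\alpha\o\pi_j = \pi_k$ — and thereby obtain a unique $\lambda':L\to TL$ with $T.\pi_j\o\lambda' = \lambda_j\o\pi_j$; this equation is precisely the statement that each $\pi_j$ is a lift morphism. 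To see $(L,\lambda')$ is itself a lift, I test $T.\lambda'\o\lambda' = \ell_L\o\lambda'$ after postcomposing with each $T^2.\pi_j$: the left side becomes $\ell_{E_j}\o\lambda_j\o\pi_j$ by coassociativity of $\lambda_j$, and the right side becomes the same expression by naturality of $\ell$ at $\pi_j$, so the two agree since the $T^2.\pi_j$ are jointly monic. Finally the universal property transfers: any cone of lift morphisms $f_j:(X,\chi)\to(E_j,\lambda_j)$ factors through the unique $\C$-map $f:X\to L$, and $f$ is automatically a lift morphism because $T.\pi_j\o\lambda'\o f = \lambda_j\o f_j = T.\pi_j\o T.f\o\chi$ for every $j$, whence $\lambda'\o f = T.f\o\chi$ by joint monicity of the $T.\pi_j$; uniqueness is inherited from $\C$.

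The single genuine obstacle lives in part (i): recognizing that the obvious coalgebra structure $T.\lambda$ fails, and that the correct structure $c_E\o T.\lambda$ succeeds precisely because of the mixed $\ell$--$c$ naturality equation $\ell.T\o c = T.c\o c.T\o T.\ell$ of axiom [TC.2], which is exactly the compatibility of $c$ with the comultiplication $\ell$. Part (ii) is then routine limit-chasing, the only structural input being that $T$ and $T^2$ preserve the limit, so that equalities of maps into $TL$ and $T^2L$ may be verified leg by leg.
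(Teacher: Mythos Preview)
Your proof is correct. Part (i) is exactly the paper's argument: the paper also takes $c\o T.\lambda$ as the lift on $TE$ and verifies coassociativity via the same chain (naturality of $c$ at $\lambda$, then $T$ applied to the coassociativity of $\lambda$, then the axiom $T.c\o c.T\o T.\ell = \ell.T\o c$). Your remark that the naive candidate $T.\lambda$ fails is a nice addition that the paper omits.

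For part (ii) there is a mild difference in presentation. The paper does not give the uniform argument you wrote; instead it exhibits the lift explicitly for the tangent terminal object, for binary tangent products, and for $T$-equalizers of lift morphisms (inducing $\lambda'$ on the equalizer from the commuting ladder). Your version works directly for an arbitrary diagram by using joint monicity of the $T.\pi_j$ and $T^2.\pi_j$, which matches the generality of the statement more closely and avoids the reduction to finite-limit building blocks. Both arguments use the same idea (the lift on the limit is induced by the cone of composites $\lambda_j\o\pi_j$, and all identities are checked one leg at a time), so this is a difference of packaging rather than of substance.
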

\begin{proof}
  ~\begin{enumerate}[(i)]
    \item Simply check that 
    \begin{gather*}
      T.(c \o T.\lambda) \o c \o T.\lambda = T.c \o T^2.\lambda \o c \o T.\lambda  = T.c \o c.T \o T^2.\lambda  \\= T.c \o c.T \o T.\ell \o T.\lambda = \ell.T \o c \o T.\lambda. 
    \end{gather*}
    \item 
    Concretely, a tangent terminal object will have a lift:
    \[
        (1, 1 \xrightarrow[]{0} T.1 \cong 1).
    \] 
    Given $(E,\lambda)$ and $(F,l)$, if the tangent product $E \x F$ exists there is a lift
    \[
      (E \x F, E \x F \xrightarrow[]{\lambda \x l} TE \x TF \cong T(E \x F)).
    \]
    Given the $T$-equalizer of a fork $f,g:(E,\lambda) \to (F,l)$, the equalizer has a lift induced as follows:
    \[
\begin{tikzcd}
	C & E & F \\
	{T.C} & {T.E} & {T.F}
	\arrow["f", shift left=1, from=1-2, to=1-3]
	\arrow["g"', shift right=1, from=1-2, to=1-3]
	\arrow["k", dashed, from=1-1, to=1-2]
	\arrow["\lambda", dotted, from=1-1, to=2-1]
	\arrow["k", dashed, from=2-1, to=2-2]
	\arrow["\lambda", from=1-2, to=2-2]
	\arrow["{T.f}", shift left=1, from=2-2, to=2-3]
	\arrow["{T.g}"', shift right=1, from=2-2, to=2-3]
	\arrow["l", from=1-3, to=2-3]
\end{tikzcd}
    \]
  \end{enumerate}
\end{proof}
\begin{proposition}\label{prop:lifts-is-tangent}
  The category of lifts is a tangent category.
\end{proposition}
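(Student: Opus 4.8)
The plan is to transport the tangent structure of $\C$ along the faithful forgetful functor $U:\mathsf{Lift}(\C) \to \C$, $(E,\lambda) \mapsto E$, making $U$ into a strict tangent functor. Concretely, I will equip $\mathsf{Lift}(\C)$ with tangent data whose image under $U$ is \emph{exactly} the tangent structure of $\C$. The endofunctor is the one produced in \Cref{lem:T-limits-of-lifts}(i): it sends $(E,\lambda)$ to $(TE,\, c\o T.\lambda)$ and a lift morphism $f$ to $T.f$. The pullback powers $T_n$ of the projection are the $T$-limits computed pointwise by \Cref{lem:T-limits-of-lifts}(ii), so these exist and are preserved by $\tilde T$, and $U$ in fact creates them; this gives [TC.1](i).

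The first substantive task is to check that each of the five structural natural transformations $p,0,+,\ell,c$ of $\C$ has components that are \emph{morphisms of lifts} (recall from \Cref{def:lift} that $f:(E,\lambda_E)\to(D,\lambda_D)$ is a lift morphism iff $\lambda_D\o f = T.f\o \lambda_E$); they then assemble into natural transformations on $\mathsf{Lift}(\C)$ lying over $p,0,+,\ell,c$. For $p_E:(TE,c\o T.\lambda)\to(E,\lambda)$ the required identity $\lambda\o p.E = T.p.E\o c\o T.\lambda$ reduces, using naturality of $p$ at $\lambda$, to $p.T = T.p\o c$, an instance of the naturality equations \eqref{eq:tc-2-3}. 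Dually, $0_E$ is a lift morphism because $c\o 0.T = T.0$, also from \eqref{eq:tc-2-3}. For $+_E$, whose source carries the pointwise lift on $T_2E$ supplied by \Cref{lem:T-limits-of-lifts}(ii), the condition reduces to $c\o T.+ = +.T\o(c\o T.\pi_0,\, c\o T.\pi_1)$, again an instance of \eqref{eq:tc-2-3}. The two genuinely delicate cases involve the twist $c$ that $\tilde T$ carries: for $\ell_E$ the lift-morphism condition reduces to $\ell.T\o c = T.c\o c.T\o T.\ell$ (the top-left square of \eqref{eq:tc-2-3}), and for $c_E$ it reduces to the Yang--Baxter equation \eqref{eq:tc-2-2}.

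With all structure maps established as lift morphisms, the tangent axioms of \Cref{def:tangent-cat} follow formally: each axiom is an equality between parallel morphisms of lifts whose images under the faithful functor $U$ are precisely the corresponding axioms holding in $\C$, so the equalities hold already in $\mathsf{Lift}(\C)$. The additive-bundle axioms [TC.1](ii) hold for the same reason, since the additive bundle $(p.E,0.E,+.E)$ projects to the one in $\C$. Finally, the universality axiom [TC.3](iv) holds because the defining square for $\mu$ is built from $\tilde T,\tilde +,\tilde 0,\tilde\ell$, all of which lie over the base, so it is a $T$-limit diagram whose underlying square in $\C$ is the required $T$-pullback; since $U$ creates such $T$-limits by \Cref{lem:T-limits-of-lifts}(ii), the square is a pullback in $\mathsf{Lift}(\C)$ as well. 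The main obstacle is thus concentrated in the five compatibility checks of the second paragraph---particularly the $\ell$ and $c$ cases, where the twist in $\tilde T$ must be untangled---but each reduces to an axiom already in hand, so the argument is ultimately bookkeeping rather than new content.
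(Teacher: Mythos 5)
Your proof is correct and follows essentially the same route as the paper's: the tangent endofunctor $(E,\lambda)\mapsto(TE,\,c\o T.\lambda)$ from \Cref{lem:T-limits-of-lifts}, structure maps inherited pointwise from $\C$, and universality created by the pointwise $T$-limits. The paper's proof is terser --- it only verifies explicitly that $T.f$ is again a lift morphism and asserts the rest --- so your five lift-morphism checks for $p,0,+,\ell,c$ (correctly reduced to the naturality equations \eqref{eq:tc-2-3} and Yang--Baxter \eqref{eq:tc-2-2}) supply detail the paper leaves implicit rather than a different argument.
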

\begin{proof}
  The tangent functor sends 
  \[
    \infer{T.f: (TE, c \o T.\lambda) \to (TF, c \o T.l)}{f:(E,\lambda) \to (F,l)}.
  \]
  To see that this is still an algebra morphism, compute
  \[
    c \o T.l \o T.f = c \o T^2.f \o T.\lambda = T^2.f \o c \o T.\lambda
  \]
  The structure maps are the structure maps on the underlying object of the lift; the universality conditions follow by Proposition \ref{lem:T-limits-of-lifts}.
\end{proof}


The following idempotent is key in the theory of lifts and will be used in defining non-singular lifts (Definition \ref{def:non-singular-lift}), and its splitting will present the projection and zero-section of a vector bundle (Definition \ref{def:pdb}).

\begin{proposition}%
  \label{prop:idempotent-natural}
  The category of lifts in a tangent category $\C$ has a natural idempotent:
  \[
      e: id \Rightarrow id; e_{(E,\lambda)}: (E,\lambda) \xrightarrow[]{p \o \lambda} (E,\lambda).
  \]\pagenote{
      This proposition originally included some ambiguities, these have been handled by earlier changes that make the notiong of lift and lift-morphism more concrete. 
  }
\end{proposition}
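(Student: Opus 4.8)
The plan is to establish three facts about the family $e_{(E,\lambda)} := p \o \lambda$: that each component is an endomorphism in $\mathsf{Lift}(\C)$ (a coalgebra morphism), that the family is natural, and that $e \o e = e$. Everything rests on a single auxiliary identity, which I would prove first. Applying the projection $p.T$ to the coalgebra coherence $T.\lambda \o \lambda = \ell.E \o \lambda$ and rewriting the left side by naturality of $p$ (so that $p.T \o T.\lambda = \lambda \o p$) and the right side by the lift--projection compatibility $p.T \o \ell = 0 \o p$ gives
\begin{equation}
    \lambda \o p \o \lambda = 0 \o p \o \lambda. \tag{$\dagger$}
\end{equation}
The identity $p.T \o \ell = 0 \o p$ itself follows from the naturality equation $p.T = T.p \o c$ together with the symmetry axiom $c \o \ell = \ell$ and the lift axiom $T.p \o \ell = 0 \o p$.

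I would then check that each $e_{(E,\lambda)}$ is a morphism of lifts, that is, $\lambda \o e = T.e \o \lambda$. The left side is $\lambda \o p \o \lambda$, which equals $0 \o p \o \lambda$ by $(\dagger)$. The right side expands as $T.p \o T.\lambda \o \lambda = T.p \o \ell.E \o \lambda$ by the coherence, and this equals $0 \o p \o \lambda$ by the axiom $T.p \o \ell = 0 \o p$. Hence both sides agree and $e_{(E,\lambda)}$ is indeed a coalgebra endomorphism.

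Idempotency is then immediate: $e \o e = p \o \lambda \o p \o \lambda = p \o (0 \o p \o \lambda)$ using $(\dagger)$, and since $p \o 0 = id$ this collapses to $p \o \lambda = e$. For naturality, given any lift morphism $f:(E,\lambda) \to (D,\gamma)$, the morphism condition $\gamma \o f = T.f \o \lambda$ and naturality of $p$ (so $p \o T.f = f \o p$) yield $e_{(D,\gamma)} \o f = p \o \gamma \o f = p \o T.f \o \lambda = f \o p \o \lambda = f \o e_{(E,\lambda)}$, as required.

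The only genuinely delicate step is the auxiliary identity $(\dagger)$: one must apply the \emph{outer} projection $p.T$ (rather than the inner $T.p$) to the coalgebra coherence, and take care to invoke the correct pair of lift--projection compatibilities. Once $(\dagger)$ is in hand, the coalgebra-morphism, idempotency, and naturality claims are all short formal manipulations using only $(\dagger)$, the axiom $T.p \o \ell = 0 \o p$, the section identity $p \o 0 = id$, and naturality of $p$.
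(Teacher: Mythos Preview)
Your proof is correct and follows essentially the same route as the paper's. Both arguments rest on the same ingredients: naturality of $p$, the coalgebra coherence $T.\lambda \o \lambda = \ell \o \lambda$, the derived identity $p.T \o \ell = 0 \o p$ (obtained via $c \o \ell = \ell$ and $T.p \o c = p.T$), and $p \o 0 = id$. The only organizational difference is that you isolate $(\dagger)$ as a standalone lemma and deduce everything from it, whereas the paper performs the idempotency computation inline; for the coalgebra-morphism step, you reduce both sides to the common value $0 \o p \o \lambda$, while the paper instead passes through the identity $c \o T.\lambda \o \lambda = T.\lambda \o \lambda$ to convert $p.T$ into $T.p$ directly---a cosmetic variation using the same axioms.
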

\begin{proof}
  First, we see that $e = p \o \lambda$ is an idempotent:
  \[ p \o \lambda \o p \o \lambda
  = p\o p.T \o T.\lambda \o \lambda \\
  = p\o p.T \o\ell\o \lambda \\
  = p \o 0 \o p\o \lambda \\
  = p \o \lambda.\]
  Moreover, every $f:(E,\lambda) \to (F,l)$ preserves the idempotent:
  \[
    f \o p \o \lambda = p \o T.f \o \lambda = p \o l \o f.
  \]
  Finally, note that the idempotent is a lift morphism.:
  \[
    T.\lambda \o \lambda = \ell \o \lambda = c \o \ell \o \lambda = c \o T.\lambda \o \lambda
  \]
  which implies that 
  \[
    \lambda \o e = \lambda \o p \o \lambda = p \o T.\lambda \o \lambda = p \o c \o T.\lambda \o \lambda = T.p \o T.\lambda \o \lambda = T.e \o \lambda.
  \]
\end{proof}


\section{Non-singular lifts}%
\label{sec:non-singular-lifts}
\cite{Grabowski2009} introduced the notion of a non-singular lift as a means to axiomatize the Euler vector field of a vector bundle's multiplicative $\R^+$-action. While it is not immediately clear that our definition is the same as Grabowski's, the results of Section \ref{sec:iso-vbun-dbun} will justify the use of this language as they are necessarily the same.
\begin{definition}%
  \label{def:non-singular-lift}
  A lift $(E,\lambda)$ in a tangent category $\C$ is \emph{non-singular} whenever the following diagram is a $T$-equalizer:
  \[
\begin{tikzcd}
	E & TE & TE
	\arrow["\lambda", from=1-1, to=1-2]
	\arrow["{T.e}", shift left=1, from=1-2, to=1-3]
	\arrow["{e.E}"', shift right=1, from=1-2, to=1-3]
\end{tikzcd}
  \]
  where $e.E = p \o \ell$ (the idempotent associated to the free lift on $E$) and $T.e = T.p \o T.\lambda$ (the image of the idempotent associated to $(E,\lambda)$ under the tangent functor). The category of non-singular lifts is written $\mathsf{NonSing}(\C)$.\pagenote{
  I have added comments clarifying what maps $e.E$ and $T.e$ are. 
  }
\end{definition}
The most prominent class of examples is given by the Euler vector field of the $\R$-action on a vector bundle.
\begin{proposition}%
  \label{prop:evf-vbun-is-nonsingular}
  The Euler vector field of a vector bundle is a non-singular lift.
\end{proposition}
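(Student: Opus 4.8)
The plan is to exhibit $(E,\lambda)$ as the $T$-equalizer by reducing its universal property to the $T$-pullback of Proposition \ref{prop:ros-for-vbun}, which (as one sees in local coordinates, where $\lambda$ cuts out $\{(u,0,0,w)\} \subseteq TE$) already carves out the same subobject of $TE$ that the equalizer should. First I would make the two parallel maps explicit. The commuting square of Proposition \ref{prop:ros-for-vbun} gives $p \o \lambda = \xi \o q$ and $T.q \o \lambda = 0 \o q$; in particular the idempotent of the lift $(E,\lambda)$ from Proposition \ref{prop:idempotent-natural} is $e = p \o \lambda = \xi \o q$, so $T.e = T.\xi \o T.q$. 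On the other side, $e.E$ is the idempotent of the \emph{free} lift $(TE,\ell.E)$, and the additive-bundle-morphism property of $\ell$ (namely $p.T \o \ell = 0 \o p$) identifies it as $e.E = p.TE \o \ell.E = 0 \o p$.

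Next I would verify that $\lambda$ equalizes the pair, which is the content of the ``$e.E=T.e$ on sections'' phenomenon: using $T.q \o \lambda = 0 \o q$, naturality of $0$ at $\xi$ (so $T.\xi \o 0 = 0 \o \xi$), and $p \o \lambda = \xi \o q$,
\[
    T.e \o \lambda = T.\xi \o T.q \o \lambda = T.\xi \o 0 \o q = 0 \o \xi \o q = 0 \o p \o \lambda = e.E \o \lambda.
\]

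The crux is the universal property. Given any $x:X \to TE$ with $T.e \o x = e.E \o x$, i.e.\ $T.\xi \o T.q \o x = 0 \o p \o x$, I would recover a cone over the cospan $(p,T.q),(\xi,0)$ of Proposition \ref{prop:ros-for-vbun} by postcomposing this identity with $T.q$ (using $T.q \o T.\xi = id$ and $T.q \o 0 = 0 \o q$) and with $p$ (using $p \o T.\xi = \xi \o p$, $p \o T.q = q \o p$, and $p \o 0 = id$), which yields
\[
    T.q \o x = 0 \o q \o p \o x \qquad\text{and}\qquad p \o x = \xi \o q \o p \o x.
\]
Setting $m := q \o p \o x$, these say exactly $(p,T.q) \o x = (\xi,0) \o m$, so the $T$-pullback of Proposition \ref{prop:ros-for-vbun} supplies a unique $\bar x : X \to E$ with $\lambda \o \bar x = x$; uniqueness holds because $\lambda$, being the pullback of the split mono $(\xi,0)$, is itself monic. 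Together with the previous paragraph this establishes the equalizer property. Finally, since Proposition \ref{prop:ros-for-vbun} is a $T$-pullback and every equation invoked is a naturality/functorial identity preserved by $T$, the same argument applies verbatim after whiskering by any $T^n$, so the equalizer is a $T$-equalizer.

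I expect the main obstacle to be purely the bookkeeping of the first step: recognizing that $e.E = 0 \o p$ and $T.e = T.\xi \o T.q$, and then seeing that the equalizer condition $T.e \o x = e.E \o x$ is \emph{equivalent} to $x$ factoring through the pullback cone of Proposition \ref{prop:ros-for-vbun}. Once one postcomposes with $p$ and $T.q$, the two legs $(\xi,0)$ of that pullback fall out automatically, and the rest is an appeal to an already-proved $T$-pullback; the only thing needing care is confirming that $\lambda$ is monic so that the induced factorization is unique.
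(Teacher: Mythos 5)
Your proof is correct and follows essentially the same route as the paper: both arguments reduce the non-singularity equalizer to the $T$-pullback of Proposition \ref{prop:ros-for-vbun}, after identifying $e.E = 0 \o p$ and $T.e = T.\xi \o T.q$. The only difference is the direction of travel --- the paper postcomposes the pullback's parallel pairs with the sections $0$ and $T.\xi$ and then collapses the result into a ternary equalizer, whereas you postcompose the equalizer condition with the retractions $p$ and $T.q$ to recover the pullback cone directly; your version is a slightly more elementary verification of the same equivalence.
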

\begin{proof}
  \pagenote{I made it clear that the diagram being discussed is a $T$-limit, as it is not actually an equalizer diagram. I have tried to clarify the wording that ``one diagram is universal if and only if the other is''.}
  Let $(q:E \to M, +, \xi, \cdot)$ be a vector bundle with Euler vector field $\lambda$. By Proposition \ref{prop:ros-for-vbun}, the diagram \[
\begin{tikzcd}
	&& TM \\
	E & TE \\
	&& E
	\arrow["\lambda", from=2-1, to=2-2]
	\arrow["{T.q}", shift left=1, from=2-2, to=1-3]
	\arrow["{p \o T.\xi \o T.q}"{pos=1}, shift left=1, from=2-2, to=3-3]
	\arrow["p"', shift right=1, from=2-2, to=3-3]
	\arrow["{T.q \o 0 \o p}"'{pos=1}, shift right=1, from=2-2, to=1-3]
\end{tikzcd}\] is a $T$-limit. The $T$-universality of this diagram will hold if and only if the diagram is universal after each parallel pair of arrows is post-composed by a $T$-monic. A section is a $T$-monic, so the previous diagram is $T$-universal if and only if the following diagram is $T$-universal:
  \[
  \begin{tikzcd}
    && E & TE \\
    E & TE \\
    && TM & TE
    \arrow["p"', shift right=2, from=2-2, to=1-3]
    \arrow["{\xi\o q \o p}", shift left=2, from=2-2, to=1-3]
    \arrow["{T.q}", shift left=2, from=2-2, to=3-3]
    \arrow["{0 \o q \o p}"', shift right=2, from=2-2, to=3-3]
    \arrow["\lambda", from=2-1, to=2-2]
    \arrow["0", from=1-3, to=1-4]
    \arrow["{T.\xi}", from=3-3, to=3-4]
  \end{tikzcd}
\]
  Now simplify this diagram using the fact that $T.(\xi \o q) = T.e, 0 \o p = e.E$:
\[\begin{tikzcd}
  && TE \\
  E & TE \\
  && TE
  \arrow["{e.T}"', shift right=2, from=2-2, to=1-3]
  \arrow["{e.E \o T.e}", shift left=2, from=2-2, to=1-3]
  \arrow["{T.e}", shift left=2, from=2-2, to=3-3]
  \arrow["{e.E \o T.e}"', shift right=2, from=2-2, to=3-3]
  \arrow["\lambda", from=2-1, to=2-2]
\end{tikzcd}\]
  Note that the the two pairs of parallel arrows have a common arrow, implying that they may be pulled together into a single ternary equalizer. All that remains to check, then, is that for any $x:X \to TE$,
  \[
  (e.e \o x = T.e \o x = e.E \o x) \iff 
  (T.e \o x = e.E \o x). 
  \]
  The forward implication is trivial, so it remains to prove the reverse. 
  Suppose $T.e \o x = e.E \o x$; then
  \[
    e.e \o x = e.E \o T.e \o x = e.E \o e.E \o x = e.E \o x
  \]
  giving the result, namely that the diagram \[\] is a $T$-equalizer.
\end{proof}
Every map $f:E \to F$ gives a map of free coalgebras $T.f: (TE,\ell) \to (TF,\ell)$ and the idempotent $e$ is a coalgebra morphism by Proposition \ref{prop:idempotent-natural}, so the following is immediate:
\begin{proposition}%
  \label{def:non-singular-lift-eq-of-lifts}
  A non-singular lift is an equalizer in the category of lifts:
  \[
\begin{tikzcd}
	{(E,\lambda)} & {(TE,\ell)} & {(TE,\ell)}
	\arrow["\lambda", from=1-1, to=1-2]
	\arrow["{T.e}", shift left=1, from=1-2, to=1-3]
	\arrow["{e.E}"', shift right=1, from=1-2, to=1-3]
\end{tikzcd}
  \]
\end{proposition}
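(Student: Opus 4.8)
The plan is to check first that the three arrows of the displayed fork really are morphisms in $\mathsf{Lift}(\C)$, and then to recognise the fork as a $T$-equalizer that is computed pointwise by Lemma \ref{lem:T-limits-of-lifts}(ii); the only nonformal point is identifying the induced lift on the equalizer object.

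For the first step I would verify three lift-morphism conditions. The arrow $\lambda\colon (E,\lambda) \to (TE,\ell)$ is a lift morphism for exactly the reason that $(E,\lambda)$ is a lift at all: the required square $\ell \o \lambda = T.\lambda \o \lambda$ is precisely the coassociativity axiom of Definition \ref{def:lift}. The arrow $e.E = p \o \ell$ is the component $e_{(TE,\ell)}$ of the natural idempotent of Proposition \ref{prop:idempotent-natural}, hence a lift morphism by naturality. Finally $T.e = T.p \o T.\lambda$ is the image under $T$ of the underlying endomorphism $e = p\o\lambda\colon E \to E$; and for \emph{any} map $f$, the naturality of $\ell$ gives $\ell \o T.f = T^2.f \o \ell$, which is exactly the statement that $T.f\colon (TE,\ell) \to (TF,\ell)$ is a morphism of free lifts. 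Thus $T.e$ is a lift morphism too. This is the content of the sentence preceding the statement.

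For the second step I would invoke non-singularity: by Definition \ref{def:non-singular-lift} the underlying diagram $E \xrightarrow{\lambda} TE \rightrightarrows TE$ is a $T$-equalizer in $\C$. Lemma \ref{lem:T-limits-of-lifts}(ii) says $T$-limits of lifts are computed pointwise in the base category, so the equalizer of the fork $T.e,\, e.E$ in $\mathsf{Lift}(\C)$ exists, its underlying object is $E$, and its underlying equalizer arrow is $\lambda$.

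The one point needing care — and the only genuine obstacle — is checking that the lift structure that Lemma \ref{lem:T-limits-of-lifts}(ii) induces on the equalizer object is the original $\lambda$ and not some other coalgebra structure on $E$. The induced lift $\lambda'$ is the unique map satisfying $T.\lambda \o \lambda' = \ell \o \lambda$, since the equalizer arrow $\lambda$ must become a lift morphism and $T.\lambda$ is monic (by $T$-universality it is the equalizer of the $T$-images of $T.e$ and $e.E$, hence monic). But the coassociativity axiom already gives $T.\lambda \o \lambda = \ell \o \lambda$, so monicity of $T.\lambda$ forces $\lambda' = \lambda$. Therefore the equalizer in $\mathsf{Lift}(\C)$ is exactly $(E,\lambda) \xrightarrow{\lambda} (TE,\ell)$, as claimed; everything else is bookkeeping with results already established.
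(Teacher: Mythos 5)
Your proof is correct and follows the same route the paper intends: the paper simply observes that $T.f$ is always a morphism of free lifts and that $e$ is a lift morphism by Proposition \ref{prop:idempotent-natural}, and then declares the result "immediate" from non-singularity together with Lemma \ref{lem:T-limits-of-lifts}(ii). Your extra step — using monicity of $T.\lambda$ (which holds because the non-singularity fork is a $T$-equalizer) to confirm that the pointwise-induced lift on the equalizer object is $\lambda$ itself — is exactly the detail the paper elides, and it is handled correctly.
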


Observe that the category of non-singular lifts is closed under finite limits in the category of lifts.
\begin{proposition}%
  \label{prop:nonsing-closed-under-t-limits}
  The category of non-singular lifts in $\C$ is closed under $T$-limits:
  \begin{enumerate}[(i)]
    \item The tangent functor on lifts preserves non-singular lifts, so that if $(E,\lambda)$ is non-singular then $(TE,c\o T.\lambda)$ is non-singular. \pagenote{
        I clarified that this was in fact referring to the tangent functor \emph{for the category of lifts}.
    }
    \item The trivial lift on an object, $0:M \to TM$, is non-singular. 
    \item $T$-products of non-singular lifts are non-singular lifts.
    \item $T$-equalizers of non-singular lifts are non-singular lifts.
  \end{enumerate}
\end{proposition}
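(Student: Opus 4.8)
The four claims all assert that the defining $T$-equalizer of a non-singular lift (Definition \ref{def:non-singular-lift}) survives the operation in question, so the plan is to exhibit, in each case, the relevant non-singular diagram as a $T$-equalizer built from data we already control. Throughout I write $e_{(E,\lambda)} = p \o \lambda$ for the natural idempotent of Proposition \ref{prop:idempotent-natural}, and I use the free-lift identity $e.E = p \o \ell = 0 \o p$ on $TE$ (the square $p.T \o \ell = 0 \o p$) together with $T.e = T.p \o T.\lambda$.

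Parts (ii) and (iii) should be immediate. For the trivial lift $(M,0)$ the idempotent is $e_{(M,0)} = p \o 0 = id_M$, so $T.e = id_{TM}$, and the non-singular diagram is $M \xrightarrow{0} TM \rightrightarrows TM$ with parallel pair $id_{TM}$ and $0\o p$. A map $x$ equalizes this pair precisely when $x = 0 \o (p \o x)$, so $0\colon M \to TM$ is its equalizer; naturality of $0$ and $p$ makes this remain an equalizer after applying any $T^n$, hence a $T$-equalizer. For (iii), using $T(E\x F) \cong TE \x TF$ one checks $e_{(E\x F,\,\lambda\x l)} = e_E \x e_F$ and $e.(E\x F) = e.E \x e.F$, so the non-singular diagram of the product lift is literally the product of the non-singular diagrams of $(E,\lambda)$ and $(F,l)$; as products of ($T$-)equalizers are ($T$-)equalizers, the product lift is non-singular.

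For (i) I would start from the presentation of a non-singular lift as an equalizer of free lifts (Proposition \ref{def:non-singular-lift-eq-of-lifts}) and apply the tangent functor on lifts. Since $T$-equalizers are preserved by every $T^n$, applying $T$ to the non-singular equalizer of $(E,\lambda)$ yields a $T$-equalizer $TE \xrightarrow{T.\lambda} T^2E \rightrightarrows T^2E$ with pair $T^2.e_E$ and $T(e.E)$. This is not yet the non-singular diagram of $(TE, c\o T.\lambda)$, because $T$ of the free lift $(TE,\ell)$ is $(T^2E, c\o T.\ell)$ rather than the free lift on $TE$. The remaining step is to conjugate by the natural isomorphism $c$: one has $e_{(TE,\,cT\lambda)} = p \o c \o T.\lambda = T.e_E$ (using $p.T\o c = T.p$), so the first map of the target diagram is again $T^2.e_E$ and commutes with $c$ by naturality, while $c \o T(e.E) = e.(TE) \o c$ reduces, via $e.E = 0\o p$ and the tangent identity $c \o T.0 = 0.T$, to $c \o T.0 \o T.p = 0.T \o T.p$. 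Thus $c$ is an isomorphism of diagrams carrying the $T$-equalizer above onto the non-singular diagram of $(TE,c\o T\lambda)$, which is therefore itself a $T$-equalizer.

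The genuine work is (iv), which I expect to be the main obstacle. Let $f,g\colon(E,\lambda)\to(F,l)$ be lift morphisms between non-singular lifts with $T$-equalizer $k\colon(C,\kappa)\to(E,\lambda)$ (computed on underlying objects by Lemma \ref{lem:T-limits-of-lifts}). To show $\kappa\colon C\to TC$ equalizes $T.e_C,e.C$ universally, take any $x\colon X\to TC$ with $T.e_C\o x = e.C\o x$. Naturality of $p,0$ gives $e.E\o T.k = T.k\o e.C$, and naturality of $e$ (Proposition \ref{prop:idempotent-natural}) with $k$ a lift morphism gives $T.e_E\o T.k = T.k\o T.e_C$; hence $T.k\o x$ equalizes the non-singular pair of $(E,\lambda)$ and factors uniquely as $\lambda\o z = T.k\o x$. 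The crucial point is that $z$ descends to $C$: from $l\o f = T.f\o\lambda$, $l\o g = T.g\o\lambda$ and $f\o k = g\o k$ we compute $l\o f\o z = T(f\o k)\o x = T(g\o k)\o x = l\o g\o z$, and because $l$ is the equalizer map of a non-singular lift it is monic, so $f\o z = g\o z$. Thus $z = k\o y$ for a unique $y\colon X\to C$, and since $T.k$ is monic the identity $T.k\o\kappa\o y = \lambda\o k\o y = \lambda\o z = T.k\o x$ gives $\kappa\o y = x$; monicity of $\kappa$ (from $T.k\o\kappa = \lambda\o k$ with $\lambda,k$ monic) gives uniqueness. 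Running the identical argument with every $T^n$ applied — all of $\lambda, l, T^n k, T^n\kappa$ remaining monic and the relevant diagrams remaining equalizers because the inputs are $T$-equalizers — shows the diagram is a $T$-equalizer, so $(C,\kappa)$ is non-singular.
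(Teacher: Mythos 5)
Your proof is correct. Parts (ii) and (iii) match the paper's argument in substance (the paper simply notes that $0$ splits the idempotent $0 \o p$ and that limits are stable under products), and your part (i) is a fleshed-out version of the paper's one-line remark that ``the non-singularity condition is a $T$-limit'' --- you supply the step the paper leaves implicit, namely that after applying $T$ one must conjugate by the isomorphism $c$ to convert the fork for $(T E, T.\lambda)$ into the non-singularity fork for $(TE, c \o T.\lambda)$, using $p.T \o c = T.p$ and $c \o T.0 = 0.T$; this is worth making explicit, since the tangent functor on lifts twists the lift by $c$. Where you genuinely diverge is part (iv): the paper arranges the data into a $3\times 3$ grid whose rows are the $T$-equalizers $C \to E \rightrightarrows F$ (and their images under $\lambda$, $T.e$, $e.E$) and whose two right-hand columns are the non-singularity $T$-equalizers of $E$ and $F$, then concludes by commutation of limits that the left column is a $T$-equalizer. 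You instead run a direct generalized-element argument: factor $T.k \o x$ through $\lambda$ by non-singularity of $E$, descend the factor to $C$ using that the equalizer arrow $l$ of $F$ is monic, and recover $x$ from monicity of $T.k$ and $\kappa$. Both are valid; the paper's argument is shorter and avoids any monicity bookkeeping, while yours makes the universal arrow explicit and shows concretely why the factorization lands in $C$, at the cost of having to track that $\lambda$, $l$, $k$, $T^n k$, $\kappa$ all remain monic under each $T^n$ (which they do, since all the forks involved are $T$-equalizers).
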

\begin{proof}
  ~\begin{enumerate}[(i)]
    \item This follows from the fact that the non-singularity condition is a $T$-limit.
    \item The zero map splits the idempotent $0 \o p$, so it is the equalizer of $0 \o p, p \o 0 = id$.
    \item This follows by stability of limits under products.
    \item The following diagram commutes by naturality:
    \[
\begin{tikzcd}
    C & E & F \\
    {T.C} & {T.E} & {T.F} \\
    \\
    {T.C} & {T.E} & {T.F}
    \arrow["\lambda", from=1-2, to=2-2]
    \arrow["\lambda", from=1-3, to=2-3]
    \arrow["f", shift left=1, from=1-2, to=1-3]
    \arrow["{T.f}", shift left=1, from=2-2, to=2-3]
    \arrow["{T.g}"', shift right=1, from=2-2, to=2-3]
    \arrow["{e.E}"', shift right=1, from=2-2, to=4-2]
    \arrow["{T.e}", shift left=1, from=2-2, to=4-2]
    \arrow["{e.F}"', shift right=1, from=2-3, to=4-3]
    \arrow["{T.e}", shift left=1, from=2-3, to=4-3]
    \arrow["{T.f}", shift left=1, from=4-2, to=4-3]
    \arrow["{T.g}"', shift right=1, from=4-2, to=4-3]
    \arrow["g"', shift right=1, from=1-2, to=1-3]
    \arrow[dashed, from=1-1, to=1-2]
    \arrow["\lambda", from=1-1, to=2-1]
    \arrow["{e.C}"', shift right=1, from=2-1, to=4-1]
    \arrow["{T.e}", shift left=1, from=2-1, to=4-1]
    \arrow[dashed, from=2-1, to=2-2]
    \arrow[dashed, from=4-1, to=4-2]
\end{tikzcd}\]
    Each horizontal diagram is a $T$-equalizer, and the two columns on the right are $T$-equalizers, so the column on the left is a $T$-equalizer.
  \end{enumerate}
\end{proof}

Finally, when a tangent category has certain $T$-equalizers, there is an idempotent monad on the category of lifts, whose algebras are non-singular lifts:
\begin{theorem}%
  \label{thm:idemp-monad-nonsingular}
  Let $\C$ be a tangent category with chosen $T$-equalizers of idempotents. Then the following equalizer determines a left-exact idempotent monad on the category of lifts, whose algebras are non-singular lifts.\pagenote{
      I have reworded this proposition to make it clear that the functor is defined by sending a lift to the lift defined by the equalizer in the following diagram.
  }
\[\begin{tikzcd}
	{(F,l)} & {(T.E,\ell)} & {(T.E, \ell)}
	\arrow[from=1-1, to=1-2]
	\arrow["{e.E}", shift left=1, from=1-2, to=1-3]
	\arrow["{T.e}"', shift right=1, from=1-2, to=1-3]
\end{tikzcd}\] 
\end{theorem}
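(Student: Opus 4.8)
The plan is to exhibit the displayed equalizer as the reflection of an arbitrary lift onto the full subcategory $\mathsf{NonSing}(\C)$ of non-singular lifts, and then to invoke the standard fact that a reflective subcategory is exactly the category of algebras for a left-exact idempotent monad (the monad being $R = \iota \o L$ with $\iota$ the inclusion, $L$ the reflector, unit the reflection map, and invertible multiplication $\mu = (\eta R)^{-1}$). Thus the substance of the argument is to show that $(E,\lambda) \mapsto \mathrm{Eq}(e.E, T.e)$ is a well-defined reflector whose algebras are the non-singular lifts, and that it preserves the finite limits that exist.

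First I would record the elementary facts about the two arrows. Both $e.E$ and $T.e$ are lift endomorphisms of the free lift $(TE, \ell)$, and a short naturality computation (using $p.T \o \ell = 0 \o p$ together with naturality of $p$ and $0$) gives $e.E = 0 \o p$ and the relations $e.E \o T.e = T.e \o e.E = 0 \o e \o p$, so the two idempotents commute. The final calculation in the proof of \Cref{prop:idempotent-natural} shows precisely that $\lambda \colon (E,\lambda) \to (TE,\ell)$ equalizes $e.E$ and $T.e$; hence, once the equalizer exists, $\lambda$ factors through it by a canonical map $\eta_{(E,\lambda)}$, the prospective unit. Naturality of $\eta$ follows because the induced map $T.f\colon(TE,\ell)\to(TF,\ell)$ commutes with both $e.E$ and $T.e$ (the latter using that lift morphisms are natural for $e$, again \Cref{prop:idempotent-natural}), so $T.f$ restricts to the equalizers and defines $R(f)$.

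The main obstacle is the \emph{existence} of this equalizer from the sole hypothesis that $T$-equalizers of idempotents exist, since $e.E, T.e$ form a commuting \emph{pair} of idempotents rather than a single one. The approach I would take is to reduce the equalizer of the pair to splittings of idempotents: the key observation is that $e.E \o x = T.e \o x$ forces $e.E \o T.e \o x = e.E \o x$, so $\mathrm{Eq}(e.E,T.e)$ coincides with the joint equalizer of $\{e.E, T.e, e.E\o T.e\}$; combining this with the (now split, $T$-stably) composite idempotent $e.E \o T.e$ and the additive-bundle structure of $TE$ should realize the desired equalizer as a split subobject of $(TE,\ell)$ inside $\mathsf{Lift}(\C)$, where by \Cref{lem:T-limits-of-lifts} such $T$-limits are computed pointwise in $\C$. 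Turning the commuting pair into a single split idempotent is the delicate step and I expect it to be where most care is needed. Granting existence, non-singularity of the output is comparatively cheap: one checks that the free lift $(TE,\ell)$ is itself non-singular (its defining fork is an instance of the universality of the vertical lift, \Cref{prop:ell-universal} and axiom [TC.3.iv]), and since $e.E, T.e$ are maps of non-singular lifts, \Cref{prop:nonsing-closed-under-t-limits}(iv) makes $R(E,\lambda)$ non-singular.

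It then remains to check the reflection property and left-exactness. Given a non-singular target $(G,m)$ and a lift map $f\colon (E,\lambda)\to(G,m)$, I would use that $(G,m)$ is the equalizer of $e.G, T.e$ (\Cref{def:non-singular-lift-eq-of-lifts}) to obtain the unique factorization of $f$ through $\eta_{(E,\lambda)}$, establishing that $\eta$ is a reflection unit; idempotency of the resulting monad is the usual consequence. Because an object is fixed by $R$ precisely when $\eta$ is invertible, i.e. precisely when $\lambda$ is \emph{itself} the equalizer, the algebras are exactly the non-singular lifts (\Cref{def:non-singular-lift}). Finally, left-exactness holds since $\mathsf{NonSing}(\C)$ is closed under $T$-limits in $\mathsf{Lift}(\C)$ (\Cref{prop:nonsing-closed-under-t-limits}) and the inclusion creates them, so the reflector preserves the finite $T$-limits that exist; the only point requiring attention is confirming that these limits are genuinely computed as $T$-limits, which is exactly what \Cref{lem:T-limits-of-lifts} provides.
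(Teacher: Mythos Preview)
Your approach is essentially the paper's: construct the unit from the observation that $\lambda$ equalizes $e.E$ and $T.e$ (the final line of \Cref{prop:idempotent-natural}), show the output is non-singular because $(TE,\ell)$ is and non-singular lifts are closed under $T$-equalizers (\Cref{prop:nonsing-closed-under-t-limits}), and deduce idempotency from the fact that a non-singular $(E,\lambda)$ already has $\lambda$ as \emph{the} equalizer.

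Two small remarks. First, you are over-engineering the existence step: the paper simply reads the hypothesis as supplying the displayed $T$-equalizer directly and does not attempt to reduce the pair $(e.E,T.e)$ to a single split idempotent. Second, your left-exactness argument is not quite right as stated: closure of $\mathsf{NonSing}(\C)$ under $T$-limits and the inclusion creating them does \emph{not} by itself force the reflector to preserve limits (inclusions of reflective subcategories always preserve limits, so this gives no leverage). The paper's argument is the one you should use, and you nearly say it at the end: the functor $R$ is \emph{defined} pointwise as a $T$-limit (an equalizer), and $T$-limits commute with $T$-limits, so $R$ preserves them.
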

\begin{proof}
  First, take the equalizer in $\mathsf{Lift}(\C)$; the functor sends $(E,\lambda)$ to the chosen limit $(F,l)$.The unit of the monad is the unique morphism from $(E,\lambda)$ to the equalizer $(F,l)$ induced by universality:
  \[
\begin{tikzcd}
	{(F,l)} & {(TE,\ell)} & {(TE,\ell)} \\
	{(E,\lambda)}
	\arrow["\lambda"', from=2-1, to=1-2]
	\arrow["{T.e}", shift left=1, from=1-2, to=1-3]
	\arrow["{e.E}"', shift right=1, from=1-2, to=1-3]
	\arrow[from=1-1, to=1-2]
	\arrow["{\exists!}", dashed, from=2-1, to=1-1]
\end{tikzcd}\]
  Note that non-singular lifts are closed under finite limits, so $(F,l)$ is a non-singular lift. If $(E,\lambda)$ is a nonsingular lift then $\lambda: (E,\lambda) \to (TE, \ell)$ equalizes the diagram, so there is a unique isomorphism $(F,l) \cong (E,\lambda)$, making the multiplication of the monad a natural isomorphism (and thus yielding an idempotent monad).
  The functor is defined as a $T$-limit and therefore preserves all $T$-limits of lifts, so it is left-exact.
\end{proof}
In the category of smooth manifolds, $\ell$ is the Euler vector field of an $\R^+$-action, this guarantees that every $\lambda$ is the Euler vector field of a multiplicative $\R^+$-action.  We note the following corollary.
\begin{corollary}%
  \label{cor:non-singular-implies-ract}
  In the category of multiplicative $\mathbb{R}^+$ actions in $\mathsf{SMan}$, multiplication by $0$ is equivalent to the natural idempotent $e$ in the fully faithful functor sending an $\mathbb{R}^+$-action to its Euler vector field. By non-singularity, the following diagram is an equalizer, giving $E$ a multiplicative action by $\mathbb{R}^+$ whose Euler vector field is $\lambda$: 
  \[
\begin{tikzcd}
	{(E, \cdot_E)} & {(TE, \cdot_T)} & {(TE, \cdot_T)}
	\arrow["{T.e}", shift left=1, from=1-2, to=1-3]
	\arrow["{e.T}"', shift right=1, from=1-2, to=1-3]
	\arrow["\lambda", from=1-1, to=1-2]
\end{tikzcd}
  \]
  Moreover, $\lambda$ is the Euler vector field of this lift, and the following diagram commutes:
  \[
\begin{tikzcd}
	E & TE & TE \\
	{\R^+ \x E} & {\R^+ \x TE} & {\R^+ \x TE} \\
	{T(\R^+ \x E)} & {T(\R^+ \x TE)} & {T(\R^+ \x TE)} \\
	TE & TTE & TTE
	\arrow["{T(\R^+ \x e.E)}", shift left=1, from=3-2, to=3-3]
	\arrow["{T(\R^+\x T.e)}"', shift right=1, from=3-2, to=3-3]
	\arrow["{\R^+ \x \lambda}", from=3-1, to=3-2]
	\arrow["{\cdot_p}", from=3-3, to=4-3]
	\arrow["{T\cdot_p}"', from=3-2, to=4-2]
	\arrow["{e.E}", shift left=1, from=4-2, to=4-3]
	\arrow["{T.e}"', shift right=1, from=4-2, to=4-3]
	\arrow["T\lambda"', from=4-1, to=4-2]
	\arrow["{T\cdot_E}"', dashed, from=3-1, to=4-1]
	\arrow["{(1^\R,id)}"', from=1-1, to=2-1]
	\arrow["{(1^\R,id)}"{description}, from=1-2, to=2-2]
	\arrow["{(1^\R,id)}"{description}, from=1-3, to=2-3]
	\arrow["{\lambda^\R \x 0}"{description}, from=2-2, to=3-2]
	\arrow["{\lambda^\R \x 0}"{description}, from=2-3, to=3-3]
	\arrow["{\lambda^\R \x 0}"{description}, from=2-1, to=3-1]
	\arrow["{e.E}", shift left=1, from=1-2, to=1-3]
	\arrow["{T.e}"', shift right=1, from=1-2, to=1-3]
	\arrow["{\R^+ \x e.E}", shift left=1, from=2-2, to=2-3]
	\arrow["{\R^+ \x T.e}"', shift right=1, from=2-2, to=2-3]
	\arrow["\lambda", from=1-1, to=1-2]
	\arrow["{\R^+ \x \lambda}", from=2-1, to=2-2]
\end{tikzcd}
  \]
\end{corollary}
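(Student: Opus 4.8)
The plan is to run the whole argument through the Euler vector field functor $\mathcal{E}\colon \mathbb{R}^+\text{-}\mathsf{Act} \to \mathsf{Lift}(\mathsf{SMan})$, which is fully faithful by Proposition~\ref{prop:evf-is-ff}, and to transport the presentation of a non-singular lift as an equalizer of free lifts (Proposition~\ref{def:non-singular-lift-eq-of-lifts}) back across $\mathcal{E}$ into the category of multiplicative $\mathbb{R}^+$-actions.

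First I would establish the two dictionary entries the statement rests on. For an action $a\colon \mathbb{R}^+ \times E \to E$ with Euler vector field $\lambda$ (Definition~\ref{def:evf}), the identities $p \circ 0 = \mathrm{id}$ and $p \circ \lambda^{\mathbb{R}} = 0$ give $p \circ \lambda = a \circ (\mathrm{id}, 0 \circ {!}) = a(-,0)$, so the natural idempotent $e = p \circ \lambda$ of Proposition~\ref{prop:idempotent-natural} is exactly multiplication by $0$; this is the first clause of the statement. Second, because $\ell$ is the Euler vector field of the scalar action $\cdot_T\colon \mathbb{R}^+ \times TE \to TE$ (Example~\ref{ex:lift-examples}(iv)), the free lift is $(TE,\ell) = \mathcal{E}(TE,\cdot_T)$, and under this identification the idempotent $e.E = p \circ \ell = 0 \circ p$ is multiplication by $0$ for $\cdot_T$.

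Next I would transport the equalizer. By Proposition~\ref{def:non-singular-lift-eq-of-lifts}, non-singularity of $(E,\lambda)$ says that $\lambda$ exhibits $E$ as the $T$-equalizer of the two free-lift endomorphisms $T.e$ and $e.E$ of $(TE,\ell)$. Since both are lift endomorphisms of $(TE,\ell) = \mathcal{E}(TE,\cdot_T)$, full faithfulness of $\mathcal{E}$ identifies them with $\mathbb{R}^+$-action endomorphisms of $(TE,\cdot_T)$ on the same underlying maps. The forgetful functor $\mathbb{R}^+\text{-}\mathsf{Act} \to \mathsf{SMan}$ is monadic and so creates the equalizer that already exists on underlying manifolds---namely the $T$-equalizer $\lambda\colon E \to TE$---so this equalizer inherits a unique action $\cdot_E$ for which $\lambda$ is an action morphism $(E,\cdot_E) \to (TE,\cdot_T)$. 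This induced action, obtained from the universal property of the right-hand fork, is precisely what the large commutative diagram records.

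Finally I would check that $\lambda$ really is the Euler vector field of $\cdot_E$, rather than merely some lift isomorphic to $(E,\lambda)$. Applying the functor $\mathcal{E}$ to the action morphism $\lambda\colon (E,\cdot_E) \to (TE,\cdot_T)$ yields a lift morphism on the same underlying map, i.e.\ $T.\lambda \circ \lambda' = \ell \circ \lambda$ where $\lambda'$ denotes the Euler vector field of $\cdot_E$; comparing this with the coalgebra identity $T.\lambda \circ \lambda = \ell \circ \lambda$ of Definition~\ref{def:lift} and cancelling $T.\lambda$ gives $\lambda' = \lambda$. Here $T.\lambda$ is monic because non-singularity makes $T$ applied to the equalizer fork an equalizer again. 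I expect this last transport step to be the main obstacle: one must be careful that the equalizer formed in $\mathbb{R}^+$-actions (on underlying manifolds) coincides with the $T$-equalizer formed in lifts and that the recovered Euler vector field is $\lambda$ on the nose, and it is precisely the monicity of $T.\lambda$ furnished by non-singularity that secures this.
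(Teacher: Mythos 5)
Your argument is correct and follows exactly the route the paper intends: the paper leaves this corollary without an explicit proof, relying on the remark that $\ell$ is the Euler vector field of the scalar action on $TE$ together with Proposition~\ref{prop:evf-is-ff}, Proposition~\ref{def:non-singular-lift-eq-of-lifts}, and the creation of the equalizer in $\mathbb{R}^+$-actions. You have simply supplied the details the paper omits (the monadicity of $\mathbb{R}^+\text{-}\mathsf{Act}$ over $\mathsf{SMan}$ and the monicity of $T.\lambda$ needed to conclude $\lambda' = \lambda$ on the nose), so there is nothing to correct.
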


\section{Differential bundles}%
\label{sec:lifts-pdbs-dbs}

This section introduces (pre-)differential bundles, which provided the rest of the data for a vector bundle: namely the projection, the zero section, and the addition map. The zero section and projection data arise by splitting the natural idempotent $e: id \Rightarrow id$, and non-singularity will induce the addition map. Every differential bundle satisfies a pair of universality diagrams, linking this presentation of differential bundles to the original definition in \cite{Cockett2018}.

\begin{definition}%
  \label{def:pdb}
  ~\begin{enumerate}[(i)]
    \item A \emph{pre-differential bundle} is a lift $\lambda:E \to TE$ equipped with a chosen splitting of the natural idempotent $e = p \o \lambda$ from Proposition \ref{prop:idempotent-natural}. Pre-differential bundles are formally written $(q:E \to M, \xi, \lambda)$, where $q:E \to M$ is the retract, $\xi:M \to E$ the section, and $\lambda:E \to TE$ the lift (the types are only necessary for the projection as the rest may be inferred, and will generally be suppressed to save space).
    \item A \emph{differential bundle} is a pre-differential bundle $(q:E \to M, \xi, \lambda)$ with the properties that $\lambda$ is non-singular and $T$-pullback powers of $q$ exist.
  \end{enumerate}
  Morphisms of (pre-)differential bundles are exactly morphisms of their underlying lifts. The categories of (pre-)differential bundles are exactly (pre-)differential bundles and lift morphisms, and are written $\mathsf{Pre}(\C), \mathsf{DBun}(\C)$ respectively.\pagenote{
      The definition of differential bundles has been expanded so that it is clear that $\xi,q$ are and idempotent splitting of $p \o \lambda$, and the category names $\mathsf{Pre}(\C), \mathsf{DBun}(\C)$ are defined explicitly to help draw attention to the definition of bundle morphisms.
  }
\end{definition}

Recall that as $e$ is a natural idempotent in the category of lifts, any lift morphism will preserve $e$ and will consequently preserve its idempotent splitting. Preserving the idempotent means that every differential bundle morphism is a bundle morphism, where the base map is given by \[
\begin{tikzcd}
	E & F \\
	M & N
	\arrow["q", from=1-1, to=2-1]
	\arrow["f", from=1-1, to=1-2]
	\arrow["\pi", from=1-2, to=2-2]
	\arrow["{m:= \pi\o f \o \xi}"', dashed, from=2-1, to=2-2]
\end{tikzcd}\]
We now look at the limits of (pre-)differential bundles.
\begin{observation}%
  \label{obs:T-limits-pdbs}
  ~\begin{enumerate}[(i)]
    \item The limit for a diagram of pre-differential bundles is the limit of the underlying lifts equipped with a chosen splitting of $p \o \lambda$ due to basic properties about idempotent splittings.
    \item The limit for a diagram of differential bundles is the limit in the category of pre-differential bundles (the lift will be universal by Proposition \ref{prop:nonsing-closed-under-t-limits}), so long as $T$-pullback powers of the resulting projection exist.
  \end{enumerate}
\end{observation}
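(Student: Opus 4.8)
The plan is to treat both parts through the forgetful functor $U : \mathsf{Pre}(\C) \to \mathsf{Lift}(\C)$ that sends $(q:E \to M, \xi, \lambda)$ to its underlying lift $(E,\lambda)$. Since Definition \ref{def:pdb} declares that morphisms of pre-differential bundles are \emph{exactly} morphisms of underlying lifts, $U$ is fully faithful; its essential image is the class of lifts $(E,\lambda)$ whose natural idempotent $e = p \o \lambda$ (Proposition \ref{prop:idempotent-natural}) admits a splitting, and any two splittings of a given lift are canonically isomorphic through the identity lift morphism, so distinct splittings give isomorphic objects of $\mathsf{Pre}(\C)$. The standard consequence I will use is that for a fully faithful $U$, if $\lim (U \o D)$ exists and lies in the essential image, then the corresponding object of $\mathsf{Pre}(\C)$ is $\lim D$: a cone over $D$ maps under $U$ to a cone over $U \o D$, factors uniquely through $\lim(U\o D)$ by universality, and this factoring morphism lifts back uniquely to $\mathsf{Pre}(\C)$ by fullness and faithfulness.

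For part (i), given $D : \j \to \mathsf{Pre}(\C)$ I first form $U \o D$ and take its limit $(L, \lambda_L)$ in $\mathsf{Lift}(\C)$, which exists whenever the underlying $T$-limit exists in $\C$, since $T$-limits of lifts are computed in the base by Lemma \ref{lem:T-limits-of-lifts}. It remains to produce a splitting of the natural idempotent $e_L = p \o \lambda_L$ on $L$. The cone legs $\pi_i : L \to E_i$ are lift morphisms and hence intertwine $e_L$ with each $e_i$; since every diagram map preserves $e$ it induces a base map, so the bases $M_i$ assemble into a diagram with the retractions $q_i$ and sections $\xi_i$ natural in $i$. A splitting of an idempotent is an (absolute) equalizer of $(e,\mathrm{id})$, and equalizers commute with the limit over $\j$, so $\lim_i M_i$ equipped with the induced maps splits $e_L$ and exhibits $(L,\lambda_L)$ in the essential image of $U$; choosing this splitting yields $\lim D$ in $\mathsf{Pre}(\C)$. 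The one genuine content here is the \emph{existence} of this splitting, which I will flag as the crux: it is immediate in $\mathsf{SMan}$, since that category is closed to idempotent splittings (the corollary following the idempotent-splitting definition), and in general is exactly the hypothesis that the base limit $\lim_i M_i$ exists.

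For part (ii), I start from the limit $(L, \lambda_L, q_L, \xi_L)$ just constructed in $\mathsf{Pre}(\C)$ and note that $\mathsf{DBun}(\C)$ is the full subcategory of $\mathsf{Pre}(\C)$ cut out by the conditions that $\lambda$ be non-singular and that $T$-pullback powers of $q$ exist. The non-singularity of $\lambda_L$ follows directly from Proposition \ref{prop:nonsing-closed-under-t-limits}: a $T$-limit decomposes into $T$-products and $T$-equalizers, and non-singular lifts are closed under both (as well as under the trivial lift and the tangent functor), so the limit lift is again non-singular. The remaining requirement, existence of $T$-pullback powers of the resulting projection $q_L$, is precisely the stated proviso and is carried as a hypothesis rather than proved. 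Granting it, $(L, \lambda_L, q_L, \xi_L)$ is a differential bundle, and since it is the limit in $\mathsf{Pre}(\C)$ and lands in the full subcategory $\mathsf{DBun}(\C)$, the same full-subcategory argument as in part (i) shows it is the limit in $\mathsf{DBun}(\C)$. Thus the only obstacles are the two existence conditions — splitting of the limit idempotent and $T$-pullback powers of $q_L$ — both of which are automatic in the motivating case $\C = \mathsf{SMan}$.
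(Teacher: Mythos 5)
Your proof is correct and follows exactly the route the paper intends: the statement is left as an unproved observation, justified only by the parenthetical appeals to ``basic properties about idempotent splittings'' and to Proposition \ref{prop:nonsing-closed-under-t-limits}, and your argument supplies precisely those details (limits of lifts computed in the base, naturality of the splittings via preservation of the idempotent $e$, closure of non-singularity under $T$-limits, and the full-subcategory argument for $\mathsf{DBun}(\C) \subseteq \mathsf{Pre}(\C)$). Your explicit flagging of the existence of the splitting of $e_L$ as the one substantive hypothesis --- automatic in $\mathsf{SMan}$, where idempotents split --- is a point the paper glosses over, and you have identified it correctly.
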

Because there is a projection associated to a (pre-)differential bundle, the $T$-reindexing operation described in Proposition \ref{prop:submersion-properties} can now be applied. This gives a pullback differential bundle in a similar way to Proposition \ref*{prop:retracts-reindexing-of-vbuns}.
\begin{lemma}[\cite{Cockett2018}]%
  \label{lem:reindex-db}
  Let $(q:E \to M, \xi, \lambda)$ be a (pre-) differential bundle in a tangent category $\C$, and consider a $T$-pullback in $\C$:
  \[
\begin{tikzcd}
	{u^*E} & E & {} \\
	N & M
	\arrow["u"', from=2-1, to=2-2]
	\arrow["q", from=1-2, to=2-2]
	\arrow["{u^*q}"', from=1-1, to=2-1]
	\arrow["{\bar{u}}", from=1-1, to=1-2]
	\arrow["\lrcorner"{anchor=center, pos=0.125}, draw=none, from=1-1, to=2-2]
\end{tikzcd}\]
  Then the induced triple maps
  \[
\begin{tikzcd}
	{T.u^*E} &&& TE & N \\
	& {u^*E} & E &&& {u^*E} & E \\
	& N & M &&& N & M \\
	TN &&& TM
	\arrow["u"', from=3-2, to=3-3]
	\arrow["q", from=2-3, to=3-3]
	\arrow["{u^*q}"', from=2-2, to=3-2]
	\arrow["{\bar{u}}", from=2-2, to=2-3]
	\arrow["\lrcorner"{anchor=center, pos=0.125}, draw=none, from=2-2, to=3-3]
	\arrow["{T.\bar u}", from=1-1, to=1-4]
	\arrow["\lambda"{description}, from=2-3, to=1-4]
	\arrow["0"{description}, from=3-3, to=4-4]
	\arrow["{T.q}", from=1-4, to=4-4]
	\arrow["{T.u}"', from=4-1, to=4-4]
	\arrow["0"{description}, from=3-2, to=4-1]
	\arrow["{T.u^*q}"', from=1-1, to=4-1]
	\arrow["{u^*\lambda}"{description}, dashed, from=2-2, to=1-1]
	\arrow["q", from=2-7, to=3-7]
	\arrow["u"', from=3-6, to=3-7]
	\arrow[from=2-6, to=3-6]
	\arrow[from=2-6, to=2-7]
	\arrow["\lrcorner"{anchor=center, pos=0.125}, draw=none, from=2-6, to=3-7]
	\arrow[curve={height=6pt}, Rightarrow, no head, from=1-5, to=3-6]
	\arrow["{\xi \o u}"{description}, curve={height=-12pt}, from=1-5, to=2-7]
	\arrow["{u^*\xi}"{description}, dashed, from=1-5, to=2-6]
\end{tikzcd}\]
  induce a (pre-)differential bundle $(u^*q: u^*E \to N, u^*\xi, u^*\lambda)$. If $\lambda$ is non-singular and $T$-pullback powers of $u^*q$ exist, then $(u^*q, u^*\xi, u^*\lambda)$ is a differential bundle.
\end{lemma}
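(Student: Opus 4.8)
The plan is to construct $u^*\lambda$ and $u^*\xi$ by the universal properties displayed in the two diagrams, to verify the (pre-)differential bundle axioms by pullback-chasing against the jointly monic projections $\bar u, u^*q$, and finally to deduce non-singularity from the closure results of the previous section.

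The linchpin is a preliminary identity: \emph{every pre-differential bundle satisfies $T.q \o \lambda = 0 \o q$}, i.e. $q \colon (E,\lambda) \to (M, 0_M)$ is a morphism of lifts into the trivial lift on $M$ (Example~\ref{ex:lift-examples}(ii)). I would prove this by composing the lift identity $T.\lambda \o \lambda = \ell \o \lambda$ with $T.p$ on the left: the left side becomes $T(p \o \lambda)\o\lambda = T.\xi\o T.q\o\lambda$ (using $p\o\lambda = \xi\o q$), while the right side becomes $T.p\o\ell\o\lambda = 0\o p\o\lambda = 0\o\xi\o q = T.\xi\o 0\o q$, using $T.p\o\ell = 0\o p$ from [TC.3] (\ref{eq:tc-3-0}) and naturality of $0$. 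Cancelling the split monic $T.\xi$ gives the claim. This is exactly the compatibility needed for the cone defining $u^*\lambda$ to exist, so it is the crux of the argument.

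With this in hand, $T$ preserves the given $T$-pullback, so $T(u^*E) = TN \ts{T.u}{T.q} TE$; the cone $(0\o u^*q, \lambda\o\bar u)$ is compatible precisely by the identity above together with naturality of $0$, and induces $u^*\lambda$ satisfying $T.\bar u\o u^*\lambda = \lambda\o\bar u$ and $T.u^*q\o u^*\lambda = 0\o u^*q$. Dually $u^*\xi = (id_N, \xi\o u)$ is induced by the pullback, with $u^*q\o u^*\xi = id_N$ and $\bar u\o u^*\xi = \xi\o u$. To see that $u^*\lambda$ is a lift I would compare $T.u^*\lambda\o u^*\lambda$ with $\ell\o u^*\lambda$ after the jointly monic $T^2.\bar u$ and $T^2.u^*q$ (jointly monic since $T^2$ preserves the pullback): the $\bar u$-component collapses to the lift identity for $\lambda$ plus naturality of $\ell$, while the $u^*q$-component collapses to $T.0\o 0 = \ell\o 0$ (again Example~\ref{ex:lift-examples}(ii)). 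The splitting equation $p\o u^*\lambda = u^*\xi\o u^*q$ is checked the same way, post-composing with $u^*q$ (use $p\o 0 = id$) and with $\bar u$ (use $p\o\lambda = \xi\o q$); together with $u^*q\o u^*\xi = id_N$ this exhibits $(u^*q, u^*\xi)$ as a splitting of $p\o u^*\lambda$, so $(u^*q, u^*\xi, u^*\lambda)$ is a pre-differential bundle.

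For the differential bundle claim it remains to prove that $u^*\lambda$ is non-singular ($T$-pullback powers of $u^*q$ being assumed). Here I would use the preliminary identity once more: both $q\colon(E,\lambda)\to(M,0_M)$ and $u\colon(N,0_N)\to(M,0_M)$ are morphisms of lifts, so by Lemma~\ref{lem:T-limits-of-lifts} the given $T$-pullback is also a $T$-pullback in $\mathsf{Lift}(\C)$ with apex $(u^*E, u^*\lambda)$ (this re-identifies $u^*\lambda$ conceptually). Its three vertices $(N,0_N), (M,0_M), (E,\lambda)$ are non-singular --- the trivial lifts by Proposition~\ref{prop:nonsing-closed-under-t-limits}(ii) and $(E,\lambda)$ by hypothesis --- and non-singular lifts are closed under $T$-limits (Proposition~\ref{prop:nonsing-closed-under-t-limits}), so $u^*\lambda$ is non-singular and the reindexed triple is a differential bundle. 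The main obstacle is the preliminary identity $T.q\o\lambda = 0\o q$, on which both the construction and the conceptual non-singularity argument rest; a secondary subtlety is that Proposition~\ref{prop:nonsing-closed-under-t-limits} phrases closure through products and equalizers, so if the ambient $T$-product $N\x E$ is unavailable one should instead run the direct pasting argument that a $T$-pullback of non-singular lifts is non-singular.
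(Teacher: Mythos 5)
Your proposal is correct and follows essentially the same route as the paper: the paper's proof simply regards $q$ as a lift morphism $(E,\lambda)\to(M,0)$ and takes the pullback in $\mathsf{Lift}(\C)$, exactly the conceptual core of your argument, with non-singularity then following from closure of non-singular lifts under $T$-limits. The one thing you add is an explicit verification of the identity $T.q\o\lambda = 0\o q$ (which the paper asserts without proof when it says the pre-differential bundle "may be regarded as" a bundle in $\mathsf{Lift}(\C)$), and your derivation of it from coassociativity, $p\o\lambda=\xi\o q$, and $T.p\o\ell=0\o p$ is sound.
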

\begin{proof}
  Note that a pre-differential bundle $(q:E \to M, \xi, \lambda)$ in $\C$ may also be regarded as a pre-differential bundle $(q: (E,\lambda) \to (M, 0), \xi, \lambda)$ in $\mathsf{Lift}(\C)$. Take the following pullback in $\mathsf{Lift}(\C)$:
  \[
\begin{tikzcd}
	{(u^*E, u^*\lambda)} & {(E,\lambda)} \\
	{(N, 0)} & {(M,0)}
	\arrow["u", from=2-1, to=2-2]
	\arrow["q"', from=1-2, to=2-2]
	\arrow[from=1-1, to=2-1]
	\arrow[from=1-1, to=1-2]
	\arrow["\lrcorner"{anchor=center, pos=0.125}, draw=none, from=1-1, to=2-2]
\end{tikzcd}\]
  It follows by construction that $\iota \o u^*\lambda = \lambda \o \iota$. Thus the result holds.
\end{proof}

\begin{proposition}%
  \label{prop:induce-abun}
  Let $(q:E \to M, \xi, \lambda)$ be a non-singular pre-differential bundle in a tangent category $\C$, so that $T$-pullback powers of $q$ exist.
  Then there is an additive bundle structure $(q,\xi,+_q)$ so that the differential bundle morphisms are additive:
  \[
    (\lambda,\xi): (q,\xi,+_q) \to (p,0,+) \hspace{0.5cm}
    (\lambda,0): (q,\xi,+_q) \to (T.q,T.\xi,T.+_q).
  \]
  Furthermore, every differential bundle morphism preserves addition.
\end{proposition}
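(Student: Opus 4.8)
The plan is to build $+_q$ out of the tangent addition $+.E$ by pushing the fibrewise sum of two lifts back through the non-singularity equalizer of Definition \ref{def:non-singular-lift}. Before doing so I would record two identities that fall straight out of the equalizer condition $T.e \o \lambda = e.E \o \lambda$ together with $e = p \o \lambda = \xi \o q$: namely $T.q \o \lambda = 0 \o q$ (expand $T.e = T.\xi \o T.q$ and $e.E = 0 \o p$, then cancel the split monic $T.\xi$) and $\lambda \o \xi = 0 \o \xi$ (from $\lambda \o \xi = \lambda \o e \o \xi = T.e \o \lambda \o \xi$, using that $e$ is a lift morphism by Proposition \ref{prop:idempotent-natural}). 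These are precisely the ``square'' and ``zero'' clauses of the two claimed bundle morphisms, so establishing them first clears most of the morphism bookkeeping in advance.

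Next I would define the addition. Since $p \o \lambda \o \pi_0 = \xi \o q \o \pi_0 = \xi \o q \o \pi_1 = p \o \lambda \o \pi_1$ on $E \ts{q}{q} E$, the pair $(\lambda \o \pi_0, \lambda \o \pi_1)$ lands in $TE \ts{p}{p} TE$ and I can form $a := {+.E} \o (\lambda \o \pi_0, \lambda \o \pi_1)$. The key point is that $a$ equalizes $T.e$ and $e.E$: using $p \o {+.E} = p \o \pi_0$ one gets $e.E \o a = 0 \o \xi \o q \o \pi_0$, while naturality of $+ \colon T \ts{p}{p} T \Rightarrow T$ together with $T.q \o \lambda = 0 \o q$ and $0 + 0 = 0$ gives $T.e \o a = 0 \o \xi \o q \o \pi_0$ as well. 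As the non-singularity diagram is an equalizer (indeed a $T$-equalizer) and $E \ts{q}{q} E$ exists, there is a unique $+_q \colon E \ts{q}{q} E \to E$ with $\lambda \o +_q = a$. Applying $p$ and cancelling the monic $\xi$ shows $q \o +_q = q \o \pi_0$, so $+_q$ is a map over $M$. Because $\lambda$ is monic (being an equalizer), the commutative-monoid axioms for $(q, \xi, +_q)$ transfer verbatim from the commutative monoid $(p.E, 0.E, +.E)$ of $\C/E$: each law, after applying $\lambda$, unfolds via $\lambda \o +_q = a$ and $\lambda \o \xi = 0 \o \xi$ into the corresponding law for $+.E$. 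This simultaneously yields the additive-bundle morphism $(\lambda, \xi) \colon (q, \xi, +_q) \to (p, 0, +)$, whose addition clause is the defining equation of $+_q$ and whose remaining clauses are the two preliminary identities.

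The substantive step, and the one I expect to be the main obstacle, is the addition clause of the second morphism $(\lambda, 0) \colon (q, \xi, +_q) \to (T.q, T.\xi, T.+_q)$, i.e. $\lambda \o +_q = T.+_q \o (\lambda \o \pi_0, \lambda \o \pi_1)$. This is a genuine compatibility between $+.E$ and the transported addition $T.+_q$, and it is exactly where the hypothesis that the equalizer is a \emph{$T$}-equalizer (not merely an equalizer) is used: applying $T$ to the equalizer shows $T.\lambda$ is monic, so it suffices to check equality after postcomposing with $T.\lambda$. On the left, $T.\lambda \o \lambda \o +_q = \ell \o \lambda \o +_q$ by the lift coherence of Definition \ref{def:lift}. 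On the right, $T.\lambda \o T.+_q \o (\lambda \o \pi_0, \lambda \o \pi_1) = T(\lambda \o +_q) \o (\lambda \o \pi_0, \lambda \o \pi_1)$; substituting $\lambda \o +_q = {+.E} \o (\lambda \o \pi_0, \lambda \o \pi_1)$ and applying the lift coherence componentwise produces $T({+.E}) \o (\ell \o \lambda \o \pi_0, \ell \o \lambda \o \pi_1)$, and the tangent axiom expressing naturality of $\ell$ with respect to $+$ (the rightmost square of \eqref{eq:tc-3-0}) collapses this to $\ell \o {+.E} \o (\lambda \o \pi_0, \lambda \o \pi_1) = \ell \o \lambda \o +_q$. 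The two sides agree, so cancelling the monic $T.\lambda$ gives the claim.

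Finally, for the closing assertion I would argue that a differential bundle morphism $f \colon (E, \lambda) \to (F, l)$, being a lift morphism with $T.f \o \lambda = l \o f$ and base map $m$ satisfying $\pi \o f = m \o q$, preserves the additions built above on source and target. Postcomposing with the monic $l$ gives $l \o f \o +_q = T.f \o {+.E} \o (\lambda \o \pi_0, \lambda \o \pi_1) = {+.F} \o (T.f \o \lambda \o \pi_0, T.f \o \lambda \o \pi_1) = {+.F} \o (l \o f \o \pi_0, l \o f \o \pi_1)$ by naturality of $+$ and the morphism identity, and this is exactly $l \o {+}_\pi \o (f \o \pi_0, f \o \pi_1)$; cancelling $l$ yields $f \o +_q = {+}_\pi \o (f \o \pi_0, f \o \pi_1)$. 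The only real obstacle throughout is the third-morphism compatibility above; everything else is a transfer of structure across the monics $\lambda$ and $l$ supplied by non-singularity.
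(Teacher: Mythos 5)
Your proof is correct and follows essentially the same route as the paper: induce $+_q$ through the non-singularity equalizer by checking that $+.E \o (\lambda \o \pi_0, \lambda \o \pi_1)$ equalizes $T.e$ and $e.E$, transfer the commutative-monoid laws across the monic $\lambda$, and derive preservation of addition by differential bundle morphisms from the induced map of equalizer diagrams. You simply spell out in more detail the steps the paper compresses (the equalization computation and the $(\lambda,0)$ additivity clause via $T.\lambda$ and the $\ell$/$+$ coherence).
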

\begin{proof}
  Non-singularity forces the existence of an addition map: 
  \[
\begin{tikzcd}
	{E_2} & {T_2E} & {T_2E} \\
	E & TE & TE
	\arrow["{\exists! +_q}"{description}, dashed, from=1-1, to=2-1]
	\arrow["{+}"{description}, from=1-2, to=2-2]
	\arrow["{\lambda \x \lambda}", from=1-1, to=1-2]
	\arrow["\lambda"', from=2-1, to=2-2]
	\arrow["{e.E}", shift left=1, from=2-2, to=2-3]
	\arrow["{T.e}"', shift right=1, from=2-2, to=2-3]
	\arrow["{+}"{description}, from=1-3, to=2-3]
	\arrow["{(e \x e).E}", shift left=1, from=1-2, to=1-3]
	\arrow["{T_2.e}"', shift right=1, from=1-2, to=1-3]
\end{tikzcd}
  \]
  Note that this diagram commutes because $T_2E$ is a pre-differential bundle whose lift is $\ell \x \ell$, and also $+$ is a linear morphism, so it commutes with the addition map $e\o a + e\o b = e \o (a\x b)$. Post-composition with $\lambda$ ensures that $\xi$ is the unit and that associativity holds. A differential bundle morphism will induce a morphism of the equalizer diagrams that induce each addition map to preserve addition.
\end{proof}

Recall that by Proposition \ref{prop:evf-vbun-is-nonsingular}, the Euler vector field for every vector bundle is a non-singular lift.\pagenote{
    The reference to Proposition \ref{prop:evf-vbun-is-nonsingular} has been put into the middle of the sentence to help. Also, rather than ``addition induced by singularity'', we give an explicit reference to the above proposition that constructs the additive bundle structure.
} If $TM \ts{q}{q} E$ exists, the map
\[
  \nu^E: TM \ts{p}{q} E \xrightarrow[]{T.\xi \x \lambda} T_2E \xrightarrow[]{+.E} TE
\] may be formed. Similarly, using the additive bundle structure from Proposition \ref{prop:induce-abun}, the $\mu$ map may be formed: 
\[
  \mu^E: E \ts{q}{q} E \xrightarrow[]{0 \x \lambda} T(E\ts{q}{q}E) \xrightarrow[]{T.+_q} TE
\]
Note that any differential bundle morphism will preserve $\mu$ and $\nu$.
\begin{lemma}%
  \label{lem:pre-mu-nu}
  Differential bundle maps preserve $\mu(x,y) := 0 \o x +_{T.q} \lambda \o y$ and $\nu(v,y) := T.\xi \o v +_p \lambda \o y$.\pagenote{
     The word linear had crept into this sentence in the original draft, it has been correct to ``differential bundle morphism''.
  }
\end{lemma}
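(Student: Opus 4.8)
The plan is to verify the two preservation identities directly, decomposing $\mu$ and $\nu$ into their constituent pieces and checking that a differential bundle morphism preserves each piece separately. Fix a differential bundle morphism $(f,m):(q,\xi,\lambda) \to (\pi,\zeta,l)$; unwinding the definitions, this records that $f$ is a lift morphism, $T.f \o \lambda = l \o f$; that $(f,m)$ is a bundle map, $\pi \o f = m \o q$ and $f \o \xi = \zeta \o m$; and, crucially, that $f$ is additive, $f \o +_q = +_\pi \o (f \x f)$, which is exactly Proposition \ref{prop:induce-abun}. I will also use the two ``leg'' equations $p \o \lambda = \xi \o q$ (the splitting of the idempotent $e = p\o\lambda$ from Proposition \ref{prop:idempotent-natural}) and $T.q \o \lambda = 0 \o q$, which are precisely what make $\mu$ and $\nu$ well defined (cf.\ the vector-bundle version, Proposition \ref{prop:ros-for-vbun}, and Corollary \ref{cor:two-pullbacks-from-ros}).

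For $\mu(x,y) = 0 \o x +_{T.q} \lambda \o y$, the fibrewise sum is taken in the additive bundle $(T.q, T.\xi, T.+_q)$ obtained by applying $T$ to $(q,\xi,+_q)$ (this is legitimate since $T$-pullback powers of $q$ exist). Applying $T$ to the additivity equation of Proposition \ref{prop:induce-abun} shows $T.f$ is additive for $+_{T.q}$ against $+_{T.\pi}$, so $T.f \o \mu^E = (T.f \o 0 \o \pi_0) +_{T.\pi} (T.f \o \lambda \o \pi_1)$. Now $T.f \o 0 = 0 \o f$ by naturality of $0$, and $T.f \o \lambda = l \o f$ because $f$ is a lift morphism, so the right-hand side is $\mu^F \o (f \x f)$; the pairing $f \x f$ genuinely lands in $F \ts{\pi}{\pi} F$ because $\pi \o f = m \o q$ identifies the two fibres over which the sum is formed.

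The $\nu$ case runs along identical lines but with the other additive structure on $TE$: in $\nu(v,y) = T.\xi \o v +_p \lambda \o y$ the sum is taken in the native tangent bundle $(p,0,+)$ of $E$. Because $+$ is a tangent natural transformation, $T.f$ is automatically additive for the tangent-bundle addition on $E$ against that on $F$, whence $T.f \o \nu^E = (T.f \o T.\xi \o \pi_0) +_p (T.f \o \lambda \o \pi_1)$; substituting $T.f \o T.\xi = T.(\zeta \o m) = T.\zeta \o T.m$ (from $f \o \xi = \zeta \o m$) and $T.f \o \lambda = l \o f$ yields $\nu^F \o (T.m \x f)$. I expect the only real obstacle to be the bookkeeping of keeping the two additive structures on $TE$ apart and confirming that each summand sits in the fibre the relevant addition expects (this is where the leg equations $p\o\lambda = \xi\o q$ and $T.q\o\lambda = 0\o q$ are used); once the correct addition is pinned down in each case, everything follows immediately from additivity of $f$ (Proposition \ref{prop:induce-abun}), the lift-morphism law, and the naturality of $0$, $p$, and $+$.
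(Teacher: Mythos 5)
Your proof is correct and follows essentially the same route as the paper's: both decompose $\mu = T.+_q \o (0 \x \lambda)$ and $\nu = +.E \o (T.\xi \x \lambda)$ and check that a differential bundle morphism preserves each factor, using the lift-morphism law, naturality of $0$ and $+$, and the additivity of $f$ from Proposition \ref{prop:induce-abun}. The paper presents this as two commuting diagrams where you argue equationally, but the ingredients and the logic are identical.
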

\begin{proof}
  The following diagram demonstrates that lift maps preserve $\nu$:
  \[
\begin{tikzcd}
	{E \ts{q}{p} TM} & {F \ts{q'}{p}TN} \\
	{T.E} & {T.F} \\
	\\
	{T.E} & {T.F}
	\arrow["\nu", from=1-1, to=2-1]
	\arrow["\nu", from=1-2, to=2-2]
	\arrow["{f \x T.m}", shift left=1, from=1-1, to=1-2]
	\arrow["{T.f}", shift left=1, from=2-1, to=2-2]
	\arrow["{T.g}"', shift right=1, from=2-1, to=2-2]
	\arrow["{e.e}"', shift right=1, from=2-1, to=4-1]
	\arrow["{T.e}", shift left=1, from=2-1, to=4-1]
	\arrow["{e.e}"', shift right=1, from=2-2, to=4-2]
	\arrow["{T.e}", shift left=1, from=2-2, to=4-2]
	\arrow["{T.f}", shift left=1, from=4-1, to=4-2]
	\arrow["{T.g}"', shift right=1, from=4-1, to=4-2]
	\arrow["{g \x T.n}"', shift right=1, from=1-1, to=1-2]
\end{tikzcd}\]
  Similarly, this diagram demonstrates that lift maps preserve $\mu$:
  \[
\begin{tikzcd}
  TE & TF \\
  {TE_2} & {TF_2} \\
  {E_2} & {F_2}
  \arrow["{(T.f \x T.f)}", from=2-1, to=2-2]
  \arrow["{(\lambda\x 0)}", from=3-1, to=2-1]
  \arrow["{(\lambda' \x 0)}"', from=3-2, to=2-2]
  \arrow["{(f\x f)}"', from=3-1, to=3-2]
  \arrow["{T.+}", from=2-1, to=1-1]
  \arrow["{T.+'}"', from=2-2, to=1-2]
  \arrow["Tf", from=1-1, to=1-2]
\end{tikzcd}\]
\end{proof}

\begin{lemma}%
  \label{lem:univ-props}
  Consider the full subcategory of differential bundles in $\C$ whose objects are differential bundles $(E,\lambda)$ so that the forks
  \begin{equation}%
    \label{eq:universality}
    
  \end{equation}
  are $T$-equalizers. This subcategory is closed under $T$-equalizers.\pagenote{
     The original wording for this lemma did not properly quantify the objects over which the theorem was stated for.
  }
\end{lemma}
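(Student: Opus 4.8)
The plan is to bootstrap from the two closure facts already established---that non-singular lifts are closed under $T$-limits (Proposition \ref{prop:nonsing-closed-under-t-limits}) and that differential bundle morphisms preserve the maps $\mu$ and $\nu$ (Lemma \ref{lem:pre-mu-nu})---and reduce the statement to an interchange-of-limits argument of exactly the kind used in the proof of Proposition \ref{prop:nonsing-closed-under-t-limits}(iv). The point is that each universality fork (\ref{eq:universality}) is itself a $T$-equalizer condition, so ``the $T$-equalizer of objects satisfying it again satisfies it'' becomes an instance of limits commuting with limits.

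First I would fix a parallel pair $f,g:(q:E\to M,\xi,\lambda)\to(q':F\to N,\zeta,l)$ of differential bundle morphisms between objects of the subcategory, and suppose their $T$-equalizer $(k:(C,\kappa)\to(E,\lambda))$ exists in $\mathsf{DBun}(\C)$. By Observation \ref{obs:T-limits-pdbs} together with Proposition \ref{prop:nonsing-closed-under-t-limits}, this equalizer is computed on underlying objects: $C$ is the $T$-equalizer of $f,g$, its base $M_C$ is the $T$-equalizer of the induced base maps, and $(C,\kappa)$ is again a non-singular differential bundle with induced projection $q_C$ and section $\xi_C$. It then remains only to check that $(C,\kappa)$ satisfies the two forks (\ref{eq:universality}), for then it lies in the subcategory.

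For the $\mu$-fork I would assemble a $3\times 3$ grid whose three rows are the $\mu$-forks $(-)\ts{q}{q}(-)\xrightarrow{\mu}T(-)\rightrightarrows T(-)$ for $C$, $E$ and $F$, and whose three columns are the $T$-equalizer diagrams $(-)_C\to(-)_E\rightrightarrows(-)_F$ evaluated at the three corners $C_2$, $TC$ and $TM_C$. The grid commutes because $f$ and $g$ preserve $\mu$ (Lemma \ref{lem:pre-mu-nu}) and are natural with respect to $T.q$, $0$ and $p$. The middle and right columns are $T$-equalizers because $T$ carries $T$-equalizers to $T$-equalizers (an equalizer preserved by every $T^n$ becomes, after one application of $T$, an equalizer preserved by every $T^n$); the left column is a $T$-equalizer because the fibre-product power $C_2$ of a $T$-equalizer is again a $T$-equalizer, limits commuting with limits. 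By hypothesis the two lower rows (for $E$ and $F$) are $T$-equalizers, so the interchange-of-limits argument---all three columns and the two lower rows being $T$-equalizers forces the top row to be one as well---shows that the $\mu$-fork for $C$ is a $T$-equalizer. Replacing $\mu$ by $\nu$ and $C_2$ by $TM_C\ts{p}{q_C}C$, and invoking preservation of $\nu$ from Lemma \ref{lem:pre-mu-nu}, gives the $\nu$-fork by the identical chase.

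The hard part is the bookkeeping that every limit appearing in the grid is genuinely a $T$-limit, so that the interchange step remains valid not merely for the underlying equalizers but uniformly after applying each $T^n$. In particular one must verify that the left column really is the $T$-equalizer of the pullback powers---this is precisely where commutation of the fibre product with the defining equalizer, as $T$-limits, is used---and that the two parallel legs of the $C$-fork ($T.q_C$ and $T.q_C\o 0\o p$, respectively $p$ and $p\o T.\xi_C\o T.q_C$) are the correct restrictions of those for $E$, which follows from naturality of $0$ and $p$ together with the fact that $q_C,\xi_C$ are the induced projection and section on the equalizer. Once these compatibilities are in place, the two diagram chases are routine.
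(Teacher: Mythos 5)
Your proof is correct and follows essentially the same route as the paper's: both invoke Lemma \ref{lem:pre-mu-nu} to see that the equalizer maps preserve $\mu$ and $\nu$, assemble the resulting commuting grid whose rows are the universality forks for $C$, $E$, $F$ and whose columns are the $T$-equalizer diagrams at each corner, and conclude by commutation of $T$-limits. Your extra bookkeeping about the left column (the pullback powers $C_2$ and $TM_C\ts{p}{q_C}C$ being $T$-equalizers) makes explicit what the paper compresses into ``$T$-equalizers are closed to $T$-limits in the category of fork diagrams,'' but it is the same argument.
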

\begin{proof} 
  Start with the $T$-equalizer of lifts:
  \[
\begin{tikzcd}
	C & {E } & {F }
	\arrow["f", shift left=1, from=1-2, to=1-3]
	\arrow["{g }"', shift right=1, from=1-2, to=1-3]
	\arrow["k", dashed, from=1-1, to=1-2]
\end{tikzcd}\]
  Observe that $k$ is a lift map, so by Lemma \ref{lem:pre-mu-nu}, the following diagram commutes:
\[\begin{tikzcd}
  {C\ts{q^c}{p} TP} & {E \ts{q}{p} TM} & {F \ts{q'}{p}TN} \\
  {T.C} & {T.E} & {T.F} \\
  \\
  {T.C} & {T.E} & {T.F}
  \arrow["\nu", from=1-2, to=2-2]
  \arrow["\nu", from=1-3, to=2-3]
  \arrow["{f \x T.m}", shift left=1, from=1-2, to=1-3]
  \arrow["{T.f}", shift left=1, from=2-2, to=2-3]
  \arrow["{T.g}"', shift right=1, from=2-2, to=2-3]
  \arrow["{e.e}"', shift right=1, from=2-2, to=4-2]
  \arrow["{T.e}", shift left=1, from=2-2, to=4-2]
  \arrow["{e.e}"', shift right=1, from=2-3, to=4-3]
  \arrow["{T.e}", shift left=1, from=2-3, to=4-3]
  \arrow["{T.f}", shift left=1, from=4-2, to=4-3]
  \arrow["{T.g}"', shift right=1, from=4-2, to=4-3]
  \arrow["{k \x T.k'}"', dashed, from=1-1, to=1-2]
  \arrow["\nu", from=1-1, to=2-1]
  \arrow["{e.e}"', shift right=1, from=2-1, to=4-1]
  \arrow["{T.e}", shift left=1, from=2-1, to=4-1]
  \arrow["{T.k}", dashed, from=2-1, to=2-2]
  \arrow["{T.k}", dashed, from=4-1, to=4-2]
  \arrow["{g \x T.n}"', shift right=1, from=1-2, to=1-3]
\end{tikzcd}\]
  Next, since $k$ is a morphism of pre-differential bundles, by Lemma \ref{lem:pre-mu-nu} the following diagram commutes:
\[\begin{tikzcd}
  {C_2} & {E_2} & {F_2} \\
  {T.C} & {T.E} & {T.F} \\
  \\
  {T.C} & {T.E} & {T.F}
  \arrow["\mu", from=1-2, to=2-2]
  \arrow["\mu", from=1-3, to=2-3]
  \arrow["{f_2}", shift left=1, from=1-2, to=1-3]
  \arrow["{T.f}", shift left=1, from=2-2, to=2-3]
  \arrow["{T.g}"', shift right=1, from=2-2, to=2-3]
  \arrow["{e.e}"', shift right=1, from=2-2, to=4-2]
  \arrow["{T.e}", shift left=1, from=2-2, to=4-2]
  \arrow["{e.e}"', shift right=1, from=2-3, to=4-3]
  \arrow["{T.e}", shift left=1, from=2-3, to=4-3]
  \arrow["{T.f}", shift left=1, from=4-2, to=4-3]
  \arrow["{T.g}"', shift right=1, from=4-2, to=4-3]
  \arrow["{g_2}"', shift right=1, from=1-2, to=1-3]
  \arrow[dashed, from=1-1, to=1-2]
  \arrow["\mu", from=1-1, to=2-1]
  \arrow["{e.e}"', shift right=1, from=2-1, to=4-1]
  \arrow["{T.e}", shift left=1, from=2-1, to=4-1]
  \arrow[dashed, from=2-1, to=2-2]
  \arrow[dashed, from=4-1, to=4-2]
\end{tikzcd}\]
  The top row follows because maps satisfying Rosicky's universality condition preserve $\mu$, and the bottom row by the naturality of $e.C$ and the fact that linear maps preserve the natural idempotent. Thus, if $E,F$ satisfy the universality diagrams in Diagram \ref{eq:universality}, the equalizer $C$ will as well, because $T$-equalizers are closed to $T$-limits in the category of fork diagrams.
\end{proof}
\begin{theorem}%
  \label{thm:universal-prop-differential-bundles}
  For every differential bundle $(q:E \to M, \xi,\lambda)$ in a tangent category, the diagram \[
\begin{tikzcd}
	{E_2} & TE & TM
	\arrow["{ \mu^E}", from=1-1, to=1-2]
	\arrow["{T.q}", shift left=1, from=1-2, to=1-3]
	\arrow["{T.q \o 0 \o p}"', shift right=1, from=1-2, to=1-3]
\end{tikzcd}\] is a $T$-equalizer, and if $TM \ts{p}{q} E$ exists, then \[
\begin{tikzcd}
	{E\ts{q}{p} TM} &[5em] TE & E
	\arrow["{\nu^E := \lambda \o \pi_0 +_p T.\xi \o \pi_0 }", from=1-1, to=1-2]
	\arrow["p", shift left=1, from=1-2, to=1-3]
	\arrow["{p \o T.\xi \o T.q}"', shift right=1, from=1-2, to=1-3]
\end{tikzcd}\] is a $T$-equalizer.
\end{theorem}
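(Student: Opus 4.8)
The plan is to avoid a direct diagram chase and instead realize every differential bundle as a $T$-equalizer of \emph{free} lifts, for which the two universality forks are essentially built into the tangent-category axioms, and then to invoke the closure result of Lemma~\ref{lem:univ-props}. Concretely I would verify three things: that the free lift $(TE,\ell)$, regarded as the differential bundle $p\colon TE\to E$, satisfies both forks; that the given differential bundle $(E,\lambda)$ arises as a $T$-equalizer of such free lifts; and that the subcategory carved out by the universality diagrams is closed under $T$-equalizers, which is precisely Lemma~\ref{lem:univ-props}.

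For the base case, a free lift $(TX,\ell)$ is the differential bundle $p\colon TX\to X$ with zero section $0$, so its associated map $\mu^{TX}$ is exactly $T.+\o(0\o\pi_0,\ell\o\pi_1)$ and its $\mu$-fork is the fork $T_2X\xrightarrow{\mu}T^2X\rightrightarrows TX$ of $T.p$ and $T.p\o 0\o p = 0\o p\o p$. That this is a $T$-equalizer is the lift-universality axiom [TC.3](iv) of Definition~\ref{def:tangent-cat}, stated there as a pullback; the passage between the pullback formulation and the equalizer fork is the content of Proposition~\ref{prop:ell-universal} and Corollary~\ref{cor:mu-universal}. The $\nu$-fork for $p\colon TX\to X$ is the symmetric assertion, which I would extract from the same axiom by conjugating with the canonical flip $c$, using that $c$ intertwines the two additive-bundle structures $p.T$ and $T.p$ on $T^2X$ (the naturality equations~\ref{eq:tc-2-3}).

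For the equalizer presentation, Proposition~\ref{def:non-singular-lift-eq-of-lifts} exhibits the non-singular lift $(E,\lambda)$ as the equalizer, in $\mathsf{Lift}(\C)$, of the two free-lift morphisms
\[
    (E,\lambda)\longrightarrow (TE,\ell)\rightrightarrows (TE,\ell)
\]
given by $e.E$ and $T.e$, both of which are genuine lift morphisms by Proposition~\ref{prop:idempotent-natural}. By Proposition~\ref{prop:nonsing-closed-under-t-limits} this equalizer remains among the non-singular lifts, and by Observation~\ref{obs:T-limits-pdbs} it is computed on underlying lifts, so it is simultaneously the equalizer in $\mathsf{DBun}(\C)$; since non-singularity is phrased as a $T$-equalizer, the same fork equalizes after applying each $T^n$, so $(E,\lambda)$ is a bona fide $T$-equalizer of the free lifts $(TE,\ell)$. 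As both of these free lifts satisfy the universality diagrams by the base case, Lemma~\ref{lem:univ-props} forces $(E,\lambda)$ to satisfy them as well, yielding both forks of the theorem at once (the $\nu$-fork under the stated hypothesis that $TM\ts{p}{q}E$ exists, which is exactly what is needed to form its domain $E\ts{q}{p}TM$).

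The main obstacle lies in the free-lift base case rather than the abstract assembly. One must check that $e.E=0\o p$ and $T.e$ are differential-bundle morphisms of $p\colon TE\to E$ into itself, so that the $\mathsf{Lift}(\C)$-equalizer and the $\mathsf{DBun}(\C)$-equalizer coincide and Lemma~\ref{lem:univ-props} applies to an honest diagram of differential bundles; this part is clean, being a consequence of the naturality of $e$ in Proposition~\ref{prop:idempotent-natural}. The genuinely computational point is establishing the $\nu$-fork for the tangent bundle from the axioms, since Proposition~\ref{prop:ell-universal} as stated packages only the $\mu$-side. Deriving the $\nu$-version by transporting the $\mu$-universality across the flip $c$ — so that the ``horizontal'' equalizer of $p$ against $e\o p$ is recognised as the image under $c$ of the ``vertical'' one — is where the bundle and symmetry axioms must be handled with care; everything downstream is then formal.
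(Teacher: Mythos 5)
Your proposal is correct and follows essentially the same route as the paper's own (very terse) proof: establish both forks for the tangent bundle as the base case, realize the differential bundle as a $T$-equalizer of free lifts via non-singularity, and conclude by Lemma \ref{lem:univ-props}. The additional detail you supply — in particular deriving the $\nu$-fork for the free lift by transporting the $\mu$-universality across the canonical flip $c$ — is exactly the content the paper leaves implicit in the phrase ``the tangent bundle satisfies both universality conditions.''
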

\begin{proof}
  The tangent bundle satisfies both universality conditions, and by Lemma \ref{lem:univ-props} every differential bundle will satisfy these conditions by the non-singularity of the lift.
\end{proof}
\begin{corollary}%
  \label{cor:idemp-dbun}
  In a complete tangent category, the category of differential bundles is precisely the category of algebras of the monad in Theorem \ref{thm:idemp-monad-nonsingular} on pre-differential bundles.
\end{corollary}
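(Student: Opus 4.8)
The plan is to transport the idempotent monad of Theorem~\ref{thm:idemp-monad-nonsingular} from the category of lifts to the category of pre-differential bundles and then to identify its algebras. Recall that that monad is idempotent and left-exact on $\mathsf{Lift}(\C)$, so being one of its algebras is the \emph{property} that the reflection unit is invertible; by Proposition~\ref{def:non-singular-lift-eq-of-lifts} its algebras are exactly the non-singular lifts. My first move is to observe that in a complete tangent category $\mathsf{Pre}(\C)$ and $\mathsf{Lift}(\C)$ are equivalent. By Proposition~\ref{prop:idempotent-natural} the map $e = p \o \lambda$ is a natural idempotent on every lift, and a pre-differential bundle is nothing but a lift together with a chosen splitting $(q\colon E \to M, \xi)$ of $e$. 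Completeness forces every idempotent to split, and idempotent splittings are preserved by every functor---in particular by each $T^n$---so such a splitting always exists, is automatically a $T$-limit, and is unique up to canonical isomorphism. Hence the forgetful functor $U\colon \mathsf{Pre}(\C) \to \mathsf{Lift}(\C)$, which is fully faithful because morphisms on both sides are morphisms of the underlying lifts, is also essentially surjective, and therefore an equivalence.

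Transporting the monad along $U$ yields an idempotent, left-exact monad on $\mathsf{Pre}(\C)$; this is the monad ``on pre-differential bundles'' of the statement, and $U$ carries its algebras bijectively onto the non-singular lifts. Unwinding the equivalence, the algebras are precisely the pre-differential bundles $(q\colon E \to M, \xi, \lambda)$ whose underlying lift $\lambda$ is non-singular. It therefore remains only to compare this notion with Definition~\ref{def:pdb}(ii), where a \emph{differential bundle} is a non-singular pre-differential bundle carrying the additional requirement that the $T$-pullback powers of $q$ exist.

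The only gap, and the main obstacle, is thus to show that in a complete tangent category the $T$-pullback powers of the projection $q$ of a non-singular pre-differential bundle automatically exist. Bare existence of the pullback powers $E_n$ is immediate from completeness, so the real content is their preservation by every $T^n$. I would establish this by realising $E_n$ as a $T$-limit: the non-singularity equalizer of Proposition~\ref{def:non-singular-lift-eq-of-lifts} presents $E$ as a $T$-equalizer inside the free lift $(TE,\ell)$, and the $\mu^E$-- and $\nu^E$--forks of Theorem~\ref{thm:universal-prop-differential-bundles} exhibit the fibre products of $q$ as $T$-equalizers built from $TE$ and $TM$. Concretely I would exhibit $q$ as a retract, in the arrow category, of the free-lift projection $p.E\colon TE \to E$---whose pullback powers are the $T$-limits $T_nE$ guaranteed by the additive-bundle axiom [TC.1]---and then argue that the class of maps with $T$-stable pullback powers is closed under retracts in a complete tangent category, mirroring the closure of tangent submersions under retracts and $T$-pullback in Lemma~\ref{lem:submersion}, so that $T$-stability descends from $p.E$ to $q$. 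With the pullback powers of $q$ shown to be $T$-limits, the non-singular pre-differential bundles coincide with the differential bundles, and the algebras of the transported monad are exactly $\mathsf{DBun}(\C)$, as claimed.
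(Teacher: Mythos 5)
Your steps (1) and (2) are fine and are exactly what the corollary intends: since idempotent splittings are absolute, every lift acquires a splitting of $e=p\o\lambda$ in a complete tangent category, $U\colon\mathsf{Pre}(\C)\to\mathsf{Lift}(\C)$ is an equivalence, and the transported idempotent monad of Theorem \ref{thm:idemp-monad-nonsingular} has as algebras precisely the pre-differential bundles with non-singular lift. The problem is your step (3). The retract you propose does not exist: the section $(\lambda,\xi)\colon q\to p.E$ in the arrow category is fine (one checks $T.q\o\lambda=0\o q$ from non-singularity), but a retraction $p.E\to q$ composing to the identity would require a map $TE\to E$ splitting $\lambda$ over the base, i.e.\ (the descent part of) a vertical connection, and the paper explicitly notes that differential bundles need not admit connections --- the tangent bundle in $\wone$ is the stated counterexample. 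What is actually true, and what the analogous argument for vector bundles uses, is that $q$ is a retract of the map $TM\ts{p}{q}E\to M$ obtained by \emph{pulling back} $p.E$ along $\xi$; but identifying that pullback, and knowing it is a $T$-pullback, is precisely the $\nu$-universality of Theorem \ref{thm:universal-prop-differential-bundles}, which is proved only for differential bundles, i.e.\ under the standing hypothesis that the $T$-pullback powers of $q$ already exist. So that route is circular, and the attempted reduction via Lemma \ref{lem:submersion}-style closure properties does not close the gap.

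The cleaner resolution is to read ``complete tangent category'' the way the rest of the thesis does (compare Proposition \ref{prop:Lambda-is-refl-subcat}, whose hypothesis is a tangent category ``with $T$-equalizers and $T$-pullbacks''): all the relevant finite limits exist \emph{and are preserved by every $T^n$}. Under that reading the $T$-pullback powers of $q$ exist tautologically, non-singular pre-differential bundles coincide with differential bundles, and your step (3) evaporates. Under your weaker reading (underlying category complete, no preservation assumed), the claim that non-singularity alone forces $T$-stability of $E_n$ is not established by your argument and, as far as the machinery of this chapter goes, would require a genuinely new proof: $E_2$ can be presented as an equalizer inside $T_2E$, but neither available iterated-limit decomposition expresses it purely in terms of limits that the axioms guarantee $T$ preserves.
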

\begin{remark}
  The universality conditions in Theorem \ref{thm:universal-prop-differential-bundles} demonstrate that this definition of differential bundle agrees with that of \cite{Cockett2018}.
  \pagenote{I have removed a reference to the published version of this chapter, it was unnecessary.}
\end{remark}

\section{The isomorphism of categories}%
\label{sec:iso-vbun-dbun}
It is now straightforward to show that the main theorem of this chapter holds. First, observe that for every differential bundle $(q:E \to M, \xi, \lambda)$ in the category of smooth manifolds, the $T$-pullback $E \ts{p}{q} TM$ exists\pagenote{I have switched the order of the pullback for consistency}
, as $p$ is a submersion. Note that the universality of the two diagrams is equivalent:
\[
   
  \hspace{0.15cm}
\begin{tikzcd}
	{E \ts{q}{p}TM} & TE \\
	M & E
	\arrow["{\nu^E}"{description}, from=1-1, to=2-1]
	\arrow["{\lambda \o \pi_0 +_p T.\xi \o \pi_1}", from=1-1, to=1-2]
	\arrow["p", from=1-2, to=2-2]
	\arrow["\xi"', from=2-1, to=2-2]
	\arrow["\lrcorner"{anchor=center, pos=0.125}, draw=none, from=1-1, to=2-2]
\end{tikzcd}
\]
Thus the following holds.
\begin{theorem}%
  \label{iso-of-cats-dbun-sman}
  There is an isomorphism of categories between vector bundles and differential bundles in smooth manifolds. 
\end{theorem}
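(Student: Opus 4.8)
The plan is to exhibit functors in both directions and check that they are mutually inverse bijections on objects and on hom-sets, deferring all the substantive content to results already assembled in this chapter. The forward functor sends a vector bundle $(q:E \to M, \xi, +, \cdot)$ to the triple $(q, \xi, \lambda)$, where $\lambda$ is the Euler vector field of the multiplicative $\R^+$-action $\cdot$ (Definition \ref{def:evf}). By Example \ref{ex:lift-examples}(iv) this $\lambda$ is a lift, by Proposition \ref{prop:evf-vbun-is-nonsingular} it is non-singular, and the natural idempotent $e = p \o \lambda$ is precisely multiplication by $0$ (Corollary \ref{cor:non-singular-implies-ract}), so that $e = \xi \o q$ and its idempotent splitting recovers exactly the projection $q$ and the zero-section $\xi$. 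Finally, $q$ is a submersion, so its $T$-pullback powers exist; hence $(q,\xi,\lambda)$ is a differential bundle in the sense of Definition \ref{def:pdb}(ii).

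For the backward functor I would start from a differential bundle $(q:E \to M, \xi, \lambda)$ in $\mathsf{SMan}$ and reconstruct a vector bundle on $q$. Since $\lambda$ is non-singular, Corollary \ref{cor:non-singular-implies-ract} supplies a multiplicative $\R^+$-action on $E$ whose Euler vector field is $\lambda$, while Proposition \ref{prop:induce-abun} supplies the fibrewise addition $+_q$ for which $(q,\xi,+_q)$ is an additive bundle. The content of Grabowski's characterization (\cite{Grabowski2009}) is that such a multiplicative $\R^+$-action, with non-singular Euler vector field, assembles precisely into a locally trivial $\R$-module in $\mathsf{SMan}/M$ — that is, a vector bundle in the sense of Definition \ref{def:vector-bundle}. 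This is the one step where I genuinely use that we are working in $\mathsf{SMan}$ rather than in an abstract tangent category.

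That the two assignments are mutually inverse on objects follows from the full faithfulness of the Euler vector field construction (Observation \ref{obs:evf-is-ff}): the $\R^+$-action is uniquely determined by $\lambda$, so vector bundle $\to$ lift $\to$ $\R^+$-action recovers the original action and hence the original vector bundle, while conversely the Euler vector field of the reconstructed action is again the given $\lambda$. For the hom-sets, the two notions of morphism coincide definitionally: a differential bundle morphism is by Definition \ref{def:pdb} exactly a lift morphism, i.e. a map $f$ with $T.f \o \lambda = \lambda' \o f$, whereas by Proposition \ref{prop:evf-is-ff} a bundle morphism is a vector bundle morphism precisely when it preserves the Euler vector field, which is the same equation. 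Hence both functors are fully faithful, and together with the object bijection this yields the asserted isomorphism of categories.

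The main obstacle is the backward direction on objects, specifically establishing local triviality of the reconstructed bundle. Everything algebraic — the $\R$-module axioms, the additive bundle structure, the compatibility of scalar multiplication with the lift — flows formally from Corollary \ref{cor:non-singular-implies-ract} and Proposition \ref{prop:induce-abun}, but local triviality is a genuinely smooth-topological condition that cannot be extracted from the tangent-categorical axioms alone. This is exactly the role of the non-singularity hypothesis of Definition \ref{def:non-singular-lift}: it forces the Euler vector field to be non-degenerate enough to produce a local product decomposition. The care required here is to confirm that the non-singularity condition as formulated in this chapter matches the hypothesis Grabowski uses, so that \cite{Grabowski2009} may be invoked verbatim to close the argument.
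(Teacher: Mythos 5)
Your overall architecture — Euler vector field forward, non-singularity plus Corollary \ref{cor:non-singular-implies-ract} backward, full faithfulness of the Euler vector field construction to handle morphisms — matches the paper's proof, and everything except one step is sound. The forward functor, the recovery of the $\R^+$-action and addition, and the identification of differential bundle morphisms with vector bundle morphisms via Proposition \ref{prop:evf-is-ff} are all exactly as in the paper.

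The gap is precisely where you flag it: local triviality of the reconstructed bundle. You propose to invoke \cite{Grabowski2009} verbatim, conditional on "confirming that the non-singularity condition as formulated in this chapter matches the hypothesis Grabowski uses." But that confirmation is not available to you at this point — the paper states explicitly at the start of Section \ref{sec:non-singular-lifts} that it is \emph{not} immediately clear the two definitions agree, and that their agreement is justified only \emph{as a consequence} of the present theorem. So closing the argument by citing Grabowski is either circular or leaves the hardest verification undone. The paper avoids this entirely: Theorem \ref{thm:universal-prop-differential-bundles} shows that every differential bundle satisfies the $\nu^E$-universality condition, which (as displayed just before the theorem statement) exhibits $E \ts{q}{p} TM \cong TE$ as a $T$-pullback over the zero section, so that $q:E \to M$ is a retract of the locally trivial projection $p:TE \to E$ in the arrow category; Proposition \ref{prop:retracts-reindexing-of-vbuns} then says retracts and reindexings of vector bundles are vector bundles, and local triviality follows. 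Replacing your appeal to Grabowski with this retract argument closes the gap and keeps the proof internal to the machinery already developed in the chapter.
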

\begin{proof}
  Note that Proposition \ref{prop:evf-vbun-is-nonsingular} gives a fully faithful functor from the category of vector bundles to differential bundles of smooth manifolds, as the lift associated to the vector bundle is non-singular and the projection and zero section give the rest of the structure of a differential bundle: as remarked after Definition \ref{def:vector-bundle}, local triviality guarantees that the projection is a submersion, so pullback powers of the projection exist, yielding a differential bundle.

  To see there is an isomorphism on objects, recall that every differential bundle is a fibred $\R$-module by Corollary \ref{cor:non-singular-implies-ract}, and that this identification is a bijective mapping (it recovers the original $\R$-action from the Euler vector field of the $\R$-action, and vice versa). Last, note that the universality condition
  \[\]
  along with Proposition \ref{prop:retracts-reindexing-of-vbuns} ensures the local triviality of $q$, so that the unique fibred $\R$-module structure associated to a differential bundle is indeed a vector bundle.
  \pagenote{I have added details to this proof to clarify that it does give a bijection on the classes of objects.}
\end{proof}

\section{Connections on a differential bundle}%
\label{sec:connections-on-a-differential-bundle}

The connections discussed in this section generalize the notion of an affine connection to a differential bundle, giving a ``local coordinates'' presentation for $TE$ similar to the presentation of $T^2M$ as $T_3M$ induced by an affine connection. Chapter \ref{ch:involution-algebroids} makes extensive use of connections on vector bundles, so it is useful to set out the basic definitions before embarking on algebroid theory.  
\begin{definition}[\cite{Cockett2017}]
\label{def:lin-connection}
  Let $(q:E \to M, \xi, \lambda)$ be a differential bundle in a tangent category $\C$. 
  \begin{itemize}
    \item A vertical connection is a map $\kappa:TE \to E$ so that 
    \begin{enumerate}[(i)]      
      \item $\kappa$ is a vertical descent, and hence a retract of the lift; thus, $\kappa \o \lambda = id$;
      \item $\kappa$ is compatible with both differential bundle structures on $TE$, so that the maps
      \begin{gather*}
        \kappa: (TE,\ell) \to (E,\lambda) \\ \kappa:(TE, c\o T.\lambda) \to (E,\lambda)
      \end{gather*}
      are lift morphisms.
    \end{enumerate}
    \item A horizontal connection is a map $\nabla: E \ts{q}{p} TM \to TE$ so that
    \begin{enumerate}[(i)]
      \item $\nabla$ is a horizontal lift, and so is a retract of $(p.E, T.q):TE \to E \ts{q}{p} TM$\footnote{
        That $(p.E,T.q)$ land in the pullback $E \ts{q}{p} TM$ is a consequence of naturality, as $p \o T.q = q \o p$.
      }; thus, $(p, T.q)\o \nabla = id$;
      \item $\nabla$ is compatible with each pair of lifts, so that the maps
      \begin{gather*}
        \nabla: ( E \ts{q}{p} TM, \lambda \x \xi) \to (TE, c \o T.\lambda) \\
        \nabla: ( E \ts{q}{p} TM, 0 \x \ell) \to (TE, \ell)
      \end{gather*}
       are lift morphisms.
    \end{enumerate}
    \item A full connection is a pair $(\kappa,\nabla)$ that satisfies the following compatibility relations:\pagenote{
        I have extended the exposition on the compatibility relation between $\kappa, \nabla$ to make it clear that it is about constructing an isomorphism $TE \cong E_2\x TM$.
    }
    \begin{enumerate}[(i)]
      \item $\kappa \o \nabla = \xi \o q \o \pi_0$,
      \item $\nabla(p, T.q) +_{T.q} \mu(p, \kappa) = id$, so that there is an isomorphism $E \ts{q}{p} TM \ts{p}{q} E \cong TE$. \pagenote{caught a typo}
    \end{enumerate}
    
  \end{itemize}
\end{definition}
A notion that will be useful when dealing with classical differential geometry is that of a \emph{covariant derivative}, whose definition is equivalent to that of a vertical connection in the category of smooth manifolds.
\begin{definition}%
  \label{def:covariant-derivative}
  Let $(q:E \to M, \xi, \lambda, \kappa)$ be a vertical connection.\footnote{
     The definition of a covariant derivative only uses a vertical connection.
  }
  The \emph{covariant derivative} associated to $(\kappa,\nabla)$ is the map
  \[
      \nabla_{(-)}[=]:\Gamma(\pi) \x \Gamma(p) \to \Gamma(\pi); (A,X) \mapsto (\kappa \o TA \o X).
  \]
\end{definition}
\cite{LucyshynWright2018} drastically simplified the notion of a full connection by showing that it is exactly a vertical connection satisfying a universal property.
\begin{proposition}[\cite{LucyshynWright2018}]\label{prop:rory-connection}
  A connection on a differential bundle is equivalently specified by a vertical connection \[\kappa:TE \to E\] so that the following diagram exhibits $TE$ as a biproduct in the category of differential bundles over $M$:
\[\begin{tikzcd}
	& E \\
	TE & TM & M \\
	& E
	\arrow["p", from=2-1, to=1-2]
	\arrow["Tq"{description}, from=2-1, to=2-2]
	\arrow["\kappa"', from=2-1, to=3-2]
	\arrow["q", from=1-2, to=2-3]
	\arrow["q"', from=3-2, to=2-3]
	\arrow["p"{description}, from=2-2, to=2-3]
\end{tikzcd}\]
\end{proposition}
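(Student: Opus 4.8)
The statement asserts an equivalence between two packages of data on a differential bundle $(q\colon E\to M,\xi,\lambda)$: a full connection $(\kappa,\nabla)$ in the sense of Definition~\ref{def:lin-connection}, and a bare vertical connection $\kappa$ whose associated cone $(p,T.q,\kappa)$ exhibits $TE$ as a biproduct of $E$, $TM$, $E$ in the category of differential bundles over $M$. The plan is to prove both implications, using as the central observation that the compatibility condition (ii) of Definition~\ref{def:lin-connection}, namely $\nabla(p,T.q)+_{T.q}\mu(p,\kappa)=\mathrm{id}$, is precisely the statement that $TE$ splits as the claimed direct sum over $M$.

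Before either direction, I would record two preliminary facts. First, the category of differential bundles over $M$ is semiadditive: its hom-sets are commutative monoids under fibrewise addition (Proposition~\ref{prop:induce-abun}) and it has a zero object and finite biproducts, the biproduct of objects over $M$ being their fibre product over $M$. Consequently a cone of bundle morphisms exhibits an object as a biproduct if and only if the induced map to the fibre product $E\ts{q}{p}TM\ts{p}{q}E$ is an isomorphism of differential bundles over $M$. Second, I would check that the three legs are morphisms of differential bundles over $M$: $p$ and $T.q$ are structural, and $\kappa$ is one exactly because the vertical connection axioms make it a lift morphism for both lift structures $\ell$ and $c\o T.\lambda$ on $TE$ (Definition~\ref{def:lin-connection}).

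For the forward direction I would assume a full connection $(\kappa,\nabla)$ and assemble the inverse isomorphism from $\nabla$ and $\mu$. Condition~(ii) says that $w\mapsto \nabla(p\,w,T.q\,w)+_{T.q}\mu(p\,w,\kappa\,w)$ is the identity on $TE$, so the map $(x,v,y)\mapsto \nabla(x,v)+_{T.q}\mu(x,y)$ is a section of $(p,T.q,\kappa)$; condition~(i), $\kappa\o\nabla=\xi\o q\o\pi_0$, together with the horizontal lift axiom $(p,T.q)\o\nabla=\mathrm{id}$ and $\kappa\o\lambda=\mathrm{id}$, yields the reverse composite and shows that $(p,T.q,\kappa)$ and this section are mutually inverse. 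That both maps are morphisms of differential bundles over $M$ follows from the lift-compatibility axioms for $\nabla$ and $\kappa$ and from Lemma~\ref{lem:pre-mu-nu} (which records that $\mu$ and $\nu$ are respected by bundle morphisms). Hence $TE$ is the biproduct. For the converse I would, given $\kappa$ and the biproduct, define $\nabla\colon E\ts{q}{p}TM\to TE$ via the universal property as the unique bundle morphism over $M$ with $p\o\nabla=\pi_0$, $T.q\o\nabla=\pi_1$, and $\kappa\o\nabla=\xi\o q\o\pi_0$ (that is, the biproduct inverse precomposed with the insertion of the zero section into the third summand). The horizontal-lift axiom and the lift-compatibility axioms of Definition~\ref{def:lin-connection} for $\nabla$ are then inherited from the biproduct injections being bundle morphisms, compatibility~(i) holds by construction, and compatibility~(ii) is exactly the biproduct identity $\sum_i\iota_i\o\pi_i=\mathrm{id}$ rewritten through $\mu$.

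I expect the main obstacle to be the handling of the two differential bundle structures that $TE$ carries --- the tangent structure $p\colon TE\to E$ with lift $\ell$, and the image $(T.q\colon TE\to TM, c\o T.\lambda)$ of the original bundle under the tangent functor on lifts --- together with the fact that neither of these is a priori a differential bundle over $M$. The content of the connection is precisely that it fuses these into a single differential bundle structure on $TE$ over $M$ for which the diagram becomes a biproduct. Concretely, the delicate step is to translate the abstract requirement that the biproduct injections and projections be morphisms of differential bundles over $M$ into the concrete lift-morphism conditions of Definition~\ref{def:lin-connection} (and back), matching each connection axiom against one of the biproduct identities; once this dictionary is established the equivalence reduces to the bookkeeping sketched above, and agrees with the argument of \cite{LucyshynWright2018}.
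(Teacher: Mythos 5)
The paper does not prove this proposition: it is quoted from \cite{LucyshynWright2018} and stated without argument, so there is no in-text proof to compare yours against. Judged on its own, your proposal follows what is essentially the argument of the cited reference, and the central identification is right: compatibility axiom (ii) of Definition \ref{def:lin-connection} is exactly the splitting $TE \cong E \ts{q}{p} TM \ts{p}{q} E$, with $(x,v,y)\mapsto \nabla(x,v)+\mu(x,y)$ inverse to $(p,T.q,\kappa)$, and you correctly isolate the real subtlety, namely that $TE$ does not a priori live over $M$ and that the connection is what manufactures that structure.

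Two caveats. First, in the converse direction the claim that the horizontal-connection axioms for $\nabla$ are ``inherited from the biproduct injections being bundle morphisms'' is doing more work than you acknowledge: the lift-compatibility conditions require $\nabla$ to be a lift morphism $(E\ts{q}{p}TM,\lambda\x\xi)\to(TE,c\o T.\lambda)$ and $(E\ts{q}{p}TM,0\x\ell)\to(TE,\ell)$, i.e.\ linearity over $E$ and over $TM$ separately, not over $M$. Extracting these from the biproduct decomposition over $M$ requires explicitly relating the induced differential bundle structure on $TE$ over $M$ to the two pre-existing structures $(p,\ell)$ and $(T.q,c\o T.\lambda)$ (using that $\mu$ and the two additions interact as in Proposition \ref{prop:induce-abun} and Lemma \ref{lem:pre-mu-nu}); this is the substantive verification and should be carried out rather than asserted. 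Second, you reproduce the paper's formula $\nabla(p,T.q)+_{T.q}\mu(p,\kappa)=\mathrm{id}$ verbatim, but as written the sum does not typecheck: the summands $\nabla(p\,w,T.q\,w)$ and $\mu(p\,w,\kappa\,w)$ agree under $p$ (both project to $p\,w$) but not under $T.q$ (the latter projects to $0\o q\o p\,w$), so the addition must be taken over $p$. This is a slip in the thesis's Definition \ref{def:lin-connection} rather than in your argument, but your forward-direction computation of the two composites only goes through once the addition is read as $+_{p}$.
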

There is also a notion of flatness for connections that extends to vertical connections on a differential bundle.
\begin{definition}
  A connection $\kappa$ on a differential bundle $(q:E \to M, \xi, \lambda)$ is \emph{flat} whenever
  \[ 
    \kappa \o T.\kappa \o c = \kappa \o T.\kappa.
  \]
\end{definition}
We can see that connections are closed under similar constructions to differential bundles, in particular idempotent splittings and the reindexing construction from Lemma \ref{lem:reindex-db}.
\begin{lemma}%
  \label{lem:conn-const}
  Let $(q:E \to M, \xi, \lambda)$ be a differential bundle equipped with a vertical connection.\pagenote{
      I have reworded the second part of this lemma so that it no longer refers to effective vertical connections. The proof has also been streamlined.
  }
  \begin{enumerate}[(i)]
    \item Any linear retract of $(q,\xi,\lambda)$ will have a vertical connection. 
    \item The pullback differential bundle induced by pulling back $q$ along $f:N \to M$ will have a vertical connection.
  \end{enumerate}
  If the vertical connection is flat or is part of a full connection (that is, it satisfies the universality condition in Proposition \ref{prop:rory-connection}), then the induced connection will be as well.
\end{lemma}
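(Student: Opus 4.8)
The strategy is to write down explicit formulas for the two induced connections and to verify the three conditions of Definition~\ref{def:lin-connection} by reducing them, via the naturality of $\ell$, $c$, $p$ and $0$, to the corresponding conditions for $\kappa$. Two preliminary facts drive everything. First, every lift morphism preserves the natural idempotent $e$ and its splitting (Proposition~\ref{prop:idempotent-natural}); applied to $\kappa$ this gives that $\kappa$ descends trivially on the base, i.e.\ $q\circ\kappa = q\circ p$ (equivalently $\kappa\circ 0 = \xi\circ q$). Indeed, reading off the base map of the differential-bundle morphism $\kappa\colon (T.q\colon T.E\to T.M)\to(q\colon E\to M)$ attached to the second lift $c\circ T.\lambda$, and using $c\circ T.0 = 0.T$ together with the naturality of $p$, identifies this base map with $p_M$, whence $q\circ\kappa = p_M\circ T.q = q\circ p$. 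Second, by Lemma~\ref{lem:reindex-db} reindexing of differential bundles is computed in $\mathsf{Lift}(\C)$ and the comparison map $\bar f$ is a lift morphism, so $T.\bar f\circ f^*\lambda = \lambda\circ\bar f$.

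For (i), given a linear retract with section $s'$ and retraction $r'$ (both lift morphisms, $r'\circ s' = \mathrm{id}$), I set $\kappa_F := r'\circ\kappa\circ T.s'$. The descent condition is immediate: $\kappa_F\circ\lambda_F = r'\circ\kappa\circ T.s'\circ\lambda_F = r'\circ\kappa\circ\lambda\circ s' = r'\circ s' = \mathrm{id}$, using that $s'$ is a lift morphism and $\kappa\circ\lambda=\mathrm{id}$. For the first lift condition, naturality of $\ell$ gives $\ell\circ T.s' = T^2.s'\circ\ell$, so $T.\kappa_F\circ\ell = T.r'\circ T.\kappa\circ\ell\circ T.s' = T.r'\circ\lambda\circ\kappa\circ T.s' = \lambda_F\circ r'\circ\kappa\circ T.s' = \lambda_F\circ\kappa_F$, where the middle step is the lift condition for $\kappa$ and the next uses that $r'$ is a lift morphism. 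The second condition is identical after replacing $\ell$ by $c\circ T.\lambda$ and invoking the naturality of $c$ ($T^2.s'\circ c = c\circ T^2.s'$) together with the fact that $s'$ is a lift morphism.

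For (ii), form the reindexed bundle $(f^*q\colon f^*E\to N, f^*\xi, f^*\lambda)$ with $f^*E = N\ts{f}{q}E$; since $f^*q$ is a submersion, $T$ preserves the pullback and $T.f^*E \cong T.N\ts{T.f}{T.q}T.E$. I define $\kappa_{f^*E}\colon T.f^*E\to f^*E$ by the universal property of $f^*E$ through the two legs $f^*q\circ p\colon T.f^*E\to N$ and $\kappa\circ T.\bar f\colon T.f^*E\to E$. These legs form a cone precisely because of the base-triviality identity: $f\circ f^*q\circ p = q\circ\bar f\circ p = q\circ p\circ T.\bar f = q\circ\kappa\circ T.\bar f$, the last equality being $q\circ\kappa = q\circ p$. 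The three connection conditions then follow by postcomposition with the jointly monic legs $\bar f$ and $f^*q$: on the $E$-leg each condition reduces to the corresponding condition for $\kappa$ (using $T.\bar f\circ f^*\lambda = \lambda\circ\bar f$ and the naturality of $\ell$, $c$), and on the $N$-leg it reduces to the fact that $\kappa_{f^*E}$ acts as $f^*q\circ p$, which collapses via the natural idempotent $p\circ f^*\lambda = f^*\xi\circ f^*q$.

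Finally, flatness and fullness are inherited. Flatness is cleanest for the pullback: using $\bar f\circ\kappa_{f^*E} = \kappa\circ T.\bar f$, functoriality, the naturality $T^2.\bar f\circ c = c\circ T^2.\bar f$, and the flatness $\kappa\circ T.\kappa\circ c = \kappa\circ T.\kappa$ of $\kappa$, one checks $\bar f\circ\kappa_{f^*E}\circ T.\kappa_{f^*E}\circ c = \bar f\circ\kappa_{f^*E}\circ T.\kappa_{f^*E}$; the $N$-leg is handled by the identity $T.p\circ c = p.T$, and joint monicity concludes. The retract case is entirely analogous, postcomposing with the monomorphism $s'$ and moving $c$ across via its naturality. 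For fullness I use the characterization of Proposition~\ref{prop:rory-connection}: a full connection is a vertical connection for which $(p, T.q, \kappa)$ exhibits $T.E$ as the biproduct $E\ts{q}{p}T.M\ts{p}{q}E$ over $M$. Since both reindexing (Lemma~\ref{lem:reindex-db}) and the splitting of linear idempotents (Proposition~\ref{prop:retracts-reindexing-of-vbuns}) are limit-preserving operations on differential bundles over the base, they preserve this biproduct, so the induced $\kappa_F$ and $\kappa_{f^*E}$ again satisfy the universal property. The main obstacle throughout is the base-triviality identity $q\circ\kappa = q\circ p$, since without it the two legs defining the pullback connection fail to land in $f^*E$; once that identity is secured, every remaining step is a diagram chase driven by naturality.
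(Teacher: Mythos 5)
Your proposal is correct and follows essentially the same route as the paper's (very terse) proof: the vertical connection on the retract and on the pullback is induced from the respective universal property, flatness is checked equationally, and fullness follows from Proposition \ref{prop:rory-connection} together with the commutation of the relevant limits. The one substantive detail you supply that the paper glosses over is the identity $q \o \kappa = q \o p$ (base-triviality of the vertical descent), which is genuinely needed for the two legs $f^*q \o p$ and $\kappa \o T.\bar{f}$ to form a cone over the pullback, and your derivation of it from the fact that $\kappa$ is a lift morphism for both lifts on $TE$ is sound.
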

\begin{proof}
  The universal property induces the vertical connection in each case. The construction will preserve flatness as it is an equational condition, and it preserves effectiveness by the commutativity of limits.
\end{proof}
There is no reason for every differential bundle in a tangent category to have a connection (for example, the tangent bundle in the free tangent category $\wone$ in Chapter \ref{chap:weil-nerve} is a differential bundle that does not have a connection). However, if the total space of a differential bundle has an affine connection, this induces a compatible connection on the differential bundle and its base space.
\begin{theorem}
  \label{thm:linear-connection-from-total-space}
  Let $(q:E \to M, \xi, \lambda)$ be a differential bundle in a tangent category in $\C$, where $E$ has a (flat) vertical connection. Then the total space $M$ and the differential bundle $(q,\xi,\lambda)$ each have a (flat) vertical connection. If the connection is full (so there is a compatible horizontal connection), then the induced connections are likewise full.
\end{theorem}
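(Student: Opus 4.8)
The plan is to derive both induced connections from the single hypothesis using \Cref{lem:conn-const}, which tells us that vertical connections are inherited by linear retracts and by reindexing, and that both constructions preserve flatness and fullness. The starting observation is that a vertical connection on the object $E$ is precisely a vertical connection $\kappa_E \colon T^2E \to TE$ on the tangent-bundle differential bundle $p.E \colon TE \to E$, so the whole problem becomes one of exhibiting both $p.M \colon TM \to M$ and $(q \colon E \to M, \xi, \lambda)$ as linear retracts of differential bundles manufactured from $p.E$.

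For the base $M$, I would first record that the pair of differential-bundle morphisms $(T.\xi, \xi) \colon p.M \to p.E$ and $(T.q, q) \colon p.E \to p.M$ are linear: both $T.\xi$ and $T.q$ are lift morphisms between the free lifts by naturality of $\ell$, and they compose to the identity since $q \o \xi = \mathrm{id}_M$ forces $T.q \o T.\xi = T.(q\o\xi) = \mathrm{id}_{TM}$. Thus $p.M$ is a linear retract of $p.E$, and \Cref{lem:conn-const}(i) manufactures a vertical connection $\kappa_M \colon T^2M \to TM$ on $M$ (concretely $\kappa_M = T.q \o \kappa_E \o T^2.\xi$), which is flat, respectively full, whenever $\kappa_E$ is.

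For the differential bundle $q$ itself the bases do not match, so I would first reindex. Pulling $p.E$ back along the zero section $\xi \colon M \to E$ yields, by \Cref{lem:conn-const}(ii), a vertical connection on $\xi^*(TE) \to M$. The key geometric input is that this reindexing is nothing but the Whitney sum $E \ts{q}{p} TM$: the common equalizer of $p$ and $p \o T.\xi \o T.q$ is exactly the restriction of $TE$ to the zero section, so the second $T$-equalizer of \Cref{thm:universal-prop-differential-bundles} shows that $\nu^E \colon E \ts{q}{p} TM \to TE$ is a $T$-isomorphism onto $\xi^*(TE)$. Under this identification the morphism $(q,\lambda) \colon E \to \xi^*(TE)$ corresponds to the biproduct coprojection of the $E$-summand $E \to E \ts{q}{p} TM$, which carries the evident linear retraction; hence $q$ is a linear retract of $\xi^*(TE) \to M$, and a second application of \Cref{lem:conn-const}(i) produces a vertical connection $\kappa_q \colon TE \to E$ on $(q,\xi,\lambda)$, again flat (resp. full) when $\kappa_E$ is.

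The only substantive work lies in this reindexing step: one must check that $E \ts{q}{p} TM$ exists (it is exactly the pullback required by \Cref{thm:universal-prop-differential-bundles}, and it supplies $\xi^*(TE)$ so no separate pullback hypothesis is needed) and that $\nu^E$ genuinely realizes it as the differential bundle $\xi^*(TE)$ over $M$, so that the coprojection and its retraction are linear bundle morphisms in the sense of \Cref{def:lin-connection}. Once these identifications are in hand the flatness claims are immediate, since flatness is an equational condition stable under the limit constructions of \Cref{lem:conn-const}, and the fullness claims follow because those same constructions preserve the biproduct universal property characterising full connections in \Cref{prop:rory-connection}; the rest is the formal bookkeeping already packaged in \Cref{lem:conn-const}.
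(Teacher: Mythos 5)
Your proof is correct and follows essentially the same route as the paper: both arguments feed the affine vertical connection on $E$ (i.e.\ the vertical connection on $p.E$) through \Cref{lem:conn-const}, identify $E \ts{q}{p} TM \to M$ with the reindexing of $p:TE \to E$ along $\xi$ via the universality of $\nu^E$, and then split off the relevant summands, with flatness and fullness carried along by that lemma. The only (harmless) divergence is that you obtain $\kappa_M$ directly from the linear retract pair $(T.\xi,\xi)$, $(T.q,q)$ exhibiting $p.M$ as a retract of $p.E$, whereas the paper extracts both induced connections at once from the idempotent splitting of the Whitney sum.
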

\begin{proof}
  By the strong universality condition for differential bundles, the differential bundle $(q\o\pi_0: E \ts{q}{p} TM \to M, (\xi, 0), \lambda \x \ell)$ is the pullback differential bundle of $p:TE \to E$ along $\xi:M \to E$. 
  This gives $(q\o\pi_0, (\xi,0), (\lambda,\ell))$, a (flat, effective) vertical connection by Lemma \ref{lem:conn-const}, and so it yields $(q:E \to M, \xi, \lambda)$ and $(p:TM \to M, 0, \ell)$ as (flat, effective) vertical connections by the idempotent splitting property.
\end{proof}
Every smooth manifold has an affine connection, thus inducing a connection on any vector bundle.
\begin{corollary}
  Every vector bundle has a connection.
\end{corollary}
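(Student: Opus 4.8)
The plan is to combine the identification of vector bundles with differential bundles (Theorem \ref{iso-of-cats-dbun-sman}) with the transfer-of-connections result just proved (Theorem \ref{thm:linear-connection-from-total-space}), using the classical fact that the total space of the bundle, being an ordinary smooth manifold, carries an affine connection.

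First I would invoke Theorem \ref{iso-of-cats-dbun-sman} to regard a vector bundle $q:E \to M$ as a differential bundle $(q:E \to M, \xi, \lambda)$ in $\mathsf{SMan}$. The total space $E$ is itself a smooth manifold, so it admits a (non-natural) Riemannian metric and hence, by the fundamental theorem of Riemannian geometry, a torsion-free affine connection $\kappa_E: T^2E \to TE$ (this is the earlier example giving an affine connection on any smooth manifold).

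The key step is to observe that such an affine connection on the manifold $E$ is exactly a (full) vertical connection on the tangent-bundle differential bundle $p:TE \to E$ in the sense of Definition \ref{def:lin-connection}: the lift of this differential bundle is $\ell: TE \to T^2E$, so the affine vertical-connection conditions $\kappa_E \o \ell = id$ together with the compatibilities $T.\kappa_E \o \ell.T = T.\kappa_E \o T.\ell = \ell \o \kappa_E$ are precisely the defining conditions for $\kappa_E$ to be a vertical connection on $p:TE \to E$, while the horizontal datum $\nabla_E$ of the affine connection supplies fullness. Thus the total space $E$ of our differential bundle ``has a full vertical connection'' in exactly the sense required by the hypothesis of Theorem \ref{thm:linear-connection-from-total-space}.

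Applying Theorem \ref{thm:linear-connection-from-total-space} to the differential bundle $(q, \xi, \lambda)$ equipped with this connection on its total space $E$ then yields a full connection on the differential bundle itself, which under the isomorphism of Theorem \ref{iso-of-cats-dbun-sman} is a connection on the original vector bundle $q:E \to M$. The only genuine point to verify is the definitional matching in the key step---that the tangent-categorical notion of an affine connection on the manifold $E$ coincides with a vertical (indeed full) connection on the differential bundle $p:TE \to E$---and I expect this to be the main, though essentially bookkeeping, obstacle, since everything else is a direct application of the two cited theorems.
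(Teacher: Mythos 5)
Your proposal is correct and matches the paper's (essentially unstated) argument: the paper simply remarks that every smooth manifold has an affine connection and then cites Theorem \ref{thm:linear-connection-from-total-space} to induce one on the vector bundle, exactly as you do. Your extra care in checking that an affine connection on the total space $E$ is literally a (full) vertical connection on the differential bundle $p:TE \to E$ is the right bookkeeping point and is left implicit in the paper.
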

\pagenote{Kristine mentioned it's a bit weird to end a chapter on a piece of notation, which I makes sense once it has been pointed out to me.}




\chapter{Involution algebroids}%
\label{ch:involution-algebroids}

This chapter accomplishes the first major goal of this thesis by providing a tangent-categorical axiomatization of Lie algebroids, namely involution algebroids. Much like vector bundles, Lie algebroids are a highly non-algebraic notion (in the sense of \cite{Freyd1972}), being vector bundles equipped with a Lie algebra structure on the set of sections of the projection. Furthermore, the bracket on sections must satisfy a product rule with respect to $\mathbb{R}$-valued functions on the base space (the \emph{Leibniz law}), introducing another piece of non-algebraic structure to the definition. The tangent-categorical definition of Lie algebroids will treat the tangent bundle as the ``prototypical Lie algebroid'' in which the vertical lift $\ell:T \Rightarrow T^2$ identifies the vector bundle structure and the canonical flip $c:T^2 \Rightarrow T^2$ plays the role of the Lie bracket.

Lie algebroids are the natural many-object analogue to Lie algebras, in the same way that Lie groupoids are the many-object analogue of Lie groups. In the single-object case, a Lie group is classically thought of as a space of symmetries for some smooth manifold (one often identifies a group action $G \x M \to M$), and a Lie algebra may similarly be thought of as a space of \emph{derivations} (often identified as a sub-Lie-algebra of $\chi(M)$ for a manifold $M$).  The extension of groups to groupoids is natural; in fact, Brandt's introduction of groupoids in \cite{brandt1927verallgemeinerung} predates MacLane and Eilenberg's invention of category theory in \cite{eilenberg1945general} by nearly two decades.  The translation of Lie algebras to the many-object case is not as straightforward. The first step is to replace the vector space underlying a Lie algebra with a vector bundle $(\pi:A \to M, \xi,\lambda)$. The idea is to axiomatize this vector bundle so that each section in $\Gamma(\pi)$ corresponds to a derivation on $C^\infty(M)$. The \emph{anti-commutator} operation on derivations from Proposition \ref{prop:anti-commm-lie} suggests there should be a Lie bracket $[-,-]:\Gamma(\pi)\ox\Gamma(\pi) \to \Gamma(\pi)$ (similar to the partially-defined multiplication for a groupoid), while the correspondence with derivations on $C^\infty(M)$ suggests there be a vector bundle morphism $\anc:A \to TM$ satisfying the \emph{Leibniz law}:
\begin{equation}\label{eq:bracket-f}
    [X, f\cdot Y] = f\cdot [X, Y] + [X,f]\cdot Y; \hspace{0.5cm} [X,f] := \phat \o T.f \o \anc \o X\footnote{Recall the notation from Lemma \ref{lem:Cinfty-module-vbun}.}
\end{equation}
(the full definition of Lie algebroids may be found in \ref{def:lie-algd}). This ``operational'' definition of Lie algebroids makes it difficult to describe their morphisms, and furthermore it essentially fails to be an algebraic structure in the classical sense, as it axiomatizes structure on the \emph{set of sections} of a map rather than a morphism in the category itself.

Involution algebroids were introduced to provide a tangent-categorical presentation of Lie algebroids, similar to the relationship between differential bundles and vector bundles. Chapter \ref{ch:differential_bundles} focused on the Euler vector field construction on a vector bundle, showing that this induced a fully-faithful functor from vector bundles to associative coalgebras (lifts) of the weak comonad $(T,\ell)$, and identified vector bundles with a subcategory of $\mathsf{Lift}(\mathsf{SMan})$ satisfying a universal property. The corresponding construction for Lie algebroids, then, is the \emph{canonical involution}, which was identified by Eduardo Martinez and his collaborators (a clearly written exposition may be found in Section 4 of \cite{de2005lagrangian}). Given a Lie algebroid $(\pi:A \to M, \anc:A \to TM, [-,-]:\Gamma(\pi)\ox\Gamma(\pi) \to \Gamma(\pi))$, its canonical involution is a map
\[
    \sigma: \prolong \to \prolong.
\]
Using this $\sigma$ map, there is a straightforward characterization of Lie algebroid morphisms: a Lie algebroid morphism is precisely a vector bundle morphism $(f,m):A \to B$ that preserves the anchor and involution maps:
\[
\begin{tikzcd}
	A & B & \prolong & {B\ts{\anc^B}{T.\pi^B}TB} \\
	TM & TN & \prolong & {B\ts{\anc^B}{T.\pi^B}TB}
	\arrow["{\anc^A}"', from=1-1, to=2-1]
	\arrow["{\anc^B}", from=1-2, to=2-2]
	\arrow["f", from=1-1, to=1-2]
	\arrow["{T.m}"', from=2-1, to=2-2]
	\arrow["{\sigma^B}", from=1-4, to=2-4]
	\arrow["{f \x T.f}", from=1-3, to=1-4]
	\arrow["{\sigma^A}"', from=1-3, to=2-3]
	\arrow["{f\x T.f}"', from=2-3, to=2-4]
\end{tikzcd}\]
Furthermore, it is implicit in Martinez's work (\cite{Martinez2001}) that $\sigma$ satisfies axioms corresponding to the Lie algebroid axioms. Thus, involutivity corresponds to antisymmetry of the Lie bracket
\[
    \sigma \o \sigma = id \iff [X,Y] + [Y,X] = 0, 
\]
while the Leibniz law holds if and only if the $\anc$ map sends the algebroid involution to the canonical flip on $M$,
\[
    T.\anc \o \pi_1 \o \sigma = c \o T.\pi \o \anc \iff 
    \forall f \in C^\infty(M), [X, f\cdot Y] = f\cdot [X, Y] + [X,f]\cdot Y
\]
(using the same definition as before for $[X,f]$).

The idea of an involution algebroid, then, is to axiomatize the canonical involution directly, just as differential bundles axiomatize the Euler vector field of a differential bundle. An involution algebroid is a differential bundle equipped with a pair of structure maps
\[
    \anc:A \to TM, \hspace{0.15cm} \sigma: \prolong \to \prolong
\]
satisfying a collection of axioms. Some of them are straightforward translations of the structure equations for Lie algebroids given in \cite{Martinez2001}, for instance
\[
  T.\anc \o \lambda = \ell \o \anc, \hspace{0.15cm}
  \sigma \o \sigma = id,\hspace{0.15cm}
  T.\anc \o \pi_1 \o \sigma = c \o T.\anc \o \pi_1.
\]
However, this requires a new coherence between the Euler vector field of the underlying vector bundle and the involution map:
\[
    \sigma \o (\xi\o\pi,\lambda) = (\xi\o\pi,\lambda).
\]
The most striking new fact about this coherence is that the Jacobi identity on the bracket $[-,-]$ corresponds to the \emph{Yang--Baxter} equation on $\sigma$:
\[\begin{tikzcd}
	{\prolong \ts{T.\anc}{T^2.\pi} T^2A} & {\prolong \ts{T.\anc}{T^2.\pi} T^2A} \\
	{\prolong \ts{T.\anc}{T^2.\pi} T^2A} & {\prolong \ts{T.\anc}{T^2.\pi} T^2A} \\
	{\prolong \ts{T.\anc}{T^2.\pi} T^2A} & {\prolong \ts{T.\anc}{T^2.\pi} T^2A}
	\arrow["{\sigma \x c.A}"', from=1-1, to=1-2]
	\arrow[from=1-2, to=2-2]
	\arrow["{\sigma \x c.A}"', from=2-2, to=3-2]
	\arrow["{1 \x T.\sigma}", from=1-1, to=2-1]
	\arrow["{\sigma \x c.A}", from=2-1, to=3-1]
	\arrow["{1 \x T.\sigma}", from=3-1, to=3-2]
\end{tikzcd}\]
This is both surprising (it is a new characterization of a central object of study in differential geometry and mathematical physics) and yet in a way expected (the work in \cite{Cockett2015}, \cite{Mackenzie2013} indicates that the Lie algebra structure on the set of vector fields over a manifold follows from the Yang--Baxter equation on $c$).  This vector-field-free presentation of the Jacobi identity allows for a structural approach to Lie algebroids that drives the work presented in Chapters \ref{chap:weil-nerve} and \ref{ch:inf-nerve-and-realization}.

As with \Cref{ch:differential_bundles}, the first section is expository, and is concerned with introducing the category of Lie algebroids. 
The second section introduces anchored bundles, together with the space of prolongations of an anchored bundle. The relationship between anchored bundles and involution algebroids is equivalent to that between reflexive graphs and groupoids (the subject of Chapter \ref{ch:inf-nerve-and-realization}), the space of prolongations of an anchored bundle being equivalent to the set of composable arrows for a groupoid. This section is mostly a translation of Martinez's prolongation construction to a general tangent category. The rest of the chapter contains new results, developed in collaboration with Matthew Burke and Richard Garner.

Section 3 introduces involution algebroids, which are anchored bundles equipped with an involution map on their space of prolongations. Section 4 considers an anchored bundle in a tangent category with negatives that is equipped with a connection. The connection gives an involution algebroid a ``local coordinates'' presentation (in the sense of Section \ref{sec:diff-and-tang-struct}) that is equivalent to the local characterization of Lie algebroids from Section 1. The final section of this chapter establishes the main result: the category of Lie algebroids is isomorphic to that of involution algebroids in smooth manifolds.\pagenote{
   The introduction has been substantially expanded to better explain what is happening in this chapter and why we are doing it. 
}


\section{Lie algebroids}\label{sec:Lie_algebroids}

This section reviews the basic theory of Lie algebroids: their definition and that of their morphisms, along with some introductory examples. The classical definition will not appear elsewhere in this chapter, however, as we quickly introduce Martinez's \emph{structure equations} for a Lie algebroid (\cite{Martinez2001}), then translate them into tangent-categorical terms using a connection. \pagenote{
   The substantial revisions to the introduction made the original preamble to this section redundant. There have also been substantial revisions to the thesis up until this point, and those changes have percolated into this section. In particular, the notation for the $C^\infty(M)$-module of sections $\Gamma(\pi)$ was set in the previous chapter, and we stick to those conventions.
}

\begin{definition}\label{def:lie-algd}
    A \emph{Lie algebroid} is a vector bundle $\pi:A \rightarrow M$ equipped with an anchor $\anc:A \to TM$ and a bracket $[-,-]:\Gamma(\pi)\ox\Gamma(\pi) \rightarrow \Gamma(\pi)$ satisfying the following axioms:
    \begin{itemize}
        \item bilinear: $[aX_1+bX_2, Y] = a[X_1, Y]+ b[X_2, Y]$ and $[X, aY_1+bY_2] = a[X, Y_1]+b[X, Y_2]$
        \item anti-symmetric: $[X, Y]+[Y, X] = 0$
        \item Jacobi: $[X, [Y, Z]] = [[X, Y], Z] + [Y, [X, Z]]$
        \item Leibniz: $[X, f\cdot Y] = f\cdot [X, Y] + [X,f]\cdot Y$\pagenote{Changed the notation of the Lie derivative to this bracket version.}
    \end{itemize}
    (where $[X,f]$ is defined as in Equation \ref{eq:bracket-f}).
\end{definition}

\begin{example}%
    \label{ex:lie-algebroids}
    ~\begin{enumerate}[(i)]
        \item The canonical example of a Lie algebroid is, of course, the tangent bundle using the operational tangent bundle from Definition \ref{def:operational-tang}.
        \item A \emph{Lie algebra} is a Lie algebroid over the terminal object: for a group $G$, the bundle of source-constant tangent vectors is the usual Lie functor from Lie groups to Lie algebras, because a groupoid is a one-object group.
        \item The bundle of \emph{source-constant} tangent vectors $s,t: G \to M$ of a Lie groupoid forms a Lie algebroid. This bundle is defined by the pullback
\[\begin{tikzcd}
    A & TG & TM \\
    M & {TM \x G}
    \arrow["{(T.s,p)}", from=1-2, to=2-2]
    \arrow["{(0,e)}"', from=2-1, to=2-2]
    \arrow["\pi"', from=1-1, to=2-1]
    \arrow[hook, from=1-1, to=1-2]
    \arrow["{T.t}", from=1-2, to=1-3]
    \arrow["\anc", curve={height=-18pt}, from=1-1, to=1-3]
    \arrow["\lrcorner"{anchor=center, pos=0.125}, draw=none, from=1-1, to=2-2]
\end{tikzcd}\]
        where the projection is $\pi$ and the target is given by $\anc$ in the diagram.
        There is an injective $\R$-module morphism from sections of $\pi$, $\Gamma(\pi)$ to vector fields on $G$, $\chi(G)$, and the Lie bracket on $G$ is closed over the image of this lift, putting a Lie bracket on $\Gamma(\pi)$. In particular, we can see that $TM$ is the bundle of source-constant tangent vectors for the pair groupoid on a manifold $M$:
\[\begin{tikzcd}
	TM & {T(M \x M)} \\
	M & {(M \x M) \x TM}
	\arrow["{(p, T.\pi_0)}", from=1-2, to=2-2]
	\arrow["{(\Delta, 0)}"', from=2-1, to=2-2]
	\arrow["p"', from=1-1, to=2-1]
	\arrow["{(id, 0\o p)}", from=1-1, to=1-2]
	\arrow["\lrcorner"{anchor=center, pos=0.125}, draw=none, from=1-1, to=2-2]
\end{tikzcd}\]
        Given $(u,v):X \to T(M \x M)$ above $(m,m):X \to TM$ with $u = 0 \o m$, it follows that $u = 0 \o p \o u$, so $v$ is the unique map induced into $TM$.
        \item Every group is a groupoid over a single object. The Lie algebroid associated with a group $G$, then, is the usual Lie algebra.
        \item From the Hamiltonian formalism of mechanics, every \emph{Poisson manifold} has an associated Lie algebroid.
        A Poisson manifold is a manifold $M$ equipped with a \emph{Poisson algebra} structure on $C^\infty(M)$, namely a Lie algebra that is also a derivation:
        \[
            [f\cdot g, h] = f\cdot [g,h] + [f,h]\cdot g
        \]
        where $\cdot$ is the multiplication in the algebra $C^\infty(M)$ as in Lemma \ref{lem:Cinfty-module-vbun}. The cotangent bundle over a Poisson manifold $M$ is canonically a Lie algebroid, called a Poisson Lie algebroid \cite{Courant1994}. 
        \item   Any Lie algebra bundle---that is, a vector bundle equipped with a Lie bracket on its space of sections---is a Lie algebroid, with anchor map $\xi \o \pi$.     
    \end{enumerate}
\end{example}
Morphisms of Lie algebroids are notoriously difficult to work with, and have an involved definition.
\begin{definition}\label{def:lie-algd-morphism}
    Let $A, B$ be a pair of anchored bundles over $M,N$, and $\Phi: A \to B$ an anchored bundle morphism over a map $\phi:M \to N$. A $\Phi$-decomposition of $X \in \Gamma(\pi_A)$ is a set of $X_i \in \Gamma(\pi_B)$ and $f_i \in C^\infty(M)$ so that
    \[
        \Phi \o X = \sum_i f_i \cdot X_i \o \phi.
    \]
    An anchor-preserving vector bundle morphism $\Phi$ is a Lie algebroid morphism if and only if for any $X,Y \in \Gamma(\pi_A)$ and $\Phi$-decompositions $\{X_i, f_i\}, \{Y_j, g_j\}$ of $X,Y$, the following equation holds:
    \[        
        \Phi \o [X,Y] 
        = 
        \sum f_i\cdot g_j \cdot ([X_i,Y_j] \o \phi)
        + \sum [X, f_i] \cdot (X_i \o \phi)
        - \sum [Y, g_j](Y_j \o \phi).
    \]
\end{definition}
The equation defining a Lie algebroid morphism holds independently of the choice of $\Phi$-decomposition (see \cite{Higgins1990} for a proof).\pagenote{Here I have tidied up the definition by explaining what a $\Phi$-decomposition is (in particular, quantifying over the $X_i, f_i$, and also clarified some wording.}

\begin{example}
    ~\begin{enumerate}[(i)]
        \item If $A, B$ are Lie algebroids over a base manifold $M$, $\Phi: A \to B$ is a Lie algebroid morphism if and only if it preserves the anchor and Lie bracket.\pagenote{I removed the remark about tangent Lie algebroids, as it refers to De Rham cohomologies which have disappeared from this thesis.}
        \item Given two Lie algebroid morphisms $f: A \to B, g: B \to C$, their composition $g \o f$ is also a Lie algebroid morphism.
        \item A \emph{Poisson Sigma model} (see \cite{Bojowald2005}) is a morphism of Lie algebroids 
        \[
            \phi: T\Sigma \to T^*M
        \]
        for which $\Sigma$ is a 2-dimensional manifold and $T\Sigma$ denotes its tangent Lie algebroid, while $M$ is a Poisson manifold and $T^*M$ denotes the Lie algebroid structure on its cotangent bundle .
    \end{enumerate}
\end{example}

Lie algebroids are a natural generalization of Lie algebras to the ``multi-object'' setting, but they are ill-suited for a functorial presentation of the theory. A step in this direction is to consider the coordinate-based presentation of the Lie bracket and its coherences due to  \cite{Martinez2001}.
Let $A$ be an anchored bundle over $M$ equipped with a bilinear bracket on its space of sections, and choose a pair of bases for $\Gamma(\pi)$ and $\chi(M)$: for $\Gamma(\pi)$ write $\{ e_\alpha\}$, and for $\chi(M)$ write $\{ \frac{\partial}{\partial x^i}\}$. The anchor and bracket then have a presentation in local coordinates: 
\[
    \anc(e_\alpha) = \sum_i \anc^i_\alpha \frac{\partial}{\partial x^i}
    \hspace{0.5cm}
    [e_\alpha, e_\beta] = \sum_\gamma C^{\gamma}_{\alpha\beta}e_\gamma
\]
(from here on out, we use Einstein summation notation to simplify our calculations, so instead write
\[
    \anc(e_\alpha) = \anc^i_\alpha \frac{\partial}{\partial x^i}
    \hspace{0.5cm}
    [e_\alpha, e_\beta] = C^{\gamma}_{\alpha\beta}e_\gamma
\]
with $\sum$ suppressed).
The following characterization of the Lie algebroid axioms uses Martinez's \emph{structure equations}.\pagenote{This is better called the "structure equations" for a Lie algebroid.}
\begin{proposition}\label{prop:la-iff-structure-morphisms}[\cite{Martinez2001}]
    An anchored bundle $A$ over $M$ equipped with a bracket is a Lie algebroid if and only if  $\anc$ and $[-,-]$ satisfy the following structure equations:
    \begin{enumerate}[(i)]
        \item Alternating: \[C^\nu_{\alpha\beta} + C^\nu_{\beta\alpha} = 0\]
        \item Leibniz: \[\anc^j_\alpha \frac{\partial \anc^i_\beta}{\partial x^j} 
            = \anc^i_\gamma C^\gamma_{\alpha\beta} + \anc^j_\beta \frac{\partial \anc^i_\alpha}{\partial x^j}\]
        \item Bianchi: \[ 0
            = \anc^i_\alpha \frac{\partial C^{\nu}_{\beta\gamma}}{\partial x^i}
            + \anc^i_\beta \frac{\partial C^{\nu}_{\gamma\alpha}}{\partial x^i}
            + \anc^i_\gamma \frac{\partial C^{\nu}_{\alpha\beta}}{\partial x^i}
            + C^{\mu}_{\beta\gamma}C^{\nu}_{\alpha\mu}
            + C^{\mu}_{\gamma\alpha}C^{\nu}_{\beta\mu}
            + C^{\mu}_{\alpha\beta}C^{\nu}_{\gamma\mu}\]
    \end{enumerate}
\end{proposition}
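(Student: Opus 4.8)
The plan is to exploit that, once a local basis $\{e_\alpha\}$ of $\Gamma(\pi)$ and $\{\partial/\partial x^i\}$ of $\chi(M)$ is fixed, both the anchor (a vector bundle map, hence $C^\infty(M)$-linear) and the bracket (via bilinearity together with the Leibniz axiom) are completely determined by the structure functions $\anc^i_\alpha$ and $C^\gamma_{\alpha\beta}$. Consequently every identity between sections appearing in Definition \ref{def:lie-algd} is equivalent to the family of identities obtained by expanding both sides in the basis $\{e_\alpha\}$ (or in $\{\partial/\partial x^i\}$ for vector fields) and comparing coefficients. The proof therefore proceeds axiom by axiom, translating each into its coefficient form and checking that the result is exactly (i), (ii), or (iii).

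First I would dispose of antisymmetry. Evaluating $[X,Y]+[Y,X]=0$ on basis sections gives $(C^\gamma_{\alpha\beta}+C^\gamma_{\beta\alpha})e_\gamma=0$, and since the $e_\gamma$ are linearly independent this is precisely (i) (with $\nu=\gamma$); conversely (i) together with bilinearity and the Leibniz rule propagates antisymmetry to arbitrary sections. Next, for (ii) I would compute the anchor-compatibility $\anc([X,Y])=[\anc(X),\anc(Y)]$, where the right-hand bracket is the Lie bracket of vector fields from Proposition \ref{prop:anti-commm-lie}. On basis elements this bracket is
\[ [\anc(e_\alpha),\anc(e_\beta)] = \Big(\anc^j_\alpha \frac{\partial \anc^i_\beta}{\partial x^j} - \anc^j_\beta \frac{\partial \anc^i_\alpha}{\partial x^j}\Big)\frac{\partial}{\partial x^i}, \]
while $\anc([e_\alpha,e_\beta]) = C^\gamma_{\alpha\beta}\,\anc^i_\gamma\,\frac{\partial}{\partial x^i}$ by $C^\infty(M)$-linearity of $\anc$. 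Comparing coefficients of $\partial/\partial x^i$ yields exactly (ii). The conceptual point I would emphasise is that anchor-compatibility is the coordinate shadow of the Leibniz axiom: it is forced by comparing $[X,fY]=f[X,Y]+(\anc(X)f)Y$ with the analogous expansion of $[[X,Y],fZ]$ inside the Jacobi identity.

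Finally, Jacobi. Expanding $[e_\alpha,[e_\beta,e_\gamma]]=[[e_\alpha,e_\beta],e_\gamma]+[e_\beta,[e_\alpha,e_\gamma]]$ in the basis, using $[e_\beta,e_\gamma]=C^\nu_{\beta\gamma}e_\nu$ and the Leibniz rule to differentiate the structure functions along the anchor, collects into the cyclic sum (iii).

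The main obstacle — and the step I would treat most carefully — is the bookkeeping showing that the three equations (i)--(iii), verified on \emph{basis} sections, actually capture the axioms for \emph{all} sections. Inserting a function, for instance testing Jacobi on the triple $e_\alpha,e_\beta,f\,e_\gamma$, produces terms of two types: those proportional to $f$, which reassemble into the basis Jacobi identity (iii), and those proportional to a single derivative $\anc(-)f$, which reassemble into the anchor-compatibility (ii); every remaining term cancels by antisymmetry (i) together with the Leibniz rule. Verifying that this separation is exhaustive, so that no mixed or higher-order terms survive, is the real content of the equivalence, and it is exactly where the first-order (bidifferential) nature of the bracket guaranteed by the Leibniz axiom is used. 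Assembling these three translations, and running each argument in reverse to recover the section-level axioms from the coefficient identities, gives the stated biconditional.
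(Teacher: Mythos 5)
Your proposal is correct in outline and its computations check out; it is, if anything, more detailed than what the paper offers for this statement, since the paper's own ``proof'' is the single remark that the proposition is a straightforward translation into local coordinates, deferring the substance to \cite{Martinez2001} and to the two lemmas that follow (Lemmas \ref{lem:curly-bracket-coh} and \ref{lem:bianchi-connection}). The route differs in one interesting way. You work directly with a local frame and derive equation (ii) as the coordinate form of anchor-compatibility $\anc([X,Y])=[\anc X,\anc Y]$, which you then explain is the first-order shadow of the Jacobi identity tested against $[X,[Y,fZ]]$ --- this matches Martinez's original derivation (where (ii) is $d^2x^i=0$ and (iii) is $d^2e^\nu=0$). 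The paper instead fixes a connection, splits the bracket into a genuine tensor $\langle-,-\rangle:A_2\to A$ plus connection-correction terms, and identifies (ii) with the Leibniz law and (iii) with Jacobi \emph{relative to that splitting}; the connection-based tensors $\{-,-\}$ and $\{-,-,-\}$ are then reused throughout Section \ref{sec:connections_on_an_involution_algebroid} to compare with the involution axioms, which is what the globalized formulation buys. Your frame computation buys a more elementary and self-contained argument but would need to be re-expressed tensorially before it could feed into the later material. One point worth making explicit in your write-up: the biconditional only makes sense if ``equipped with a bracket'' is read as an $\R$-bilinear bracket already of Leibniz (first-order bidifferential) type, since otherwise the structure functions do not determine the bracket on general sections; you use this implicitly in your opening paragraph and again in the final bookkeeping step, and it should be stated as a standing hypothesis rather than left tacit.
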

This proposition is a straightforward translation of the Lie algebroid axioms into local coordinates using a covariant derivative.
First, recall that by the smooth Serre--Swan theorem (11.33 \cite{Nestruev2003}), the bilinearity of the bracket \pagenote{Clarified where this new bracket comes from/what category it lives in.}
\[
    [-,-]:\Gamma(\pi) \x \Gamma(\pi) \to \Gamma(\pi)  
\] as a morphism of $C^\infty(M)$-modules guarantees that it corresponds to a bilinear morphism $A_2 \to A$ of vector bundles, meaning that there exists a globally defined bilinear map $A_2 \to A$ that is equal to the Lie bracket when applied to sections of the projection. We record this as a lemma: \pagenote{Clarified the point that this theorem gives a global definition of the Lie bracket, extending from from $\Gamma(\pi)$ to general elements in the category}
\begin{lemma}\label{lem:ang-brack-lie}
    For every Lie algebroid $\a$, there is a bilinear morphism
    \[
       \langle -,-\rangle: A_2 \to A 
    \]
    so that for any sections $X,Y\in \Gamma(\pi)$, $\langle X,Y\rangle = [X,Y]$.
\end{lemma}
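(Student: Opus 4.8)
The plan is to deduce the lemma from the smooth Serre--Swan theorem (11.33 of \cite{Nestruev2003}), which is cited immediately above. Serre--Swan exhibits the global-sections functor $\Gamma$ as a fully faithful embedding of the category of vector bundles over $M$ into the category of $C^\infty(M)$-modules, with essential image the finitely generated projective modules. Under this correspondence a fibrewise-bilinear bundle morphism $A_2 \to A$ is exactly the same datum as a $C^\infty(M)$-bilinear map $\Gamma(\pi) \x \Gamma(\pi) \to \Gamma(\pi)$ (equivalently a $C^\infty(M)$-linear map out of $\Gamma(\pi) \otimes_{C^\infty(M)} \Gamma(\pi)$). So it suffices to exhibit such a $C^\infty(M)$-bilinear operation on sections agreeing with $[-,-]$, and then transport it back across the equivalence to obtain $\langle -,-\rangle : A_2 \to A$. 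The identity $\langle X,Y\rangle = [X,Y]$ then follows from full faithfulness of $\Gamma$: the morphism $\langle-,-\rangle$ precomposed with the pairing $(X,Y):M \to A_2$ is sent by $\Gamma$ to the module-level operation evaluated at the sections $X,Y$.

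First I would record the input to Serre--Swan, namely that $\Gamma(\pi)$ is finitely generated and projective. This is the content of local triviality of $\pi:A \to M$ (Definition \ref{def:vector-bundle}): a finite trivializing cover together with a subordinate partition of unity (available since $M$ is Hausdorff and second-countable) presents $\Gamma(\pi)$ as a direct summand of a free module $C^\infty(M)^k$. Hence $\pi$ lies in the essential image of $\Gamma$ and the dictionary above applies verbatim.

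The hard part will be the middle step: checking that the bracket is genuinely $C^\infty(M)$-bilinear, i.e.\ \emph{tensorial}, and not merely $\R$-bilinear. The $\R$-bilinearity and antisymmetry are immediate from Definition \ref{def:lie-algd}, but $C^\infty(M)$-linearity in each slot has to be reconciled with the Leibniz axiom $[X, f\cdot Y] = f\cdot[X,Y] + [X,f]\cdot Y$, whose correction term $[X,f] = \phat \o T.f \o \anc \o X$ (Equation \ref{eq:bracket-f}) is exactly the feature a fibrewise-bilinear map cannot carry. This is the point I would scrutinize most carefully, and I expect it to be where the covariant derivative foreshadowed in the surrounding discussion becomes indispensable: fixing a linear connection on $A$ lets one organize the bracket into its tensorial part and an anchor-derivative part, with antisymmetry controlling the latter. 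The local-coordinate structure functions $C^\gamma_{\alpha\beta}$ of the subsequent Proposition \ref{prop:la-iff-structure-morphisms} are precisely the components of $\langle -,-\rangle$ in a local frame, so the concluding check is that these frame-wise data glue to a single global fibrewise-bilinear morphism, independent of the chosen frame.
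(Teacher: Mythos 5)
Your route is the same as the paper's: the text immediately preceding the lemma simply invokes the smooth Serre--Swan theorem to convert a ``$C^\infty(M)$-bilinear'' operation on $\Gamma(\pi)$ into a fibrewise-bilinear bundle map $A_2 \to A$, and the lemma is then ``recorded'' with no further argument. So on the Serre--Swan dictionary and the finitely-generated-projective input you and the paper agree, and that part of your write-up is fine.

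The problem is the middle step you explicitly defer, and it is not a step that can be completed as you hope. You correctly observe that the Leibniz axiom $[X, f\cdot Y] = f\cdot[X,Y] + [X,f]\cdot Y$ obstructs $C^\infty(M)$-linearity in the second slot; combined with antisymmetry it obstructs it in the first slot too. But this is not merely a technical wrinkle to be absorbed by a connection: if a fibrewise-bilinear $\langle-,-\rangle : A_2 \to A$ satisfied $\langle X,Y\rangle = [X,Y]$ for \emph{all} sections, then $[X, f\cdot Y] = \langle X, f\cdot Y\rangle = f\cdot\langle X,Y\rangle = f\cdot[X,Y]$, forcing $[X,f]\cdot Y = 0$ for all $X,Y,f$, i.e.\ $\anc = 0$. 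So for any Lie algebroid with nonzero anchor no such tensor exists, and the literal statement cannot be rescued. Your proposed fix --- choosing a connection and splitting the bracket into a tensorial part plus an anchor-derivative part --- is exactly what the paper does in the sequel (the map $\langle-,-\rangle_{(\kappa,\nabla)} = \kappa \o \sigma \o \nabla -_\pi \kappa \o \nabla$ of Proposition \ref{prop:intertwining-linear-iff-bracket-linear}, with the correction terms $\{-,-\}$ and $\{-,-,-\}$ carrying the non-tensorial data), but the resulting $\langle-,-\rangle$ agrees with $[-,-]$ only on a chosen local frame $\{e_\alpha\}$, not on arbitrary sections, and it depends on the connection. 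In short: your diagnosis of where the difficulty lies is sharper than the paper's own one-sentence justification, but the proposal leaves the decisive step open, and the resolution you sketch delivers a weaker (frame-dependent, connection-dependent) conclusion than the lemma asserts. A complete write-up would either restrict the claim to frame sections, or make the dependence on $(\kappa,\nabla)$ explicit and record the correction terms, as the paper does from Definition \ref{def:curly-bracket} onward.
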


There are two structure maps derived from $\< -, -\>$ that encode the coherences of a Lie algebroid. The first map measures the extent to which the anchor maps fail to preserve the chosen connections on the vector bundle $\pi:A \to M$ and the tangent bundle $p:TM \to M$.\pagenote{Added some explanation of what the curly bracket map does.}
\begin{definition}\label{def:curly-bracket}
    Let $A$ be a Lie algebroid, and for a chosen horizontal connection $\nabla$ on $A$ and vertical connection $\kappa'$ on $TM$, set
    \[
        \{ v, x\}_{\kappa',\nabla} := TM \ts{p}{q} A \xrightarrow[]{\nabla} TA \xrightarrow[]{T.\anc} T^2M \xrightarrow[]{\kappa'} TM.
    \]
    When the choice of connection is evident by context, we will suppress the subscript.
\end{definition}
\begin{observation}
    The above parentheses bracket corresponds to the symbol
    \[
        \{-,-\} = \anc^j_\beta \frac{\partial \anc^i_\alpha}{\partial x^j}.  
    \]
\end{observation}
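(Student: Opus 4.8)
The statement is a local-coordinate identity, so the plan is to unwind Definition~\ref{def:curly-bracket} in the chart and frame that define the symbols $\anc^i_\alpha$. First I would fix a chart $U$ on $M$ with coordinates $x^i$ together with a local frame $\{e_\alpha\}$ trivialising $\pi$ over $U$, so that $A|_U$ acquires fibre coordinates $a^\alpha$ and $TA$, $TM$, $T^2M$ acquire the induced coordinates $(x^i, a^\alpha, \dot x^i, \dot a^\alpha)$, $(x^i, y^i)$ and $(x^i, y^i, \dot x^i, \dot y^i)$ respectively (the ordering of the last being read off the local description of $\ell$ in Example~\ref{example:tangcat-sman}). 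The connections $\nabla$ and $\kappa'$ are taken to be the flat ones determined by this chart and frame; as noted just before Definition~\ref{def:curly-bracket}, the whole purpose of the curly bracket is to record the failure of $\anc$ to intertwine these connections, directly analogous to the correction term $\nabla[f] = \kappa_N \o T^2.f \o \nabla_M$ from the earlier observation, so making this convention explicit is the conceptual first move.

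With the coordinates in place, the computation is three short steps. In coordinates $\anc(x^i, a^\alpha) = (x^i;\, \anc^i_\beta a^\beta)$. The flat horizontal connection lifts $v = v^j \partial_{x^j}$ at the point $x = a^\beta e_\beta$ to $(x^i, a^\beta, v^j, 0) \in TA$; it is the section of $(p, T.\pi): TA \to A \ts{\pi}{p} TM$ that inserts a zero in the $\dot a$-slot. Applying the tangent functor to $\anc$ and using the chain rule gives
\[
    T.\anc(x^i, a^\alpha, \dot x^i, \dot a^\alpha) = \Big(x^i,\ \anc^i_\beta a^\beta,\ \dot x^i,\ \tfrac{\partial \anc^i_\beta}{\partial x^j}a^\beta \dot x^j + \anc^i_\beta \dot a^\beta\Big),
\]
and precomposing with the horizontal lift (so $\dot x^j = v^j$ and $\dot a^\beta = 0$) annihilates the second summand. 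Since the flat vertical descent is $\kappa'(x^i, y^i, \dot x^i, \dot y^i) = (x^i, \dot y^i)$, reading off the top slot yields $\{v,x\} = \tfrac{\partial \anc^i_\beta}{\partial x^j}a^\beta v^j\, \partial_{x^i}$.

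To recover the displayed symbol I would then specialise to the indexing frame elements: taking $v = \anc(e_\beta) = \anc^j_\beta \partial_{x^j}$ and $x = e_\alpha$ (so $v^j = \anc^j_\beta$ and $a^\gamma = \delta^\gamma_\alpha$) collapses the expression to $\anc^j_\beta \tfrac{\partial \anc^i_\alpha}{\partial x^j}\, \partial_{x^i}$, which is exactly $\{-,-\} = \anc^j_\beta \tfrac{\partial \anc^i_\alpha}{\partial x^j}$. This simultaneously explains why this symbol is the one appearing on the right-hand side of the Leibniz structure equation in Proposition~\ref{prop:la-iff-structure-morphisms}(ii).

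The only genuine subtlety — and the step I would be most careful about — is the bookkeeping of $T.\anc$: one must track which of the four coordinates of $T^2M$ is the ``vertical-vertical'' slot $\dot y^i$ that $\kappa'$ selects, and check that the flat $\nabla$ and flat $\kappa'$ are genuinely the section and retraction forced by the chart, i.e. that they satisfy $(p, T.\pi)\o\nabla = id$ and $\kappa'\o\ell = id$. There is no hard content beyond the chain rule; the work lies entirely in pinning down the coordinate conventions so that the indices land where the statement claims.
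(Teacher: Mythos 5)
Your computation is correct, and it is exactly the justification the paper leaves implicit: the Observation is stated without proof, relying on the same unwinding of Definition~\ref{def:curly-bracket} in a chart with the flat connections (horizontal lift inserting a zero in the fibre-velocity slot, chain rule for $T.\anc$, vertical descent $\kappa'$ reading off the $\dot y$ coordinate) that you carry out. Your specialisation to $v = \anc(e_\beta)$, $x = e_\alpha$ lands precisely on the symbol $\anc^j_\beta \frac{\partial \anc^i_\alpha}{\partial x^j}$ as it appears in the Leibniz structure equation, so nothing is missing.
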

The Leibniz coherence may be rewritten as follows:
\begin{lemma}\label{lem:curly-bracket-coh}
    Let $A$ be an anchored bundle with a bilinear bracket (inducing an involution $\sigma$).
    Choose connections $(\kappa, \nabla), (\kappa', \nabla')$ on $A$ and $TM$ respectively.
    The bracket and anchor map satisfy the Leibniz law if and only if
    \[
        \anc \o \< x, y\>_{(\kappa, \nabla)} 
        + \{ \anc x, y\}_{(\kappa', \nabla)}
        = \{ \anc y, x\}_{(\kappa', \nabla)}.
    \]
\end{lemma}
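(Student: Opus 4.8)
The plan is to prove the equivalence by passing to Martinez's local coordinates and recognizing the displayed identity as the Leibniz structure equation (ii) of Proposition \ref{prop:la-iff-structure-morphisms}, which is itself the coordinate form of the Leibniz law. First I would fix a local frame $\{e_\alpha\}$ for the sections of $\pi$ and coordinates $x^i$ on $M$, and write $\anc(e_\alpha) = \anc^i_\alpha \frac{\partial}{\partial x^i}$ together with $\< e_\alpha, e_\beta\> = C^\gamma_{\alpha\beta} e_\gamma$. The latter is legitimate because Lemma \ref{lem:ang-brack-lie} supplies a globally defined bilinear map agreeing with the bracket on sections, so its structure functions are exactly the $C^\gamma_{\alpha\beta}$; applying the anchor gives $\anc \o \< e_\alpha, e_\beta\> = C^\gamma_{\alpha\beta}\anc^i_\gamma \frac{\partial}{\partial x^i}$.

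Next I would compute the two curly brackets. By Definition \ref{def:curly-bracket} the curly bracket is $\kappa' \o T.\anc \o \nabla$, and the Observation following that definition identifies its coordinate expression as a directional derivative of the anchor, so that $\{\anc e_\alpha, e_\beta\} = \anc^j_\alpha \frac{\partial \anc^i_\beta}{\partial x^j}\frac{\partial}{\partial x^i}$ and, exchanging the two slots, $\{\anc e_\beta, e_\alpha\} = \anc^j_\beta \frac{\partial \anc^i_\alpha}{\partial x^j}\frac{\partial}{\partial x^i}$. Substituting these into the statement turns it into
\[
    C^\gamma_{\alpha\beta}\anc^i_\gamma + \anc^j_\alpha \frac{\partial \anc^i_\beta}{\partial x^j} = \anc^j_\beta \frac{\partial \anc^i_\alpha}{\partial x^j},
\]
which, after invoking the alternating identity $C^\gamma_{\alpha\beta} + C^\gamma_{\beta\alpha} = 0$ forced by $\sigma \o \sigma = id$ to reconcile the sign conventions fixed by the orientation of $\kappa'$ in Definition \ref{def:curly-bracket}, is precisely structure equation (ii). Proposition \ref{prop:la-iff-structure-morphisms} then identifies that structure equation with the Leibniz law, so both directions of the equivalence follow at once.

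The hard part will be the connection dependence. Each curly bracket $\{-,-\}_{\kappa',\nabla}$ genuinely depends on the chosen connections through Christoffel-type terms, yet the asserted equivalence must be well-posed. I would resolve this by observing that the combination actually appearing in the statement, $\{\anc y, x\} - \{\anc x, y\}$, is the Lie bracket $[\anc x, \anc y]$ of the anchored vector fields and is therefore connection-independent: the Christoffel contributions of $\nabla$ and $\kappa'$ enter symmetrically in the two slots and cancel in the difference. Concretely it suffices to check the identity pointwise, and at any point one may choose coordinates and a local frame that are normal for $\kappa'$ and $\nabla$, so that the Christoffel terms vanish and only the directional-derivative terms of the Observation survive. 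Since the surviving expression — the coordinate Lie bracket of $\anc x$ and $\anc y$, set against $\anc \o \<x,y\>$ — is tensorial, its validity is chart-independent, and the pointwise verification in a trivializing frame establishes the equation for the original connections. This is the step where I would spend the most care, both to confirm the cancellation and to pin down the residual sign against the paper's orientation conventions.
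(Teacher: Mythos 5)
Your proposal is correct and follows essentially the same route as the paper: the paper's entire proof is the one-line assertion that the displayed equation is equivalent, in Martinez's local coordinates, to the Leibniz structure equation (ii) of Proposition \ref{prop:la-iff-structure-morphisms}, which is exactly the reduction you carry out (the sign bookkeeping via antisymmetry of $C^\gamma_{\alpha\beta}$ included). The only caveat is in your final digression: the claim that the Christoffel contributions ``enter symmetrically in the two slots and cancel in the difference'' is not quite right for the connection $\nabla$ on $A$, since the two arguments of $\{-,-\}$ have different types ($TM$ versus $A$) and the terms $\anc\o\nabla(\anc x, y)$ and $\anc\o\nabla(\anc y, x)$ do not cancel pairwise --- only the $\kappa'$-terms cancel, and then only because $\kappa'$ is torsion-free --- but this well-posedness discussion is supplementary and the paper does not engage with it either.
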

\begin{proof}
    The condition is equivalent to the identity
    \[\anc^j_\alpha \frac{\partial \anc^i_\beta}{\partial x^j} 
            = \anc^i_\gamma C^\gamma_{\alpha\beta} + \anc^j_\beta \frac{\partial \anc^i_\alpha}{\partial x^j}.\]
\end{proof}
The Bianchi axiom measures the failure of the Jacobi identity in local coordinates, and states that it must be corrected for by the curvature of the brackets. 
We see that
\[
    C^{\nu}_{\alpha\mu} C^{\mu}_{\beta\gamma} = [e_\alpha, [e_\beta, e_\gamma]]
\]
while 
\[
    \frac{\partial C^{\nu}_{\beta\gamma}}{\partial x^i} e_\nu
    = \kappa \o T(\langle -,-\rangle_{(\kappa, \nabla)})\o \nabla^{A_2} \o ( \frac{\partial}{\partial x^i}, e_\beta, e_\gamma)
\]
determines a trilinear map
\[
    \{\frac{\partial}{\partial x^i}, e_\beta, e_\gamma\}_{(\kappa, \nabla)} 
    := \kappa \o T(\langle -,-\rangle_{(\kappa, \nabla)})\o \nabla^{A_2} \o ( \frac{\partial}{\partial x^i}, e_\beta, e_\gamma).
\]
This is the second derived map used in the structure equations for a Lie algebroid.
\begin{definition}
    Let $(\pi:A \to M, \xi, \lambda, \anc)$ be an anchored bundle equipped with a bilinear map
    \[
        \langle -, - \rangle: A_2 \to A.
    \]
    The derived ternary bracket $\{ -, -, - \}: TM \ts{p}{\pi} A \ts{\pi}{\pi} A \to A$ is defined as
    \[
        \{v,x,y\}_{(\kappa, \nabla)} := 
        TM\ts{p}{\pi} A \ts{\pi}{\pi} A \xrightarrow[]{\nabla[A2]} TA_2 \xrightarrow[]{T.\<-,-\>} TA \xrightarrow{\kappa} A
    \]
    where $\nabla[A2]$ is the pairing $(\nabla(\pi_0,\pi_1), \nabla(\pi_0,\pi_2))$.
\end{definition}
\begin{observation}
    The ternary bracket corresponds to the following symbol:
    \[
        \{-,-,-\}_{(\kappa, \nabla)} :=
        \anc^i_\alpha \frac{\partial C^{\nu}_{\beta\gamma}}{\partial x^i}.
    \]
\end{observation}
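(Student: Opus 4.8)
The plan is to verify the identity by unwinding the three constituent maps of $\{-,-,-\}_{(\kappa,\nabla)}$ in local coordinates, exactly in the spirit of the preceding Observation for the binary symbol $\{-,-\}$. Working over a coordinate patch of $M$ with coordinates $x^i$, I would fix a local frame $\{e_\alpha\}$ for $\pi:A \to M$ and take $(\kappa,\nabla)$ to be the flat connections for which this frame and the coordinate frame $\{\partial/\partial x^i\}$ are parallel; equivalently, one uses the connection-induced splitting $TA \cong A \ts{\pi}{p} TM \ts{p}{\pi} A$ in which the chosen sections are constant. In this gauge the bracket $\langle -,-\rangle$ has components $\langle -,-\rangle^\nu = C^\nu_{\alpha\beta}(x)\,a^\alpha b^\beta$, the horizontal lift $\nabla(v,x)$ is the tangent vector whose base component is $v$ and whose fibre-derivative component vanishes, and $\kappa$ projects a tangent vector of $A$ onto its fibre-derivative part.

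First I would compute the composite on the generators $(\partial/\partial x^i, e_\beta, e_\gamma)$. The pairing $\nabla^{A_2} = (\nabla(\pi_0,\pi_1), \nabla(\pi_0,\pi_2))$ produces the element of $TA_2 \cong TA \ts{T\pi}{T\pi} TA$ that lifts the base direction $\partial/\partial x^i$ while keeping $e_\beta, e_\gamma$ constant. Applying $T.\langle-,-\rangle$ and then differentiating, the only surviving contribution is the explicit $x^i$-dependence of the structure functions $C^\nu_{\beta\gamma}(x)$, since the frame sections are parallel and hence contribute no fibre-variation. The vertical projection $\kappa$ then extracts precisely this fibre-derivative component, giving
\[
    \{\tfrac{\partial}{\partial x^i}, e_\beta, e_\gamma\}_{(\kappa,\nabla)} = \frac{\partial C^\nu_{\beta\gamma}}{\partial x^i}\, e_\nu .
\]
This is exactly the manifestation, for $f = \langle-,-\rangle$, of the earlier identity $\nabla[f] = \kappa \o T^2.f \o \nabla$ from Section \ref{sec:diff-and-tang-struct}, which expresses that $\kappa \o T.f \o \nabla$ measures the failure of $f$ to preserve the connections and which in a parallel frame reduces to the ordinary partial derivative of the component functions. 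Finally I would invoke linearity of $\nabla$, and hence of $\{-,-,-\}$, in its first argument: since $\anc(e_\alpha) = \anc^i_\alpha\, \partial/\partial x^i$, we obtain
\[
    \{\anc(e_\alpha), e_\beta, e_\gamma\}_{(\kappa,\nabla)} = \anc^i_\alpha\, \{\tfrac{\partial}{\partial x^i}, e_\beta, e_\gamma\}_{(\kappa,\nabla)} = \anc^i_\alpha\,\frac{\partial C^\nu_{\beta\gamma}}{\partial x^i}\, e_\nu ,
\]
whose $\nu$-component is the claimed symbol.

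The step I expect to be the main obstacle is justifying that, in the chosen gauge, the derivative $T.\langle-,-\rangle$ applied to the horizontal lift picks up only the partial derivative $\partial C^\nu_{\beta\gamma}/\partial x^i$ and no cross terms from the bilinear structure or from frame variation. This is precisely where the role of the connection enters: one must check that the horizontal lifts of the constant sections $e_\beta, e_\gamma$ are annihilated by the fibre-derivative, so that after applying $\kappa$ only the coordinate dependence of $C^\nu_{\beta\gamma}$ survives. As in the binary case, this amounts to a careful bookkeeping of the Jacobian of the bilinear map composed with the connection splitting, and it is rendered routine by appealing to the local-coordinate description of $\kappa \o T.f \o \nabla$ already established in Section \ref{sec:diff-and-tang-struct}.
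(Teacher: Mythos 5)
Your proposal is correct and follows essentially the same route as the paper, which records this observation without proof after having already displayed the identity $\frac{\partial C^{\nu}_{\beta\gamma}}{\partial x^i}\, e_\nu = \kappa \o T(\langle -,-\rangle)\o \nabla^{A_2}(\tfrac{\partial}{\partial x^i}, e_\beta, e_\gamma)$ in the preceding paragraph; your only addition is to make explicit the final step of linearity in the first argument, substituting $\anc(e_\alpha) = \anc^i_\alpha\,\tfrac{\partial}{\partial x^i}$ to produce the factor $\anc^i_\alpha$. The local-coordinate bookkeeping you flag as the main obstacle is exactly the content the paper delegates to the earlier $\nabla[f] = \kappa \o T.f \o \nabla$ discussion, so nothing is missing.
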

\begin{lemma}\label{lem:bianchi-connection}
    Let $A$ be an anchored bundle over $M$ with a bilinear bracket and denote the induced involution by $\sigma$.
    Choose a pair of connections and write the derived maps $\langle-,-\rangle, \{-,-,-\}$.
    Then
    \begin{enumerate}[(i)]
        \item the bracket is antisymmetric if and only if its globalization is; that is, $\langle e_\alpha, e_\beta\rangle + \langle e_\beta, e_\alpha\rangle = 0$;
        \item the bracket satisfies the Jacobi identity if and only if it is alternating in the last two arguments and
            \[
                0
                = \sum_{\gamma \in \mathsf{Cy}(3)} \< x_{\gamma_0}, \< x_{\gamma_1}, x_{\gamma_2}\> \>
                + \sum_{\gamma \in \mathsf{Cy}(3)} \{ \anc x_{\gamma_0}, x_{\gamma_1}, x_{\gamma_2}\}.
            \]
    \end{enumerate}
\end{lemma}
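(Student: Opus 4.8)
The plan is to reduce both equivalences to the local structure equations of Proposition~\ref{prop:la-iff-structure-morphisms} by evaluating the derived brackets on a basis $\{e_\alpha\}$ of $\Gamma(\pi)$ and reading off the coordinate symbols supplied by the two preceding observations. Throughout I will use Lemma~\ref{lem:ang-brack-lie}, which gives $\<e_\alpha, e_\beta\> = C^\gamma_{\alpha\beta} e_\gamma$, together with the fact that $\<-,-\> \colon A_2 \to A$ is a morphism of vector bundles and hence $C^\infty(M)$-bilinear.

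For part (i): since $\<-,-\>$ is $C^\infty(M)$-bilinear, antisymmetry of its globalization on all of $A_2$ is equivalent to antisymmetry on the spanning sections $e_\alpha$. First I would compute $\<e_\alpha, e_\beta\> + \<e_\beta, e_\alpha\> = (C^\gamma_{\alpha\beta} + C^\gamma_{\beta\alpha})\, e_\gamma$, which vanishes for all $\alpha,\beta$ precisely when structure equation (i) of Proposition~\ref{prop:la-iff-structure-morphisms} holds; that equation is in turn equivalent to antisymmetry of the bracket. Chaining these gives the stated equivalence.

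For part (ii): I would first record that, by the ternary-bracket observation $\{\anc e_\alpha, e_\beta, e_\gamma\} = \anc^i_\alpha \frac{\partial C^\nu_{\beta\gamma}}{\partial x^i}\, e_\nu$, the ternary bracket is alternating in its last two arguments exactly when $C^\nu_{\beta\gamma} + C^\nu_{\gamma\beta} = 0$, i.e. exactly when $\<-,-\>$ is antisymmetric; this is the ingredient that lets the derivation form of the Jacobi identity be rewritten in the cyclic form $\sum_{\mathsf{Cy}(3)} [x_{\gamma_0}, [x_{\gamma_1}, x_{\gamma_2}]] = 0$. Next I would expand the two cyclic sums on $x_0 = e_\alpha$, $x_1 = e_\beta$, $x_2 = e_\gamma$. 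Pulling the structure functions out of the outer bracket by $C^\infty(M)$-bilinearity turns $\sum_{\mathsf{Cy}(3)} \<x_{\gamma_0}, \<x_{\gamma_1}, x_{\gamma_2}\>\>$ into $(C^\mu_{\beta\gamma} C^\nu_{\alpha\mu} + C^\mu_{\gamma\alpha} C^\nu_{\beta\mu} + C^\mu_{\alpha\beta} C^\nu_{\gamma\mu})\, e_\nu$, the quadratic part of the Bianchi equation, while the ternary-bracket observation turns $\sum_{\mathsf{Cy}(3)} \{\anc x_{\gamma_0}, x_{\gamma_1}, x_{\gamma_2}\}$ into the derivative part $(\anc^i_\alpha \frac{\partial C^\nu_{\beta\gamma}}{\partial x^i} + \anc^i_\beta \frac{\partial C^\nu_{\gamma\alpha}}{\partial x^i} + \anc^i_\gamma \frac{\partial C^\nu_{\alpha\beta}}{\partial x^i})\, e_\nu$. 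Their sum is the full Bianchi expression contracted with the basis $e_\nu$; by linear independence of the $e_\nu$ it vanishes iff structure equation (iii) holds, which by Proposition~\ref{prop:la-iff-structure-morphisms} is equivalent to the Jacobi identity.

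The hard part will be the bookkeeping in part (ii): one must check that the cyclic sum of iterated Lie brackets decomposes into exactly the tensorial double-bracket term and the connection-derived ternary term with nothing left over, and that the apparent dependence on the chosen connections $(\kappa,\nabla)$ and $(\kappa',\nabla')$ cancels in the final identity. My strategy is to offload this onto the two coordinate observations, which already absorb the connection data into the symbols $\anc^j_\beta \frac{\partial \anc^i_\alpha}{\partial x^j}$ and $\anc^i_\alpha \frac{\partial C^\nu_{\beta\gamma}}{\partial x^i}$; what then remains is simply to recognise the Bianchi identity inside the expanded sums and to cite Proposition~\ref{prop:la-iff-structure-morphisms} for the passage between the structure equations and the Jacobi and antisymmetry axioms.
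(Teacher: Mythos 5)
Your proposal is correct and follows essentially the same route as the paper: the paper's own proof consists of nothing more than the assertion that (i) and (ii) are equivalent to the coordinate structure equations $C^\nu_{\alpha\beta}+C^\nu_{\beta\alpha}=0$ and the Bianchi identity of Proposition \ref{prop:la-iff-structure-morphisms}, which is exactly the reduction you carry out in detail. Your write-up simply makes explicit the basis expansion and the role of $C^\infty(M)$-bilinearity that the paper leaves implicit.
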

\begin{proof}~
    \begin{enumerate}[(i)]
        \item This is equivalent to $C^\nu_{\alpha\beta} + C^\nu_{\beta\alpha} = 0$.
        \item This is equivalent to \[ 0
            =   C^{\mu}_{\beta\gamma}C^{\nu}_{\alpha\mu}
            + C^{\mu}_{\gamma\alpha}C^{\nu}_{\beta\mu}
            + C^{\mu}_{\alpha\beta}C^{\nu}_{\gamma\mu}
            + \anc^i_\alpha \frac{\partial C^{\nu}_{\beta\gamma}}{\partial x^i}
            + \anc^i_\beta \frac{\partial C^{\nu}_{\gamma\alpha}}{\partial x^i}
            + \anc^i_\gamma \frac{\partial C^{\nu}_{\alpha\beta}}{\partial x^i}. \] 
    \end{enumerate}
\end{proof}

\begin{proposition}%
    \label{prop:lie-alg-bil-defn}
    A Lie algebroid is exactly an anchored vector bundle $(\pi:A \to M, \xi, \lambda, \anc)$ with a bilinear, alternating map
    \[
        \langle -, -\rangle: A_2 \to A; \hspace{0.5cm} \< x, y \> + \< y, x\> = 0
    \]
    so that for any connection $(\nabla,\kappa)$ on $A$, the maps 
    \begin{equation}
        \label{eq:lie-algd-structure-maps}
        \{ v, x\}_{(\kappa, \nabla)} := \kappa' \o T.\anc \o \nabla(v, x), \hspace{0.5cm}
        \{v,x,y\}_{(\kappa, \nabla)} := \kappa \o T(\langle -,-\rangle_{(\kappa, \nabla)})\o \nabla^{A_2} \o (v,x,y)
    \end{equation}
    satisfy the equations
    \begin{enumerate}[(i)]
        \item $ \anc \o \< x, y\> 
        + \{ \anc x, y\}_{(\kappa', \nabla)}
        = \{ \anc y, x\}_{(\kappa', \nabla)}$,
        \item $\sum_{\gamma \in \mathsf{Cy}(3)} \< x_{\gamma_0}, \< x_{\gamma_1}, x_{\gamma_2}\> \>
        + \sum_{\gamma \in \mathsf{Cy}(3)} \{ \anc x_{\gamma_0}, x_{\gamma_1}, x_{\gamma_2}\}_{(\kappa, \nabla)} = 0$.
    \end{enumerate}
\end{proposition}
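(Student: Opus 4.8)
The plan is to recognize this proposition as the coordinate-free repackaging of Martinez's structure equations (Proposition \ref{prop:la-iff-structure-morphisms}), assembled from the connection-dependent translations already established in Lemmas \ref{lem:ang-brack-lie}, \ref{lem:curly-bracket-coh}, and \ref{lem:bianchi-connection}. The logical skeleton is that a Lie algebroid structure on an anchored bundle $(\pi:A \to M, \xi, \lambda, \anc)$ decomposes, in the presence of a connection, into three independent requirements on its bracket --- bilinearity together with antisymmetry, the Leibniz law, and the Jacobi identity --- and that each of these corresponds to one of the conditions in the statement. So the proof is a matter of lining up the three equivalences in the correct order and then checking that the resulting conditions are insensitive to the choice of connection.

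First I would handle the passage between the Lie bracket on $\Gamma(\pi)$ and the intrinsic map $\< -, - \>: A_2 \to A$. By Lemma \ref{lem:ang-brack-lie} the bracket globalizes to a bilinear morphism of vector bundles, and conversely any bilinear $\< -, - \>$ restricts to a bracket on sections; Lemma \ref{lem:bianchi-connection}(i) then identifies antisymmetry of the bracket with the alternating condition $\< x, y\> + \< y, x\> = 0$. Next I would translate the Leibniz law: with the curly bracket $\{ v, x\}_{(\kappa', \nabla)} = \kappa' \o T.\anc \o \nabla(v,x)$ of Definition \ref{def:curly-bracket}, Lemma \ref{lem:curly-bracket-coh} shows the Leibniz law is equivalent to equation (i). Finally, using the derived ternary bracket $\{-,-,-\}_{(\kappa, \nabla)}$ and Lemma \ref{lem:bianchi-connection}(ii), the Jacobi identity becomes equation (ii), the cyclic sum over $\mathsf{Cy}(3)$. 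Chaining these equivalences with the definition of a Lie algebroid and with Proposition \ref{prop:la-iff-structure-morphisms} yields both implications of the stated ``if and only if''.

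The step I expect to be the genuine obstacle is the quantifier ``for any connection.'' The derived maps $\{-,-\}$ and $\{-,-,-\}$ depend on the chosen $(\kappa,\nabla)$, whereas the underlying Lie algebroid axioms do not; I would argue that because antisymmetry, Leibniz, and Jacobi are connection-free properties of the bracket, applying the three lemmas to an arbitrary connection shows that if $A$ is a Lie algebroid then (i) and (ii) hold for every connection, while for the converse it suffices that they hold for a single connection. Care must also be taken in the converse to verify that reconstructing the full bracket on sections from the tensorial $\< -, - \>$ and the connection yields a well-defined, connection-independent operation --- this is exactly what the coordinate identities underlying Lemmas \ref{lem:curly-bracket-coh} and \ref{lem:bianchi-connection} guarantee, since the structure functions $C^\gamma_{\alpha\beta}$ and $\anc^i_\alpha$ determine the bracket uniquely once the Leibniz rule is imposed. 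Matching the two connections correctly --- the $TM$-connection $\kappa'$ appearing in equation (i) versus the $A$-connection $\kappa$ in the ternary bracket --- is the main bookkeeping hazard, but it is dictated by the types of the maps involved.
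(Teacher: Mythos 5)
Your proposal matches the paper's intent exactly: the paper states this proposition with no separate proof, treating it as the direct assembly of Lemma \ref{lem:ang-brack-lie} (globalizing the bracket to a bilinear $A_2 \to A$), Lemma \ref{lem:curly-bracket-coh} (Leibniz law as equation (i)), and Lemma \ref{lem:bianchi-connection} (antisymmetry and Jacobi as the alternating condition and equation (ii)), which is precisely the decomposition you describe. Your additional care about the ``for any connection'' quantifier is a reasonable point the paper leaves implicit, but it does not change the route.
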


There is also a local coordinates presentation of morphisms as in Section 2 of \cite{Martinez2018}. An anchored bundle morphism $A \to B$ is a Lie algebroid morphism whenever
\[
    f^\beta_\gamma A^\gamma_{\alpha \delta} + \anc^i_\delta \frac{\partial f^\beta_\alpha}{\partial x^i}
    = B^\beta_{\theta\sigma}f^\theta_{\alpha}f^{\sigma}_{\delta} + \anc^i_\alpha \frac{\partial f^\beta_\delta}{\partial x^i}.
\]
The $A$ and $B$ arguments are understood as the brackets, so this condition can be rewritten as
\[
    f^\beta_\gamma A^\gamma_{\alpha\delta} = f \o \< \alpha, \delta \>, \hspace{0.25cm}
    B^\beta_{\theta\sigma}f^\theta_{\alpha}f^{\sigma}_{\delta} = \< f \o \alpha, f \o \delta \>.
\]
Set the following notation for maps between vector bundles with connection:\pagenote{Reintroduce the $\nabla$ notation here after removing it.}
\begin{equation}\label{eq:nabla-notation}
    \infer{\nabla[f]:A_2 \to B := \kappa^B \o T.f \o \nabla^A}{f:A \to B & (\kappa^A,\nabla^A,A, \lambda^A) & (\kappa^B,\nabla^B, B, \lambda^B)}
\end{equation}
The $\anc$ terms are understood to be the torsion, so that
\begin{gather*}
        \anc^i_\delta \frac{\partial f^\beta_\alpha}{\partial x^i} 
    = \kappa \o T.f \o \nabla(\anc e_\delta, e_\alpha) = \nabla[f](\anc e_\delta, e_\alpha),\\
    \anc^i_\alpha \frac{\partial f^\beta_\delta}{\partial x^i} 
    = \kappa \o T.f \o \nabla(\anc \alpha, \delta) = \nabla[f](\anc e_\alpha, e_\delta),
\end{gather*}
using the notation set up in Equation \ref{eq:nabla-notation}.
The notion of a Lie algebroid morphism, then, has the following presentation:
\begin{proposition}[\cite{Martinez2018}]%
    \label{prop:lie-algd-morphism-defn}
    Let $(\pi:A \to M,\anc^A, \<-,-\>^A), (q:B \to N,\anc^B,\<-,-\>^B)$ be a pair of Lie algebroids with chosen connections $(\kappa^{-},\nabla^{-})$.
    An anchor-preserving vector bundle morphism $f:A \to B$ is a Lie algebroid morphism if and only if
    \[
        f \o \< e_\alpha, e_\delta \> + \nabla[f] \o (\anc e_\delta, e_\alpha)
        = \< f \o \alpha, f \o \delta \> + \nabla[f] \o(\anc e_\alpha, e_\delta).
    \]
\end{proposition}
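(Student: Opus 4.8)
The plan is to recognize this statement as a faithful translation of Martinez's local-coordinate criterion for a Lie algebroid morphism into the connection-theoretic notation set up in \Cref{eq:nabla-notation}, so that the proof reduces to matching each symbol in the local equation with one of the globally defined maps and then substituting. I would take as given the local characterization of \cite{Martinez2018}: an anchor-preserving bundle morphism $f:A \to B$ is a Lie algebroid morphism if and only if, in any pair of local frames $\{e_\alpha\}$ for $A$ and $\{e_\beta\}$ for $B$,
\[
    f^\beta_\gamma A^\gamma_{\alpha\delta} + \anc^i_\delta \frac{\partial f^\beta_\alpha}{\partial x^i} = B^\beta_{\theta\sigma} f^\theta_\alpha f^\sigma_\delta + \anc^i_\alpha \frac{\partial f^\beta_\delta}{\partial x^i}.
\]
All that then remains is to show this coordinate equation is equivalent, term by term, to the asserted global identity.

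First I would assemble the dictionary. By \Cref{lem:ang-brack-lie} the bracket globalizes to a bilinear map $\<-,-\>:A_2 \to A$, and evaluating on frame sections gives the two ``bracket'' identifications $f^\beta_\gamma A^\gamma_{\alpha\delta} = f \o \<e_\alpha, e_\delta\>$ and $B^\beta_{\theta\sigma} f^\theta_\alpha f^\sigma_\delta = \<f \o \alpha, f \o \delta\>$. For the two anchor-derivative terms I would expand $\nabla[f] = \kappa^B \o T.f \o \nabla^A$ from \Cref{eq:nabla-notation} in the chosen frames, using the \emph{coordinate} (flat) connection so that the horizontal lift carries no Christoffel contribution: then $\nabla[f]$ applied to $(\anc e_\delta, e_\alpha)$ returns exactly $\anc^i_\delta \frac{\partial f^\beta_\alpha}{\partial x^i}$, and applied to $(\anc e_\alpha, e_\delta)$ returns $\anc^i_\alpha \frac{\partial f^\beta_\delta}{\partial x^i}$. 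Substituting these four identifications into the local equation produces precisely the global identity of the proposition, and since every map involved is bilinear over $C^\infty(M)$, an equation on frame elements is equivalent to the corresponding identity of global bilinear maps. Both directions of the ``if and only if'' come for free, since each substitution step is an equality rather than a one-way implication.

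The main obstacle is the \emph{well-definedness} of the statement: it is asserted for an arbitrary chosen pair of connections $(\kappa^{-},\nabla^{-})$, whereas the two $\nabla[f]$ terms individually depend on those connections (and on the frames), while the notion of Lie algebroid morphism does not. I must therefore check that the connection-dependent contributions cancel. The key observation is that, after rearranging, the equation reads $f \o \<e_\alpha, e_\delta\> - \<f \o \alpha, f \o \delta\> = \nabla[f] \o (\anc e_\alpha, e_\delta) - \nabla[f] \o (\anc e_\delta, e_\alpha)$, so only the \emph{antisymmetric} combination of $\nabla[f]$ in its two slots enters; the symmetric Christoffel part of $\nabla^A$ (and the connection data feeding $\kappa^B$) appears identically in both terms and drops out, leaving exactly the antisymmetrized derivative $\anc^i_\delta \frac{\partial f^\beta_\alpha}{\partial x^i} - \anc^i_\alpha \frac{\partial f^\beta_\delta}{\partial x^i}$ of Martinez's equation. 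I would verify this cancellation by writing the transformation law for $\nabla[f]$ under a change of connection and confirming that the correction tensor contributes equally to both sides; once recorded, connection-independence is established and the equivalence of the two equations — hence the proposition — follows from the substitution above.
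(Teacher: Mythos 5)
Your substitution argument is exactly the route the paper takes: Proposition \ref{prop:lie-algd-morphism-defn} carries no proof body, and its justification is precisely the dictionary in the paragraph preceding it --- Martinez's coordinate criterion from the cited reference, with $f^\beta_\gamma A^\gamma_{\alpha\delta} = f \o \<e_\alpha,e_\delta\>$ and $B^\beta_{\theta\sigma}f^\theta_\alpha f^\sigma_\delta = \<f\o e_\alpha, f\o e_\delta\>$ supplied by Lemma \ref{lem:ang-brack-lie}, and the two derivative terms identified with $\nabla[f](\anc e_\delta, e_\alpha)$ and $\nabla[f](\anc e_\alpha, e_\delta)$ via Equation \ref{eq:nabla-notation}. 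On that core your reconstruction is faithful.

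The supplementary well-definedness paragraph, however, asserts a cancellation that does not hold as stated. If $\nabla'^A$ differs from $\nabla^A$ by a bilinear correction $\Gamma$ valued in the vertical part, then linearity of $f$ together with $\kappa^B \o \lambda^B = id$ gives $\nabla'[f](v,x) = \nabla[f](v,x) + f(\Gamma(v,x))$, so the antisymmetrized combination appearing in the proposition changes by $f(\Gamma(\anc e_\alpha, e_\delta)) - f(\Gamma(\anc e_\delta, e_\alpha))$. There is no ``symmetric Christoffel part appearing identically in both terms'': the two slots of $\Gamma$ live in different bundles ($TM$ and $A$), and the swap of $\alpha$ and $\delta$ is twisted through the anchor, so these two corrections are not equal in general; an analogous non-cancelling term arises from changing $\kappa^B$. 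Since the bracket side of the equation is manifestly connection-independent, your transfer from the flat coordinate connection to an arbitrary one does not go through by the argument you give; the identification with Martinez's formula should instead be carried out directly in the frames and connections for which the displayed equalities $\anc^i_\delta \frac{\partial f^\beta_\alpha}{\partial x^i} = \nabla[f](\anc e_\delta, e_\alpha)$ are exact, which is what the paper implicitly does. This does not damage your main substitution step, but the connection-independence claim needs either to be dropped or to be established by a genuinely different argument.
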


\section{Anchored bundles}%
\label{sec:anchored-bundles}
Anchored bundles are to Lie algebroids what reflexive graphs are to groupoids. Each theory has (mostly) the same structure, but while a reflexive graph is missing a groupoid's composition operation, an anchored bundle lacks a Lie algebroid's bracket operation. This section reviews the basic theory of anchored bundles and their prolongations (see \cite{Mackenzie2005} for more details).
\begin{definition}\label{def:anchored_bundles}
    An anchored bundle in a tangent category is a differential bundle $(A \xrightarrow{\pi} M, \xi, \lambda)$ equipped with a linear morphism 
    \begin{equation*}
        \begin{tikzcd}[column sep = tiny]
    A \ar{rr}{\anc} \ar{rd}[swap]{\pi} & & TM \ar{ld}{p} \\
    & M
 \end{tikzcd}
    \end{equation*}
    A morphism of anchored bundles is a linear bundle morphism $(f,v)$ that preserves the anchors
    \[
\begin{tikzcd}
	TA & TB & A & B \\
	A & B & TM & TN
	\arrow["\lambda"{description}, from=2-1, to=1-1]
	\arrow["l"{description}, from=2-2, to=1-2]
	\arrow["f"{description}, from=2-1, to=2-2]
	\arrow["{T.f}"{description}, from=1-1, to=1-2]
	\arrow["\rho"{description}, from=1-4, to=2-4]
	\arrow["\anc"{description}, from=1-3, to=2-3]
	\arrow["f"{description}, from=1-3, to=1-4]
	\arrow["{T.v}"{description}, from=2-3, to=2-4]
\end{tikzcd}\]
    The category of anchored bundles and anchored bundle morphisms in a tangent category $\C$ is written $\mathsf{Anc}(\C)$, and a generic anchored bundle is written $(A \xrightarrow{\pi} M, \xi, \lambda,\anc)$.
    %
\end{definition}
There are two pullbacks that are associated with every anchored bundle.
These play the role of the spaces of composable arrows $G_2 := G \ts{t}{s} G, G_3 := G \ts{t}{s} G \ts{t}{s} G$ for a reflexive graph $s,t:G \to M$.
\begin{definition}\label{def:prolongations}
    Let $(A \xrightarrow{\pi} M, \xi, \lambda, \anc)$ be an anchored bundle. Its \emph{first and second prolongations} are given by the limits
    \[
\begin{tikzcd}
	{\prol(A)} & TA & {\prol^2(A)} && {T^2A} \\
	A & TM && TA & {T^2M} \\
	&& A & TM
	\arrow["\anc"{description}, from=2-1, to=2-2]
	\arrow["{T.\pi}", from=1-2, to=2-2]
	\arrow[from=1-1, to=2-1]
	\arrow[from=1-1, to=1-2]
	\arrow["\lrcorner"{anchor=center, pos=0.125}, draw=none, from=1-1, to=2-2]
	\arrow["{T^2.\pi}"{description}, from=1-5, to=2-5]
	\arrow["{T.\anc}"{description}, from=2-4, to=2-5]
	\arrow["{T.\pi}"{description}, from=2-4, to=3-4]
	\arrow["\anc"{description}, from=3-3, to=3-4]
	\arrow[from=1-3, to=3-3]
	\arrow[from=1-3, to=1-5]
	\arrow["\lrcorner"{anchor=center, pos=0.125}, draw=none, from=1-3, to=2-4]
\end{tikzcd}\]
    (The notation for the fibre product is slightly non-standard, as it is not technically a pullback.)
    Throughout this chapter, it will always be assumed that the first and second prolongations of an anchored bundle exist (although no choice of prolongation is explicitly made).\pagenote{I have explicitly defined the category of anchored bundles, and split the definition of prolongations from the definition of an anchored bundle. Also clarified that we are simply assuming these first two $T$-limits always exist, but we do not make a specific choice of prolongation.}
\end{definition}

\begin{remark}
    It is not strictly necessary that the prolongations of an anchored bundle exist; this condition is primarily a matter of convenience when discussing involution algebroids. Every result in this section, that does not explicitly mention prolongations, holds for an anchored bundle independently of their existence.
\end{remark}
\begin{example}%
    \label{ex:anchored-bundles}\pagenote{
       This set of examples has been substantially revised to fix some clumsy wording in the original version. It made sense to included the full result about $\C$ be a reflective subcategory of $\mathsf{Anc}(\C)$ and the coreflection from the category of differential bundles to anchored bundles rather than having them as separate results, as they each describe classes of anchored bundles.
    }
    ~\begin{enumerate}[(i)]
        \item For any object $M$, $id: TM \to TM$ is an anchor for the tangent differential bundle, and every $f: M \to N$ yields a morphism of anchored bundles. Moreover, for any anchored bundle over $M$ the anchor is itself a morphism of anchored bundles $(A,\pi,\xi,\lambda,\anc) \to (TM, p, 0, \ell, id)$. This induces a fully faithful functor:
        \[
            \C \hookrightarrow \mathsf{Anc}(\C).
        \]
        This inclusion has a left adjoint, which sends an anchored bundle $(\pi:A \to M, \xi,\lambda,\anc)$ to its base space $M$ (the unit is the anchor map $\anc$), so that $\C$ is a reflective subcategory of $\mathsf{Anc}(\C)$.
        \item For any differential bundle $(A, \pi, \xi, \lambda)$, the map $0\o\pi:A \to TM$ is an anchor and every morphism $f: A \to B$ of differential bundles again yields a morphism of anchored bundles. The naturality of $0$ ensures that every differential bundle morphism will preserve this trivial anchor map, giving a fully faithful functor
        \[
            \mathsf{DBun}(\C) \hookrightarrow \mathsf{Anc}(\C).
        \]
        This functor has a right adjoint that replaces the anchor map with the trivial anchor map
        \[
            (\pi:A \to M, \xi, \lambda, \anc) \mapsto (\pi:A \to M, \xi, \lambda, 0 \o \pi)  
        \]
        where the counit is given by the natural idempotent $e = \xi \o \pi:(A,\lambda) \to (A,\lambda)$. It is trivial to check that the anchor map is preserved by the bundle morphism $(e,id)$:
        \[
            \anc \o \xi \o \pi = 0 \o T.id \o \pi 
        \]
        and so the following diagram commutes:\pagenote{I took the opportunity to clarify a point brought up in my defense about this relationship.}
        \[\begin{tikzcd}
            TM & TM \\
            A & A
            \arrow[Rightarrow, no head, from=1-1, to=1-2]
            \arrow["{0 \o \pi}", from=2-1, to=1-1]
            \arrow["{p \o \lambda}"', from=2-1, to=2-2]
            \arrow["\anc"', from=2-2, to=1-2]
        \end{tikzcd}\]
        This means that  differential bundles are a \emph{coreflective} subcategory of anchored bundles.
        \item Any reflexive graph $(s,t: C \to M, e:M \to C)$ in a tangent category has an anchored bundle (when sufficient limits exist), constructed as
        \[
\begin{tikzcd}
	{C^\partial} & {T.C} & {T.C}
	\arrow["{e.T_1}", shift left=1, from=1-2, to=1-3]
	\arrow["{T.e^s}"', shift right=1, from=1-2, to=1-3]
	\arrow[from=1-1, to=1-2]
\end{tikzcd}\]
        (where $e^s:C \to C = e \o s$). Construct a lift on $C^\partial$:
        \[
\begin{tikzcd}
	{T.C^\partial} & {T^2C_1} & {T^2C_1} \\
	{C^\partial} & {T.C_1} & {T.C_1}
	\arrow["{e.C_1}", shift left=1, from=2-2, to=2-3]
	\arrow["{T.e^s}"', shift right=1, from=2-2, to=2-3]
	\arrow[from=2-1, to=2-2]
	\arrow["\ell", from=2-2, to=1-2]
	\arrow["\ell", from=2-3, to=1-3]
	\arrow["{T.e.C_1}", shift left=1, from=1-2, to=1-3]
	\arrow["{T.T.e^s}"', shift right=1, from=1-2, to=1-3]
	\arrow[from=1-1, to=1-2]
	\arrow["\lambda", dashed, from=2-1, to=1-1]
\end{tikzcd}\]    
        This lift will be non-singular by the commutativity of $T$-limits. The pre-differential bundle data is given by the projection
        \[ C^\partial \hookrightarrow T.C \xrightarrow[]{p.e^s} M.\]
        The section is induced by
        \[
            M \xrightarrow[]{(0.e)}  T.C
        \]
        while post-composition with $T.t$ gives the anchor map:
        \[
            C^\partial \hookrightarrow T.C \xrightarrow[]{T.t} T.C. 
        \]
        The diagram is a pullback by composition, and the outer perimeter defines the pullback $\prol(A)$. Note that any reflexive graph morphism will give rise to an anchored bundle morphism by naturality, making the construction of an anchored bundle from a reflexive graph functorial.\pagenote{This was originally in the examples of prolongations, which was not correct.}
        \item For any object $M$ in a tangent category, $c_M: T^2M \to T^2M$ is an anchor on $(T.p, T.0, c \o T.\ell)$, and every map $f: M \to N$ gives a morphism of anchored bundles.
        \item For any anchored bundle $(q:E \to M, \xi, \lambda, \anc)$, the differential bundle $(T.q, T.\xi, \newline c \o T.\lambda)$ has an anchor 
        \[\anc_T: TE \xrightarrow[]{T.\anc} T^2M \xrightarrow[]{c}T^2M.\]
    \end{enumerate} 
\end{example}
The first prolongation of an anchored bundle $\prol(A)$ behaves similarly to the second tangent bundle, except that it does not have a canonical flip. In the definition of an affine connection, the tangent bundle played a similar role to the ``arities'' of a theory. There is a lift map that makes this connection stronger:
\begin{definition}\label{def:lhat}
    Let $(\pi:A \to M, \xi, \lambda, \anc)$ be an anchored bundle in a tangent category $\C$.
    We define a \emph{generalized lift}:
    \[
        \hat{\lambda}: A \to \prolong := (\xi \o \pi, \lambda).   
    \]
\end{definition}
This generalized lift satisfies the same coherences as the lift on the second tangent bundle:
\begin{proposition}\label{prop:lift-axioms-anchor}
    Let $(\pi:A \to M, \xi, \lambda, \anc)$ be an anchored bundle in a tangent category $\C$.
    It follows that
    \begin{enumerate}[(i)]
        \item (Coassociativity of $\hat{\lambda}$) $(\hat{\lambda}\x\ell)\o\hat{\lambda} = (id \x T.\hat{\lambda}) \o \hat{\lambda}$;
        \item (Universality) the following diagram is a $T$-pullback:
        \[
\begin{tikzcd}
	{A_2} & \prolong \\
	M & A
	\arrow["{\hat \mu}", from=1-1, to=1-2]
	\arrow["{\pi_0}", from=1-2, to=2-2]
	\arrow["{\pi \o \pi_i}"', from=1-1, to=2-1]
	\arrow["\xi"', from=2-1, to=2-2]
	\arrow["\lrcorner"{anchor=center, pos=0.125}, draw=none, from=1-1, to=2-2]
\end{tikzcd}\]
        where $\hat \mu := (\xi \o \pi \o \pi_0, \mu)$.
    \end{enumerate}
\end{proposition}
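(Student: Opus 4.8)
The plan is to unwind both statements against the description of the prolongation as an iterated $T$-limit, reducing everything to the coherences of the lift $\lambda$ together with a few standard differential-bundle identities. The identities I will use are: $\pi\o\xi = id$ (the idempotent splitting of Definition \ref{def:pdb}); $T.\pi\o\lambda = 0\o\pi$ and $\lambda\o\xi = 0\o\xi$ (the projection and zero section of a differential bundle are lift morphisms to and from the trivial lift); naturality of $0\colon id\Rightarrow T$ and of $p\colon T\Rightarrow id$; and the defining coalgebra law $T.\lambda\o\lambda = \ell\o\lambda$ of a lift (Definition \ref{def:lift}). Throughout I identify $\prol^2(A)$ with the limit of the zigzag $A\xrightarrow{\anc}TM\xleftarrow{T.\pi}TA\xrightarrow{T.\anc}T^2M\xleftarrow{T^2.\pi}T^2A$, so that a generalized point is a triple $(a,v,w)$ with $a\in A$, $v\in TA$, $w\in T^2A$ satisfying the two matching conditions.

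For (i), both $(\hat\lambda\x\ell)\o\hat\lambda$ and $(id\x T.\hat\lambda)\o\hat\lambda$ are maps $A\to\prol^2(A)$: the first expresses $\prol^2(A)$ as $\prol(A)$ pulled back with $T^2A$, the second as $A$ pulled back with $T.\prol(A)$ (using that $T$ preserves $\prolong$). I would compute each side into its three components, valued in $A$, $TA$, and $T^2A$ respectively. The $A$-components are both $\xi\o\pi$ by $\pi\o\xi = id$. The $TA$-components are $\lambda\o\xi\o\pi$ on the left and $T.\xi\o T.\pi\o\lambda$ on the right; applying $T.\pi\o\lambda = 0\o\pi$, then naturality of $0$, then $\lambda\o\xi = 0\o\xi$ rewrites both as $0\o\xi\o\pi$. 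The $T^2A$-components are $\ell\o\lambda$ and $T.\lambda\o\lambda$, which agree exactly by the coalgebra law. The only genuine care needed is the bookkeeping that aligns the two nested-pullback descriptions of $\prol^2(A)$ before comparing slots.

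For (ii), I would identify the square with the $T$-equalizer of Theorem \ref{thm:universal-prop-differential-bundles}, which presents $\mu\colon A_2\to TA$ as the equalizer of $T.\pi$ and $T.\pi\o 0\o p$. Commutativity is immediate, since the projection $\prolong\to A$ precomposed with $\hat\mu$ returns the first component $\xi\o\pi\o\pi_0$ of $\hat\mu$, and $\pi\o\pi_0 = \pi\o\pi_i$ on $A_2$. For the universal property, suppose $f = (f_0,f_1)\colon X\to\prolong$ and $g\colon X\to M$ form a cone, i.e. $f_0 = \xi\o g$; then the defining identity of $\prolong$ together with $\anc\o\xi = 0$ gives $T.\pi\o f_1 = \anc\o f_0 = 0\o g$. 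Applying $p$ and using $p\o T.\pi = \pi\o p$ yields $\pi\o p\o f_1 = g$, whence $T.\pi\o 0\o p\o f_1 = 0\o g = T.\pi\o f_1$, so $f_1$ equalizes the pair and the $T$-equalizer property supplies a unique $h\colon X\to A_2$ with $\mu\o h = f_1$. It then remains to check $h$ is the mediating map, which rests on the identity $p\o\mu = \pi_0$ on $A_2$: by naturality of $p$ applied to $+_\pi$, together with $p\o 0 = id$, $p\o\lambda = \xi\o\pi$, and the unit law, one gets $p\o\mu = \pi_0$. Hence $\pi\o\pi_i\o h = \pi\o p\o\mu\o h = \pi\o p\o f_1 = g$ and $\hat\mu\o h = (\xi\o\pi\o p\o f_1,\; f_1) = (\xi\o g,\; f_1) = f$, with uniqueness inherited from the equalizer. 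Finally, since Theorem \ref{thm:universal-prop-differential-bundles} furnishes a $T$-equalizer and $A_2$ is a $T$-limit, the same argument runs verbatim after any $T^n$, so the square is a $T$-pullback.

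The main obstacle is the reduction in (ii): recognising that pulling the prolongation projection $\prolong\to A$ back along the zero section is precisely the $\mu$-equalizer, and confirming that the equalizer condition on $f_1$ falls out for free from the prolongation's defining identity and the naturality of $p$. Once this identification is secured, $T$-stability is automatic. By comparison, (i) is a pure diagram chase whose only non-formal ingredient is the coassociativity (coalgebra) law of the lift.
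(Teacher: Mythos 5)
Your proposal is correct and follows essentially the same route as the paper: part (i) is the identical component-wise computation reducing to $\pi\o\xi = id$, the zero-section/projection coherences of the lift, and the coalgebra law $T.\lambda\o\lambda = \ell\o\lambda$; part (ii) rests on the same key input, the universality of $\mu$ from Theorem \ref{thm:universal-prop-differential-bundles} together with the defining pullback of $\prolong$. The only difference is presentational — the paper pastes the square next to the defining pullback of $\prolong$ and invokes the pullback lemma on the composite rectangle, whereas you construct the mediating map explicitly from the $\mu$-equalizer; both are sound.
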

\begin{proof}
    ~\begin{enumerate}[(i)]
        \item Compute
            \begin{align*}
                (\hat{\lambda}\x \ell)\o \hat{\lambda} 
                &= (\xi\o\pi\o\xi\o\pi, \lambda\o\xi\o\pi, \ell\o\lambda) \\
                &= (\xi\o\pi, T.(\xi\o\pi)\o\lambda, T.\lambda \o \lambda) \\
                &= (\pi_0, T.(\xi\o\pi)\o\pi_1, T.\lambda \o \pi_1)\o (\xi\o\pi, \lambda) \\
                &= (id \x T(\hat{\lambda}))\o\hat{\lambda}.
            \end{align*}
        \item  Use the pullback lemma to observe that the following diagram is universal for any anchored bundle:
    \[\begin{tikzcd}
        {A_2} & {\prol(A)} & TA \\
        M & A & TM
        \arrow["\xi", from=2-1, to=2-2]
        \arrow["\anc", from=2-2, to=2-3]
        \arrow["0"', curve={height=12pt}, from=2-1, to=2-3]
        \arrow["T\pi", from=1-3, to=2-3]
        \arrow["{\pi_0}", from=1-2, to=2-2]
        \arrow["{\pi\o\pi_1}"', from=1-1, to=2-1]
        \arrow["{\hat{\mu}}"', from=1-1, to=1-2]
        \arrow["{\pi_1}"', from=1-2, to=1-3]
        \arrow["\mu", curve={height=-12pt}, from=1-1, to=1-3]
    \end{tikzcd}\]
            The top triangle of the diagram commutes by definition\pagenote{Removed a redundancy pointed out by Kristine.}. 
            The right square and outer perimeter are pullbacks by definition, and the bottom triangle also commutes by definition. The pullback lemma ensures that the left square is a pullback, so for every anchored bundle, the general lift is universal for $\prol(A)$. Now post-compose with the involution:
\[\begin{tikzcd}
    {A_2} & {\prol(A)} & {\prol(A)} \\
    M & A & A
    \arrow["\xi", from=2-1, to=2-2]
    \arrow["{\pi_0}", from=1-2, to=2-2]
    \arrow["{\pi\o\pi_1}"', from=1-1, to=2-1]
    \arrow["{\hat{\mu}}"', from=1-1, to=1-2]
    \arrow["id", from=2-2, to=2-3]
    \arrow["{p\pi_1}", from=1-3, to=2-3]
    \arrow["\sigma"', from=1-2, to=1-3]
    \arrow["{\hat{\nu}}", curve={height=-12pt}, from=1-1, to=1-3]
\end{tikzcd}\]
            It suffices to check that the top triangle commutes, so $\sigma \o \hat{\mu} = \nu$:
            \[
                \sigma \o \hat{\mu}\o (a,b) = \sigma \o   ((\xi\o \pi,0)\o a +_{\pi_0} (\xi\o \pi,\lambda)\o b) = 
                 (id, T.\xi \o \anc \o a) +_{p\pi_1} (\xi\o \pi,\lambda)\o b.
            \]
            Thus, the lift $(\xi\pi,\lambda)$ involution algebroid is universal for $\prol(A)$.
    \end{enumerate}
\end{proof}

\begin{example}%
    \label{ex:prolongations}
    ~\begin{enumerate}[(i)]
        \item For $T(M) = T(M)$, the space of prolongations is $T(M) \ts{id}{Tp} T^2M = T^2M$, and the second prolongation is given by $T^2M$.
        \item   In a tangent category with a tangent display system, if $\pi \in \d$ then the prolongations for $(\pi, \xi, \lambda, \anc)$ automatically exist.
        In particular, for every anchored bundle in the category of smooth manifolds, all prolongations exist because the projection is a submersion (see \Cref{sec:submersions}).
        \item   For any differential bundle with the anchor $0\o \pi$, it follows that $\prol(A) \cong A\ts{\pi}{\pi\o\pi_i} (A_2) \cong A_3$. 
            The universality of the vertical lift factors $b$ into $(a_1,a_2)$, as the following diagram is a pullback:
            \begin{equation*}
\begin{tikzcd}
    A_3 \ar{d}{\pi_0} \ar[dotted]{r}{(\pi_1,\pi_2)}
    &[3em] A \ts{\pi}{\pi} A \rar{\mu} \dar{\pi\pi_i} & TA \dar{T\pi} \\
    A \rar{\pi} & M \rar{0} & TM
\end{tikzcd}
\end{equation*}

        \pagenote{The functor from reflexive graphs to anchored bundles was here for some reason, rather than in the examples of anchored bundles piece.}
        \item Returning to the anchored bundle constructed from a graph, the space of prolongations $\prol(A)$ embeds into the second tangent bundle of the space of composable arrows:
        \[ \prolong \hookrightarrow T^2(G \ts{t}{s} G).\]
    \end{enumerate}
\end{example}
\pagenote{It seemed cleaner to include this proposition in the above examples, since this is really just a remark about }

The category of anchored bundles is, in a sense, ``tangent monadic'' over the category of differential bundles: the forgetful functor from anchored bundles to differential bundles ``creates'' $T$-limits and the tangent structure (this is all made precise in \Cref{ch:inf-nerve-and-realization} using an enriched perspective on tangent categories).
\begin{observation}%
    \label{obs:t-limits-of-anc}
    A limit of anchored bundles is the limit of the underlying differential bundles. Because the anchor is preserved by every map in the diagram, this induces a natural transformation for any $D: \d \to \mathsf{Anc}(C)$:
    \[
\begin{tikzcd}
	\d & {\mathsf{Anc}(\C)} & {\mathsf{Anc}} & {\mathsf{DBun}(\C)}
	\arrow["D"',from=1-1, to=1-2]
	\arrow[""{name=0, anchor=center, inner sep=0}, "S"', curve={height=12pt}, from=1-2, to=1-3]
	\arrow["U"', from=1-3, to=1-4]
	\arrow[""{name=1, anchor=center, inner sep=0}, curve={height=-12pt}, Rightarrow, no head, from=1-2, to=1-3]
	\arrow["\anc", shorten <=3pt, shorten >=3pt, Rightarrow, from=1, to=0]
\end{tikzcd}\]
    Thus, $\lim U.S.D$ computes the limit of the underlying objects, while $\lim U.D$ computes the limit of the underlying differential bundles in the diagram. The anchor/unit map, then, induces a differential bundle map:
    \[
        \lim \anc_i: (\lim A_i, \lim \lambda_i) \to (T.(\lim_i M_i), \ell.(\lim_i M_i)).
    \]
    This is the limit in the category of anchored bundles (so long as pullback powers of the limit projection and the two prolongations of the limit anchor bundle exist).
\end{observation}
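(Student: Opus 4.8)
The plan is to show that the forgetful functor $U\colon \mathsf{Anc}(\C) \to \mathsf{DBun}(\C)$ creates the relevant $T$-limits, with the anchor on the limit forced by the limit of the component anchor maps. First I would recall from \Cref{ch:differential_bundles} (Observation \ref{obs:T-limits-pdbs} together with Lemma \ref{lem:T-limits-of-lifts}) that $T$-limits of differential bundles are computed on underlying objects: assuming the $T$-limit of $U.D$ exists, it is $(\lim_i A_i, \lim_i \lambda_i)$ with projection $\lim_i \pi_i$ over the base $M := \lim_i M_i$, and the same holds for its pullback powers. Applying this to the diagram $U.S.D$, whose objects are the underlying differential bundles $(TM_i, p, 0, \ell)$ of the base-tangent anchored bundles $S(A_i) = (TM_i, p, 0, \ell, \mathrm{id})$, gives $\lim U.S.D = \lim_i TM_i$; since $T$ preserves $T$-limits and the base limit $M = \lim_i M_i$ is one, $\lim_i TM_i \cong TM$ with lift $\ell.M$.

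Next I would construct the anchor on the limit. The components $\anc_i$ constitute the natural transformation $\anc$ of the displayed diagram, i.e.\ a natural transformation $U.D \Rightarrow U.S.D$ (naturality is exactly anchor-preservation by the morphisms of $D$). Precomposing the limit cone $\rho_i\colon \lim_i A_i \to A_i$ with the $\anc_i$ produces a cone over $U.S.D$, which factors uniquely through $\lim_i TM_i \cong TM$; this factorization is the desired anchor $\anc := \lim_i \anc_i\colon \lim_i A_i \to TM$. I would then verify the two anchored-bundle conditions by postcomposing with the jointly monic family $\{\rho_i\}$: that $\anc$ lies over the base, $p \circ \anc = \lim_i \pi_i$, reduces to $p \circ \anc_i = \pi_i$; and that $\anc$ is a lift morphism, $T.\anc \circ (\lim_i \lambda_i) = \ell.M \circ \anc$, reduces to the corresponding identity for each $\anc_i$ via the pointwise description of the limit lift.

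Finally I would check the universal property of the resulting anchored bundle. Any cone over $D$ in $\mathsf{Anc}(\C)$ restricts under $U$ to a cone of differential bundles, which factors uniquely through $(\lim_i A_i, \lim_i \lambda_i)$; it remains to see that the factoring map preserves the anchor. After postcomposition with the projections $\rho_i$ this again reduces to anchor-preservation of the legs of the original cone, and the uniqueness clause in the universal property of $\lim_i TM_i \cong TM$ upgrades these pointwise identities to the single required equation. Throughout, the standing assumption that the pullback powers of $\lim_i \pi_i$ and the two prolongations of the limit exist guarantees that the constructed datum is genuinely an anchored bundle in the sense of this chapter.

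The step I expect to be the main obstacle is the identification $\lim_i TM_i \cong T(\lim_i M_i)$ and the attendant bookkeeping that the limit anchor it produces is at once a map over the base and a lift morphism. This is exactly where the $T$-limit hypothesis (rather than a bare limit) is indispensable: without $T$ preserving the base limit, the cone $\{\anc_i \circ \rho_i\}$ would have no canonical target through which to factor, and the limit anchor could not be defined.
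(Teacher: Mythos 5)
Your proposal is correct and follows essentially the same route as the paper: the paper's Observation \ref{obs:t-limits-of-anc} is itself just the remark that the anchors assemble into a natural transformation $U.D \Rightarrow U.S.D$, that limits of differential bundles are computed on underlying objects (Observation \ref{obs:T-limits-pdbs}, Lemma \ref{lem:T-limits-of-lifts}), and that the induced map $\lim\anc_i$ between the limits serves as the anchor, subject to the existence of pullback powers and prolongations. Your additional verifications (postcomposing with the jointly monic projections to check the bundle and lift-morphism conditions, and the universal property) are exactly the details the paper leaves implicit.
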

The tangent structure on lifts defined in Proposition \ref{prop:lifts-is-tangent} lifts to a tangent structure on anchored bundles. 
\begin{lemma}\label{lem:tangent-anchored-bundle}
    The category of anchored bundles in a tangent category has a tangent structure that maps objects as follows:
    \[
        (\pi:A \to M, \xi, \lambda, \anc) \mapsto (T.\pi:TA \to TM, T.\xi, c \o T.\lambda, c \o T.\anc)  
    \]
    where the structure maps are all defined using the pointwise structure maps in $\C$. 
    \pagenote{There exists multiplie tangent-like structures on anchored bundles and involution algebroids, so I have added more detail in the statements of those results.}
\end{lemma}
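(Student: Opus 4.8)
The plan is to transport the tangent structure on lifts of Proposition~\ref{prop:lifts-is-tangent} upward along the forgetful functor $U:\mathsf{Anc}(\C)\to\mathsf{DBun}(\C)$ that discards the anchor. The organising observation is that, by Definition~\ref{def:anchored_bundles}, the anchor $\anc:A\to TM$ is a \emph{linear} bundle morphism over $M$, hence a morphism of lifts $\anc:(A,\lambda)\to(TM,\ell)$ into the free lift on $M$; concretely $\ell\o\anc=T.\anc\o\lambda$. Everything below is engineered so that the flip $c$ converts the naive image $T.\anc$ into an anchor with the correct base.

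First I would check that the stated assignment lands in $\mathsf{Anc}(\C)$ on objects. Applying the lift tangent functor to the lift morphism $\anc$ gives a lift morphism $T.\anc:(TA,c\o T.\lambda)\to(T^2M,c.T\o T.\ell)$, where $(T^2M,c.T\o T.\ell)=T_{\mathsf{Lift}}(TM,\ell)$. The flip is itself a lift isomorphism $c:(T^2M,c.T\o T.\ell)\to(T^2M,\ell.T)$ onto the \emph{free} lift on $TM$, since the required identity $\ell.T\o c=T.c\o c.T\o T.\ell$ is exactly the naturality coherence recorded in Equation~\ref{eq:tc-2-3}. Composing, $c\o T.\anc$ is a linear anchor for the differential bundle $(T.\pi,T.\xi,c\o T.\lambda)$, so $(T.\pi,T.\xi,c\o T.\lambda,c\o T.\anc)$ is a genuine anchored bundle (this is Example~\ref{ex:anchored-bundles}(vi)). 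On morphisms I set $T_{\mathsf{Anc}}(f,v)=(T.f,T.v)$; anchor-preservation follows from naturality of $c$, since $c\o T.\anc^B\o T.f=c\o T^2.v\o T.\anc^A=T^2.v\o c\o T.\anc^A$.

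Next I would define the five structure transformations $p,0,+,\ell,c$ on $\mathsf{Anc}(\C)$ to have the same underlying and base components as in $\C$, and verify that each component is an anchored bundle morphism, i.e. preserves the new anchors. In every case the anchor-preservation square collapses, after inserting the naturality of the transformation in question, to one of the flip coherences: for $p$ it is $T.p\o c=p.T$; for $0$ it is $c\o 0.T=T.0$; for $+$ it is $c\o T.+=+.T\o(c\o T.\pi_0,c\o T.\pi_1)$; all three are instances of Equation~\ref{eq:tc-2-3}. The two genuinely second-order checks are for $\ell$ and $c$. For the lift, after using naturality of $\ell$ the condition becomes
\[
    c.T\o T.c\o\ell.T=T.\ell\o c,
\]
which I would obtain from the coherence $\ell.T\o c=T.c\o c.T\o T.\ell$ of Equation~\ref{eq:tc-2-3} together with involutivity $c\o c=\mathrm{id}$ of Equation~\ref{eq:tc-2-1} (post-composing the claim with $c$ reduces it to these). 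For the flip, after using naturality of $c$ the condition becomes precisely the Yang--Baxter equation $c.T\o T.c\o c.T=T.c\o c.T\o T.c$ of Equation~\ref{eq:tc-2-2}. I expect these two verifications to be the main obstacle, as they are the only places where the nontrivial flip axioms, rather than mere naturality, are needed.

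Finally I would dispatch the tangent-category axioms wholesale. The functor $U$ is faithful and an anchored bundle morphism is determined by its underlying differential bundle morphism, so each equation among the structure transformations in [TC.1]--[TC.3] holds in $\mathsf{Anc}(\C)$ as soon as it holds after applying $U$, where it is the already-established identity for differential bundles (ultimately the pointwise tangent structure of $\C$). The pullback-power condition of [TC.1] and the universality condition of [TC.3] hold because the corresponding underlying $\C$-diagrams are $T$-limits and $U$ creates them (Observation~\ref{obs:t-limits-of-anc}). This reduces the whole axiom check to facts already in hand, completing the construction.
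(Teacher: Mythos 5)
Your proposal is correct and follows essentially the same route as the paper: exhibit $c\o T.\anc$ as a linear anchor for $(T.\pi,T.\xi,c\o T.\lambda)$ using naturality together with the $c,\ell$ coherences, and then let the forgetful functor to differential bundles (via Observation \ref{obs:t-limits-of-anc}) carry the remaining structure maps, equations, and universality conditions. Your write-up is more explicit than the paper's — in particular you spell out that anchor-preservation for the transformations $\ell$ and $c$ reduces to the interchange coherence of Equation \ref{eq:tc-2-3} and to Yang--Baxter respectively, a verification the paper leaves implicit — but the underlying argument is the same.
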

\begin{proof}
    Given an anchored bundle $(q:A \to M,\xi, \lambda, \anc)$, there is an anchor on the differential bundle $(Tq, T\xi, c \o T\lambda)$ given by $c \o T\anc$; the diagram commutes by naturality and the coherences on $c, \ell$.
    \[
\begin{tikzcd}
	{T^2A} & {T^3M} & {T^3M} \\
	TA & {T^2M} & {T^2M} \\
	TM & TM & TM
	\arrow["Tq", from=2-1, to=3-1]
	\arrow["Tp", from=2-2, to=3-2]
	\arrow["{c \o T\lambda}", from=2-1, to=1-1]
	\arrow["{c \o T\ell}"', from=2-2, to=1-2]
	\arrow["\ell"', from=2-3, to=1-3]
	\arrow["c"', from=2-2, to=2-3]
	\arrow["p"', from=2-3, to=3-3]
	\arrow[Rightarrow, no head, from=3-1, to=3-2]
	\arrow[Rightarrow, no head, from=3-2, to=3-3]
	\arrow["T\anc"', from=2-1, to=2-2]
	\arrow["{T^2\anc}", from=1-1, to=1-2]
	\arrow["Tc", from=1-2, to=1-3]
\end{tikzcd}\]
    The tangent structure maps and universality properties all follow from the forgetful property of the functor from anchored bundles to differential bundles as a consequence of Observation \ref{obs:t-limits-of-anc}.
\end{proof}

For any anchored bundle $A$, there are two differential bundles associated to $\prol(A)$. The first is the usual pullback differential bundle given by pulling back $T.\pi$ along $\anc$ as in Lemma \ref{lem:reindex-db}:
\begin{equation*}
    \begin{tikzcd}
    \prolong \rar{\pi_1} \dar{\pi_0} & TA \dar{T\pi} \\
    A \rar{\anc} & TM
\end{tikzcd}
\end{equation*}
This gives the differential bundle structure 
\[
    (\prolong \xrightarrow{\pi_0} A, A \xrightarrow{(id, T.\xi \o \anc)} \prolong, \prolong \xrightarrow{(0, c \o T.\lambda)} T(\prolong)).
\]
Taking the fibre product in the category of anchored bundles as in Observation \ref{obs:t-limits-of-anc} yields the second differential bundle structure:
\[
    (A, \pi, \xi, \lambda) \xrightarrow{(\anc, id)} (TM, p, 0, \ell) \xleftarrow{(T\pi, \pi)} (TA, p, 0, \ell).
\]
The two lifts behave similarly to the pair $(T.\ell, \ell.T)$ on the second tangent bundle, and $(\ell,\lambda_T)$ on the tangent bundle of a pre-differential bundle.
\begin{lemma}\label{prop:anc-prol-fun}
    Let $\C$ be a tangent category and $\mathsf{Anc}(\C)$ the category of anchored bundles in $\C$. If $T$-pullback powers of $p \o \pi_0, \pi_1: \prolong \to A$ exist, then there are two differential bundles on $\prol(A)$, with lifts induced as
    \[
\begin{tikzcd}
	{(\prol(A), \lambda \x \ell)} & {(TA, \ell)} & {(\prol(A), 0 \x (c \o T.\lambda))} & {(TA, c \o T.\lambda)} \\
	{(A,\lambda)} & {(TM, \ell)} & {(A, 0)} & {(TM, 0)}
	\arrow[from=1-1, to=2-1]
	\arrow["\anc"', from=2-1, to=2-2]
	\arrow["{T.\pi}", from=1-2, to=2-2]
	\arrow[from=1-1, to=1-2]
	\arrow["\lrcorner"{anchor=center, pos=0.125}, draw=none, from=1-1, to=2-2]
	\arrow[from=1-3, to=2-3]
	\arrow["{T.\pi}"{description}, from=1-4, to=2-4]
	\arrow[""{name=0, anchor=center, inner sep=0}, "\anc"{description}, from=2-3, to=2-4]
	\arrow[from=1-3, to=1-4]
	\arrow["\lrcorner"{anchor=center, pos=0.125}, draw=none, from=1-3, to=0]
\end{tikzcd}\]
    with structure maps
    \begin{enumerate}
        \item $(\prol(A) \xrightarrow{p\o\pi_1} A, A \xrightarrow{(\xi\pi,0)} \prol(A), \prol(A) \xrightarrow{\lambda \x \ell} T \o \prol(A))$,
        \item $(\prol(A) \xrightarrow{\pi_0} A, A \xrightarrow{(id, T.\xi\o\anc)} \prol(A), \prol(A) \xrightarrow{0 \x c \o T.\lambda} T.\prol(A))$.
    \end{enumerate}
    Furthermore, the two lifts $\lambda_\prol$ and $\lambda_\anc$ commute: 
    \[c \o T.(\lambda \x \ell) \o ( 0 \x c \o T.\lambda ) = T.( 0 \x c \o T.\lambda ) \o (\lambda \x \ell).\]
\end{lemma}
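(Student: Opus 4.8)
The plan is to recognise the two differential bundle structures as instances of constructions already available, and then to reduce the commutation of the two lifts to two elementary coherences of the ambient tangent structure. The structure with lift $\lambda_\anc = 0\x(c\o T.\lambda)$ is exactly the reindexing of the differential bundle $(T.\pi, T.\xi, c\o T.\lambda)$ along $\anc$, so it is a differential bundle by Lemma \ref{lem:reindex-db} as soon as $T$-pullback powers of $\pi_0$ exist. The structure with lift $\lambda_\prol = \lambda\x\ell$ is the pullback in $\mathsf{Anc}(\C)$ of the anchored bundle morphisms $\anc:(A,\pi,\xi,\lambda,\anc)\to(TM,p,0,\ell,id)$ and $(T.\pi,\pi):(TA,p,0,\ell,id)\to(TM,p,0,\ell,id)$; by Observation \ref{obs:t-limits-of-anc} its underlying differential bundle is the stated one, and it is a genuine differential bundle once $T$-pullback powers of $p\o\pi_1$ exist. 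I would then check the projection and section data directly: $(\xi\pi,0)$ lands in $\prolong$ because $\anc\o\xi = 0$ (the anchor preserves zero sections), and $(id, T.\xi\o\anc)$ does because $T.\pi\o T.\xi = id$; well-definedness of the two maps $\lambda_\prol,\lambda_\anc$ into $T\prolong$ uses that $\pi:(A,\lambda)\to(M,0)$ and $\anc:(A,\lambda)\to(TM,\ell)$ are lift morphisms, together with naturality of $0$, $\ell$, and $c$.

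For the commutation, write $L := \lambda\x\ell$ and $N := 0\x(c\o T.\lambda)$, so the claim is $c\o T.L\o N = T.N\o L$ as maps $\prolong\to T^2\prolong$. Since $\prolong$ is a $T$-limit it is preserved by $T^2$, hence $T^2\prolong = T^2A\ts{T^2\anc}{T^3\pi} T^3A$ and the pair $(T^2.\pi_0, T^2.\pi_1)$ is jointly monic; it therefore suffices to verify the equation after post-composing with $T^2.\pi_0$ and with $T^2.\pi_1$. Using naturality of $c$ and the component identities $T.\pi_0\o L=\lambda\o\pi_0$, $T.\pi_1\o L=\ell\o\pi_1$, $T.\pi_0\o N=0\o\pi_0$, $T.\pi_1\o N=c\o T.\lambda\o\pi_1$, the $\pi_0$-component collapses to the identity $c\o T.\lambda\o 0 = T.0\o\lambda$, and the $\pi_1$-component collapses to $c.T\o T.\ell\o c = T.c\o\ell.T$ (both post-composed with $T.\lambda\o\pi_1$). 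The first identity follows from naturality of $0$, giving $T.\lambda\o 0 = 0.T\o\lambda$, together with the coherence $c\o 0.T = T.0$ read off from \ref{eq:tc-2-3}. The second is obtained from the naturality coherence $T.c\o c.T\o T.\ell = \ell.T\o c$ of \ref{eq:tc-2-3} by composing on the right with $c$ and on the left with $T.c$ and cancelling via $c\o c = id$ and $T.c\o T.c = id$. Conceptually this says that on the $A$-factor of $\prolong$ the two lifts restrict to $\lambda$ and $0$, whose commutation is the first identity, while on the $TA$-factor they restrict to $\ell$ and $c\o T.\lambda$ (the two lifts of $(TA,\ell)$ and $T$ of $(A,\lambda)$ in the sense of Proposition \ref{prop:lifts-is-tangent}), whose commutation is the second.

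The main obstacle is bookkeeping: keeping track of which instance of $c$, $\ell$, and $0$ (i.e.\ at which iterate of $T$) occurs at each stage of the nested composites, and correctly reducing the pullback equation to the two scalar coherences. In particular, the derivation of $c.T\o T.\ell\o c = T.c\o\ell.T$ from the stated naturality axiom is the one genuinely non-formal algebraic step; everything else is naturality of the tangent transformations combined with the explicit component descriptions of $L$ and $N$.
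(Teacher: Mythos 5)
Your proposal is correct and follows essentially the same route as the paper: the two bundles are obtained from the reindexing construction (Lemma \ref{lem:reindex-db}) and the fibre product of anchored bundles (Observation \ref{obs:t-limits-of-anc}/\ref{obs:T-limits-pdbs}), and the commutation of the two lifts is verified by post-composing with the jointly monic projections and reducing to the commutation of $\lambda$ with $0$ and of $\ell$ with $c\o T.\lambda$. You simply spell out the tangent-structure coherences behind those two componentwise identities, which the paper leaves implicit.
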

\begin{proof} 
    The two differential bundles exist as a consequence of Observation \ref{obs:T-limits-pdbs} and Lemma \ref{lem:reindex-db}, respectively. The commutativity of limits follows by postcomposition, as both $c \o T.\lambda, \ell$ and $\lambda, 0$ commute by the differential bundle axioms.
\end{proof}
The above lemma determines a functor, which we denote $\prol$, that sends an anchored bundle in $\C$ to an anchored bundle in $\mathsf{Lift}(\C)$ (equipped with the tangent structure from Proposition \ref{prop:lifts-is-tangent}).
\begin{proposition}%
    \label{cor:prol-functor}
    There is a functor $\prol$ from anchored bundles in $\C$ to anchored bundles in the category of lifts\pagenote{added more context for what $\mathsf{Lift}(\C)$ is} $\mathsf{Lift}(\C)$, that sends an anchored bundle $(\pi:A \to M, \xi, \lambda, \anc)$ to the tuple in $\mathsf{Lift}(\C)$
    \begin{align*}
        &\pi^\prol:= (\prol(A), 0 \x c \o T.\lambda) \xrightarrow[]{p \o \pi_1} (A, 0),\\
        &\xi^\prol := (A,0) \xrightarrow[]{(\xi\o\pi,0)} (\prol(A),0 \x c \o T.\lambda)\\ 
        &\lambda^\prol := (\prol(A), 0\x c\o T.\lambda) \xrightarrow[]{(\lambda \x \ell)} (T(\prol(A)), c \o T(0 \x c \o T.\lambda), \\
        &\anc^\prol := (\prol(A), 0 \x c \o T.\lambda) \xrightarrow[]{\pi_1} (TA, c \o T.\lambda) 
    \end{align*}
    (note that this functor lands in anchored bundles of \emph{non-singular} lifts; see Definition \ref{def:non-singular-lift}).
    Morphisms of anchored bundles \[(f,m):(\pi:A \to M, \xi, \lambda, \anc) \to (q:E \to N, \zeta, l, \delta)\] are sent to
    \[
        \prol(f,m) := f \ts{Tm}{Tm} Tf: \prolong \to E \ts{\delta}{Tq} TE
    \]
\end{proposition}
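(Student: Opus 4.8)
The plan is to assemble the object data from the two commuting differential-bundle structures on $\prol(A)$ produced by Lemma~\ref{prop:anc-prol-fun}, and then to verify that every structure map is a morphism in $\mathsf{Lift}(\C)$ by reducing each coherence to a pointwise statement in $\C$ via Lemma~\ref{lem:T-limits-of-lifts}. First I would fix the object $(\prol(A),\lambda_\anc)$ with $\lambda_\anc = 0 \x c\o T.\lambda$ as an object of $\mathsf{Lift}(\C)$; this is a genuine lift in $\C$, being exactly the lift of differential bundle (2) in Lemma~\ref{prop:anc-prol-fun}. The projection $p\o\pi_1$, the section $(\xi\o\pi,0)$ and the candidate lift $\lambda^\prol = \lambda\x\ell = \lambda_\prol$ are then taken from differential bundle (1), whose base carries the lift $\lambda$ and whose total-space lift is $\lambda_\prol$.

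The key verification is that $\lambda^\prol$ is a lift \emph{internal to} $\mathsf{Lift}(\C)$, i.e. a coalgebra for the tangent endofunctor of Proposition~\ref{prop:lifts-is-tangent}. Because that tangent structure is computed on underlying objects, $T_{\mathsf{Lift}}(\prol(A),\lambda_\anc) = (T.\prol(A),\, c\o T.\lambda_\anc)$, so the coalgebra-morphism square for $\lambda^\prol$ reads $c\o T.\lambda_\anc \o \lambda_\prol = T.\lambda_\prol \o \lambda_\anc$. This is obtained from the commutation identity $c\o T.\lambda_\prol \o \lambda_\anc = T.\lambda_\anc \o \lambda_\prol$ of Lemma~\ref{prop:anc-prol-fun} by composing on the left with $c$ and using the involution $c\o c = id$. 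Coassociativity of $\lambda^\prol$ as a coalgebra in $\mathsf{Lift}(\C)$ is then literally the coassociativity of $\lambda_\prol$ as a lift in $\C$ (again since $T_{\mathsf{Lift}}$ acts pointwise), which holds because $(\prol(A),\lambda_\prol)$ is differential bundle (1). In the same pointwise spirit I would check that $p\o\pi_1$ and $(\xi\o\pi,0)$ are lift morphisms between $(\prol(A),\lambda_\anc)$ and $(A,\lambda)$ that split the idempotent $p\o\lambda^\prol$, that $\anc^\prol = \pi_1$ is a lift morphism into $(TA,\,c\o T.\lambda) = T_{\mathsf{Lift}}(A,\lambda)$ (note this target confirms the base object is $(A,\lambda)$), and that $\anc^\prol$ is linear into the tangent differential bundle of the base, which reduces to the corresponding linearity in $\C$. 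Membership in $\mathsf{Anc}(\mathsf{NonSing}(\C))$ follows from Proposition~\ref{prop:nonsing-closed-under-t-limits}: $(\prol(A),\lambda_\anc)$ is a $T$-pullback of the non-singular lifts $(A,0)$, $(TM,0)$ and $(TA,\,c\o T.\lambda)$, the last being non-singular as $T_{\mathsf{Lift}}$ of the differential bundle $(A,\lambda)$.

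For the action on morphisms, given an anchored bundle morphism $(f,m)$ I would first confirm that $f\ts{Tm}{Tm}Tf$ lands in $\prol(E) = E\ts{\delta}{T.q}TE$: the defining equations $\delta\o f = T.m\o\anc$ and $T.q\o T.f = T.m\o T.\pi$ make the pair $(f\o\pi_0, T.f\o\pi_1)$ satisfy the compatibility defining $\prol(E)$. Naturality of $0$, $c$ and $T$ then shows that $\prol(f,m)$ is a morphism of lifts for $\lambda_\anc$ and commutes with the projection, section, lift and anchor, so it is a morphism of anchored bundles in $\mathsf{Lift}(\C)$; preservation of identities and composites is immediate from functoriality of $T$ together with the universal property of the fibre products defining $\prol(-)$.

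I expect the main obstacle to be bookkeeping the ``role reversal'' of the two lifts: the total-space object of the prolongation carries $\lambda_\anc$, whereas the differential-bundle structure living inside $\mathsf{Lift}(\C)$ is built from $\lambda_\prol$, so one must repeatedly invoke the commutation of the two lifts (and $c\o c = id$) to see that each map is simultaneously a lift morphism for $\lambda_\anc$ and compatible with the $\lambda_\prol$-differential-bundle data. Once Lemma~\ref{lem:T-limits-of-lifts} is used to compute every $T_{\mathsf{Lift}}$-limit pointwise in $\C$, all remaining verifications are routine naturality checks.
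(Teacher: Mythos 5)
Your proposal is correct and follows essentially the same route as the paper: assemble the object data from the two commuting lifts on $\prol(A)$ (Lemma \ref{prop:anc-prol-fun}), verify each structure map is a lift morphism by reducing to pointwise identities in $\C$, and obtain functoriality from the universal property of the pullbacks defining $\prol(-)$. Your explicit justification of non-singularity via Proposition \ref{prop:nonsing-closed-under-t-limits} is a small addition the paper leaves implicit, but it does not change the argument.
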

\begin{proof}
    First, note that the tuple
    \[
        (\pi^\prol: \prol(A) \to A, \xi^\prol, \lambda^\prol, \anc^\prol)  
    \]
    is an anchored bundle, and that each morphism is a lift morphism:
    \begin{itemize}
        \item $\pi^\prol: (\prol(A), 0 \x c \o T.\lambda) \to (A,0)$ follows because
        \begin{align*}
            T.p \o T.\pi_1 \o (\lambda \x \ell) = T.p \o \ell \o \pi_1 = 0 \o p \o \pi_1
        \end{align*}
        \item $\xi^\prol: (A,0) \to (\prol(A), 0 \x c \o T.\lambda)$ follows since
        \[
           ( T.\xi \o T.\pi, T.0) \o 0 = (0 \o \xi \o \pi, T.0 \o 0) = (0 \o \xi \o \pi, c \o T.\lambda \o 0) = (0 \x c \o T.\lambda) \o (\xi \o \pi, 0)
        \]
        \item $\lambda^\prol: (\prol(A), 0 \x c \o T.\lambda) \to (T.\prol(A), 0 \x c \o T.(c \o T.\lambda))$
        follows by the commutativity of $0 \x c \o T.\lambda)$ and $\lambda \x \ell$, and
        \item $\anc^\prol: (\prol(A), 0 \x c \o T.\lambda) \to (TA, c \o T.\lambda)$ is a lift by definition of $\anc^\prol = \pi_1$.
    \end{itemize}
    This gives an anchored bundle in the category of non-singular lifts in $\C$.
    
    Next, check that the mapping is functorial. To see that $\prol(f,m)$ preserves the lifts, note that $(f,m)$ gives a morphism of diagrams for each pullback in $\mathsf{Lift}(\C)$ defining the two lifts, so the induced map $\prol(f,m)$\pagenote{There was some ambiguity in map names here, so $(f,v)$ was switched to $(f,m)$.} preserves each lift. To see that $\prol(f,m)$ preserves the anchor, check that
    \[
        \anc^{\prol B} \o \prol(f,m) = \pi_1 \o (f \x T.f)  = T.f \o \pi_1 = T.f \o \anc^{\prol A}.  
    \] 
\end{proof}
\begin{corollary}%
    \label{cor:prol-endofunctor}
    In a tangent category $\C$ where pullbacks along differential bundle projections exist, such as a tangent category equipped with a proper retractive display system (Definition \ref{def:display-system}) like the category of smooth manifolds, there is an endofunctor on the category of anchored bundles in $\C$,  \[\prol': \mathsf{Anc}(\C) \to \mathsf{Anc}(\C) = U^{\mathsf{Lift}}.\prol.\]
\end{corollary}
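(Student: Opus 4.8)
The plan is to exhibit $\prol'$ as the composite of the functor $\prol \colon \mathsf{Anc}(\C) \to \mathsf{Anc}(\mathsf{Lift}(\C))$ from Proposition \ref{cor:prol-functor} with the forgetful functor on anchored bundles induced by $U^{\mathsf{Lift}} \colon \mathsf{Lift}(\C) \to \C$. The essential observation is that $U^{\mathsf{Lift}}$ is a \emph{strict} tangent functor: by Proposition \ref{prop:lifts-is-tangent} the tangent structure on $\mathsf{Lift}(\C)$ sends $(E,\lambda)$ to $(TE, c \o T.\lambda)$ with all of $p, 0, +, \ell, c$ given by the underlying structure maps in $\C$, so $U^{\mathsf{Lift}}.T_{\mathsf{Lift}} = T.U^{\mathsf{Lift}}$ on the nose and $U^{\mathsf{Lift}}$ is a tangent functor with $\tnat = id$ in the sense of Definition \ref{def:tang-functor}.

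First I would record the general principle that a strict tangent functor $F \colon \D \to \e$ which creates $T$-limits induces a functor $\mathsf{Anc}(\D) \to \mathsf{Anc}(\e)$: being a tangent functor, $F$ carries a lift $(X, l \colon X \to TX)$ to a lift $(FX, Fl)$ and so sends pre-differential bundles to pre-differential bundles; because it creates $T$-limits it preserves the non-singularity $T$-equalizer of Definition \ref{def:non-singular-lift} and the existence of $T$-pullback powers of the projection, hence carries differential bundles to differential bundles; and since it preserves the projection $p$ it sends a linear anchor $\anc \colon A \to TM$ to a linear anchor $F\anc \colon FA \to T(FM)$, with anchored-bundle morphisms preserved by functoriality. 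For $F = U^{\mathsf{Lift}}$ the hypothesis is supplied by Lemma \ref{lem:T-limits-of-lifts}, which shows that $T$-limits of lifts are created pointwise in the base category. Applying the induced functor to the tuple of Proposition \ref{cor:prol-functor} simply strips the lift structure, giving
\[
\prol'(A) = \left(\prol(A) \xrightarrow{p \o \pi_1} A,\ (\xi \o \pi, 0),\ \lambda \x \ell,\ \pi_1 \colon \prol(A) \to TA\right),
\]
whose underlying differential bundle is exactly structure (1) of Lemma \ref{prop:anc-prol-fun} and whose anchor is $\anc^\prol = \pi_1$; a morphism $(f,m)$ is sent to $f \ts{Tm}{Tm} Tf$.

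The remaining point, and the only place the stated hypothesis is used, is that $\prol'(A)$ must be a bona fide object of $\mathsf{Anc}(\C)$, which under the standing convention of Definition \ref{def:prolongations} means its first and second prolongations exist. Its projection $p \o \pi_1$ is a differential bundle projection, so the assumption that pullbacks along differential bundle projections exist---guaranteed, for example, by a proper retractive display system as in $\mathsf{SMan}$, where every differential bundle projection is a submersion (Corollary \ref{cor:sman-r-display})---produces these prolongations. Since a composite of functors is a functor, $\prol' = U^{\mathsf{Lift}}.\prol$ is then a well-defined endofunctor on $\mathsf{Anc}(\C)$. The main obstacle is precisely the descent argument of the middle paragraph---checking that non-singularity and the requisite pullback powers survive the forgetful functor---but this collapses entirely into the pointwise creation of $T$-limits in Lemma \ref{lem:T-limits-of-lifts}, so beyond this bookkeeping no genuinely new computation is needed.
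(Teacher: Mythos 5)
Your proposal is correct and follows exactly the route the paper intends: the corollary is stated with the formula $\prol' = U^{\mathsf{Lift}}.\prol$ built into it, and the paper offers no further argument beyond composing the functor of Proposition \ref{cor:prol-functor} with the forgetful functor on lifts. Your elaboration---that $U^{\mathsf{Lift}}$ is a strict tangent functor creating $T$-limits pointwise (Lemma \ref{lem:T-limits-of-lifts}), so the anchored-bundle structure descends, with the pullback hypothesis needed only to guarantee that $\prol'(A)$ has the prolongations required of an object of $\mathsf{Anc}(\C)$---is a faithful filling-in of the details the paper leaves implicit.
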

 \begin{observation}%
    \label{obs:computation-of-lim-prol}
     $T$-Limits in $\mathsf{Anc}(\mathsf{Lift}(\C))$ are computed as pointwise limits in $\C$ by Observations \ref{obs:t-limits-of-anc} and \ref{obs:T-limits-pdbs}, and $\prol$ is constructed as a limit, so it follows that $\prol$ preserves $T$-limits.
 \end{observation}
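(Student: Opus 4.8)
The plan is to show $\prol$ preserves $T$-limits by reducing every limit computation to a pointwise one in $\C$ and then invoking the interchange of limits. Fix a diagram $D : \d \to \mathsf{Anc}(\C)$ with $D_j = (\pi_j : A_j \to M_j, \xi_j, \lambda_j, \anc_j)$ whose $T$-limit $L = (\pi_L : A_L \to M_L, \xi_L, \lambda_L, \anc_L)$ exists in $\mathsf{Anc}(\C)$; I must exhibit $\prol(L)$ as the $T$-limit of $\prol \circ D$ in $\mathsf{Anc}(\mathsf{Lift}(\C))$.

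First I would pin down how $T$-limits are computed in the target $\mathsf{Anc}(\mathsf{Lift}(\C))$. By Observation \ref{obs:t-limits-of-anc}, a $T$-limit of anchored bundles is created by the forgetful functor to differential bundles, hence is the $T$-limit of the underlying differential bundles together with the induced anchor. By Observation \ref{obs:T-limits-pdbs}, that differential-bundle limit is the limit of the underlying lifts (carrying along the idempotent splitting), and by Lemma \ref{lem:T-limits-of-lifts} $T$-limits of lifts are computed pointwise in the base tangent category. Applying this last reduction once inside $\mathsf{Lift}(\C)$ and once more to unwind $\mathsf{Lift}(\C)$ over $\C$, one concludes that the underlying object of a $T$-limit in $\mathsf{Anc}(\mathsf{Lift}(\C))$ is the corresponding $T$-limit taken objectwise in $\C$, with projection, zero section, the two lifts, and anchor all induced by their universal properties.

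Next I would use that, by Proposition \ref{cor:prol-functor} and Definition \ref{def:prolongations}, $\prol$ is built from the pullback $\prol(A_j) = \prolong$ (with $A, \anc, \pi$ replaced by $A_j, \anc_j, \pi_j$), a finite $T$-limit in $\C$ whose structure maps are induced. Computing $\lim(\prol \circ D)$ by the pointwise recipe above, its underlying object is $\lim_j\bigl(A_j \ts{\anc_j}{T\pi_j} T A_j\bigr)$. Since limits commute with limits, this is the fibre product $\bigl(\lim_j A_j\bigr) \ts{\lim\anc_j}{\lim T\pi_j} \bigl(\lim_j T A_j\bigr)$; and because $T$ preserves the $T$-limit $L$, we have $\lim_j T A_j \cong T\bigl(\lim_j A_j\bigr) = T A_L$, while $\lim\anc_j = \anc_L$ and $\lim T\pi_j = T\pi_L$. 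Hence the underlying object of $\lim(\prol\circ D)$ is exactly $A_L \ts{\anc_L}{T\pi_L} T A_L = \prol(L)$, and a short diagram chase identifies the projection $p\o\pi_1$, the section, the lifts $\lambda \x \ell$ and $0 \x (c\o T.\lambda)$, and the anchor $\anc^\prol = \pi_1$ of the two sides, since each is the unique arrow determined by a limit cone.

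The anticipated obstacle is bookkeeping rather than conceptual: I must check that the interchange-of-limits isomorphism on underlying objects genuinely upgrades to an isomorphism of anchored bundles in $\mathsf{Lift}(\C)$, compatibly with \emph{both} lift structures of Lemma \ref{prop:anc-prol-fun} and with the anchor simultaneously. This is secured by noting that each structure map on either side is the unique map induced by a universal property preserved by the pointwise limit, so once the underlying objects are identified the remaining comparisons follow from naturality and the uniqueness of induced maps, with no separate calculation needed.
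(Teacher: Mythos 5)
Your proposal is correct and follows essentially the same route as the paper: the paper states this as an observation whose entire justification is that limits in $\mathsf{Anc}(\mathsf{Lift}(\C))$ are created pointwise through the chain of forgetful functors (Observations \ref{obs:t-limits-of-anc} and \ref{obs:T-limits-pdbs}), that $\prol$ is itself a finite $T$-limit construction, and hence that preservation follows from the commutativity of limits. Your write-up simply makes explicit the interchange-of-limits step, the use of $T$-limit preservation to identify $\lim_j TA_j \cong TA_L$, and the identification of structure maps by uniqueness of universally induced arrows — all of which is exactly the argument the paper is compressing into one sentence.
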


\begin{proposition}
    \label{prop:anc-almost-tang-cat}
        The prolongation endofunctor $\prol': \mathsf{Anc}(\C) \to \mathsf{Anc}(\C)$ has natural transformations
    \begin{itemize}
        \item $p': \prol' \Rightarrow id$
        \item $0': id \Rightarrow \prol'$
        \item $+': \prol' \ts{p'}{p'} \prol' \Rightarrow \prol'$
        \item $\ell': \prol' \Rightarrow \prol'.\prol'$
    \end{itemize}
    satisfying all of the axioms of a tangent category that do not incvolve the canonical flip (Definition \ref{def:tangent-cat}).
\end{proposition}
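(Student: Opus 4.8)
The plan is to realise the tangent-functor structure on $\prol'$ as data that the anchored bundle $\prol'(A)$ already carries, supplemented by a single genuinely new lift assembled from the generalized lift $\hat{\lambda}$. First I would take $p'$, $0'$, $+'$ to be the additive (differential) bundle structure of $\prol'(A)$ recorded in Proposition \ref{cor:prol-functor}: the projection $p'_A = p \o \pi_1 : \prolong \to A$, the zero section $0'_A = (\xi \o \pi, 0.A)$ (the pair induced into the pullback $\prolong$, which is legitimate since $\anc \o \xi = 0.M \o \pi$), and the fibrewise addition $+'_A$ inherited from $+$ on the $TA$-factor. On the prototypical bundle $A = TM$, where $\prol'(TM) \cong T^2M$, these reduce to $p.TM$, $0.TM$, $+.TM$, which confirms that they are the tangent-projection, zero and addition one expects of a tangent functor. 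Because they are exactly the additive-bundle data of $\prol'(A)$, the additive bundle axioms [TC.1](ii) hold automatically; and [TC.1](i) (existence and $\prol'$-stability of pullback powers of $p'$) holds because these powers are $T$-limits and $\prol$ preserves $T$-limits (Observation \ref{obs:computation-of-lim-prol}), existence being guaranteed under the standing hypotheses (e.g.\ a proper retractive display system, as in $\mathsf{SMan}$).

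The one new ingredient is the lift $\ell' : \prol' \Rightarrow \prol'.\prol'$, whose value at $A$ maps $\prol'(A)$ into the iterated prolongation $\prol'(\prol'(A))$, a double $T$-pullback. I would induce $\ell'_A$ by the universal property of that double pullback from the lift $\ell : T \Rightarrow T^2$ of $\C$ together with the generalized lift $\hat{\lambda} = (\xi\o\pi, \lambda)$ of Definition \ref{def:lhat}; on the prototypical bundle this recovers $\ell.TM : T^2M \to T^3M$. That the induced map genuinely lands in $\prol'(\prol'(A))$ — i.e.\ satisfies both pullback constraints cutting out the iterated prolongation — is precisely the coassociativity identity $(\hat{\lambda}\x\ell)\o\hat{\lambda} = (\mathrm{id}\x T.\hat{\lambda})\o\hat{\lambda}$ of Proposition \ref{prop:lift-axioms-anchor}(i). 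That $\ell'_A$ is an anchored bundle morphism (compatible with the anchor $\anc^\prol = \pi_1$ and with the coalgebra structure) is then checked using the tangent-category coherences on $\ell$.

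With the maps in hand, I would verify the remaining axioms. The lift/addition naturality equations [TC.3](i) relate $\ell'$ to $p', 0', +'$; since $\prol$ is assembled from $T$-limits and the forgetful functors $\mathsf{Anc}(\mathsf{Lift}(\C)) \to \mathsf{Anc}(\C) \to \C$ compute these limits pointwise (Observations \ref{obs:t-limits-of-anc}, \ref{obs:T-limits-pdbs}, \ref{obs:computation-of-lim-prol}), each such equation reduces to the corresponding commuting diagram for $(T,p,0,+,\ell)$ in $\C$, which holds by hypothesis. Coassociativity [TC.3](ii) for $\ell'$ is the coassociativity of $\hat{\lambda}$, and universality [TC.3](iv) follows from the universality half of Proposition \ref{prop:lift-axioms-anchor}(ii): the map $\mu$ demanded by that axiom, built from $\ell'$, $0'$ and $+'$, is identified with $\hat{\mu}$, turning the required pullback square into the $T$-pullback already established there. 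The axioms I cannot touch — involution [TC.2](i), Yang--Baxter [TC.2](ii), the flip-naturality equations [TC.2](iii), and symmetric comultiplication $c\o\ell = \ell$ [TC.3](iii) — are exactly the ones naming the canonical flip; no natural flip $c' : \prol'.\prol' \Rightarrow \prol'.\prol'$ is available, since producing one would require an involution $\sigma$ on $\prolong$, a datum a bare anchored bundle does not carry.

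I expect the main obstacle to be the lift $\ell'$: pinning down the induced map into the double prolongation, verifying that it is an anchored bundle morphism, and establishing its coassociativity and universality. None of these is a formal consequence of the tangent structure of $\C$ alone; they rest on the coassociativity and universality of the generalized lift $\hat{\lambda}$, which is exactly why Proposition \ref{prop:lift-axioms-anchor} was proved in advance.
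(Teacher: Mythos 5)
Your proposal is correct and takes essentially the same route as the paper, whose own proof of this proposition is explicitly only a sketch: $p',0',+'$ are taken from the differential bundle structure of Corollary \ref{cor:prol-functor}, and $\ell'$ is exactly the map $id \x T.\hat\lambda : \prol(A) \to \prol^2(A)$ you describe, with coassociativity and universality supplied by Proposition \ref{prop:lift-axioms-anchor} (applied, for the universality axiom, to the anchored bundle $\prol'(A)$ itself). One small slip worth noting: that $id \x T.\hat\lambda$ lands in the double pullback follows already from $\pi \o \xi = id$ and the anchored-bundle axioms, not from coassociativity of $\hat\lambda$, which is instead what yields the coassociativity axiom [TC.3](ii) — as you correctly state elsewhere.
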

\begin{proof}[(Sketch)]
    Note that the full argument is given in \Cref{sec:weil-nerve}, but it is not difficult to sketch it out here. For the projection, zero map, and addition, use the differential bundle structure induced by Corollary \ref{cor:prol-functor}. To see the lift axiom, note that up to a choice of pullback, we have:
    \[
        \prol'.\prol'(\pi:A \to M, \xi, \lambda, \anc) = 
        \begin{cases}
            \quad \quad \quad \quad (p \o \pi_1, p \o \pi_2):& \prol^2(A) \to \prol(A) \\
            (\xi \o \pi \o \pi_0, 0 \o \pi_1, 0 \o \pi_2):& \prol(A) \to \prol^2(A) \\
            \quad \quad \quad \quad \quad \quad (\lambda \x \ell \x \ell):&\prol^2(A) \to T.\prol^2(A) \\
            \quad \quad \quad \quad \quad \quad \quad (\pi_1, \pi_2):& \prol^2(A) \to T.\prol(A)
        \end{cases}  
    \]
    The ``lift map'' is then
    \[
        \ell': \prol' \Rightarrow \prol'.\prol'; \hspace{0.15cm}
        \prol(A) \xrightarrow[]{A \x T.\hat\lambda} \prol^2(A) 
    \]
    (where we recall that $\hat\lambda = (\xi\o\pi,\lambda):A \to \prol(A)$).
\end{proof}

\begin{remark}
    The structure described in Proposition \ref{cor:prol-functor} leads to the theory of \emph{double vector bundles}, developed by MacKenzie and his collaborators \cite{flari2019warps,Mackenzie1992}. A double vector bundle is a commuting square
    \[
\begin{tikzcd}
	D & B \\
	A & M
	\arrow["{q^D_A}", from=1-1, to=2-1]
	\arrow["{q^D_B}", from=1-1, to=1-2]
	\arrow["{q^A}", from=2-1, to=2-2]
	\arrow["{q^B}", from=1-2, to=2-2]
\end{tikzcd}\]
    where each projection is a vector bundle projection, and ``vertical'' and ``horizontal'' orientations of the square are each vector bundle homomorphisms. It was observed by \cite{Grabowski2009} that this is equivalent to a pair of commuting $\R^+$-actions on the total space $E$; following the development in \Cref{ch:differential_bundles}, this is a \emph{commuting} pair of non-singular lifts on $E$,
    \[
        \lambda^A,\lambda^B:E \to TE, \hspace{0.15cm}
        T.\lambda^B \o \lambda^A = c \o T.\lambda^A \o \lambda^B.  
    \]
    A proper exposition of the so-called ``Ehresmann doubles'' (\cite{Mackenzie2011}) for the structures in Lie theory would substantially expand the scope of this thesis, and so it has been relegated to the margins.
\end{remark}

Finally, observe that a connection (see Section \ref{sec:connections-on-a-differential-bundle}) on an anchored bundle's underlying differential bundle behaves similarly to an affine connection.
\begin{lemma}\label{lem:anchored-bundle-conn}
    Let $(\pi:A \to M, \xi, \lambda, \anc)$ be an anchored bundle with $\prol(A)$ existing in a tangent category $\C$.\pagenote{The original draft had a typo - it was meant to be ``anchored bundle equipped with a connection''.}
    If $(\pi, \xi, \lambda)$ has a connection, then define \[\hat{\kappa} := \kappa \pi_1, \quad \hat{\nabla} := \nabla(\pi_0, \anc\pi_1)\] and note that
    \begin{enumerate}[(i)]
        \item $\hat{\kappa}: \prol(A) \to A$ is a retract of $(\xi\pi, \lambda)$, and $\hat{\kappa}: (\prol(A),l) \to (A, \lambda)$ is linear for both $l = (\lambda \x \ell), (0 \x c \o T.\lambda)$;
        \item $\hat{\nabla}:A_2 \to \prol(A)$ is a section of $(\pi_0, p \o \pi_1)$ and is bilinear;
        \item there is an isomorphism $\prol(A) \cong A_3$:\pagenote{added the explicit isomorphism}
        \[
            \nabla(p \o \pi_1, \anc \o \pi_0) +_{(\pi, T.\pi)} \hat{\mu}(p \o \pi_1, \hat{\kappa}) = id. 
        \]
    \end{enumerate}
\end{lemma}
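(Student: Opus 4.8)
The plan is to obtain all three assertions by transporting the data of the full connection $(\kappa,\nabla)$ on the underlying differential bundle $(\pi,\xi,\lambda)$ to $\prol(A)=\prolong$ along the pullback square defining the prolongation. Throughout I would use the two differential bundle structures on $\prol(A)$ from Lemma \ref{prop:anc-prol-fun}, the identities $\pi_1\o\hat\lambda=\lambda$ and $\pi_0\o\hat\lambda=\xi\pi$ for the generalized lift $\hat\lambda$ of Definition \ref{def:lhat}, and crucially the defining equation $T.\pi\o\pi_1=\anc\o\pi_0$ of the pullback $\prol(A)=A\ts{\anc}{T.\pi}TA$.

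For (i), the retraction is immediate: $\hat\kappa\o\hat\lambda=\kappa\o\pi_1\o\hat\lambda=\kappa\o\lambda=\mathrm{id}_A$ by the vertical-connection axiom $\kappa\o\lambda=\mathrm{id}$. The two linearity claims say that $\hat\kappa$ is a lift morphism out of $(\prol(A),\lambda\x\ell)$ and out of $(\prol(A),0\x c\o T.\lambda)$ into $(A,\lambda)$; since $\hat\kappa=\kappa\o\pi_1$ and $\pi_1$ is, by the very construction in Lemma \ref{prop:anc-prol-fun}, a lift morphism from each of these two structures into $(TA,\ell)$ and $(TA,c\o T.\lambda)$ respectively, these reduce exactly to the two compatibility axioms of the vertical connection $\kappa\colon(TA,\ell)\to(A,\lambda)$ and $\kappa\colon(TA,c\o T.\lambda)\to(A,\lambda)$. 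For (ii), that $\hat\nabla$ is a section of $(\pi_0,p\o\pi_1)$ follows directly from the horizontal-connection retraction axiom $(p,T.\pi)\o\nabla=\mathrm{id}$, and bilinearity follows because $\nabla$ is bilinear (being a lift morphism for each of its lift structures) and $\anc$ is linear, so the composite $\nabla(-,\anc\,-)$ paired with a projection is again bilinear.

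The heart of the argument is (iii), which I would read off directly from the full-connection compatibility relation of Definition \ref{def:lin-connection}, namely
\[
\nabla(p,T.\pi)\,+_{T.\pi}\,\mu(p,\kappa)=\mathrm{id}_{TA},
\]
which (equivalently, via Proposition \ref{prop:rory-connection}) exhibits $TA$ as the fibred biproduct $A\ts{\pi}{p}TM\ts{p}{\pi}A$. Precomposing this identity with $\pi_1\colon\prol(A)\to TA$ and substituting the pullback relation $T.\pi\o\pi_1=\anc\o\pi_0$ turns the horizontal summand $\nabla(p\o\pi_1,T.\pi\o\pi_1)$ into $\nabla(p\o\pi_1,\anc\o\pi_0)$ and the vertical summand into $\mu(p\o\pi_1,\hat\kappa)=\hat\mu(p\o\pi_1,\hat\kappa)$ in the $TA$-coordinate. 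Because the anchor is linear and $p\o\anc=\pi$, pulling the biproduct back along $\anc$ replaces the $TM$-summand by a copy of $A$, yielding $\prol(A)\cong A_3$ with comparison map $(\pi_0,p\o\pi_1,\hat\kappa)$; the displayed equation is precisely the statement that $\hat\nabla$-and-$\hat\mu$ provide its inverse, i.e. the $\Psi\Phi=\mathrm{id}$ direction.

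The main obstacle I anticipate is bookkeeping of the fibred addition $+_{(\pi,T.\pi)}$ on $\prol(A)$ against the addition $+_{T.\pi}$ on $TA$: I must confirm that restricting the $TA$-level identity to the subobject $\prol(A)$ genuinely produces an identity of endomorphisms of $\prol(A)$, not merely of its $TA$-component. This amounts to checking that the additive structure appearing in the relation is the reindexed one of Lemma \ref{lem:reindex-db} and Lemma \ref{prop:anc-prol-fun}, under which pullback along $\anc$ preserves fibred sums and the $A$-coordinate $\pi_0$ is carried through unchanged (with $\xi\pi$ functioning as the zero of the relevant fibre). Once that compatibility is in place, the three parts follow from the connection axioms with no further computation.
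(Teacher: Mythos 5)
The paper states this lemma without proof (it is asserted with ``note that'' and followed immediately by an example), so there is no official argument to compare against; your proposal supplies the intended one and is correct. Reducing (i) to the two lift-morphism axioms of $\kappa$ via the fact that $\pi_1$ carries $\lambda \x \ell$ and $0 \x c \o T.\lambda$ to $\ell$ and $c \o T.\lambda$, and obtaining (iii) by precomposing the full-connection identity $\nabla(p,T.\pi) +_{T.\pi} \mu(p,\kappa) = id_{TA}$ with $\pi_1$ and rewriting $T.\pi \o \pi_1 = \anc \o \pi_0$, is exactly the computation the lemma is built on. The bookkeeping worry you raise resolves as you expect: by Lemma \ref{prop:anc-prol-fun} and Observation \ref{obs:t-limits-of-anc} the additive structure $+_{(\pi,T.\pi)}$ on $\prol(A)$ is computed componentwise in the pullback, with $\xi \o \pi$ the zero of the $A$-coordinate (which is why $\hat\mu = (\xi\o\pi\o\pi_0,\mu)$), so the $TA$-level identity together with the trivial identity $\pi_0 +_\pi \xi\o\pi\o p\o\pi_1 = \pi_0$ assembles into the stated identity of endomorphisms of $\prol(A)$.
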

\begin{example}
    Every vector bundle in the category of smooth manifolds has a connection; it follows that  Lemma \ref{lem:anchored-bundle-conn} holds for every anchored bundle in the category of smooth manifolds.
\end{example}

\begin{remark}
    The category of anchored bundles in a tangent category is almost a tangent category, except that it lacks a symmetry map. The ``differential objects'' in such a category will act like a cartesian differential category, except that the symmetry of mixed partial derivatives fails. There has been some interest in settings for differentiable programming where the symmetry of mixed partial derivatives need not hold (see Definition 3.4 along with the discussion at the end of Section 6 in \cite{cruttwell2021categorical});\pagenote{I have included an actual reference to quantify some interest.} the category of anchored bundles in a tangent category appears to be a source of examples.
\end{remark}
\section{Involution algebroids}%
\label{sec:involution-algebroids}


Involution algebroids are a tangent-categorical axiomatization of Lie algebroids. Recall that a Lie algebroid has almost all of the structure of the operational tangent bundle, in particular a Lie bracket on sections that satisfies the Leibniz law so that it gives a directional derivative. 
In Proposition \ref{prop:anc-almost-tang-cat}, it was demonstrated that the category of anchored bundles have almost all of the same maps as the tangent bundle, the only structure map missing being the canonical flip
\[
    c:T^2M \Rightarrow T^2M
\]
which, we may recall from the chapter introduction (and \cite{Cockett2015}, \cite{Mackenzie2013}), is used in constructing the Lie bracket of vector fields in a tangent category with negatives. Thus, a natural next step is to add an involution map to an anchored bundle and require that it satisfies the same coherences as $c$ from the tangent bundle.

\begin{definition}\label{def:involution-algd}
    An involution algebroid is an anchored bundle $(A, \pi,\xi,\lambda, \anc)$ equipped with a map $\sigma: \prol(A) \to \prol(A)$ satisfying the following axioms:
    \begin{enumerate}[(i)]
        \item Involution: \[
\begin{tikzcd}
	{\prol(A)} & {\prol(A)} \\
	& {\prol(A)}
	\arrow[Rightarrow, no head, from=1-1, to=2-2]
	\arrow["\sigma", from=1-1, to=1-2]
	\arrow["\sigma", from=1-2, to=2-2]
\end{tikzcd}\]
        \item Double linearity:\[
        \begin{tikzcd}
            {T.\prol(A)} & {T.\prol(A)} \\
            {\prol(A)} & {\prol(A)}
            \arrow["\sigma", from=2-1, to=2-2]
            \arrow["{0 \x c \o T.\lambda}"', from=2-2, to=1-2]
            \arrow["{T.\sigma}", from=1-1, to=1-2]
            \arrow["{\lambda \x \ell}", from=2-1, to=1-1]
        \end{tikzcd}\]
        \item Symmetry of lift: \[
\begin{tikzcd}
	A & {\prol(A)} \\
	& {\prol(A)}
	\arrow["{\hat \lambda = (\xi\o\pi, \lambda)}", from=1-1, to=1-2]
	\arrow["\sigma", from=1-2, to=2-2]
	\arrow["\hat\lambda"', from=1-1, to=2-2]
\end{tikzcd}\]
        \item Target: \[
\begin{tikzcd}
	{\prol(A)} & TA & {T^2M} \\
	{\prol(A)} & TA & {T^2M}
	\arrow["\sigma", from=1-1, to=2-1]
	\arrow["{\pi_1}", from=1-1, to=1-2]
	\arrow["{T.\anc}", from=1-2, to=1-3]
	\arrow["{\pi_1}", from=2-1, to=2-2]
	\arrow["{T.\anc}", from=2-2, to=2-3]
	\arrow["c", from=1-3, to=2-3]
\end{tikzcd}\]
        \item Yang--Baxter: \[
\begin{tikzcd}
	& {\prol^2(A)} & {\prol^2(A)} \\
	{\prol^2(A)} &&& {\prol^2(A)} \\
	& {\prol^2(A)} & {\prol^2(A)}
	\arrow["{\sigma \x c}", from=2-1, to=1-2]
	\arrow["{\sigma \x c}", from=1-3, to=2-4]
	\arrow["{\sigma \x c}"', from=3-2, to=3-3]
	\arrow["{A \x T.\sigma}"', from=2-1, to=3-2]
	\arrow["{A \x T.\sigma}"', from=3-3, to=2-4]
	\arrow["{A \x T.\sigma}", from=1-2, to=1-3]
\end{tikzcd}\]
    \end{enumerate}
    $A$ is an \emph{almost}-involution algebroid if the involution does not satisfy the Yang--Baxter equation.
    A morphism of involution algebroids is a morphism of anchored bundles, so that $\prol(f)\o \sigma_A = \sigma_B\o\prol(f)$.
    (Note that because $\sigma$ is an isomorphism and $\sigma = \sigma^{-1}$, 
    \[\sigma: (\prol(A), 0 \x c\o T.\lambda) \to (\prol(A),\lambda\x \ell)\]
    is linear as well.) Write the category of involution algebroids and involution algebroid morphisms in $\C$ is written $\mathsf{Inv}(\C)$.\pagenote{
    The category $\mathsf{Inv}(\C)$ has been explicitly defined - this addresses a few later remarks in this section.
}
\end{definition}

\begin{observation}
    It is not immediately clear that the Yang--Baxter equation is well-typed. This follows from the target axiom (iv) and the double linearity axiom (ii). Starting with $(u,v,w): \prolong \ts{T.\anc}{T^2.\pi} T^2A$, we see that $\sigma \x c$ is well-typed if and only if
    \[
       T.\anc \o \pi_1 \o \sigma(u,v) = T^2.\pi \o c \o w = c \o T^2.\pi \o w = c \o T.\anc \o v.
    \]
    Similarly, $1 \x T.\sigma$ is well-typed if $T.\anc \o u = T.\pi \o \pi_0 \o T.\sigma(v,w)$; then use the double linearity axiom to compute
    \begin{gather*}
        T.\pi \o \pi_0 \o T.\sigma(v,w) = T.\pi \o T.p \o T.\pi_1 (v,w) = T.\pi \o T.p \o w 
        \\ = T.p \o T^2.\pi \o w = T.p \o T.\anc \o v = T.\pi \o v.
    \end{gather*}
\end{observation}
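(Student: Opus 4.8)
The plan is to work with generalized elements $(u,v,w)$ of $\prol^2(A)$, unwinding the limit of Definition \ref{def:prolongations}. Such a triple satisfies $\anc \o u = T.\pi \o v$ and $T.\anc \o v = T^2.\pi \o w$; equivalently $(u,v)$ is an element of $\prol(A)$, and since $T$ preserves the prolongation pullback (it is a $T$-limit), $(v,w)$ is an element of $T.\prol(A) = TA \ts{T.\anc}{T^2.\pi} T^2A$. The two maps in the hexagon are $\sigma \x c$, sending $(u,v,w) \mapsto (\sigma(u,v), c \o w)$, and $A \x T.\sigma$, sending $(u,v,w) \mapsto (u, T.\sigma(v,w))$. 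For each I would isolate the single compatibility condition that is not automatic from the domain constraints and verify it.

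First I would treat $\sigma \x c$. The non-trivial condition is that the $TA$-component of $\sigma(u,v)$ and the $T^2A$-component $c \o w$ agree over $T^2M$, that is $T.\anc \o \pi_1 \o \sigma(u,v) = T^2.\pi \o c \o w$. I would rewrite the right-hand side using naturality of $c$ against $\pi \colon A \to M$, so $T^2.\pi \o c = c \o T^2.\pi$, and then the second constraint $T^2.\pi \o w = T.\anc \o v$, giving $c \o T.\anc \o v$. The left-hand side equals $c \o T.\anc \o \pi_1(u,v) = c \o T.\anc \o v$ directly by the target axiom (iv), which reads $T.\anc \o \pi_1 \o \sigma = c \o T.\anc \o \pi_1$. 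Hence both sides coincide and $\sigma \x c$ lands in $\prol^2(A)$.

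Next I would treat $A \x T.\sigma$. Here the missing condition is the first prolongation constraint for the output, $\anc \o u = T.\pi \o \pi_0 \o T.\sigma(v,w)$, where $\pi_0$ is the projection $T.\prol(A) \to TA$. The key input is that double linearity (axiom (ii)) exhibits $\sigma$ as a differential-bundle morphism from structure (1) to structure (2) of Lemma \ref{prop:anc-prol-fun}; since such a morphism preserves projections, and the source has projection $p \o \pi_1$ while the target has projection $\pi_0$, this yields $\pi_0 \o \sigma = p \o \pi_1$, whence $\pi_0 \o T.\sigma = T.p \o T.\pi_1$ after applying $T$. I would then chain $T.\pi \o T.p \o T.\pi_1(v,w) = T.\pi \o T.p \o w = T.p \o T^2.\pi \o w$ (naturality of $p$ against $\pi$) $= T.p \o T.\anc \o v$ (second constraint) $= T.\pi \o v$ (anchor lies over the projection, so $p \o \anc = \pi$) $= \anc \o u$ (first constraint), which is exactly what is needed.

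I expect the main obstacle to be bookkeeping rather than depth: correctly identifying which lift/projection pair of Lemma \ref{prop:anc-prol-fun} is the \emph{source} and which is the \emph{target} of $\sigma$ under double linearity, so that projection preservation delivers precisely $\pi_0 \o \sigma = p \o \pi_1$ and not its reverse. The naturality squares for $c$ and $p$ and the relation $p \o \anc = \pi$ are routine, but one must apply $T$ to them at the right stage; the only genuinely conceptual step is reading the projection identity off axiom (ii), since that axiom is phrased as an intertwining of lifts and the projection statement is its companion consequence for differential-bundle morphisms.
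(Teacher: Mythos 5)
Your proposal follows the paper's own argument essentially step for step: you isolate the same two compatibility conditions, handle the $\sigma \x c$ case by naturality of $c$ plus the target axiom exactly as in the text, and run the $1 \x T.\sigma$ case through the identical chain of equalities. You also correctly read the first-constraint condition as $\anc \o u = T.\pi \o \pi_0 \o T.\sigma(v,w)$, where the paper's ``$T.\anc \o u$'' is evidently a typo.

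The one step to tighten is the claim that double linearity ``delivers precisely $\pi_0 \o \sigma = p \o \pi_1$'' because differential-bundle morphisms preserve projections. What a lift morphism actually gives (via preservation of the natural idempotent, Proposition \ref{prop:idempotent-natural}) is $\pi_0 \o \sigma = m \o p \o \pi_1$ with $m = \pi_0 \o \sigma \o (\xi\o\pi,0)$ the induced base map, and axiom (ii) alone does not force $m = id$. Fortunately the computation only needs the composite with $\pi$, namely $\pi \o \pi_0 \o \sigma = \pi \o p \o \pi_1$, and that does follow: post-composing the target axiom with $T.p$ gives $\anc \o \pi_0 \o \sigma = \anc \o p \o \pi_1$ (using $T.p \o c = p.T$, $p \o \anc = \pi$, and $T.\pi \o \pi_1 = \anc \o \pi_0$ on $\prol(A)$), and a further composition with $p$ removes the remaining discrepancy. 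With that small repair your argument is correct and coincides with the paper's.
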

This perspective on Lie algebroids has already appearad in the work of Martinez and his collaborators in \cite{Leon2005}, where a ``canonical involution'' was derived on space of prolongations of a Lie algebroid using the formula
\[
    \sigma: \prol(A) \to \prol(A); \sigma(x,y,z) = \sigma(y,x,z + \< x, y\>).
\]
The structure of this map has been largely unexplored; helpfully, involution algebroids succeed in reverse-engineering axioms for an involution map that will induce a Lie bracket on the sections of the projection map. \pagenote{I removed the remark about the Yang-Baxter equation, as that is discussed more thoroughly in the introduction of this chapter. Identifying axioms on the involution to identify Lie brackets is original, I think I may have muddied the waters with my original remark.} 
The bracket from the original Lie algebroid is induced using the same formula as for the bracket of vector fields on the tangent bundle:
\[
    \lambda \o [X,Y]^* 
    =
    \left( 
        (\pi_1 \o \sigma \o (id, T.X \o \anc) \o Y -_p T.Y \o X \o \anc) -_{T.\pi} 0Y
    \right).
\]
Furthermore, a morphism of anchored bundles is a Lie algebroid morphism if and only if it preserves the derived involution map.

\begin{example}\label{ex:inv-algds}
    \pagenote{I have updated this example following to clarify the relationships between $\C, \mathsf{Anc}(\C), \mathsf{Inv}(\C)$.}
    ~\begin{enumerate}[(i)]
        \item For any $M$ in $\C$, $(TM, p, 0, id, c)$ is an involution algebroid. Furthermore, for any involution algebroid anchored on $M$, $(\anc, id)$ is a morphism of involution algebroids (by the target axiom). This defines a fully faithful functor $\C \hookrightarrow \mathsf{Inv}(\C)$. The same construction as the anchored bundle case in Example \ref{ex:anchored-bundles}(i) exhibits $\C$ as a reflective subcategory of $\mathsf{Inv}(\C)$.
        \item For any differential bundle, the trivial anchored bundle $(\pi:A \to M,\xi,\lambda, 0\o\pi)$ has an involution using the isomorphism $\prol(A) \cong A_3$, given by $\sigma := (\pi_1,\pi_0,\pi_2)$ (proving this map satisfies the involution algebroid axioms is just an exercise in combinatorics). It follows that every differential bundle morphism gives rise to a morphism of these \emph{trivial} involution algebroids. The same construction from the anchored bundle case (Example \ref{ex:anchored-bundles}(ii)) exhibits $\mathsf{Diff}(\C)$ as a \emph{coreflective} subcategory of involution algebroids.
        \item Consider a groupoid 
        \[s,t:G \to M, \hspace{0.15cm} e: M \to G, \hspace{0.15cm} (-)^{-1}:G \to G, \hspace{0.15cm} m:G_2 \to G.\] 
        The underlying reflexive graph has an associated anchored bundle, constructed in Example \ref{ex:anchored-bundles}, and the space of prolongations of this anchored bundle includes into $(T^2.G_2)$. Note that there is a well-formed involution map:
        \[
            \sigma(u,v) = 
            c \o ((0 \o p \o v)^{-1}; (T.0 \o u);v)
        \] 
        The direct proof of this involves a more conceptual construction, which is the focus of Section \ref{sec:inf-nerve-of-a-gpd}.
    \end{enumerate}
\end{example}

An involution algebroid resembles a generalized tangent bundle, and so the lift $(\xi\o\pi, \lambda):A \to \prol(A)$ satisfies the same universality conditions as $\ell:T \Rightarrow T^2$. The double linearity condition is equivalent to the naturality condition for $c, \ell$ in Definition \ref{def:tangent-cat}:
\begin{equation}
    \label{eq:symmetry-of-lift}
\begin{tikzcd}
	{\prol(A)} && {\prol(A)} &[-1em] {T^2} && {T^2} \\
	{\prol^2(A)} & {\prol^2(A)} & {\prol^2(A)} & {T^3} & {T^3} & {T^3}
	\arrow["{T.\ell}"{description}, from=1-4, to=2-4]
	\arrow["{c.T}"', from=2-4, to=2-5]
	\arrow["{T.c}"', from=2-5, to=2-6]
	\arrow["c", from=1-4, to=1-6]
	\arrow["{\ell.T}"{description}, from=1-6, to=2-6]
	\arrow["{id \x T.\hat\lambda}", from=1-1, to=2-1]
	\arrow["\sigma", from=1-1, to=1-3]
	\arrow["{\sigma \x c}"', from=2-1, to=2-2]
	\arrow["{id \x T.\sigma}"', from=2-2, to=2-3]
	\arrow["\hat\lambda\x\ell"{description}, from=1-3, to=2-3]
\end{tikzcd}
\end{equation}
which, using string diagrams for monoidal categories (\cite{selinger2010survey}), is the equation
\begin{equation*}
    \begin{tikzpicture}
	\begin{pgfonlayer}{nodelayer}
		\node [style=none] (0) at (0, 0) {};
		\node [style=none] (1) at (0, 0.75) {};
		\node [style=new style 0] (2) at (0.5, 0) {};
		\node [style=none] (3) at (1.75, 1) {};
		\node [style=none] (4) at (1.75, 0.5) {};
		\node [style=none] (5) at (1.75, 0) {};
		\node [style=none] (6) at (2.75, 0.5) {$=$};
		\node [style=none] (7) at (3.5, 0.75) {};
		\node [style=none] (8) at (3.5, 0) {};
		\node [style=new style 0] (9) at (4.75, 0.75) {};
		\node [style=none] (10) at (5.75, 0) {};
		\node [style=none] (11) at (5.75, 0.5) {};
		\node [style=none] (12) at (5.75, 1) {};
	\end{pgfonlayer}
	\begin{pgfonlayer}{edgelayer}
		\draw (0.center) to (2);
		\draw [in=-180, out=15] (1.center) to (5.center);
		\draw [in=180, out=15, looseness=0.75] (2) to (3.center);
		\draw [in=180, out=0] (2) to (4.center);
		\draw [in=-180, out=0, looseness=1.25] (8.center) to (9);
		\draw [in=-150, out=30, looseness=0.75] (7.center) to (10.center);
		\draw [in=180, out=45] (9) to (12.center);
		\draw [in=-165, out=-45, looseness=0.75] (9) to (11.center);
	\end{pgfonlayer}
\end{tikzpicture}

\end{equation*}
where the circle denotes the lift and $c$ is the crossing of two lines. 
\begin{proposition}%
    \label{prop:nat-of-sigma-ell}
    Let $(\pi:A \to M, \xi, \lambda, \anc)$ be an anchored bundle, and suppose that
    \[
        \sigma: \prol(A) \to \prol(A)  
    \]
    satisfies the involution axiom (i) and the symmetry of lift axiom (iii), and furthermore that the involution ``exchanges'' the idempotents associated to the two lifts $(\lambda \x \ell)$ and $(0 \x c \o T.\lambda)$ on $\prol(A)$ (Proposition \ref{prop:idempotent-natural})
    \[
        \sigma \o ((p \o \lambda) \x (p \o \ell)) = (p \o 0) \x (p \o c \o T.\lambda) = id \x (T.p \o T.\lambda)
    \]
    Then the double linearity axiom is equivalent to the left-hand commuting diagram in Diagram \ref{eq:symmetry-of-lift}:
    \[
      (\hat{\lambda} \x \ell) \o \sigma = (id \x T.\sigma) \o (\sigma \x c) \o (id \x T.\hat{\lambda}).  
    \]
\end{proposition}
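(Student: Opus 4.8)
The plan is to read both the double linearity square (Definition \ref{def:involution-algd}(ii)) and the target identity
\[ (\hat{\lambda}\x\ell)\o\sigma = (id \x T.\sigma)\o(\sigma\x c)\o(id\x T.\hat{\lambda}) \]
as equalities of maps $\prol(A) \to \prol^2(A)$ (resp.\ $\prol(A)\to T\prol(A)$), and to test equality against the legs of the defining limit $\prol^2(A) = \prolong \ts{T.\anc}{T^2.\pi} T^2A$. Since $\prol^2(A)$ is a $T$-limit, the target identity holds if and only if it holds after composing with the projection onto the $\prol(A)$-factor and with the projection onto $T^2A$ (equivalently, onto the $T\prol(A)$-leg $TA \ts{T.\anc}{T^2.\pi}T^2A$, which is the codomain of double linearity). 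First I would record, exactly as in the well-typedness discussion following Definition \ref{def:involution-algd}, the componentwise descriptions: $\sigma\x c$ acts by $\sigma$ on the $\prol(A)$-factor and by $c$ on the $T^2A$-factor, while $id\x T.\sigma$ fixes the base and applies $T.\sigma$ on the $T\prol(A)$-factor.

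The conceptual heart is reconciling the generalized lift $\hat{\lambda} = (\xi\o\pi,\lambda)$ appearing in the target identity with the two prolongation lifts $\lambda\x\ell$ and $0\x c\o T.\lambda$ appearing in double linearity. The key observation is that $id\x T.\hat{\lambda}$ feeds $\sigma$ the datum $(a,\,T.\xi\o\anc\o a)$, which is the image of the zero section $(id, T.\xi\o\anc)$ of the reindexed differential bundle of Lemma \ref{prop:anc-prol-fun}(2); equivalently it is the idempotent $e_2 = id\x(T.p\o T.\lambda)$ applied to the input. Writing $e_1 = (p\o\lambda)\x(p\o\ell)$, the exchange hypothesis $\sigma\o e_1 = e_2$ together with the involution axiom (i) gives $\sigma\o e_2 = e_1$, so $\sigma$ carries this section datum of the second lift to the section datum $(\xi\o\pi, 0\o p)$ of the first. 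This is the step that converts the $\hat{\lambda}$-phrasing into a $\lambda$-phrasing, while axiom (iii), $\sigma\o\hat{\lambda} = \hat{\lambda}$, is what lets the genuine lift contribution pass through $\sigma$ untouched.

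With those reductions in hand, I expect the $T^2A$-leg of the target identity to collapse, after the section and zero terms cancel, to exactly the double linearity equation $T.\sigma\o(\lambda\x\ell) = (0\x c\o T.\lambda)\o\sigma$; conversely, assuming double linearity makes this leg commute. The $\prol(A)$-leg should commute unconditionally: on this factor both sides reduce to $\hat{\lambda}$ applied after the base projection, and equality is forced by (i) together with the fact that $\sigma$ covers the identity on the base (so $\pi\o\pi_0\o\sigma = \pi\o\pi_0$). Since this second leg never invokes double linearity, the two implications assemble into the claimed \emph{equivalence} rather than a one-directional statement.

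The main obstacle is bookkeeping rather than conceptual novelty: one must keep the nested pullback $\prol^2(A)$, its sub-leg $T\prol(A)$, and the overloaded projections $\pi_i$ straight, and verify well-typedness at each intermediate composite (as previewed for the Yang--Baxter map in the discussion after Definition \ref{def:involution-algd}). The delicate point is pinning down precisely where each hypothesis enters — axiom (iii) to carry the honest lift through $\sigma$, and the idempotent-exchange identity with (i) to swap the two section idempotents — since a slip in matching these is exactly what would silently downgrade the biconditional to a single implication. A useful cross-check is to specialise to the tangent bundle $(TM, p, 0, id, c)$, where the identity becomes the tangent-category naturality law $\ell.T\o c = T.c\o c.T\o T.\ell$ displayed alongside it in Equation \ref{eq:symmetry-of-lift}.
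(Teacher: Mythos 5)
Your plan is essentially the paper's proof: a componentwise computation over the defining limit of $\prol^2(A)$, in which the exchange-of-idempotents hypothesis combined with involutivity converts the section datum $(u, T.\xi\o\anc\o u)$ produced by $id \x T.\hat\lambda$ into the zero-section datum of the other lift, the base component agrees unconditionally, and the remaining component is exactly the double linearity equation $T.\sigma \o (0 \x c \o T.\lambda) = (\lambda \x \ell)\o\sigma$. The one quibble is your attribution of a role to axiom (iii): in the paper's calculation the conversion step is carried entirely by the exchange identity together with $\sigma\o\sigma = id$, and (iii) does not visibly enter the displayed computation.
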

\begin{proof}
    Starting with the left-hand side of the equation, 
    \begin{align*}
        &\quad (id \x T.\sigma) \o (\sigma \x c) \o (id \x T.\hat\lambda) \o (u,v) \\
        &= (id \x T.\sigma) \o (\sigma \x c) \o (u, T.\xi \o T.\pi \o v, T.\lambda v) \\
        &= (id \x T.\sigma) \o (\sigma \o (u, T.\xi \o \anc \o u), T.\lambda v) \\
        &= (id \x T.\sigma) \o (\xi \o \pi \o u, 0 \o \anc \o u, T.\lambda v) \\
        &= (id \x T.\sigma) \o (\xi \o \pi \o u, 0 \o T.\pi \o v, T.\lambda v) \\
        &= (\xi \o \pi \o u, T.\sigma \o \hat \lambda \o v) .
    \end{align*}
    For the right-hand side:
    \begin{align*}
        &\quad (\hat \lambda, \ell) \o \sigma \o (u,v) \\
        &= (\xi \o \pi \o \pi_0 \o \sigma\o (u,v), (\lambda \x \ell) \o \sigma\o (u,v)) \\
        &= (\xi \o \pi \o p \o v, (\lambda \x \ell) \o \sigma\o (u,v)) \\
        &= (\xi \o p \o T.\pi \o v, (\lambda \x \ell) \o \sigma\o (u,v)) \\
        &= (\xi \o p \o \anc \o u, (\lambda \x \ell) \o \sigma\o (u,v)) \\
        &= (\xi \o \pi \o u, (\lambda \x \ell) \o \sigma\o (u,v)) \\
        &= (\xi \o \pi \o u, T.\sigma \o (0 \x c \o T.\lambda) \o (u,v)).
    \end{align*}\pagenote{Added a step to clarify the calculation.}
    So the naturality equation \ref*{eq:symmetry-of-lift} is equivalent to
    \[
        T.\sigma \o (0 \x c \o T.\lambda) = (\lambda \x \ell) \o \sigma.
    \]
\end{proof}

The category of involution algebroids is ``tangent monadic'' over the category of anchored bundles, in the same sense that anchored bundles are tangent monadic over the category of differential bundles, or internal categories over reflexive graphs in a category. The ``tangent monadicity'' leads to a similar observation about $T$-limits of involution algebroids as in Observation \ref{obs:t-limits-of-anc}.
\begin{observation}%
    \label{obs:inv-tmonad-anc}
    The forgetful functor from involution algebroids to anchored bundles creates limits; that is to say, the $T$-limits of the underlying anchored bundles give the limits of involution algebroids. Recall that by Observation \ref{obs:computation-of-lim-prol}, the limit $\prol(\lim A_i) = \lim \prol(A_i)$ in the category of anchored bundles, and this induces a map between objects in $\C$
    \[
        \lim \sigma_i: \lim \prol(A_i) \to \lim \prol(A_i).  
    \]
    The axioms for an involution algebroid are induced by universality. 
\end{observation}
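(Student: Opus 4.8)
The plan is to show that the forgetful functor $U : \mathsf{Inv}(\C) \to \mathsf{Anc}(\C)$ creates $T$-limits by exhibiting, for each diagram $D : \d \to \mathsf{Inv}(\C)$ whose underlying anchored-bundle diagram $U.D$ has a $T$-limit, a unique involution on that limit turning the limiting cone into a cone of involution algebroids, and then checking this cone is terminal among involution-algebroid cones. First I would take the $T$-limit $(\pi : A \to M, \xi, \lambda, \anc)$ of $U.D$ in $\mathsf{Anc}(\C)$, with limit projections $\phi_i : A \to A_i$; this exists and is computed pointwise in $\C$ by Observation \ref{obs:t-limits-of-anc}. Applying the prolongation functor $\prol$ (Proposition \ref{cor:prol-functor}) and invoking Observation \ref{obs:computation-of-lim-prol}, which records that $\prol$ preserves $T$-limits, gives $\prol(A) = \lim \prol(A_i)$ with jointly monic projections $\prol(\phi_i)$; the same applied twice yields $\prol^2(A) = \lim \prol^2(A_i)$.

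Next I would produce the involution on the limit. Because each arrow $D(g)$ of the diagram is an involution-algebroid morphism, the squares $\prol(D(g)) \o \sigma_i = \sigma_j \o \prol(D(g))$ commute, so the family of composites $\sigma_i \o \prol(\phi_i)$ is a cone over the diagram $i \mapsto \prol(A_i)$. By the universal property of $\prol(A) = \lim \prol(A_i)$ there is a unique map $\sigma : \prol(A) \to \prol(A)$ satisfying $\prol(\phi_i) \o \sigma = \sigma_i \o \prol(\phi_i)$; this is the candidate involution, and by construction every projection $\phi_i$ is an involution-algebroid morphism.

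The core of the argument is verifying the five axioms of Definition \ref{def:involution-algd}. Each is an equality of parallel maps landing in $\prol(A)$ or (for Yang--Baxter) in $\prol^2(A)$, both of which are limits with jointly monic projections; hence it suffices to check each equation after post-composing with $\prol(\phi_i)$, respectively $\prol^2(\phi_i)$. Here I would use that all the maps occurring in the axioms are compatible with the limit cone: $\sigma$ intertwines with $\sigma_i$ by its defining property, the generalized lift $\hat\lambda$ and the structure maps $\lambda, \ell, 0, c$ commute with the projections by naturality together with the pointwise computation of the limit, and $\prol(\phi_i)$ is itself an anchored-bundle morphism. Thus post-composition with $\prol(\phi_i)$ collapses each axiom to the corresponding axiom for the involution algebroid $A_i$, which holds by hypothesis; joint monicity then lifts it back to $A$.

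Finally, given any cone $\psi_i : B \to A_i$ of involution algebroids, its underlying anchored-bundle cone factors uniquely through $A$ via a morphism $\psi$, and the same joint-monicity argument — post-composing $\prol(\phi_i) \o \prol(\psi) \o \sigma_B$ and $\prol(\phi_i) \o \sigma \o \prol(\psi)$ and reducing to the fact that each $\psi_i$ preserves $\sigma$ — shows $\psi$ preserves $\sigma$, so the lifted cone is limiting in $\mathsf{Inv}(\C)$. The main obstacle is the bookkeeping in this reduction step, in particular making the Yang--Baxter axiom descend: it lives in $\prol^2$ and mixes $\sigma$ with $c$, so it requires both that $\prol^2$ preserves the limit and that $c$ is natural with respect to $T^2(\phi_i)$ — both available, but which must be applied in concert.
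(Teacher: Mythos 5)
Your proposal is correct and follows the same route as the paper's (very terse) argument: compute the limit of the underlying anchored bundles, use that $\prol$ commutes with $T$-limits so that $\prol(\lim A_i) = \lim \prol(A_i)$, induce $\sigma$ as the unique map compatible with the $\sigma_i$ by universality, and deduce the involution-algebroid axioms from joint monicity of the limit projections. You simply fill in the details (including the terminality of the lifted cone) that the paper leaves implicit in the phrase ``induced by universality.''
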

Note that a point-wise tangent structure may be defined following the anchored bundles example from Lemma \ref{lem:tangent-anchored-bundle}:\pagenote{Added some preamble to this result to help differentiate it from the prolongation tangent structure.}
\begin{proposition}%
    \label{prop:pointwise-tangent-structure-inv}
    For a tangent category $\C$, the category of involution algebroids has a ``point-wise'' tangent structure that maps objects as follows:
    \[
        (\pi:A \to M, \xi, \lambda, \anc, \sigma) \mapsto
        (T.\pi, T.\xi, c \o T.\lambda, T.\anc, \sigma_T)  
    \]
    where $\sigma_T$ is defined as
    \begin{equation}\label{eq:sigmaTdeff}
        \sigma_T := (1 \x_c c)\o T.\sigma\o (1 \x_c c): \prol(A_T) \to \prol(A_T).
    \end{equation}
\end{proposition}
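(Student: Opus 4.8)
The plan is to exhibit this assignment as the lift of the pointwise tangent structure on anchored bundles (Lemma \ref{lem:tangent-anchored-bundle}) along the forgetful functor $U:\mathsf{Inv}(\C) \to \mathsf{Anc}(\C)$. By Observation \ref{obs:inv-tmonad-anc} this functor creates $T$-limits, and every tangent-category axiom is either an equation in the structure maps $p,0,+,\ell,c$ or a universal property expressed via $T$-limits. Since all of these already hold for the underlying anchored bundles by Lemma \ref{lem:tangent-anchored-bundle}, it suffices to (a) produce the involution $\sigma_T$ on $\prol(TA)$ and check it satisfies the five axioms of Definition \ref{def:involution-algd}, and (b) verify that each pointwise structure map $p,0,+,\ell,c$ of the anchored-bundle tangent structure is actually a morphism of involution algebroids, i.e. commutes with the relevant involutions after applying $\prol$.

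First I would pin down the typing of $\sigma_T$. The underlying anchored bundle of $T(A,\sigma)$ has anchor $c \o T.\anc$, so its prolongation is $\prol(TA) = TA \ts{c\o T.\anc}{T^2.\pi} T^2A$; on the other hand, since $T$ preserves the defining $T$-pullback of $\prol(A)$, one has $T\prol(A) \cong TA \ts{T.\anc}{T^2.\pi} T^2A$. The map $1 \x_c c$ is the identity on the first leg and $c$ on the second, and it is an isomorphism $\prol(TA) \xrightarrow{\cong} T\prol(A)$: well-typedness is precisely the equivalence $c \o T.\anc \o u = T^2.\pi \o w \iff T.\anc \o u = T^2.\pi \o (c\o w)$, which follows from $c \o c = id$ and the naturality of $c$ against $T.\pi$. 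Conjugating $T.\sigma$ by this isomorphism (its inverse is again $1\x_c c$, as $c\o c=id$) yields $\sigma_T: \prol(TA) \to \prol(TA)$.

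Next I would verify the involution-algebroid axioms for $\sigma_T$ by transporting those for $\sigma$ through $T$ and the conjugation. Involution (i) is immediate: $\sigma_T \o \sigma_T = (1\x_c c)\o T(\sigma\o\sigma)\o(1\x_c c) = id$, using $T(\sigma\o\sigma)=id$ and that $1\x_c c$ is self-inverse. The target axiom (iv), the symmetry-of-lift axiom (iii), and the double-linearity axiom (ii) each reduce to applying $T$ to the corresponding diagram for $\sigma$ and then absorbing the bookkeeping flips, governed by the coherences $c \o c = id$, $c \o \ell = \ell$, and the naturality equations TC.2.(iii); the equivalent formulation of double linearity in Proposition \ref{prop:nat-of-sigma-ell} is convenient here, as it isolates the interaction of $\sigma$ with the two lifts.

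The main obstacle will be the Yang--Baxter axiom (v), which lives on the second prolongation $\prol^2(TA)$ and forces one to track $c$-flips at three tangent levels at once. Here the plan is to rewrite $\sigma_T \x c$ and $1 \x T.\sigma_T$ in terms of $T(\sigma \x c)$, $T(1 \x T.\sigma)$ and strings of flips, and then reduce the hexagon for $\sigma_T$ to the hexagon for $\sigma$ using exactly the Yang--Baxter coherence $c.T \o T.c \o c.T = T.c \o c.T \o T.c$ (TC.2.ii) together with naturality; this is the one place where the threefold coherence of $c$ is genuinely needed, mirroring the relationship between the Jacobi identity and Yang--Baxter discussed in the chapter introduction. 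Finally, since $U$ creates $T$-limits, checking that $p,0,+,\ell,c$ lift to involution-algebroid morphisms is a diagram chase establishing $\prol(f)\o\sigma_T = \sigma\o\prol(f)$ (and its evident variants) for each structure map $f$, after which all tangent-category axioms are inherited from the anchored-bundle tangent structure of Lemma \ref{lem:tangent-anchored-bundle}.
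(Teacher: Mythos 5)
Your proposal is correct and follows essentially the same route as the paper's proof: both inherit the tangent structure from the forgetful functor to anchored bundles (Lemma \ref{lem:tangent-anchored-bundle}), construct the cospan isomorphism $\prol(TA) \cong T\prol(A)$ via $1 \x_c c$, define $\sigma_T$ by conjugating $T.\sigma$, and then check the involution-algebroid axioms by transporting those of $\sigma$ through $T$ and absorbing the flips. Your write-up is if anything more explicit than the paper's about where the real work lies (the Yang--Baxter hexagon and the verification that the structure maps are involution-algebroid morphisms), both of which the paper dispatches as "straightforward" or "by construction."
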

\begin{proof}    
    The tangent structure on involution algebroids is inherited from the functor $\mathsf{Inv}(\C) \to \mathsf{Anc}(\C)$. Thus, it suffices to give the involution map for the tangent involution algebroid.

    Note that given $(\pi:A \to M, \xi, \lambda, \anc, \sigma)$, the space of prolongations on $(T.\pi, T.\xi, \newline c\o T.\lambda, c\o T.\anc)$ is $TA \ts{c\o T.\anc}{T^2\pi} T^2A$. We can construct an isomorphism between the objects $\prol(TA)$ and $T(\prol(A))$ in $\C$ using the cospan isomorphism
\[\begin{tikzcd}
    TA & {T^2M} & {T^2A} \\
    TA & {T^2M} & {T^2A}
    \arrow["c \o T.\anc", from=1-1, to=1-2]
    \arrow["{T^2.\pi}"', from=1-3, to=1-2]
    \arrow["T.\anc"', from=2-1, to=2-2]
    \arrow["{T^2.\pi}", from=2-3, to=2-2]
    \arrow["id"', from=1-1, to=2-1]
    \arrow["c"', from=1-2, to=2-2]
    \arrow["c"', from=1-3, to=2-3]
\end{tikzcd}\]
    thus inducing a map
    \[
       TA \ts{c\o T.\anc}{T^2\pi} T^2A
       \xrightarrow[]{id \x c} T(\prolong)
       \xrightarrow[]{T.\sigma} T(\prolong)
       \xrightarrow[]{id \x c} TA \ts{c\o T.\anc}{T^2\pi} T^2A
    \]
    which we call $\sigma_T$. The linearity and involution axioms follow by construction.
    
    Now check the rest of the axioms. For the unit:
    \begin{gather*}
        (1 \x_c c)\o T.\sigma\o (1 \x_c c) \o (T.(\xi\o\pi), c \o T.\lambda)\\
        = (1 \x_c c) \o T.\sigma \o (T(\xi\pi), T\lambda) 
        = (1 \x_c c) \o (T.(\xi\o\pi), T.\lambda) = (T.(\xi\o\pi), c \o T.\lambda).
    \end{gather*}
    For the anchor:
    \begin{gather*}
        T.\anc_T \o \pi_1 \o \sigma_T = T(c\o T\anc) \o \pi_1 \o (1 \x_c c)\o T.\sigma\o (1 \x_c c)\\
        = T.c\o c \o  T^2.\anc \o T.\pi_1 \o T.\sigma\o (1 \x_c c)
        = T.c \o c \o T.c \o T^2.\anc \o T.\pi_1 \o (1 \x_c c)\\
        = c \o T.c \o c \o T^2\anc \o c \o \pi_1  = c \o Tc \o T^2.\anc \o \pi_1
        = c \o T.\anc_T \o \pi_1.
    \end{gather*}
    The Yang--Baxter equation is straightforward to check.
\end{proof}
The tangent bundle is a canonical involution algebroid on every object in a tangent category, and the anchor induces a morphism from an involution algebroid to the tangent involution algebroid on its base space.
The anchor acts as a reflector from involution algebroids in $\C$ to $\C$ itself.
\begin{proposition}%
    \label{prop:c-refl-in-inv}
    Any tangent category $\C$ is a reflective subcategory of the category of involution algebroids in $\C$.
\end{proposition}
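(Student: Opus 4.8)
The plan is to exhibit the reflection directly, in exact parallel with the anchored-bundle case of Example \ref{ex:anchored-bundles}(i). Recall from Example \ref{ex:inv-algds}(i) that the assignment $M \mapsto (TM, p, 0, id, c)$ is a fully faithful functor $i : \C \hookrightarrow \mathsf{Inv}(\C)$, and that for every involution algebroid $A = (\pi : A \to M, \xi, \lambda, \anc, \sigma)$ the pair $(\anc, id_M)$ is a morphism of involution algebroids $A \to i(M)$ (this is where the target axiom is used). I would take the candidate left adjoint $L : \mathsf{Inv}(\C) \to \C$ to send each involution algebroid to its base object $M$, and then show that the anchor $\eta_A := (\anc, id_M) : A \to i(L(A))$ is the universal arrow from $A$ to $i$. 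Since $i$ is fully faithful, producing such a universal arrow for every object is exactly what is required to make $\C$ a reflective subcategory of $\mathsf{Inv}(\C)$; the functoriality of $L$ and the naturality of $\eta$ then follow formally from the universal property.

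The one substantive thing to verify is the universal property of $\eta_A$. Given any object $N \in \C$ and any involution algebroid morphism $g = (g_1, g_0) : A \to i(N)$, where $g_1 : A \to TN$ is the map of total spaces and $g_0 : M \to N$ the map of base objects, I need a unique $\bar g : M \to N$ in $\C$ with $i(\bar g) \o \eta_A = g$. Writing $i(\bar g) = (T\bar g, \bar g)$, the composite is $i(\bar g)\o\eta_A = (T\bar g \o \anc,\ \bar g)$. Matching base components forces $\bar g = g_0$, which gives uniqueness immediately. For existence I set $\bar g = g_0$ and must check $T g_0 \o \anc = g_1$; but the anchor of the tangent involution algebroid $i(N)$ is $id_{TN}$, so the anchor-preservation square of Definition \ref{def:anchored_bundles} applied to the morphism $g$ reads $id_{TN} \o g_1 = T g_0 \o \anc$, i.e.\ precisely $g_1 = T g_0 \o \anc$. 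Hence $\bar g = g_0$ works, and $\eta_A$ is universal.

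The main (and essentially only) subtlety is bookkeeping: confirming that $\eta_A$ really is a morphism of involution algebroids, and not merely of anchored bundles, which requires the involution-preservation condition $\prol(\anc, id_M) \o \sigma = c \o \prol(\anc, id_M)$. Unwinding the isomorphism $\prol(TM) \cong T^2 M$ from Example \ref{ex:prolongations}(i) together with the action of $\prol$ on morphisms, this reduces exactly to the target axiom $T.\anc \o \pi_1 \o \sigma = c \o T.\anc \o \pi_1$, so I would simply cite Example \ref{ex:inv-algds}(i) for it. Everything else is automatic: the collapse of the anchor-preservation condition to $g_1 = T g_0 \o \anc$ is the crux of the argument, and it is precisely the analogue of the fact that, for the tangent involution algebroid, the anchor is the identity. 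No limit constructions or tangent-structure coherences are needed, so I expect no real obstacle beyond this identification.
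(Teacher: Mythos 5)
Your proof is correct and follows essentially the same route as the paper: the reflector is the base-space functor, the unit is the anchor $(\anc, id_M)$, and the whole argument turns on the anchor of the tangent involution algebroid being the identity so that anchor-preservation forces $g_1 = Tg_0 \o \anc$. The paper merely packages this as an idempotent monad on $\mathsf{Inv}(\C)$ whose algebras are $\C$, whereas you verify the universal arrow directly; these are equivalent formulations of the same reflection.
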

\begin{proof}
    First, observe that the inclusion of $\C$ into $\mathsf{Inv}(\C)$ (Definition \ref{def:involution-algd})\pagenote{The notation $\mathsf{Inv}(\C)$ had not yet been introduced at this point, it has been added to the definition of involution algebroids and a reference has been made to that definition.} is fully faithful because the anchor on the tangent involution algebroid is $id: TM \to TM$. This means that the only involution algebroid morphisms $TA \to TB$ are pairs $(Tf,f)$, $f: A \to B$.
    Now consider the functor $\mathsf{Inv}(\C) \to \C$ that sends $(\pi:A \to M, \xi, \lambda, \anc, \sigma)$ to $M$: this gives an endofunctor $S: \mathsf{Inv}(\C) \to \mathsf{Inv}(\C)$ along with a natural transformation $\anc: id \Rightarrow S$, so that $S.\anc = \anc.S = id$, given by the anchor map.
    Thus,  the category $\C$ is the category of algebras for an idempotent monad on $\mathsf{Inv}(\C)$.
\end{proof}
\begin{corollary}
    Let \[ \widehat{A} = (\pi:A \to M, \xi, \lambda, \anc, \sigma) \]   be an involution algebroid in a tangent category $\C$.
    \begin{enumerate}[(i)]
        \item The morphism
        \[(T.\pi,\pi): TA \to TM\]
        is an involution algebroid morphism from the tangent involution algebroid on $A$ to the tangent involution algebroid on $M$.
        \item The morphism
        \[(\anc,id):\widehat{A} \to TM\]  
        is an involution algebroid morphism.
    \end{enumerate}
    If pullback powers of $p \o \pi_1: \prol(A) \to A$ exist, then
    \[
        (p\o \pi_1: \prol(A) \to A, (\xi \o \pi, 0), (\lambda \x \ell))
    \] is a differential bundle\pagenote{A ``differentiable bundle'' slipped in.}; note that $\pi_0$ acts as an anchor. If the prolongations exist, then 
    \[
        (p \o \pi_1:\prol(A) \to A, (\xi \o \pi, 0), (\lambda \x \ell), \pi_0, 
        \sigma')
    \] is an involution algebroid; this follows from computing the pullback in the category of involution algebroids ($\sigma'$ is induced as in Observation \ref{obs:t-limits-of-anc}).
\end{corollary}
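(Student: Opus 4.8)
The plan is to read the three parts of the statement as the two legs and the apex of a single pullback square in $\mathsf{Inv}(\C)$. Parts (i) and (ii) produce a cospan
\[
\widehat{A} \xrightarrow{(\anc,\, id)} TM \xleftarrow{(T.\pi,\, \pi)} TA,
\]
whose common codomain is the canonical (``tangent'') involution algebroid $(TM, p, 0, \ell, id, c)$ on $M$ from Example \ref{ex:inv-algds}(i), and whose two sources are $\widehat{A}$ itself and the canonical involution algebroid $(TA, p, 0, \ell, id, c)$ on the object $A$. The final claim then asserts that $\prol(A)$, together with the stated differential-bundle data and the induced map $\sigma'$, is exactly the pullback of this cospan computed in $\mathsf{Inv}(\C)$. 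Since the forgetful functor $\mathsf{Inv}(\C) \to \mathsf{Anc}(\C)$ creates $T$-limits, and these are computed pointwise (Observation \ref{obs:inv-tmonad-anc} together with Observation \ref{obs:t-limits-of-anc}), the involution algebroid structure on the apex comes for free from the underlying pullback $A \ts{\anc}{T.\pi} TA = \prol(A)$ in $\C$.

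For part (i), I would simply invoke the functoriality of the fully faithful embedding $\C \hookrightarrow \mathsf{Inv}(\C)$ (Example \ref{ex:inv-algds}(i)): the pair $(T.\pi, \pi)$ is the image of $\pi \colon A \to M$, and each condition defining an involution algebroid morphism — compatibility with $p$, $0$, $\ell$, the anchor $id$, and the involution $c$ — reduces to naturality of the corresponding tangent-structure transformation, with involution preservation being naturality of $c$ against $\prol(T.\pi) = T^2.\pi$. For part (ii), the morphism $(\anc, id) \colon \widehat{A} \to TM$ is an anchored-bundle morphism because $\anc$ is linear, i.e. $T.\anc \o \lambda = \ell \o \anc$, and it preserves the anchor trivially ($id \o \anc = T.id \o \anc$); that it preserves the involution, i.e. $\prol(\anc)\o\sigma = c \o \prol(\anc)$, is precisely the target axiom (iv) of Definition \ref{def:involution-algd}, which forces $T.\anc \o \pi_1 \o \sigma = c \o T.\anc \o \pi_1$. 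Both verifications are essentially already recorded in Example \ref{ex:inv-algds}(i).

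For the apex, I would first note that the underlying $\C$-level cospan of total spaces is $A \xrightarrow{\anc} TM \xleftarrow{T.\pi} TA$, whose pullback is $\prol(A)$, while the cospan of base spaces is $A \xrightarrow{\pi} M \xleftarrow{id} M$, whose pullback is $A$; hence the apex is an anchored bundle with base $A$ and projection the induced map $p \o \pi_1 \colon \prol(A) \to A$. Appealing to Lemma \ref{prop:anc-prol-fun} (equivalently Proposition \ref{cor:prol-functor}) identifies the induced differential-bundle data as section $(\xi\o\pi, 0)$ and lift $\lambda \x \ell$, with the surviving projection into the tangent factor $TA$ serving as the anchor; this requires only that $T$-pullback powers of $p\o\pi_1$ exist, yielding the differential bundle. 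Finally, by Observation \ref{obs:inv-tmonad-anc}, the three involutions $\sigma$, $c$, $c$ of the cospan are compatible with the pullback cone and so induce a unique map $\sigma'$ on $\prol\bigl(\prol(A)\bigr) = \prol^2(A)$, whose involution algebroid axioms follow by universality. This is where the hypothesis that ``the prolongations exist'' is used: it guarantees that the space $\prol^2(A)$ on which $\sigma'$ must live is available, so the pullback can be formed in $\mathsf{Inv}(\C)$ rather than merely in $\mathsf{Anc}(\C)$.

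The main obstacle is this last step: one must be certain the two legs are genuine \emph{involution} algebroid morphisms (not merely anchored-bundle morphisms), since only then does Observation \ref{obs:inv-tmonad-anc} deliver $\sigma'$ and its axioms pointwise. Concretely, the delicate point is the interaction of the target axiom (iv) with the Yang--Baxter axiom (v) needed to confirm that $\sigma'$, defined on $\prol^2(A)$ by universality, satisfies the involution and Yang--Baxter identities; everything else — the projection, section, lift, and anchor compatibilities — is a routine diagram chase in naturality, already packaged by the pointwise-limit observations.
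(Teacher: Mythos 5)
Your proposal is correct and follows essentially the same route as the paper: parts (i) and (ii) are the morphisms supplied by Example \ref{ex:inv-algds}(i) (the embedding $\C \hookrightarrow \mathsf{Inv}(\C)$ and the target axiom, respectively), and the apex structure is obtained by computing the pullback of the cospan $\widehat{A} \to TM \leftarrow TA$ pointwise via Observations \ref{obs:t-limits-of-anc} and \ref{obs:inv-tmonad-anc}, exactly as the paper's inline justification indicates. Your identification of the induced anchor as the projection $\pi_1\colon \prolong \to TA$ is in fact the type-correct choice (consistent with $\anc'$ in Proposition \ref{prop:second-tangent-structure-inv-algds}), whereas the statement's ``$\pi_0$ acts as an anchor'' appears to be a typo.
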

The above corollary puts an involution algebroid structure on the differential bundle $(\prol(A), \lambda \x \ell)$. Note that the map $\sigma$ gives an isomorphism of differential bundles
\[
  (\prol(A),\lambda \x \ell) \to (\prol(A),0 \x c \o T.\lambda).
\]
Martinez observed that the canonical involution $\sigma$ puts a unique Lie algebroid structure on $(\prol(A),0 \x c \o T.\lambda)$, and $\sigma$ is a uniquely determined isomorphism of involution algebroids (see \cite{Leon2005}):
\begin{corollary}\label{cor:second-lie-algd-unique-map}
    For an involution algebroid $(\pi:A \to M, \xi, \lambda, \anc, \sigma)$, the isomorphism of differential bundles
    \[
        \sigma: (\prol(A), 0 \x c \o T.\lambda) \to (\prol(A), \lambda \x \ell)
    \]
    induces a second involution algebroid structure on $\prol(A)$.
\end{corollary}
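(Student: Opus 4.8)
The plan is to construct the second structure by \emph{transporting} the involution algebroid of the preceding corollary along $\sigma$. Write $B_1 := (\prol(A), \lambda \x \ell)$ for the differential bundle which, by the previous corollary, already carries an involution algebroid structure $\widehat{B_1}$ with anchor $\anc_1$ (the map $\pi_0$) and involution $\sigma'$, and write $B_2 := (\prol(A), 0 \x c \o T.\lambda)$ for the second differential bundle on $\prol(A)$. Both bundles share the projection $p \o \pi_1 : \prol(A) \to A$ and the section $(\xi \o \pi, 0)$. The double linearity axiom (ii) combined with involutivity (axiom (i)) shows that $\sigma : B_2 \to B_1$ is an isomorphism of differential bundles over the identity on $A$, with $\sigma^{-1} = \sigma$, exactly as recorded after Definition \ref{def:involution-algd}.

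First I would promote $\sigma$ to an isomorphism of anchored bundles by transporting the anchor along it: set $\anc_2 := \anc_1 \o \sigma$, so that $\sigma : (B_2, \anc_2) \to (B_1, \anc_1)$ preserves anchors by construction and is an anchored bundle isomorphism (its inverse being $\sigma$ itself). Applying the prolongation functor $\prol$ (functoriality from Proposition \ref{cor:prol-functor}) then yields an isomorphism $\prol(\sigma) : \prol(B_2) \to \prol(B_1)$, and since $\sigma = \sigma^{-1}$ and $\prol$ is a functor we have $\prol(\sigma)^{-1} = \prol(\sigma)$. I would define the involution on $B_2$ by conjugation,
\[
    \sigma_2 := \prol(\sigma) \o \sigma' \o \prol(\sigma) : \prol(B_2) \to \prol(B_2).
\]

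It then remains to verify the five axioms of Definition \ref{def:involution-algd} for $(B_2, \anc_2, \sigma_2)$. Each axiom is a commuting diagram built from the differential bundle structure, the anchor, the tangent functor, and $\prol$, and $\sigma_2$ is obtained from $\sigma'$ purely by conjugation with the isomorphism $\prol(\sigma)$. Involutivity (i) is immediate since $\sigma_2 \o \sigma_2 = \prol(\sigma) \o \sigma' \o \sigma' \o \prol(\sigma) = id$; the target axiom (iv) and the symmetry/linearity axioms (ii) and (iii) follow because $\sigma$ intertwines the anchors and the two lifts, so conjugation carries each defining equation for $\sigma'$ to the corresponding equation for $\sigma_2$. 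Here I would use that $\prol$ sends anchored bundle maps to lift morphisms (Proposition \ref{cor:prol-functor}), so that $T.\prol(\sigma)$ interacts correctly with $c \o T.\lambda$.

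The main obstacle is the Yang--Baxter axiom (v), which lives on the second prolongation $\prol^2(B_2)$ and interleaves $\sigma_2 \x c$ with $1 \x T.\sigma_2$. To push the transport through here I would need the conjugating isomorphism to be compatible with the ``almost tangent'' structure of $\mathsf{Anc}(\C)$ from Proposition \ref{prop:anc-almost-tang-cat}: concretely, that $\prol(\sigma)$ and $T.\prol(\sigma)$ assemble, via the isomorphism $\prol(B_{2,T}) \cong T.\prol(B_2)$ of Proposition \ref{prop:pointwise-tangent-structure-inv}, into a single conjugator of the Yang--Baxter hexagon, so that the hexagon for $\sigma'$ conjugates verbatim to the one for $\sigma_2$. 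Once this compatibility of $\prol$ with $T$ is established, all five diagrams for $\sigma_2$ reduce formally to the already-proven diagrams for $\sigma'$, yielding the desired second involution algebroid structure on $\prol(A)$.
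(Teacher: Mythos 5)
Your proposal is correct and takes essentially the same route as the paper, which states this corollary without a separate proof precisely because the structure on $(\prol(A), 0 \x c \o T.\lambda)$ is obtained by transporting the involution algebroid structure of the preceding corollary along the isomorphism $\sigma$ — exactly the conjugation argument you spell out. One small slip in your setup: the two differential bundles on $\prol(A)$ do not share the projection $p \o \pi_1$ (the second has projection $\pi_0$, and $\sigma$ \emph{exchanges} the two projections rather than fixing them), but since a differential bundle is determined by its lift and the double-linearity axiom makes $\sigma$ a lift isomorphism, your transport argument is unaffected.
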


Recall that Proposition \ref{prop:anc-almost-tang-cat} sketched out a proof that the category of anchored bundles in $\C$ has an endofunctor $\prol'$ and natural transformations $p', 0', +', \ell'$ satisfying the axioms of a tangent structure; this endofunctor and the natural transformations all lift to $\mathsf{Inv}(\C)$. The involution map $\sigma$ is the missing piece that gives a tangent structure on $\mathsf{Inv}(\C)$. The construction may be spelled out here at a big-picture level, but the actual proof brings up tricky coherence issues that make up the bulk of Chapter \ref{chap:weil-nerve}.
\begin{proposition}%
    \label{prop:second-tangent-structure-inv-algds}
    The category of involution algebroids in a tangent category $\C$ has a second tangent structure, where the tangent functor is given by\pagenote{Changed wording to help differentiate this structure against the pointwise tangent structure defined beforehand.}
    \[
        \prol': \mathsf{Inv}(\C) \to \mathsf{Inv}(\C)  
    \]
    and the tangent natural transformations are given as in Proposition \ref{prop:anc-almost-tang-cat}, with the canonical flip
    \[
        \sigma': \prol'.\prol' \to \prol'.\prol' := \prol^2(A) \xrightarrow[]{1 \x T.\sigma} \prol^2(A).  
    \]
    Starting with an involution algebroid, $\prol'(A)$ and $\prol'.\prol'(A)$ are given by
    \[
       \prol'(A) 
       \begin{cases}
            \pi':& \prol(A) \xrightarrow[]{p \o \pi_1} A \\
            \xi':& A \xrightarrow[]{(\xi \o \pi, 0)} \prol(A) \\
            \lambda':& \prol(A) \xrightarrow[]{\lambda \x \ell} T.\prol(A) \\
            \anc':& \prol(A) \xrightarrow[]{\pi_1} TA \\
            \sigma':& \prol^2(A) \xrightarrow{\sigma \x c} \prol^2(A)
        \end{cases}  
        \hspace{0.15cm}
        \prol'.\prol'(A)
        \begin{cases}
            \pi''& \prol^2(A) \xrightarrow[]{(p \o \pi_1, p \o \pi_2):} \prol(A) \\
            \xi'':& \prol(A) \xrightarrow[]{(\xi \o \pi \o \pi_0, 0 \o \pi_1, 0 \o \pi_2)} \prol^2(A) \\
            \lambda'':&\prol^2(A) \xrightarrow[]{(\lambda \x \ell \x \ell)} T.\prol^2(A) \\
            \anc'':& \prol^2(A) \xrightarrow[]{(\pi_1, \pi_2)} T.\prol(A) \\
            \sigma'':& \prol^3(A) \xrightarrow[]{(\sigma \x c \x c)}\prol^3(A) 
        \end{cases}  
    \]
    (where $\prol^3(A) = \prolong \ts{T.\anc}{T^2.\pi} T.(\prolong)$).
    The tangent natural transformations are given by
    \begin{itemize}
        \item $p: \prol' \Rightarrow id; \hspace{0,15cm} \prol(A) \xrightarrow[]{\pi_0} A$ 
        \item $0: id \Rightarrow \prol'; \hspace{0,15cm} A \xrightarrow[]{(id,T.\xi \o T.\pi)} \prol'(A)$
        \item $+: \prol'_2 \Rightarrow \prol'; \hspace{0,15cm} A \ts{\anc}{T.\pi \o \pi_i} T_2A \xrightarrow[]{id \x +} \prol(A)$
        \item $\ell:\prol'(A) \Rightarrow \prol'.\prol'; \hspace{0,15cm} \prol(A) \xrightarrow[]{1 \x T.(\xi\o\pi, \lambda)} \prol^2(A)$
        \item $\anc': \prol' \Rightarrow T$
        \item $c:\prol'.\prol'(A) \Rightarrow \prol'.\prol';\hspace{0,15cm}  \prol^2(A) \xrightarrow[]{1 \x T.\sigma} \prol^2(A)$.
    \end{itemize}
\end{proposition}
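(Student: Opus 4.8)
The plan is to leverage Proposition \ref{prop:anc-almost-tang-cat}, which already equips the prolongation endofunctor $\prol'$ with natural transformations $p'$, $0'$, $+'$, and $\ell'$ satisfying every axiom of Definition \ref{def:tangent-cat} that does not mention the canonical flip. All of this data lifts from $\mathsf{Anc}(\C)$ to $\mathsf{Inv}(\C)$: the corollary preceding Corollary \ref{cor:second-lie-algd-unique-map} shows that $\prol'(A)$ carries an involution algebroid structure (with involution $\sigma \times c$), while $\prol'$ preserves involution algebroid morphisms and each of $p'$, $0'$, $+'$, $\ell'$ is an involution algebroid morphism, since $T$-limits in $\mathsf{Inv}(\C)$ are created from those in $\mathsf{Anc}(\C)$ (Observation \ref{obs:inv-tmonad-anc}) and the structure maps are induced by the same universal properties as the involutions. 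It therefore remains only to introduce the canonical flip $\sigma' = 1 \times T.\sigma$ on $\prol^2(A)$, confirm that it is a natural transformation valued in involution algebroid morphisms, and verify the flip axioms [TC.2] and [TC.3](iii).

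First I would dispose of the routine flip axioms. Naturality and well-typedness of $\sigma'$ hold because $\prol'$ is functorial, every involution algebroid morphism preserves $\sigma$, and $c$ is natural. The involution axiom [TC.2](i) is immediate:
\[ (1 \times T.\sigma) \circ (1 \times T.\sigma) = 1 \times T.(\sigma \circ \sigma) = 1 \times T.id = id, \]
using the involution axiom of $A$. Symmetric co-multiplication [TC.3](iii) is equally direct from $\ell' = 1 \times T.\hat\lambda$ and the symmetry-of-lift axiom $\sigma \circ \hat\lambda = \hat\lambda$:
\[ \sigma' \circ \ell' = (1 \times T.\sigma) \circ (1 \times T.\hat\lambda) = 1 \times T.(\sigma \circ \hat\lambda) = 1 \times T.\hat\lambda = \ell'. \]
The naturality equations of [TC.2](iii) relating $\sigma'$ to $p'$, $0'$, and $+'$ are componentwise identities on the pullback $\prol^2(A)$: as $\sigma'$ fixes the first factor and acts as $T.\sigma$ on the remaining ones, each reduces to the compatibility of $\sigma$ with the differential bundle structure from Proposition \ref{prop:anc-almost-tang-cat} together with the corresponding base-level naturality of $c$.

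The content of the argument concentrates in two equations. The flip--lift naturality in [TC.2](iii), which for $(\prol', \sigma', \ell')$ takes the form $\prol'.\sigma' \circ \sigma'.\prol' \circ \prol'.\ell' = \ell'.\prol' \circ \sigma'$, is the one-level-up incarnation of Diagram \ref{eq:symmetry-of-lift}; I would obtain it by prolonging the identity of Proposition \ref{prop:nat-of-sigma-ell}, namely $(\hat\lambda \times \ell) \circ \sigma = (1 \times T.\sigma) \circ (\sigma \times c) \circ (1 \times T.\hat\lambda)$, which is exactly the double-linearity axiom recast as a flip--lift naturality for $\sigma$ itself. The Yang--Baxter equation [TC.2](ii), living on $\prol^3(A)$, is where I expect the real difficulty: after expanding the whiskered transformations $\sigma'.\prol'$ and $\prol'.\sigma'$ into explicit maps on the threefold prolongation, the resulting hexagon must be shown to collapse onto the involution algebroid's own Yang--Baxter axiom (v) of Definition \ref{def:involution-algd}, combined with the Yang--Baxter equation for $c$ in $\C$ (Equation \ref{eq:tc-2-2}) and the intervening naturality squares.

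The main obstacle is thus not any single identity but the bookkeeping of the iterated pullbacks $\prol^2(A)$ and $\prol^3(A)$ and the correct identification of the whiskerings $\sigma'.\prol'$, $\prol'.\sigma'$ with the componentwise maps $\sigma \times c$, $1 \times T.\sigma$ and their iterates. This is precisely the delicate coherence work carried out in full in Chapter \ref{chap:weil-nerve}; at the present level it suffices to record that each flip axiom for the $\prol'$-tangent structure reduces to an involution algebroid axiom of $A$ paired with the corresponding tangent axiom of the ambient category $\C$.
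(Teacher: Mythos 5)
Your outline is essentially sound, but it takes a genuinely different route from the paper. The paper does not verify the tangent axioms for $\prol'$ directly on $\mathsf{Inv}(\C)$: it defers the proof to Section \ref{sec:prol_tang_struct}, where the result is obtained as a corollary of the Weil nerve theorem (Theorem \ref{thm:weil-nerve}). There the category of involution algebroids with chosen prolongations is identified with the category of $T$-cartesian, transverse-limit-preserving tangent functors $(A,\alpha):\wone\to\C$, and the second tangent structure is simply \emph{precomposition} with $W\ox(-)$ on $\wone$; every coherence then holds automatically because $\wone$ is itself a tangent category and the structure maps are whiskerings $A.\theta$ for $\theta\in\{p,0,+,\ell,c\}$. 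What that machinery buys is exactly the step you flag as the main obstacle: the identification of the whiskered transformations with the componentwise maps on the iterated pullbacks is carried out once and for all in Lemma \ref{lem:cwm-map-structure} (e.g.\ $A.c.T=\sigma\x c$, $A.T.c=1\x T.\sigma$, $A.\ell.T=\hat\lambda\x\ell$, $A.T.\ell=id\x T.\hat\lambda$), using the $T$-cartesian property rather than hand computation on $\prol^3(A)$, and the universality/pullback-existence requirements of [TC.1] and [TC.3](iv) come from transverse-limit preservation via Lemma \ref{lemma:pullback-part-of-theorem}. Your direct verifications of the involution axiom and of $\sigma'\o\ell'=\ell'$, and your matching of flip--lift naturality with Proposition \ref{prop:nat-of-sigma-ell} and of Yang--Baxter for $c'$ with axiom (v) of Definition \ref{def:involution-algd}, all agree with the correspondences established inside the proof of Theorem \ref{thm:weil-nerve}, so the reductions you assert are correct. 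The one soft spot is that your proposal ultimately \emph{asserts} rather than proves the two hardest identifications (that $\prol'.\sigma'$, $\sigma'.\prol'$, $\prol'.\ell'$, $\ell'.\prol'$ really are the claimed componentwise maps on $\prol^2(A)$ and $\prol^3(A)$); a self-contained direct proof would have to supply those computations, which is precisely the bookkeeping the paper's span/nerve formalism is built to avoid.
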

\begin{proof}
    Deferred to \Cref{sec:prol_tang_struct}.
\end{proof}

\section{Connections on an involution algebroid}%
\label{sec:connections_on_an_involution_algebroid}
In this section we take an involution algebroid with a chosen linear connection on its underlying anchored bundle (Definition \ref{def:involution-algd}, \ref{def:lin-connection}), and rederive Martinez's structure equations for a Lie algebroid (Proposition \ref{prop:la-iff-structure-morphisms}). 

In a tangent category $\C$  with negatives, there is a natural transformation\pagenote{Clarifying notation for this section as there is substantially more fibered linear algebra, including the use of substraction, than is present in other sections/chapters.}
\[
    n:T \Rightarrow T  
\]
making each fibred commutative monoid $(p:TM \Rightarrow M, 0, +, n)$ a fibred abelian group. Because the additive bundle structure on differential bundles is induced via universality (Proposition \ref{prop:induce-abun}), in a tangent category with negatives the additive bundle structure for differential bundles will likewise have negatives.
We adopt the following notation for the ``fibred linear algebra'' used in this section, as there are a significant number of computations done on bundles with multiple choices of additive bundle structure (e.g. the second tangent bundle of a differential bundle has three additive bundles structures).
\begin{notation}
    Let $E$ be an object with multiple differential bundle structures $(\pi^i:E \to M^i, \xi^i, \lambda^i)$ in a tangent category with negatives. Addition over a specific differential bundle is written using infix notation, where a subscript is added to the symbol denoting the projection of the differential bundle. Letting $x,y:X \to E$ denote a pair of generalized elements for which the $\pi^i$-addition operation is well-defined; we set
    \[
        x +_{\pi[i]} y = X \xrightarrow[]{(x,y)} E \ts{\pi[i]}{\pi[i]} E \xrightarrow[]{+^i} E.
    \]
    Similar notation is used for subtraction:
    \[
        x -_{\pi[i]} y  = X 
        \xrightarrow[]{(x,y)} E \ts{\pi[i]}{\pi[i]} E 
        \xrightarrow[]{id \x n[i]} E \ts{\pi[i]}{\pi[i]} E 
        \xrightarrow[]{+^i} E.
    \]
\end{notation}

Throughout this section, we work in a tangent category $\C$ with negatives\pagenote{I had  originally forgotten to add the ``with negatives'' caveat} and a chosen anchored bundle $(\pi:A \to M, \xi, \lambda, \anc)$, equipped with a connection $(\kappa,\nabla)$, whose base object has a torsion-free connection $(\kappa',\nabla')$ and a morphism
\[
    \sigma: \prol(A) \to \prol(A)
\]
that exchanges the two projection maps $p \o \pi_1, \pi_0$, meaning that the following diagram commutes:
\begin{equation*}
    \begin{tikzcd}[column sep = small, row sep = small]
        & \prol(A) \ar{ld}[swap]{\pi_0} \ar{rd}{p \o \pi_1} \ar[dd,"\sigma" description] \\
    A & & A \\
        & \prol(A) \ar{lu}{p \o \pi_1} \ar{ru}[swap]{\pi_0} \\
\end{tikzcd}
\end{equation*}
The following notation will be useful when working with local coordinates.
\begin{notation}
    First, recall the $\nabla$-notation for morphisms of differential bundle where each morphism has a choice of connection from Equation \ref{eq:nabla-notation}:
    \[
        \infer{\nabla[f] := \kappa^B \o T.f \o \nabla^A:A_2 \to B}{f:A \to B & (\kappa^A,\nabla^A,A, \lambda^A) & (\kappa^B,\nabla^B, B, \lambda^B)}
    \]
    Let \[(\pi:A \to M, \xi, \lambda, \anc), (q:B \to N, \zeta, l, \rho)\] be a pair of anchored bundles equipped with connections.
    ``Hatting'' a map $f:\prol(A) \to \prol(B)$ refers to the map:
    \[
        \widehat{f}:A_3 \xrightarrow[]{\hat\nu(\pi_0,\pi_2) + \hat\nabla(\pi_0,\pi_1)} \prol(A) \xrightarrow[]{f} \prol(B) \xrightarrow[]{(\pi_0,p \o \pi_1, \kappa \o \pi_1)} B_3.  
    \]
    Similarly, for $f:TA \to TB$,
    \[
        \widehat{f}: TM \ts{p}{\pi} A \ts{\pi}{\pi} A \xrightarrow[]{\nu(\pi_0,\pi_2) + \nabla(\pi_0,\pi_1)} TA \xrightarrow[]{f} TB \xrightarrow[]{(\pi_0,p \o \pi_1, \kappa \o \pi_1)} TM \ts{p}{q} B \ts{\pi}{q} B  
    \]
    Clearly, $\widehat{f \o g} = \widehat{f} \o \widehat{g}$. Similarly, a map $A_3 \to B_3$ may be ``barred'' to form a map $\prol(A) \to \prol(B)$:
    \[
        \overline{g}:\prol(A) \xrightarrow[]{(\pi_0,p \o \pi_1, \kappa \o \pi_1)} A_3  \xrightarrow[]{g} B_3 \xrightarrow[]{\hat\nu(\pi_0,\pi_2) + \hat\nabla(\pi_0,\pi_1)} \prol(B).
    \]
    It is straightforward to see that $\overline{\widehat{f}} = f, \widehat{\overline{g}} = g$.
\end{notation}

\begin{lemma}\label{lem:intertwining-induces-bracket}
    $\sigma: \prol(A) \to \prol(A)$ induces a bracket on $\Gamma(\pi)$:
    \[
        \lambda \o [X,Y]
        = (\sigma \o (id, TX \o \anc) \o Y -_{T\pi} (id, TY \o \anc) \o X) - 0\o Y.
    \]    
\end{lemma}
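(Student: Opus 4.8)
The plan is to show that the right-hand side, which \emph{a priori} is only a morphism $M \to TA$, factors through the lift $\lambda \colon A \to TA$. Since $A$ is a non-singular lift, $\lambda$ is the equalizer exhibiting $A$ (Definition \ref{def:non-singular-lift}, Theorem \ref{thm:universal-prop-differential-bundles}) and is in particular monic, so such a factorization determines the bracket $[X,Y] \colon M \to A$ uniquely; a final check that $\pi \o [X,Y] = id$ then places it in $\Gamma(\pi)$. Conceptually this reproduces the construction of the bracket of vector fields on $TM$ through the universality of $\mu$ (Corollary \ref{cor:mu-universal}), with $\sigma$ playing the role that the canonical flip $c$ plays there.

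First I would check that the subtractions are well typed. Writing $u_1 = (id, T.X \o \anc) \o Y$ and $u_2 = (id, T.Y \o \anc) \o X$ as morphisms $M \to \prol(A)$, the hypothesis that $\sigma$ exchanges the projections $\pi_0$ and $p \o \pi_1$, together with the defining constraint $T.\pi \o \pi_1 = \anc \o \pi_0$ of $\prol(A)$ and the naturality of $p$, gives $p \o \pi_1 \o \sigma \o u_1 = \pi_0 \o u_1 = Y$ and $T.\pi \o \pi_1 \o \sigma \o u_1 = \anc \o \pi_0 \o \sigma \o u_1 = \anc \o X$. The same two projections of $\pi_1 \o u_2 = T.Y \o \anc \o X$ are again $Y$ and $\anc \o X$. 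Thus $\pi_1 \o \sigma \o u_1$ and $\pi_1 \o u_2$ agree under \emph{both} $p \colon TA \to A$ and $T.\pi \colon TA \to TM$; their difference is ``doubly vertical,'' and each subtraction in the formula is well typed, being taken in the fibres of one of these two projections.

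The crux is then the verticality of the right-hand side. Because $p$ and $T.\pi$ are each additive bundle morphisms for the two differential bundle structures on $TA$ (Lemma \ref{prop:anc-prol-fun}), subtracting over one projection sends the other projection to a zero section. Carrying this out for the two successive subtractions, and using $p \o 0 = id$, $T.\pi \o 0 = 0 \o \pi$, I would obtain $p \o (\text{RHS}) = \xi$ and $T.\pi \o (\text{RHS}) = 0$. By the universal property of the non-singular lift (Theorem \ref{thm:universal-prop-differential-bundles}), the two conditions that the $p$-image lie in the image of $\xi$ and that the $T.\pi$-image be a zero section are exactly what force $\text{RHS}$ into the image of $\lambda$. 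Hence $\text{RHS} = \lambda \o [X,Y]$ for a unique $[X,Y]$, and $\xi \o \pi \o [X,Y] = p \o \lambda \o [X,Y] = \xi$ (Proposition \ref{prop:idempotent-natural}) gives $\pi \o [X,Y] = id$ since $\xi$ is monic.

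The main obstacle is organizational rather than conceptual: both $\prol(A)$ and $TA$ carry several coincident additive bundle structures, and the real work is to track which projection each subtraction is performed over and to confirm that the displayed subscripts yield well-typed expressions throughout. Once the two projections $p \o (\text{RHS})$ and $T.\pi \o (\text{RHS})$ have been computed, verticality---and therefore the well-definedness of $[X,Y]$---is forced by the universal property, so I expect no genuinely hard step beyond this bookkeeping with the fibred structures.
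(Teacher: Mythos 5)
Your proposal is correct and is essentially the paper's own argument: the paper likewise verifies that the two terms agree under both projections, computes that the two projections of the difference are zero sections, and then invokes the universality of the lift to produce the unique section $[X,Y]$. The only cosmetic difference is that you phrase the verticality check on the $TA$-component via $p$ and $T.\pi$ and appeal to Theorem \ref{thm:universal-prop-differential-bundles}, whereas the paper computes $\pi_0$ and $p \o \pi_1$ of the element of $\prol(A)$ and appeals to the universality of $\hat\lambda$ from Proposition \ref{prop:lift-axioms-anchor}, which is derived from the same universal property.
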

\begin{proof}
    Let $X,Y \in \Gamma(\pi)$ and compute:
    \begin{align*}
         &\quad p \o \pi_1 \o (\sigma \o (id, TX \o \anc) \o Y -_{\pi_0} (id, TY \o \anc) \o X) \\
        &= p \o (\pi_1 \o \sigma \o (id, TX \o \anc) \o Y -_{T\pi} TY \o \anc \o X) \\
        &= p \o \pi_1 \o \sigma \o (id, TX \o \anc) \o Y -_{\pi} p\o TY \o \anc \o X \\
        &= \pi_0 \o (id, TX \o \anc) \o Y - Y \o p \o \anc \o X \\
        &= Y - Y = \xi, \\
         & \pi_0 \o ( \sigma \o (id, TX \o \anc) \o Y -_{\pi_0} (id, TY \o \anc) \o X) \\
        &= \pi_0 \o \sigma \o (id, TX \o \anc) \o Y -_{\pi} \pi_0 \o (id, TY \o \anc) \o X \\
        &= p \o \pi_1 \o (id, TX \o \anc) \o Y -_{\pi} X \\
        &= p \o TX \o \anc \o Y -_{\pi} X = X -_{\pi} X = \xi. 
    \end{align*}
    The universality of the lift induces a new section $[X,Y]$ so that
    \[
        \lambda \o [X,Y]
        = (\sigma \o (id, TX \o \anc) \o Y -_{T\pi} (id, TY \o \anc) \o X) - 0Y.
    \]    
\end{proof}

\begin{definition}\label{def:linear-intertwining}
    The map $\sigma:\prol(A) \to \prol(A)$ is \emph{linear} whenever the two bundle morphisms
    \[
        \sigma: (\prol(A),\lambda \x \ell) \to (\prol(A),0 \x c \o T\lambda)
        \hspace{0.25cm}
        \sigma: (\prol(A),0 \x c \o T\lambda) \to (\prol(A),\lambda \x \ell)
    \]
    are linear, and \emph{cosymmetric} if \[\sigma \o \widehat{\lambda} = \widehat{\lambda} = (\xi\pi,\lambda).\]
    Note that whenever $\sigma$ is linear and cosymmetric, $\sigma \o \hat\mu (u,v) = \hat\nu (u,v)$.
\end{definition}
Linearity and unit axioms, along with the connection on the differential bundle, force the existence of a bilinear bracket $\<-,-\>$ as in the definition of a Lie algebroid in Proposition \ref{prop:lie-alg-bil-defn}.
\begin{proposition}\label{prop:intertwining-linear-iff-bracket-linear}
    For an anchored bundle $(\pi:A \to M, \xi, \lambda, \anc)$ with connection $(\kappa, \nabla)$, a cosymmetric and bilinear $\sigma$ is equivalent to a bilinear $\<-,-\>:A_2 \to A$, with the correspondence given by
    \[
        \infer{\sigma: \prol(A)\xrightarrow[\overline{(\pi_1,\pi_0, \pi_2 + \< \pi_0,\pi_1 \>)}]{} \prol(A)}
        {\< -,-\>: \kappa \o \sigma \o \nabla -_\pi \kappa \o \nabla}
    \]
\end{proposition}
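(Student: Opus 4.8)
The plan is to transport the entire statement into the ``local coordinate'' presentation supplied by the connection, where both sides become elementary. By Lemma \ref{lem:anchored-bundle-conn} the connection $(\kappa,\nabla)$ furnishes an isomorphism $\prol(A)\cong A_3$, and the ``hatting''/``barring'' operations are mutually inverse bijections between self-maps of $\prol(A)$ and self-maps of $A_3$ (recall $\overline{\widehat f}=f$ and $\widehat{\overline g}=g$). First I would record the coordinate descriptions of the maps in the statement. Writing $\phi=(\pi_0,p\o\pi_1,\hat\kappa)$ for the isomorphism $\prol(A)\to A_3$ with $\hat\kappa=\kappa\o\pi_1$, one computes $\phi\o\hat\lambda=(\xi\pi,\xi\pi,\mathrm{id})$, so the generalized lift of Definition \ref{def:lhat} has image the ``third axis'' $\{(0,0,a)\}$; and $\phi\o\hat\nabla=(\pi_0,\pi_1,0)$, i.e. $\hat\nabla(x,y)=(x,y,0)$, using that $\hat\nabla$ is a section of $(\pi_0,p\o\pi_1)$ landing in the horizontal subbundle (so $\hat\kappa\o\hat\nabla=\xi\pi\o\pi_0$). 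Under $\sigma\mapsto\widehat\sigma$ the claim reduces to the coordinate statement: \emph{$\sigma$ is cosymmetric and bilinear iff $\widehat\sigma(a,b,c)=(b,a,c+B(a,b))$ for a unique bilinear $B\colon A_2\to A$, in which case $B=\hat\kappa\o\sigma\o\hat\nabla-_\pi\hat\kappa\o\hat\nabla=\<-,-\>$.}

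For the direction building $\sigma$ from a bracket, given bilinear $\<-,-\>$ I would set $\widehat\sigma:=(\pi_1,\pi_0,\pi_2+\<\pi_0,\pi_1\>)$ and $\sigma:=\overline{\widehat\sigma}$. Cosymmetry is then immediate: $\widehat\sigma(0,0,a)=(0,0,a+\<0,0\>)=(0,0,a)$ since bilinearity gives $\<0,0\>=0$, whence $\sigma\o\hat\lambda=\hat\lambda$. For bilinearity I would identify, via Lemma \ref{prop:anc-prol-fun}, the two lift structures $(\lambda\x\ell)$ and $(0\x c\o T\lambda)$ on $\prol(A)$ with the two linear axes of $A_3$, and observe that the coordinate swap $(a,b)\mapsto(b,a)$ is an isomorphism interchanging these axes while the correction $\<\pi_0,\pi_1\>$ is linear in each axis separately (being bilinear); hence $\widehat\sigma$, and therefore $\sigma$, is a morphism of differential bundles in both directions, i.e. $\sigma$ is bilinear in the sense of Definition \ref{def:linear-intertwining}.

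For the converse direction, and the extraction formula, I would start from a cosymmetric bilinear $\sigma$ and show its coordinate form is forced. The two linearity conditions, spelled out through the explicit lifts $\lambda\x\ell$ and $0\x c\o T\lambda$, say that $\widehat\sigma$ carries each linear axis linearly into the other over identity base maps, forcing $\widehat\sigma(a,b,c)=(b,a,L(a,b,c))$ with $L$ linear along both axes; cosymmetry $\widehat\sigma(0,0,a)=(0,0,a)$ then fixes the coefficient of $c$ to be the identity, so $L(a,b,c)=c+B(a,b)$ with $B(a,b):=L(a,b,0)$, and biadditivity of $L$ forces $B$ bilinear. Finally I would evaluate the extraction formula in coordinates: since $\hat\nabla(x,y)=(x,y,0)$ and $\hat\kappa$ reads off the third coordinate, $\hat\kappa\o\sigma\o\hat\nabla(x,y)$ is the third coordinate of $\widehat\sigma(x,y,0)=(y,x,B(x,y))$, namely $B(x,y)$, while $\hat\kappa\o\hat\nabla(x,y)=0$; hence $\<-,-\>=B$ and $\overline{(\pi_1,\pi_0,\pi_2+\<\pi_0,\pi_1\>)}=\sigma$. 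Together with $\overline{\widehat f}=f$ this shows the two assignments are mutually inverse, giving the equivalence.

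I expect the main obstacle to be precisely the identification used above: translating the two abstract linearity conditions of Definition \ref{def:linear-intertwining} (preservation of the lifts $\lambda\x\ell$ and $0\x c\o T\lambda$ under $T\sigma$) into the concrete assertion that $\widehat\sigma$ interchanges the first two coordinates over identity base maps and is affine in the third with a bilinear correction. This is where the interaction between the connection maps $\kappa,\nabla$ and the two prolongation lift structures must be computed carefully; everything else is formal bookkeeping with the hat/bar dictionary.
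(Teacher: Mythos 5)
Your proposal is correct and follows essentially the same route as the paper: both proofs use the connection's hat/bar dictionary $\prol(A)\cong A_3$ to reduce the statement to the coordinate form $\widehat\sigma(a,b,c)=(b,a,c+\<a,b\>)$, with cosymmetry corresponding to fixing the third axis and bilinearity to the correction term being bilinear. The only difference is emphasis — the paper extracts the bracket by directly distributing $\sigma$ over the decomposition $\hat\nabla+\hat\mu$ and declares the converse immediate, whereas you argue that linearity and the standing assumption that $\sigma$ exchanges the two projections force the coordinate form, and you spell out the converse in more detail.
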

\begin{proof}
    Derive
    \begin{align*}
    \widehat{\sigma} \o (x,y,z)
        &= (y,x, \kappa \o \sigma \o (\nabla(\anc x,y) +_{p\o\pi_1} \mu(y, z)))\\
        &= (y,x, \kappa \o \sigma \o \nabla(\anc x, y) +_\pi \kappa \o \sigma \o \mu (y,z))\\
        &= (y,x, \kappa \o \sigma \o \nabla(\anc x, y) +_\pi \kappa \o \nu \o (y,z))\\
        &= (y,x, \kappa \o \sigma \o \nabla(\anc x, y) +_\pi z)\\
        &=: (y,x, \langle x, y\rangle +_\pi z )
    \end{align*}
    where $\<-,-\>$ is certainly bilinear. The converse is immediate: take
    \[
        \widehat{\sigma}(u,v,w) := (v,u,w + \< u,v\>).
    \]
    It is easy to see that $\sigma$ is linear and cosymmetric.    
\end{proof}
The linear bracket must be involutive for the bilinear bracket to be alternating.
\begin{proposition}\label{prop:intertwining-involutive}
    If $\sigma$ is cosymmetric and linear, then the bilinear bracket $\< -,-\>$ is alternating if and only if $\sigma\o \sigma = id$.
\end{proposition}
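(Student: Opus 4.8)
The plan is to transport the entire statement across the ``hatting'' isomorphism $\prol(A) \cong A_3$ supplied by the chosen connection, where $\sigma$ acquires the transparent normal form recorded in Proposition \ref{prop:intertwining-linear-iff-bracket-linear}. Concretely, for a cosymmetric and linear $\sigma$ that proposition gives
\[
  \widehat{\sigma}(x,y,z) = (y,\, x,\, z + \langle x, y\rangle),
\]
so the involution condition $\sigma \circ \sigma = id$ on $\prol(A)$ should reduce, after hatting, to a statement about the additive third component. First I would record that hatting is a bijection on endomorphisms that is compatible with composition and sends identities to identities: the relations $\overline{\widehat{f}} = f$, $\widehat{\overline{g}} = g$ and $\widehat{f \circ g} = \widehat{f} \circ \widehat{g}$ from the notation above make it a functorial bijection, and $\widehat{id} = id$ on $A_3$ because the two maps $(\pi_0, p\pi_1, \kappa\pi_1)$ and $\hat\nu(\pi_0,\pi_2) + \hat\nabla(\pi_0,\pi_1)$ are exactly the mutually inverse halves of the isomorphism $\prol(A) \cong A_3$ from Lemma \ref{lem:anchored-bundle-conn}(iii).

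With this bookkeeping in place the argument is a single computation in generalized elements. Using functoriality of hatting together with the normal form,
\[
  \widehat{\sigma \circ \sigma}(x,y,z) = \widehat{\sigma}\bigl(\widehat{\sigma}(x,y,z)\bigr) = \widehat{\sigma}(y,\, x,\, z + \langle x,y\rangle) = \bigl(x,\, y,\, z + \langle x,y\rangle + \langle y,x\rangle\bigr).
\]
Hence $\widehat{\sigma \circ \sigma} = id$ on $A_3$ if and only if $z + \langle x,y\rangle + \langle y,x\rangle = z$ for all generalized elements, i.e. if and only if $\langle x,y\rangle + \langle y,x\rangle = 0$, which is precisely the requirement that $\langle -,-\rangle$ be alternating in the sense of Proposition \ref{prop:lie-alg-bil-defn}. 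Since hatting is a bijection, $\widehat{\sigma \circ \sigma} = id$ is equivalent to $\sigma \circ \sigma = id$, so both directions of the biconditional are delivered by this one equality and there is no genuine asymmetry between them to manage.

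The only real care is in the first paragraph: justifying that one may reason entirely in the $A_3$-coordinates and, in particular, that $\widehat{id} = id$, so that the involution axiom really does translate into the vanishing of the correction term $\langle x,y\rangle + \langle y,x\rangle$. I expect this normalization to be the main (though modest) obstacle; once it is established, the result is immediate from the additive normal form of $\widehat{\sigma}$, and no further appeal to the Euler/lift coherences or to the anchor is required.
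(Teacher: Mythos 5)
Your proposal is correct and follows essentially the same route as the paper's proof: both pass to the hatted normal form $\widehat{\sigma}(x,y,z) = (y,x,z+\langle x,y\rangle)$, compute $\widehat{\sigma}\circ\widehat{\sigma}(x,y,z) = (x,y,z+\langle x,y\rangle+\langle y,x\rangle)$, and conclude by cancellativity of the (group) addition in the third component. The only difference is that you spell out the bookkeeping that hatting is a composition-preserving bijection with $\widehat{id}=id$, which the paper leaves implicit in its notation.
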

\begin{proof}
    First, note that $\sigma \o \sigma = id$ if and only if $\widehat{\sigma} \o \widehat{\sigma} = id$.
    Then check that
    \[
        \widehat{\sigma} \o \widehat{\sigma} (u,v,w) = \widehat{\sigma}(v,u,w+\<u,v\>) = (u,v,w + \< u,v \> + \< v, u\>).
    \]
    By the cancellativity of addition on $A$, 
    \[
    w = w+ \< u,v \> + \< v,u \> \iff 0 = \< u,v \> + \< v,u \>.
    \]
\end{proof}
\begin{observation}
    A bilinear $\<-,-\>$ on an anchored bundle with a connection induces the maps
    \[
        \{ v, x\}_{(\kappa, \nabla)} := \kappa' \o T.\anc \o \nabla(v, x), \quad
        \{v,x,y\}_{(\kappa, \nabla)} := \kappa \o T(\langle -,-\rangle_{(\kappa, \nabla)})\o \nabla^{A_2} \o (v,x,y)
    \]
    from \Cref{eq:lie-algd-structure-maps} in Proposition \ref{prop:lie-alg-bil-defn}.
\end{observation}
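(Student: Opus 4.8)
The plan is to verify that the two displayed composites are well-typed morphisms of $\C$; once the domains and codomains are seen to match, the maps exist purely by composition and are by construction the structure maps of Equation~\ref{eq:lie-algd-structure-maps}, so nothing beyond this bookkeeping is required. I would handle the binary and ternary brackets separately.

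For $\{v,x\}_{(\kappa',\nabla)}$, recall that the horizontal connection on the underlying differential bundle is a morphism $\nabla: TM \ts{p}{\pi} A \to TA$ that is a section of the horizontal descent. Applying the tangent functor to the anchor gives $T.\anc: TA \to T^2M$, and the torsion-free connection on the base $M$ supplies a vertical connection $\kappa': T^2M \to TM$. The composite $\kappa' \o T.\anc \o \nabla$ is therefore a morphism $TM \ts{p}{\pi} A \to TM$, which is exactly the map of Definition~\ref{def:curly-bracket} and the first half of Equation~\ref{eq:lie-algd-structure-maps}. This part is immediate.

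For $\{v,x,y\}_{(\kappa,\nabla)}$ the only step needing an argument is that the pairing $\nabla^{A_2} := (\nabla(\pi_0,\pi_1), \nabla(\pi_0,\pi_2))$ factors through $T(A_2) = TA \ts{T.\pi}{T.\pi} TA$ rather than merely $TA \x TA$. Writing a generalized element of $TM \ts{p}{\pi} A \ts{\pi}{\pi} A$ as $(v,x,y)$, the defining identity of the horizontal connection recovers the tangent-vector component under $T.\pi$, so $T.\pi \o \nabla(v,x) = v = T.\pi \o \nabla(v,y)$; the two legs agree over $T.\pi$, and the pairing lands in the fibre product $T(A_2)$ as claimed (using that $T$ preserves this $T$-limit since $\pi$ is a differential bundle projection). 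Post-composing with $T(\<-,-\>): T(A_2) \to TA$, the tangent functor applied to the bilinear bracket, and then with the vertical connection $\kappa: TA \to A$ produces the composite $\kappa \o T(\<-,-\>) \o \nabla^{A_2}: TM \ts{p}{\pi} A \ts{\pi}{\pi} A \to A$, which is precisely the second half of Equation~\ref{eq:lie-algd-structure-maps}.

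The only genuine obstacle is the pullback compatibility for $\nabla^{A_2}$, and even this reduces to the single horizontal-lift identity; everything else is composing arrows whose types align. The conceptual point I would emphasize is that the argument uses only bilinearity of $\<-,-\>$ together with the chosen connections, and never the antisymmetry, Leibniz, or Jacobi conditions, so the structure maps are induced by \emph{any} bilinear bracket. This decouples the construction of the structure maps from the Lie algebroid axioms, which is exactly what the observation asserts.
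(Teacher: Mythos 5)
Your proposal is correct and matches the paper's treatment: the paper records this as an observation with no proof, since the content is exactly the definitional bookkeeping you carry out, and the maps were already introduced in Definition \ref{def:curly-bracket} and the definition of the derived ternary bracket. The one point you rightly single out — that $\nabla^{A_2}$ lands in $T(A_2)$ because both legs project to $v$ under $T.\pi$ via the horizontal-lift identity, and $T$ preserves the pullback $A_2$ — is the only nontrivial check, and your closing remark that nothing beyond bilinearity is used is precisely the point of stating this as a standalone observation.
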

\begin{proposition}\label{prop:cosymmetric-leibniz}
    Let $\sigma$ be cosymmetric and bilinear, with the associated bracket $\<-,-\>$. Then
    \[
        T\anc \o \pi_1 \o \sigma = c \o T\anc \o \pi_1
    \] 
    if and only if the Leibniz equation is satisfied:
    \begin{equation}
        \label{eq:liebniz-local}
        \anc \o \langle u,v \rangle + \{ \anc v, u\} = \{ \anc u, v.\}
    \end{equation}
\end{proposition}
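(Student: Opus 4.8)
The plan is to verify the claimed equivalence in local coordinates, transporting both sides of $T\anc \o \pi_1 \o \sigma = c \o T\anc \o \pi_1$ along the connection isomorphisms $\prol(A) \cong A_3$ of Lemma \ref{lem:anchored-bundle-conn} and $T^2M \cong T_3M$ induced by the torsion-free affine connection $(\kappa',\nabla')$ on $M$ (via $(p.T, T.p, \kappa')$). Since both isomorphisms are bijections, the two maps $\prol(A) \to T^2M$ agree if and only if their ``hatted'' forms $A_3 \to T_3M$ agree componentwise. As a first input, since $\sigma$ is cosymmetric and bilinear, Proposition \ref{prop:intertwining-linear-iff-bracket-linear} gives the local form $\widehat{\sigma}(x,y,z) = (y, x, \langle x,y\rangle + z)$.

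Next I would compute the three components of $T\anc \o \pi_1$ on a generalized element $(x,y,z) \in A_3$, that is, on the element of $\prol(A)$ whose second leg $\pi_1 \in TA$ has connection data $(p\pi_1,\, T\pi\,\pi_1,\, \kappa\pi_1) = (y,\, \anc x,\, z)$, the middle slot being forced to $\anc x$ by the prolongation constraint $T\pi\,\pi_1 = \anc\,\pi_0$. Using naturality of $p$ (so $p.T \o T\anc = \anc \o p$), the identity $p \o \anc = \pi$ (so $T.p \o T\anc = T\pi$), the linearity of the anchor $T\anc \o \lambda = \ell \o \anc$ together with $\kappa' \o \ell = id$ on the vertical part $\lambda(z)$, and the definition of the curly bracket on the horizontal part $\nabla(y,\anc x)$, this yields
\[
    T\anc \o \pi_1 (x,y,z) = (\anc y,\; \anc x,\; \{\anc x, y\} + \anc z).
\]
I would then record how $c$ acts on the decomposed $T^2M$: since $p.T \o c = T.p$, $T.p \o c = p.T$, and torsion-freeness gives $\kappa' \o c = \kappa'$, the canonical flip swaps the first two legs and fixes the third. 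Hence
\[
    c \o T\anc \o \pi_1 (x,y,z) = (\anc x,\; \anc y,\; \{\anc x, y\} + \anc z),
\]
while substituting $\widehat{\sigma}$ into the previous display gives
\[
    T\anc \o \pi_1 \o \sigma (x,y,z) = (\anc x,\; \anc y,\; \{\anc y, x\} + \anc\langle x,y\rangle + \anc z).
\]
The first two legs agree unconditionally, so (using that $\C$ has negatives, hence cancellativity of fibre addition to drop the common summand $\anc z$) the equation $T\anc \o \pi_1 \o \sigma = c \o T\anc \o \pi_1$ is equivalent to $\anc\langle x,y\rangle + \{\anc y, x\} = \{\anc x, y\}$, which is exactly the Leibniz equation \eqref{eq:liebniz-local} with $u = x$ and $v = y$.

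The main obstacle is the bookkeeping in establishing the local form of $T\anc \o \pi_1$: one must check that $T\anc$ and $\kappa'$ respect the additive splitting of $TA$ into its horizontal part $\nabla(y,\anc x)$ and its vertical part $\lambda(z)$, so that $\kappa' \o T\anc$ distributes over the sum and sends the horizontal piece to $\{\anc x, y\}$ and the vertical piece to $\anc z$. This relies on the compatibility of the anchor with the two lifts and on the linearity clauses of the connection, but is otherwise a routine (if lengthy) diagram chase; everything downstream of it is formal, amounting to the swap induced by $c$ and a single cancellation.
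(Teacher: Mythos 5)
Your proof is correct and follows essentially the same route as the paper: both pass to local coordinates via the connection-induced splittings, use the form $\widehat{\sigma}(x,y,z)=(y,x,z+\langle x,y\rangle)$ from Proposition \ref{prop:intertwining-linear-iff-bracket-linear} together with torsion-freeness to make $\widehat{c}$ the swap of the first two legs, and reduce the identity to a comparison of third components. Your write-up just makes explicit a few steps the paper leaves implicit (the naturality computations identifying the first two legs, and the cancellation of the common $\anc z$ summand), and uses the opposite ordering convention $(p.T, T.p, \kappa')$ on $T_3M$, which is immaterial.
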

\begin{proof}
    Following the given notation and using the hypothesis that the connection is torsion-free on $M$, 
    \[
        \widehat{T\anc \o \pi_1}(u,v,w) = (\anc u, \anc v, w + \{u,v\})
        \hspace{0.5cm}
        \widehat{c}(x,y,z) = (y,x,z). 
    \]
    Computing each side,
    \begin{align*}
        \widehat{T\anc} \o \widehat{\pi_1} \o \widehat{\sigma} \o (u,v,w)
        &= \widehat{T\anc} \o \widehat{\pi_1} \o (v,u,w + \langle u,v\rangle) \\
        &= \widehat{T\anc} \o (\anc v,u,w + \langle u,v\rangle) \\
        &= (\anc v,\anc u, \anc w + \anc \langle u,v\rangle + D[\anc](\anc v, u)) \\
        &= (\anc v,\anc u, \anc w + \anc \langle u,v\rangle + \{ v, u\} ),\\
        \widehat{c} \o \widehat{T\anc} \o \widehat{\pi_1}(u,v,w)
        &= \widehat{c} (\anc u, \anc v, \anc w + \{u, v\}) \\
        &= (\anc v, \anc u, \anc w + \{ u, v\}),
    \end{align*}
    so it follows that the two terms are equal if and only if the desired equality holds.
\end{proof}

\begin{lemma}\label{lem:tang-of-sigma}
    Let $\sigma$ be linear and cosymmetric.
    Then
    \begin{enumerate}[(i)]
        \item $T(\<-,-\>):TA_2 \to TA$ satisfies 
        \begin{align*}
            &\quad \widehat{T(\<-,-\>)}(a_x, u_y, u_z,u_{xy}, u_{xz})\\
            &= (a_x, \< u_y, u_z \>, \{a_x, u_y, y_z\} + \<u_y, u_{xz}\> + \langle u_{xy}, u_{z}\rangle); 
        \end{align*}
        \item $T.\sigma$ satisfies
        \begin{align*}
                &\quad \widehat{(id \x T(\widehat{\sigma}))}(u_{x}, u_{y}, u_{xy}, u_{z}, u_{xz}, u_{yz}, u_{xyz})\\
                &= (u_x, u_z, u_{xz}, u_y, u_{xy},  u_{yz} + \< u_y, u_z\>, \\&\quad u_{xyz} + \{a_x, u_y, y_z\} + \<u_y, u_{xz}\> 
                + \langle u_{xy}, u_{z}\rangle).
        \end{align*}
    \end{enumerate}
\end{lemma}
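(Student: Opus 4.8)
The plan is to prove both identities by local-coordinate bookkeeping, exploiting two facts already in hand: the functoriality of the hatting operation, $\widehat{f \o g} = \widehat f \o \widehat g$, and the explicit formula $\widehat\sigma(x,y,z) = (y,x,z +_\pi \<x,y\>)$ established in Proposition \ref{prop:intertwining-linear-iff-bracket-linear} under the standing hypotheses that $\sigma$ is linear and cosymmetric. Since part (ii) will be deduced from part (i), the real content is the computation of $\widehat{T(\<-,-\>)}$.

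First I would unfold the hat of the map $T(\<-,-\>): TA_2 \to TA$. Writing a generalized element of $TA_2 \cong TM \ts{p}{\pi} A_2 \ts{\pi}{\pi} A_2$ as $(a_x, u_y, u_z, u_{xy}, u_{xz})$, the connection-induced undecoration sends it to $\nu^{A_2}(a_x,(u_{xy},u_{xz})) + \nabla^{A_2}(a_x,(u_y,u_z))$, and the three output coordinates of $\widehat{T(\<-,-\>)}$ are recovered by post-composing $T(\<-,-\>)$ with the $TM$-valued coordinate ($T.\pi$), the base coordinate ($p$), and the fibre-derivative coordinate ($\kappa$). The first two are immediate: because $\pi \o \<-,-\> = \pi \o \pi_0$, naturality of $T.\pi$ delivers the $TM$-component $a_x$, and naturality of $p$ gives $p \o T(\<-,-\>)(\cdots) = \< u_y, u_z\>$, matching the first two slots of the claimed formula.

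The substantive step is the $\kappa$-component, where I would split $T(\<-,-\>)$ across the two summands of the undecoration. Applying $\kappa \o T(\<-,-\>)$ to the $\nabla^{A_2}$ summand is, by the very definition of the derived ternary bracket, exactly $\{a_x, u_y, u_z\}_{(\kappa,\nabla)}$. Applying it to the $\nu^{A_2}$ summand is the purely vertical/linear part; using bilinearity of $\<-,-\>$ together with the differential-bundle and lift coherences — which force $T$ of a bilinear map to obey the Leibniz product rule on vertical vectors — this evaluates to $\<u_y, u_{xz}\> + \<u_{xy}, u_z\>$. Summing the two contributions yields the stated third coordinate. This splitting is the only place requiring genuine care, and hence the main obstacle: one must verify that $T(\<-,-\>)$ is additive over the appropriate fibred bundle so that the sum decomposes at all, and that its restriction along $\nu^{A_2}$ is literally the product rule. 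These are precisely the points where the linearity of $\<-,-\>$ and the connection axioms are consumed; everything else is naturality and substitution.

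For part (ii) I would feed the explicit $\widehat\sigma(x,y,z) = (y,x,z+\<x,y\>)$ into the functoriality of hatting. Since $T$ preserves the swap and additive structure of $\widehat\sigma$, the map $T(\widehat\sigma)$ interchanges the two corresponding coordinate blocks and adds, in the last block, exactly the image of $T(\<-,-\>)$ computed in part (i). Prepending the identity factor and re-hatting $\mathsf{id} \x T(\widehat\sigma)$ then rearranges the seven coordinates into the asserted expression: the correction $u_{yz} + \<u_y,u_z\>$ arises from the base-level swap-and-add, while $u_{xyz} + \{a_x,u_y,u_z\} + \<u_y,u_{xz}\> + \<u_{xy},u_z\>$ is precisely the part-(i) formula transported into the top block. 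This final assembly is routine once part (i) is available.
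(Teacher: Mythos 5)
Your overall strategy coincides with the paper's: split the input tangent vector via the connection into a horizontal part and a vertical part, identify $\kappa \circ T\langle-,-\rangle$ on the horizontal part with the derived ternary bracket by definition, evaluate the vertical part by a product rule for $T$ of a bilinear map, and then deduce (ii) by pushing $\widehat\sigma(x,y,z)=(y,x,z+\langle x,y\rangle)$ through $T$ and invoking (i). However, two points need repair before the computation actually closes.

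First, the vertical summand you name, $\nu^{A_2}(a_x,(u_{xy},u_{xz})) = T\xi(a_x) +_p \lambda(u_{xy},u_{xz})$, is based at the zero section and contains no occurrence of $u_y$ or $u_z$, so $\kappa \circ T\langle-,-\rangle$ applied to it cannot produce $\langle u_y,u_{xz}\rangle + \langle u_{xy},u_z\rangle$. The summand the computation requires is the $\mu$-type vertical vector $\mu^{A_2}((u_y,u_z),(u_{xy},u_{xz})) = 0(u_y,u_z) + \lambda(u_{xy},u_{xz})$ based at $(u_y,u_z)$, which is what the paper's proof uses (the notation-section formula $\nu(\pi_0,\pi_2)+\nabla(\pi_0,\pi_1)$ invites the confusion, but only the $\mu$-decomposition carries the data needed for the answer). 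Second, the ``Leibniz product rule on vertical vectors'' is precisely the step the paper does not take for granted: it derives $T\langle-,-\rangle \circ \mu^{A_2} = \mu^{A} \circ (\langle\pi_0,\pi_1\rangle,\ \langle\pi_0,\pi_3\rangle+\langle\pi_1,\pi_2\rangle)$ from the universality of the vertical lift, via the pullback square built out of the bilinearity of $\langle-,-\rangle$. You correctly flag this as the crux but leave it asserted; supplying that pullback argument is where bilinearity is actually consumed, and without it part (i) --- and hence part (ii) --- is not yet proved. With those two repairs your argument becomes the paper's.
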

\begin{proof}~
    \begin{enumerate}[(i)]
        \item By the universality of the vertical lift and bilinearity of $\<-,-\>$, the outer squares below are pullbacks:
            \begin{equation*}
\begin{tikzcd}
    A_4 \ar{rd}{\mu^{A_2}} \ar[dotted]{rrr}{(\< \pi_0, \pi_1\>, \langle\pi_0,\pi_3\rangle + \langle \pi_1, \pi_2\rangle)} \ar{ddd}  
    & & & A_2 \ar{ddd} \ar{ld}{\mu^{A}} \\
    & T(A) \rar{T\langle,\rangle} \dar{T\pi \o T\pi_i} & T(A)\dar{T\pi}\\
    & TM \rar[equals] & TM \\
    M \ar{ru}{0} \ar[equals]{rrr} & & & M \ar{lu}{0}
\end{tikzcd}
\end{equation*}
            so that 
               \[T(\langle,\rangle)(0,u_y, u_z, u_{xy},u_{xz}) 
               = \mu^A(\< u_y,u_z \>, \<u_y, u_{xz}\> + \langle u_{xy}, u_{z}\rangle).\]
               Now we compute
               \begin{align*}
                   &\quad T(\langle,\rangle)(\mu^{A_2}((u_y,y_z), (u_{xy}, u_{xz})) +_p \nabla^{A_2}(a_x, (u_y,u_z))) \\
                   &= T(\langle,\rangle)\mu^{A_2}((u_y, u_z),(u_{xy}, u_{xz})) +_p T(\langle,\rangle)\o\nabla(a_x, (u_y,u_z))\\
                   &= \mu^A(\< u_y,u_z \>, \<u_y, u_{xz}\> + \langle u_{xy}, u_{z}\rangle) 
                   +_p T(\langle,\rangle)\o\nabla(a_x, (u_y,u_z)) 
               \end{align*}                   
               and then postcompose this with $(T\pi, p, \kappa)$ to obtain
               \begin{align*}
                   &\quad (0, \< u_y,u_z \>, \ \<u_y, u_{xz}\> + \langle u_{xy}, u_{z}\rangle) 
                   +_p (a_x, \< u_y, u_z \>, \{a_x, u_y, y_z\})\\
                   &=  
                   (a_x, \< u_y, u_z \>, \{a_x, u_y, y_z\} +  \<u_y, u_{xz}\> + \< u_{xy}, u_{z}\>). 
               \end{align*}
        \item Consider the following diagram:
               \begin{equation*}
\begin{tikzcd}
    TM \ts{p}{\pi} A_3 \ts{\pi}{\pi} A_3\rar{\widehat{T\widehat{\sigma}}} \dar[swap]{\nabla^{A^3} +_p \mu^{A^3}} 
    & TM \ts{p}{\pi} A_3 \ts{\pi}{\pi} A_3 \\
    T(A_3) \rar{T\widehat{\sigma}}[swap]{T(\pi_1, \pi_0, \pi_2 + \langle \pi_0, \pi_1 \rangle)}
    \dar[swap]{T(\nabla' +_{\pi_0} \mu'))} 
    & T(A_3) \ar{u}[swap]{(T(\pi^3),p, \kappa^{A_3})} \\
    T\o\prol(A) \rar{T\sigma} & T\o\prol(A) \ar{u}[swap]{T(\pi_0, p \o \pi_1, \kappa \o\pi_1)}
\end{tikzcd}
\end{equation*}
            We want to find $\widehat{T\widehat{\sigma}} = \widehat{T(\pi_1, \pi_0, \pi_2 + \< \pi_0, \pi_1 \> )}$.
            Note that
            \[
                T(\pi_1, \pi_0, \pi_2 +_\pi \< \pi_0, \pi_1 \>)
                = T(\pi_1, \pi_0, \pi_2) +_{T\pi} T(\xi\pi\pi_i, \pi_0, \< \pi_0, \pi_1\>).
            \]
            The left term is straightforward:
            \begin{align*}
                &\widehat{T(\pi_0, \pi_1, \pi_2)}(a_x, (u_y, u_{xy}), (u_z, u_{xz}), (u_{yz}, u_{xyz}))\\
                &=(a_x,  (u_z, u_{xz}), (u_y, u_{xy}),(u_{yz}, u_{xyz}))
            \end{align*}
            and for the right term, use part (i) of this lemma:
            \begin{align*}
                &\quad \widehat{T\<-,-\>}\o \widehat{T(\pi_0, \pi_1))}(a_x, (u_y, u_{xy}), (u_z, u_{xz}), (u_{yz}, u_{xyz})) \\
                &= 
                \widehat{T\<-,-\>}(a_x, u_y, u_{xy}, u_z, u_{xyz}) \\
                &= (a_x, \< u_y, u_z \>, 
                \{a_x, u_y, y_z\} + \<u_y, u_{xz}\> + \< u_{xy}, u_{z}\>) 
            \end{align*}

            Then compute
            \begin{align*}
                &\quad \widehat{T\widehat{\sigma}}(a_x, u_y, u_{xy}, u_z, u_{xz}, u_{yz}, u_{xyz}) \\
                &= (a_x, u_z, u_{xz}, u_y, u_{xy}, u_{yz} + \< u_y, u_z\>, u_{xyz} + q)
            \end{align*}
            where $q = \{a_x, u_y, y_z\} + \<u_y, u_{xz}\> + \langle u_{xy}, u_{z}\rangle$, 
            giving the desired equation.
    \end{enumerate}
\end{proof}

\begin{proposition}\label{prop:yang-baxter-iff-bianchi}
    Let $\sigma$ be cosymmetric, doubly linear, and involutive, and satisfy the target axiom.
    Then $\sigma$ satisfies the Yang--Baxter equation if and only if $\<-,-\>$ and $\{-,-,-\}$ satisfy the Bianchi identity: 
    \begin{equation}
        \label{eq:bianchi-identity}
        0
        =\sum_{\gamma \in \mathsf{Cy}(3)} \langle x_{\gamma_0}, \langle x_{\gamma_1}, x_{\gamma_2}\rangle\rangle 
        + \sum_{\gamma \in \mathsf{Cy}(3)} \{ \anc x_{\gamma_0}, x_{\gamma_1}, x_{\gamma_2}.\}
    \end{equation}
\end{proposition}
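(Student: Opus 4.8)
The plan is to transport the Yang--Baxter equation of Definition \ref{def:involution-algd}(v), namely
\[
    (\sigma \x c) \o (A \x T.\sigma) \o (\sigma \x c) = (A \x T.\sigma) \o (\sigma \x c) \o (A \x T.\sigma)
\]
on $\prol^2(A)$, across the connection-induced ``hatting'' isomorphism into an equation between two endomorphisms of the sevenfold fibre-product presentation of $\prol^2(A)$, whose components are indexed by the nonempty subsets of $\{x,y,z\}$ --- precisely the seven arguments $(u_x, u_y, u_{xy}, u_z, u_{xz}, u_{yz}, u_{xyz})$ appearing in Lemma \ref{lem:tang-of-sigma}(ii). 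Since hatting is functorial ($\widehat{f \o g} = \widehat f \o \widehat g$), it suffices to hat each factor and compose. This reduces the whole statement to a finite local-coordinate identity, which is the natural home for comparing the bracket $\<-,-\>$ and the ternary bracket $\{-,-,-\}$.

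First I would record the two building blocks in local coordinates. The factor $A \x T.\sigma$ is exactly $\widehat{(id \x T(\widehat\sigma))}$ computed in Lemma \ref{lem:tang-of-sigma}(ii): it fixes $u_x$, exchanges the $y$- and $z$-slots, and deposits a bracket term $\<u_y,u_z\>$ into the $u_{yz}$-component together with the term $\{\anc u_x, u_y, u_z\} + \<u_y, u_{xz}\> + \<u_{xy}, u_z\>$ into the top $u_{xyz}$-component. The remaining factor $\sigma \x c$ I would compute directly from Proposition \ref{prop:intertwining-linear-iff-bracket-linear}, which gives $\widehat\sigma(u,v,w) = (v,u,w+\<u,v\>)$ on the outer prolongation direction, together with the already-known local form of the canonical flip $c$, which merely permutes first-order slots, as in $\widehat c(x,y,z) = (y,x,z)$ from Proposition \ref{prop:cosymmetric-leibniz}. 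Double linearity, involutivity, and the target axiom guarantee that these hatted maps are well-defined and that their parts on subsets of size $\le 2$ are governed only by the flip combinatorics.

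The decisive step is to compose the three hatted factors on each side and compare componentwise. In every component indexed by a proper subset of $\{x,y,z\}$ the two sides agree automatically: there the maps reduce to permutations of first-order data with linear transport, and equality is exactly the braid relation for $c$ together with the linearity and target coherences already assumed of $\sigma$. The content is concentrated entirely in the top $u_{xyz}$-component, where each of the three composed factors contributes one copy of a $\{\anc\,\cdot,\cdot,\cdot\}$ term and one iterated $\<\cdot,\<\cdot,\cdot\>\>$ term, cyclically permuted. Equating the top components and cancelling the transport pieces leaves precisely
\[
    \sum_{\gamma \in \mathsf{Cy}(3)} \<x_{\gamma_0}, \<x_{\gamma_1}, x_{\gamma_2}\>\> + \sum_{\gamma \in \mathsf{Cy}(3)} \{\anc x_{\gamma_0}, x_{\gamma_1}, x_{\gamma_2}\} = 0,
\]
which is the Bianchi identity; running the computation in reverse gives the converse.

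The hard part will be the disciplined bookkeeping of the top component: tracking how the $c$-flips permute the arguments fed into each $\widehat\sigma$ so that the three bracket contributions assemble into a genuine cyclic sum rather than an unsymmetrized one, and verifying that all lower-order terms really do cancel so that nothing beyond the cyclic sum survives. This is where the involution axiom $\sigma\o\sigma = id$ and the target axiom $T.\anc \o \pi_1 \o \sigma = c \o T.\anc \o \pi_1$ (via Proposition \ref{prop:cosymmetric-leibniz}) must be invoked to control the $\{-,-,-\}$ terms and to secure the first-order matching, rather than any genuinely new geometric input.
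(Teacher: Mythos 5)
Your proposal is correct and follows essentially the same route as the paper's proof: hat both sides of the Yang--Baxter equation into the sevenfold local-coordinate presentation using Lemma \ref{lem:tang-of-sigma}(ii) and the connection, observe that the two triple composites agree in all components except the top one, and then reduce the top-component equality to the Bianchi identity by cancellation and the alternating property of $\<-,-\>$ and $\{-,-,-\}$ in the last two arguments. The only point worth emphasizing is the one you already flag as the hard part: the raw top-component comparison is not cyclically symmetric on the nose, and the paper likewise must invoke the alternating property to reassemble it into the cyclic sums of Equation \ref{eq:bianchi-identity}.
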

\begin{proof}
    We expand $\widehat{id \x T\sigma}$ and  $\widehat{\sigma \x c}$.
    Start with $T(id \x \sigma)$, which was derived in Lemma \ref{lem:tang-of-sigma}:
    \begin{align*}
        \sigma_1(u) &= \widehat{(id \x T(\widehat{\sigma}))}(u_{x}, u_{y}, u_{xy}, u_{z}, u_{xz}, u_{yz}, u_{xyz})\\
        &= (u_x, u_z, u_{xz}, u_y, u_{xy}
        ,  u_{yz} + \< u_y, u_z\>, \\
        &\quad u_{xyz} + \langle u_y, u_{xz} \rangle  + \langle u_{xy}, u_z \rangle  + \{  \anc \o u_x, u_y, u_z \}).
    \end{align*}
    Using the fact that $\kappa'$ is torsion free, so 
    $\hat{c}(u_{xz}, u_{yz}, u_{xzy}) = (u_{yz}, u_{xz}, u_{xyz})$, it follows that
    \[
       \sigma_2(u_x, u_y, u_{xy}, u_z, u_{xz}, u_{yz}, u_{xyz})
        = (u_y, u_x, u_{xy}+ \< u_x, u_y \>, u_z, u_{yz}, u_{xz}, u_{xyz}).
    \]
    Then compute
    \begin{align*}
         &\quad \sigma_2\sigma_1\sigma_2(u) \\
        &= \big( u_z
                , u_y
                , u_{yz} + \langle u_y, u_z \rangle 
                , u_x
                , u_{xz} + \langle u_x, u_z \rangle 
                , u_{xy} + \langle u_x, u_y \rangle , z_1 \big),
                \\
        &\quad  \sigma_1\sigma_2\sigma_1(u) \\
        &= \big( u_z
              , u_y
              , u_{yz} + \langle u_y, u_z \rangle 
              , u_x
              , u_{xz} + \langle u_x, u_z \rangle 
              , u_{xy} + \langle u_x, u_y \rangle , z_2\big).
    \end{align*}
   Note that the first five terms are equal, so it suffices to check $z_1 = z_2$ for $z_1 = \pi_6\sigma_1\sigma_2\sigma_1(u), z_2 = \pi_6\sigma_2\sigma_1\sigma_2(u)$.
    \begin{align*}
    z_1 &= u_{xyz} + \langle u_y, u_{xz} \rangle  + \langle u_{xy}, u_z \rangle  
    +    \{  \anc \o u_x, u_y, u_z \} + \{  \anc \o u_z, u_x, u_y \}  \\
    &\quad +\langle u_x, u_{yz} + \langle u_y, u_z \rangle  \rangle  
    + \langle u_{xz} + \langle u_x, u_z \rangle , u_y \rangle \\ 
    &= u_{xyz} + \langle u_y, u_{xz} \rangle  + \langle u_{xy}, u_z \rangle  + \{  \anc \o u_x, u_y, u_z \} + \{  \anc \o u_z, u_x, u_y \}  \\
    &\quad +\langle u_x, u_{yz} \rangle + \langle u_x, \langle u_y, u_z \rangle  \rangle
    + \langle u_{xz}, u_y \rangle + \langle \langle u_x, u_z \rangle , u_y \rangle \\ 
    &= u_{xyz} + \langle u_{xy}, u_z \rangle  + \{  \anc \o u_x, u_y, u_z \} + \{  \anc \o u_z, u_x, u_y \}  \\
    &\quad +\langle u_x, u_{yz} \rangle + \langle u_x, \langle u_y, u_z \rangle  \rangle
    + \langle \langle u_x, u_z \rangle , u_y \rangle, \\ 
    z_2 &= u_{xyz} 
    + \langle u_x, u_{yz} \rangle + \{  \anc \o u_y, u_x, u_z \} + \langle u_{xy} + \langle u_x, u_y \rangle , u_z \rangle \\
    &= u_{xyz} + \langle u_x, u_{yz} \rangle + \{  \anc \o u_y, u_x, u_z \}    + \langle u_{xy} , u_z \rangle + \langle \langle u_x, u_y \rangle , u_z \rangle .
    \end{align*}    
    So $z_1 = z_2$ is equivalent to requiring
    \begin{align*}
     &\quad  u_{xyz} + \langle u_{xy}, u_z \rangle + \{  \anc \o u_x, u_y, u_z \} + \{  \anc \o u_z, u_x, u_y \}  \\
     &\quad +\langle u_x, u_{yz} \rangle + \langle u_x, \langle u_y, u_z \rangle  \rangle
    + \langle \langle u_x, u_z \rangle , u_y \rangle \\ 
    &= u_{xyz} + \langle u_x, u_{yz} \rangle + \{  \anc \o u_y, u_x, u_z \}     + \langle u_{xy} , u_z \rangle + \langle \langle u_x, u_y \rangle , u_z \rangle;
    \end{align*}
    cancelling alike terms, this is equivalent to
    \begin{align*}
     &\quad \langle u_x, \langle u_y, u_z \rangle  \rangle + \langle \langle u_x, u_z \rangle , u_y \rangle + \{  \anc \o u_x, u_y, u_z \} 
    + \{  \anc \o u_z, u_x, u_y \} \\
    &=  \{  \anc \o u_y, u_x, u_z \}   +  \langle \langle u_x, u_y \rangle , u_z \rangle.
    \end{align*}
    Using the fact that $\< -, -\>$ and $\{-,-,-\}$ are alternating in the last two arguments, this is equivalent to
    \begin{align*}
        0&=\langle u_x, \langle u_y, u_z \rangle  \rangle + \langle \langle u_x, u_z \rangle , u_y \rangle +  \langle u_z , \langle u_x, u_y \rangle \rangle \\
         &\quad + \{  \anc \o u_x, u_y, u_z \} + \{  \anc \o u_z, u_x, u_y \} + \{  \anc \o u_y, u_z, u_x \} \\
         &=\langle u_x, \langle u_y, u_z \rangle  \rangle + \langle u_y, \langle u_z, u_x \rangle\rangle +  \langle u_z , \langle u_x, u_y \rangle \rangle \\
         &\quad + \{  \anc \o u_x, u_y, u_z \} + \{  \anc \o u_z, u_x, u_y \} + \{  \anc \o u_y, u_z, u_x \} 
    \end{align*}
    giving the desired identity.
\end{proof}

\begin{corollary}\label{cor:inv-con-def}
    Let $(\pi:A \to M, \xi, \lambda, \anc)$ be an anchored bundle in a tangent category with negatives, with anchored connection $(\nabla,\kappa)$ on $A$ and torsion-free affine connection $(\nabla',\kappa')$ on $M$.
    An involution algebroid structure on $A$ is equivalent to a bilinear map
    \[
        \<-,-\>:A_2 \to A 
    \]
    with derived maps 
    \begin{gather*}
        \{-,-\}: A \ts{\pi}{p} TM \to TM := \kappa' \o T\anc \o (\pi_0,\pi_1), \\
        \{-,-,-\}: TM \ts{p}{\pi \o\pi_i} A_2 \to A; \{a, u_1, u_2\} := \kappa \o T(\<-,-\>) \o (\nabla(a,u_1), \nabla(a,u_2))
    \end{gather*}
    satisfying 
    \begin{enumerate}[(i)]
        \item $\< -, -\>$ is linear and cosymmetric,
        \item $\< -, -\>$ is alternating,
        \item $\< -, - \>$ and $\{-,-\}$ satisfy the Leibniz equation, Equation \ref{eq:liebniz-local}.
        \item $\< -, -\>, \{-,-\},$ and $\{-,-,-\}$ satisfy the Bianchi identity, \Cref{eq:bianchi-identity}.
    \end{enumerate}
\end{corollary}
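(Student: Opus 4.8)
The plan is to assemble this characterization directly from the sequence of results just proved, observing that, once connections are fixed, each axiom for an involution algebroid (Definition \ref{def:involution-algd}) translates into exactly one of the four conditions on the bilinear bracket. The backbone is Proposition \ref{prop:intertwining-linear-iff-bracket-linear}, which gives a bijection between cosymmetric, bilinear maps $\sigma:\prol(A) \to \prol(A)$ and bilinear maps $\langle -,-\rangle:A_2 \to A$ via the explicit formula $\widehat{\sigma}(u,v,w) = (v,u,w+\langle u,v\rangle)$. First I would fix this correspondence, which simultaneously establishes condition (i), and then check that each remaining axiom matches a condition on $\langle -,-\rangle$.

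The structural step is to recognise that the double linearity axiom (ii) and the symmetry of lift axiom (iii) of Definition \ref{def:involution-algd} are precisely the statements that $\sigma$ is \emph{linear} and \emph{cosymmetric} in the sense of Definition \ref{def:linear-intertwining}. Here I would invoke Proposition \ref{prop:nat-of-sigma-ell}: granting involution and symmetry of lift, together with the standing hypothesis of this section that $\sigma$ exchanges the projections $p\circ\pi_1$ and $\pi_0$ (and hence the two associated idempotents), double linearity is equivalent to the naturality equation $(\lambda\times\ell)\circ\sigma = T.\sigma\circ(0\times c\circ T.\lambda)$, i.e.\ to linearity of the two bundle morphisms. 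This is what licenses the passage to a bilinear bracket.

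With the bijection in hand, the remaining three axioms are handled one at a time, each already proved as an equivalence. The involution axiom $\sigma\circ\sigma = id$ corresponds to $\langle -,-\rangle$ being alternating (condition (ii)) by Proposition \ref{prop:intertwining-involutive}; the target axiom corresponds to the Leibniz equation \ref{eq:liebniz-local} (condition (iii)) by Proposition \ref{prop:cosymmetric-leibniz}; and the Yang--Baxter axiom corresponds to the Bianchi identity \ref{eq:bianchi-identity} (condition (iv)) by Proposition \ref{prop:yang-baxter-iff-bianchi}. Since each of these propositions assumes $\sigma$ is already cosymmetric and (doubly) linear --- and, in the Yang--Baxter case, also involutive and target-compatible --- I would order the verifications so the hypotheses are discharged before use: linearity and cosymmetry first, then involution, then the target axiom, and finally Yang--Baxter.

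The main obstacle is not any individual computation, since those all reside in the preceding lemmas (notably the coordinate expansions of $T(\langle -,-\rangle)$ and $T.\sigma$ in Lemma \ref{lem:tang-of-sigma}), but rather the bookkeeping of which hypotheses each proposition requires and confirming that ``linear and cosymmetric $\sigma$'' genuinely coincides with the conjunction of the double linearity and symmetry of lift axioms. Once that compatibility is confirmed, and the exchange-of-idempotents hypothesis of Proposition \ref{prop:nat-of-sigma-ell} is seen to follow from the standing assumptions, the corollary follows by conjoining the four equivalences.
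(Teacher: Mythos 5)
Your proposal is correct and follows essentially the same route as the paper, which states this corollary without a separate proof precisely because it is the conjunction of Propositions \ref{prop:intertwining-linear-iff-bracket-linear}, \ref{prop:intertwining-involutive}, \ref{prop:cosymmetric-leibniz}, and \ref{prop:yang-baxter-iff-bianchi}, with the identification of double linearity and symmetry of lift with linearity and cosymmetry handled by Proposition \ref{prop:nat-of-sigma-ell}. Your attention to the order in which the hypotheses of each proposition are discharged, and to the standing assumption that $\sigma$ exchanges the two projections, is exactly the bookkeeping the paper leaves implicit.
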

Morphisms of involution algebroids may also be characterized by preservation of the tensor.
\begin{proposition}%
    \label{prop:inv-algd-mor-conn}
    Let $A, B$ be a pair of involution algebroids with chosen connections in a tangent category with negatives.
    Then an anchored bundle morphism $f: A \to B$ is an involution algebroid morphism if and only if (recalling the notation from Equation \ref{eq:nabla-notation})
    \[
        \nabla[f](x,y) + \< f\o x, f\o y\> = \nabla[f](y,x) + f \o \< x, y\>.
    \]
\end{proposition}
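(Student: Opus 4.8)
The plan is to reduce the structural condition $\prol(f)\o\sigma_A = \sigma_B\o\prol(f)$ that defines an involution algebroid morphism (Definition \ref{def:involution-algd}) to the stated local-coordinate identity, following the same template as Propositions \ref{prop:intertwining-linear-iff-bracket-linear}--\ref{prop:yang-baxter-iff-bianchi}. The chosen connections on $A$ and $B$ supply the ``hatting'' bijection between maps $\prol(A)\to\prol(B)$ and maps $A_3\to B_3$, which is functorial and inverted by ``barring''. Since $\overline{(-)}^A\o(\pi_0,p\pi_1,\kappa\pi_1)^A = id_{\prol(A)}$ (the isomorphism $\prol(A)\cong A_3$ of Lemma \ref{lem:anchored-bundle-conn}), the intermediate decompositions cancel and
\[
\prol(f)\o\sigma_A = \sigma_B\o\prol(f) \iff \widehat{\prol(f)}\o\widehat{\sigma_A} = \widehat{\sigma_B}\o\widehat{\prol(f)}
\]
as maps $A_3\to B_3$. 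First I would record this equivalence; because hatting is a bijection on hom-sets, establishing it settles both directions of the ``if and only if'' at once.

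Next I would assemble the two ingredients. By Proposition \ref{prop:intertwining-linear-iff-bracket-linear} the involutions hat to $\widehat{\sigma_A}(x,y,z) = (y,x,z + \<x,y\>_A)$ and $\widehat{\sigma_B}(x,y,z) = (y,x,z + \<x,y\>_B)$. The remaining ingredient is the hatted form of $\prol(f) = f\ts{Tm}{Tm}Tf$, and this is the one genuinely new computation; I expect it to be the main obstacle. One must track how $\prol(f)$ interacts with the decomposition $\prol(A)\cong A_3$, whose three slots record the base element $\pi_0$, the horizontal datum $p\pi_1$, and the vertical datum $\kappa\pi_1$. Using that $f$ is anchor-preserving (so that $T\pi_B\o Tf = Tm\o T\pi_A$ matches the gluing condition defining $\prol(B)$, which is also what makes $\prol(f)$ land in $\prol(B)$ at all) together with the definition $\nabla[f] := \kappa^B\o T.f\o\nabla^A$, the argument yielding the formula for $\widehat{T^2.f}$ in Section \ref{sec:diff-and-tang-struct} applies almost verbatim, the single connection-defect surfacing in the vertical slot:
\[
\widehat{\prol(f)}(x,y,z) = (f x,\ f y,\ f z + \nabla[f](\anc x, y)).
\]
The care here is in verifying that $f$ acts diagonally on the first two $A$-slots and that no further correction enters the horizontal slot, which is where the gluing of the $A$-factor to the $TA$-factor along the anchor must be handled.

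With these formulas in hand the comparison is routine, and I would simply compute both composites. On the left,
\[
\widehat{\prol(f)}\o\widehat{\sigma_A}(x,y,z) = \widehat{\prol(f)}(y,x,z+\<x,y\>_A) = (f y,\ f x,\ f z + f\<x,y\>_A + \nabla[f](\anc y, x)),
\]
while on the right,
\[
\widehat{\sigma_B}\o\widehat{\prol(f)}(x,y,z) = \widehat{\sigma_B}(f x, f y, f z + \nabla[f](\anc x, y)) = (f y,\ f x,\ f z + \nabla[f](\anc x, y) + \<f x, f y\>_B).
\]
The first two slots agree automatically, so by cancellativity of the fibrewise addition the two composites coincide if and only if the third slots do, i.e.
\[
\nabla[f](\anc x, y) + \<f x, f y\>_B = \nabla[f](\anc y, x) + f\<x,y\>_A.
\]
This is precisely the stated identity once $\nabla[f](x,y)$ is read with the anchor on its first argument, following the convention $\nabla[f](\anc e_\delta, e_\alpha)$ of Proposition \ref{prop:lie-algd-morphism-defn}; it also agrees on the nose with the coordinate form of the Lie algebroid morphism condition given there. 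I would close by noting that the chain of equivalences runs in both directions, so it simultaneously establishes the ``if'' and the ``only if''.
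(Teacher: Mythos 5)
Your proposal is correct and follows essentially the same route as the paper's (much terser) proof: both reduce the condition $\prol(f)\o\sigma_A=\sigma_B\o\prol(f)$ via the connection-induced isomorphism $\prol(A)\cong A_3$ to a comparison of third slots, using $\widehat{\sigma}(x,y,z)=(y,x,z+\<x,y\>)$ and $\widehat{\prol(f)}(x,y,z)=(fx,fy,fz+\nabla[f](x,y))$. Your version simply spells out the composite computation and the anchor-convention point that the paper leaves implicit.
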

\begin{proof}
    Note that
    \begin{gather*}
        (\sigma \o \prol(f) = \prol(f) \o \sigma) \\
        \iff (\pi_1,\pi_0, \pi_2 + \< \pi_0, \pi_1 \> ) \o (f\o x, f\o y, f \o z + \nabla[f](x,y))
        \\
        = (f, f, f \o \pi_2 + \nabla[f](\pi_0,\pi_1)) \o (y,x, z + \< x, y \> ) 
    \end{gather*}
    while the second condition reduces to
    \[
        \nabla[f](x,y) + \< f\o x, f\o y\> = \nabla[f](y,x) + f \o \< x, y\>.
    \]
\end{proof}

\section{The isomorphism of Lie and involution algebroid categories}
\label{sec:the-isomorphism-of-categories}

Sections \ref{sec:Lie_algebroids} and \ref{sec:connections_on_an_involution_algebroid} have made the relationship between involution algebroids and Lie algebroids clear. It is important to note that while the proofs used connections as a tool to identify the local coherences satisfied by involution and Lie algebroids, the construction of a Lie algebroid from an involution algebroid (or vice versa) is independent of the choice of connection.

\begin{theorem}%
    \label{thm:iso-of-cats-Lie}
    There is an isomorphism of categories between Lie algebroids and involution algebroids in smooth manifolds.
\end{theorem}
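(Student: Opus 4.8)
The plan is to establish the isomorphism of categories by exhibiting mutually inverse functors on objects and on morphisms, leveraging the extensive groundwork already laid in Sections \ref{sec:Lie_algebroids} and \ref{sec:connections_on_an_involution_algebroid}. The key observation is that both constructions are independent of the choice of connection, even though the proofs of the coherence conditions pass through connections. First I would fix, for each vector bundle $\pi:A \to M$ in $\mathsf{SMan}$, a connection $(\kappa,\nabla)$ on the underlying differential bundle together with a torsion-free affine connection $(\kappa',\nabla')$ on $M$; these exist by the corollary that every vector bundle (and every smooth manifold) admits a connection. The object assignment in one direction sends a Lie algebroid $(\pi:A\to M,\anc,[-,-])$ to the involution algebroid whose $\sigma$ is determined, via Proposition \ref{prop:intertwining-linear-iff-bracket-linear}, by the globalized bilinear bracket $\langle-,-\rangle:A_2 \to A$ of Lemma \ref{lem:ang-brack-lie}, namely $\widehat{\sigma}(u,v,w) = (v,u,w+\langle u,v\rangle)$. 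The reverse assignment sends an involution algebroid to the Lie algebroid whose bracket on $\Gamma(\pi)$ is the one induced by $\sigma$ in Lemma \ref{lem:intertwining-induces-bracket}.

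The heart of the object-level bijection is Corollary \ref{cor:inv-con-def}, which already states that an involution algebroid structure on an anchored bundle (in a tangent category with negatives) is \emph{equivalent} to a bilinear $\langle-,-\rangle$ satisfying exactly the four conditions---linearity/cosymmetry, alternation, Leibniz, and Bianchi---that by Propositions \ref{prop:intertwining-involutive}, \ref{prop:cosymmetric-leibniz}, and \ref{prop:yang-baxter-iff-bianchi} translate the involution algebroid axioms into Martinez's structure equations. The second step is therefore to match these four conditions against the Lie algebroid axioms of Definition \ref{def:lie-algd} as reformulated in Proposition \ref{prop:lie-alg-bil-defn}: antisymmetry of the bracket corresponds to $\sigma\o\sigma = id$ (Proposition \ref{prop:intertwining-involutive}), the Leibniz law corresponds to the target axiom (Proposition \ref{prop:cosymmetric-leibniz}), and the Jacobi identity corresponds to the Yang--Baxter equation via the Bianchi identity (Proposition \ref{prop:yang-baxter-iff-bianchi}). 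Since $\mathsf{SMan}$ has negatives, all of these apply directly. I would then verify that the two object assignments are mutually inverse by checking that starting from a bracket, forming $\sigma$, and reading back off the induced bracket (via the two formulas for $\lambda\o[X,Y]$) returns the original bracket---this is essentially the round-trip computation $\widehat{\sigma}(u,v,w)=(v,u,w+\langle u,v\rangle)$ composed with the extraction $\langle-,-\rangle = \kappa\o\sigma\o\nabla -_\pi \kappa\o\nabla$ from Proposition \ref{prop:intertwining-linear-iff-bracket-linear}.

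For morphisms, the plan is to compare Proposition \ref{prop:lie-algd-morphism-defn} (Martinez's connection-based criterion for a Lie algebroid morphism) with Proposition \ref{prop:inv-algd-mor-conn} (the connection-based criterion for an involution algebroid morphism). Both reduce an anchor-preserving vector bundle morphism to the single identity $\nabla[f](x,y) + \langle f\o x, f\o y\rangle = \nabla[f](y,x) + f\o\langle x,y\rangle$, so the bijection on morphisms is immediate once the object correspondence is fixed; I would note that functoriality (preservation of identities and composition) is automatic since both categories have vector bundle morphisms as their underlying maps and the defining conditions are preserved under composition. The main obstacle I anticipate is \emph{connection-independence}: the functors must be well-defined on the nose, yet every structural criterion I invoke is phrased relative to chosen connections. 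The cleanest resolution is to observe that the underlying data being transported---the global bracket $\langle-,-\rangle:A_2\to A$ and the involution $\sigma:\prol(A)\to\prol(A)$---are themselves connection-free, with connections entering only as a computational device to verify the axioms; changing the connection alters the derived maps $\{-,-\}$ and $\{-,-,-\}$ but leaves the equivalences of Corollary \ref{cor:inv-con-def} intact, so the resulting $\sigma$ (respectively, the resulting bracket) does not depend on the choice. I would make this independence explicit at the outset, since it is what licenses calling the construction a functor rather than merely a connection-dependent correspondence.
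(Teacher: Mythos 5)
Your proposal is correct and follows essentially the same route as the paper: both establish the isomorphism by chaining through the categories with chosen connections via Propositions \ref{prop:lie-alg-bil-defn} and \ref{prop:lie-algd-morphism-defn}, Corollary \ref{cor:inv-con-def}, and Proposition \ref{prop:inv-algd-mor-conn}, and both then argue that the transported data ($\langle-,-\rangle$ and $\sigma$) are independent of the auxiliary choices. The only difference in emphasis is that the paper's well-definedness discussion centres on independence from the choice of pullback $\prolong$ (invoking uniqueness of the canonical involution), whereas you centre it on connection-independence; both concerns are resolved by the same observation that connections and prolongation choices enter only as computational devices.
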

\begin{proof}
    For the equivalence of categories, note that by Propositions \ref{prop:lie-alg-bil-defn} and \ref{prop:lie-algd-morphism-defn}, Corollary \ref{cor:inv-con-def}, and Proposition \ref{prop:inv-algd-mor-conn} there is an isomorphism of categories between involution algebroids with a choice of connection and Lie algebroids with a choice of connection (morphisms are \emph{not} restricted to connection preserving morphisms). This allows us to chain together isomorphisms
    \[
        \mathsf{Inv}(\mathsf{SMan}) 
        \cong \mathsf{Inv}(\mathsf{SMan})_{\mathsf{ChosenConn}}
        \cong \mathsf{LieAlgd}_{\mathsf{ChosenConn}}
        \cong \mathsf{LieAlgd}.
    \]
    \pagenote{One thing that was unclear in the original proof of this statement is that while connections are helpful for the nuts and bolts of this proof, the actual constructions do not rely on connections.}

    To complete the proof, we must show that the assignment that sends an involution algebroid to a Lie algebroid whose bracket is given by
    \begin{equation}\label{eq:inv-to-lie}
                \lambda \o [X,Y]^* = 
        \left( (\pi_1 \o \sigma \o (id, T.X \o \anc) \o Y -_p T.Y \o \anc \o) -_{T.\pi} 0 \o Y
        \right),
    \end{equation}
    is a bijection on objects, which brings up some subtleties. First, while an involution map 
    \[
        \sigma:\prolong \to \prolong
    \]
    is defined with respect to a particular choice of pullback $\prolong$, the category of involution algebroids \emph{does not} distinguish between different choices of this pullback (and therefore different representations of the map $\sigma$), and it is not part of the data of an involution algebroid. It is immediate by universality that the left-hand-side of Equation \ref{eq:inv-to-lie} is independent of the choice of pullback $\prolong$.
    
    Now, recall that the canonical involution of a Lie algebroid is uniquely by Theorem 4.7 of \cite{de2005lagrangian} (this was also mentioned in Corollary \ref{cor:second-lie-algd-unique-map}). Once we make a choice of prolongation $\prolong$, we have made a choice of pullback $\prol(A)$ in $\mathsf{LieAlgd}$, which uniquely determines the canonical involution
    \[
    \sigma: \prolong \to \prolong.
    \]
    While the exact map $\sigma$ depends on the choice of pullback $\prolong$, they all determine the same involution algebroid, thus proving the bijection correspondence of objects.
\end{proof}



%

\chapter{The Weil nerve of an algebroid}%
\label{chap:weil-nerve}\pagenote{
   This chapter has been given a new introduction to help with its exposition.
   Various typos have been fixed, but the most substantial changes are in Section \ref{sec:weil-nerve}, where the proof has been restructured to address some concerns brought up in Michael's comments.
}

The first three chapters of this thesis demonstrated that tangent categories allow for an essentially algebraic description of Lie algebroids by axiomatizing the behaviour of the tangent bundle, and showing that a Lie algebroid over a manifold $M$ is a ``generalized tangent bundle'', namely an \emph{involution algebroid}. This chapter will make precise the sense in which an involution algebroid is a generalized tangent bundle, by showing that the category of involution algebroids in a tangent category $\C$ is equivalent to a certain tangent-functor category from the free tangent category over a single object to $\C$, or more generally that there is a fully faithful functor
\[
	\mathsf{Inv}(\C) \hookrightarrow \mathsf{Tang_{Lax}}(\mathsf{FreeTangCat}(*),\C).
\]
This functor, the \emph{Weil nerve} of an involution algebroid, builds a functor from the free tangent category over a single object to $\C$ using a span construction. This chapter primarily builds on two pieces of work: Leung's construction of the free tangent category $\wone$ (\cite{Leung2017}) , and Grothendieck's original nerve construction (first published in \cite{Segal1974}).

To understand Leung's construction of the free tangent category, and more generally his actegory-theoretic presentation of tangent categories (Section \ref{sec:tang-struct-as-wone}), we first look at Weil's original insight relating the kinematic and operational descriptions of the tangent bundle in $\mathsf{SMan}$. The definition of a tangent vector on a manifold $M$ as an equivalence class of curves (Definition \ref{def:tang-vector})  puts a bijective correspondence between tangent vectors and $\R$-algebra homomorphisms from the ring of smooth functions $C^\infty(M)$ to the ring of dual numbers:
\[
	C^\infty(M) \to \R[x]/x^2.
\] The hom-set $\R\mathsf{Alg}(C^\infty(M),\R[x]/x^2)$ is precisely the set of \emph{derivations} on $C^\infty(M)$, which defines the operational tangent bundle discussed in Definition \ref{def:operational-tang}: there is a natural smooth manifold structure on this set. The Weil functor formalism, most notably developed in \cite{Kolar1993}, extends this observation to a general class of endofunctors on $\mathsf{SMan}$. For example, the fibre product  $T_2M$ corresponds to $\R$-algebra morphisms,
\[
C^\infty(M) \to R[x,y]/(x^2, y^2, xy), 
\] while the second tangent bundle corresponds to $\R$-algebra morphisms into the tensor product,
\[
C^\infty(M) \to R[x]/(x^2) \ox R[y]/(y^2) = R[x,y]/(x^2,y^2).
\] By applying Milnor's exercise (Problem 1-C \cite{Milnor1974}), which states that the $C^\infty$ functor
\[
    \mathsf{SMan} \to \R\mathsf{Alg^{op}};\hspace{0.15cm} M \mapsto C^\infty(M)=\mathsf{SMan}(M,\R)
\] is fully faithful, the structure maps occur as natural transformations. For example, the tangent projection is induced by the morphism
\[
	{p}: \R[x]/x^2 \xrightarrow[]{a + bx \mapsto a} \R,  
\] 
so that 
\[
    TM \xrightarrow[]{p} M = [C^\infty(M),\R[x]/x^2] \xrightarrow[]{(p)_*} [C^\infty(M), \R].
\]
The zero map and addition are similarly induced by
\[
	{0}: \R \xrightarrow[]{a \mapsto a + 0x} R[x]/x^2 \text{ and } +:R[x,y]/(x^2,y^2,xy) \xrightarrow[\mapsto a + (b+c)x]{a + bx + cy} R[x]/x^2,
\] 
respectively, while the lift and flip are induced by the morphisms
\[
	{\ell}: \R[x]/x^2 \xrightarrow[\mapsto a + bxy]{a+bx} \R[x,y]/(x^2,y^2) 
\]
and
\[
    {c}: \R[x,y]/(x^2,y^2) \xrightarrow[\mapsto a + cx + by + dxy]{a + bx + cy + dxy} \R[x,y]/(x^2,y^2).
\] 
More generally, there is a monoidal category of \emph{Weil algebras} (Definition \ref{def:weil-algebra-and-prol}) which has a monoidal action on the category of smooth manifolds. The Weil functor formalism, then, studies differential geometric structures from the perspective of the endofunctors and natural transformations induced by this action. Leung's insight is that there is an analogous category of commutative rigs\footnote{A rig is a ri\emph{n}g without \emph{n}egatives, i.e. a commutative monoid equipped with a bilinear multiplication.} built by replacing $\R[x]/x^2$ with $\N[x]/x^2$, called $\wone$ (Definition \ref{def:Weil-algebra}); a tangent structure is precisely a monoidal action by $\wone$ satisfying some universal properties. In particular, this category $\wone$ is precisely the free tangent category over a point, $\mathsf{FreeTang}(*)$, so that every object $A$ in a tangent category $\C$ determines a strict tangent functor
\[
    T^{(-)}A: \wone \to \C; V \mapsto T^VA
\]
and morphisms $f:A \to B$ are in bijective correspondence with tangent-natural transformations $T^{(-)}A \Rightarrow T^{(-)}(B)$.

The axioms of an involution algebroid in a tangent category $\C$ correspond bijectively with those of the tangent bundle - this suggests that an involution algebroid should determine a tangent functor from $\wone$ to $\C$. A first guess would lead one to think that $p:\N[x]/x^2 \to \N$ is sent to $\pi:A \to M$, $0$ to $\xi$, and $+$ to $+_A$. As the space of prolongations $\prol(A) = \prolong$ plays the role of the second tangent bundle, we can see that
\[
  \ell:N[x]/x^2 \to N[x,y]/(x^2,y^2) \mapsto (\xi\o\pi,\lambda): A \to \prol(A), 
\]
and 
\[
  c:N[x,y]/(x^2,y^2) \to N[x,y]/(x^2,y^2) \mapsto \sigma \prol(A) \to \prol(A).
\]
This pattern may be neatly summed up using span composition - we will construct a functor that sends $\N[x]/x^2$ to the span
\[\begin{tikzcd}
	& A \\
	M && TM
	\arrow["\anc"{description}, from=1-2, to=2-3]
	\arrow["\pi"{description}, from=1-2, to=2-1]
\end{tikzcd}\]
and the tensor product $\N[x]/x^2 \ox \N[x]/x^2$ to the span composition (e.g. the pullback)
\[\begin{tikzcd}
	&& {\prol(A)} \\
	& A && TA \\
	M && TM && {T^2M}
	\arrow["\anc"{description}, from=2-2, to=3-3]
	\arrow["\pi"{description}, from=2-2, to=3-1]
	\arrow["{T.\pi}"{description}, from=2-4, to=3-3]
	\arrow["{T.\anc}"{description}, from=2-4, to=3-5]
	\arrow[from=1-3, to=2-2]
	\arrow[from=1-3, to=2-4]
	\arrow["\lrcorner"{anchor=center, pos=0.125, rotate=-45}, draw=none, from=1-3, to=3-3]
\end{tikzcd}\]
which is the space of prolongations. This leads to the first major result of this chapter, the \emph{Weil Nerve} (Theorem \ref{thm:weil-nerve}), which states there is a fully faithful embedding
\[
    N_{\mathsf{Weil}}:\mathsf{Inv}(\C) \hookrightarrow [\wone, \C].
\]
This bears a strong similarity to Grothendieck's original nerve theorem, which takes an internal category $s,t:C \to M$ and constructs a functor $\Delta^{op} \to \C$ (where $\Delta^{op}$ is the monoidal theory of an internal monoid) by sending tensor to span composition, and the composition and unit maps given span morphisms
\[\begin{tikzcd}
	& {C_2} &&& M \\
	M && M & M && M \\
	& C &&& C
	\arrow["{s \o \pi_0}"{description}, from=1-2, to=2-1]
	\arrow["{t \o \pi_1}"{description}, from=1-2, to=2-3]
	\arrow["s"{description}, from=3-2, to=2-1]
	\arrow["t"{description}, from=3-2, to=2-3]
	\arrow["m"{description}, from=1-2, to=3-2]
	\arrow[Rightarrow, no head, from=1-5, to=2-4]
	\arrow[Rightarrow, no head, from=1-5, to=2-6]
	\arrow["s"{description}, from=3-5, to=2-4]
	\arrow["t"{description}, from=3-5, to=2-6]
	\arrow["e"{description}, from=1-5, to=3-5]
\end{tikzcd}\]
while the unit and associativity axioms for a category are exactly the unit and associativity laws for a monoid in this setting. The \emph{Segal conditions} identify exactly the functors $C:\Delta^{op} \to \C$ that lie in the image of the nerve functor $N$ as those whose $[n]^{th}$ object is sent to the wide pullback $C([n]) = C[2] \ts{t}{s} C[2]\dots \ts{t}{s} C[2]$. The corresponding result for involution algebroids is found in Theorem \ref{thm:iso-of-cats-inv-emcs}, which states that a tangent functor $(A,\alpha):\wone \to \C$ is the nerve of an involution algebroid if and only if $A$ preserves tangent limits and $\alpha$ is a $T$-cartesian natural transformation (this forces $A.(W^{\ox n}) = A \ts{\anc}{T.\pi} TA \dots \ts{T^{n-1}\anc}{T^n\pi} T^nA$). The similarity between the Weil nerve and Grothendieck/Segal's nerve runs deep, and in Chapter \ref{ch:inf-nerve-and-realization} we demonstrate that the \emph{enriched} perspective on tangent categories puts these both into the same formal framework.

Sections \ref{sec:weil-algebras-tangent-structure} and \ref{sec:tang-struct-as-wone} give a detailed introduction to the Weil functor formalism (\cite{Kolar1993},\cite{Bertram2014a}) and Leung's unification of Weil functors with tangent categories \cite{Leung2017}. The rest of the chapter contains contains new results developed by the author. Section \ref{sec:weil-nerve} proves the embedding part of the Weil nerve theorem, that the category of involution algebroids embeds into the category of tangent functors and tangent natural transformations $[\wone, \C]$. Section \ref{sec:identifying-involution-algebroids} identifies exactly those tangent functors $(A,\alpha):\wone \to \C$ that are the nerve of an involution algebroid, completing the proof of the Weil nerve theorem. Section \ref{sec:prol_tang_struct} uses the Weil nerve to develop a novel tangent structure on the category of involution algebroids in a tangent category (in particular, the category of Lie algebroids will have this novel tangent structure).

\section{Weil algebras and tangent structure}%
\label{sec:weil-algebras-tangent-structure}

This section gives a more thorough introduction to the Weil functor formalism of \cite{Kolar1993}, and in particular how the structure maps of a tangent category may be teased out of it. We begin by introducing  Weil algebras, and the \emph{prolongation} of a smooth manifold by a Weil algebra. (The relationship with prolongations of involution algebroids from Definition \ref{def:anchored_bundles} will be made clear in Section \ref{sec:weil-nerve}.)

\begin{definition}%
    \label{def:weil-algebra-and-prol}
    An $\R$-Weil algebra\footnote{Not to be confused with the normal usage of ``Weil algebra'' in Lie theory, e.g. \cite{Meinrenken2019}.} is a finite-dimensional $\R$-algebra $V$ so that $V = \R \oplus \dot{V}$ as $\R$-modules and $\dot{V}$ is a nilpotent ideal. The category $\R\mathsf{Weil}$ is the full subcategory of $\R\mathsf{Alg}$ spanned by the $\R$-Weil algebras. The \emph{prolongation} of a manifold by a Weil algebra $V$ is given by the manifold
    \[
        T^VM := \R\mathsf{Alg}(C^\infty(M,\R), V).
    \]
\end{definition}
\cite{Eck1986} showed that every product-preserving endofunctor on the category of smooth manifolds is constructed as the Weil prolongation by some Weil algebra. Consequently, $\R$-algebra homomorphisms induce natural transformations between these product-preserving  endofunctors on the category of smooth manifolds.
\begin{example}\label{ex:weil-algebras-and-maps}
    Consider the following $\R$-Weil algebras and their associated prolongation functors.
    \begin{enumerate}[(i)]
        \item Prolongation by $\R$ induces the identity functor, and the tangent bundle is given by $\R[x]/x^2$. The tangent projection, then, is equivalent to the $\R$-algebra morphism
        \[
            p: \R[x]/x^2 \to \R; \hspace{0.2cm}
            p(a + bx) = a
        \]
        while the 0-map induces the zero vector field:
        \[
            0: \R \to \R[x]/x^2; \hspace{0.2cm}
            0(a) = a + 0x.
        \]
        \item The algebra $\R[x_i]_{1 \le i \le n}/(x_ix_j)_{1 \le i \le j \le n} = (\R[x]/x^2)^n$ is the wide pullback $T_nM = TM \ts{p}{p} TM \dots \ts{p}{p} TM$. 
        In particular, prolongation by $\R[x,y]/(x^2,y^2,xy)$ gives the bundle $T_2M = TM \ts{p}{p} TM$. The $\R$-algebra morphism
        \[
            +:\R[x,y]/(x^2,y^2,xy) \to R[x]/x^2; \hspace{0.2cm}
            +(a_0 + a_1x + a_2y) = a_0 + (a_1 + a_2)x
        \]corresponds to the addition of tangent vectors.
        \item The algebra $\R[x,y]/(x^2,y^2) = (\R[x]/x^2)\ox (\R[x]/x^2)$ is the second tangent bundle $T^2M = TTM$. The vertical lift $T \to T^2$ is induced by the morphism 
        \[
            \ell: \R[x]/x^2 \to \R[x,y]/(x^2,y^2);\hspace{0.2cm}
            \ell(a + bx) = a + bxy.
        \]
        \item The monoidal symmetry map induces $c: T^2 \Rightarrow T^2$, as follows: 
        \begin{gather*}
                        c: (\R[x]/x^2)\ox (\R[y]/y^2) \to (\R[y]/y^2)\ox (\R[x]/x^2);\\ c(a + b_1x + b_2y + b_3xy) = a + b_2x + b_1y + b_3xy.        
        \end{gather*}
        \item For $n \ge 2$, the algebra $\R[x]/x^n$ gives the $n$-jet bundle.
        Note that this is the equalizer of $\ox^n \R[x]/x^2$ by the symmetry actions of $S_n$.
    \end{enumerate}
\end{example}
Further examples may be found in the monograph \cite{Kolar1993}. Tangent categories bridge the gap between the Weil functor approach to studying the differential geometry of smooth manifolds and the synthetic differential geometry approach of axiomatizing a tangent bundle using nilpotent infinitesimals. The main structure axiomatized here is that of \emph{monoidal action} of a symmetric monoidal category on a category $M \x \C \to \C$, or equivalently, a lift from a category to the category of complexes $\C \to [M,\C]$, which involves translating a bit of classical category theory to the 2-categorical setting.

Example  \ref{ex:weil-algebras-and-maps} leads to the classical theorem that the category of smooth manifolds has an action by the category of $\R$-Weil algebras that preserves all connected limits that exist. These ``natural'' universal properties (in the sense of \cite{Kolar1993}) is foundational to synthetic differential geometry; see, for example, Chapter Two of \cite{Lavendhomme1996}. Unfortunately, Weil algebras are not an ideal syntactic presentation: they are not a finitely presentable category, and it is not immediately clear when a diagram is a connected limit.\footnote{It should be noted that \cite{Nishimura2007} made progress applying techniques from computer algebra to latter problem.} Moving from $\R$-algebras to commutative rigs and restricting to an appropriate subcategory solves this problem.\pagenote{Clarified a point raised by Michael}

\begin{definition}[Definition 3.1 \cite{Leung2017}]
    \label{def:Weil-algebra}
    The category $\wone$ is defined to be the full subcategory of commutative rigs, $\mathsf{CRig}$, generated by the rig of dual numbers $W := \N[x]/x^2$, constructed as follows:
    \begin{enumerate}
        \item Start with finite product powers of $W$ in $\mathsf{CRig}$, and make a strict choice of presentation:
        \[
            W_0 = \N, \hspace{0.15cm} W_n := \N[x_i]/(x_ix_j)_{i \le j}, {0 \le i < n}.
        \]
        \item Then take the closure of $W_n$ under coproduct of commutative rigs, written $\ox$. Again, make a strict choice of presentation:
        \[
            W_{n(0)} \ox \dots \ox W_{n(m-1)} :=
            \N[x_{i,j}]/(x_{ij}x_{ik})_{j \le k}, 0 < i < m, 0 < j < n(i).
        \]
    \end{enumerate}
    Note that we will often suppress the tensor product $\ox$ and simply write\pagenote{explained notation used throughout this chapter}
    \[
        UV := U\ox V.
    \]
\end{definition}
\begin{proposition}[Definition 3.3 \cite{Leung2017} ]
    ~\begin{enumerate}[(i)]
        \item The category $\wone$ is a symmetric strict monoidal category with unit $\N$ and coproduct $\ox$.
        \item $\N$ is a terminal object in $\wone$.
    \end{enumerate}
\end{proposition}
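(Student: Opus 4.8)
The plan is to exploit the fact that, by construction, $\ox$ is nothing but the coproduct of commutative rigs restricted to $\wone$. For part (i), I would first observe that $\mathsf{CRig}$ has all finite coproducts and that these carry the canonical symmetric monoidal structure: the associator, unitors, and symmetry are the unique comparison maps supplied by the universal property of the coproduct, and the coherence axioms (pentagon, triangle, hexagon) hold automatically because they are equalities between maps out of a coproduct. The unit for this structure is the initial object of $\mathsf{CRig}$, which is $\N$: for any commutative rig $R$ there is exactly one rig homomorphism $\N \to R$, since such a map must preserve $0$ and $1$ and $\N$ is generated by $1$ under addition. Thus the real content of (i) is twofold: that $\wone$ is closed under $\ox$ (so the coproduct of two of its objects is again one of its objects), and that the resulting structure is \emph{strict}.

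Closure is built into Definition \ref{def:Weil-algebra}, whose second clause takes the closure of the product powers $W_n$ under $\ox$; since $\wone$ is a full subcategory of $\mathsf{CRig}$, the universal property of the coproduct is inherited as soon as the coproduct object itself lies in $\wone$. Strictness is where the strict choice of presentation does the work. I would check that the flat presentation $\N[x_{i,j}]/(x_{ij}x_{ik})_{j \le k}$ assigned to an iterated tensor depends only on the list of factors and not on any bracketing, so that $(U \ox V) \ox W$ and $U \ox (V \ox W)$ unfold to literally the same rig, and that tensoring with $\N$ adjoins no new generators, giving $U \ox \N = U = \N \ox U$ on the nose. The symmetry $\sigma \colon U \ox V \to V \ox U$ is then the generator-swapping isomorphism, which is visibly involutive and satisfies the symmetric monoidal coherences inherited from the coproduct.

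For part (ii), I would show $\N$ is terminal by producing, for each object $V$, a unique rig homomorphism $V \to \N$. Every object of $\wone$ decomposes as $V = \N \oplus \dot V$ with $\dot V$ the nilpotent ideal generated by the variables $x_{i,j}$. The augmentation $a + \dot v \mapsto a$ is a rig homomorphism, since $\dot V$ is an ideal whose elements are nilpotent, and it is the \emph{only} one: any homomorphism $V \to \N$ sends nilpotents to nilpotents, and $\N$ has no nonzero nilpotent element, so $\dot V$ must map to $0$, forcing the map to be the augmentation. Hence $\N$ is both initial and terminal, i.e. a zero object of $\wone$.

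The obstacle I anticipate is bookkeeping rather than conceptual: making the strict-presentation argument of part (i) airtight. Concretely, I must confirm that the chosen normal forms for iterated coproducts are genuinely bracketing- and order-independent (up to exactly the reindexing that the symmetry $\sigma$ realizes), so that associativity and unitality hold as equalities of objects and identity structure morphisms, not merely as isomorphisms. Everything else follows formally from the universal property of coproducts in $\mathsf{CRig}$ and the fact that $\N$ has no nontrivial nilpotents.
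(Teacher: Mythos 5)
Your argument is correct, and it is the standard one: the paper itself gives no proof of this proposition, deferring entirely to Definition 3.3 of \cite{Leung2017}, where the content is exactly what you describe --- $\ox$ is the coproduct of commutative rigs restricted to a full subcategory closed under it, $\N$ is initial in $\mathsf{CRig}$ so coherence for the cocartesian structure is automatic, strictness is arranged by the flat choice of presentation in Definition \ref{def:Weil-algebra}, and terminality of $\N$ follows because every generator $x_{i,j}$ squares to zero while $\N$ has no nonzero nilpotents, forcing any homomorphism $V \to \N$ to be the augmentation. The one point you rightly flag as delicate is the unitor: for $U \ox \N = U$ to hold \emph{on the nose} the indexing convention must literally drop width-zero factors rather than leave a gap in the labels $x_{i,j}$, which is a bookkeeping convention rather than a mathematical obstacle, and is how the cited source handles it.
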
 
Note that there is a forgetful functor
\[
    \wone \to (\mathsf{CMon}/\N) \to \mathsf{CMon}    
\]
that reflects connected limits. This gives the following class of limits, identified in \cite{Leung2017}.
\begin{definition}%
    \label{def:transverse-limit}
    We say the following pullback diagrams in $\wone$ are \emph{transverse}:\pagenote{switched superscript to subscript in second diagram}
\[\begin{tikzcd}
    {W} & {W} &[-2em] {W^2} & {W} &[-2em] {W^2} & {WW} \\
    {W} & {W} & {W} & {\N} & {\N} & {W}
    \arrow["{id}", from=1-1, to=1-2]
    \arrow["{id}", from=1-2, to=2-2]
    \arrow["{id}"', from=2-1, to=2-2]
    \arrow["{id}"', from=1-1, to=2-1]
    \arrow["\lrcorner"{very near start, rotate=0}, from=1-1, to=2-2, phantom]
    \arrow["{\pi_0}"', from=1-3, to=2-3]
    \arrow["{\pi_1}", from=1-3, to=1-4]
    \arrow["{p}", from=1-4, to=2-4]
    \arrow["{p}"', from=2-3, to=2-4]
    \arrow["\lrcorner"{very near start, rotate=0}, from=1-3, to=2-4, phantom]
    \arrow["{p \o \pi_i}"', from=1-5, to=2-5]
    \arrow["{\mu}", from=1-5, to=1-6]
    \arrow["{0}"', from=2-5, to=2-6]
    \arrow["{pW}", from=1-6, to=2-6]
\end{tikzcd}\]

    where $\mu(a + a_1x + a_2y) = a + a_1x + a_2xy$. The $\ox$-closure of these three pullbacks is the set of \emph{transverse squares}, and they are also pullback squares by \cite{Leung2017}.
\end{definition}
To see that each transverse square in the $\ox$-closure is a pullback diagram, take the two non-identity squares and rewrite them in $\mathsf{CMon}$:
\[
\begin{tikzcd}
	{\N \x (\N \x \N)} & \N\x\N &[-1em] {\N \x (\N \x \N)} &[3em] {\N \x (\N\x\N\x\N)} \\
	\N\x\N & \N & \N & {\N \x \N}
	\arrow["{\N \x \pi_0}"', from=1-1, to=2-1]
	\arrow["{\N \x \pi_1}", from=1-1, to=1-2]
	\arrow["{\pi_0}", from=1-2, to=2-2]
	\arrow["{\pi_0}"', from=2-1, to=2-2]
	\arrow["\lrcorner"{anchor=center, pos=0.125}, draw=none, from=1-1, to=2-2]
	\arrow[draw=none, from=1-1, to=2-2]
	\arrow["{\pi_0}"', from=1-3, to=2-3]
	\arrow["{\N \x (\pi_0,0 \circ !, \pi_1)}", from=1-3, to=1-4]
	\arrow["{(id,0 \o !)}"', from=2-3, to=2-4]
	\arrow["{\N \x \pi_1}", from=1-4, to=2-4]
	\arrow["\lrcorner"{anchor=center, pos=0.125}, draw=none, from=1-3, to=2-4]
\end{tikzcd}\]
The coproduct of Weil algebras is the tensor product of the underlying commutative monoids, which are finite-dimensional and free, so these limits are closed under $\ox$.
\begin{proposition}[\cite{Leung2017} Proposition 4.1]
    The category $\wone$ is a tangent category, where the tangent functor is
    \[
        T := W \ox (\_): \wone \to \wone  
    \]
    and the natural transformations are given by 
    \begin{gather*}
        p: W \ox (\_) \xrightarrow[]{p \ox (\_)} (\_), \hspace{0.25cm}
        0: (\_) \xrightarrow[]{0 \ox (\_)} W \ox (\_), \hspace{0.25cm}
        +: W_2 \ox (\_) \xrightarrow[]{+ \ox (\_)} W \ox (\_), \\
        \ell: W \ox (\_) \xrightarrow[]{\ell \ox (\_)} W \ox W \ox (\_), \hspace{0.25cm}
        c: W \ox W \ox (\_) \xrightarrow[]{p \ox (\_)} W \ox W \ox (\_)).
    \end{gather*}
\end{proposition}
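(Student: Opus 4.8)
The plan is to exploit the fact that $(\wone, \ox, \N)$ is a \emph{strict} symmetric monoidal category, so that the entire candidate tangent structure is obtained by tensoring a fixed family of rig morphisms on the single Weil algebra $W = \N[x]/x^2$ with the identity; the whole verification then reduces to checking the axioms of \Cref{def:tangent-cat} ``pointwise on $W$'', i.e. as commuting diagrams of morphisms of commutative rigs. First I would observe that, by bifunctoriality of $\ox$, every morphism $f\colon U \to V$ in $\wone$ induces a natural transformation $f \ox (-)\colon U \ox (-) \Rightarrow V \ox (-)$; applied to $p, 0, +, \ell$ and the monoidal symmetry (which plays the role of $c$), this makes $p, 0, +, \ell, c$ natural transformations of exactly the stated types and makes $T = W \ox (-)$ a functor. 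Because the monoidal structure is strict, all associativity and unit coherence isomorphisms are identities, so any tangent-category axiom---being a diagram built from natural transformations each of the form $g \ox (-)$---holds for every object if and only if the corresponding diagram of rig morphisms commutes in $\wone$.

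Next I would discharge the purely equational axioms. The symmetry axioms [TC.2] and the lift axioms [TC.3](i)--(iii) become finite identities among rig morphisms on $W$, on $W \ox W = \N[x,y]/(x^2,y^2)$, and on $W\ox W \ox W$. Several are immediate from strictness: the involution $c \o c = id$ is the statement that the symmetry squares to the identity, the symmetric co-multiplication $c \o \ell = \ell$ holds because $\ell(a + bx) = a + bxy$ is invariant under the swap of $x$ and $y$, and the Yang--Baxter equation is the hexagon coherence of the symmetry. The additive bundle equations [TC.1](ii) reduce to verifying that $(W, p, 0, +)$ satisfies the commutative-monoid laws, a direct substitution in $W_2 = \N[x,y]/(x^2,y^2,xy)$. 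I would present one representative computation (say the left unit law for $+$) and note that the remaining identities are discharged identically.

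The substantive content is the two universal-property clauses. The pullback-power axiom [TC.1](i) asks that the $T_n$ exist and be preserved by $T$: these pullback powers are represented by the algebras $W_n$, whose defining squares are among the transverse squares of \Cref{def:transverse-limit} and hence genuine pullbacks, and $W \ox (-)$ preserves them precisely because the transverse squares are closed under $\ox$ (so $W_2 \ox X = (W \ox X)\times_X (W\ox X) = T_2 X$). The universality axiom [TC.3](iv) is the crux: unwinding $\mu := T.+ \o (0 \o \pi_0, \ell \o \pi_1)$ in the strict monoidal setting identifies it with the rig morphism $W_2 \to W \ox W$ sending $a + a_1 x + a_2 y \mapsto a + a_1 x + a_2 xy$, which is exactly the map $\mu$ appearing in the third transverse square of \Cref{def:transverse-limit}. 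That square is a pullback by Leung's result and is preserved by $W \ox (-)$, again by $\ox$-closure of transverse squares, which yields the required pullback for all $X$.

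The hard part is the bookkeeping of this last step: correctly translating the tangent-categorical composites (the map $\mu$, and the pullback powers $T_n$) into their representing rig morphisms on $W$, and recognizing that the universal properties demanded by the tangent axioms coincide with exactly the transverse limits that Leung has already shown to be pullbacks closed under $\ox$. Once this dictionary is fixed, every axiom is either a strict-monoidal triviality, a one-line polynomial computation in a truncated polynomial rig, or a direct instance of a transverse pullback.
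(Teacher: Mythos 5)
Your proposal is correct in outline, but note that the paper does not actually prove this proposition at all: it is imported wholesale as Proposition~4.1 of \cite{Leung2017}, so there is no in-text argument to compare against. That said, your reduction is exactly the mechanism the surrounding text is built on, and it is worth recording why it is the right one. The strictness observation does all the work for the equational axioms: since every structure map is of the form $g \ox (\_)$ for a fixed rig morphism $g$, and since whiskering on either side ($T.\ell$ versus $\ell.T$, etc.) again produces maps of this form ($W \ox \ell \ox (\_)$ versus $\ell \ox W \ox (\_)$), each axiom of Definition \ref{def:tangent-cat} holds at every object iff it holds at the unit $\N$, where it is a finite identity of truncated-polynomial computations. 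For the two universal-property clauses your dictionary is the correct one: the pullback powers $T_n$ are represented by the $W_n$, and the composite $T.+ \o (0 \o \pi_0, \ell \o \pi_1)$ unwinds to the rig morphism $a + a_1x + a_2y \mapsto a + a_1x + a_2xy$, which is literally the map $\mu$ in the third transverse square of Definition \ref{def:transverse-limit}; existence is then Leung's statement that transverse squares are pullbacks, and preservation by $T$ (and by all $T^m$, which the axioms implicitly require) is the $\ox$-closure of the transverse class. The only cosmetic gap is that Definition \ref{def:transverse-limit} explicitly lists only the $W_2$ pullback, so the general $T_{n+m} \cong T_n \times_{\mathrm{id}} T_m$ squares must be obtained from the $\ox$-closure (or from the enlarged list in Definition \ref{def:weil-space}); this is routine but should be said. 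With that noted, your argument is complete and is, in substance, the proof the paper is citing.
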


The category $\wone$ is, in some sense, a finitely presented theory. It is precisely the free tangent category on a single object:
\begin{proposition}[Proposition 9.5, \cite{Leung2017}]
    \label{thm:leung}
    The category $\wone$ is generated by the maps $\{p, 0, +, \ell, c\}$ from Example  \ref{ex:weil-algebras-and-maps}, closed under composition, tensor, and maps induced by transverse limits.
\end{proposition}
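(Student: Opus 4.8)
The plan is to show that the wide subcategory $\mathcal{G} \subseteq \wone$ generated by $\{p, 0, +, \ell, c\}$ under composition, $\ox$, and maps induced by transverse limits (Definition \ref{def:transverse-limit}) contains every morphism of $\wone$; since it contains every object by construction, this yields $\mathcal{G} = \wone$. Throughout I will use the strict presentation of Definition \ref{def:Weil-algebra}, under which $W_n$ is the $n$-fold transverse pullback power of $p \colon W \to \N$ and a general object is an $\ox$-product $W_{n(0)} \ox \cdots \ox W_{n(m-1)}$, and I will freely use that a rig homomorphism out of an object is determined by the (nilpotent) images of the degree-one generators $x_i$, subject only to the multiplicative relations of the domain.

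First I would assemble a stock of elementary maps known to lie in $\mathcal{G}$. Since $\mathcal{G}$ is closed under transverse limits, it contains the pullback legs $\pi_i \colon W_n \to W$ and the augmentations $! \colon V \to \N$, together with every map into a transverse limit induced by a $\mathcal{G}$-cone. From these and $0, +$ one obtains: the diagonals $\Delta \colon W \to W_n$ (induced by the cone of identities), hence the scalings $W \to W$, $x \mapsto k\,x$ for $k \in \N$ (as iterates of $+ \circ \Delta$); the map $\mu \colon W_2 \to WW$ of Definition \ref{def:transverse-limit}, which is exactly $T(+)\circ(0\circ\pi_0,\ell\circ\pi_1)$ and so is a composite of $\ell, 0, +$ and transverse data as in the universality axiom of Definition \ref{def:tangent-cat}; the coproduct injections $W \hookrightarrow V$ (from $0$ and $\ox$) and the symmetries of $\ox$ (from $c$). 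The key auxiliary lemma is that the \emph{fold} (multiplication) maps $\nabla_V \colon V \ox V \to V$, equivalently all copairings, lie in $\mathcal{G}$; for $V = W$ this follows because $\nabla_W = {+}\circ q$, where $q \colon WW \to W_2$ is the map into the transverse pullback $W_2$ induced by the $\mathcal{G}$-cone $(\mathrm{id}\ox p,\ p\ox\mathrm{id})$, and the general case follows by iterating over the tensor factors.

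With these in hand I would reduce an arbitrary homomorphism $f \colon V \to V'$ in two steps. Because $\ox$-products of transverse squares are again transverse, $V'$ is a transverse limit whose legs land in tensor powers $W^{\ox k}$; by closure under transverse-limit-induced maps it therefore suffices to generate each composite $V \to W^{\ox k}$. Next, since $\ox$ is the coproduct in $\wone$, such a map out of $V = W_{n(0)}\ox\cdots\ox W_{n(m-1)}$ is the copairing of its restrictions to the factors, and copairings are generated by the fold lemma; so the problem reduces to generating an arbitrary $g \colon W_n \to W^{\ox k}$. Such a $g$ is determined by nilpotent images $g(x_0), \dots, g(x_{n-1}) \in \dot{(W^{\ox k})}$ with $g(x_i)g(x_j) = 0$, each an $\N$-linear combination of square-free monomials in the generators of $W^{\ox k}$. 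I would synthesise $g$ by induction on the total degree and on the number of monomial terms: a single monomial of degree $d$ is realised as the image of a generator by an iterate of $\ell$ (to raise degree), followed by flips $c$ (to order the variables) and injections and $0$ (to place it among the $k$ factors); integer coefficients are supplied by the scalings; and $\N$-linear combinations are assembled by routing through the diagonals and $\mu$-type maps and folding with $+$. Crucially, the vanishing constraints $g(x_i)g(x_j) = 0$ are automatically respected, because the maps $\mu$, $\ell$, and the diagonals out of the pullback powers $W_n$ send distinct generators to elements whose product already vanishes.

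The main obstacle is precisely this last synthesis together with the fold lemma it relies on: both require matching the universal properties of the three transverse squares of Definition \ref{def:transverse-limit} against the monomial combinatorics of the generator-images, and the bookkeeping must guarantee that the iterated lifts and flips produce exactly the intended monomial while the additive combination via $+$ merges the correct variables and no others. Everything else — the reduction through coproducts and transverse limits, and the catalogue of elementary generated maps — is then routine, and the reductions collapse the general morphism to the explicitly constructed $g \colon W_n \to W^{\ox k}$, completing the identification $\mathcal{G} = \wone$.
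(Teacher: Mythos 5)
The paper does not prove this statement at all: it is imported verbatim as Proposition 9.5 of \cite{Leung2017}, so there is no in-paper argument to compare yours against. Judged on its own terms, your proposal has the right architecture, and it is broadly the architecture of Leung's actual proof: use that $\ox$ is the coproduct and that every object is an iterated transverse limit of tensor powers of $W$ to reduce a general morphism to one of the form $g \colon W_n \to W^{\ox k}$, and then synthesise $g$ from the data of the nilpotent images $g(x_i)$. Your preliminary catalogue (projections, diagonals, scalings, the identification $\nabla_W = {+}\circ q$ with $q$ induced into the transverse pullback $W_2$, and hence copairings) is correct and verifiable.

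The genuine gap is exactly where you flag it, and flagging it is not the same as closing it. Two things are missing. First, you argue that the constraints $g(x_i)g(x_j)=0$ are ``automatically respected'' by the maps you build --- but that is a soundness claim (everything you construct is a valid homomorphism), whereas the theorem needs completeness: every homomorphism satisfying those constraints must be reachable. The constraints are a real restriction (e.g.\ $x \mapsto t_1 + t_2$ into $W\ox W$ is \emph{not} a rig homomorphism since $(t_1+t_2)^2 = 2t_1t_2 \neq 0$), and your induction has to use them positively to show realizability, not merely note that they are preserved. Second, the step ``$\N$-linear combinations are assembled by routing through the diagonals and $\mu$-type maps and folding with $+$'' hides the central difficulty: the pointwise sum of two rig homomorphisms is not a rig homomorphism, so realizing $x \mapsto 2x + 3xy$, say, requires factoring through the diagonal $\Delta \colon W \to W_2$ and then producing a map \emph{out of} the pullback $W_2$ hitting the two summands on the two generators --- and maps out of limits are not supplied by your closure operations, so this must be built by hand and checked to respect $x_1x_2 = 0$. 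This bookkeeping, together with the verification that the induction on degree and number of monomials actually terminates and covers every admissible tuple $(g(x_1),\dots,g(x_n))$, is the combinatorial core of Leung's argument (carried out there via a normal form for such morphisms), and it is precisely the part your sketch defers rather than supplies.
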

\begin{corollary}
    The category $\wone$ is the \emph{free} tangent category over a single object: every object $C$ in a tangent category $\C$ determines a strict tangent functor $T_{-}.C: \wone \to \C$, mapping
    \[
        V = W^{n{1}} \ox \dots \ox W^{n(k)}  
        \mapsto 
        T_{n(1)}.(\dots).T_{n(k)}.C = T^VC
    \]
    so that there is an isomorphism of categories between $\C$ and the category of strict tangent functors $\wone \to \C$ with tangent natural transformations as morphisms.\pagenote{Original statement was incomplete.}
\end{corollary}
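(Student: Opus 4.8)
The plan is to read this corollary as the universal property packaged by Leung's generation result, Proposition \ref{thm:leung}, and to prove it by exhibiting the claimed isomorphism of categories explicitly on objects and on morphisms, then checking the two assignments are mutually inverse. The essential leverage is that Proposition \ref{thm:leung} presents $\wone$ with generating morphisms $\{p,0,+,\ell,c\}$ closed under composition, tensor, and transverse limits, and that the relations these generators satisfy are exactly the tangent-category axioms [TC.1]--[TC.3] of Definition \ref{def:tangent-cat}. Since $\N = W_0$ is the monoidal unit and $T^{\N}C = T_0 C = C$, the object $\N$ plays the role of the single generating ``point'' whose prolongations $T^V\N = V$ are all the objects of $\wone$.

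For the object bijection, in the forward direction I would send $C \in \C$ to the assignment $T_{-}.C$ defined on objects by $V = W^{n(1)}\ox\cdots\ox W^{n(k)} \mapsto T_{n(1)}\cdots T_{n(k)}C = T^VC$, using the strict choice of presentation in Definition \ref{def:Weil-algebra} so that the assignment is literally well defined. On the generating morphisms I would send $p,0,+,\ell,c$ to the corresponding tangent natural transformations of $\C$ evaluated at the appropriate prolongation of $C$ (so $p\ox V \mapsto p_{T^VC}$, and likewise for the others). Because $\wone$ is generated by these maps under composition, tensor, and transverse limits, and a strict tangent functor must preserve each of these operations, this data extends to at most one functor; existence of the extension — i.e. that the assignment respects every relation holding among the generators — follows because those relations are precisely the axioms [TC.1]--[TC.3], which hold in $\C$, and because the transverse pullbacks of Definition \ref{def:transverse-limit} are sent to the pullback powers $T_n$ that exist and are $T$-stable by [TC.1](i). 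In the backward direction I would send a strict tangent functor $F:\wone\to\C$ to $F(\N)$; strictness gives $F(W\ox V) = T.F(V)$, hence $F(V) = T^V F(\N)$ on objects and, by the same generation argument, $F = T_{-}.F(\N)$ on the nose, so the two object assignments are mutually inverse.

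For the morphism bijection I would send $f:C\to D$ to the family $\gamma^f_V := T^V f$. Naturality of $\gamma^f$ in $V$ reduces to the naturality of $p,0,+,\ell,c$ in $\C$ together with their images under $T^n$, and the tangent-compatibility square of Definition \ref{def:tang-nat} holds trivially since both functors are strict ($\tnat=\beta=\mathrm{id}$) and the square becomes $T.\gamma^f_V = \gamma^f_{W\ox V}$. Conversely, a tangent natural transformation $\gamma: T_{-}.C \Rightarrow T_{-}.D$ is determined by $\gamma_{\N}$: strictness forces $\gamma_{W\ox V} = T.\gamma_V$, so by induction on the presentation of $V$ one obtains $\gamma_V = T^V\gamma_{\N}$; thus $\gamma \mapsto \gamma_{\N}$ and $f \mapsto \gamma^f$ are mutually inverse, and functoriality of both directions is routine since composition of natural transformations is computed componentwise. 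Combining the object and morphism bijections yields the asserted isomorphism of categories.

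The step I expect to be the genuine obstacle is the well-definedness of $T_{-}.C$ on morphisms — verifying that the generator-to-generator assignment respects every relation of $\wone$. This is exactly where the proof must cite Proposition \ref{thm:leung}: one cannot check the relations by hand without a finite presentation, so the argument hinges on Leung's identification of the relations of $\wone$ with the tangent axioms. The most delicate sub-case is the interaction with transverse limits: I must confirm that a morphism of $\wone$ arising from the universal property of a transverse square is carried to the morphism induced by the universal property of the corresponding $T$-pullback in $\C$. This requires that the target $T$-limits actually exist and are preserved by $T$ (so that prolongations such as $T^V C$ are themselves built from genuine $T$-limits), which is guaranteed by [TC.1](i), and that the comparison maps agree, which follows from the strictness of the presentation and the uniqueness of maps into a limit.
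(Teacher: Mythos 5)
Your proposal is correct and follows the same route the paper intends: the paper states this corollary without proof as an immediate consequence of Leung's generation result (Proposition \ref{thm:leung}), and your write-up supplies exactly the details left implicit — the object assignment $V \mapsto T^VC$, the generator-to-generator assignment, extension via generation under composition, tensor and transverse limits, and the reduction of a tangent natural transformation to its component at $\N$. The only caveat worth noting is that Proposition \ref{thm:leung} as quoted states only \emph{generation}, whereas your well-definedness argument needs the stronger fact that the relations among the generators are precisely the tangent axioms; you correctly identify this as the crux and it is indeed part of Leung's full result (his Theorem 14.1, cited later in the chapter), so the argument stands.
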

\begin{notation}
    Throughout this section, the notation $T^VC$ will refer to the action of the Weil algebra $V$ on an object $C$ in a tangent category.
    In particular, we will make use of the isomorphism $T^U.T^VC = T^{UV}C$.
\end{notation}

\section{Tangent structures as monoidal actions}%
\label{sec:tang-struct-as-wone}

The presentation of $\wone$ as the free tangent category situates the formal theory of tangent categories as an instance of more general categorical machinery, namely monoidal actions. Recall that in a symmetric monoidal category $(\C, \ox, I)$, an internal monoid $(C, \bullet, i)$ determines a monad:
\[
    (C \ox \_: \C \to \C, \mu: \C \ox (\C \ox \_) \xrightarrow{\bullet \ox \_} \C \ox \_, 
    \eta: \_ \xrightarrow{\rho} I \ox \_ \xrightarrow{e \ox \_} C \ox \_).
\]
The category of algebras for this monad is exactly the category of $C$-modules, objects with an associative and unital action by $C$. A morphism will be a map on the base object that preserves the action:
\[\begin{tikzcd}
	C & {M \ox C} & {M \ox M \ox C} & {M \ox C} \\
	& C & {M \ox C} & C \\
	& {M\ox C} & {M \ox D} \\
	& C & D
	\arrow["M\ox\propto", from=1-3, to=1-4]
	\arrow["{\bullet\ox M}"', from=1-3, to=2-3]
	\arrow["\propto"', from=2-3, to=2-4]
	\arrow["\propto", from=1-4, to=2-4]
	\arrow["{(u,C)}", from=1-1, to=1-2]
	\arrow["\propto", from=1-2, to=2-2]
	\arrow[Rightarrow, no head, from=1-1, to=2-2]
	\arrow["f", from=4-2, to=4-3]
	\arrow["{\propto_C}", from=3-2, to=4-2]
	\arrow["{\propto_D}", from=3-3, to=4-3]
	\arrow["{M \ox f}", from=3-2, to=3-3]
\end{tikzcd}\]
Strict actegories are the 2-categorical generalization of modules over a monoid. The coherences for a 2-monad follow from the coherences from a strict monoidal category in the 2-category of categories. The following proposition relies on a few facts from enriched category theory (treating the cartesian closed category $\mathsf{Cat}$ as a $\mathsf{Cat}$-enriched category, per \cite{Kelly2005}) but a more general treatment of non-strict actegories may be found in \cite{janelidze2001note}:\pagenote{Since I'm not referencing any sort of weakness, this is just enriched category theory so I think Kelly is a fair reference. However, I have included the reference to Janelidze and Kelly's work on actegories.} 
\begin{itemize}
    \item A 2-functor and 2-natural transformations are exactly a functor and natural transformations that satisfy extra coherences. These coherences follow for free by constructing the monad and comonad $\m \x \_, [\m, \_]$.
    \item An algebra of the underlying 1-monad is exactly an algebra of the 2-monad (the same result holds for comonads).
\end{itemize}
When working with algebras of a 2-monad, four different notions of morphisms can come into play (\cite{Lack2009}). These arise through using the 2-categorical data to weaken the notion of a morphism:
\begin{enumerate}[(i)]
    \item Strict: this is exactly a morphism of the underlying algebras. Write the 2-category of strict $\m$-actegories.
    \item Strong: the morphisms preserve the action to an isomorphism:
    \[
\begin{tikzcd}
	{\m \x \C} & {\m \x \D} \\
	\C & \D
	\arrow["{\m \x F}", from=1-1, to=1-2]
	\arrow["{\propto_C}"', from=1-1, to=2-1]
	\arrow["{\propto_D}", from=1-2, to=2-2]
	\arrow["F"', from=2-1, to=2-2]
	\arrow["\alpha"{description}, draw=none, from=1-2, to=2-1]
\end{tikzcd}\]
    \item Lax: the 2-cell is no longer an isomorphism:
    \[
\begin{tikzcd}
	{\m \x \C} & {\m \x \D} \\
	\C & \D
	\arrow["{\m \x F}", from=1-1, to=1-2]
	\arrow["{\propto_C}"', from=1-1, to=2-1]
	\arrow["{\propto_D}", from=1-2, to=2-2]
	\arrow["F"', from=2-1, to=2-2]
	\arrow["\alpha"{description}, shorten <=2pt, shorten >=2pt, Rightarrow, from=1-2, to=2-1]
\end{tikzcd}\]
    \item Oplax: the 2-cell travels in the opposite direction (these will not figure into this account).
\end{enumerate}
2-cells between actegory morphisms must satisfy a coherence between the natural transformation parts of the actegory morphisms.
\begin{definition}\label{def:actegory-natural}
    In the case of strict, strong, and lax tangent functors, the same notion of a 2-cell applies: a natural transformation $\gamma:F \Rightarrow G$ satisfying the following coherences with the natural transformations $\alpha$ and $\beta$:
\[\begin{tikzcd}
	\m\x\C & \m\x\D & \m\x\C & \m\x\D \\
	\C & \D & \C & \D
	\arrow["{\m\x F}", from=1-1, to=1-2]
	\arrow[""{name=0, anchor=center, inner sep=0}, "{\propto^\D}", from=1-2, to=2-2]
	\arrow["{\propto^\C}"', from=1-1, to=2-1]
	\arrow[""{name=1, anchor=center, inner sep=0}, "F"{description}, from=2-1, to=2-2]
	\arrow["\alpha"{description}, Rightarrow, from=1-2, to=2-1]
	\arrow[""{name=2, anchor=center, inner sep=0}, "G"', curve={height=18pt}, from=2-1, to=2-2]
	\arrow[""{name=3, anchor=center, inner sep=0}, "{\m\x G}"'{pos=0.2}, from=1-3, to=1-4]
	\arrow[""{name=4, anchor=center, inner sep=0}, "{\m\x F}", curve={height=-18pt}, from=1-3, to=1-4]
	\arrow[""{name=5, anchor=center, inner sep=0}, "{\propto^\C}"', from=1-3, to=2-3]
	\arrow["{\propto^\D}", from=1-4, to=2-4]
	\arrow["G"', from=2-3, to=2-4]
	\arrow["\beta", Rightarrow, from=1-4, to=2-3]
	\arrow["\gamma", shorten <=2pt, shorten >=2pt, Rightarrow, from=1, to=2]
	\arrow["{\m\x \gamma}"', shift left=2, shorten <=2pt, shorten >=2pt, Rightarrow, from=4, to=3]
	\arrow["{=}"{description}, Rightarrow, draw=none, from=0, to=5]
\end{tikzcd}\]
We call these \emph{actegory natural transformations}. 
\end{definition}
Note that for strict actegory morphisms, this condition holds for any natural transformation $\gamma:F \Rightarrow G$. Now consider the following three 2-categories.\pagenote{Added the actual definition of the 2-categories referenced later on. I also added the coherence for actegory-natural transformation}
\begin{definition}\label{def:2-categories}
    Let $(\m,\ox,I)$ be a strict monoidal category. Define the following three 2-categories.
    \begin{enumerate}
        \item $\m\mathsf{Act}_{\mathsf{strict}}$: the 2-category of strict $\m$-actegories, strict actegory morphisms, and natural transformations.
        \item $\m\mathsf{Act}_{\mathsf{strong}}$: the 2-category of strict $\m$-actegories, strong actegory morphisms, and actegory natural transformations.
        \item $\m\mathsf{Act}_{\mathsf{lax}}$: the 2-category of strict $\m$-actegories, lax actegory morphisms, and actegory natural transformations.
    \end{enumerate}
    Note that the inclusions of these 2-categories are \emph{locally} fully faithful, so only the 1-cells differ.
\end{definition}

The case where the action preserves certain limits in the monoidal category is of particular interest. A small category equipped with a class of chosen limits is known as a \emph{sketch}. The previous correspondence restricts to the class of limit-preserving actions in this case.
\begin{definition}
    A \emph{sketch} is a small category with a class of chosen limits, and a sketch morphism is a functor sending chosen limits to the chosen limits in the domain (up to isomorphism). The category of models of a sketch $\c$ in a category $\C$, $\mathsf{Mod}(\c, \C)$, is the full subcategory $[\c, \C]$ whose functors preserve the chosen limits.
    A \emph{monoidal sketch}, then, is a sketch $(\c, \prol)$ equipped with a symmetric monoidal category structure on $\c$ so that $\_ \ox \_$ preserves limits in each argument.
\end{definition}

Now use the fact that the category $\wone$ is a monoidal sketch, since it is a small, strict monoidal category equipped with a class of limits stable under the tensor product.
\begin{theorem}[Theorem 14.1, \cite{Leung2017}]
    Let $\C$ be a category. The following are equivalent:
    \begin{enumerate}[(i)]
        \item A tangent structure on $\C$,
        \item A sketch action $\propto: \wone \x \C \to \C$.
    \end{enumerate}
\end{theorem}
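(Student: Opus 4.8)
The plan is to read the statement as a packaging of the universal property of $\wone$ as the free tangent category (the Corollary to Proposition~\ref{thm:leung}): a tangent structure on $\C$ is, up to currying, the same thing as a strictly-monoidal limit-preserving action of $\wone$ on $\C$. I would prove the two implications separately and then check that the assignments are mutually inverse. Throughout, the two facts I would lean on are (a) Proposition~\ref{thm:leung}, which presents $\wone$ by the generators $\{p,0,+,\ell,c\}$ of Example~\ref{ex:weil-algebras-and-maps} closed under composition, $\ox$, and maps induced by the transverse limits of Definition~\ref{def:transverse-limit}, and (b) the observation that the relations holding among these generators in $\wone$ are exactly the tangent-category axioms [TC.1]--[TC.3] of Definition~\ref{def:tangent-cat}.

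For (ii) $\Rightarrow$ (i): given a sketch action $\propto:\wone\x\C\to\C$, I would set $T:=W\propto(-)$ and define each structure map by whiskering the corresponding generator of $\wone$ against the action, e.g. $p:=p\propto(-)$, $\ell:=\ell\propto(-)$, and so on, using strict monoidality to identify $T^2=(W\ox W)\propto(-)$, $T_n=W_n\propto(-)$, and the like. Since the defining equations [TC.1]--[TC.3] are equations between composites of $\{p,0,+,\ell,c\}$ that already hold in $\wone$, applying the (strict monoidal, functorial) action transports them verbatim into $\C$. The remaining universality clauses---that pullback powers of $p$ exist and are preserved by $T$ ([TC.1(i)]) and that the lift square is a pullback ([TC.3(iv)])---are precisely the transverse squares of Definition~\ref{def:transverse-limit}; a sketch action preserves these by definition, so they become the required (iterated) pullbacks in $\C$.

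For (i) $\Rightarrow$ (ii): once $\C$ carries a tangent structure it is a tangent category, so the Corollary to Proposition~\ref{thm:leung} supplies for each object $C$ a strict tangent functor $T_{(-)}C:\wone\to\C$, $V\mapsto T^VC$, and for each $f:C\to D$ a tangent natural transformation with components $T^Vf$. I would uncurry this to a bifunctor $\propto(V,C):=T^VC$ and verify the strict-action coherences $\propto(\N,-)=\mathrm{id}_\C$ and $\propto(U\ox V,-)=\propto(U,\propto(V,-))$, which hold strictly because $\wone$ is built from strict choices of products and $\ox$ and the free functor is strict. To see that $\propto$ is a \emph{sketch} action I must check that each $T_{(-)}C$ sends transverse squares to pullbacks in $\C$; this is exactly [TC.1(i)] and [TC.3(iv)] together with preservation of pullback powers by $T$, so it holds.

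The step I expect to be the main obstacle is the well-definedness of the action on $\wone$-morphisms in the second direction. A morphism of $\wone$ has many presentations as composites, tensors, and transverse-limit-induced maps of the generators, so I must know that any two presentations of the same arrow induce the same map in $\C$. This is where the presentation of $\wone$ by Proposition~\ref{thm:leung} does the real work: its relations are generated by the tangent axioms, which hold in $\C$ by hypothesis, so the assignment descends to a well-defined functor; the transverse-limit-induced generators are sent to the canonical maps determined by the pullbacks guaranteed by [TC.1(i)] and [TC.3(iv)], independent of the chosen limit cones up to the unique coherence isomorphisms. Finally I would confirm the two constructions are mutually inverse---starting from $\propto$, extracting $(T,p,0,+,\ell,c)$, and re-currying returns $\propto$ on generators, hence everywhere by Proposition~\ref{thm:leung}, and conversely---completing the equivalence.
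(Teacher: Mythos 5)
The paper does not prove this statement; it is quoted verbatim from \cite{Leung2017} (Theorem 14.1) and used as a black box, so there is no internal proof to compare against. Your reconstruction follows the standard argument and is essentially the one Leung gives: the direction (ii) $\Rightarrow$ (i) is the routine transport of generators, relations, and transverse squares along a strict limit-preserving action, and the direction (i) $\Rightarrow$ (ii) is the uncurrying of the free-tangent-category property. Both directions are set up correctly, and you correctly identify that the universality clauses [TC.1(i)] and [TC.3(iv)] correspond exactly to the three transverse squares of Definition \ref{def:transverse-limit} and their $\ox$-closure.

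The one place where you are leaning on more than the paper actually records is the well-definedness step you flag yourself. Proposition \ref{thm:leung} as stated in the thesis only asserts \emph{generation}: every map of $\wone$ is a composite of tensors of $\{p,0,+,\ell,c\}$ and of maps induced by transverse limits. To descend your assignment to a functor you additionally need \emph{completeness of the relations}: that any two such presentations of the same arrow of $\wone$ are identified by the tangent-category axioms alone. That is the genuinely hard computation in Leung's paper (carried out by explicit normal forms for rig homomorphisms $W_n \to V$, in the spirit of Lemma \ref{lem:writing-maps-in-wone}), and it is not a formal consequence of the generation statement. Your appeal to the Corollary following Proposition \ref{thm:leung} (freeness of $\wone$) does close the gap within the thesis's logical ordering, but you should be explicit that the freeness corollary, not the generation proposition, is what carries the load there --- otherwise the argument reads as if generation alone sufficed, which it does not.
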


\begin{observation}%
    \label{obs:cofree-tangent-cat}
    There is a \emph{coalgebraic} perspective on tangent categories, coming from the equivalence between algebras of the 2-monad $(\wone \x (\_), \ox, I:1 \to \wone)$ and the 2-comonad $([\wone,\_], [\ox,\_], [I,\_])$.
    For any  category $\C$, there is a free tangent category given by
    \[
        \wone \x \C
    \] and this agrees with the free $\wone$-actegory. However, for the \emph{cofree} tangent category, take
    \[
        \mathsf{Mod}(\wone, \C),
    \]
    the category of transverse-limit-preserving functors $\wone \to \C$.
\end{observation}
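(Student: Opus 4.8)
The plan is to derive everything from the cartesian closed structure of $\mathsf{Cat}$ together with the standard fact that an adjoint pair consisting of a $2$-monad and a $2$-comonad has isomorphic $2$-categories of (strict) algebras and coalgebras. First I would record the adjunction $\wone \x (\_) \dashv [\wone, \_]$ coming from cartesian closedness, and observe that the monoid structure $(\wone, \ox, I\colon 1 \to \wone)$ equips $\wone \x (\_)$ with its $2$-monad structure while its mate across the adjunction equips $[\wone, \_]$ with the $2$-comonad structure $([\wone,\_],[\ox,\_],[I,\_])$ of the statement. Transposing an action $\wone \x \D \to \D$ to a coaction $\D \to [\wone, \D]$ is a bijection carrying the actegory axioms to the coalgebra axioms, and being compatible with the $2$-cells of Definition \ref{def:2-categories} it yields an isomorphism of $2$-categories between strict $\wone$-actegories and strict $[\wone,\_]$-coalgebras. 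Composing with the sketch-action characterisation of tangent structure (Theorem 14.1 of \cite{Leung2017}), which identifies the \emph{sketch} actions among actegories with tangent structures, gives the advertised coalgebraic description.

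Next I would treat the free case. The free algebra on a category $\C$ for the $2$-monad $\wone \x (\_)$ is $\wone \x \C$ with action $\ox \x \C$, which is exactly the free $\wone$-actegory; the forgetful $2$-functor $\wone\mathsf{Act}_{\mathsf{strict}} \to \mathsf{Cat}$ has this as its left adjoint by the usual free-algebra construction. The one extra point is that $\wone \x \C$ is a \emph{sketch} action, i.e. its action preserves transverse limits in the acting variable; this is immediate from the closure of transverse limits under $\ox$ (Definition \ref{def:transverse-limit}), which makes $\ox \x \C$ preserve them as well. Since $\wone \x \C$ is then itself a tangent category, is free among all actegories, and tangent functors are exactly actegory morphisms, it is also free among tangent categories — giving the claimed coincidence of the free tangent category with the free actegory.

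For the cofree case I would start from the cofree coalgebra $[\wone, \C]$ and explain why one must pass to $\mathsf{Mod}(\wone, \C)$. The cofree coalgebra carries the action $(V, F) \mapsto F(V \ox \_)$, and the computation
\[
    F\big((\lim_i V_i) \ox W\big) = F\big(\lim_i (V_i \ox W)\big) = \lim_i F(V_i \ox W)
\]
is valid exactly when $F$ preserves transverse limits (again using their closure under $\ox$). Thus the cofree action restricts to a sketch action precisely on the full subcategory of transverse-limit-preserving functors, so $\mathsf{Mod}(\wone, \C)$ is the largest part of the cofree actegory that is again a tangent category; this is the construction producing the cofree tangent category, in contrast to the free case where no restriction is needed.

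The main obstacle — and the point I would be most careful about — is the cofree \emph{universal property} itself. The comonad transposition forces the comparison of a candidate cofree classifying map to send an object $D$ of a tangent category $\D$ to $V \mapsto H(T^V D)$ for an underlying functor $H\colon \D \to \C$, and this lands in $\mathsf{Mod}(\wone, \C)$ only when $H$ preserves the limits $T^{\lim_i V_i}D = \lim_i T^{V_i}D$; for arbitrary $H$ it does not, so $\mathsf{Mod}(\wone, \C)$ cannot satisfy the naive strict universal property. I would reconcile this by working in the lax setting of Definition \ref{def:2-categories}, so the coherence cell need only be a morphism rather than an isomorphism, and by exhibiting tangent categories as a coreflective sub-$2$-category of $\wone$-actegories, so that the cofree tangent category is the coreflection of $[\wone, \C]$. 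Verifying that this coreflection is computed by $\mathsf{Mod}(\wone, \C)$, rather than by some larger formal completion, is the technical heart of the statement and is where I expect the real work to lie.
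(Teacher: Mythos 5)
The paper offers no argument here at all: this is stated as an \emph{Observation} immediately after Leung's Theorem 14.1 and is left unproved, so your proposal is supplying a justification rather than competing with one. What you write is the intended reconstruction and is correct on every point the thesis actually uses: the adjunction $\wone \x (\_) \dashv [\wone,\_]$ mates the monoid $(\wone,\ox,I)$ into the stated $2$-comonad; algebras and coalgebras coincide by the standard argument for an adjoint monad/comonad pair; the free actegory $\wone \x \C$ is already a sketch action because transverse limits are closed under $\ox$ and are connected, hence preserved by the constant functor at an object of $\C$; and the coaction $(V,F)\mapsto F(V\ox\_)$ on $[\wone,\C]$ restricts to a sketch action precisely on the transverse-limit-preserving functors, a full subcategory that is closed under the action (since $V\ox(\_)$ preserves transverse limits) and in which the pointwise limits of $[\wone,\C]$ are again limits.

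Your closing worry is the one substantive issue, and you are right to flag it, but note that it concerns a universal property the paper never asserts. ``Cofree tangent category'' is used here, and later for $\w = \mathsf{Mod}(\wone,\s)$, only to mean the canonical tangent structure obtained by cutting the cofree coalgebra down to where its coaction is a sketch action; no $2$-categorical couniversal property of $\mathsf{Mod}(\wone,\C)$ over $\mathsf{Cat}$ is invoked anywhere in the thesis. The naive strict version does fail for exactly the reason you give: the transpose $D \mapsto (V \mapsto H(T^V D))$ of an arbitrary $H:\D \to \C$ lands in $\mathsf{Mod}(\wone,\C)$ only when $H$ carries the $T$-limits of $\D$ to limits of $\C$, so strict tangent functors $\D \to \mathsf{Mod}(\wone,\C)$ correspond to $T$-limit-preserving functors out of $\D$, not to all functors. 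If you want a clean statement, state that restricted bijection; the coreflection language is harmless for the single object $[\wone,\C]$, where $\mathsf{Mod}(\wone,\C)$ is visibly the largest full sub-actegory carrying a sketch action, but you should not claim coreflectivity of sketch actions inside all $\wone$-actegories without an argument, and nothing in the observation requires it.
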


We can use Leung's theorem to induce a monoidal functor $\wone \to \C$ when a tangent structure is induced by a single object.
\begin{corollary}\label{cor:using-leung-thm}
    Let $(\C,\ox,I)$ be a strict monoidal category.\pagenote{I've added this intermediary result to make the Weil nerve a bit more explicit.}
    If an additive bundle $(p:A \to I, +:A_2 \to A, 0:I \to A)$ equipped with morphisms
    \[
       A \ox A \xrightarrow[]{c} A \ox A \hspace{0.5cm} A \xrightarrow[]{\ell} A \ox A
    \]
    determines a tangent structure on $\C$ using the endofunctor $A \ox (-)$, then $A$ determines a strict, transverse-limit-preserving, monoidal functor 
    \[
       A(-):\wone \to \C; W_{n[1]}\ox \dots \ox W_{n[k]} \mapsto A_{n[1]}\ox \dots \ox A_{n[k]} = T_{n[1]}\dots T_{n[k]}.I
    \]
\end{corollary}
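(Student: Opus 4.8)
The plan is to deduce this corollary directly from Leung's identification of tangent structures with sketch actions (Theorem~14.1 of \cite{Leung2017}) together with the free-tangent-category corollary to Proposition~\ref{thm:leung}. By hypothesis the endofunctor $A \ox (-)$, with the structure transformations induced by $p,0,+,\ell,c$, is a tangent structure on $\C$; by Theorem~14.1 this is the same datum as a sketch action $\propto : \wone \x \C \to \C$ preserving transverse limits in its first variable, with $\propto(W,-) = A \ox (-)$ and, more generally, $\propto(V,-) = T^V$. I would then define the candidate functor as the restriction of this action to the monoidal unit,
\[
    A(-) := \propto(-, I) : \wone \to \C, \qquad V \mapsto T^V I .
\]
Applying the corollary to Proposition~\ref{thm:leung} to the object $I$, the functor $A(-) = T_{(-)}I$ is a well-defined \emph{strict tangent functor}; this settles functoriality and the action on morphisms for free, because the generators $p,0,+,\ell,c$ of $\wone$ are sent to the given structure maps $p : A \to I$, $0 : I \to A$, $+ : A_2 \to A$, $\ell : A \to A\ox A$, $c : A \ox A \to A \ox A$, and the relations among the generators in $\wone$ (which, by freeness, are exactly those forced by the axioms of Definition~\ref{def:tangent-cat}) hold in $\C$ precisely because $A \ox (-)$ is assumed to be a tangent structure.

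Transverse-limit preservation is then immediate: since $\propto$ preserves transverse limits in its first variable, so does its restriction $\propto(-, I) = A(-)$. In particular, writing each $W_n = W \ts{p}{p} \dots \ts{p}{p} W$ as the transverse pullback power of $W$ over $\N$ (Definition~\ref{def:transverse-limit}) and using $A(W) = T I = A$, $A(\N) = I$, preservation yields
\[
    A(W_n) = A(W) \ts{p}{p} \dots \ts{p}{p} A(W) = A \ts{p}{p} \dots \ts{p}{p} A = A_n .
\]

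The substantive part is monoidality, i.e.\ $A(U \ox V) \cong A(U) \ox A(V)$ with $A(\N) = I$, which is equivalent to the asserted identity $T_{n[1]}\dots T_{n[k]}.I = A_{n[1]} \ox \dots \ox A_{n[k]}$. Since $\ox$ in $\wone$ is carried to composition of functors, $A(U \ox V) = T^U(T^V I)$, so monoidality reduces to a natural isomorphism
\[
    T^U Y \cong (T^U I) \ox Y \qquad (Y \in \C),
\]
which I would establish by induction on the presentation of $U$ as a tensor of the $W_n$ (Proposition~\ref{thm:leung}). For a pure tensor power $U = W^{\ox m}$ this is the strict-associativity identity $T^m Y = A^{\ox m} \ox Y = (T^m I)\ox Y$, and the inductive step for $\ox$ follows from strict associativity in $\C$ together with $T^{U\ox U'} = T^U T^{U'}$. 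The only case carrying real content is $U = W_n$, where the required identity is $T_n Y = A_n \ox Y$: tensoring the transverse pullback power $A_n$ on the right by $Y$ must reproduce the pullback power of $p \ox Y : A \ox Y \to Y$ defining $T_n Y$.

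This last point is the main obstacle. It requires that right-tensoring $(-) \ox Y$ preserve the transverse pullback powers $A_n$, which does \emph{not} hold in an arbitrary strict monoidal category (the tangent axioms only supply that the left-tensoring $A \ox (-) = T$ preserves them, via TC.1). I would therefore invoke the monoidal-sketch compatibility of $\C$ — that $\ox$ preserves transverse limits in \emph{each} variable — which is exactly the property enjoyed by the span-type monoidal categories to which this corollary is applied in the Weil nerve. Granting it, $(-) \ox Y$ sends the limit cone of $A_n$ to the limit cone of $T_n Y$, giving $T_n Y = A_n \ox Y$ and hence, by the induction, both the natural isomorphism $T^U Y \cong (T^U I)\ox Y$ and the object-level formula $A(W_{n[1]} \ox \dots \ox W_{n[k]}) = A_{n[1]} \ox \dots \ox A_{n[k]}$. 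Strictness of the two monoidal structures upgrades these isomorphisms to equalities, so $A(-)$ is a strict, transverse-limit-preserving, monoidal functor, as required.
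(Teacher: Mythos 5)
Your proposal follows the route the paper intends: the paper states Corollary \ref{cor:using-leung-thm} without proof, as an immediate consequence of Leung's theorem (Proposition \ref{thm:leung} and the free-tangent-category corollary that follows it) applied to the monoidal unit $I$, and your reduction to the strict tangent functor $T_{(-)}I:\wone\to\C$ is exactly that argument; strictness and preservation of transverse limits come for free, as you say. Where you go beyond the paper is in isolating the one step that does \emph{not} follow formally from the tangent axioms for $A\ox(-)$: the identification $T_nY = A_n\ox Y$, and hence the asserted equality $T_{n[1]}\dots T_{n[k]}.I = A_{n[1]}\ox\dots\ox A_{n[k]}$, requires that right-tensoring $(-)\ox Y$ preserve the transverse pullback powers of $p:A\to I$, whereas axiom [TC.1] of Definition \ref{def:tangent-cat} only guarantees that the \emph{left}-tensoring $A\ox(-)=T$ preserves them. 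You are correct that this is not automatic in an arbitrary strict monoidal category and must be supplied as a hypothesis (that $\ox$ preserves transverse limits in each variable, i.e.\ that $\C$ is a monoidal sketch for these limits). The paper never addresses this point in the general setting of the corollary; it is verified only in the one place the corollary is used, the span-composition category $\mathsf{Span}(\pi,\xi,\lambda,\anc)$ in the proof of Theorem \ref{thm:weil-nerve}, where Lemma \ref{lemma:pullback-part-of-theorem}(iii) shows the relevant $T$-pullbacks are closed under $\boxtimes$ in both arguments. So your proof is correct modulo that added hypothesis, and your flagging of it is a genuine improvement on the statement as written.
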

Note that this allows for a more conceptual description of representable tangent structure.
\begin{proposition}\label{prop:monoidal-functor-inf-obj}
    In a symmetric monoidal closed category, an infinitesimal object is exactly a strict symmetric monoidal functor $D:\wone \to \C$.\pagenote{The other statement was probably more general than necessary - this statement is more to the point.}
\end{proposition}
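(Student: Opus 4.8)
The plan is to prove the correspondence as a dictionary between the infinitesimal-object data $(\odot, 0, \epsilon, \delta)$ and the values of a strict symmetric monoidal functor on the generators of $\wone$, leaning on Leung's generators-and-relations description (Proposition \ref{thm:leung}) together with the dual tangent structure of Proposition \ref{prop:inf-object-tangent-structures}. For the forward direction, given an infinitesimal object $D$ in a symmetric monoidal closed $\C$, Proposition \ref{prop:inf-object-tangent-structures}(i) already equips $\C^{op}$ with a tangent structure whose tangent endofunctor is $D \ox (-)$. Since this tangent structure is literally of the representable form $A \ox (-)$ on the strict monoidal category $\C^{op}$, Corollary \ref{cor:using-leung-thm} applies verbatim (with $A = D$) and produces a strict, transverse-limit-preserving symmetric monoidal functor $\wone \to \C^{op}$; reading this through the duality gives the asserted functor out of $\wone$, sending $W \mapsto D$, $W \ox W \mapsto D \ox D$, and the generating maps $p, 0, +, \ell, c$ to the $\C$-morphisms $0, \epsilon, \delta, \odot, \sigma$ respectively (the variance flip being supplied by passing to $\C^{op}$, as recorded in the proof of Proposition \ref{prop:inf-object-tangent-structures}(i)).

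For the converse I would run the same assignment backwards. By Proposition \ref{thm:leung} a strict symmetric monoidal functor out of $\wone$ is determined by the image $D := D(W)$ of the generating object together with the images of $p, 0, +, \ell, c$, subject to precisely the relations presenting $\wone$ as the free tangent category. Set $\odot, 0, \epsilon, \delta$ to be the images of $\ell, p, 0, +$, and note that strict monoidality forces $D(\N) = I$ and $D(c) = \sigma$, the ambient symmetry; the symmetry axioms [TC.2] of Definition \ref{def:tangent-cat} are then automatic, since $\sigma$ is involutive and satisfies Yang--Baxter. The content is that the remaining relations translate term-by-term into the infinitesimal-object axioms: the additive-bundle laws [TC.1] become the comonoid axioms [IO.3] for $(0, \delta, \epsilon)$ (cocommutativity of $\delta$ coming from commutativity of $+$), while the lift laws [TC.3] become the commutative-semigroup-with-absorbing-zero axioms [IO.2] for $\odot$ (its commutativity coming from the symmetric-comultiplication relation $c \circ \ell = \ell$) together with the coadditivity [IO.4] linking $\odot$ and $\delta$. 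I would verify these by transporting each commuting diagram of $\wone$ through $D$ and using strict monoidality to rewrite every $D(U \ox V)$ as $D(U) \ox D(V)$.

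The main obstacle, and where the monoidal closed hypothesis does real work, is the handling of the transverse limits. Proposition \ref{thm:leung} only presents $\wone$ once one closes under transverse limits, so the objects $W_n$ (for example $W_2$, the transverse pullback of $p$ along $p$) and the maps out of them, such as $+ : W_2 \to W$, are not visibly part of the tensor-generated data. Under the duality these transverse \emph{limits} become \emph{colimits} in $\C$, and the requirement that the functor be well defined and monoidal on them is exactly the assertion that the pushout powers $D(n)$ of $0 : I \to D$ exist and that $\delta$ is coadditive and satisfies the coequalizer condition --- that is, axioms [IO.1], [IO.4], and [IO.5]. Here closedness of $\C$ is essential: it makes $D \ox (-)$ a left adjoint, hence cocontinuous, so the pushout powers are preserved and the transverse-limit cones of $\wone$ are faithfully realized. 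I expect the bulk of the work to be checking that the coequalizer of [IO.5] corresponds precisely to the lift-universality diagram [TC.3](iv), since that is the one relation of $\wone$ not expressible through finite products and tensors alone.

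Finally I would observe that the two constructions are mutually inverse: recovering $\odot, 0, \epsilon, \delta$ by evaluating the functor of the forward direction at the generators returns the original infinitesimal-object structure (by the construction in Corollary \ref{cor:using-leung-thm}), and conversely reassembling the dual tangent structure from a functor's generator-values reproduces that functor by the freeness in Proposition \ref{thm:leung}. This yields the claimed bijective ``exactly,'' naturally in $\C$.
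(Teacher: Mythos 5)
The paper states this proposition without an accompanying proof, so there is no argument of its own to compare against line by line; your route --- dualize to $\C^{op}$, invoke Proposition \ref{prop:inf-object-tangent-structures}(i) to obtain the representable tangent structure $D \ox (-)$, and then apply Corollary \ref{cor:using-leung-thm} and Proposition \ref{thm:leung} --- is clearly the intended one, and your forward direction is sound. One presentational point: the functor you actually construct lands in $\C^{op}$ (equivalently, it is contravariant on $\wone$), sending $p, 0, +, \ell, c$ to $0, \epsilon, \delta, \odot, \sigma$. You are right to do this, since the statement cannot be read covariantly on the nose and the paper's own Corollary \ref{cor:dual-tangent-structure} writes the functor as $\wone \to \C^{op}$, but you should say explicitly that you are proving the statement in that corrected form.

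The genuine gap is in the converse, where you assert that ``the requirement that the functor be well defined and monoidal on'' the transverse limits ``is exactly the assertion that the pushout powers $D(n)$ of $0$ exist'' together with [IO.4] and [IO.5]. That is not so. A strict symmetric monoidal functor out of $\wone$ is perfectly well defined on $W_2, W_3, \dots$ without sending the transverse pullback $W \ts{p}{p} W$ to the pushout $D +_I D$ in $\C$: the objects $W_n$ for $n \ge 2$ are not tensor-generated from $W$, and the universal property of a pushout quantifies over all of $\C$, so it cannot be forced by functoriality and monoidality alone. Hence a bare strict symmetric monoidal functor $\wone \to \C^{op}$ need not produce a $\delta$ with codomain the pushout $D(2)$, nor the coequalizer [IO.5]. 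The correct right-hand side of the bijection is the class of strict symmetric monoidal functors that additionally send transverse limits to colimits of $\C$ --- exactly the qualifier the paper itself adds when it later cites this proposition (``transverse-colimit-preserving symmetric monoidal functor'' in the Weil-space examples), and exactly what Corollary \ref{cor:using-leung-thm} delivers in your forward direction. Once that hypothesis is added to the converse, your dictionary between the [TC] axioms and the [IO] axioms goes through, and your observation that closedness enters precisely to make $D \ox (-)$ cocontinuous, so that the dual of [TC.1](i) holds in $\C^{op}$, is correct and worth keeping.
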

This presentation of an infinitesimal object makes it tautological that $\C^{op}$ has a tangent structure.
\begin{corollary}%
    \label{cor:dual-tangent-structure}
    Given a strict symmetric monoidal functor
    \[
        D:\wone \to \C^{op}
    \]
    there is a strict action of $\wone$ on $\C^{op}$ given by
    \[
        \wone \x \C^{op} \xrightarrow[]{D^{op} \ox \C} \C^{op} \x \C^{op} \xrightarrow[]{\otimes} \C^{op}. 
    \]
\end{corollary}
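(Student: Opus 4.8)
The plan is to recognize the claim as an instance of the general fact that a strict monoidal functor transports the regular self-action of its codomain back along itself. Recall that any strict monoidal category acts on its own underlying category by left tensoring, $(Y,X) \mapsto Y \ox X$, and that this is a strict action precisely because the associativity and unit axioms of the tensor are the associativity and unit axioms for an action. I would apply this with the codomain taken to be $\C^{op}$, equipped with the symmetric monoidal structure $(\C^{op}, \ox, I, \alpha^{-1}, \rho^{-1}, \sigma)$ inherited from $\C$ as in Proposition \ref{prop:inf-object-tangent-structures}(i) (treated as strict, either because $\C$ is assumed strict as in Corollary \ref{cor:using-leung-thm} or by Mac Lane coherence).

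First I would precompose the self-action of $\C^{op}$ with $D \x \mathrm{id}_{\C^{op}} \colon \wone \x \C^{op} \to \C^{op} \x \C^{op}$, which is exactly the composite appearing in the statement, and set $U \propto X := D(U) \ox X$. As a composite of functors this is automatically a functor $\wone \x \C^{op} \to \C^{op}$, so functoriality is free and only the two strict-actegory equations remain. The unit law reads
\[
\N \propto X = D(\N) \ox X = I \ox X = X,
\]
using that $D$ preserves the unit strictly and that $I$ is the unit of $\ox$; associativity reads
\[
(U \ox V) \propto X = D(U \ox V) \ox X = \big(D(U) \ox D(V)\big) \ox X = D(U) \ox \big(D(V) \ox X\big) = U \propto (V \propto X),
\]
using strict monoidality of $D$ on $\ox$ together with associativity of $\ox$ in $\C^{op}$. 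Since these are the only coherences demanded of a strict $\wone$-action, the construction is complete.

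The verification is pure bookkeeping, so the only point needing care is the strictness hypothesis: if $\C$ is not strict the associator and unitor $\alpha^{-1}, \rho^{-1}$ on $\C^{op}$ intervene and one gets an action strict only up to these coherent isomorphisms, which I would handle by assuming $\C$ strict or invoking strictification. I would close by observing that this action is the advertised dual tangent structure. Because $D$ is an infinitesimal object (Proposition \ref{prop:monoidal-functor-inf-obj}), it carries the transverse pullbacks of $\wone$ to the pushouts and coequalizers provided by Definition \ref{def:inf-object}, i.e. to limits in $\C^{op}$, so $\propto$ is a sketch action; Leung's equivalence (Theorem 14.1 of \cite{Leung2017}) then turns it into the tangent structure on $\C^{op}$ with tangent functor $T = D \ox (-)$ of Proposition \ref{prop:inf-object-tangent-structures}(i).
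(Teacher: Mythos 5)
Your proof is correct and is exactly the argument the paper intends: the paper states this corollary without proof, calling it ``tautological,'' and your write-up simply spells out that tautology (pulling back the regular self-action of $\C^{op}$ along the strict monoidal $D$, with the unit and associativity laws following from strictness of $D$ and of $\ox$). Your side remarks on the strictness caveat and on why the action is a sketch action are accurate and consistent with Proposition \ref{prop:inf-object-tangent-structures} and Corollary \ref{cor:using-leung-thm}.
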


There is a clear correspondence between the notions of a (strict, strong, lax) tangent functor and a (strict, strong, lax) actegory morphism. This proposition extends to the following equivalence of 2-categories.
\begin{corollary}[Theorem 14.1 \cite{Leung2017}]
    The following pairs of 2-categories are equivalent.
    \begin{enumerate}[(i)]
        \item The 2-category of tangent categories and strict tangent functors is the full sub-2-category of $\wone\mathsf{Act_{Strict}}$ spanned by sketch actions.
        \item The 2-category of tangent categories and strong tangent functors is the full sub-2-category of $\wone\mathsf{Act_{strong}}$ spanned by sketch actions.
        \item The 2-category of tangent categories and lax tangent functors is the full sub-2-category of $\wone\mathsf{Act_{Lax}}$ spanned by sketch actions.
    \end{enumerate}
\end{corollary}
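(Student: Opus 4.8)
The plan is to upgrade the object-level correspondence of the preceding theorem — that a tangent structure on $\C$ is the same thing as a sketch action $\propto: \wone \x \C \to \C$ — to a correspondence of $1$-cells and $2$-cells, separately in each of the three degrees of strictness. Since that theorem already identifies tangent categories with sketch actions on the nose, there is nothing to do on objects: the comparison I build will be the identity on objects, and I will argue it is an isomorphism on each hom-category, which yields the asserted equivalence of $2$-categories (in fact a $2$-isomorphism onto the full sub-$2$-category spanned by sketch actions). The whole comparison is a \emph{repackaging}: it carries the underlying functor $F$ and the underlying natural transformation $\gamma$ identically, and only reinterprets the coherence $2$-cell $\alpha$ from its component ``at $W$'' to a datum indexed over all of $\wone$.

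The central tool throughout is Leung's generation result, Proposition \ref{thm:leung}: every object of $\wone$ is built from $W$, and every morphism from $\{p,0,+,\ell,c\}$, by composition, tensor $\ox$, and transverse limits. First I would treat $1$-cells. Given a tangent functor $(F,\alpha)$ with $\alpha: F.T \Rightarrow T'.F$ (Definition \ref{def:tang-functor}), I would extend $\alpha$ to the coherence $2$-cell of an actegory morphism whose value at $W$ is $\alpha$: the required compatibility of an actegory $2$-cell with the module multiplication forces its value at $U \ox V$ from its values at $U$ and $V$ (matching $T^U T^V = T^{UV}$), while compatibility with transverse limits forces its value at the $W_n$. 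One then checks these extensions are well defined and satisfy the actegory coherences \emph{precisely because} $\alpha$ satisfies the tangent-functor squares at the five generators. Conversely, restricting an actegory morphism's $2$-cell to $W$ recovers $\alpha$, and the actegory coherences specialise at $p,0,+,\ell,c$ to exactly the diagrams of Definition \ref{def:tang-functor}. The strict/strong/lax trichotomy then matches automatically, since the condition on $\alpha$ (identity, invertible, arbitrary) is exactly the condition separating the three flavours of actegory morphism in Definition \ref{def:2-categories}.

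Next I would handle $2$-cells. A tangent natural transformation $\gamma$ (Definition \ref{def:tang-nat}) is a natural transformation $F \Rightarrow G$ subject to a single coherence square at $T$, whereas an actegory natural transformation (Definition \ref{def:actegory-natural}) demands the analogous coherence over the whole action. By the same generation argument the square at $W$ propagates along $\ox$ and the transverse limits to the full actegory-natural coherence, so the two notions coincide under $\gamma \mapsto \gamma$. It then remains to verify that this repackaging is $2$-functorial: identities and composites of the underlying functors and natural transformations are preserved strictly, and the composite coherence cells agree because composition of tangent functors (Example \ref{ex:composition-of-tangent-functors}(ii)) is defined by pasting the $\alpha$'s at $W$, which is exactly what composition of actegory morphisms restricts to. Hence the comparison is the identity on objects and an isomorphism on hom-categories, giving the equivalence in each of (i), (ii), (iii).

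The main obstacle is the extension step: showing that compatibility of a $2$-cell with the five generators, the tensor, and the transverse limits is not merely necessary but \emph{sufficient} to determine a coherent $2$-cell over all of $\wone$. This is where one leans hardest on Proposition \ref{thm:leung} — one must know that the relations among the generators (the axioms \textnormal{TC.1}--\textnormal{TC.3}) are completely accounted for, so that the generator-level coherences already imply the actegory coherences at every composite, with no extra conditions forced by the transverse limits. Handling the transverse-limit compatibility in the \emph{lax} case, where the comparison $2$-cell need not be invertible, is the most delicate point, and is exactly why the statement is confined to the full sub-$2$-category spanned by the limit-preserving (sketch) actions rather than arbitrary $\wone$-actegories.
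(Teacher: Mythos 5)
Your proposal is correct; the paper itself offers no proof of this corollary, deferring entirely to Theorem 14.1 of \cite{Leung2017}, and your generators-and-relations argument via Proposition \ref{thm:leung} — extending the coherence $2$-cell from its component at $W$ over tensors and transverse limits, and checking that the actegory coherences at $\{p,0,+,\ell,c\}$ are exactly the tangent-functor squares of Definition \ref{def:tang-functor} — is precisely the argument underlying the cited result. You also correctly isolate the one delicate point (inducing $\alpha_{W_n}$ by universality of the transverse pullbacks in the lax, non-invertible case), which is exactly why the statement is restricted to sketch actions.
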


\section{The Weil nerve of an involution algebroid}
\label{sec:weil-nerve}

The construction in this section is analagous to the nerve of an internal category---hence the ``Weil nerve'' construction---and deals with similar technical issues. In particular, the construction in this section will mimic the nerve construction for internal categories by replacing the tensor product of $\wone$ with span composition in the domain category. Recall that every anchored bundle or internal category has a canonical span associated with it:
\[
\begin{tikzcd}[column sep = small]
	& A &&& C \\
	M && TM & M && M
	\arrow["\pi"{description}, from=1-2, to=2-1]
	\arrow["\anc"{description}, from=1-2, to=2-3]
	\arrow["s"{description}, from=1-5, to=2-4]
	\arrow["t"{description}, from=1-5, to=2-6]
\end{tikzcd}\]
In any category $\C$, there is a category of spans in $\C$ as well as span composition.
\begin{definition}%
    \label{def:span-stuff}
    A \emph{span from $A$ to $B$} in a category $\C$ is a diagram of the form 
    \[
\begin{tikzcd}
	& X \\
	A && B
	\arrow["l"{description}, from=1-2, to=2-1]
	\arrow["r"{description}, from=1-2, to=2-3]
\end{tikzcd}\]
    There is a notion of \emph{span composition}, so given a span $X:A \to B$ and $Y:B \to C$, then the composition of $X$ and $Y$ is the pullback (if it exists):
    \[
\begin{tikzcd}
	&& Z \\
	& X && Y \\
	A && B && C
	\arrow["l"{description}, from=2-2, to=3-1]
	\arrow["r"{description}, from=2-2, to=3-3]
	\arrow["{l'}"{description}, from=2-4, to=3-3]
	\arrow["{r'}"{description}, from=2-4, to=3-5]
	\arrow[from=1-3, to=2-2]
	\arrow[from=1-3, to=2-4]
	\arrow["\lrcorner"{anchor=center, pos=0.125, rotate=-45}, draw=none, from=1-3, to=3-3]
	\arrow["{l''}"{description}, curve={height=24pt}, from=1-3, to=3-1]
	\arrow["{r''}"{description}, curve={height=-24pt}, from=1-3, to=3-5]
\end{tikzcd}\]
    A morphism of spans is a commuting diagram of the form
    \[
\begin{tikzcd}
	A & X & B \\
	C & Y & D
	\arrow["l"{description}, from=1-2, to=1-1]
	\arrow["r"{description}, from=1-2, to=1-3]
	\arrow["{l'}"{description}, from=2-2, to=2-1]
	\arrow["{f_l}"{description}, from=1-1, to=2-1]
	\arrow["{f_r}"{description}, from=1-3, to=2-3]
	\arrow["{r'}"{description}, from=2-2, to=2-3]
	\arrow["{f_c}"{description}, from=1-2, to=2-2]
\end{tikzcd}\]
    Note that if $f$ and $g$ are span morphisms with $f_r = g_l$, then the horizontal composition may be formed if each respective span composition exists:
    \[
\begin{tikzcd}
	&& \bullet \\
	& X && W \\
	A && B && E \\
	C && D && F \\
	& Y && Z \\
	&& \bullet
	\arrow["l"{description}, from=2-2, to=3-1]
	\arrow["r"{description}, from=2-2, to=3-3]
	\arrow["{l'}"{description}, from=5-2, to=4-1]
	\arrow["{f_l}"{description}, from=3-1, to=4-1]
	\arrow["{f_r=g_l}"{description}, from=3-3, to=4-3]
	\arrow["{r'}"{description}, from=5-2, to=4-3]
	\arrow["{f_c}"{description}, from=2-2, to=5-2]
	\arrow[from=2-4, to=3-3]
	\arrow[from=2-4, to=3-5]
	\arrow[from=5-4, to=4-3]
	\arrow[from=5-4, to=4-5]
	\arrow["{g_r}"{description}, from=3-5, to=4-5]
	\arrow["{g_c}"{description}, from=2-4, to=5-4]
	\arrow[from=1-3, to=2-2]
	\arrow[from=1-3, to=2-4]
	\arrow[from=6-3, to=5-2]
	\arrow[from=6-3, to=5-4]
	\arrow["\lrcorner"{anchor=center, pos=0.125, rotate=-45}, draw=none, from=1-3, to=3-3]
	\arrow["\lrcorner"{anchor=center, pos=0.125, rotate=135}, draw=none, from=6-3, to=4-3]
	\arrow[curve={height=-18pt}, dashed, from=1-3, to=6-3]
\end{tikzcd}\]
    When discussing span composition in a tangent category, it is assumed that the pullback is a $T$-pullback.
\end{definition}
\begin{observation}%
    \label{obs:lim-of-spans}
    Note that the category of spans in $\C$ is a functor category, so that limits are computed pointwise in $\C$. This also means that the horizontal composition operation, when it exists, preserves limits in either argument.
\end{observation}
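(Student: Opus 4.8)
The plan is to recognize the category of spans in $\C$ as an ordinary diagram category and then reduce both assertions to the standard facts that limits in functor categories are computed pointwise and that limits commute with limits.

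First I would fix the shape category $\mathcal{P}$ with three objects $L, C, R$ and exactly two non-identity arrows $C \to L$ and $C \to R$. A functor $\mathcal{P} \to \C$ is precisely a span $A \xleftarrow{l} X \xrightarrow{r} B$ (with $L \mapsto A$, $C \mapsto X$, $R \mapsto B$, and the two arrows sent to $l$ and $r$), and a natural transformation between two such functors is exactly a span morphism in the sense of Definition \ref{def:span-stuff}. This gives an isomorphism of categories between the category of spans in $\C$ and the functor category $[\mathcal{P}, \C]$. The first claim is then immediate from the standard fact that limits in a functor category are computed pointwise: for any diagram $D \colon \mathcal{I} \to [\mathcal{P}, \C]$ whose pointwise limits exist in $\C$, the limit $\lim D$ exists and satisfies $(\lim D)(x) \cong \lim_i D(i)(x)$ for each $x \in \{L, C, R\}$.

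For the second claim I would fix a span $Y \colon B \to C$ and analyse the composition functor $X \mapsto Y \circ X$ on spans ending at $B$. By Definition \ref{def:span-stuff} the apex of $Y \circ X$ is the pullback $X \ts{r}{l'} Y$, so composing with $Y$ amounts to forming this pullback at the apex while leaving the outer feet unchanged. Given a diagram of spans $X_i$ with limit $X = \lim_i X_i$, the first claim computes $X$ pointwise, so its apex is the limit of the apices of the $X_i$ and its right foot is the constant $B$. Since a pullback is a finite limit and finite limits commute with arbitrary limits, I would conclude
\[
    \Big(\lim_i X_i\Big) \ts{r}{l'} Y \;\cong\; \lim_i \big( X_i \ts{r}{l'} Y \big),
\]
which identifies the apex of $Y \circ (\lim_i X_i)$ with the apex of $\lim_i (Y \circ X_i)$; the outer feet agree because the left foot is preserved pointwise and the right foot is the constant $C$. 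A symmetric argument, pulling back along a fixed span on the other side, handles the remaining variable.

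The only genuine content beyond this bookkeeping is the ``when it exists'' proviso: the argument presupposes that the relevant pullbacks, and hence the composites, exist. The main obstacle, such as it is, is therefore purely a matter of keeping the cone data and the two outer feet aligned across the diagram, together with checking that the comparison map induced by universality is the pullback of the comparison maps --- both of which follow formally from the commutation of limits with limits. In the tangent-categorical setting one reads every pullback as a $T$-pullback and every limit as a $T$-limit; since a $T$-limit is by definition a limit preserved by all $T^n$, the identical commutation argument applies in each $T^n$ simultaneously, so no new difficulty arises.
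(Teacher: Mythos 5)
Your proposal is correct and follows exactly the reasoning the paper intends: the observation is stated without a written-out proof, and its implicit justification is precisely your argument — spans are functors out of the walking-span shape, so limits are pointwise, and the composite's apex is a pullback along a fixed leg, which commutes with limits (and with each $T^n$ in the $T$-limit case). Your elaboration, including the care about the ``when it exists'' proviso, is a faithful expansion of what the paper leaves as a one-line remark.
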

These span constructions can be helpful in constructing functors from a monoidal category into a non-monoidal category $\C$, by forming a monoidal category from $\C$ using spans. In the case of an internal category over $M$, one takes the slice category $\C/(M \x M)$ where the tensor product is span composition. An internal category $s,t:C \to M$ is a monoid in this category of spans over $M$, so that it determines a monoidal functor
\[
    C: \Delta^{op} \to \C/(M \x M),    
\] 
remembering that $\Delta^{op}$ is the monoidal theory for monoid (every monoid in a monoidal category $\C$ determines a monoidal functor $\Delta \to \C$). The construction of the corresponding monoidal category for spans is more nuanced, as the category $\wone$ is not $\N$-indexed. Observe that the prolongation of an anchored bundle is constructed as a span composition:
\[\begin{tikzcd}
    && {\prol(A)} \\
    & A && TA \\
    M && TM && {T^2M}
    \arrow["\pi", from=2-2, to=3-1]
    \arrow["\anc"', from=2-2, to=3-3]
    \arrow["T\pi", from=2-4, to=3-3]
    \arrow["T\anc"', from=2-4, to=3-5]
    \arrow["{\pi_0}", from=1-3, to=2-2]
    \arrow["{\pi_1}"', from=1-3, to=2-4]
    \arrow["\lrcorner"{anchor=center, pos=0.125, rotate=-45}, draw=none, from=1-3, to=3-3]
\end{tikzcd}\]
The third prolongation is given by span composition as well:
\[
\begin{tikzcd}
	&& {\prol^2(A)} \\
	& A && {T.\prol(A)} \\
	M && TM && {T^2M}
	\arrow["\pi"{description}, from=2-2, to=3-1]
	\arrow["\anc"{description}, from=2-2, to=3-3]
	\arrow["{T.\pi \o \pi_0}"{description}, from=2-4, to=3-3]
	\arrow["{T.\anc \o \pi_1}"{description}, from=2-4, to=3-5]
	\arrow["{\pi_0}", from=1-3, to=2-2]
	\arrow["{\pi_1}"', from=1-3, to=2-4]
	\arrow["\lrcorner"{anchor=center, pos=0.125, rotate=-45}, draw=none, from=1-3, to=3-3]
\end{tikzcd}\]
This horizontal composition will play the same role as the tensor product in $\C/(M\x M)$.
\begin{definition}%
    \label{def:boxtimes-span}
    In a tangent category $\C$, consider a pair of spans
    \[
        X:M \to T^UM, \hspace{0.15cm} Y:M \to T^VM.
    \]
    Define $X \boxtimes Y$ to be the horizontal composition (when it exists):
    \[
\begin{tikzcd}
	&& Z \\
	& X && {T^UY} \\
	M && {T^UM} && {T^{UV}M}
	\arrow["{l_X}"{description}, from=2-2, to=3-1]
	\arrow["{r_Y}"{description}, from=2-2, to=3-3]
	\arrow["{T^U.l_Y}"{description}, from=2-4, to=3-3]
	\arrow["{T^U.r}"{description}, from=2-4, to=3-5]
	\arrow[from=1-3, to=2-2]
	\arrow[from=1-3, to=2-4]
	\arrow["\lrcorner"{anchor=center, pos=0.125, rotate=-45}, draw=none, from=1-3, to=3-3]
\end{tikzcd}\]
    (recall that we will often suppress the $\ox$ in $\wone$ to save space).\pagenote{clarifying notation}
    So the span composition is
    \[
        M \xrightarrow[]{X} T^UM \xrightarrow[]{T^U.Y} T^U.T^VM    
    \]
    \begin{equation}
        \label{eq:span-form}
\begin{tikzcd}
	M & X & {T^UM} \\
	M & Y & {T^{U}M} \\
	M & A & {T^V} \\
	M & B & {T^{V'}}
	\arrow[Rightarrow, no head, from=1-1, to=2-1]
	\arrow["{l_X}"', from=1-2, to=1-1]
	\arrow["{l_{Y}}", from=2-2, to=2-1]
	\arrow["{\theta.M}", from=1-3, to=2-3]
	\arrow["{r_X}", from=1-2, to=1-3]
	\arrow["{r_Y}"', from=2-2, to=2-3]
	\arrow["f"{description}, from=1-2, to=2-2]
	\arrow["{r_A}", from=3-2, to=3-3]
	\arrow["{r_B}"', from=4-2, to=4-3]
	\arrow["{\psi.M}", from=3-3, to=4-3]
	\arrow["g"{description}, from=3-2, to=4-2]
	\arrow["{l_B}", from=4-2, to=4-1]
	\arrow[Rightarrow, no head, from=3-1, to=4-1]
	\arrow["{l_A}"', from=3-2, to=3-1]
\end{tikzcd}
    \end{equation}
    The horizontal composition $f \boxtimes g$ is defined as $f \x_{\theta.M} \theta.g$:
    \[
\begin{tikzcd}
	&& Z \\
	& X && Y \\
	M && {T^UM} && {T^{U}.T^{V}M} \\
	M && {T^{U'}M} && {T^{U'}.T^{V'}M} \\
	& A && {T^{U'}B} \\
	&& C
	\arrow[Rightarrow, no head, from=3-1, to=4-1]
	\arrow["{l_X}"{description}, from=2-2, to=3-1]
	\arrow["{l_A}"{description}, from=5-2, to=4-1]
	\arrow["{\theta.M}", from=3-3, to=4-3]
	\arrow["{r_X}"{description}, from=2-2, to=3-3]
	\arrow["{r_A}"{description}, from=5-2, to=4-3]
	\arrow["f"{description}, from=2-2, to=5-2]
	\arrow["{T^U.r_Y}"{description}, from=2-4, to=3-5]
	\arrow["{T^{U'}.r_B}"{description}, from=5-4, to=4-5]
	\arrow["{\theta.\psi.M}", from=3-5, to=4-5]
	\arrow["{\theta.g}"{description}, from=2-4, to=5-4]
	\arrow["{T^U.l_Y}"{description}, from=2-4, to=3-3]
	\arrow["{T^{U'}.l_B}"{description}, from=5-4, to=4-3]
	\arrow[from=1-3, to=2-2]
	\arrow[from=1-3, to=2-4]
	\arrow["\lrcorner"{anchor=center, pos=0.125, rotate=-45}, draw=none, from=1-3, to=3-3]
	\arrow[from=6-3, to=5-2]
	\arrow[from=6-3, to=5-4]
	\arrow["\lrcorner"{anchor=center, pos=0.125, rotate=135}, draw=none, from=6-3, to=4-3]
	\arrow[curve={height=18pt}, from=1-3, to=6-3]
\end{tikzcd}\]
\end{definition}
In any tangent category with a tangent display system (Definition\pagenote{there was no reason to pull this out as an observation} \ref{def:display-system}), the category of spans on $M$ whose maps are of the form given by Equation \ref{eq:span-form} with $l \in \d$ is a monoidal category.
Any anchored bundle in a tangent category gives rise to a monoidal category after a strict choice of $T$-pullbacks (assuming those $T$-pullbacks exist).
\begin{definition}\label{def:monoidal-category}
    Let $(\pi:A \to M,\xi,\lambda,\anc)$ be an anchored bundle in a tangent category $\C$. Write the span
    \[
        \widehat{A}.W_n := (M \xleftarrow[]{\pi \o \pi_i} A_n \xrightarrow[]{\anc^n} T_nM),
        \hspace{0.5cm}
        \widehat{A}.\N := (M = M = M).
    \]
    A \emph{choice of prolongations} for $(\pi,\xi,\lambda,\anc)$ is a strict choice of horizontal composition for each $V \in \wone$:
    \[
        \widehat{A}.V = \widehat{A}.(W_{n[1]}\dots W_{n[k]}) :=
        \widehat{A}.W_{n[1]} \boxtimes \dots \boxtimes \widehat{A}.W_{n[k]}.
    \]
    We will write the span as follows:
\[\begin{tikzcd}
	& {A.V} \\
	M && {T^V.M}
	\arrow["{\pi^V}"', from=1-2, to=2-1]
	\arrow["{\anc^V}", from=1-2, to=2-3]
\end{tikzcd}\]
    (notice that the apex is not hatted).
    Given a choice of prolongations for an anchored bundle $(\pi,\xi,\lambda,\anc)$, the category $\mathsf{Span}(\pi,\xi,\lambda,\anc)$ is defined as follows:
    \begin{itemize}
        \item Objects are $\widehat{A}.V$ for $V \in \wone$.
        \item Morphisms are given by pairs
        \[
            (f,\phi):\widehat{A}.V \to \widehat{A}.U
        \]
        where $f:A.V \to A.U$ and $\phi:V \to U$ determine a span morphism of the form
\[\begin{tikzcd}
	M & {A.V} & {T^V.M} \\
	M & {A.U} & {T^U.M}
	\arrow["{\pi^V}"', from=1-2, to=1-1]
	\arrow["{\anc^V}", from=1-2, to=1-3]
	\arrow["{\phi.M}", from=1-3, to=2-3]
	\arrow[Rightarrow, no head, from=1-1, to=2-1]
	\arrow["{\pi^V}", from=2-2, to=2-1]
	\arrow["{\anc^U}"', from=2-2, to=2-3]
	\arrow["f", from=1-2, to=2-2]
\end{tikzcd}\]
        as discussed in Definition \ref{def:span-stuff}.
        \item Tensor structure: The tensor product is defined using the horizontal composition $\boxtimes$ as defined in Definition \ref{def:span-stuff}.
    \end{itemize}
\end{definition}
The idea is to show that an involution algebroid structure on an anchored bundle induces a tangent structure on the monoidal category of prolongations, and then to apply Leung's theorem. 
The following two lemmas will simplify this proof.
\begin{lemma}\label{lemma:pullback-part-of-theorem}
    Let $(\pi:A \to M, \xi, \lambda, \anc)$ be an anchored bundle with chosen prolongations in a tangent category $\C$, and identify the monoidal category $\mathsf{Span}(\pi,\xi,\lambda,\anc)$.
    \begin{enumerate}[(i)]
        \item There is a functor 
        \[
           U^\anc: \mathsf{Span}(\pi,\xi,\lambda,\anc) \to \C
        \]
        constructed by sending a span morphism to the morphism between the objects at its apex:
\[\begin{tikzcd}
	M & {A.V} & {T^V.M} & {A.V} \\
	M & {A.U} & {T^U.M} & {A.U}
	\arrow["{\pi^V}"', from=1-2, to=1-1]
	\arrow["{\anc^V}", from=1-2, to=1-3]
	\arrow[""{name=0, anchor=center, inner sep=0}, "{\phi.M}"', from=1-3, to=2-3]
	\arrow[Rightarrow, no head, from=1-1, to=2-1]
	\arrow["{\pi^V}", from=2-2, to=2-1]
	\arrow["{\anc^U}"', from=2-2, to=2-3]
	\arrow["f", from=1-2, to=2-2]
	\arrow[""{name=1, anchor=center, inner sep=0}, "f", from=1-4, to=2-4]
	\arrow[shorten <=14pt, shorten >=14pt, maps to, from=0, to=1]
\end{tikzcd}\]
        \item Suppose we have a square
\[\begin{tikzcd}
	{\widehat{A}.U} & {\widehat{A}.Y} \\
	{\widehat{A}.X} & {\widehat{A}.Z}
	\arrow["{(f,\phi)}"', from=2-1, to=2-2]
	\arrow["{(g,\psi)}", from=1-2, to=2-2]
	\arrow["{(l,\alpha)}"', from=1-1, to=2-1]
	\arrow["{(r,\beta)}", from=1-1, to=1-2]
\end{tikzcd}\]
        whose image under $U^\anc$ is a $T$-pullback in $\C$, and so that the square in $\wone$ is a transverse $T$-pullback:
\[\begin{tikzcd}
	{{A}.U} & {{A}.Y} & U & Y \\
	{{A}.X} & {{A}.Z} & X & Z
	\arrow["f"', from=2-1, to=2-2]
	\arrow["g", from=1-2, to=2-2]
	\arrow["l"', from=1-1, to=2-1]
	\arrow["r", from=1-1, to=1-2]
	\arrow["\phi"', from=2-3, to=2-4]
	\arrow["\psi", from=1-4, to=2-4]
	\arrow["\beta", from=1-3, to=1-4]
	\arrow["\alpha"', from=1-3, to=2-3]
	\arrow["\lrcorner"{anchor=center, pos=0.125}, draw=none, from=1-1, to=2-2]
	\arrow["\lrcorner"{anchor=center, pos=0.125}, draw=none, from=1-3, to=2-4]
\end{tikzcd}\]
      Then $U^\anc$ reflects the limit; that is, the original square in $\mathsf{Span}(\pi,\xi,\lambda,\anc)$ is a $T$-pullback.
      \item $T$-pullbacks of the form described in (ii) are closed under $\boxtimes$.
    \end{enumerate}
\end{lemma}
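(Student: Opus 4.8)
<br>

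The proof of Lemma~\ref{lemma:pullback-part-of-theorem} breaks into three parts, each of which I would handle by reducing to the corresponding fact about spans and then invoking the transversality hypotheses to guarantee the relevant limits exist and are preserved.

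\textbf{Part (i).} The plan is to verify that the assignment $U^\anc$ is genuinely functorial. First I would observe that on objects it sends $\widehat{A}.V$ to the apex $A.V$, and on a morphism $(f,\phi)$ it returns the apex map $f$. Functoriality then amounts to checking that the apex of an identity span morphism is the identity and that the apex of a composite span morphism is the composite of the apices. Both are immediate from the definition of span-morphism composition in Definition~\ref{def:span-stuff}, since span morphisms compose componentwise and the apex component is one of those components. This part is purely formal and should take only a sentence or two.

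\textbf{Part (ii).} Here the goal is limit-reflection: given the square in $\mathsf{Span}(\pi,\xi,\lambda,\anc)$ whose apex square is a $T$-pullback in $\C$ and whose base square in $\wone$ is a transverse $T$-pullback, I want to conclude the original square is a $T$-pullback of spans. The strategy is to use the fact (Observation~\ref{obs:lim-of-spans}) that the category of spans in $\C$ is a functor category, so limits are computed pointwise --- meaning a square of spans is a pullback exactly when it is a pullback ``leg by leg'': at the apex, at the left foot $M$, and at the right foot $T^{(-)}M$. The left foot is constant at $M$ (trivially a pullback), the apex square is a $T$-pullback by hypothesis, and the right foot square is $T^{(-)}M$ applied to the transverse pullback in $\wone$, which remains a $T$-pullback because the strict tangent functor $T^{(-)}M:\wone \to \C$ preserves transverse limits (Corollary following Proposition~\ref{thm:leung}). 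Having all three feet be $T$-pullbacks, and iterating the argument after each application of $T^n$, gives that the square of spans is a $T$-pullback. The one subtlety I would be careful about is confirming that the cone-induced comparison map genuinely assembles into a span morphism of the admissible form in Equation~\ref{eq:span-form} (i.e. that its left leg lands in the display class $\d$); this follows because the comparison map is built from the apex comparison, which is a $T$-pullback of display maps and hence display by Proposition~\ref{prop:submersion-properties}.

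\textbf{Part (iii).} Finally I would show such $T$-pullback squares are closed under $\boxtimes$. Since $\boxtimes$ is horizontal composition of spans (Definition~\ref{def:boxtimes-span}), and horizontal composition preserves limits in each argument (again Observation~\ref{obs:lim-of-spans}), composing two $T$-pullback squares of the admissible form horizontally yields another $T$-pullback square, provided the requisite $T$-pullbacks defining $\boxtimes$ exist. The only genuine work is bookkeeping: one must check that the outer span produced by $\boxtimes$ still has its left leg in the display system so the result lies in $\mathsf{Span}(\pi,\xi,\lambda,\anc)$, which again reduces to the closure of $\d$ under $T$ and reindexing. \textbf{The main obstacle} I anticipate is in part~(ii): carefully tracking that ``pointwise'' reflection of the pullback interacts correctly with the transversality condition in $\wone$ --- that is, ensuring the right-foot square remains a pullback after applying $T^{(-)}M$ requires precisely the transverse-limit-preservation of Leung's action, and one must confirm the square being reflected is indeed in the $\ox$-closure of the generating transverse squares of Definition~\ref{def:transverse-limit} so that this preservation applies.
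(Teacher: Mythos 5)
Your overall strategy coincides with the paper's: part (i) is formal, part (ii) reduces to the apex square in $\C$ together with the transverse square in $\wone$, and part (iii) follows from closure of transverse squares under $\ox$ together with the fact that $\boxtimes$ commutes with the relevant limits. Parts (i) and (iii) are fine as you describe them.

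In part (ii), however, the step you flag as the ``one subtlety'' is not the real one, and the real one is left unaddressed. Your pointwise argument (left foot, apex, right foot) shows that the square is a pullback in the ambient category of spans in $\C$ with arbitrary span morphisms. But the statement concerns $\mathsf{Span}(\pi,\xi,\lambda,\anc)$, whose morphisms are, by Definition \ref{def:monoidal-category}, pairs $(f,\phi)$ with $\phi$ a map of $\wone$ and right-foot component $\phi.M$; a cone in this category must therefore be filled by a morphism whose right foot has the form $\zeta.M$ for some $\zeta:V\to U$ in $\wone$. To produce $\zeta$ you must invoke the universality of the transverse square in $\wone$ itself --- exactly as the paper's proof does (``the unique map induced in $\wone$ by the universality of transverse squares'') --- and then note that $\zeta.M$ agrees with the comparison map induced by the right-foot pullback in $\C$ by uniqueness, so that the pair $(z,\zeta)$ is a legitimate morphism of $\mathsf{Span}(\pi,\xi,\lambda,\anc)$ and the required triangles commute by universality. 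By contrast, the subtlety you do flag --- that the comparison map's left leg lie in the display class $\d$ --- is vacuous here: the display condition constrains the left legs of the span \emph{objects}, not the components of span morphisms, and the left-foot components of all morphisms in $\mathsf{Span}(\pi,\xi,\lambda,\anc)$ are identities on $M$ in any case. Once the $\wone$-component is handled as above, your argument and the paper's coincide.
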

\begin{proof}
    The functor in in (i) is straightforward to construct, as it simply forgets the left and right legs of the spans. For (ii), note that because the $\wone$ part of the diagram is a transverse $T$-pullback, then given a pair of maps
\[\begin{tikzcd}
	{\widehat{A}.V} \\
	& {\widehat{A}.U} & {\widehat{A}.Y} \\
	& {\widehat{A}.X} & {\widehat{A}.Z}
	\arrow["{(f,\phi)}"', from=3-2, to=3-3]
	\arrow["{(g,\psi)}", from=2-3, to=3-3]
	\arrow["{(l,\alpha)}"', from=2-2, to=3-2]
	\arrow["{(r,\beta)}", from=2-2, to=2-3]
	\arrow["{(x,\omega)}"', curve={height=12pt}, from=1-1, to=3-2]
	\arrow["{(y,\gamma)}", curve={height=-12pt}, from=1-1, to=2-3]
\end{tikzcd}\]
    a unique span morphism $\widehat{A}.V \to \widehat{A}.U$ may be induced using the apex map from $\C$, and the unique map induced in $\wone$ by the universality of transverse squares (this square is also universal in $\C$), so the span morphism diagram will commute by universality.
    
    For (iii), $T$-pullback squares of the form in (ii) are closed under $\boxtimes$ as transverse squares in $\wone$ are closed under $\ox$, so the result follows by the commutativity of limits and by applying part (ii) of this lemma. 
\end{proof}

\begin{observation}%
    \label{obs:concrete-desc-zigzag}
    It will be useful to have a ``flat'' presentation of the prolongation $A.W_{n(1)}\dots W_{n(k)}$. 
    The higher prolongations of an anchored bundle may be concretely described as the $T$-pullback of the zig-zag below:
    \[
\begin{tikzcd}
	&& {\hat{A}.W_{n(1)}\dots W_{n(k)}} \\
	{\hat A.W_{n(1)}} && {T_{n[1]}.\hat A. W_{n(2)}} && {T_{n(1)\dots n(k-1)}\hat A . W_{n(k)}} \\
	& {T_{n(1)}M} && \dots
	\arrow["{T_{n(1)}.\anc_{n(2)}}"', from=2-3, to=3-4]
	\arrow["{T_{n(1)\dots n(k)}.(\pi \o \pi_0)}", from=2-5, to=3-4]
	\arrow["{\anc_{n(1)}}"', from=2-1, to=3-2]
	\arrow["{T_n[1].(\pi \o \pi_0)}", from=2-3, to=3-2]
	\arrow[from=1-3, to=2-1]
	\arrow[from=1-3, to=2-5]
\end{tikzcd}\]
    so that the prolongation $A.W^{n[1]}\dots W^{n[k]}$ may be written concretely as
    \[
      (u_1,\dots, u_{k}) : A_{n[1]} \ts{\anc'}{T.\pi'}  T_{n[1]}.A_{n[2]} \ts{T.\anc}{T^2.\pi} \dots \ts{\anc'}{T.\pi'} T_{n[1]\dots n[k-1]}A_{n[k]}.
    \]
    Furthermore, the choice of prolongation identifies the following limits:
    \[
\begin{tikzcd}
	{\hat{A}.UV} & {T^U.\hat{A}.V} \\
	{\hat{A}.U} & {T^{U}M}
	\arrow["{\anc^U}"', from=2-1, to=2-2]
	\arrow["{T^U.\pi^V}", from=1-2, to=2-2]
	\arrow[from=1-1, to=2-1]
	\arrow[from=1-1, to=1-2]
	\arrow["\lrcorner"{anchor=center, pos=0.125}, draw=none, from=1-1, to=2-2]
\end{tikzcd}
\begin{tikzcd}
	{\hat{A}.UV} && {T^U\hat{A}.V} \\
	& {T^{U}M} & {T^UM} \\
	{\hat{A}.U} & {T^UM}
	\arrow[from=1-1, to=3-1]
	\arrow[from=1-1, to=1-3]
	\arrow["\lrcorner"{anchor=center, pos=0.125}, draw=none, from=1-1, to=2-2]
	\arrow["{\anc^U}"', from=3-1, to=3-2]
	\arrow["{T^U.\pi^V}", from=1-3, to=2-3]
	\arrow[Rightarrow, no head, from=2-2, to=2-3]
	\arrow[Rightarrow, no head, from=2-2, to=3-2]
\end{tikzcd}\]
    so that
    \[
        A. UV
        = A.U \boxtimes A.V
        = A.U \ \boxtimes id_M \boxtimes A.V
    \]
    where $id_M$ is the span $M = M = M$.
\end{observation}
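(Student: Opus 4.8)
The plan is to argue by induction on the number $k$ of tensor factors in $V = W_{n[1]}\dots W_{n[k]}$, unfolding the horizontal composition $\boxtimes$ (Definition \ref{def:boxtimes-span}) one factor at a time and appealing to the pasting lemma for $T$-pullbacks. The base case $k=1$ is exactly the span $\widehat{A}.W_{n[1]} = (M \xleftarrow{\pi \circ \pi_i} A_{n[1]} \xrightarrow{\anc^{n[1]}} T_{n[1]}M)$ of Definition \ref{def:monoidal-category}, and the case $V = \N$ is the identity span $M = M = M$. For the inductive step I would set $U = W_{n[1]}\dots W_{n[k-1]}$ and write $\widehat{A}.(UW_{n[k]}) = \widehat{A}.U \boxtimes \widehat{A}.W_{n[k]}$ using the strict choice of prolongations. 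By the definition of $\boxtimes$, this composite is formed by pulling back the right leg $\anc^U : A.U \to T^U M$ against $T^U$ applied to the left leg $\pi \circ \pi_0$ of $\widehat{A}.W_{n[k]}$. This adjoins precisely the factor $T_{n[1]\dots n[k-1]}A_{n[k]}$ to the apex, and the pasting lemma then collapses the nested pullbacks into the single iterated fibre product displayed in the statement, namely the wide $T$-pullback of the exhibited zig-zag.

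The point that makes the pasting legitimate at every stage is that each tangent action $T^U$ preserves the $T$-pullbacks used to build the lower prolongations, so that $T^U.\widehat{A}.V$ is again a span of the required shape and the zig-zag closes up into an honest limit cone rather than merely a cone on underlying objects. This preservation is guaranteed because the squares being composed are transverse in $\wone$ (Definition \ref{def:transverse-limit}) and are carried to $T$-pullbacks by the tangent structure, exactly as recorded in Lemma \ref{lemma:pullback-part-of-theorem}(iii), which states that $T$-pullbacks of the relevant form are closed under $\boxtimes$.

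For the two displayed pullback squares I would observe that each is an instance of the defining pullback of $\boxtimes$: writing $\widehat{A}.UV = \widehat{A}.U \boxtimes \widehat{A}.V$, the apex $A.UV$ is by construction the pullback of $\anc^U$ against $T^U.\pi^V : T^U.\widehat{A}.V \to T^U M$, which is the first square; the second square is the same limit re-expressed by factoring the cospan through the diagonal $T^U M = T^U M$, which changes nothing since one leg is an identity. Finally, the chain $A.UV = A.U \boxtimes A.V = A.U \boxtimes id_M \boxtimes A.V$ follows from the associativity of the strict choice of prolongations together with the fact that the span $id_M = (M = M = M)$ is the monoidal unit for $\boxtimes$, so inserting it between the two factors leaves the horizontal composite unchanged.

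I expect the only real subtlety to be bookkeeping: keeping the indices of the iterated tangent functors $T_{n[1]\dots n[j]}$ aligned with the legs of each span as factors are adjoined, and confirming that each intermediate square is a genuine $T$-pullback so the pasting lemma applies throughout. Since all of this is controlled by Lemma \ref{lemma:pullback-part-of-theorem} and the transversality of the squares in $\wone$, the argument amounts to unwinding definitions, as befits an observation.
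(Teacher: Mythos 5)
Your proposal is correct and matches the justification the paper leaves implicit: the observation is stated without proof precisely because it amounts to unwinding Definition \ref{def:boxtimes-span} and the strict choice of prolongations, with the pasting lemma and Lemma \ref{lemma:pullback-part-of-theorem}(iii) collapsing the iterated horizontal composites into the wide $T$-pullback of the zig-zag. Your treatment of the two displayed squares as the defining pullback of $\boxtimes$ and its re-expression through the identity span $id_M$ is exactly the intended reading.
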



%

                   
Note that a $\anc$ sends the involution algebroid structure map to its corresponding tangent structure map. Each of the structure maps, then, gives a span morphism where $\boxtimes$ is well defined:
\begin{definition}%
    \label{def:generators-for-wone-in-anc}
    Let $(\pi:A \to M, \xi, +_q, \lambda, \anc, \sigma)$ be an involution algebroid in $\C$ with chosen prolongations.
    Then define the following maps in $\mathsf{Span}(\pi,\xi,\lambda,\anc)$:
    \begin{itemize}
        \item The projection $p: \hat{A}.W \to \hat{A}.\N$,
        \[
\begin{tikzcd}
	M & A & TM \\
	M & M & M
	\arrow["\anc", from=1-2, to=1-3]
	\arrow["\pi"', from=1-2, to=1-1]
	\arrow[Rightarrow, no head, from=1-1, to=2-1]
	\arrow[Rightarrow, from=2-2, to=2-1]
	\arrow[Rightarrow, no head, from=2-2, to=2-3]
	\arrow["p", from=1-3, to=2-3]
	\arrow["\pi"{description}, shorten <=2pt, shorten >=2pt, from=1-2, to=2-2]
\end{tikzcd}\]
        \item The zero map $0: \hat{A}.\N \to \hat{A}.W$,
        \[
\begin{tikzcd}
	M & M & M \\
	M & A & TM
	\arrow[Rightarrow, no head, from=1-2, to=1-1]
	\arrow[Rightarrow, no head, from=1-2, to=1-3]
	\arrow[Rightarrow, no head, from=1-1, to=2-1]
	\arrow["0", from=1-3, to=2-3]
	\arrow["\pi", from=2-2, to=2-1]
	\arrow["\anc"', from=2-2, to=2-3]
	\arrow["\xi"{description}, shorten <=2pt, shorten >=2pt, from=1-2, to=2-2]
\end{tikzcd}\]
        \item The addition map $+:  \hat{A}.W_2 \to \hat{A}.W$,
        \[
\begin{tikzcd}
	M & {A_2} & {T_2M} \\
	M & A & TM
	\arrow["{\anc_2}", from=1-2, to=1-3]
	\arrow["{\pi\o \pi_i}"', from=1-2, to=1-1]
	\arrow[Rightarrow, no head, from=1-1, to=2-1]
	\arrow["{+.M}", from=1-3, to=2-3]
	\arrow["\pi", from=2-2, to=2-1]
	\arrow["\anc"', from=2-2, to=2-3]
	\arrow["{+}"{description}, shorten <=2pt, shorten >=2pt, from=1-2, to=2-2]
\end{tikzcd}\]
        \item The lift map $\ell: \hat{A}.W \to \hat{A}.WW$,
        \[
\begin{tikzcd}
	M & A & TM \\
	M & \prolong & {T^2M}
	\arrow["\pi"', from=1-2, to=1-1]
	\arrow["\anc", from=1-2, to=1-3]
	\arrow["{\ell.M}", from=1-3, to=2-3]
	\arrow[Rightarrow, no head, from=1-1, to=2-1]
	\arrow["{\pi\o \pi_0}"{pos=0.7}, from=2-2, to=2-1]
	\arrow["{T.\anc \o \pi_1}"'{pos=0.8}, from=2-2, to=2-3]
	\arrow["{(\xi\pi,\lambda)}"{description}, shorten <=2pt, shorten >=2pt, from=1-2, to=2-2]
\end{tikzcd}\]
        \item The flip map $c: \hat{A}.WW \to \hat{A}.WW$,
        \[
\begin{tikzcd}
	M & {\prolong} & {T^2M} \\
	M & {\prolong} & {T^2M}
	\arrow[Rightarrow, no head, from=1-1, to=2-1]
	\arrow["{c.M}", from=1-3, to=2-3]
	\arrow["{\pi\o\pi_0}"', from=1-2, to=1-1]
	\arrow["{\anc \o \pi_1}", from=1-2, to=1-3]
	\arrow["{\pi\o\pi_0}", from=2-2, to=2-1]
	\arrow["{\anc \o \pi_1}"', from=2-2, to=2-3]
	\arrow["\sigma"{description}, shorten <=2pt, shorten >=2pt, from=1-2, to=2-2]
\end{tikzcd}\]
    \end{itemize}
\end{definition}
The idea is to show that the monoidal category of chosen prolongations $\mathsf{Span}(\pi,\xi,\lambda,\anc)$ for an involution algebroid has a tangent structure generated by the structure maps in Definition \ref{def:generators-for-wone-in-anc} and the endofunctor $\widehat{A}.W \boxtimes (-)$. Using the flat presentation, we can then show that $U^\anc$ will determine a tangent functor in $\C$. The following lemma about $\anc$ will be useful in constructing the natural transformation part of a tangent functor.
\begin{definition}\label{def:anc-nat}
    Let $(\pi:A \to M, \xi, \lambda, \anc)$ be an anchored bundle with chosen prolongations. 
    Recall that by Definition \ref{def:monoidal-category}, the right leg of $A.U$ is written $\anc$, so it induces a span map:\[
\begin{tikzcd}
	M & {A.U} & {T^UM} \\
	M & {T^UM} & {T^UM}
	\arrow[Rightarrow, no head, from=2-2, to=2-3]
	\arrow["{\anc^U}", from=1-2, to=1-3]
	\arrow["{\anc^U}", from=1-2, to=2-2]
	\arrow[Rightarrow, no head, from=1-3, to=2-3]
	\arrow["{\pi^U}"', from=1-2, to=1-1]
	\arrow["{p^U}"', from=2-2, to=2-1]
	\arrow[Rightarrow, no head, from=1-1, to=2-1]
\end{tikzcd}\]
    This map has a flat presentation as
    \[
\begin{tikzcd}
	{A.UV} & {A.U \ts{\anc^U}{T^U.\pi^V}T^U.A.V} \\
	\\
	{T^U.A.V} & {T^UM \ts{id}{T^U.\pi^V} T^U.A.V}
	\arrow["{\anc^U \boxtimes A.V}"{description}, from=1-1, to=3-1]
	\arrow[Rightarrow, no head, from=3-1, to=3-2]
	\arrow[Rightarrow, no head, from=1-1, to=1-2]
	\arrow["{\pi^U \x T^U.A.V}"{description}, from=1-2, to=3-2]
\end{tikzcd}        
    \]
    We write the map
    \[
        \anc^U.V := \anc^U \boxtimes (\widehat A.V)
    \]
    which corresponds to the following span morphism:
\[\begin{tikzcd}
	&& {A.UV} \\
	& {A.U} && {T^U.A.V} \\
	M && {T^UM} && {T^{UV}M} \\
	& {T^UM} && {T^U.A.V} \\
	&& {T^U.A.V}
	\arrow["{\anc^U}", from=2-2, to=4-2]
	\arrow[Rightarrow, no head, from=2-4, to=4-4]
	\arrow["{T^U.\anc^V}"{description}, from=2-4, to=3-5]
	\arrow["{T^U.\pi^V}"{description}, from=2-4, to=3-3]
	\arrow["{\anc^U}"{description}, from=2-2, to=3-3]
	\arrow["{\pi^U}"{description}, from=2-2, to=3-1]
	\arrow[from=5-3, to=4-2]
	\arrow[Rightarrow, no head, from=5-3, to=4-4]
	\arrow[from=1-3, to=2-2]
	\arrow[from=1-3, to=2-4]
	\arrow["\lrcorner"{anchor=center, pos=0.125, rotate=-45}, draw=none, from=1-3, to=3-3]
	\arrow["{p^U.M}"{description}, from=4-2, to=3-1]
	\arrow[Rightarrow, no head, from=4-2, to=3-3]
	\arrow["{T^U.\pi^V}"{description}, from=4-4, to=3-3]
	\arrow[from=4-4, to=3-5]
\end{tikzcd}\]
\end{definition}

\begin{theorem}[The Weil Nerve]
    \label{thm:weil-nerve}
    There is a fully faithful functor
    \[
        \mathsf{N}_{\weil}: \mathsf{Inv}(\C) \to [\wone, \C]  
    \]
    that sends an involution algebroid to the transverse-limit-preserving tangent functor:
    \[
        (\widehat{A},\alpha): \wone \to \C 
    \]
\end{theorem}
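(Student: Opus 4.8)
The plan is to transport the entire problem into the monoidal category of chosen prolongations $\mathsf{Span}(\pi,\xi,\lambda,\anc)$ (Definition \ref{def:monoidal-category}), equip \emph{that} category with a tangent structure, invoke Leung's theorem to produce a functor out of $\wone$, and only then push everything down into $\C$ along the forgetful functor $U^\anc$ of \Cref{lemma:pullback-part-of-theorem}(i). Concretely, I would first show that the generating object $\widehat{A}.W$, together with the endofunctor $\widehat{A}.W \boxtimes (-)$ and the five span-morphisms $p,0,+,\ell,c$ assembled in Definition \ref{def:generators-for-wone-in-anc}, makes $\mathsf{Span}(\pi,\xi,\lambda,\anc)$ into a tangent category. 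Since $U^\anc$ merely forgets the legs of a span, each tangent-category axiom of \Cref{def:tangent-cat} reduces to a condition on the apex maps in $\C$, so it suffices to verify every axiom one level down.

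The axiom-checking splits cleanly along the structure already developed. The additive-bundle axioms $[TC.1]$ and the lift axioms $[TC.3]$ are exactly the differential-bundle and anchored-bundle data: coassociativity of $\ell$ and the universality pullback are Proposition \ref{prop:lift-axioms-anchor}(i)--(ii), while the pullback powers $T_n$ and the universality square $[TC.3]$(iv) exist because the underlying $T$-pullbacks exist in $\C$ and are reflected into $\mathsf{Span}$ by \Cref{lemma:pullback-part-of-theorem}(ii)--(iii). The symmetry axioms $[TC.2]$ are precisely the involution-algebroid axioms of Definition \ref{def:involution-algd}: $c \o c = \mathrm{id}$ is involutivity, the naturality squares for $\ell$ and $c$ are the double-linearity and symmetry-of-lift axioms (the translation recorded in Proposition \ref{prop:nat-of-sigma-ell} and Equation \ref{eq:symmetry-of-lift}), and the Yang--Baxter equation for $c$ is the Yang--Baxter axiom for $\sigma$, whose well-typedness is guaranteed by the target and double-linearity axioms as in the observation following Definition \ref{def:involution-algd}. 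Matching these last axioms is where I expect the only real friction, as it requires keeping the two differential-bundle structures on $\prol(A)$ straight throughout.

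With the tangent structure in hand, Corollary \ref{cor:using-leung-thm} (Leung's characterization, \Cref{thm:leung}) produces a strict, transverse-limit-preserving monoidal — hence tangent — functor $\widehat{A}\colon \wone \to \mathsf{Span}(\pi,\xi,\lambda,\anc)$ sending $V \mapsto \widehat{A}.V$. I would then post-compose with $U^\anc$. Because $U^\anc$ is only lax monoidal — it sends the span composite $\widehat{A}.UV$ to the genuine $T$-pullback $\widehat{A}.U \ts{\anc^U}{T^U.\pi^V} T^U\!\widehat{A}.V$ rather than an identity — the composite is a \emph{lax} tangent functor $(\widehat{A},\alpha)\colon \wone \to \C$, and the comparison $\alpha_V\colon \widehat{A}(WV) = A.WV \to T(A.V)$ is exactly the apex of the span-morphism $\anc^W.V$ of Definition \ref{def:anc-nat}, namely the projection $A \ts{\anc}{T.\pi^V} T.(A.V) \to T.(A.V)$ afforded by the flat presentation of \Cref{obs:concrete-desc-zigzag}. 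The tangent-functor coherences of \Cref{def:tang-functor} for $\alpha$ then follow from the naturality of $\anc^U.(-)$ and the strictness of $\widehat{A}$ in $\mathsf{Span}$, while preservation of transverse limits is immediate since these become $T$-pullbacks that $U^\anc$ reflects. On morphisms, an involution-algebroid map $(f,m)$ induces $\prol(f,m)$ (Proposition \ref{cor:prol-functor}), which commutes with $\sigma$ by definition and so yields a monoidal natural transformation of span functors; applying $U^\anc$ gives a tangent natural transformation $\mathsf{N}_{\weil}(f,m)$, with functoriality inherited from $\prol$.

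For full faithfulness, the key point is that a tangent functor out of $\wone$ is rigidly pinned down by its behaviour on the generators. Given a tangent natural transformation $\gamma\colon (\widehat{A},\alpha) \Rightarrow (\widehat{B},\beta)$, its components $\gamma_{\N} = m\colon M \to N$ and $\gamma_W = f\colon A \to B$ determine every remaining $\gamma_V$, since by \Cref{thm:leung} each object and map of $\wone$ is built from $p,0,+,\ell,c$ under composition, tensor, and transverse limits, and $\gamma$ commutes with all of these. The tangent-naturality squares at $p,0,+,\ell$ force $(f,m)$ to be an anchored-bundle morphism, and — the crucial step — the square at the flip $c$ forces $\prol(f)\o\sigma_A = \sigma_B\o\prol(f)$, so that $(f,m)$ is an involution-algebroid morphism; conversely every such morphism is the restriction of its own nerve, giving the bijection on hom-sets. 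I expect the main obstacle to be the bookkeeping of the first step: establishing the tangent structure on $\mathsf{Span}(\pi,\xi,\lambda,\anc)$ and, in particular, checking that the strict generation of \Cref{thm:leung} is compatible with the chosen $\boxtimes$-prolongations, so that $\widehat{A}$ is genuinely well-defined and strict rather than merely pseudofunctorial. Once the span category is confirmed to be a tangent category, the descent along $U^\anc$ and the reconstruction of morphisms are comparatively formal.
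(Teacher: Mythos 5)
Your proposal is correct and follows essentially the same route as the paper's proof: equip $\mathsf{Span}(\pi,\xi,\lambda,\anc)$ with a tangent structure via the generators of Definition \ref{def:generators-for-wone-in-anc}, apply Leung's theorem through Corollary \ref{cor:using-leung-thm}, descend along the lax functor $(U^\anc,\anc)$, and recover full faithfulness by showing a tangent natural transformation is determined by its components at $\N$ and $W$ via the $\boxtimes$-decomposition. The matching of axioms to the involution-algebroid structure (involutivity, double linearity via Proposition \ref{prop:nat-of-sigma-ell}, Yang--Baxter, and the universality results of Proposition \ref{prop:lift-axioms-anchor} and Lemma \ref{lemma:pullback-part-of-theorem}) is exactly the paper's.
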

\begin{proof}
    For the first step of this proof, we show that an involution algebroid structure on an anchored bundle $(\pi:A \to M, \xi, \lambda, \anc)$ determines a tangent category structure on the monoidal category $\mathsf{Span}(\pi:A \to M, \xi, \lambda, \anc)$.
    
    We check that the endofunctor $\hat{A} \boxtimes (-)$ determines a tangent structure, with the structure maps given by Definition \ref{def:generators-for-wone-in-anc}:
    
    \begin{enumerate}[{[TC.1]}]
        \item Additive bundle axioms:
        \begin{enumerate}[(i)]
            \item Use Lemma \ref{lemma:pullback-part-of-theorem} to see that \[ \hat{A}.W_2 = \hat{A}.W \ts{p}{p} \hat{A}.W  A \ts{\pi}{\pi} A;\] this is preserved by $\hat A.V \boxtimes (-)$.
            \item The triple $(\hat{A}.+,\hat{A}.p,\hat{A}.0) = (+_q, \pi, \xi)$ is an additive bundle induced by Proposition  \ref{prop:induce-abun}, and $\boxtimes$ preserves pullbacks (and therefore additive bundles), so the additive bundle axioms hold.
        \end{enumerate} 
        \item Symmetry axioms:
        \begin{enumerate}[(i)]
            \item $\hat{A}.c \o \hat{A}.c = id$ follows from the involution axiom $\sigma \o \sigma = id$.
            \item For Yang--Baxter, note that 
            \[ 
                (\hat{A} \boxtimes c) \o (c \boxtimes \hat{A}) \o (\hat{A} \boxtimes c)  = 
                (c \boxtimes \hat{A})  \o (\hat{A} \boxtimes c)  \o (c \boxtimes \hat{A}) 
            \] 
            follows from the Yang--Baxter equation on an involution algebroid
            \[
              (\sigma \x c)  \o (id \x T.\sigma) \o (\sigma \x c) =
              (id \x T.\sigma) \o (\sigma \x c)\o (id \x T.\sigma),
            \]
            since 
            \[\sigma \x c.A = (\hat{A}.c) \boxtimes (\hat{A}.W )
                \text{ and }
            id \x T.\sigma = (\hat{A}.W ) \boxtimes c. \]
            \item For the naturality conditions:
            \begin{enumerate}[(a)]
                \item The interchanges of $+,0,p$ all follow from the fact that 
                \[\sigma:(A.WW,\lambda \x \ell) \to (A.WW,id \x c \o T.\lambda)\] 
                is linear, and so is an additive bundle morphism.
                \item The axiom 
                \[\ell.T \o c = T.c \o c.T \o T.\ell\] 
                is equivalent to the equation
                \[
                    (\sigma \x c) \o (1\x T.\sigma) \o (\hat{\lambda} \x \ell) = (1 \x T\hat{\lambda}) \o \sigma
                \]
                which is equivalent to the double linearity axiom on $\sigma$ by Proposition  \ref{prop:nat-of-sigma-ell}.
            \end{enumerate}
        \end{enumerate}
        \item The lift axioms:
            \begin{enumerate}[(i)]
                \item The additive bundle equations are a consequence of $\lambda$ being a lift and $+$ being the addition induced by the non-singularity of $\lambda$.
                \item The coassociativity axiom 
                \[\ell.T \o \ell = T.\ell \o \ell\] 
                is equivalent to 
                \[(\hat{\lambda}\x\ell)\o\hat{\lambda} = (id \x T.\hat{\lambda}) \o \hat{\lambda}\] 
                proved in (i) of Proposition  \ref{prop:lift-axioms-anchor}.
                \item The symmetry of comultiplication, $c \o \ell = \ell$, is given by the unique equation for an involution algebroid, so that $\sigma \o (\xi\o\pi,\lambda) = (\xi\o\pi,\lambda)$.
                \item The universality of the lift follows from part (ii) of Proposition  \ref{prop:lift-axioms-anchor}; Lemma \ref{lemma:pullback-part-of-theorem} ensures that for any $V \in \wone$, $\widehat{A}.V \boxtimes \mu$ and $\mu \boxtimes \widehat{A}.V$ are universal.
            \end{enumerate}
    \end{enumerate}
    This lemma puts a tangent structure on $\mathsf{Span}(\pi,\xi,\lambda,\anc)$. Now consider the functor sending spans to the apex map, 
    \[
        U^\anc: \mathsf{Span}(\pi,\xi,\lambda,\anc) \to \C.
    \]
    The family of maps
    \[
        \{\anc^U.V: \anc^U \boxtimes \widehat A.V | U,V \in \wone\}
    \]
    gives a family of natural transformations
    \[
        \anc^{U}:A.T^U \Rightarrow T^U.A,
    \]
    so that the following pair constitute a tangent functor
    \[
      (U^\anc,\anc): \mathsf{Span}(\pi,\xi,\lambda,\anc) \to \C.
    \]
    Because the universality conditions on $\mathsf{Span}(\pi,\xi,\lambda,\anc)$ followed by reflecting limits in $\C$ using Lemma \ref{lemma:pullback-part-of-theorem}, it follows that $(U^\anc,\anc)$ will preserve the tangent-natural limits in $\mathsf{Span}(\pi,\xi,\lambda,\anc)$ corresponding to transverse limits in $\wone$.
    
    By Leung's Theorem  \ref{thm:leung} (by way of Corollary \ref{cor:using-leung-thm}), the tangent structure on $\mathsf{Span}(\pi,\xi,\lambda,\anc)$ induces a strict, monoidal, transverse-limit-preserving functor
    \[
        \bar{A}: \wone \to \mathsf{Span}(\pi,\xi,\lambda,\anc)    
    \]
    that sends the tensor product $\ox$ to the span composition $\boxtimes$. By composing the strict tangent functor $(\bar{A},id)$ and $(U^\anc,\anc)$, we have a lax, transverse-limit-preserving, tangent functor:
    \[
        (A,\anc):\wone \to \C; V \mapsto A.V
    \]


    Now, check the bijection on morphisms. Starting with an involution algebroid morphism $(f,m):A \to B$, note that this gives a span morphism $\hat f$:
\[\begin{tikzcd}[ampersand replacement=\&]
	M \& A \& TM \\
	N \& B \& TN
	\arrow["{T.m}", from=1-3, to=2-3]
	\arrow["m", from=1-1, to=2-1]
	\arrow[from=1-2, to=1-1]
	\arrow[from=1-2, to=1-3]
	\arrow[from=2-2, to=2-3]
	\arrow[from=2-2, to=2-1]
	\arrow["f", from=1-2, to=2-2]
\end{tikzcd}\]
    This gives a natural definition of $\hat{f}.V$ using the horizontal composition of span morphism, so that 
    \begin{equation}\label{eq:boxtimes-def-of-hat-f}
        \hat{f}.(UV) = \hat{f}.U \boxtimes \hat{f}.V \text{ and } \hat{f}.\N = m,
    \end{equation}
    giving a family of maps $\{\hat{f}_{U}: U \in \mathsf{objects}(\wone)\}$. Because $f$ will commute with the structure maps $\{ \pi,\xi,+,(\xi\o\pi,\lambda),\sigma\}$, it follow immediately that $\hat{f}$ is a natural transformation, because the following calculation holds for each $\theta:X \to Y \in \{ p,0,+,\ell,c\}$:
    \begin{align*}
        & \quad \hat{f}.UYV \o (\hat{A}.U \boxtimes \theta \boxtimes \hat{A}.V) \\
        &= (\hat{f}.U \boxtimes \hat{f}.Y \boxtimes \hat{f}.V) \o  (\hat{A}.U \boxtimes \theta \boxtimes \hat{A}.V) \\
        &= \hat{f}.U \boxtimes( \hat{f}.Y \o \theta)\boxtimes \hat{f}.V \\
        &= \hat{f}.U \boxtimes( \theta \o \hat{f}.X) \boxtimes \hat{f}.V \\
        &= (\hat{A}.U \boxtimes \theta \boxtimes \hat{A}.V) \o \hat{f}.UXV
    \end{align*}
    Tangent naturality will follow by the preservation of the anchor map by $f$. 
    The equality, for any Weil algebra $U$, of the diagrams
\[\begin{tikzcd}[ampersand replacement=\&]
	M \& {A.U} \& {T^UM} \& M \& {A.U} \& {T^UM} \\
	N \& {B.U} \& {T^UN} \& M \& {T^UM} \& {T^UM} \\
	N \& {T^UN} \& {T^UN} \& N \& {T^VN} \& {T^VN}
	\arrow["{T^U.m}", from=1-3, to=2-3]
	\arrow["m", from=1-1, to=2-1]
	\arrow["{\pi^U}"', from=1-2, to=1-1]
	\arrow["{\anc^U}", from=1-2, to=1-3]
	\arrow["{\anc^U}", from=2-2, to=2-3]
	\arrow[from=2-2, to=2-1]
	\arrow["{\hat{f}.U}"{description}, from=1-2, to=2-2]
	\arrow["{\anc^U}", from=2-2, to=3-2]
	\arrow["{p^U}", from=3-2, to=3-1]
	\arrow[Rightarrow, no head, from=2-1, to=3-1]
	\arrow[Rightarrow, no head, from=2-3, to=3-3]
	\arrow[Rightarrow, no head, from=3-2, to=3-3]
	\arrow["{\pi^U}"', from=1-5, to=1-4]
	\arrow["{p^U}"', from=2-5, to=2-4]
	\arrow["{\anc^U}"', from=1-5, to=1-6]
	\arrow[Rightarrow, no head, from=2-5, to=2-6]
	\arrow[Rightarrow, no head, from=3-5, to=3-6]
	\arrow["{p^U}", from=3-5, to=3-4]
	\arrow[Rightarrow, no head, from=1-6, to=2-6]
	\arrow["{T^U.m}", from=2-6, to=3-6]
	\arrow["{T^U.m}", from=2-5, to=3-5]
	\arrow["m"', from=2-4, to=3-4]
	\arrow[Rightarrow, no head, from=1-4, to=2-4]
	\arrow["{\anc^U}", from=1-5, to=2-5]
	\arrow["{=}"{description}, draw=none, from=2-3, to=2-4]
\end{tikzcd}\]
is precisely the tangent-naturality condition from Definitions \ref{def:tang-nat}, \ref{def:actegory-natural}.
    
    For the inverse of this mapping, consider a tangent natural transformation (Definition \ref{def:tang-nat})
    \[
        \gamma: ({A},\alpha) \to ({B}, \beta),  \hspace{0.15cm}
            \begin{tikzcd}
    A \circ T(A) \rar{\gamma_{TA}} \dar{\tnat_A} & B \circ T(A) \dar{\beta_{TA}} \\
    T \circ A(A) \rar{T\gamma_A} & T \circ B(A)
\end{tikzcd}
    \]
    where $(A,\alpha)$ and $(B,\beta)$ are tangent functors $\wone \to \C$ built out of involution algebroids with chosen prolongations. For any $U,V$, the map $\gamma.UV$ decomposes as $\gamma.U \boxtimes \gamma.V$:
    \[
\begin{tikzcd}
	{\widehat{A}.T.T} && {T.\widehat A.T} \\
	& {\widehat B.T.T} && {T.\widehat B.T} \\
	{\widehat A.T} && {T.\widehat A} \\
	& {\widehat B.T} && {T.\widehat B}
	\arrow["{\widehat A.p.T = \pi_0}"{description}, from=1-1, to=3-1]
	\arrow["{\alpha.T = \pi_1}", from=1-1, to=1-3]
	\arrow["{\widehat A.p}"{description, pos=0.7}, from=3-1, to=3-3]
	\arrow["{T.\widehat A.p}"{description, pos=0.7}, from=1-3, to=3-3]
	\arrow["{\widehat B.p}"{description}, from=4-2, to=4-4]
	\arrow["{T.\widehat B.p}"{description}, from=2-4, to=4-4]
	\arrow["{\widehat B.p.T = \pi_0}"{description, pos=0.2}, from=2-2, to=4-2]
	\arrow["{\beta.T = \pi_1}"{pos=0.2}, from=2-2, to=2-4]
	\arrow["{\gamma.T}"{description}, from=3-1, to=4-2]
	\arrow["{T.\gamma}"{description}, from=3-3, to=4-4]
	\arrow["{T.\gamma.T}"{description}, from=1-3, to=2-4]
	\arrow["{\gamma.TT}", dashed, from=1-1, to=2-2]
\end{tikzcd}   
    \]
    Applying this relationship inductively, it is clear that the base maps $\gamma.W$ and $\gamma.\N$ determine the entire morphism $\gamma.V$:
\[\begin{tikzcd}
	& M & {A.U} & {T^UM} \\
	& N & {B.U} & {T^UN} \\
	M & {A_{n[1]}} & {T_{n[1]}.M} & \dots & {T_{n[1]}\dots T_{n[k]}.M} \\
	N & {B_{n[1]}} & {T_{n[1]}.M} & \dots & {T_{n[1]}\dots T_{n[k]}.N}
	\arrow["{T_{n[1]}\dots T_{n[k]}.\gamma.\N}", from=3-5, to=4-5]
	\arrow["{\gamma.\N}", from=3-1, to=4-1]
	\arrow["{\anc_{n[1]}}", from=3-2, to=3-3]
	\arrow["{\pi\o\pi_i}"', from=3-2, to=3-1]
	\arrow["{\pi\o\pi_i}", from=4-2, to=4-1]
	\arrow["{\anc_{n[1]}}"', from=4-2, to=4-3]
	\arrow["{\gamma.W_{n[1]}}", from=3-2, to=4-2]
	\arrow[""{name=0, anchor=center, inner sep=0}, "{T_{n[1]}.\gamma.\N}", from=3-3, to=4-3]
	\arrow["{T_{n[1]}.(\pi \o \pi_i)}"', from=3-4, to=3-3]
	\arrow[from=3-4, to=3-5]
	\arrow["{T_{n[1]}.(\pi \o \pi_i)}", from=4-4, to=4-3]
	\arrow[from=4-4, to=4-5]
	\arrow["\dots"{description}, draw=none, from=3-4, to=4-4]
	\arrow["{T^U.\gamma.\N}"', from=1-4, to=2-4]
	\arrow["{\pi^U}"', from=1-3, to=1-2]
	\arrow["{\pi^U}", from=2-3, to=2-2]
	\arrow["{\anc^U}"', from=2-3, to=2-4]
	\arrow["{\anc^U}", from=1-3, to=1-4]
	\arrow["{\gamma.\N}"', from=1-2, to=2-2]
	\arrow[""{name=1, anchor=center, inner sep=0}, "{\gamma.U}"', from=1-3, to=2-3]
	\arrow["{=}"{description}, Rightarrow, draw=none, from=1, to=0]
\end{tikzcd}\]
    Thus, every tangent-natural transformation is constructed out of a pair
    \[
        (\gamma.\N: M \to N, \gamma.W:A \to B)
    \]
    using the $\boxtimes$ construction from Equation \ref{eq:boxtimes-def-of-hat-f}.
    All that remains to show is that this pair is an involution algebroid morphism. 

    Tangent naturality gives the following two coherences:
    \[
        \anc^B \o \gamma.W = T.\gamma.\N \o \anc^A \text{ and } 
        \sigma^B \o \gamma.WW = \gamma.WW \o \sigma^A
    \]
    since $\anc^B = \beta.W, \anc^A = \alpha.W, \sigma^B = B.c$, and $\sigma^A = A.c$ by construction. The following diagram proves that $\gamma.W$ preserves the lifts, so that $(\gamma.W, \gamma.\N)$ is an involution algebroid morphism:
    \[
\begin{tikzcd}
    {T.A.T} & {T.A.T} & {T.A.T} & {T.B.T} & {T.B.T} & {T.B.T} \\
    & {A.T.T} & {A.T.T} & {B.T.T} & {B.T.T} \\
    {A.T} & {A.T} & {A.T} & {B.T} & {B.T} & {B.T}
    \arrow["{A.\ell}"{description}, from=3-3, to=2-3]
    \arrow["{\alpha.T}"{description}, from=2-3, to=1-3]
    \arrow["{(\xi\pi,\lambda^A)}"{description}, from=3-2, to=2-2]
    \arrow[Rightarrow, no head, from=3-2, to=3-3]
    \arrow[Rightarrow, no head, from=2-2, to=2-3]
    \arrow[Rightarrow, no head, from=1-2, to=1-3]
    \arrow["{\pi_1}"{description}, from=2-2, to=1-2]
    \arrow["{T.\gamma}", from=1-3, to=1-4]
    \arrow["{\gamma.T}", from=2-3, to=2-4]
    \arrow["m", from=3-3, to=3-4]
    \arrow["{B.\ell}"{description}, from=3-4, to=2-4]
    \arrow["{\beta.T}"{description}, from=2-4, to=1-4]
    \arrow["{(\xi\pi,\lambda^B)}"{description}, from=3-5, to=2-5]
    \arrow["{\beta.T}"{description}, from=2-5, to=1-5]
    \arrow[Rightarrow, no head, from=1-4, to=1-5]
    \arrow[Rightarrow, no head, from=2-4, to=2-5]
    \arrow[Rightarrow, no head, from=3-4, to=3-5]
    \arrow["{\lambda^A}"{description}, from=3-1, to=1-1]
    \arrow[Rightarrow, no head, from=1-1, to=1-2]
    \arrow[Rightarrow, no head, from=3-1, to=3-2]
    \arrow[Rightarrow, no head, from=3-5, to=3-6]
    \arrow[Rightarrow, no head, from=1-5, to=1-6]
    \arrow["{\lambda^B}"{description}, from=3-6, to=1-6]
\end{tikzcd}\]
    Thus, a tangent natural transformation $(\widehat{A},\alpha) \to (\widehat{B}, \beta)$ is exactly a morphism of involution algebroids $A \to B$, proving the theorem.
    
\end{proof} 
Now, the projection for a Lie algebroid is a submersion, as we may make a choice of prolongations for each $U \in \wone$. These prolongations lead to a new observation about Lie algebroids: they embed into a category of functors into smooth manifolds.
\begin{corollary}%
    \label{cor:SMan-embedding}
    Using the Weil nerve construction, the category of Lie algebroids embeds into the tangent-functor category:
    \[
        \mathsf{LieAlgd} \hookrightarrow [\wone, \mathsf{SMan}].  
    \]
\end{corollary}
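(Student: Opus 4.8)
The plan is to obtain the claimed embedding as the composite of two results already established: the isomorphism of categories $\mathsf{LieAlgd} \cong \mathsf{Inv}(\mathsf{SMan})$ of Theorem \ref{thm:iso-of-cats-Lie}, and the Weil nerve functor $\mathsf{N}_{\weil} : \mathsf{Inv}(\mathsf{SMan}) \to [\wone, \mathsf{SMan}]$ obtained by instantiating Theorem \ref{thm:weil-nerve} at $\C = \mathsf{SMan}$. Since an isomorphism of categories is in particular fully faithful, and the Weil nerve is fully faithful by Theorem \ref{thm:weil-nerve}, their composite is fully faithful, hence an embedding. First I would record this composite as the definition of the functor $\mathsf{LieAlgd} \hookrightarrow [\wone, \mathsf{SMan}]$ and note that full faithfulness is stable under composition.

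The one point that genuinely requires checking is that the Weil nerve is defined on \emph{all} of $\mathsf{Inv}(\mathsf{SMan})$. The construction of $\mathsf{N}_{\weil}$ depends on a choice of prolongations $A.V$ for every Weil algebra $V \in \wone$ (a strict choice of horizontal composites, as in Definition \ref{def:monoidal-category}), and these are assembled by iterated span composition $\boxtimes$ (Definition \ref{def:boxtimes-span}). The concrete zig-zag of Observation \ref{obs:concrete-desc-zigzag} exhibits each $A.V$ as an iterated $T$-pullback whose legs are built from tangent images of the projection $\pi$ and the anchor $\anc$. The key step is therefore to verify that all of these $T$-pullbacks exist in $\mathsf{SMan}$ — not merely the first and second prolongations, but $A.V$ for arbitrary $V$.

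This is precisely where the submersion theory of Section \ref{sec:submersions} does its work. The underlying differential bundle of an involution algebroid in smooth manifolds is a vector bundle by Theorem \ref{iso-of-cats-dbun-sman}, so its projection $\pi : A \to M$ is a submersion. By Corollary \ref{cor:sman-r-display} the submersions in $\mathsf{SMan}$ form a proper retractive display system; in particular they are closed under the tangent functor and stable under pullback along arbitrary maps by Proposition \ref{prop:submersion-properties}. Consequently every projection-type leg $T^{\ldots}.(\pi \o \pi_0)$ occurring in the zig-zag of Observation \ref{obs:concrete-desc-zigzag} is again a submersion, each pullback in the zig-zag exists and is preserved by $T$, and so the prolongation $A.V$ exists for all $V$ — this is the content of Example \ref{ex:prolongations}(ii). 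Hence $\mathsf{N}_{\weil}$ is defined on the whole of $\mathsf{Inv}(\mathsf{SMan})$, and composing with the isomorphism of Theorem \ref{thm:iso-of-cats-Lie} yields the corollary.

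I do not expect a serious obstacle: the statement is genuinely a corollary, and the only delicate part — guaranteeing the prolongations $A.V$ for every $V$ rather than just the low ones — has already been isolated and dispatched by the display-system machinery. The residual work is purely bookkeeping: confirming that a uniform strict choice of horizontal composites can be made, so that the Weil nerve construction of Theorem \ref{thm:weil-nerve} runs verbatim with $\C = \mathsf{SMan}$.
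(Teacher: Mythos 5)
Your proposal is correct and follows essentially the same route as the paper: the text preceding the corollary justifies it precisely by noting that the projection of a Lie algebroid is a submersion, so a choice of prolongations exists for every $U \in \wone$ and the Weil nerve of Theorem \ref{thm:weil-nerve} applies. Your elaboration of why the submersion/display-system machinery guarantees all the higher prolongations, and the explicit composition with the isomorphism of Theorem \ref{thm:iso-of-cats-Lie}, just makes explicit what the paper leaves implicit.
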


\section{Identifying involution algebroids}%
\label{sec:identifying-involution-algebroids}

This section identifies those tangent functors
\[
    (A,\alpha): \wone \to \C  
\] 
that are involution algebroids as precisely those where $A$ preserves transverse limits and $\alpha$ is a \emph{$T$-cartesian} natural transformation (Definition \ref{def:cart-nat}). 
These conditions will force each $A.V$ to be the $V$-prolongation of the underlying anchored pre-differential bundle:
\[
    (A.p: A.T \to A,\;\ A.0: A \to A.T,\;\ A.T \xrightarrow[]{A.\ell} A.T.T \xrightarrow[]{\alpha.T} T.A.T,\;\ \alpha:A.T \to T.A )  
\]
(these conditions also ensure that this tuple is an anchored differential bundle). 

Initially, it is only clear that $\alpha$ is $T$-cartesian for the projection $p$. Indeed, recall that the prolongation $A.UV$ is defined to be the $T$-pullback of the cospan:
\[ \widehat A .U \xrightarrow[]{\alpha^U} T^U.A.\N \xleftarrow[]{T^U.A.p^V} T^U.\widehat{ A}.V\]
Then consider the following diagram:
\[
\begin{tikzcd}
	{\widehat A.UWV} & {T^U. \widehat A.T.T^V} \\
	{\widehat A.U.V} & {T^U.\widehat A.V} \\
	{\widehat A.T^U} & {T^U.\widehat A.\N}
	\arrow[from=1-1, to=1-2]
	\arrow[from=2-1, to=2-2]
	\arrow["{T^U.\hat A.p.V}", from=1-2, to=2-2]
	\arrow["{\widehat A.T^U.p.T^V}"', from=1-1, to=2-1]
	\arrow["{\widehat A.T^U.p^V}"', from=2-1, to=3-1]
	\arrow["{T^U.\widehat A.p^V}", from=2-2, to=3-2]
	\arrow[from=3-1, to=3-2]
	\arrow["\lrcorner"{anchor=center, pos=0.125}, draw=none, from=2-1, to=3-2]
	\arrow["\lrcorner"{anchor=center, pos=0.125}, draw=none, from=1-1, to=2-2]
\end{tikzcd}\]
This means that every naturality square of $\alpha$ for $p$ is a $T$-pullback; natural transformations satisfying this property for every map in the domain category are called $T$-cartesian.
\begin{definition}%
    \label{def:cart-nat}
    A natural transformation $\gamma: F \Rightarrow G$ is \emph{cartesian} whenever each naturality square
    \[\begin{tikzcd}
        FC & GC \\
        FD & GD
        \arrow["{F.f}"', from=1-1, to=2-1]
        \arrow["\gamma", from=1-1, to=1-2]
        \arrow["\gamma"', from=2-1, to=2-2]
        \arrow["{G.f}", from=1-2, to=2-2]
        \arrow["\lrcorner"{anchor=center, pos=0.125}, draw=none, from=1-1, to=2-2]
\end{tikzcd}\]
    is a pullback. A natural transformation between functors into a tangent category is \emph{$T$-cartesian} whenever each component square is a $T$-pullback (we will generally suppress the $T$ when the context is clear). 
\end{definition}

Now, recall that the Weil complex determined by an involution algebroid has $A.U.V$ determined by the following $T$-pullback squares:
\[
\begin{tikzcd}
	{\widehat A.U.V} & {T^U.\widehat A.V} \\
	{\widehat A.T^U} & {T^U.\widehat A.\N}
	\arrow[from=1-1, to=1-2]
	\arrow["{\widehat A.T^U.p^V}"', from=1-1, to=2-1]
	\arrow["{T^U.\widehat A.p^V}", from=1-2, to=2-2]
	\arrow[from=2-1, to=2-2]
	\arrow["\lrcorner"{anchor=center, pos=0.125}, draw=none, from=1-1, to=2-2]
	\arrow["\lrcorner"{anchor=center, pos=0.125}, draw=none, from=1-1, to=2-2]
\end{tikzcd}\]
Then it is not difficult to show that the $T$-cartesian condition on a Weil complex forces it to be an involution algebroid.  We first need:
\begin{definition}
    A $T$-cartesian Weil complex in $\C$ is a tangent functor
    \[
        (A,\alpha): \wone \to \C  
    \]
    for which $A$ sends transverse limits to $T$-limits and $\alpha$ is a $T$-cartesian natural transformation.
\end{definition}
The first condition to check is that a $T$-cartesian Weil complex gives a natural anchored bundle $\hat{A}$ whose Weil prolongations coincide with the functor assignments on objects.
\begin{proposition}%
    \label{prop:anc-bun-cw-complex}
    Let $(A,\alpha)$ be a $T$-cartesian Weil complex. Then we have an anchored bundle
    \[
        (M  := A.\N, \hspace{0.15cm}
        \hat{A} := A.W , \hspace{0.15cm}
        \pi := A.\pi, \hspace*{0.15cm}
        \xi := A.\xi, \hspace*{0.15cm}
        \lambda := \alpha.T \o A.\ell).
    \]
    Furthermore, 
    \[
        \prol(\hat{A}) = A.WW, \hspace*{0.30cm}
        \prol^2(\hat{A}) = A.WWW. 
    \]
\end{proposition}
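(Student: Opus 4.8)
The plan is to read off every component of the anchored bundle from $A$ evaluated at the generating morphisms of $\wone$, verify each axiom by transporting the corresponding identity in $\wone$ across the tangent-functor coherences of $(A,\alpha)$, and then obtain the two prolongation identities from the hypothesis that $\alpha$ is $T$-cartesian. Throughout I write $\pi := A.p$, $\xi := A.0$, $\anc := \alpha_{\N}$ and $\lambda := \alpha_{W}\o A.\ell$, where $p\colon W\to\N$, $0\colon\N\to W$ and $\ell\colon W\to WW$ are the structure morphisms of $\wone$ from Example \ref{ex:weil-algebras-and-maps}; note $\hat A = A.W$ and $M = A.\N$.

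First I would show $(\hat A,\lambda)$ is a lift. Since $(W,\ell)$ is the free lift on $\N$ (Example \ref{ex:lift-examples}(i)), its coalgebra law is just coassociativity $T.\ell\o\ell=\ell.T\o\ell$ of $\wone$ at $\N$. Expanding $T.\lambda\o\lambda=T.\alpha_W\o T.A.\ell\o\alpha_W\o A.\ell$, rewriting $T.A.\ell\o\alpha_W$ by naturality of $\alpha$ at $\ell$, collapsing $T.\ell\o\ell$ by coassociativity, and applying the lift coherence of a tangent functor (Definition \ref{def:tang-functor}) at $W$ produces $\ell_{\hat A}\o\lambda$ — this is the general principle that a lax tangent functor carries lifts to lifts. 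The pre-differential bundle structure comes from splitting $e=p\o\lambda$: the projection coherence $p.A\o\alpha=A.p$ gives $p\o\lambda=A(p_W\o\ell)$, and in $\wone$ one has $p_W\o\ell=p.T\o\ell=T.p\o c\o\ell=T.p\o\ell=0\o p$ (using $T.p\o c=p.T$ and $c\o\ell=\ell$), so $e=A.0\o A.p=\xi\o\pi$ with $\pi\o\xi=A(p\o 0)=\mathrm{id}$. The anchor needs no further argument: $p\o\anc=p.A\o\alpha_\N=A.p=\pi$ by the projection coherence, and the lift coherence of Definition \ref{def:tang-functor} evaluated at $\N$ reads exactly $T.\anc\o\lambda=\ell_M\o\anc$, i.e. $\anc$ is a linear morphism into $(TM,\ell)$.

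The remaining differential-bundle conditions use that $A$ preserves transverse limits. The pullback powers $W_n$ of $p$ are transverse limits in $\wone$ (Definition \ref{def:transverse-limit}), so $A.W_n=\hat A_n$ is the $T$-pullback power of $\pi$, and it exists and is stable under every $T^k$. Non-singularity of $\lambda$ is where the real work lies, and this is the step I expect to be the \emph{main obstacle}, since the defining condition of a non-singular lift (Definition \ref{def:non-singular-lift}) is a $T$-equalizer, not manifestly a transverse limit. My plan is to repackage it: by Corollary \ref{cor:mu-universal} together with Theorem \ref{thm:universal-prop-differential-bundles}, non-singularity is equivalent to the $\mu$-universality $T$-pullback, and in $\wone$ the corresponding square (the $\mu$-square of Definition \ref{def:transverse-limit}) is transverse, hence sent by $A$ to a $T$-pullback. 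The delicate point is that this image square lands in $A.WW$ rather than in $T\hat A$, so I would post-compose with $\alpha_W$ and use the naturality and linearity coherences of $\alpha$ to identify it with the $\mu^{\hat A}$-square, then invoke Theorem \ref{thm:universal-prop-differential-bundles} to conclude $\lambda$ is non-singular. This completes the verification that $(\hat A,\lambda,\pi,\xi,\anc)$ is an anchored bundle.

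Finally, the prolongations fall out of $T$-cartesianness. The first prolongation $\prol(\hat A)=\hat A\ts{\anc}{T.\pi}T\hat A$ is by definition the pullback of $\hat A\xrightarrow{\alpha_\N}TM\xleftarrow{T(A.p)}T\hat A$, which is precisely the naturality square of $\alpha$ at $p\colon W\to\N$; this is a $T$-pullback because $\alpha$ is $T$-cartesian, and its apex is $A.WW$, so $\prol(\hat A)=A.WW$. For the second prolongation I would apply $T$ to this square (valid since $T$ preserves $T$-pullbacks) to present $T.A.WW$ as $T\hat A\ts{T\anc}{T^2\pi}T^2\hat A$, take the $T$-cartesian naturality square of $\alpha$ at the morphism $W\ox p\colon WW\to W$ to present $A.WWW$ as the pullback of $A.WW\xrightarrow{\alpha_W}T\hat A\xleftarrow{T.A(W\ox p)}T.A.WW$, and then paste the two squares. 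Pullback pasting identifies the result with the limit of the zig-zag $\hat A\to TM\leftarrow T\hat A\to T^2M\leftarrow T^2\hat A$, which is exactly $\prol^2(\hat A)$ in the sense of Definition \ref{def:prolongations}; hence $\prol^2(\hat A)=A.WWW$.
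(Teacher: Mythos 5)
Your proposal is correct and follows essentially the same route as the paper: the key steps in both are (a) reducing non-singularity of $\lambda$ to the $\mu$-universality square, factoring it as the image under $A$ of the transverse $\mu$-square in $\wone$ pasted with the naturality square of $\alpha$ (a $T$-pullback by $T$-cartesianness), and concluding by the pullback lemma; and (b) reading off $\prol(\hat A)=A.WW$ and $\prol^2(\hat A)=A.WWW$ from the $T$-cartesian naturality squares of $\alpha$ at $p$ and their pastings. Your write-up is more explicit than the paper's about the preliminary lift/idempotent-splitting verifications, which the paper subsumes under the general remark that a transverse-limit-preserving tangent functor carries the differential bundle $(p,0,\ell)$ on $\N$ to a differential bundle, but this is a difference of exposition rather than of method.
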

\begin{proof}
    Suppose we have a tangent functor $(F,\alpha): \C \to \D$ and a differential bundle $(\pi,\xi,\lambda)$ in $\C$. If $F$ preserves $T$-pullbacks of $\pi$, it preserves the additive bundle structure on $(\pi,\xi,+)$, so to show $(F.\pi, F.\xi, \alpha \o F.\lambda)$ is universal it suffices to show that the following diagram is a $T$-pullback in $\D$:
\[\begin{tikzcd}
    {F.A_2} & {T.F.A} \\
    {F.M} & {T.F.M}
    \arrow["{F.\pi_2}"', from=1-1, to=2-1]
    \arrow["{\mu^{\alpha \o F.\lambda}}", from=1-1, to=1-2]
    \arrow["{F.0}"', from=2-1, to=2-2]
    \arrow["{F.T.\pi}"', from=1-2, to=2-2]
\end{tikzcd}\]
    Expand this to
\[\begin{tikzcd}
    {F.A_2} & {F.T.A} & {T.F.A} \\
    {F.M} & {F.T.M} & {T.F.M}
    \arrow["{F.\pi_2}"', from=1-1, to=2-1]
    \arrow["{F.\mu^\lambda}", from=1-1, to=1-2]
    \arrow["{F.0}"', from=2-1, to=2-2]
    \arrow["{T.F.\pi}"', from=1-3, to=2-3]
    \arrow["{F.T.\pi}"', from=1-2, to=2-2]
    \arrow["{\alpha.A}", from=1-2, to=1-3]
    \arrow["{\alpha.M}"', from=2-2, to=2-3]
\end{tikzcd}\]
    In this case, it restricts to the diagram
\[\begin{tikzcd}
    {A.T_2} & {A.T^2} & {T.A.T} \\
    A & {A.T} & {T.A}
    \arrow["{A.\mu}", from=1-1, to=1-2]
    \arrow["{\alpha.T}", from=1-2, to=1-3]
    \arrow[from=1-1, to=2-1]
    \arrow["{A.T.p}"{description}, from=1-2, to=2-2]
    \arrow["\alpha"', from=2-2, to=2-3]
    \arrow["{T.A.p}"{description}, from=1-3, to=2-3]
    \arrow["{A.0}"', from=2-1, to=2-2]
    \arrow["\lrcorner"{anchor=center, pos=0.125}, draw=none, from=1-2, to=2-3]
\end{tikzcd}\]

    Each square is a $T$-pullback by hypothesis, so the universality of the lift follows by the $T$-pullback lemma. Because the complex is $T$-cartesian, the assignment $A.V$ gives a coherent choice of prolongations by the $T$-pullback
    \[\begin{tikzcd}
    {A.U.V} & {T^U.A.V} \\
    {A.U} & {T^U.A}
    \arrow["{A.U.p^V}"', from=1-1, to=2-1]
    \arrow["{\alpha^U}", from=1-1, to=1-2]
    \arrow["{T^U.A.p^V}", from=1-2, to=2-2]
    \arrow["{\alpha^U}"', from=2-1, to=2-2]
\end{tikzcd}\] 
\end{proof}
There is, of course, a natural candidate for the involution map.
\begin{corollary}
    Let $(\pi:A \to M, \xi, \lambda, \anc)$ be the anchored bundle induced by a $T$-cartesian Weil complex in a tangent category $\C$. Then we have an involution map
    \[
        \sigma: \prol(A) \xrightarrow{A.c} \prol(A).
    \]
\end{corollary}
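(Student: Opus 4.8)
The plan is to set $\sigma := A.c$, where $c : WW \to WW$ is the canonical flip of $\wone$, and to verify the five axioms of Definition \ref{def:involution-algd} by transporting the corresponding equations in $\wone$ across the functor $A$. Proposition \ref{prop:anc-bun-cw-complex} already supplies the identifications $\prol(A) = A.WW$ and $\prol^2(A) = A.WWW$ together with the anchored bundle data; as a preliminary step I would record that, exactly as in Definition \ref{def:generators-for-wone-in-anc}, the anchor is $\anc = \alpha_\N$, the first projection is $\pi_1 = \alpha_W$, and the generalized lift is $\hat\lambda = A.\ell$ (recognizing $\pi_0 \circ A.\ell = \xi\pi$ from the $\wone$ identity $T.p \circ \ell = 0 \circ p$, and $\alpha_W \circ A.\ell = \lambda$ by the formula of Proposition \ref{prop:anc-bun-cw-complex}). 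The organizing principle is that $A$ sends $\otimes$ to span composition $\boxtimes$, so that, via the dictionary recorded in the proof of Theorem \ref{thm:weil-nerve}, one has $\sigma \times c = A.(c.T)$, $1 \times T.\sigma = A.(T.c)$, $\hat\lambda \times \ell = A.(\ell.T)$, and $1 \times T.\hat\lambda = A.(T.\ell)$.

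The three axioms internal to $\wone$ are then immediate from functoriality. Involution (i) follows from $c \circ c = \mathrm{id}$ (Equation \ref{eq:tc-2-1}), giving $\sigma \circ \sigma = A.(c \circ c) = \mathrm{id}$; the symmetry of lift (iii) follows from the symmetric comultiplication law $c \circ \ell = \ell$ (Equation \ref{eq:tc-3-2}), giving $\sigma \circ \hat\lambda = A.(c \circ \ell) = A.\ell = \hat\lambda$; and the Yang--Baxter axiom (v) is $A$ applied to $c.T \circ T.c \circ c.T = T.c \circ c.T \circ T.c$ (Equation \ref{eq:tc-2-2}) after rewriting the two sides of the involution-algebroid diagram as composites of $A.(c.T)$ and $A.(T.c)$ through the dictionary above.

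The remaining two axioms involve the ambient tangent functor $T$ of $\C$ and the comparison $\alpha$, so they rest on the tangent-functor coherences of $(A,\alpha)$ from Definition \ref{def:tang-functor} rather than on bare functoriality. The target axiom (iv) is precisely the $c$-coherence of $(A,\alpha)$ evaluated at $\N$: with $\anc = \alpha_\N$ and $\pi_1 = \alpha_W$, the identity $c.M \circ T.\anc \circ \pi_1 = T.\anc \circ \pi_1 \circ \sigma$ is the $\N$-component of $c.A \circ T.\alpha \circ \alpha.T = T.\alpha \circ \alpha.T \circ A.c$. Double linearity (ii) is the main obstacle, since it contains $T.\sigma = T.(A.c)$, a term not directly in the image of $A$. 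Here the plan is to invoke Proposition \ref{prop:nat-of-sigma-ell}: its hypotheses are the involution and symmetry-of-lift axioms (already established) together with the idempotent-exchange identity, which is itself the image under $A$ of a comparison of the idempotents associated to the two lifts on $\prol(A)$ holding in $\wone$. That proposition then reduces double linearity to $(\hat\lambda \times \ell) \circ \sigma = (1 \times T.\sigma) \circ (\sigma \times c) \circ (1 \times T.\hat\lambda)$, every term of which lies in the image of $A$ by the dictionary, so it is $A$ applied to the $\wone$ naturality identity $\ell.T \circ c = T.c \circ c.T \circ T.\ell$ (Equation \ref{eq:tc-2-3}), completing the verification.
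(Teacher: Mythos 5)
Your proposal is correct and matches the paper's strategy: the paper states this corollary without proof (the well-typedness of $A.c$ being immediate from Proposition \ref{prop:anc-bun-cw-complex}) and defers the verification of the involution-algebroid axioms to Lemma \ref{lem:cwm-map-structure} together with the proposition that follows it, which establish exactly the dictionary ($A.(c.T) = \sigma \x c$, $A.(T.c) = 1 \x T.\sigma$, etc.) that you use to transport the $\wone$ identities through $A$. Your explicit appeal to the tangent-functor coherences for the target axiom and to Proposition \ref{prop:nat-of-sigma-ell} for double linearity spells out details the paper leaves implicit, but the underlying argument is the same.
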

The equations for an involution algebroid should follow immediately by functoriality; one need only ensure that the maps take the correct form.
\begin{lemma}\label{lem:cwm-map-structure}
    Let $A$ be a $T$-cartesian Weil complex in a tangent category $\C$, with $(\pi:A \to M, \xi, \lambda, \sigma)$ its underlying anchored bundle.
    Then we have:
    \begin{enumerate}[(i)]
        \item $A.c.T = \sigma \x c$,
        \item $A.T.c = 1 \x T.\sigma$,
        \item $A.\ell.T = \hat{\lambda} \x \ell.A$,
        \item $A.T.\ell = id \x T.\hat{\lambda}$.
    \end{enumerate}
\end{lemma}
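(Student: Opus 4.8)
The plan is to verify each of the four identities \emph{legwise}, exploiting that every map in sight has as its codomain one of the prolongations $A.WW = \prol(A)$ or $A.WWW = \prol^2(A)$, which by Proposition \ref{prop:anc-bun-cw-complex} are built as $T$-pullbacks. A morphism into a $T$-pullback is determined by its composites with the two canonical legs, so to prove $A.\theta = \Theta$ for each $\theta \in \{c.T, T.c, \ell.T, T.\ell\}$ it suffices to check that $A.\theta$ and the claimed structure map $\Theta$ agree after postcomposition with each leg. Throughout I will use the identifications of structure maps coming from the Weil-complex data: $A.p = \pi$, $A.0 = \xi$, $A.+ = +_q$, $A.c = \sigma$, and $A.\ell = \hat{\lambda} = (\xi\pi,\lambda)$ (the last being the one-line computation $\pi_0 \o A.\ell = A.(0\o p) = \xi\o\pi$ and $\pi_1 \o A.\ell = \alpha.T \o A.\ell = \lambda$).

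The key observation is that $\prol^2(A) = A.WWW$ carries two $T$-pullback presentations, and the ``outer'' maps $\sigma \x c$ and $\hat{\lambda} \x \ell.A$ are cones over one while the ``inner'' maps $1 \x T.\sigma$ and $id \x T.\hat{\lambda}$ are cones over the other. Presentation (a) is $\prol(A) \ts{T.\anc \o \pi_1}{T^2.\pi} T^2A$, whose legs are $\pi_0 = A.(WW \ox p)$ (a plain functor image) and $\pi_1 = \alpha^{WW} = T.\alpha \o \alpha.T$ (the iterated lax structure of the tangent functor); presentation (b) is $A \ts{\anc}{T.\pi \o T.\pi_0} T.\prol(A)$, whose legs are the base projection $A.(W \ox p \ox p)$ and $\alpha_{WW}$. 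For the differential-bundle legs I would reduce to a computation inside $\wone$ by functoriality; for the tangent legs I would invoke either the tangent-functor coherences of Definition \ref{def:tang-functor} or the plain naturality of $\alpha$.

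Concretely, for (i): the base leg gives $\pi_0 \o A.(c.T) = A\bigl((WW\ox p)\o(c\ox W)\bigr) = A\bigl(c \o (WW\ox p)\bigr) = \sigma \o \pi_0$ (a commutativity of flip-then-project in $\wone$), while the tangent leg is handled by the $c$-coherence of Definition \ref{def:tang-functor}, which reads $c.A \o \alpha^{WW} = \alpha^{WW} \o A.(c.T)$, i.e. $\pi_1 \o A.(c.T) = c \o \pi_1$. For (ii) I would use presentation (b): the base leg is preserved since $(W\ox p\ox p)\o(W\ox c) = W\ox p\ox p$ (any two maps to the terminal rig agree), and the tangent leg follows from ordinary naturality of $\alpha$ at the morphism $c : WW \to WW$, giving $\alpha_{WW}\o A.(T.c) = T.\sigma \o \alpha_{WW}$. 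Claims (iii) and (iv) are the verbatim analogues with $\ell$ in place of $c$ and $\hat{\lambda} = A.\ell$ in place of $\sigma = A.c$: (iii) over presentation (a) using the $\ell$-coherence of Definition \ref{def:tang-functor} to produce the $\ell.A$ leg, and (iv) over presentation (b) using naturality of $\alpha$ at $\ell : W \to WW$ to produce the $T.\hat{\lambda}$ leg.

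The main obstacle is not any single hard step but the index bookkeeping: correctly matching the tensor-factor positions that $c.T = c\ox W$, $T.c = W\ox c$, $\ell.T = \ell\ox W$, $T.\ell = W\ox\ell$ flip or lift against the correct pullback presentation, and confirming that the proposed right-hand sides are well-typed cones in the first place (this uses the anchored-bundle identities $T.\anc \o \lambda = \ell \o \anc$ and $\anc \o \pi_0 = T.\pi \o \pi_1$, exactly as in the Observation following Definition \ref{def:involution-algd}). Once the two presentations and their legs are pinned down, the conceptual content is simply the recognition that the tangent legs are governed precisely by the tangent-functor axioms and the naturality of $\alpha$, so every remaining verification collapses to a short computation in $\wone$.
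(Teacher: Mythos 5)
Your proposal is correct and is essentially the paper's own argument: the paper's proof consists precisely of the four commutative diagrams exhibiting $A.TTT$ (resp.\ $A.TT$) via its two $T$-pullback presentations and reading off each leg of $A.\theta$ by functoriality, naturality of $\alpha$, and the tangent-functor coherences, exactly as you describe. The only cosmetic difference is in (iv), where the paper explicitly invokes the $T$-cartesian naturality square of $\alpha$ at $\ell$ while you use plain naturality of the whiskered $\alpha^W$ together with the pullback presentation of the codomain — the same underlying facts.
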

\begin{proof}
    ~\begin{enumerate}[(i)]
        \item Consider the diagram
\[\begin{tikzcd}
    {A.T.T.T} &&& {A.T.T.T} \\
    & {T.T.A.T} & {T.T.A.T} \\
    {} & {T.T.A} & {T.T.A} \\
    {A.T.T} &&& {A.T.T}
    \arrow["{A.T.T.p}"', from=1-1, to=4-1]
    \arrow["{\alpha^{TT}}", from=1-1, to=2-2]
    \arrow["{\alpha^{TT}}"', from=4-1, to=3-2]
    \arrow["{T.T.A.p}", from=2-2, to=3-2]
    \arrow["{A.T.T.p}", from=1-4, to=4-4]
    \arrow[from=4-4, to=3-3]
    \arrow["{T.T.A.p}", from=2-3, to=3-3]
    \arrow[from=1-4, to=2-3]
    \arrow["{c.A.T}", from=2-2, to=2-3]
    \arrow["{c.A}"', from=3-2, to=3-3]
    \arrow["{A.c}"{description}, from=4-1, to=4-4]
    \arrow["{A.c.T}"{description}, from=1-1, to=1-4]
    \arrow["\lrcorner"{anchor=center, pos=0.125}, draw=none, from=1-1, to=3-2]
    \arrow["\lrcorner"{anchor=center, pos=0.125, rotate=-90}, draw=none, from=1-4, to=3-3]
\end{tikzcd}\]

        Observe that this forces $A.c.T = A.c \x c.A.T = \sigma \x c$.
        \item Likewise, the diagram
\[\begin{tikzcd}
    {A.T.T.T} &&& {A.T.T.T} \\
    & {T.T.A.T} & {T.T.A.T} \\
    {} & {T.A} & {T.T.A} \\
    {A.T} &&& {A.T}
    \arrow["{A.T.p.p}"', from=1-1, to=4-1]
    \arrow["{\alpha.TT}", from=1-1, to=2-2]
    \arrow["{\alpha^{T}}"', from=4-1, to=3-2]
    \arrow["{T.T.A.p}", from=2-2, to=3-2]
    \arrow["{A.T.p.p}", from=1-4, to=4-4]
    \arrow["{\alpha^T}", from=4-4, to=3-3]
    \arrow["{T.T.A.p}", from=2-3, to=3-3]
    \arrow["{\alpha.TT}"', from=1-4, to=2-3]
    \arrow["{T.A.c}", from=2-2, to=2-3]
    \arrow[Rightarrow, no head, from=3-2, to=3-3]
    \arrow[Rightarrow, no head, from=4-1, to=4-4]
    \arrow["{A.T.c}"{description}, from=1-1, to=1-4]
    \arrow["\lrcorner"{anchor=center, pos=0.125}, draw=none, from=1-1, to=3-2]
    \arrow["\lrcorner"{anchor=center, pos=0.125, rotate=-90}, draw=none, from=1-4, to=3-3]
\end{tikzcd}\]
        forces $A.T.c = id \x T.A.c = id \x T.\sigma$.
        \item The diagram
\[\begin{tikzcd}
    {A.T.T} & {} && {A.T.T.T} \\
    & {T.A.T} & {T.T.A.T} \\
    & {T.A} & {T.T.A} \\
    {A.T} &&& {A.T.T}
    \arrow["{A.\ell.T}"{description}, from=1-1, to=1-4]
    \arrow["{\alpha.T}"{description}, from=1-1, to=2-2]
    \arrow["{\alpha^{TT}.T}"{description}, from=1-4, to=2-3]
    \arrow["{\ell.A.T}", from=2-2, to=2-3]
    \arrow["{A.T.p}"{description}, from=1-1, to=4-1]
    \arrow["{T.A.p}", from=2-2, to=3-2]
    \arrow["{T.T.A.p}", from=2-3, to=3-3]
    \arrow["{\ell.A}"', from=3-2, to=3-3]
    \arrow["\alpha"{description}, from=4-1, to=3-2]
    \arrow["{A.\ell}"{description}, from=4-1, to=4-4]
    \arrow["{A.T.T.p}"{description}, from=1-4, to=4-4]
    \arrow["{\alpha^{TT}}"{description}, from=4-4, to=3-3]
    \arrow["\lrcorner"{anchor=center, pos=0.125}, draw=none, from=1-1, to=3-2]
    \arrow["\lrcorner"{anchor=center, pos=0.125, rotate=-90}, draw=none, from=1-4, to=3-3]
    \arrow["\lrcorner"{anchor=center, pos=0.125, rotate=45}, draw=none, from=4-1, to=3-3]
    \arrow["\lrcorner"{anchor=center, pos=0.125, rotate=45}, draw=none, from=1-1, to=2-3]
\end{tikzcd}\]

        forces $A.\ell.T = A.\ell \x \ell.A = \hat{\lambda} \x \ell$.
        \item As $\alpha$ is $T$-cartesian, the following diagram is a $T$-pullback:
\[\begin{tikzcd}
    {A.T.T} & {A.T.T.T} \\
    {T.A.T} & {T.A.T.T}
    \arrow["{\alpha.T}"', from=1-1, to=2-1]
    \arrow["{A.T.\ell}", from=1-1, to=1-2]
    \arrow["{\alpha.T.T}", from=1-2, to=2-2]
    \arrow["{T.A.\ell}"', from=2-1, to=2-2]
    \arrow["\lrcorner"{anchor=center, pos=0.125}, draw=none, from=1-1, to=2-2]
\end{tikzcd}\]
        Using previous results, this means that $A.\ell.T$ is the unique map making the following diagram commute:
\[\begin{tikzcd}
    {A \ts{\anc}{T.\pi}TA} & {A \ts{\anc}{T.\pi}TA \ts{T.\anc}{T^2.\pi}T^2A} \\
    TA & {TA \ts{T.\anc}{T^2.\pi}T^2A}
    \arrow["{(\pi_1,\pi_2)}", from=1-2, to=2-2]
    \arrow["{T.\hat\lambda}"', from=2-1, to=2-2]
    \arrow["{\pi_1}"', from=1-1, to=2-1]
    \arrow[dashed, from=1-1, to=1-2]
\end{tikzcd}\]
        which we can see is $id \x \hat\lambda$.
    \end{enumerate}
\end{proof}

Pulling together this lemma and the previous proposition, the following is now clear:
\begin{proposition}
    A $T$-cartesian Weil complex determines an involution algebroid.
\end{proposition}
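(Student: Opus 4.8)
The plan is to verify the five axioms of Definition \ref{def:involution-algd} for the anchored bundle produced in Proposition \ref{prop:anc-bun-cw-complex}, equipped with the involution $\sigma := A.c : \prol(\hat A) \to \prol(\hat A)$, by recognising each axiom as the image under the functor $A$ of the corresponding coherence that the canonical flip $c$ satisfies in $\wone$ (which is a tangent category by Leung's Proposition \ref{thm:leung}). The argument is essentially the inverse of the translation carried out inside the proof of the Weil nerve (Theorem \ref{thm:weil-nerve}); the new content is that $T$-cartesianness of $\alpha$, via Lemma \ref{lem:cwm-map-structure} and Proposition \ref{prop:anc-bun-cw-complex}, lets us read the abstract maps $A.c$, $A.c.T$, $A.T.c$, $A.\ell.T$, $A.T.\ell$ as the concrete structure maps $\sigma$, $\sigma \x c$, $id \x T.\sigma$, $\hat\lambda \x \ell$, $id \x T.\hat\lambda$.

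First I would dispatch the easy axioms. Involution (i), $\sigma \o \sigma = id$, is $A$ applied to $c \o c = id$ (Equation \ref{eq:tc-2-1}). Symmetry of the lift (iii), $\sigma \o \hat\lambda = \hat\lambda$, is $A$ applied to $c \o \ell = \ell$ (Equation \ref{eq:tc-3-2}), once one notes $A.\ell = \hat\lambda$ from the definition $\lambda := \alpha.T \o A.\ell$ in Proposition \ref{prop:anc-bun-cw-complex}. The Yang--Baxter axiom (v) is the most mechanical: substituting $\sigma \x c = A.c.T$ and $id \x T.\sigma = A.T.c$ from Lemma \ref{lem:cwm-map-structure}(i),(ii), the two composites around the hexagon become $A.(c.T \o T.c \o c.T)$ and $A.(T.c \o c.T \o T.c)$, which agree by functoriality applied to the Yang--Baxter equation \ref{eq:tc-2-2} of $\wone$.

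Next come double linearity (ii) and the target axiom (iv). For (ii) I would route through Proposition \ref{prop:nat-of-sigma-ell}: having already established (i) and (iii), it remains to check the idempotent-exchange hypothesis and then the left-hand square of Diagram \ref{eq:symmetry-of-lift}. Both are again images under $A$ of identities holding in $\wone$; pushing the naturality coherence $\ell.T \o c = T.c \o c.T \o T.\ell$ through $A$ and using Lemma \ref{lem:cwm-map-structure} reproduces exactly the equation $(\hat\lambda \x \ell)\o\sigma = (id \x T.\sigma)\o(\sigma \x c)\o(id \x T.\hat\lambda)$ that Proposition \ref{prop:nat-of-sigma-ell} identifies with double linearity. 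The target axiom (iv), $T.\anc \o \pi_1 \o \sigma = c \o T.\anc \o \pi_1$, is the one place where the anchor $\anc = \alpha$ (rather than a bare $\wone$-map) enters; here I would use the tangent-functor coherence of $(A,\alpha)$ with $c$ from Definition \ref{def:tang-functor}, together with the $T$-cartesian identification of $\pi_1$ with the relevant component of $\alpha$, to extract the stated equality.

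The main obstacle is not any single calculation but the bookkeeping of types: each axiom lives on a concretely presented prolongation ($\prol(\hat A) = A.WW$, $\prol^2(\hat A) = A.WWW$), so I must consistently use $T$-cartesianness of $\alpha$ to pass between the functor's value on a Weil algebra and the iterated fibre product presenting that prolongation, and to recognise $A.c.T$, $A.T.c$, $A.\ell.T$, $A.T.\ell$ in the split form supplied by Lemma \ref{lem:cwm-map-structure}. Once that dictionary is in place, each involution algebroid axiom reduces to functoriality applied to a coherence for $c$ in $\wone$ (from Definition \ref{def:tangent-cat}) or to the tangent-functor coherences for $\alpha$, so I expect the final write-up to consist mostly of citing Proposition \ref{prop:anc-bun-cw-complex}, Lemma \ref{lem:cwm-map-structure}, Proposition \ref{prop:nat-of-sigma-ell}, and the appropriate numbered equations of Definitions \ref{def:tangent-cat} and \ref{def:tang-functor}.
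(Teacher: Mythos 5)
Your proposal is correct and follows exactly the route the paper intends: the paper states this proposition with no written proof beyond ``pulling together'' Proposition \ref{prop:anc-bun-cw-complex} and Lemma \ref{lem:cwm-map-structure}, and your write-up simply makes explicit the dictionary (each involution algebroid axiom as the image under $A$ of a coherence for $c$ in $\wone$, with double linearity routed through Proposition \ref{prop:nat-of-sigma-ell} and the target axiom through the tangent-functor coherence of $\alpha$) that the paper leaves implicit.
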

However, we have not yet exhibited an isomorphism of categories between the image of the Weil nerve functor and $T$-cartesian Weil complexes. 
At first glance, the Weil nerve construction only gives a Weil complex that is $T$-cartesian for the tangent projection $p \in \wone$. Being $T$-cartesian for $p$ is, however, sufficient: a Weil complex that is $T$-cartesian for tangent projections will be $T$-cartesian for every map in $\wone$ (a similar result appears in the context of differentiable programming languages; see \cite{Cruttwell2019}).

\begin{proposition}\label{prop:mod-is-cart-if-p}
    A lax tranverse-limit-preserving tangent functor $(F,\alpha):\wone \to \C$ for which $F$ preserves pullback powers of each $T^U.p$ is $T$-cartesian if and only if each
    \begin{equation*}
        \begin{tikzcd}
        F.T.T \dar[swap]{F.T.p} \rar{\alpha_A} & T.F.T \dar{T.F.p} \\
        F.T \rar{\alpha_B} & T.F
\end{tikzcd}
    \end{equation*}
    is a $T$-pullback.
\end{proposition}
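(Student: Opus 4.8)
The plan is to show that the class $\mathcal{P}$ of morphisms $f$ of $\wone$ whose $\alpha$-naturality square is a $T$-pullback is all of $\wone$, using Leung's generation theorem (\ref{thm:leung}): every morphism of $\wone$ is obtained from the generators $\{p, 0, +, \ell, c\}$ by composition, tensor $\ox$, and maps induced by transverse limits. First I would record the standard closure properties of $\mathcal{P}$. Since $\alpha$ is a genuine natural transformation, \emph{every} naturality square commutes automatically, so the entire content is upgrading ``commutes'' to ``is a $T$-pullback.'' The class $\mathcal{P}$ contains all identities and isomorphisms, and it is closed under composition by the pasting lemma for $T$-pullbacks. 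Thus, by Leung's theorem, it suffices to place the five generators in $\mathcal{P}$ and to show $\mathcal{P}$ is closed under $\ox$ and under transverse-limit-induced maps.

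For the generators, three are immediate or nearly so. The projection $p$ (and each $T^U.p$) lies in $\mathcal{P}$ by hypothesis. The flip $c$ is an isomorphism, since $c \circ c = \mathrm{id}$ by the involution axiom \eqref{eq:tc-2-1}, so its square is trivially a pullback. For the zero map I would use the retraction $p \circ 0 = \mathrm{id}$: stacking the square at $0$ above the square at $p$ produces the (trivially cartesian) square at $\mathrm{id}$; since the lower square (at $p$) and the outer square (at $\mathrm{id}$) are $T$-pullbacks, the pasting lemma forces the upper square (at $0$) to be a $T$-pullback as well, so $0 \in \mathcal{P}$.

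The two remaining generators, $+$ and $\ell$, are the technical heart. For $+ \colon W_2 \to W$ I would exploit that $F$ preserves pullback powers of each $T^U.p$, so that $F(T W_2) \cong F(TW) \times_{F(TN)} F(TW)$ (as $T W_2$ is a pullback power of $T.p$) while $T F(W_2) \cong TFW \times_{TFN} TFW$ (as $F(W_2)$ is a pullback power of $p$ and $T$ in $\C$ preserves such powers); the tangent-functor coherence for $+$ in Definition \ref{def:tang-functor} then says $\alpha$ is a morphism of the induced additive bundles, so the $+$-square sits fibrewise over the cartesian $p$-square with additive-bundle projections as its edges, and cartesianness propagates. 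For the lift $\ell \colon W \to WW$, which admits no retraction, I would instead invoke the universality of the vertical lift (Corollary \ref{cor:mu-universal}, axiom [TC.3](iv) of Definition \ref{def:tangent-cat}): in $\wone$ the defining square of $\mu$ is a transverse pullback, $F$ preserves it, and combining this with the already-established cartesianness of $p$, $0$, $+$ should identify $F(\ell)$ and transport the pullback property across $\alpha$. Showing cleanly that preservation of the lift's defining transverse limit, together with the $p$-squares, genuinely yields a $T$-pullback square \emph{at $\ell$} — rather than only a comparison map that must then be shown invertible — is the step I expect to be the main obstacle.

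Finally I would verify closure of $\mathcal{P}$ under the two remaining generation operations. Closure under tensor follows because the iterated components $\alpha^{UV}$ decompose, via the lax tangent structure, as a whiskered composite of $\alpha^U$ and $\alpha^V$; hence a naturality square at $U \ox f$ or $f \ox V$ arises from the square at $f$ by applying $T^U$ or by $\boxtimes$-type composition, both of which preserve $T$-pullbacks given preservation of pullback powers and the pointwise computation of limits (\ref{obs:lim-of-spans}). Closure under transverse-limit-induced maps follows from stability of $T$-pullbacks under base change together with $F$'s preservation of those limits; this in particular re-derives the cartesianness of the projections $\pi_i \colon W_2 \to W$ as base changes of $p$, which is what feeds the $+$ argument above. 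With all five generators in $\mathcal{P}$ and $\mathcal{P}$ closed under composition, tensor, and transverse-limit-induced maps, Leung's theorem gives $\mathcal{P} = \wone$, so $\alpha$ is $T$-cartesian; the converse implication is immediate, since $p$ is itself a morphism of $\wone$.
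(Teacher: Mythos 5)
Your overall strategy---reduce to the five generators via Leung's theorem (Proposition \ref{thm:leung}), then close the class of cartesian-naturality squares under composition, tensor, and transverse-limit-induced maps---is exactly the paper's, and your treatments of $p$, $c$, $0$, the tensor closure, and the transverse-limit closure all match the argument given there. The genuine gap is precisely where you flag it: the lift. You reject the cancellation pattern for $\ell$ on the grounds that it ``admits no retraction,'' and then reach for the universality of the vertical lift, which, as you anticipate, only hands you a comparison map whose invertibility you cannot establish. But no retraction is needed. The relevant identity is the lift axiom $T.p \o \ell = 0 \o p$ (the third diagram of Equation \eqref{eq:tc-3-0}): pasting the $\ell$-naturality square of $\alpha$ with the naturality square at the whiskered projection gives the same outer rectangle as pasting the $p$-square with the $0$-square. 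That outer rectangle is a $T$-pullback because both of its factors are (the $p$-square by hypothesis, the $0$-square by your own argument), and the square at the whiskered projection is a $T$-pullback by hypothesis; cancellation in the $T$-pullback lemma then forces the $\ell$-square to be a $T$-pullback. This is exactly the two-step pattern you already used successfully for $0$, and it is how the paper handles $\ell$; the universality of $\mu$ plays no role in this step.

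A smaller remark on $+$: your fibrewise additive-bundle argument can probably be made to work, but as written it is vague at the point where ``cartesianness propagates,'' and the paper's route is both shorter and uniform with the rest of the proof. Since $p \o + = p \o \pi_i$, the $+$-square pasted with the $p$-square equals the $\pi_i$-square pasted with the $p$-square; the latter composite is a $T$-pullback (the $\pi_i$-squares being retracts of squares already known to be cartesian), so cancelling the $p$-square yields the $+$-square. Adopting this makes every generator except $c$ a single application of the same cancellation scheme against an identity of the form $(\text{known}) \o (\text{target}) = (\text{known}) \o (\text{known})$, which is the real engine of the proof.
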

\begin{proof}
    We only check the converse since the forward implication is trivial. We make use of the $T$-pullback lemma.
    \begin{enumerate}[(i)]
        \item $c$ is an isomorphism, so its naturality square is a $T$-pullback.
        \item For projections $T_2 \to T$,  the retract of a $T$-pullback diagram is a $T$-pullback, so the following diagram is universal:
        \begin{equation*}
    \begin{tikzcd}
        F.T.T_2 \ar[equals]{rrr} \ar[equals]{rd} \ar{ddd}{\alpha} & & & F.T.T_2 \ar{ddd}{\alpha} \ar{ld}{F.T.\pi_i} \\
        & F.T.T_2 \rar{F.T.\pi_i} \dar{\alpha} & F.T.T \dar{\alpha} \\
        & T.F.T_2 \rar{T.F.\pi_i}  & T.F.T \\
        F.T.T_2 \ar[equals]{rrr} \ar[equals]{ru} & & & F.T.T_2 \ar{lu}{F.T.\pi_i} \\
    \end{tikzcd}
\end{equation*}

        \item For $0$, observe that the following two diagrams are equal:
\[\begin{tikzcd}
	{F.T} & {F.T.T} & {F.T} & {F.T} & {F.T} \\
	{T.F} & {T.F.T} & {T.F} & {T.F} & {T.F}
	\arrow["\alpha", from=1-3, to=2-3]
	\arrow["{F.T.p}", from=1-2, to=1-3]
	\arrow["{T.F.p}"', from=2-2, to=2-3]
	\arrow["{F.T.0}", from=1-1, to=1-2]
	\arrow["{T.F.0}"', from=2-1, to=2-2]
	\arrow["\alpha"', from=1-1, to=2-1]
	\arrow["\alpha"{description}, from=1-2, to=2-2]
	\arrow[Rightarrow, no head, from=1-4, to=1-5]
	\arrow[Rightarrow, no head, from=2-4, to=2-5]
	\arrow["\alpha", from=1-5, to=2-5]
	\arrow["\alpha"', from=1-4, to=2-4]
	\arrow["\lrcorner"{anchor=center, pos=0.125}, draw=none, from=1-2, to=2-3]
	\arrow["\lrcorner"{anchor=center, pos=0.125}, draw=none, from=1-4, to=2-5]
\end{tikzcd}\]
        The right diagram is a $T$-pullback, and the right square of the left diagram is a $T$-pullback by hypothesis.
        By the $T$-pullback lemma, the left square of the left diagram is a $T$-pullback.
        \item For $\ell$, observe that
\[\begin{tikzcd}
	{F.T.T} & {F.T.T.T} & {F.T.T} & {F.T.T} & {F.T} & {F.T.T} \\
	{T.F.T} & {T.F.T.T} & {T.F.T} & {T.F.T} & {T.F} & {T.F.T}
	\arrow["{F.T.\ell}", from=1-1, to=1-2]
	\arrow["{\alpha.T}"', from=1-1, to=2-1]
	\arrow["{T.F.\ell}"', from=2-1, to=2-2]
	\arrow["{\alpha.T.T}"', from=1-2, to=2-2]
	\arrow["{F.T.p.T}", from=1-2, to=1-3]
	\arrow["{T.F.p.T}"', from=2-2, to=2-3]
	\arrow["{\alpha.T}", from=1-3, to=2-3]
	\arrow["\lrcorner"{anchor=center, pos=0.125}, draw=none, from=1-2, to=2-3]
	\arrow["{F.T.p}", from=1-4, to=1-5]
	\arrow["{T.F.p}"', from=2-4, to=2-5]
	\arrow["{\alpha.T}"', from=1-4, to=2-4]
	\arrow["\alpha"', from=1-5, to=2-5]
	\arrow["\lrcorner"{anchor=center, pos=0.125}, draw=none, from=1-4, to=2-5]
	\arrow["{F.T.0}", from=1-5, to=1-6]
	\arrow["{T.F.0}"', from=2-5, to=2-6]
	\arrow["{\alpha.T}", from=1-6, to=2-6]
	\arrow["\lrcorner"{anchor=center, pos=0.125}, draw=none, from=1-5, to=2-6]
\end{tikzcd}\]
        The outer perimeter of the right diagram is a $T$-pullback (left square by hypothesis, right square by (ii)), as is the right square of the left diagram (by hypothesis). 
        By the $T$-pullback lemma, the left square of the left diagram is a $T$-pullback.
        \item For $+$, observe that
\[\begin{tikzcd}
	{F.T.T_2} & {F.T.T} & {F.T.T} & {F.T.T_2} & {F.T.T} & {F.T} \\
	{T.F.T_2} & {T.F.T} & {T.F.T} & {T.F.T_2} & {T.F.T} & {T.F}
	\arrow["{F.T.+}", from=1-1, to=1-2]
	\arrow["{\alpha.T}"', from=1-1, to=2-1]
	\arrow["{T.F.+}"', from=2-1, to=2-2]
	\arrow["{\alpha.T}"', from=1-2, to=2-2]
	\arrow["{F.T.p}", from=1-2, to=1-3]
	\arrow["{T.F.p}"', from=2-2, to=2-3]
	\arrow["{\alpha.T}", from=1-3, to=2-3]
	\arrow["\lrcorner"{anchor=center, pos=0.125}, draw=none, from=1-2, to=2-3]
	\arrow["{F.T.\pi_i}", from=1-4, to=1-5]
	\arrow["{T.F.\pi_i}"', from=2-4, to=2-5]
	\arrow["{\alpha.T_2}"', from=1-4, to=2-4]
	\arrow["{\alpha.T}"', from=1-5, to=2-5]
	\arrow["\lrcorner"{anchor=center, pos=0.125}, draw=none, from=1-4, to=2-5]
	\arrow["{F.T.p}", from=1-5, to=1-6]
	\arrow["{T.F.p}"', from=2-5, to=2-6]
	\arrow["\alpha", from=1-6, to=2-6]
	\arrow["\lrcorner"{anchor=center, pos=0.125}, draw=none, from=1-5, to=2-6]
\end{tikzcd}\]
        The outer diagram on the right is a $T$-pullback by composition, and the right square on the left diagram is a $T$-pullback by hypothesis, so the result follows. 
    \end{enumerate}
    To check that the naturality square is a $T$-pullback for \emph{every} map in $\wone$, we once again use Leung's characterization of maps in $\wone$ from Proposition \ref{thm:leung}. Inductively, the set of maps generated by $\{p,0,+,\ell,c\}$ closed under $\ox$ and $\o$ follows as $T$-pullback squares are closed to composition. For maps induced by a tranverse limit in $\wone$, $F$ preserves transverse limits so this follows by the commutativity of limits.
\end{proof}

\begin{theorem}\label{thm:iso-of-cats-inv-emcs}
    For any tangent category $\C$, the replete image of the Weil nerve functor
    \[
        \mathsf{Inv}(\C) \hookrightarrow [\wone, \C]  
    \]
    is precisely the category of $T$-cartesian Weil complexes.
\end{theorem}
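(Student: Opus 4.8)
The plan is to leverage that $\mathsf{N}_{\weil}$ is fully faithful (Theorem \ref{thm:weil-nerve}), so that identifying its replete image reduces to a matching of objects: it suffices to prove that (a) every involution algebroid is carried to a $T$-cartesian Weil complex, and conversely (b) every $T$-cartesian Weil complex is isomorphic, as a tangent functor, to the nerve of some involution algebroid. Full faithfulness then takes care of the morphisms automatically, so no further work on the level of maps is needed once these two object-level facts are in hand.

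For (a), Theorem \ref{thm:weil-nerve} already produces a transverse-limit-preserving tangent functor $(\widehat{A},\alpha)$, so the only outstanding point is that $\alpha$ is $T$-cartesian. I would argue this using the observation recorded just before the statement of the theorem: since the prolongation $A.UV$ is built as the $T$-pullback of the cospan $\widehat{A}.U \xrightarrow{\alpha^U} T^U.A.\N \xleftarrow{T^U.A.p^V} T^U.\widehat{A}.V$, every naturality square of $\alpha$ at the tangent projection $p$ is a $T$-pullback. Because $\widehat{A}$ preserves transverse limits, it preserves pullback powers of each $T^U.p$, so Proposition \ref{prop:mod-is-cart-if-p} upgrades $T$-cartesianness at $p$ to $T$-cartesianness at every map of $\wone$. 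Hence $(\widehat{A},\alpha)$ is a $T$-cartesian Weil complex, placing the image of $\mathsf{N}_{\weil}$ inside the claimed category.

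For (b), I would take an arbitrary $T$-cartesian Weil complex $(A,\alpha)$ and feed it through the results immediately preceding the theorem: Proposition \ref{prop:anc-bun-cw-complex} extracts an anchored bundle whose prolongations are $\prol(\widehat{A}) = A.WW$ and $\prol^2(\widehat{A}) = A.WWW$, the following corollary supplies the involution $\sigma = A.c$, and the subsequent proposition verifies that this data satisfies the involution algebroid axioms. Applying $\mathsf{N}_{\weil}$ to this involution algebroid yields a tangent functor $(\widehat{A}',\beta)$, and the task is to produce a tangent-natural isomorphism $(\widehat{A}',\beta) \cong (A,\alpha)$. On objects this is exactly Proposition \ref{prop:anc-bun-cw-complex}: both functors send $V \in \wone$ to the $V$-prolongation of the common underlying anchored bundle, so $\widehat{A}'.V = A.V$. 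On the generating maps $\{p,0,+,\ell,c\}$ the two functors agree by construction together with Lemma \ref{lem:cwm-map-structure}, which pins down $A.c.T$, $A.T.c$, $A.\ell.T$, $A.T.\ell$ as precisely the maps $\sigma \x c$, $1 \x T.\sigma$, $\hat{\lambda} \x \ell$, and $id \x T.\hat{\lambda}$ used to assemble the nerve from its generators.

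The main obstacle lies in (b): promoting agreement on generators to a genuine tangent-natural isomorphism over the whole of $\wone$. I expect to resolve this with Leung's characterization (Proposition \ref{thm:leung}) of $\wone$ as the free tangent category on one object, whereby a transverse-limit-preserving tangent functor out of $\wone$ --- and a tangent-natural transformation between two such --- is completely determined by its behaviour on the generators $\{p,0,+,\ell,c\}$ under composition, tensor $\ox$, and transverse limits. Since $A$ and $\widehat{A}'$ both preserve transverse limits and coincide on these generators, they are canonically isomorphic as tangent functors; and because the comparison cells $\alpha$ and $\beta$ are themselves assembled from the same generating data (the anchor being the component of $\alpha$ over $\N$), the isomorphism intertwines them, hence is tangent-natural. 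This exhibits $(A,\alpha)$ in the replete image and completes the identification of that image with the category of $T$-cartesian Weil complexes.
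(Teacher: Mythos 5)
Your proposal is correct and follows essentially the same route the paper intends: the paper leaves Theorem \ref{thm:iso-of-cats-inv-emcs} without an explicit proof precisely because it is meant to be assembled from the surrounding results --- the nerve is $T$-cartesian at $p$ by construction and hence everywhere by Proposition \ref{prop:mod-is-cart-if-p}, while Proposition \ref{prop:anc-bun-cw-complex}, the corollary supplying $\sigma = A.c$, and Lemma \ref{lem:cwm-map-structure} give the converse, with full faithfulness from Theorem \ref{thm:weil-nerve} handling morphisms. Your use of Leung's generation result (Proposition \ref{thm:leung}) to promote agreement on generators to a tangent-natural isomorphism is exactly the right way to close the loop the paper leaves implicit.
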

\begin{corollary}\label{cor:the-prolongation-description}
    That $\alpha:A.T \Rightarrow T.A$ is $T$-cartesian is equivalent to requiring that the tangent functor
    \[
        (A,\alpha): \wone \to \C  
    \]
    restricts to an anchored bundle
    \[(\pi: A.T \xrightarrow[]{A.p} A.\N,\;\ \xi: A \xrightarrow[]{A.0} A.T,\;\ \lambda: A.T \xrightarrow[]{A.\ell} A.TT \xrightarrow[]{\alpha.T}T.A.T,\;\ \anc:A.T \xrightarrow[]{\alpha} T.A)\]
    and each $A.T^V$ is the $V$-prolongation of this anchor bundle.
\end{corollary}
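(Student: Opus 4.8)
The statement is an equivalence between a property of the structure map $\alpha$ and a structural description of the tangent functor $(A,\alpha)$, so the plan is to prove the two implications separately. Throughout I work, as in the rest of this section, with a lax tangent functor $(A,\alpha)\colon \wone \to \C$ that preserves transverse limits, so that Propositions \ref{prop:anc-bun-cw-complex} and \ref{prop:mod-is-cart-if-p} are available.

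For the forward implication, assume $\alpha$ is $T$-cartesian. Most of the work is already done by Proposition \ref{prop:anc-bun-cw-complex}: it produces the anchored bundle $(\pi,\xi,\lambda,\anc) = (A.p,\ A.0,\ \alpha.T \circ A.\ell,\ \alpha)$, with the differential-bundle structure and the universality of the lift coming from the pullback-lemma argument in its proof and the linearity of $\anc = \alpha$ from the tangent-functor coherence for $\ell$, and it identifies $\prol(\hat A) = A.WW$ and $\prol^2(\hat A) = A.WWW$. The only thing to add is the statement for an \emph{arbitrary} Weil algebra $V$. For this I would use the general naturality square of the whiskered $\alpha^U$ at $p^V$, with vertical legs $A.U.p^V$ and $T^U.A.p^V$, which is a $T$-pullback precisely because $\alpha$ is $T$-cartesian and which exhibits $A.UV$ as the span composition $\hat A.U \boxtimes \hat A.V$. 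Combined with the zig-zag presentation of higher prolongations in Observation \ref{obs:concrete-desc-zigzag}, an induction on the word length of $V$ then shows that $A.T^V$ is the $V$-prolongation for every $V \in \wone$.

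For the reverse implication, assume $(A,\alpha)$ restricts to the stated anchored bundle and that each $A.T^V$ is the $V$-prolongation. The crux is that the naturality square of $\alpha$ at the projection $p\colon W \to \N$, whose corners are $A.T.T$, $T.A.T$, $A.T$, $T.A$ and whose legs are $\alpha.T$, $A.T.p$, $T.A.p$ and $\alpha$, is, under the identifications $A.T.T = A.WW$, $T.A.T = T\hat A$, $A.T = \hat A$ and $T.A = TM$, exactly the pullback defining the first prolongation $\prol(\hat A) = \hat A \ts{\anc}{T.\pi} T\hat A$: the top leg $\alpha.T$ is $\pi_1$, the left leg $A.T.p$ is $\pi_0$, the right leg $T.A.p$ is $T.\pi$, and the bottom leg $\alpha$ is $\anc$. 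Hence the hypothesis applied to $V = WW$ asserts precisely that this square is a $T$-pullback, and Proposition \ref{prop:mod-is-cart-if-p} then upgrades this single condition to $T$-cartesianness of $\alpha$ for every morphism of $\wone$.

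The main obstacle is the inductive bookkeeping in the forward direction: matching the functor values $A.T^V$ with the inductively defined prolongations requires recognizing each whiskered naturality square of $\alpha$ as the $T$-pullback realizing one stage of the $\boxtimes$-composition and applying the $T$-pullback lemma to glue these stages along the zig-zag of Observation \ref{obs:concrete-desc-zigzag}. The reverse direction, by contrast, is essentially immediate once the Proposition \ref{prop:mod-is-cart-if-p} square is recognized as the first-prolongation pullback, and the anchored-bundle data in the forward direction is already supplied by Proposition \ref{prop:anc-bun-cw-complex}.
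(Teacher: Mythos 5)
Your proposal is correct and assembles the argument exactly as the paper intends: the forward direction is Proposition \ref{prop:anc-bun-cw-complex} together with the whiskered naturality squares of $\alpha$ at $p$ realizing the stages of the $\boxtimes$-composition, and the reverse direction is the observation (made explicitly at the start of Section \ref{sec:identifying-involution-algebroids}) that the prolongation pullbacks \emph{are} those naturality squares, upgraded to full $T$-cartesianness by Proposition \ref{prop:mod-is-cart-if-p}. The paper leaves the corollary without a written proof, but your reconstruction matches the machinery it sets up for precisely this purpose.
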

\begin{remark}
    The condition in Corollary  \ref{cor:the-prolongation-description} is analogous to the Segal conditions identifying those simplicial complexes
    \[\Delta \to \C \]
    that are internal categories. Note that every simplicial object has an underlying reflexive graph
    \[
        \mathsf{tr}_1(X) := (s,t:X([1]) \to X([0]), i:X([0]) \to X([1]))
    \]
    where $X([n])$ is isomorphic to the object of $n$-composable arrows for the underlying reflexive graph.
\end{remark}
\begin{remark}
    Notably, being $T$-cartesian for $p$ is enough to force that a natural transformation is $T$-cartesian for the other tangent-structural natural transformations. This has consequences when one uses partial maps to combine \emph{topological} notions with tangent categories.
    In this context, a partial map $N \to X$ with domain $M \hookrightarrow N$  is a span
    \[
\begin{tikzcd}
	& M \\
	N && X
	\arrow["f"{description}, from=1-2, to=2-3]
	\arrow["m"{description}, from=1-2, to=2-1]
\end{tikzcd}\]
    whose right leg is monic. The intuition is that the map $f$ is defined on the subobject $M$ of $N$, which introduces a new problem: what is the proper notion of a \emph{subobject} in a tangent category?
    Such a notion should give rise to a \emph{stable class of monics}: one that is closed under horizontal span composition.
    One answer is the notion of etale monics: a morphism is \emph{etale} whenever the naturality square for $p$ is a $T$-pullbacks:
    \[
\begin{tikzcd}
    TM & TN \\
    M & TN
    \arrow["{T.f}", from=1-1, to=1-2]
    \arrow["{T.f}"', from=2-1, to=2-2]
    \arrow["p"', from=1-1, to=2-1]
    \arrow["p", from=1-2, to=2-2]
    \arrow["\lrcorner"{anchor=center, pos=0.125}, draw=none, from=1-1, to=2-2]
\end{tikzcd}\]
    Geometrically, this means that the morphism is a local diffeomorphism; for example, an etale subobject of $\R^n$ in the Dubuc topos is precisely an open subset in the usual sense.
    An endofunctor lifts to the partial map category whenever it preserves the class of monics. A natural transformation lifts to endofunctors on the partial map category whenever it is $T$-cartesian for the class of monics, and the same proof will show that this property holds for etale monics \cite{Cruttwell2019}. 
\end{remark}

\section{The prolongation tangent structure}
\label{sec:prol_tang_struct}

One of the most important consequences of the Weil Nerve Theorem  \ref{thm:weil-nerve} is that the category of involution algebroids (with chosen prolongations) may be equipped with two tangent structures. The first tangent structure is the pointwise tangent structure described in Proposition \ref{prop:pointwise-tangent-structure-inv}.
The tangent functor sends
\[
    (A,\alpha) \mapsto (T.A:\wone \to \C, c.A \o T.\alpha: T.A.T \Rightarrow T.T.A)  
\]
(recall the composition of tangent functors given in Example \ref{ex:composition-of-tangent-functors} (ii)). The structure morphisms will be given by whiskering, so in this case $\theta.A, \theta \in \{ p,0,+,\ell,c\}$. The restriction to tangent functors that preserve transverse limits along with the fact that the natural part $\alpha$ is $T$-cartesian, however, ensures that precomposition with the tangent functor
\[
    (A,\alpha) \mapsto (A.T: \wone \to \C, T.\alpha \o A.c: A.T.T \Rightarrow T.A.T)  
\]
returns an involution algebroid. The structure maps are once again given by whiskering, with the pre-composition tangent structure $A.\theta, \theta \in \{ p,0,+,\ell,c\}$. Preservation of transverse limits guarantees that this tangent structure will satisfy the necessary universality conditions. 

\begin{proposition}[Proposition \ref{prop:second-tangent-structure-inv-algds}]
\label{prop:second-tangent-structure-inv-algds-2}
    The category of involution algebroids with chosen prolongations in a tangent category $\C$ has a second tangent structure, where the action by $\wone$ is given by
    \[
        (A,\alpha) \mapsto ( A.T: \wone \to \C, \alpha.T \o \hat A.c: \hat A.T.T \Rightarrow T.\hat A.T).  
    \]
\end{proposition}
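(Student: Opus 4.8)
The plan is to prove the statement by transporting the evident \emph{precomposition} tangent structure on a functor category across the Weil nerve identification of Theorem~\ref{thm:iso-of-cats-inv-emcs}. By that theorem, the category of involution algebroids with chosen prolongations is (the replete image of) the category of $T$-cartesian Weil complexes, a full subcategory of the lax tangent functors $\wone \to \C$. Since $\wone$ is itself a tangent category, Example~\ref{ex:composition-of-tangent-functors}(i) furnishes a strong tangent endofunctor $(T,c)$ on $\wone$, and the core idea is that precomposition with $(T,c)$ is a tangent endofunctor on the category of lax tangent functors which restricts to $T$-cartesian Weil complexes and corresponds, under the identification, to $\prol'$.

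First I would set up the precomposition structure on $\mathsf{Tang_{Lax}}(\wone,\C)$. Using the composition of tangent functors from Example~\ref{ex:composition-of-tangent-functors}(ii), a lax tangent functor $(A,\alpha)$ is sent to $(A,\alpha)\circ(T,c) = (A.T,\ \alpha.T \circ A.c)$, which is exactly the assignment in the statement. The structure natural transformations are obtained by whiskering $A$ against the tangent structure maps of $\wone$: at $(A,\alpha)$ these are $A.p$, $A.0$, $A.+$, $A.\ell$, with canonical flip $A.c$. What makes this well typed is that $p,0,+,\ell,c$ are themselves tangent natural transformations for $\wone$ --- for instance the naturality square $T.p \circ c = p.T$ appearing in \eqref{eq:tc-2-3} is precisely the coherence making $p \colon (T,c) \Rightarrow (\mathrm{id},\mathrm{id})$ a tangent natural transformation --- so whiskering by the lax tangent functor $A$ yields tangent natural transformations in the sense of Definitions~\ref{def:tang-nat} and~\ref{def:tang-2cat}. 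The equational axioms [TC.1]--[TC.3] for the precomposition functor are then the $A$-whiskerings of the corresponding equations holding in $\wone$, and so hold because whiskering preserves equalities of $2$-cells. The universality axioms [TC.1](i) and [TC.3](iv) require $A$ to carry the transverse pullback powers $W_n \ox (-)$ and the transverse square defining $\mu$ to $T$-pullbacks in $\C$, which holds because $A$ preserves transverse limits; in particular the $n$-fold pullback power of $A.p$ is $A.(W_n \ox (-))$.

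Next I would verify that this structure restricts to $T$-cartesian Weil complexes, which is the only genuinely non-formal step. The functor $A.T = A \circ (W \ox (-))$ again preserves transverse limits, being a composite of transverse-limit-preserving functors. For the new comparison $\alpha' = \alpha.T \circ A.c$, its naturality square at a morphism $f$ of $\wone$ is the horizontal paste of the naturality square of $A.c$ with that of $\alpha.T$. The former has isomorphism legs --- the components of $c$ are isomorphisms, hence so are their images under $A$ --- and is therefore a $T$-pullback; the latter is the naturality square of the $T$-cartesian $\alpha$ at the morphism $W \ox f$, hence a $T$-pullback. By the pullback lemma the paste is a $T$-pullback, so $\alpha'$ is $T$-cartesian. (Alternatively one may reduce to checking only the tangent projection via Proposition~\ref{prop:mod-is-cart-if-p}.) Thus $(A.T,\alpha')$ is again a $T$-cartesian Weil complex, the whiskered structure maps are morphisms of such, and the pullback powers and universality pullbacks computed in the ambient functor category land inside the subcategory, so the latter inherits the tangent structure.

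Finally I would match the explicit data. Transporting along Theorem~\ref{thm:iso-of-cats-inv-emcs}, the precomposition functor becomes $\prol'$, since $A.T$ evaluated at $\N$ and $W$ recovers $\widehat{A}$ and $\prol(A)$ (Proposition~\ref{prop:anc-bun-cw-complex}), and the whiskered structure maps recover the $p',0',+',\ell'$ and the flip $\sigma'$ of Proposition~\ref{prop:second-tangent-structure-inv-algds}: their concrete forms, such as $A.T.c = \mathrm{id} \x T.\sigma = \sigma'$ and $A.\ell.T = \widehat{\lambda} \x \ell$, are exactly the identities established in Lemma~\ref{lem:cwm-map-structure}. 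The hard part is the $T$-cartesianness of $\alpha'$; once the pasting argument above is in place, everything else is formal $2$-categorical bookkeeping inherited from the tangent structure of $\wone$ via whiskering.
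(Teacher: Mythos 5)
Your proposal is correct and follows essentially the same route as the paper: the text surrounding Proposition \ref{prop:second-tangent-structure-inv-algds-2} sets up precisely this precomposition-with-$(T,c)$ tangent structure on transverse-limit-preserving tangent functors, with structure maps given by whiskering $A.\theta$ for $\theta \in \{p,0,+,\ell,c\}$, and the proof then identifies the resulting data with $\prol'$ and $\prol'.\prol'$ via the Weil nerve. Your pasting argument showing that $\alpha.T \circ A.c$ remains $T$-cartesian is a detail the paper asserts rather than spells out, but it is a verification within the same argument, not a different approach.
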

\begin{proof}
    The proposition statement means that the structure morphisms for this new involution algebroid are given by
    \[
        ( A.T, \alpha.T \o  A.c) \cong
        \begin{cases}
            \alpha.T \o  A.c = \anc':& \prol(A) \xrightarrow[]{\pi_1} TA \\
             \quad \;\; A.T.p = \pi':& \prol(A) \xrightarrow[]{p \o \pi_1} A \\
             \quad \;\;\; A.T.0 = \xi':& A \xrightarrow[]{(\xi \o \pi, 0)} \prol(A) \\
            \anc' \o  A.T.\ell = \lambda':& \prol(A) \xrightarrow[]{\lambda \x \ell} T.\prol(A) \\
             \quad \;\ A.T.c = \sigma':& \prol^2(A) \xrightarrow{\sigma \x c} \prol^2(A)
        \end{cases}  
    \]
    Similarly, we can see that
    \begin{gather*}
        ( A.T.T, \alpha.T.T \o  A.c.T \o  A.T.c) \\ = 
        \begin{cases}
            \alpha.T.T \o  A.c.T \o  A.T.c = \anc'':& \prol^2(A) \xrightarrow[]{(\pi_1, \pi_2)} T.\prol(A) \\
             \hspace{2.5cm} A.T.p = \pi'':& \prol^2(A) \xrightarrow[]{(p \o \pi_1, p \o \pi_2):} \prol(A) \\
             \hspace{2.6cm} A.T.0 = \xi'':& \prol(A) \xrightarrow[]{(\xi \o \pi \o \pi_0, 0 \o \pi_1, 0 \o \pi_2)} \prol^2(A) \\
            \hspace{1.85cm} \anc''\o  A.T.\ell = \lambda'':&\prol^2(A) \xrightarrow[]{(\lambda \x \ell \x \ell)} T.\prol^2(A) \\
             \hspace{2.15cm} A.T.T.c = \sigma'':& \prol^3(A) \xrightarrow[]{(\sigma \x c \x c)}\prol^3(A) 
        \end{cases}  
    \end{gather*}
    These coincide with the involution algebroids $\prol'(A), \prol'.\prol'(A)$ in Proposition \ref{prop:second-tangent-structure-inv-algds}: the second tangent structure follows from the fact that the natural transformations for the tangent structure there are given by
    \[
        A.\phi: A.U \Rightarrow A.V, \phi:U \to V \in \{p,0,+, \ell, c\}.  
    \]
    The result follows as a corollary of Theorem \ref{thm:weil-nerve}.
\end{proof}

\subsection*{The Jacobi identity for involution algebroids}
Classically, the theory of Lie algebroids uses the algebra of sections $\Gamma(\pi)$.  One key observation is that when using the Lie tangent structure $(\mathsf{Inv}(\C), \prol)$, sections of $\pi$ are in bijective correspondence with $\chi_\prol(A)$.
This observation allows for different statements about Lie algebroids to be translated into formal statements about the tangent bundle in $(\mathsf{Inv}(\C), \prol)$.
\begin{proposition}\label{prop:section-morphism}
    Let $A$ be an involution algebroid in $\C$.
    There is a bijection between the sections of\, $\pi$ in $\C$ and the vector fields on $A$ in $\mathsf{Inv}(\C)$:
    \[
        X \in \Gamma(\pi) \mapsto ((id, TX \o \anc), X): A \to T_L(A);
        \hspace{.2cm}
        \hat{X} \in \chi_\prol(A) \mapsto \hat{X}_R: A.R \to A.T.
    \]
\end{proposition}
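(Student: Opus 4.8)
The plan is to unfold the definition of a vector field in the Lie tangent structure $(\mathsf{Inv}(\C), \prol)$ of Proposition \ref{prop:second-tangent-structure-inv-algds} and show that its data is forced to be exactly a section of $\pi$. Recall that $\prol'(A)$ has chosen prolongations, tangent projection $p = \pi_0 : \prol(A) \to A$, and anchor $\anc' = \pi_1$. A vector field on $A$ is then a morphism $\hat X : A \to \prol'(A)$ of involution algebroids splitting $p$; such a morphism consists of a total map $f : A \to \prol(A)$ and a base map $s : M \to A$. First I would read off the two consequences of the splitting condition $p \circ \hat X = \mathrm{id}$: at the base it forces $\pi \circ s = \mathrm{id}_M$, so $s =: X \in \Gamma(\pi)$, while at the total space it forces $\pi_0 \circ f = \mathrm{id}_A$. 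Next, since $\hat X$ preserves anchors, the morphism condition $\anc' \circ f = T s \circ \anc$ reads $\pi_1 \circ f = TX \circ \anc$; together with $\pi_0 \circ f = \mathrm{id}$ this pins down $f = (\mathrm{id}, TX \circ \anc)$. This already shows that extracting the base component $\hat X \mapsto \hat X_{\N} = X$ is well-defined and injective, and it recovers the stated formula.

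For surjectivity I would fix $X \in \Gamma(\pi)$ and verify that $\hat X := ((\mathrm{id}, TX \circ \anc), X)$ is a genuine involution algebroid morphism splitting $p$. The pair lands in $\prol(A)$ since $\anc \circ \mathrm{id} = T\pi \circ TX \circ \anc = T(\pi X)\circ \anc = \anc$. I would then dispatch the linear bundle morphism conditions -- preservation of the projection, zero, addition, and the lift $\lambda' = \lambda \times \ell$ -- each of which reduces to naturality of the tangent-structure maps together with linearity of $\anc$: for instance $\pi' \circ f = p \circ \pi_1 \circ f = X \circ \pi$ is immediate, and preservation of $\xi$ follows from $\anc \circ \xi = 0$ via naturality of $0$.

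The main obstacle will be showing that $\hat X$ preserves the involution, i.e. that $\prol(\hat X)$ intertwines $\sigma_A$ with $\sigma_{\prol'(A)} = \sigma \times c$; this is the only place the involution axioms of $A$ are genuinely needed. I would expand $\prol(\hat X) = f \ts{TX}{TX} Tf$ from Proposition \ref{cor:prol-functor} and reduce the required identity to the target axiom $T\anc \circ \pi_1 \circ \sigma = c \circ T\anc \circ \pi_1$ together with the double-linearity of $\sigma$ -- precisely the coherences governing how $\sigma$ interacts with $\anc$ and $c$ -- the factor $TX \circ \anc$ in $f$ being what allows the anchor--flip coherence to absorb $\sigma$ into $c$. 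Alternatively, in a tangent category with negatives equipped with a connection I can invoke Proposition \ref{prop:inv-algd-mor-conn}, whereupon $\sigma$-preservation becomes the single equation $\nabla[\hat X](x,y) + \langle \hat X x, \hat X y\rangle = \nabla[\hat X](y,x) + \hat X\langle x,y\rangle$, which the explicit form of $f$ satisfies by a direct computation. Finally, the Weil nerve (Theorem \ref{thm:weil-nerve}) offers a clean packaging of the whole bijection: a vector field is a tangent natural transformation $\hat A \Rightarrow \hat A.T$, and such transformations are determined by their components at $\N$ and $W$, so once $\hat X_{\N} = X$ and $\hat X_{W} = (\mathrm{id}, TX \circ \anc)$ are shown to assemble into one, the correspondence with $\Gamma(\pi)$ is exactly the component-at-$\N$ map, giving the claimed bijection.
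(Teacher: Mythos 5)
Your proposal is correct and takes essentially the same route as the paper: both derive the forced form $((\mathrm{id}, TX \circ \anc), X)$ of a vector field from the splitting of the prolongation projection together with anchor preservation (the paper phrases this via the tangent-naturality square evaluated at $\N$ and $W$), and then verify directly that this candidate is an involution algebroid morphism and a section of $p_L$, with the involution-preservation step resting on the target axiom for $\sigma$. The detours you mention through connections and the Weil nerve are optional packaging the paper does not need.
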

\begin{proof}
    Recall the coherence for tangent natural transformations $\gamma: (H,\phi) \Rightarrow (G,\psi)$:
    \begin{equation*}
        \begin{tikzcd}
    H.T \rar{\gamma.T} \dar{\phi} & K.T \dar{\psi} \\
    T.H \rar{T.\gamma} & T.K
\end{tikzcd}
    \end{equation*}
    We specify this to a morphism $\hat{X}: (A, \alpha) \Rightarrow T_L(A,\alpha) = (A.T, \alpha.T)$ at $\N, W$:
    \begin{equation*}
        \begin{tikzcd}
    A \rar{X.T} \dar{\anc} & \prolong \dar{\pi_1} \\
    TM \rar{T.X} & TA
\end{tikzcd}
    \end{equation*}
    and so  infer that, if we set $X := X_R$, we have $\pi_1 \o X.T = T(X) \o \anc$.
    Furthermore, the condition that $p_L \o X = id$ forces $id = \pi_0 \o X.T$; thus, we can see that every section $X$ of $p_L$ is given by a morphism of the form $((id, TX \o \anc), X))$ on the underlying involution algebroids, where $\pi \o X = id$. 
    
    We now show that every $X \in \Gamma(\pi)$ gives rise to a section of $\pi_L$.
    Observe that the following is a morphism of involution algebroids:
    \begin{equation*}
        \begin{tikzcd}
    A \dar{\pi} \rar{(id, TX \o \anc)} &[2em] A \ts{\anc}{T\pi} TA \dar{p\o \pi_1} \\
    M \rar{X} & A
\end{tikzcd}
    \end{equation*}
    Note that it is well typed, as $T.\pi \o T.X \o \anc = \anc \o id$. Check that it is a bundle morphism:
    \[
        p \o \pi_1 \o (id, T.X \o \anc) = p \o T.X \o \anc = X \o p \o \anc = X \o \pi
    \]
    and that it is linear:
    \begin{gather*}
        (\lambda \o \pi_0, \ell\o\pi_1)\o(id, T.X\o \anc) = (\lambda, \ell \o T.X \o \anc) \\= (\lambda, T^2.X \o \ell \o \anc) = (\lambda, T^2.X \o \lambda) = T(id, TX)\o\lambda. 
    \end{gather*}
    Then check that it preserves the anchor:
    \[
        \pi_1 \o (id, TX \o \anc) = TX \o \anc
    \]
    and the involution:
    \begin{gather*}
        (\pi_0, \pi_1, T^2X \o T\anc \pi_1)\o \sigma = (\sigma, T^2X \o T\anc \o \pi_1 \sigma)\\ 
        = (\sigma, T^2X \o c T\anc\o \pi_1)
        = (\sigma(\pi_0, \pi_1), c\pi_3) \o (id, T^2X \o T\anc \o \pi_1).
    \end{gather*}

    Thus we have that $(id, TX \o \anc \pi_1)$ is a morphism of involution algebroids, inducing a morphism of $T$-cartesian Weil complexes.
    Lastly, we check that it is a section of $p^L_A$, but this is clear, since
    \[
        \pi_0 \o (id, TX \o \anc) = id;
    \]
    thus we have the desired bijection.
\end{proof}

Recall that given an involution on an anchored bundle, there is a bracket on its set of sections (see the explicit construction in Section \ref{sec:connections_on_an_involution_algebroid}). 
Given an $X,Y \in \Gamma(\pi)$, there is a bracket defined as follows:
\[
    \hat{\lambda} \o [X,Y]_A +_1 (\xi\pi,0)\o Y = ((\sigma \o (id, TY \o \anc) \o X -_2 (id, TX \o \anc)\o Y)).
\]
A direct proof of the Jacobi identity is a detailed calculation (see the original preprint on involution algebroids \cite{Burke2019}) and still relies on Cockett and Cruttwell's result for an arbitrary tangent category with negatives. As a result of Proposition  \ref{prop:section-morphism}, we can instead use Cockett and Cruttwell's result directly:
\begin{corollary}\label{cor:lie-bracket}
    Let $A$ be a complete involution algebroid in a tangent category $\C$ with negatives. 
    There is a Lie bracket defined on $\Gamma(\pi)$, $[-,-]$ induced by
    \[
        \hat{\lambda} \o [X,Y]_A +_1 (\xi\pi,0)\o Y = ((\sigma \o (id, TY \o \anc) \o X -_2 (id, TX \o \anc)\o Y)).
    \]
\end{corollary}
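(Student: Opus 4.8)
The plan is to obtain the Lie algebra structure on $\Gamma(\pi)$ not by a frontal assault on the axioms, but by transporting the \emph{kinematic} bracket of vector fields across the bijection of Proposition \ref{prop:section-morphism}, working inside the prolongation (``Lie'') tangent structure $(\mathsf{Inv}(\C),\prol)$ of Proposition \ref{prop:second-tangent-structure-inv-algds-2}. The key point is that $A$, viewed as an object of the tangent category $(\mathsf{Inv}(\C),\prol)$, plays the role of a base manifold whose tangent bundle is $\prol(A)$; consequently the general construction of the Lie bracket of vector fields in a tangent category with negatives (Proposition \ref{prop:anti-commm-lie}, due to Cockett and Cruttwell) applies directly, delivering a Lie algebra $\chi_\prol(A)$ for free---including the Jacobi identity, which is the only genuinely hard axiom and which the excerpt's remark explicitly defers to this route.

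Concretely, I would proceed as follows. First, since $A$ is complete, all prolongations exist and $(\mathsf{Inv}(\C),\prol)$ is a bona fide tangent category. Second, I would check that this tangent structure inherits negatives from $\C$: the additive bundle structure on each prolongation is induced by universality as in Proposition \ref{prop:induce-abun}, and the negation $n:T \Rightarrow T$ of $\C$ is compatible with the $T$-limits defining $\prol$, so it lifts to a fibred-group structure on the prolongation differential bundles. With negatives secured, Proposition \ref{prop:anti-commm-lie} endows $\chi_\prol(A)$ with a Lie bracket, defined through the universality of the prolongation lift exactly as in Equation \ref{eq:hard}:
\[
    \prol.\hat{Y} \o \hat{X} -_{\prol} \sigma' \o \prol.\hat{X} \o \hat{Y} = \mu'([\hat{X},\hat{Y}], \hat{X}),
\]
where $\sigma'$ and $\mu'$ are the flip and universal map of the $\prol$ tangent structure recorded in Proposition \ref{prop:second-tangent-structure-inv-algds}. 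I would then invoke Proposition \ref{prop:section-morphism} to identify $\Gamma(\pi)$ with $\chi_\prol(A)$ via $X \mapsto ((id, TX \o \anc), X)$, and \emph{define} $[X,Y]_A$ as the image of the kinematic bracket of the corresponding vector fields; well-definedness of the resulting section is precisely the universality argument already carried out in Lemma \ref{lem:intertwining-induces-bracket}.

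The main obstacle is the final identification: showing that the abstract kinematic bracket, pushed through the section/vector-field bijection, is computed by the asserted formula of the statement. This requires substituting the explicit descriptions of $\prol$, of the flip $\sigma' = 1 \x T.\sigma$, and of the universal map $\mu'$ from Proposition \ref{prop:second-tangent-structure-inv-algds} into the defining equation above, then comparing term by term with $\hat{\lambda} \o [X,Y]_A +_1 (\xi\pi,0)\o Y = \sigma \o (id, TY \o \anc)\o X -_2 (id, TX \o \anc)\o Y$. I expect the delicate bookkeeping to be the interplay of the two additive bundle structures on $\prol(A)$---the $+_1$ and $-_2$ operations---since $\sigma$ exchanges them (Lemma \ref{prop:anc-prol-fun}); here the double-linearity and target axioms for $\sigma$ are what guarantee that the two sides are fibred over the same object and that the abstract flip $\sigma'$ collapses to the concrete $\sigma$ occurring in the displayed formula. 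Once this matching is verified, bilinearity and antisymmetry follow from the corresponding properties of the kinematic bracket (or directly from Propositions \ref{prop:intertwining-linear-iff-bracket-linear} and \ref{prop:intertwining-involutive}), and the Jacobi identity is inherited from $\chi_\prol(A)$, completing the proof.
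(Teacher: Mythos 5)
Your proposal follows essentially the same route as the paper's proof: induce the bracket via the universality of the lift, identify $[X,Y]_A$ with the kinematic bracket of the corresponding vector fields $\hat{X},\hat{Y}$ in the prolongation tangent structure on $\mathsf{Inv}(\C)$ via the bijection of Proposition \ref{prop:section-morphism}, and then inherit the Jacobi identity from Cockett and Cruttwell's general result on the bracket of vector fields in a tangent category with negatives. Your write-up is more explicit than the paper's about the inheritance of negatives and the term-by-term matching of the formula, but the decomposition and the key lemmas invoked are the same.
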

\begin{proof}
    The bracket is induced by Rosicky's universality diagram, as
    \begin{align*}
        0&=p \o  ((\sigma \o (id, TY \o \anc) \o X - (id, TX \o \anc)\o Y)) - 0Y \\
        &= Tp \o  ((\sigma \o (id, TY \o \anc) \o X - (id, TX \o \anc)\o Y)) - 0Y. 
    \end{align*}
    We look at the Lie tangent structure for $\mathsf{Inv}^*(A)$; this is precisely the vector field induced by
    \[
        ev_R( [\hat{X}, \hat{Y}] ).
    \]
    We complete the proof by using the result that for any $A$ in a tangent category with negatives, the bracket on $\chi(A)$ that is defined by
    \[
        \ell \o [X,Y] = (c\o T.X \o Y - T.Y \o X) - 0X
    \]
    satisfies the Jacobi identity.
\end{proof}

\subsection*{Identifying categories of involution algebroids}%
\label{sub:identifying-cats-of-inv}

Section \ref{sec:identifying-involution-algebroids} identified whenever a functor $\wone \to \C$ is an involution algebroid, whereas this section identifies tangent categories $\C$ that embed into the category of involution algebroids in some tangent category $\C$. We call this structure an \emph{abstract category} of involution algebroids. This notion involves some 2-category theory, using a modified notion of \emph{codescent} (see \cite{Bourke2010} for a development of codescent).

Recall that for any tangent category $\C$, the category of involution algebroids has $\C$ as a reflective subcategory. Furthermore, because limits of involution algebroids are computed pointwise, this reflector is left-exact. This left-exact reflection is the main structure we axiomatize.
\begin{definition}
    An \emph{abstract category of involution algebroids} is a tangent category $\C$ with a left-exact $T$-cartesian tangent localization $(Z,\anc): \C \to \D$, where $L$ satisfies a \emph{codescent} condition:
    \[
        \mathsf{TangCat_{Strict}}(\wone, \C)  \hookrightarrow \mathsf{TangCat_{Lax}}(\wone, \C) \xrightarrow{L_*} \mathsf{TangCat_{Lax}}(\wone, \D)
    \]
    (where $L_*$ denotes post-composition by $L$) is fully faithful. 
\end{definition}
\begin{example}
    The category of involution algebroids in any tangent category $\C$ is an abstract category of involution algebroids using $(\mathsf{Inv}(C), \prol)$. The reflector is the functor sending an involution algebroid to its base space; the $T$-cartesian natural transformation is the anchor map. Any tangent subcategory of $\mathsf{Inv}(\C)$ that contains $\C$ as a full subcategory will give rise to an abstract category of involution algebroids.
\end{example}

\begin{proposition}
    Let $\Z \hookrightarrow \C$ be an abstract category of involution algebroids.
    Then there is an embedding $\C \hookrightarrow \mathsf{Inv}(\Z)$.
\end{proposition}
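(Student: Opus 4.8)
The plan is to transport each object of $\C$ across the localization using the Weil nerve identification of Theorem~\ref{thm:iso-of-cats-inv-emcs}. By the corollary to Leung's Theorem~\ref{thm:leung}, every object $A \in \C$ is the same datum as a strict tangent functor $T_{-}.A \colon \wone \to \C$, and $\C \cong \mathsf{TangCat}_{\mathsf{Strict}}(\wone, \C)$ with morphisms corresponding to tangent natural transformations. First I would post-compose each such functor with the left-exact, $T$-cartesian tangent localization $(L, \anc) \colon \C \to \Z$. Since $T_{-}.A$ is strict, Example~\ref{ex:composition-of-tangent-functors}(ii) shows the composite is the lax tangent functor
\[
    (L . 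T_{-}.A, \; \anc . T_{-}.A) \colon \wone \to \Z,
\]
whose underlying assignment is exactly the codescent functor $L_*$ restricted to strict tangent functors.

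The central step is to verify that each such composite is a $T$-cartesian Weil complex in the sense of Section~\ref{sec:identifying-involution-algebroids}. Transverse limits are handled directly: $T_{-}.A$ corresponds to the sketch action of $\wone$ evaluated at $A$, so it sends transverse limits in $\wone$ to $T$-limits in $\C$, and $L$ is left-exact, hence preserves these $T$-limits; the composite therefore sends transverse limits to $T$-limits in $\Z$. For the cartesian condition, note that the lax structure $2$-cell of the composite is the whiskering $\anc . T_{-}.A$, whose naturality square at a map $\phi \colon U \to V$ of $\wone$ is precisely the naturality square of $\anc$ at the morphism $T_{-}.A(\phi)$ of $\C$. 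As $\anc$ is $T$-cartesian by the definition of an abstract category of involution algebroids, every one of these squares is a $T$-pullback, so $\anc . T_{-}.A$ is $T$-cartesian; if one prefers to check only the projection $p$, Proposition~\ref{prop:mod-is-cart-if-p} upgrades this to the other generators. By Theorem~\ref{thm:iso-of-cats-inv-emcs} the composite thus lies in the replete image of the Weil nerve, i.e.\ it is the nerve of an involution algebroid in $\Z$. This defines the functor $\C \to \mathsf{Inv}(\Z)$ on objects, and on morphisms by applying $L_*$ and then the inverse of the Weil nerve equivalence.

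Full faithfulness --- what ``embedding'' requires --- then falls out of the codescent condition. Under the identifications $\mathsf{TangCat}_{\mathsf{Strict}}(\wone,\C) \cong \C$ and, via Theorem~\ref{thm:iso-of-cats-inv-emcs}, the $T$-cartesian Weil complexes sitting inside $\mathsf{TangCat}_{\mathsf{Lax}}(\wone,\Z)$ as $\mathsf{Inv}(\Z)$, the composite $L_*$ of the codescent condition \emph{is} the functor $\C \to \mathsf{Inv}(\Z)$ constructed above. Its full faithfulness is exactly the hypothesis, so the map $\C(A,B) \to \mathsf{Inv}(\Z)(\mathsf{N}_{\weil} A, \mathsf{N}_{\weil} B)$ is a bijection, giving the embedding.

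I expect the main obstacle to be packaging rather than computation: one must confirm that the target of the codescent composite is correctly cut down to the replete image of the Weil nerve (the $T$-cartesian Weil complexes), since only there does Theorem~\ref{thm:iso-of-cats-inv-emcs} supply the equivalence with $\mathsf{Inv}(\Z)$, and one must check that whiskering $\anc$ by the strict functor $T_{-}.A$ genuinely produces the lax coherence data of a tangent functor as in Definitions~\ref{def:tang-functor} and~\ref{def:actegory-natural}. The delicate bookkeeping is in matching the three notions of $2$-cell (strict, lax, and the natural transformations of the codescent diagram) so that full faithfulness of $L_*$ transfers cleanly to $\C \to \mathsf{Inv}(\Z)$.
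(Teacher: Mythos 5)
Your proposal is correct and follows essentially the same route as the paper's proof: regard each object of $\C$ as a strict tangent functor $\wone \to \C$, post-compose with the left-exact $T$-cartesian localization to obtain a $T$-cartesian Weil complex (hence, via Theorem \ref{thm:iso-of-cats-inv-emcs}, an involution algebroid in $\Z$), and deduce full faithfulness from the codescent condition. The only difference is that you spell out the verification of the $T$-cartesian and transverse-limit conditions in more detail than the paper, which states them in one line.
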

\begin{proof}
    The proof follows by treating objects in $\C$ as strict tangent functors $\wone \to \C$ and morphisms as tangent natural transformations.
    \[
\begin{tikzcd}
    \wone && \C & \Z
    \arrow["Z"{description}, from=1-3, to=1-4]
    \arrow[""{name=0, anchor=center, inner sep=0}, "{\prol(-,A)}"{description}, curve={height=-24pt}, from=1-1, to=1-3]
    \arrow[""{name=1, anchor=center, inner sep=0}, "{\prol(-,B)}"{description}, curve={height=24pt}, from=1-1, to=1-3]
    \arrow["{\prol(-,f)}"{description}, shorten <=6pt, shorten >=6pt, Rightarrow, from=0, to=1]
\end{tikzcd} \]
    The natural part of $Z$ is $T$-cartesian, and the functor part preserves limits, so $Z.\prol(-,A) =: Z[A]$ determines an involution algebroid in $\Z$, and $f$ a morphism of involution algebroids.
    The embedding is guaranteed by the codescent condition so that the post-composition functor is fully faithful.
\end{proof}
\begin{corollary}
    An abstract category of involution algebroids $\Z \hookrightarrow \C$ is exactly a full subcategory $\Z \hookrightarrow \C \hookrightarrow \mathsf{Inv}(\Z)$.
\end{corollary}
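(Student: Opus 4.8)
The statement is an equivalence, so the plan is to prove both implications, leaning on the preceding Proposition for the forward direction and on the closing remark of the preceding Example for the backward direction. First I would fix notation: an abstract category of involution algebroids $\Z \hookrightarrow \C$ comes equipped with a left-exact $T$-cartesian localization $Z : \C \to \Z$ (the reflector onto the full reflective subcategory $\Z$) satisfying the codescent condition. The inclusion $\Z \hookrightarrow \C$ is a full subcategory inclusion by hypothesis, so for the forward direction it remains to produce the full inclusion $\C \hookrightarrow \mathsf{Inv}(\Z)$ and to check that the two compose into a single chain.

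The embedding itself is exactly the preceding Proposition: viewing each object $A$ of $\C$ as the strict tangent functor $\prol(-,A) : \wone \to \C$ and each morphism as a tangent-natural transformation, the assignment $A \mapsto Z.\prol(-,A) =: Z[A]$ lands in $\mathsf{Inv}(\Z)$ because $Z$ is $T$-cartesian on the natural part and limit-preserving on the functor part, while full faithfulness is precisely the codescent hypothesis. The genuinely new point I would verify is that this embedding restricts on $\Z$ to the canonical inclusion $\Z \hookrightarrow \mathsf{Inv}(\Z)$ of Proposition \ref{prop:c-refl-in-inv}, so that we obtain a bona fide chain $\Z \hookrightarrow \C \hookrightarrow \mathsf{Inv}(\Z)$ rather than two unrelated embeddings. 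For an object $N$ of $\Z$ the reflector $Z$ fixes $N$ (being idempotent on its reflective subcategory) and $\prol(-,N)$ is the canonical tangent functor whose anchor is the identity; hence $Z[N]$ is the tangent involution algebroid on $N$, which is exactly the image of $N$ under the canonical full inclusion. Thus the triangle commutes and the forward direction is complete.

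For the backward direction I would take as given a chain of full subcategory inclusions $\Z \hookrightarrow \C \hookrightarrow \mathsf{Inv}(\Z)$ in which $\C$ is a tangent subcategory of $\mathsf{Inv}(\Z)$ containing the canonical copy of $\Z$. This is precisely the situation covered by the closing remark of the preceding Example: the reflector $Z : \C \to \Z$ is the restriction of the base-space functor $\mathsf{Inv}(\Z) \to \Z$, $A \mapsto A.\N$; the witnessing $T$-cartesian natural transformation is the anchor $\anc : \mathrm{id} \Rightarrow Z$; left-exactness is inherited because limits of involution algebroids are computed pointwise (Observation \ref{obs:inv-tmonad-anc}); and the codescent full-faithfulness for $\C$ is the restriction along the full inclusion of the codescent property of $\mathsf{Inv}(\Z)$ itself. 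Hence $\Z \hookrightarrow \C$ is an abstract category of involution algebroids, and the two notions coincide.

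The main obstacle is the compatibility check in the second paragraph: confirming that the embedding of the preceding Proposition carries the reflective subcategory $\Z$ onto the canonical copy of $\Z$ inside $\mathsf{Inv}(\Z)$, since without this the two full inclusions would not assemble into one chain. A secondary point requiring care is that the codescent condition for a general $\C$ is genuinely \emph{inherited} from (rather than merely analogous to) that of $\mathsf{Inv}(\Z)$ in the backward direction, which is exactly what fullness of the inclusion $\C \hookrightarrow \mathsf{Inv}(\Z)$ guarantees.
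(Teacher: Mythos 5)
Your proof is correct and follows exactly the route the paper intends: the forward direction is the preceding Proposition and the backward direction is the closing remark of the preceding Example, for which the paper itself supplies no further argument. The compatibility check that the embedding restricts on $\Z$ to the canonical inclusion of Proposition \ref{prop:c-refl-in-inv} is a detail the paper leaves implicit, and you handle it correctly.
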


\chapter{The infinitesimal nerve and its realization}%
\label{ch:inf-nerve-and-realization}

The main thrust of Chapters \ref{ch:differential_bundles}, \ref{ch:involution-algebroids}, and \ref{chap:weil-nerve} has been that the tangent categories framework allows for Lie algebroids to be regarded as tangent functors
\[
    \wone \to \mathsf{SMan} 
\] which satisfy certain universality conditions. 
This chapter, which is more experimental than the previous four chapters and represents work still in progress, puts Lie algebroids into the framework of \emph{enriched functorial semantics}. This new perspective on algebroids uses Garner's enriched perspective on tangent categories (\cite{Garner2018}) and the enriched theories paradigm from \cite{Bourke2019}. The functorial-semantics presentation of the Lie functor will generalize the Cartan--Lie theorem (that the category of Lie algebras is a coreflective subcategory of Lie groups) into a statement within the general theory of functorial semantics. 

The goal is to show that the infinitesimal approximation of a groupoid, as discussed in Example \ref{ex:lie-algebroids}, may be constructed as a \emph{nerve}, just like Kan's original simplical approximation of a topological space. The nerve of a functor $K:\a \to \C$ approximates objects and morphisms in $\C$ by $\a$-presheaves, so it sends an object in $\C$ to the $\a$-presheaf
\[
    N_K: \a \to \C; C \mapsto \C(K-, C). 
\]
Thus there will be an infinitesimal object,
\[
    \partial: \wone^{op} \to \mathsf{Gpd}(\w)   
\] where $\mathsf{Gpd}(\w)$ denotes groupoids in the category $\W$ of Weil spaces (formally defined in Section \ref{sec:tang-cats-enrichment}). The nerve of $\partial$ has a left adjoint, the \emph{Lie realization}, given by the left Kan extension (just as Kan's geometric approximation of a simplicial set is, in \cite{Kan1958}):
\begin{equation}\label{eq:lan-lie}
    \begin{tikzcd}
        {\wone^{op}} & {\mathsf{Gpd}(\w)} \\
        {[\wone,\w]}
        \arrow["\yon"', from=1-1, to=2-1]
        \arrow["\partial", from=1-1, to=1-2]
        \arrow["{Lan_\yon\partial}"', dashed, from=2-1, to=1-2]
    \end{tikzcd}
\end{equation}

\begin{restatable*}[The Lie Realization]{theorem}{lie}
    \label{thm:lie-realization}
    There is a tangent adjunction between the category of involution algebroids and groupoids in $\w$, where each functor preserves products and the base spaces.
\end{restatable*}
\[
\begin{tikzcd}
    {\mathsf{Gpd}(\w)} & {\mathsf{Inv}(\w)}
    \arrow[""{name=0, anchor=center, inner sep=0}, "{N_\partial}", curve={height=-12pt}, from=1-1, to=1-2]
    \arrow[""{name=1, anchor=center, inner sep=0}, "{|-|_\partial}", curve={height=-12pt}, from=1-2, to=1-1]
    \arrow["\dashv"{anchor=center, rotate=-90}, draw=none, from=0, to=1]
\end{tikzcd}\]

Note, however, that the left Kan extension in Equation \ref{eq:lan-lie} does not immediately give the desired adjunction of Theorem \ref{thm:lie-realization}, as there is no guarantee that the nerve functor $N_\partial$ lands in the category of algebroids. To prove this we must revisit the work in Section \ref{sec:weil-nerve} presenting involution algebroids as those functors $A:\wone \to \C$ for which each $A(V)$ is the prolongation of its underlying anchored bundle; this leads naturally to the formalism for enriched theories developed in \cite{Bourke2019}.

The presentation of algebroids as models of an enriched theory requires situating the categories of differential bundles, anchored bundles, and involution algebroids in $\w$ as full subcategories of $\w$-presheaf categories on small $\w$-categories. 
Section \ref{sec:tang-cats-enrichment} reviews the work in \cite{Garner2018} characterizing tangent categories as categories enriched in $\w = \mathsf{Mod}(\wone, \s)$ (the cofree tangent category on $\s$, by Observation \ref{obs:cofree-tangent-cat}).

Section \ref{sec:enriched-structures} reconfigures the content of Chapter \ref{ch:differential_bundles} using the enriched perspective, so that lifts are $\w$-functors from the $\w$-monoid $D + 1$, while differential bundles are a reflective subcategory of functors from its idempotent splitting. Similarly, the category of anchored bundles are a reflective subcategory of the category $[\wone^1,\C]$, where $\wone^1$ is the category of 1-truncated Weil algebras (Definition \ref{def:truncated-wone}).

Section \ref{sec:enriched-nerve-constructions} reviews the basic idea of an enriched nerve/approximation. A particularly important example is the linear approximation of a reflexive graph, the functor introduced in Example \ref{ex:prolongations}, which is the nerve of a functor
\[
    \partial: (\wone^1)^{op} \to \mathsf{Gph}(\w).
\]

Section \ref{sec:enriched-theories} applies the enriched theories framework introduced in \cite{Bourke2019}, where a \emph{dense} subcategory of a locally presentable $\a \hookrightarrow \C$ forms the ``arities'', and a bijective-on-objects functor $\a \to \th$ is the theory. The category of models is the pullback of (enriched) categories:
\[\begin{tikzcd}
	{\C^{\th}} & {[\th, \vv]} \\
	\C & {[\a^{op}, \vv]}
	\arrow[hook, from=2-1, to=2-2]
	\arrow[from=1-2, to=2-2]
	\arrow[from=1-1, to=2-1]
	\arrow[from=1-1, to=1-2]
\end{tikzcd}\]
The first step is to freely complete the $\w$-category $\wone^1$ of truncated Weil algebras (Definition \ref{def:truncated-wone}) so that its base anchored bundle has all prolongations; call this $\w$-category $\prol$. Then, in every tangent category $\C$, the category of anchored bundles with chosen prolongations (Definition \ref{def:monoidal-category}) in $\C$ embeds fully and faithfully into the functor category:
\[
    \mathsf{Anc}^\prol(\C) \hookrightarrow [\prol, \C]  
\]
(here, $\mathsf{Anc}^\prol(\C)$ denotes the category of anchored bundles with chosen prolongations).
In particular, the tangent bundle on $\N$ in $\wone$ determines a bijective-on-objects functor
\[
    \prol \to \wone 
\]
so that the category of involution algebroids in any tangent category $\C$ is the pullback in $\w$Cat:
\[\begin{tikzcd}
    {\mathsf{Inv}^*(\C)} & {[\wone,\C]} \\
    {\mathsf{Anc}^*(\C)} & {[{\prol}^{op},\C]}
    \arrow[hook, from=2-1, to=2-2]
    \arrow[from=1-2, to=2-2]
    \arrow[from=1-1, to=2-1]
    \arrow[hook, from=1-1, to=1-2]
    \arrow["\lrcorner"{anchor=center, pos=0.125}, draw=none, from=1-1, to=2-2]
\end{tikzcd}\]
This means that the category of involution algebroids in $\w$ is monadic over the category of anchored bundles in $\w$ using the monad-theory correspondence from \cite{Bourke2019}.

The final section looks at the category of $\w$-groupoids. Essentially, the free groupoid over the linear approximation of a graph will now give an infinitesimal object $\wone \to \mathsf{Gpd}(\w)$. The nerve of  $\partial: \wone^{op} \to \mathsf{Gpd}(\w)$---the \emph{infinitesimal approximation}---has a right adjoint via the \emph{realization} of the nerve functor from Definition \ref{def:realization}. This is used to prove the culminating Theorem \ref{thm:lie-realization}.

The individual pieces of categorical machinery used in this chapter are not new (the enriched perspective on tangent categories, enriched nerve constructions, enriched theories). However, all of the results dealing with the application of enriched nerve constructions and enriched theories to tangent categories is original work of the author.

\section{Tangent categories via enrichment}%
\label{sec:tang-cats-enrichment}

This section gives a quick introduction to Garner's enriched perspective on tangent categories. The enriched approach to tangent categories first appeared in \cite{Garner2018} and builds on the category perspective on tangent categories introduced in \cite{Leung2017}. Garner was able to exhibit some of the major results from synthetic differential geometry as pieces of enriched category theory; for example, the Yoneda lemma implies the existence of a well-adapted model of synthetic differential geometry.

The category of Weil spaces is the site of enrichment for tangent categories and is closely related to Dubuc's \emph{Weil topos} from his original work on models of synthetic differential geometry \cite{Dubuc1981}; a deeper study of this topos may be found in \cite{Bertram2014}. Recall that the category $\wone$ is the free tangent category over a single object. The category of Weil spaces is the \emph{cofree} tangent category over $\s$, which is the category of transverse-limit-preserving functors $\wone \to \s$ by Observation \ref{obs:cofree-tangent-cat}. Call this the category of \emph{Weil spaces}, and write it $\w$. Just as a simplicial set $S: \Delta \to \s$ is a gadget recording homotopical data, a Weil space records \emph{infinitesimal} data.

\begin{definition}%
    \label{def:weil-space}
    A \emph{Weil space} is a functor $\wone \to \s$ that preserves transverse limits (Definition \ref{def:transverse-limit}): that is, the $\ox$-closure of the set of limits
    \[
        \left\{
\begin{tikzcd}
    {T_{n+m}} & {T_m} \\
    {T_n} & \N
    \arrow[from=1-1, to=2-1]
    \arrow[from=1-2, to=2-2]
    \arrow[from=2-1, to=2-2]
    \arrow[from=1-1, to=1-2]
    \arrow["\lrcorner"{anchor=center, pos=0.125}, draw=none, from=1-1, to=2-2]
\end{tikzcd}
        ,
\begin{tikzcd}
    {T_2} & {T^2} \\
    \N & T
    \arrow[from=1-1, to=2-1]
    \arrow[from=1-2, to=2-2]
    \arrow["0"{description}, from=2-1, to=2-2]
    \arrow["\mu"{description}, from=1-1, to=1-2]
    \arrow["\lrcorner"{anchor=center, pos=0.125}, draw=none, from=1-1, to=2-2]
\end{tikzcd}
        ,
\begin{tikzcd}
    {T_n} & {T_n} \\
    {T_n} & {T_n}
    \arrow[Rightarrow, no head, from=1-1, to=1-2]
    \arrow[Rightarrow, no head, from=1-1, to=2-1]
    \arrow[Rightarrow, no head, from=1-2, to=2-2]
    \arrow[Rightarrow, no head, from=2-1, to=2-2]
\end{tikzcd}
        \right\}
    \]
    A morphism of Weil spaces is a natural transformation. Write the category of Weil spaces as $\w$.
\end{definition}
\begin{example}
    ~\begin{enumerate}[(i)]
        \item Every commutative monoid may be regarded as a Weil space canonically. Observe that for every $V \in \wone$ and commutative monoid $M$, one has the free $V$-module structure on $M$ given by $|V| \ox_{\mathsf{CMon}} M$ (here $|V|$ is the underlying commutative monoid of $V$). The commutative monoid $|V|$ is exactly $\N^{\mathsf{\dim V}}$, so that
        \[
            |V| \ox_{\mathsf{CMon}} M \cong \oplus^{\dim V} M.
        \]
        This agrees with the usual tangent structure on a category with biproducts.
        \item Following (i), any tangent category $\C$ that is \emph{concrete}---that is, admitting a faithful functor $U: \C \to \s$---will have a natural functor into Weil spaces (copresheaves on $\wone$). Every object $A$ will have an underlying Weil space $V \mapsto U_{\s}(T^V(A))$, and whenever $U$ preserves connected limits (such as the forgetful functor from commutative monoids to sets), each of the underlying copresheaves will be a Weil space.
        \item Consider a symmetric monoidal category with an infinitesimal object, which by Proposition \ref{prop:monoidal-functor-inf-obj} is a transverse-colimit-preserving symmetric monoidal functor $D:\wone \to \C$. Then for every object $X$, the nerve (Definition \ref{def:nerve-of-a-functor}) $N_D(X): \C(D-,X):\wone \to \s$ is a Weil space.
        \item For any pair of objects $A,B$ in a tangent category, $\C(A,T^{(-)}B):\wone \to \s$ is a Weil space by the continuity of $\C(B,-):\C \to \s$.
    \end{enumerate}
\end{example}
Unlike the category of simplicial sets, the category of Weil spaces is not a topos.
The category of Weil spaces does, however, inherit some nice properties from the topos of copresheaves on $\wone$ by applying results from \Cref{sec:enriched-nerve-constructions}, as it is \emph{locally presentable}. The basics of locally presentable categories can be found in the Appendix \ref{appendix}. Roughly speaking, a cocomplete category $\C$ has a subcategory of finitely presentable objects $\C_{fp}$, those $C$ so that
\[
    \C(C,-): \C \to \s  
\]
preserves \emph{filtered} colimits (i.e. those colimit diagrams that commute with all finite limits in $\s$). $\C$ is locally finitely presentable whenever every object is given by the coend
\[
    C \cong \int^{X \in \C_{fp}} \C(X, C) \cdot X
\]
where $S\cdot X$ is the (possibly infinite) product $X^{|S|}$ with $|S|$ the cardinality of the set $S$. This means that $\C = \mathsf{Lex}(\C_{fp}^{op}, \s)$, where $\mathsf{Lex}$ means the category of finite-limit-preserving functors. 

The third point of Corollary \ref{cor:properties-of-w} below, that $\w$ is locally finitely presentable as a cartesian monoidal category, means that the category $\w_{fp}$ is closed under products. This implies that $\w$ is locally presentable \emph{as a} $\w$-category (this ends up being an important technical condition whereby locally presentable $\w$-categories make sense). Whenever we discuss an arbitrary $\vv$-category, we will assume that $\vv$ is locally presentable as a monoidal category.

\begin{proposition}[\cite{Garner2018}]
    The category of Weil spaces is a cartesian-monoidal reflective subcategory of\, $[\wone,\s]$.
\end{proposition}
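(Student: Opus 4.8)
The plan is to realize $\w$ as a \emph{small-orthogonality class} inside the copresheaf topos $[\wone,\s]$ and then invoke the reflectivity of such classes in a locally presentable category; the cartesian-monoidal refinement will come from the fact that $\ox$ is the coproduct in $\wone$.

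First I would record that $[\wone,\s]$ is a copresheaf category on the small category $\wone$, hence a Grothendieck topos and in particular locally finitely presentable and cartesian closed, with all limits and colimits computed pointwise in $\s$. By Definition \ref{def:weil-space}, $\w$ is the full subcategory of those $F\colon\wone\to\s$ that send each transverse square (Definition \ref{def:transverse-limit}) to a pullback in $\s$. To express this by orthogonality, I would use the covariant Yoneda isomorphism $[\wone,\s](\wone(V,-),F)\cong F(V)$: for a transverse pullback square $\delta$ with cospan diagram $D_\delta\colon J\to\wone$ and apex $L_\delta=\lim D_\delta$, the limit cone induces a canonical comparison map
\[
    \phi_\delta\colon \operatorname{colim}_{j\in J}\wone(D_\delta j,-)\longrightarrow \wone(L_\delta,-),
\]
and $F$ preserves the limit of $\delta$ precisely when $[\wone,\s](-,F)$ inverts $\phi_\delta$, i.e.\ when $F$ is orthogonal to $\phi_\delta$. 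Thus $\w=\Sigma^{\perp}$ for $\Sigma=\{\phi_\delta:\delta\text{ transverse}\}$. Since the transverse squares are the $\ox$-closure of three generating squares (Definition \ref{def:transverse-limit}) and $\wone$ is small, $\Sigma$ is a \emph{set}; the orthogonal subcategory problem for a set of maps in a locally presentable category has a solution, so the inclusion $\w\hookrightarrow[\wone,\s]$ admits a left adjoint $L$. This establishes reflectivity.

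For the cartesian-monoidal claim I would first check that the ambient cartesian structure restricts: products in $[\wone,\s]$ are pointwise, and finite products in $\s$ commute with the transverse pullbacks, so if $F,G\in\w$ then $(F\times G)(L_\delta)\cong F(L_\delta)\times G(L_\delta)$ is again the corresponding pullback; likewise the constant functor $\Delta 1$ preserves all limits. Hence $\w$ is closed under finite products and contains the terminal object, so the inclusion is a cartesian functor. The finer point — that $L$ itself preserves finite products, exhibiting $\w$ as a cartesian-monoidal reflective subcategory — I would deduce from the key structural fact that $\ox$ is the \emph{coproduct} of $\wone$ (as noted just after Definition \ref{def:Weil-algebra}), which gives $\wone(U,-)\times\wone(V,-)\cong\wone(U\ox V,-)$. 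Combined with the distributivity of products over colimits in the topos $[\wone,\s]$, this yields $\phi_\delta\times\phi_{\delta'}\cong\phi_{\delta\ox\delta'}$; since the transverse squares are closed under $\ox$, the localizing set $\Sigma$ is closed under binary products. A localization at a product-closed set of maps in a cartesian closed locally presentable category is cartesian (equivalently, $\w$ is an exponential ideal and $L$ preserves finite products), which is exactly the assertion.

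I expect the main obstacle to be this last step. Reflectivity is routine once $\w$ is presented as an orthogonality class, but the product-preservation of the reflector is where the special arithmetic of $\wone$ must be used: it is precisely the identification $\wone(U,-)\times\wone(V,-)\cong\wone(U\ox V,-)$ together with the $\ox$-closure of the transverse squares that forces $L$ to be monoidal for the cartesian structure. Verifying the comparison $\phi_\delta\times\phi_{\delta'}\cong\phi_{\delta\ox\delta'}$ in detail — tracking the finite colimits of representables through the product — is the one genuinely computational ingredient, and care is needed because $\w$ is not a topos, so the reflection is not left exact and one cannot simply appeal to left-exact localization.
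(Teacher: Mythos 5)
The thesis states this proposition as a citation to \cite{Garner2018} and supplies no proof of its own, so there is no internal argument to compare against; judged on its own terms, your proof is essentially correct and follows the standard (and, as far as I know, Garner's) route: present $\w$ as the class of objects orthogonal to a small set $\Sigma$ of comparison maps $\phi_\delta$ between finite colimits of covariant representables, invoke the reflectivity of small-orthogonality classes in a locally presentable category, and then upgrade to a cartesian reflection via Day's reflection theorem using the identification $\wone(U,-)\times\wone(V,-)\cong\wone(U\ox V,-)$ coming from $\ox$ being the coproduct of $\wone$.

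The one step I would tighten is the final appeal to ``a localization at a product-closed set of maps is cartesian.'' That is not a correct general principle: closure of $\Sigma$ under binary products of its \emph{own} elements does not by itself imply that the local objects form an exponential ideal. What Day's theorem requires is that local objects be orthogonal to $\phi_\delta\times 1_X$ for \emph{every} object $X$ of $[\wone,\s]$. This does follow here, but by a slightly different route: write $X$ as a colimit of representables, use that $\mathrm{Hom}(-,F)$ carries colimits of arrows to limits of bijections to reduce to $X=\wone(V,-)$, and then observe that $\phi_\delta\times 1_{\wone(V,-)}\cong\phi_{\delta\ox V}$ lies in $\Sigma$ because the transverse squares are by definition closed under $-\ox V$ for arbitrary $V$. (Your product-closure argument happens to recover this only because the identity squares on the $T_n$, and hence on all objects of $\wone$ after $\ox$-closure, are themselves transverse, so $1_{\wone(V,-)}$ is essentially a member of $\Sigma$; it is cleaner, and closer to where the ``special arithmetic of $\wone$'' genuinely enters, to use the $-\ox V$ stability directly.) Your closing caution that the reflection is not left exact, so one cannot shortcut via a left-exact localization, is well placed.
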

\begin{corollary}%
    \label{cor:properties-of-w}
    The category of Weil spaces is
    \begin{enumerate}[(i)]
        \item a cartesian closed category;
        \item a representable tangent category, where the infinitesimal object is given by the restricted Yoneda embedding $\yon: \wone^{op} \to \w$;
        \item Locally finitely presentable as a cartesian monoidal category.
    \end{enumerate}
\end{corollary}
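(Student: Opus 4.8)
The plan is to derive all three statements from the preceding Proposition, that $\w$ is a cartesian-monoidal reflective subcategory of the copresheaf category $[\wone,\s]$, together with standard facts about reflective subcategories, representable tangent structure, and locally presentable categories. Write $L \dashv I$ for the reflection, with $I\colon \w \hookrightarrow [\wone,\s]$ the inclusion; here ``cartesian-monoidal'' means precisely that the reflector $L$ preserves finite products. For (i) I would invoke Day's reflection theorem: the functor category $[\wone,\s]$ is a Grothendieck topos and hence cartesian closed, and a reflective subcategory of a cartesian closed category is again cartesian closed — in fact an exponential ideal, with exponentials computed as in the ambient category — exactly when its reflector preserves finite products. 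Since $L$ does so by hypothesis, $\w$ is cartesian closed.

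For (ii), the tangent structure on $\w$ is the cofree tangent structure of Observation \ref{obs:cofree-tangent-cat}, under which $\w = \mathsf{Mod}(\wone,\s)$ is the cofree tangent category on $\s$. To see this structure is \emph{representable}, first note that each representable $\wone(V,-)$ preserves all limits, in particular transverse limits, so the Yoneda embedding corestricts to $\yon\colon \wone^{op} \to \w$. Because $\ox$ is the coproduct of $\wone$ and $\N$ is its unit (and is initial), there are natural isomorphisms $\wone(U \ox V,-) \cong \wone(U,-) \x \wone(V,-)$ and $\wone(\N,-) \cong 1$, so that $\yon$ is a strict symmetric monoidal functor $(\wone^{op},\ox) \to (\w,\x)$ — exactly the data of an infinitesimal object by Proposition \ref{prop:monoidal-functor-inf-obj}, which by Proposition \ref{prop:inf-object-tangent-structures} induces a representable tangent structure with $T = [\yon(W),-]$. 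The key computation, carried out via the Yoneda lemma and the coproduct isomorphism above, is
\[
    X^{\yon(V)}(U) \;\cong\; \mathrm{Nat}\bigl(\wone(U,-)\x\wone(V,-),\,X\bigr) \;\cong\; \mathrm{Nat}\bigl(\wone(U\ox V,-),\,X\bigr) \;\cong\; X(U \ox V),
\]
which identifies $[\yon(W),-]$ on the nose with the cofree tangent endofunctor $X \mapsto X(W \ox -)$.

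For (iii), local finite presentability follows because $\w$ is the category of models in $\s$ of a finite-limit sketch, namely the transverse-limit cones of Definition \ref{def:transverse-limit}; equivalently, $\w$ is closed in $[\wone,\s]$ under filtered colimits (filtered colimits commute with finite limits in $\s$), so the reflective inclusion is accessible, and an accessible reflective subcategory of a locally finitely presentable category is itself locally finitely presentable. To upgrade this to local presentability \emph{as a cartesian monoidal category}, I would exhibit the representables $\yon(V)$ as a strong, dense generating family of finitely presentable objects closed under finite products, using $\yon(U)\x\yon(V) \cong \yon(U\ox V)$ together with $1 \cong \yon(\N)$; the closure of such a dense family of finitely presentable objects under $\x$, with the unit among them, is precisely the condition required.

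I expect the main obstacle to lie in (ii): one must verify that the representable tangent structure manufactured from the infinitesimal object $\yon$ coincides with the cofree structure on $\mathsf{Mod}(\wone,\s)$, rather than being merely \emph{some} tangent structure. The displayed identification of the two underlying endofunctors is the crux, and the remaining coherences — that $\ell$, $c$, and the additive-bundle maps $p,0,+$ agree — reduce, again by the Yoneda lemma, to the defining relations among the generators $\{p,0,+,\ell,c\}$ of $\wone$ (Proposition \ref{thm:leung}), which hold by construction. Parts (i) and (iii) are, by contrast, essentially bookkeeping once the Proposition supplies the cartesian-monoidal reflection.
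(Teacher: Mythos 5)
Your proposal is correct and follows essentially the same route as the paper: everything is deduced from the cartesian-monoidal reflective embedding $\w \hookrightarrow [\wone,\s]$ (Day's reflection theorem for cartesian closure, the sketch/reflective-subcategory characterizations of Proposition \ref{prop:lfp-defs} for local presentability), and your displayed Yoneda computation $X^{\yon(V)}(U) \cong X(U \ox V)$ is precisely the calculation the paper gives immediately after the corollary to identify the representable tangent structure with the cofree one. The only difference is that you spell out more of the bookkeeping (the exponential-ideal argument and the closure of $\w_{fp}$ under products via $\yon(U)\x\yon(V)\cong\yon(U\ox V)$) that the paper leaves implicit or defers to \cite{Garner2018}.
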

The cofree tangent structure on $\w$ is given by precomposition, that is:
\[
    T^U.M.(V) = M.(U \ox V) = M.T^U.V.
\]
This tangent coincides with the representable tangent structure induced by the Yoneda embedding. The proof is an application of the Yoneda lemma. Observe that
\[
    [D,M](V) \cong [\wone,\s](D \x \yon(V), M) \cong [\wone, \s](\yon(WV), M) \cong M(WV)
\]
where $D \x \yon(V) = \yon(WV)$ follows because the tensor product in $\wone$ is cocartesian and the reflector is cartesian monoidal.
\begin{notation}
    The tangent category $\w$ is a representable tangent category, where $D = \yon W$ (Definition \ref{def:inf-object}). 
    We will write the Yoneda functor $\yon: \wone^{op} \to \w$ as $D(-)$, so it is closer to the usual notation used in representable tangent categories or synthetic differential geometry (a single $D$ may be used as shorthand for $D(W)$, $D(n)$ for $D(W_n)$, etc.).
    Note that
    \[
        D(V) = D(\ox^k W_{n(i)}) = \prod^K D(n_i)  
    \]
    and that $D(\ell) = \otimes, D(c) = (\pi_1, \pi_0)$, $D(+) = \delta$, and so on.
\end{notation}

At this point, we are ready to move to the enriched perspective on tangent categories. The basics of enriched category theory may be found in the Appendix \ref{appendix}, but one definition in particular is important to include here.
\begin{definition}\label{def:power}
    Let\, $\C$ be a $\vv$-category for $\vv$ a closed symmetric monoidal category. For $J \in \vv$, the \emph{power} by $J$ of an object $C \in \C$ is an object $C^J$ so that the following is an isomorphism:
    \[
        \forall D: \vv(J, \C(D,C)) \cong \C(D,C^J)
    \]
    whereas the \emph{copower} is given by
    \[
        \forall D: \vv(J, \C(C,D)) \cong \C(J \bullet C, D).
    \]
    Let $\j \hookleftarrow \vv$ be a full monoidal subcategory of $\vv$. A $\vv$-category\, $\C$ has \emph{coherently chosen powers} by $\j$ if there is a choice of $\j$-powers\, $(-)^J$ so that
    \[
        (C^J)^K = C^{J \ox K}.
    \]
    Likewise, \emph{coherently chosen copowers} are a choice of $\j$-copowers so that
    \[
        K \bullet (J \bullet C) = (K \ox J) \bullet C.
    \]
    The sub-2-categories of $\vv$-categories equipped with coherently chosen powers and copowers are $\vv\cat^\j$ and $\vv\cat_\j$, respectively.
\end{definition}

\cite{Wood1978} proved that the 2-category of actegories over a monoidal $\C$ is equivalent to the 2-category of $[\C,\s]$-enriched categories with powers by representable functors (Definition \ref{def:power}), using the monoidal structure on $[\C, \s]$ induced by Day convolution\footnote{
    The \emph{Day convolution} tensor product of presheaves $X,Y: \C^{op} \to \s$ is given by $X \widehat\ox Y := \mathsf{Lan}_{\ox}(X \boxtimes Y)$, where $(X \x Y)(V) =X(V) \x Y(V)$ (\cite{Day1970}).
    }. 
Moreover, \cite{Garner2018} showed that a monoidal reflective subcategory $\vv \hookrightarrow \hat \a$ exhibits the 2-category of $\vv$-categories as a reflective sub-2-category of $\a$-categories; this proves that tangent categories are equivalent to a particular class of enriched category.
\begin{proposition}[\cite{Garner2018}]
    A tangent category is exactly a $\w$-category with powers by representables.
\end{proposition}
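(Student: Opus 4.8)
The statement to prove is that a tangent category is exactly a $\w$-category with powers by representables.

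The plan is to deduce this from two ingredients already assembled in the excerpt: Leung's theorem (Theorem \ref{thm:leung} and the surrounding corollaries) presenting tangent structure as a sketch action $\propto: \wone \x \C \to \C$, and Wood's theorem that the $2$-category of $\m$-actegories is equivalent to the $2$-category of $[\m, \s]$-enriched categories with powers by representables. First I would set $\m = \wone$ and recall that by Leung's Theorem \ref{thm:leung} a tangent structure on $\C$ is precisely a sketch action $\propto: \wone \x \C \to \C$, i.e. an actegory structure whose underlying action preserves transverse limits (Definition \ref{def:transverse-limit}) in the $\wone$-variable. Applying \cite{Wood1978}, such an action corresponds to a $[\wone, \s]$-enriched category structure on $\C$ equipped with powers by representable functors $\yon(V)$, where the monoidal structure on $[\wone,\s]$ is Day convolution. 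This is the ``unrestricted'' version of the statement, living over the presheaf category $\hat\a = [\wone,\s]$ rather than over $\w$.

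The key step is then to descend along the monoidal reflective inclusion $\w \hookrightarrow [\wone, \s]$ established in the cited Proposition of \cite{Garner2018} (the one immediately preceding, stating $\w$ is a cartesian-monoidal reflective subcategory of $[\wone,\s]$). The plan is to invoke Garner's general result that a monoidal reflective subcategory $\vv \hookrightarrow \hat\a$ exhibits $\vv\cat$ as a reflective sub-$2$-category of $\hat\a$-categories, and to identify the image of this reflection with exactly the actegories whose action preserves transverse limits. Concretely, I would argue that a power by representables in the $[\wone,\s]$-enrichment corresponds, under the reflection, to a power by the \emph{Weil space} $\yon(V) = D(V)$ in the $\w$-enrichment, using that the reflector is cartesian monoidal and that $D(V)$ is the reflection of the representable $\yon(V)$. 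The transverse-limit-preservation condition that cuts sketch actions out of all actegories is precisely the condition that the underlying presheaves $\C(A, T^{(-)}B)$ land in $\w$ rather than in all of $[\wone,\s]$; this is what makes the enrichment a genuine $\w$-enrichment.

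The main obstacle I expect is the careful bookkeeping of the correspondence between powers and the action, and in particular verifying that ``powers by representables'' in the $\w$-enriched sense is the correct reflected notion. The subtlety is that $\w$ is not a presheaf category, so one cannot directly speak of representables in $\w$; rather, the representables $\yon(V)$ become the reflected objects $D(V)$, and one must check that coherently chosen powers by the $D(V)$ (in the sense of Definition \ref{def:power}, with $\j$ the full subcategory on the $D(V)$) match up on the nose with the coherent action $T^U T^V = T^{UV}$ coming from the monoidal structure of $\wone$. Once the dictionary $T^V A = A^{D(V)}$ is set up, the tangent-categorical axioms (the universality conditions of Definition \ref{def:tangent-cat}) translate into the requirement that these powers be compatible with the transverse limits, and this is exactly what the reflection onto $\w$ encodes. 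I would present the proof as the composite equivalence $\mathsf{TangCat} \simeq \wone\mathsf{Act}_{\mathrm{sketch}} \simeq (\text{$\w$-cats with powers by representables})$, with the two arrows being Leung's theorem and the combination of Wood's and Garner's theorems respectively.
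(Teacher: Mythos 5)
Your proof is essentially correct, but it takes a genuinely different route from the one the paper writes down. The paper's proof is a direct construction: it defines the hom-objects explicitly as $\underline{\C}(A,B) := U \mapsto \C(A,T^U B)$, checks these are Weil spaces using the continuity of $\C(A,-)$ and the infinitesimal linearity of $T^{(-)}B$, builds composition by whiskering with $T^V$, and then reads off the coherently chosen powers from the Yoneda identification $\C(A,T^VB) \cong [D(V),\C(A,B)]$. You instead assemble the statement abstractly as the composite $\mathsf{TangCat} \simeq \wone\text{-sketch actions} \simeq \w$-categories with powers by representables, via Leung's theorem, Wood's actegory/enrichment correspondence over $[\wone,\s]$ with Day convolution, and Garner's result that a monoidal reflective subcategory $\vv \hookrightarrow \hat{\a}$ reflects $\hat{\a}$-categories onto $\vv$-categories. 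This is precisely the argument the paper gestures at in the paragraphs preceding the proposition before opting for the concrete verification, so it is a legitimate alternative; what it buys is conceptual transparency and an immediate upgrade to the 2-categorical statement (Theorem \ref{thm:2-cat-tangcat}), at the cost of invoking heavier machinery and of a transport-of-structure check that the paper's direct proof avoids. One small correction to your final paragraph: no reflection of the representables is actually needed, since each $\yon(V) = \wone(V,-)$ is already continuous and hence already a Weil space, so $D(V)$ \emph{is} the representable rather than its reflection; the genuine content of the descent step is only that the hom-presheaves $\C(A,T^{(-)}B)$ land in $\w$, which, as you correctly identify, is equivalent (by representable detection of limits) to the sketch-action condition. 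The remaining routine check you should make explicit is that a power by $\yon(V)$ computed in the $[\wone,\s]$-enrichment is still a power in the $\w$-enrichment; this follows because the inclusion $\w \hookrightarrow [\wone,\s]$ is full and the internal hom of $\w$ is computed as in copresheaves.
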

\begin{proof}
    For every $A, B \in \C_0$, the Weil space is defined as
    \[
        \underline{\C}(A,B) := U \mapsto \C(A, T^{U}B).
    \]
    The functor $\C(A,-)$ is continuous and $T^{-}B$ is an infinitesimally linear functor, so this is a Weil space. The following diagram gives the composition map :
\[\begin{tikzcd}
	{\underline\C(B,C) \x \underline\C(A,B)} && {\underline\C(A,C)} \\
	{\C(B, T^{U}C) \x \C(A, T^{V}B)} && {\C(A, T^{VU}C)} \\
	& {\C(T^VB, T^{V}.T^{U}C) \x \C(A, T^{V}B)}
	\arrow[Rightarrow, no head, from=1-1, to=2-1]
	\arrow["{T^V \x id}"', from=2-1, to=3-2]
	\arrow["m"', from=3-2, to=2-3]
	\arrow[Rightarrow, no head, from=2-3, to=1-3]
	\arrow[dashed, from=1-1, to=1-3]
\end{tikzcd}\]
    Note that it is natural in $U$ and $V$.

    By the Yoneda lemma (as the internal hom in $\w$ is the internal hom of copresheaves on $\wone$), \[\C(A, T^VB) =  (U \mapsto \C(A, B)(V \ox U)) = [D(V), \C(A,B)]\]
    so this category has coherently chosen powers by representable functors.
\end{proof}

Now the original notions of (lax, strong, strict) tangent functors can be shown to correspond to power-preservation properties of $\w$-functors between $\w$-categories with coherently chosen powers:
\begin{theorem}[\cite{Garner2018}]\label{thm:2-cat-tangcat}
    We have the following equivalences of 2-categories:
    \begin{enumerate}[(i)]
        \item the 2-category of $\w$-categories with coherently chosen powers and $\mathsf{TangCat}_{\mathsf{Lax}}$;
        \item the 2-category of $\w$-categories with coherently chosen powers and power-preserving $\w$ functors  and $\mathsf{TangCat}_{\mathsf{Strong}}$;
        \item The 2-category of $\w$-categories with coherently chosen powers and chosen-power-preserving $\w$ functors and $\mathsf{TangCat}_{\mathsf{Strict}}$.
    \end{enumerate}
\end{theorem}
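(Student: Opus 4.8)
The final statement is Theorem \ref{thm:2-cat-tangcat}, an equivalence of 2-categories between $\w$-categories with coherently chosen powers (with three flavors of power-preserving $\w$-functors) and the three flavors of $\mathsf{TangCat}$. The plan is to build this on top of the two structural results already established: Leung's characterization of tangent categories as $\wone$-sketch-actegories (the equivalence of 2-categories from Theorem 14.1 of \cite{Leung2017}, recorded here with its three morphism-flavors), and the preceding proposition identifying a tangent category with a $\w$-category equipped with powers by representables. So the strategy is a transitivity argument: I would realize the equivalence as a composite of two known 2-equivalences, the nontrivial new content being the matching of the \emph{morphism} notions (lax/strong/strict versus power-preserving up to iso / chosen-power-preserving) and the \emph{2-cell} notions.

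First I would set up the object-level correspondence, which is essentially the content of the preceding proposition: given a tangent category $(\C,\mathbb{T})$, the enriched hom $\underline{\C}(A,B) := (U \mapsto \C(A,T^UB))$ is a Weil space, composition is as displayed, and the Yoneda lemma gives $\C(A,T^VB) \cong [D(V),\underline{\C}(A,B)]$, exhibiting coherently chosen powers $B^{D(V)} = T^VB$. Conversely a $\w$-category with coherently chosen powers by representables recovers a tangent functor $T = (-)^{D(W)}$ with the structure maps induced by the images under the (contravariant) Yoneda functor $D(-)$ of the generators $p,0,+,\ell,c$ of $\wone$; here I would lean on Leung's Proposition \ref{thm:leung} that these five maps plus transverse limits generate $\wone$, and on the stated formulas $D(\ell)=\otimes$, $D(c)=(\pi_1,\pi_0)$, $D(+)=\delta$, to check that the tangent-category axioms [TC.1]--[TC.3] transport to the enriched coherence data. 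The key bridge is that the Day-convolution monoidal structure on $[\wone,\s]$ restricts to $\w$ and that, by Wood's theorem (\cite{Wood1978}) together with Garner's reflection of $\vv$-categories inside $\hat\a$-categories, powers by representables correspond exactly to the $\wone$-action, so this object-level step is really just assembling \cite{Garner2018}, \cite{Wood1978}, and Leung's Theorem 14.1.

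Next I would handle the three morphism layers in parallel, since the underlying object-map is the same in all three and only the treatment of the comparison 2-cell $\alpha : F.T \Rightarrow T'.F$ changes. The dictionary I would prove is: a lax tangent functor (arbitrary $\alpha$) $\longleftrightarrow$ a $\w$-functor that preserves powers only laxly, i.e.\ carries the canonical comparison $F(B^{D(V)}) \to (FB)^{D(V)}$ with no invertibility; a strong tangent functor ($\alpha$ invertible) $\longleftrightarrow$ a $\w$-functor preserving powers \emph{up to isomorphism}; a strict tangent functor ($\alpha = \mathrm{id}$, as for sketch-action-preserving strict functors) $\longleftrightarrow$ a $\w$-functor preserving the \emph{chosen} powers on the nose. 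The content here is to show that the enriched power-comparison map and the tangent-structure comparison $\alpha$ are identified under the object-level dictionary, so that invertibility (resp.\ identity) of one is invertibility (resp.\ identity) of the other; this is again mediated by Leung's result through the generators, since $\alpha$ at $W$ determines $\alpha$ everywhere on $\wone$ by functoriality and transverse-limit preservation. For the 2-cells I would verify that an actegory-natural transformation in the sense of Definition \ref{def:actegory-natural} (equivalently a tangent natural transformation, Definition \ref{def:tang-nat}) corresponds precisely to a $\w$-enriched natural transformation, the compatibility square with $\alpha,\beta$ becoming the enriched naturality condition. I expect the main obstacle to be exactly this morphism-and-2-cell bookkeeping: carefully matching the lax comparison cell of an enriched power-preserving functor with the tangent-comparison $\alpha$ across all three strictness levels simultaneously, and confirming that the three inclusions are locally fully faithful (differing only in 1-cells) on both sides so that the equivalences are compatible. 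Once that dictionary is pinned down, all three statements (i)--(iii) follow at once, and I would close by noting that the equivalence is the advertised composite of the Leung equivalence with the Wood--Garner reflection, invoking the relevant uniqueness of powers to see it is genuinely an equivalence rather than merely an adjunction.
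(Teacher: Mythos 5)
Your proposal is correct and takes essentially the approach the paper intends: the paper does not actually prove this theorem (it is deferred to \cite{Garner2018}), and its only proof content is the remark immediately after the statement identifying the tangent comparison $\alpha$ with the canonical power-comparison $F(B^{D(V)}) \to (FB)^{D(V)}$, which is exactly the ``key bridge'' of your morphism-level dictionary. Your assembly of Leung's Theorem 14.1, Wood's theorem, and the monoidal reflection is the same scaffolding the surrounding text sets up, so you have simply supplied the bookkeeping the paper delegates to the reference.
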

Note that for a lax tangent functor $(F,\alpha): \C \to \D$, the map
\[
    \alpha.X: F.T.X \to T.F.X
\] can be seen as the unique morphism induced by universality. Conversely, for a strong tangent functor, the natural isomorphism $\alpha$ is the isomorphism from \emph{a} power $F.T.X$ to the \emph{coherently chosen} power $T.F.X$. In contrast, a strict tangent functor preserves the coherent choice of powers.

The ability to work with $\w$-categories that do not have powers by representables allows for significant flexibility. It is useful to observe that there are $\w$-categories which are not tangent categories.
\begin{example}%
    \label{ex:w-cats}
    ~\begin{enumerate}[(i)]
        \item Every monoid $(M, m, e)$ in $\w$ gives rise to a one-object $\w$-category whose hom-object is $M$, composition is $m$, and unit is $e$.
        \item Given a tangent category\, $\C$, it is possible to take the full\, $\w$-category over some set of objects\, $\D$, even though $D$ may not be closed under iterated applications of the tangent functor.
        \item The dual of a\, $\w$-category is a\, $\w$ category, where
        \[
            \C^{op}(A,B) = \C(B,A): \wone \to \s.    
        \]
        Dually, in the case that\, $\C$ is a tangent category, $\C^{op}$ will have coherent \emph{copowers} by representables.
        \item If a cartesian category\, $\C$ has an infinitesimal object $D$ (Definition \ref{def:inf-object}), it has a natural $\w$-category structure (with coherent \emph{copowers} by representables)
        \[
            \C(A,B):\wone \to \s := \underline{\C}(A \x D(-), B).
        \]
        Using the dual tangent structure on $\C^{op}$ from Proposition \ref{prop:inf-object-tangent-structures}, the enrichment on $\C$ is exactly the enrichment found by regarding $\C$ as the dual $\w$-category of the tangent category\, $\C^{op}$.
    \end{enumerate}
\end{example}

As a final remark, note that the Yoneda lemma applies to $\w$-categories, so there is an embedding
\[
    \C \hookrightarrow [\C^{op},\s].
\]
The powers and copowers by representables are computed pointwise in a presheaf category, so they inherit the coherent choice. Thus the following holds:
\begin{corollary}[\cite{Garner2018}]
    Every tangent category embeds into a $\w$-cocomplete representable tangent category.
\end{corollary}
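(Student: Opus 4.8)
The plan is to obtain the embedding from the enriched Yoneda functor, leaning on Garner's identification of tangent categories with $\w$-categories carrying coherently chosen powers by representables (the proposition preceding, and Theorem \ref{thm:2-cat-tangcat}). First I would fix a tangent category $\C$ and regard it as such a $\w$-category, then form the $\w$-enriched presheaf category $\widehat{\C} := [\C^{op}, \w]$ of $\w$-functors $\C^{op} \to \w$. By Corollary \ref{cor:properties-of-w}, $\w$ is locally finitely presentable as a cartesian monoidal category, hence complete and cocomplete as a $\w$-category; the standard theory of enriched presheaf categories then gives that $\widehat{\C}$ is $\w$-complete and $\w$-cocomplete, with all weighted (co)limits computed pointwise in $\w$. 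In particular $\widehat{\C}$ is $\w$-cocomplete, which is one half of what we need.

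Next I would equip $\widehat{\C}$ with tangent structure. Since $\w$ is a representable tangent category with infinitesimal object $D = \yon(W)$ (Corollary \ref{cor:properties-of-w}(ii)), it has powers by representables, and because powers in a functor $\w$-category are computed pointwise, $\widehat{\C}$ inherits powers by representables together with the coherence $(X^J)^K = X^{J \ox K}$ of Definition \ref{def:power}. By Garner's theorem this coherent choice of powers is exactly a representable tangent structure on $\widehat{\C}$, whose tangent functor is the power $(-)^{D}$ by the infinitesimal object. Thus $\widehat{\C}$ is a $\w$-cocomplete representable tangent category.

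Then I would verify that the enriched Yoneda embedding $\yon: \C \to \widehat{\C}$, $C \mapsto \C(-, C)$, is the required embedding. Fully faithfulness is the enriched Yoneda lemma. That $\yon$ respects the tangent structure follows because the defining isomorphism of a power yields $\yon(C^J)(-) = \C(-, C^J) \cong \C(-, C)^J = (\yon C)^J$, so $\yon$ preserves powers by representables; by Theorem \ref{thm:2-cat-tangcat}(ii) this exhibits $\yon$ as a (fully faithful) strong tangent functor landing in the $\w$-cocomplete representable tangent category $\widehat{\C}$, as claimed.

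The main obstacle is the bookkeeping of the second step: one must check that the pointwise powers of $\widehat{\C}$ genuinely assemble into a \emph{coherent} choice in the sense of Definition \ref{def:power} (in particular the identity $(X^J)^K = X^{J \ox K}$ holding on the nose, inherited from the corresponding coherence in $\w$), so that Garner's correspondence applies and returns an actual representable tangent structure rather than merely an unorganised family of powers. Everything else—the $\w$-cocompleteness of $\widehat{\C}$, fully faithfulness, and preservation of the tangent structure—is a direct appeal to enriched presheaf theory and the enriched Yoneda lemma.
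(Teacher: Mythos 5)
Your proof is correct and takes essentially the same route as the paper: the enriched Yoneda embedding $\C \hookrightarrow [\C^{op},\w]$, with powers by representables computed pointwise in the presheaf category so that the coherent choice (and hence the representable tangent structure) is inherited from $\w$. The paper compresses this into two sentences immediately preceding the corollary; your expansion, including the verification that $\yon$ preserves powers and is therefore a fully faithful strong tangent functor, supplies exactly the details left implicit there.
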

(In fact, showing that the Yoneda embedding applies to tangent categories is the main theorem of \cite{Garner2018}.)

\section{Differential and anchored bundles as enriched structures}%
\label{sec:enriched-structures}

This section gives an enriched-categorical reinterpretation of the work in Chapter \ref{ch:differential_bundles} regarding differential bundles and Chapters \ref{ch:involution-algebroids} and \ref{chap:weil-nerve} regarding anchored bundles.

\subsection*{Differential bundles as enriched structures}%
\label{sub:differential-bundles-as-enriched-structures}

As a first case study using the enriched perspective for tangent categories, consider differential bundles. Most of the work in \Cref{ch:differential_bundles} uses the intuition that differential bundles are some sort of tangent-categorical algebraic theory; this section will make that intuition concrete. Recall that a \emph{lift} (Definition \ref{def:lift}) is a map $\lambda:E \to TE$. Using the enriched perspective and treating $TE$ as a power (recall Definition \ref{def:power}), this gives the following correspondence:
\[
    \infer{\hat{\lambda}:D \to \C(E,E)}{\lambda: 1 \to \C(E, E^D)}
\]
The commutativity condition for $\hat{\lambda}$, then, is translated as follows:
\[
\begin{tikzcd}
	E & TE && {D \x D} & {\C(E,E) \x \C(E,E)} \\
	TE & {T^2E} && D & {\C(E,E)}
	\arrow["\odot", from=1-4, to=2-4]
	\arrow["{m_{E,E,E}}", from=1-5, to=2-5]
	\arrow["{\hat{\lambda} \x \hat{\lambda}}", from=1-4, to=1-5]
	\arrow["{\hat{\lambda}}"', from=2-4, to=2-5]
	\arrow["\lambda", from=1-1, to=1-2]
	\arrow["\lambda"', from=1-1, to=2-1]
	\arrow["{T.\lambda}", from=1-2, to=2-2]
	\arrow["\ell"', from=2-1, to=2-2]
\end{tikzcd}\]
That is, $\hat{\lambda}$ is a semigroup morphism $D \to \C(E,E)$. In any cartesian closed category with coproducts, a semigroup may be freely lifted to a monoid using the "exception monad" $(-) + 1$ from functional programming (see, for example, \cite{Seal2013}).
\begin{definition}%
    \label{def:lambda-monoid}
    Regard the following monoid as the one-object $\w$-category $\Lambda$:
    \[
        \infer{m: (D+1) \x (D+1) \to D+1}{D \x D + D + D + 1 \xrightarrow{(\iota_L\o m | \iota_L | \iota_L |\iota_R)} D + 1}
    \]
\end{definition}
Thus, a lift $\lambda$ is exactly a functor $\hat{\lambda}:\Lambda \to \C$. Now check that morphisms are tangent natural transformations. Note that the semigroup $D$ is commutative, so $\Lambda = \Lambda^{op}$; the choice of using $\Lambda^{op}$ in the next lemma is to be consistent with conventions used in \Cref{sec:enriched-nerve-constructions}.

\begin{lemma}%
    \label{lem:cat-of-lifts-iso}
    The category of lifts in a tangent category $\C$ is isomorphic to the category of $\w$-functors and $\w$-natural transformations\, $\Lambda^{op} \to \C$.
\end{lemma}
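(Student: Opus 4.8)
The plan is to establish an isomorphism of categories by exhibiting mutually inverse assignments on objects and morphisms, leveraging the enriched-categorical reinterpretation of a lift that was just set up. First I would make precise the correspondence on objects. Given an object $E$ in $\C$, a $\w$-functor $\Lambda^{op} \to \C$ consists of a choice of object (the single object of $\Lambda^{op}$ maps to some $E$) together with an action of the hom-monoid, namely a monoid morphism $D + 1 \to \underline{\C}(E,E)$ in $\w$. By the universal property of the free-monoid-on-a-semigroup construction (the ``exception monad'' $(-)+1$), such a monoid morphism is in bijection with a semigroup morphism $D \to \underline{\C}(E,E)$, the unit being forced to the identity. Using the power adjunction $\underline{\C}(E, E^D) \cong [\,D, \underline{\C}(E,E)\,]$ and the Yoneda-style identification of $[D,-]$ as the tangent functor on the $\w$-category $\C$, I would show that such a semigroup morphism $\hat\lambda: D \to \underline{\C}(E,E)$ is precisely a map $\lambda: E \to TE$ whose semigroup-multiplicativity is exactly the coalgebra coherence square $T.\lambda \circ \lambda = \ell.E \circ \lambda$ of Definition \ref{def:lift}. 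This last identification is essentially transcribing the diagram displayed just before Definition \ref{def:lambda-monoid} and reading it in reverse.

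Next I would handle morphisms. A $\w$-natural transformation between $\w$-functors $\Lambda^{op} \to \C$ is, since $\Lambda^{op}$ has a single object, a single component $f: E \to D$ in $\C$ satisfying the naturality hexagon that records compatibility with the hom-actions of $\Lambda^{op}$. I would unwind this condition using the power adjunction and show it is exactly the commuting square defining a coalgebra (lift) morphism in Definition \ref{def:lift}, i.e. $\gamma \circ f = T.f \circ \lambda$. The two assignments — lift $\mapsto$ $\w$-functor and $\w$-functor $\mapsto$ lift — are then visibly inverse to one another, since each is obtained by transposing along the same adjunction isomorphisms, and transposition is invertible; functoriality and compatibility with composition and identities follow because $\w$-functoriality composition is defined via the monoid multiplication $m$, which corresponds under the correspondence to ordinary composition in $\C$.

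The main obstacle I expect is bookkeeping around the enrichment rather than any deep difficulty: one must verify carefully that the semigroup-multiplicativity of $\hat\lambda$ translates to \emph{exactly} the associative-coalgebra square and not some weaker or stronger condition, and that the passage through the free-monoid construction $(-)+1$ does not introduce spurious unit conditions. Concretely, the unit $1 \to D+1$ of the monoid $\Lambda$ must map to the identity endomorphism of $E$ under any $\w$-functor, and I would need to confirm this imposes no constraint beyond unitality of the $\w$-functor (it does not). A secondary care-point is that $D$ is a \emph{commutative} semigroup, so $\Lambda \cong \Lambda^{op}$; I would remark that the choice of $\Lambda^{op}$ is purely conventional (to align with the nerve conventions of Section \ref{sec:enriched-nerve-constructions}) and carries no mathematical content here. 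With these verifications in place, the bijection on objects and morphisms, together with its respect for composition, yields the claimed isomorphism of categories.
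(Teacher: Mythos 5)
Your proposal is correct and follows essentially the same route as the paper: the object-level correspondence (monoid morphisms $D+1 \to \underline{\C}(E,E)$ $\leftrightarrow$ semigroup morphisms $D \to \underline{\C}(E,E)$ $\leftrightarrow$ coassociative lifts, via the free unitalization and the power adjunction) is exactly the discussion the paper places just before the lemma, and your unwinding of the $\w$-naturality square into the lift-morphism condition is precisely the content of the paper's written proof. Your side remarks about the unit imposing no extra constraint and about $\Lambda \cong \Lambda^{op}$ also mirror observations the paper makes explicitly.
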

\begin{proof}
    Check that a $\w$-natural transformation is exactly a morphism of lifts $f: \lambda \to \lambda'$. Start with the $\w$-naturality square:
    \[
\begin{tikzcd}
    {D+1} & {\C(A,A)\x \C(A,B)} \\
    {\C(A,B)\x \C(B,B)} & {\C(A,B)}
    \arrow["{(\lambda, f \o !)}", from=1-1, to=1-2]
    \arrow["{(f \o !, l)}"', from=1-1, to=2-1]
    \arrow["{m_{ABB}}"', from=2-1, to=2-2]
    \arrow["{m_{AAB}}", from=1-2, to=2-2]
\end{tikzcd}\]
    Now, rewriting $D+1 \to \C(A,B)$ as a semigroup map $D \to \C(A,B)$, we have:
    \[
        \infer{
            \infer{
                1 \xrightarrow{Tf \o \lambda'}\C(A,TB)
            }{
                1 \xrightarrow{(\lambda',f)} \C(A,TA) \x \C(A,B) \xrightarrow{1 \x T} \C(A,TA) \x \C(TA,TB) \xrightarrow{m_{A,TA,TB}} \C(A,TB)
            }}{
            D \xrightarrow{(\lambda,f\o !)} \C(A,A) \x \C(A,B) \xrightarrow{m_{AAB}} \C(A,B)
        }
    \]
    Similarly, the other path is exactly $\lambda' \o f$. Thus, a $\w$-natural transformation is exactly a morphism of lifts. 
\end{proof}

It is a classical result in synthetic differential geometry that the object $D$ has only one point. In the case of $\w$, this follows from the Yoneda lemma (regarding $\w$ as a $\s$-category):
\[
    \w(1,D) = \wone(\N[x]/x^2, \N) = \{ !: \N[x]/x^2 \to \N \}
\]
Note that the natural idempotent $e:id \Rightarrow id$ from Proposition \ref{prop:idempotent-natural}, then, must be the point $0:1 \to D$. Note that this idempotent is an absorbing element of the monoid $D+1$, so for any $f:X \to D+1$, it follows that $m(f,0\o !) = m(0\o !, f) = 0\o !$.  A pre-differential bundle is exactly a lift with a chosen splitting of the natural idempotent $p\o\lambda$.
\begin{lemma}%
    \label{lem:splitting-of-idemp-is-lambdaplus}
    For every lift $\bar{\lambda}:\Lambda^{op} \to \C$, the natural idempotent $e = p \o \lambda$ is exactly
    \[1 \xrightarrow[]{0} D \xrightarrow[]{\iota_L} D+1 \to \C(E,E).\]
\end{lemma}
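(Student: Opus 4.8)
The plan is to unwind the enriched correspondence from Lemma \ref{lem:cat-of-lifts-iso} and then identify the claimed composite by a direct application of the Yoneda lemma. Recall that a lift $\bar\lambda:\Lambda^{op}\to\C$ names an object $E$ together with a monoid homomorphism $\lambda^+:D+1\to\C(E,E)$, whose restriction along $\iota_L:D\to D+1$ is the semigroup map $\hat\lambda:D\to\C(E,E)$. Under the power-adjunction (equivalently, Yoneda, since $D=\yon W$ and $\C(E,E)$ is the Weil space $U\mapsto\C(E,T^UE)$), this $\hat\lambda$ corresponds to the classical lift $\lambda\in\C(E,E)(W)=\C(E,TE)$. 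The composite in the statement factors through $\iota_L$, so it equals $\hat\lambda\circ 0$, a morphism $1\to\C(E,E)$ in $\w$, i.e. an element of $\C(E,E)(\N)=\C(E,E)$; the task is to identify this endomorphism of $E$ with $e=p\circ\lambda$.

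First I would pin down the point $0:1\to D$. Since $\N$ is a zero object of $\wone$, the terminal Weil space is $1=\yon\N$, and by full faithfulness of the Yoneda embedding $\yon:\wone^{op}\to\w$ we have $\w(1,D)=\w(\yon\N,\yon W)\cong\wone(W,\N)$. Terminality of $\N$ in $\wone$ makes this hom-set the singleton $\{p\}$, where $p:W\to\N$ is the augmentation $a+bx\mapsto a$. Hence the unique point $0:1\to D$ is precisely $\yon(p)$. (This is why, despite the suggestive name inherited from synthetic differential geometry, the point in question corresponds to the \emph{projection} $p$ and not to the zero section $0:\N\to W$.)

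The remaining step is naturality. Writing $F:=\C(E,E)$, the transformation $\hat\lambda:\yon W\to F$ corresponds by Yoneda to $\lambda\in F(W)$, and precomposition with $\yon(p):\yon\N\to\yon W$ corresponds to applying the (covariant) functorial action $F(p):F(W)\to F(\N)$ to $\lambda$. Since the $\wone$-action underlying $F$ sends $p$ to the tangent projection $p_E:TE\to E$, the map $F(p)$ is postcomposition by $p_E$; therefore $\hat\lambda\circ 0=F(p)(\lambda)=p_E\circ\lambda=p\circ\lambda=e$, which is exactly the natural idempotent of Proposition \ref{prop:idempotent-natural}.

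The only genuinely delicate point is keeping the variance straight in the second step: one must verify that the sole point of $D$ corresponds to $p$ rather than to the zero section, and correspondingly that the induced action on the Weil space $\C(E,E)$ is postcomposition by $p_E$. Once this is settled, the idempotence and absorbing behaviour recorded in the preceding discussion (that $m(f,0\circ{!})=m(0\circ{!},f)=0\circ{!}$) are automatically consistent with $e\circ e=e$, and no further computation is required.
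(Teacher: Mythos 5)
Your argument is correct and follows the same route the paper takes: the paper proves this lemma only implicitly, via the remark immediately preceding it that $\w(1,D)=\wone(\N[x]/x^2,\N)$ is a singleton by Yoneda, so the unique point of $D$ corresponds to the augmentation $p:W\to\N$ and the composite evaluates to $p\o\lambda$. Your write-up simply makes explicit the variance bookkeeping (that the point is $\yon(p)$, not the zero section, and that the induced action on $\C(E,E)$ is postcomposition by $p_E$) that the paper glosses over, and it is sound.
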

Now that the natural idempotent is understood as a map in $\Lambda$, that idempotent splits to give the theory of a pre-differential object.
\begin{definition}%
    \label{def:lambda-plus}
    The $\w$-category $\Lambda^+$ is given by the set of objects $\{0,1\}$ with hom-Weil-spaces. Specifically,
    \begin{itemize}
        \item The hom-spaces are $\Lambda^+(1,1) = D+1$, otherwise $\Lambda^+(i,j) = 1$.
        \item Composition (writing the original composition from $\Lambda$ as $m$) is given by
        \begin{gather*}
            m_{111}: (D+1) \x (D+1) \xrightarrow[]{m} (D+1) \\
            m_{101}: 1 \x 1 \xrightarrow[]{\iota_R} (D + 1) \\
            \text{otherwise: } m_{ijk} = ! 
        \end{gather*}  
    \end{itemize}
\end{definition}
Idempotent splittings are absolute (co)limits, and are preserved by all functors; as this is a limit completion, we have the following.
\begin{lemma}%
    \label{lem:lambda-plus-is-pdb}
    The category of pre-differential bundles is exactly the category of $\w$-functors $\Lambda^+\to \C$ (that is, $\C$-valued presheaves).
\end{lemma}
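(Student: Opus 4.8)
The plan is to recognise $\Lambda^+$ as the $\w$-category obtained from $\Lambda$ by splitting the single idempotent $e = \iota_L \o 0 \in \Lambda(\ast,\ast) = D+1$, and then to read a $\w$-functor $\Lambda^+ \to \C$ off as precisely a lift together with a chosen splitting of its natural idempotent. First I would record, from Definition \ref{def:lambda-plus}, that $\Lambda^+$ contains $\Lambda$ (which equals $\Lambda^{op}$, since $D$ is a commutative semigroup) as the full sub-$\w$-category on the object $1$, and that the generating morphisms $s \in \Lambda^+(0,1)$ and $r \in \Lambda^+(1,0)$ together with the composition laws witness the object $0$ as a splitting of the absorbing element $e$: that is, $r \o s = \mathrm{id}_0$ and $s \o r = e$, in the sense of the idempotent-splitting definition. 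Since idempotent splittings are absolute colimits, this identifies the inclusion $\Lambda \hookrightarrow \Lambda^+$ with the free completion of $\Lambda$ under the splitting of $e$.

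For the bijection on objects, a $\w$-functor $F : \Lambda^+ \to \C$ assigns objects $E := F(1)$ and $M := F(0)$. Restricting $F$ along the full inclusion $\Lambda \hookrightarrow \Lambda^+$ and applying Lemma \ref{lem:cat-of-lifts-iso} shows that the action of $F$ on $\Lambda^+(1,1) = D+1$ is exactly a lift $(E,\lambda)$. The only remaining data are the images $\xi := F(s) : M \to E$ and $q := F(r) : E \to M$, and the only nontrivial $\w$-functoriality conditions beyond the monoid-morphism condition already encoding the lift are $q \o \xi = \mathrm{id}_M$ and $\xi \o q = F(e)$. By Lemma \ref{lem:splitting-of-idemp-is-lambdaplus}, $F(e)$ is the natural idempotent $p \o \lambda$ of Proposition \ref{prop:idempotent-natural}, so $(q,\xi,\lambda)$ is exactly a pre-differential bundle in the sense of Definition \ref{def:pdb}. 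Conversely, a pre-differential bundle supplies the monoid map $D+1 \to \C(E,E)$ (its lift) together with $F(s), F(r)$ satisfying the two splitting equations, hence assembles into such a functor.

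For morphisms, a $\w$-natural transformation $\theta : F \Rightarrow G$ has components $\theta_1 : F(1) \to G(1)$ and $\theta_0 : F(0) \to G(0)$. Restricting again to the full sub-$\w$-category on $1$ and invoking Lemma \ref{lem:cat-of-lifts-iso}, $\theta_1$ is a morphism of the underlying lifts. The naturality squares at the generators $s$ and $r$ then force $\theta_0 = G(r) \o \theta_1 \o F(s)$, so $\theta_0$ is uniquely determined by $\theta_1$; conversely, because the splittings are absolute, every morphism of lifts $\theta_1$ extends to a (necessarily unique) $\w$-natural transformation. Thus $\w$-natural transformations $\Lambda^+ \to \C$ correspond precisely to morphisms of the underlying lifts, which is exactly the definition of a pre-differential bundle morphism. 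Checking that these two bijections preserve identities and composition is routine, yielding the claimed isomorphism of categories.

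The step I expect to be the main obstacle is the morphism correspondence: one must argue that $\w$-naturality at the split object $0$ imposes no constraint beyond naturality at $1$, and that the forced component $\theta_0$ is always well defined and compatible. This is exactly the point at which absoluteness of the idempotent splitting is essential, since it guarantees that the section/retraction data transport coherently along every $\w$-functor and every $\w$-natural transformation, so that the generating object $1$ controls all of the data.
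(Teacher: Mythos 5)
Your proposal is correct and follows essentially the same route as the paper: the paper's own justification is the one-line observation that $\Lambda^+$ is the idempotent splitting of $\Lambda$ at $e$, that such splittings are absolute, and hence (via Lemma \ref{lem:cat-of-lifts-iso} and Lemma \ref{lem:splitting-of-idemp-is-lambdaplus}) that $\w$-functors $\Lambda^+ \to \C$ are exactly lifts with a chosen splitting of $p \o \lambda$, i.e.\ pre-differential bundles. You have simply written out in full the object and morphism correspondences that the paper leaves implicit.
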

It is straightforward to exhibit the category of differential bundles as a reflective subcategory of pre-differential bundles in $[\Lambda^+, \C]$ (so long as $\C$ has equalizers).
\begin{proposition}%
    \label{prop:Lambda-is-refl-subcat}
    The category of differential bundles in a tangent category\, $\C$ with $T$-equalizers and $T$-pullbacks is a reflective subcategory of\, $[\Lambda^+, \C]$.
\end{proposition}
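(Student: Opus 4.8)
The plan is to realize differential bundles as the algebras of an idempotent monad on $[\Lambda^+,\C]$, exploiting the fact that the category of algebras for an idempotent monad is automatically a full reflective subcategory. By Lemma \ref{lem:lambda-plus-is-pdb} the ambient category $[\Lambda^+,\C]$ is exactly the category $\mathsf{Pre}(\C)$ of pre-differential bundles, and by Definition \ref{def:pdb} a differential bundle is a pre-differential bundle whose underlying lift is non-singular together with the requirement that $T$-pullback powers of its projection exist. Since $\C$ is assumed to have $T$-pullbacks, the latter condition holds automatically, so I would first reduce the statement to: the non-singular pre-differential bundles form a reflective subcategory of $\mathsf{Pre}(\C)$. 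Fullness is immediate, since a morphism of (pre-)differential bundles is by definition a morphism of the underlying lifts with no further condition imposed at the object level.

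Second, I would transport the left-exact idempotent monad of Theorem \ref{thm:idemp-monad-nonsingular} from $\mathsf{Lift}(\C)$ to $\mathsf{Pre}(\C)$. Recall that this monad sends a lift $(E,\lambda)$ to the $T$-equalizer $(F,l)$ of the pair $e.E, T.e : (TE,\ell)\to(TE,\ell)$, with unit $\eta$ the map induced by the universal property of the equalizer (which exists because $\lambda$ equalizes). The $T$-equalizers needed here are supplied by hypothesis. Since $\C$ also has $T$-pullbacks, idempotents split, so the natural idempotent $p\o l$ on $(F,l)$ from Proposition \ref{prop:idempotent-natural} admits a chosen splitting; this equips $(F,l)$ with pre-differential-bundle structure and, via the existence of pullback powers, with differential-bundle structure. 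Because morphisms of pre-differential bundles are just lift morphisms, the assignment $(E,\lambda)\mapsto(F,l)$ extends to an endofunctor of $\mathsf{Pre}(\C)$ by acting through $T$ and equalizer universality, $\eta$ is natural, and the multiplication is an isomorphism exactly as in Theorem \ref{thm:idemp-monad-nonsingular}, so the monad remains idempotent.

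Third, I would identify the algebras. An object $(E,\lambda)$ is a fixed point of the monad iff its unit $\eta$ is an isomorphism, which happens precisely when $\lambda$ itself exhibits $E$ as the $T$-equalizer of $e.E, T.e$, i.e.\ precisely when $(E,\lambda)$ is non-singular (Definition \ref{def:non-singular-lift}); combined with the automatic existence of pullback powers, the fixed points are exactly the differential bundles. This is the weaker-hypothesis analogue of Corollary \ref{cor:idemp-dbun}, which assumed completeness: the point is that only $T$-equalizers (for the reflection) and $T$-pullbacks (for the pullback powers) are actually used, so the reflector is the monad itself and the reflection of $(q:E\to M,\xi,\lambda)$ is its non-singular replacement $(F,l)$.

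The main obstacle I anticipate is the bookkeeping between bare lifts and pre-differential bundles: a pre-differential bundle carries a chosen idempotent splitting that a lift does not, so I must confirm that the non-singular reflection constructed at the level of lifts descends to a genuine idempotent monad on $[\Lambda^+,\C]=\mathsf{Pre}(\C)$ together with its chosen splittings, and that the universal factorization property of $\eta$ follows cleanly from equalizer universality. Concretely, given a differential bundle $(G,\gamma)$ and a pre-differential bundle morphism $f:(E,\lambda)\to(G,\gamma)$, I would use non-singularity of $\gamma$ to realize $G$ as a $T$-equalizer and push $f$ through it to obtain the unique factorization through $\eta$; checking this factorization is compatible with the splitting data is the delicate step. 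Once this is secured, the standard identification of the Eilenberg--Moore category of an idempotent monad with a reflective subcategory closes the argument.
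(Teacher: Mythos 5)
Your proposal is correct and follows essentially the same route as the paper: identify $[\Lambda^+,\C]$ with pre-differential bundles via Lemma \ref{lem:lambda-plus-is-pdb}, then realize differential bundles as the algebras of the left-exact idempotent monad of Theorem \ref{thm:idemp-monad-nonsingular} (the non-singular replacement given by the $T$-equalizer of $e.E$ and $T.e$), whose existence requires only the assumed $T$-equalizers and $T$-pullbacks. Your extra care about descending the monad from lifts to pre-differential bundles with their chosen splittings is a point the paper glosses over by citing Corollary \ref{cor:idemp-dbun} and Observation \ref{obs:T-limits-pdbs}, but it does not change the argument.
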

\begin{proof}
    The category of pre-differential bundles in $\C$ is isomorphic to $[\Lambda^+, \C]$. By Corollary \ref{cor:idemp-dbun}, the category of differential bundles is the category of algebras for an idempotent monad on the category of pre-differential bundles in $\C$. The reflector sends a pre-differential bundle to the $T$-equalizer: 
\[\begin{tikzcd}
    A & TE & TE
    \arrow["{e.E}", shift left=1, from=1-2, to=1-3]
    \arrow["{T.e}"', shift right=1, from=1-2, to=1-3]
    \arrow[dashed, from=1-1, to=1-2]
\end{tikzcd}\] This equalizer will always exist if $\C$ has equalizers, and pullbacks of the projection will exist if $\C$ has pullbacks, so the pullback is a differential bundle. This reflection gives a left-exact idempotent monad on $[\Lambda^+, \C]$ whose algebras are differential bundles.
\end{proof}
Now, in the case that $\C$ is a \emph{locally presentable} tangent category (such as $\w$), $[\Lambda^+, \C]$ is locally presentable and so is the reflective subcategory of differential bundles; thus the following holds.
\begin{corollary}%
    \label{cor:Lambda-dense}
    If\, $\C$ is locally presentable, then $\mathsf{DBun}(\C)$ is a locally presentable category.
\end{corollary}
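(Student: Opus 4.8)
The plan is to chain together the two structural facts that precede this corollary: Proposition \ref{prop:Lambda-is-refl-subcat}, which realizes $\mathsf{DBun}(\C)$ as a reflective subcategory of the functor category $[\Lambda^+, \C]$, and the general principle that reflective subcategories of locally presentable categories are locally presentable, provided the reflection is suitably well-behaved. The whole argument is essentially a bookkeeping exercise in locally presentable category theory, so the bulk of the work is in checking that each ingredient satisfies the presentability hypotheses rather than in any new computation.

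First I would establish that $[\Lambda^+, \C]$ is itself locally presentable. Here $\Lambda^+$ is a \emph{small} $\w$-category (it has two objects, with hom-Weil-spaces $D+1$ and $1$), and $\C$ is assumed locally presentable as a $\w$-category (recall from the discussion after Corollary \ref{cor:properties-of-w} that $\w$ is locally finitely presentable as a cartesian monoidal category, which is what makes enriched local presentability meaningful). The standard result is that for a small enriched category $\a$ and a locally presentable $\vv$-category $\C$, the $\vv$-functor category $[\a, \C]$ is again locally presentable; I would cite this from the locally-presentable-categories appendix the paper refers to. This handles the ambient category.

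Next I would invoke Proposition \ref{prop:Lambda-is-refl-subcat}: under the hypothesis that $\C$ has $T$-equalizers and $T$-pullbacks, $\mathsf{DBun}(\C)$ is a reflective subcategory of $[\Lambda^+, \C]$, and crucially the reflector there is the algebra category for a \emph{left-exact idempotent monad}. A locally presentable tangent category is by assumption complete and cocomplete, so in particular it has all the $T$-equalizers and $T$-pullbacks needed to apply that proposition. The key structural point is that the reflector is an \emph{accessible} (indeed finitary, since idempotent) functor: the category of algebras for an accessible idempotent monad on a locally presentable category is again locally presentable, being closed under sufficiently filtered colimits and identifiable as an orthogonality class. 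I would note that left-exactness of the idempotent monad additionally guarantees the subcategory is closed under the relevant limits, which is more than we need for presentability but confirms the reflection is a genuine localization.

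The step I expect to be the main obstacle is verifying the accessibility of the reflector, i.e. that the localization defining $\mathsf{DBun}(\C) \hookrightarrow [\Lambda^+,\C]$ is by an accessible (orthogonality) class rather than an arbitrary full reflective subcategory — only \emph{accessible} reflective subcategories of locally presentable categories are themselves locally presentable, and a reflective subcategory alone does not suffice in general. The saving grace is precisely that the reflector is an \emph{idempotent} monad built from a $T$-equalizer (Proposition \ref{prop:Lambda-is-refl-subcat} via Corollary \ref{cor:idemp-dbun}): an idempotent monad is automatically accessible when its underlying endofunctor preserves filtered colimits, and the equalizer defining it is a finite-limit construction, which is accessible. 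I would therefore spell out that the monad of Proposition \ref{prop:Lambda-is-refl-subcat} preserves filtered colimits (equalizers and the finite pullback powers involved commute with filtered colimits in a locally finitely presentable setting), conclude it is an accessible idempotent monad, and hence that its category of algebras $\mathsf{DBun}(\C)$ is a locally presentable category. This closes the argument without any further calculation.
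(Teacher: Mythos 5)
Your proof is correct and follows essentially the same route as the paper: the paper's (one-line) argument is exactly that $[\Lambda^+,\C]$ is locally presentable because $\Lambda^+$ is small and $\C$ is locally presentable, and that $\mathsf{DBun}(\C)$ is then locally presentable as the reflective subcategory produced by Proposition \ref{prop:Lambda-is-refl-subcat}. Your additional verification that the reflector is accessible (being an idempotent monad built from finite $T$-limits, hence filtered-colimit-preserving) is a point the paper leaves implicit, and spelling it out is a genuine improvement rather than a deviation.
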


\subsection*{Anchored bundles}

There are two ways to think about anchored bundles:
\begin{enumerate}[(i)]
    \item an anchored bundle is a differential bundle with an anchor $A \to TM$, or
    \item an anchored bundle is an involution algebroid without an involution.
\end{enumerate} 
These two perspectives can be unified by regarding $A$ as a cylinder for the weighted limit $TM$, so that the anchor is induced by the unique map $A.T.0 \to T.A.0$:
\[
\begin{tikzcd}
	{\Lambda^+} & \C
	\arrow[""{name=0, anchor=center, inner sep=0}, "A", curve={height=-12pt}, from=1-1, to=1-2]
	\arrow[""{name=1, anchor=center, inner sep=0}, "TM"', curve={height=12pt}, from=1-1, to=1-2]
	\arrow["\anc", shorten <=3pt, shorten >=3pt, Rightarrow, from=0, to=1]
\end{tikzcd}\]
That is, the syntactic category for anchored bundles is constructed as a full $\w$-category of $\wone$ that doesn't include the map $c$. This may be found by taking the full subcategory of $\wone$ whose objects are constructed out of $W_n, n\in \mathbb{N}$.
\begin{definition}%
    \label{def:truncated-wone}
    A Weil algebra has \emph{width} $k \in \N$ if it can be written
    \[
        V = \ox^{0 \le i < k} W_{n(i)},\; n(i) \in \N
    \]
    The category of $k$-truncated Weil algebras, written\, $\wone^k$, is the full $\w$-subcategory of\, $\wone$ of Weil algebras of width $k$ or less.

    The full subcategory whose objects are $\{ \N, W \}$ will be given the special notation $\wone^*$.
\end{definition}
Note that for each $V$ in $\wone^k$, the enrichment is given by
\[
    \wone^k(U,V) := (X \mapsto \underline{\wone}(U, X \ox V) ).
\]
The maps in $\wone^1$---that is, the full subcategory of $\wone$ whose objects are
\[
    \{ W_n | n \in \mathbb{N} \}
\]
---have the useful property that they may be written without the flip $c$. This makes $\wone^1$ a natural candidate for the syntactic category of anchored bundles. 
\begin{lemma}\label{lem:writing-maps-in-wone}
    Every morphism
    \[
        W_n \to V \in \wone
    \]
    may be written without $c$; that is, it is generated by the set of maps $\{p,+,0,\ell\}$ closed under tensor, composition, and maps induced by transverse limits.
\end{lemma}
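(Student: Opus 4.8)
The plan is to combine Leung's generators-and-relations presentation of $\wone$ (Proposition \ref{thm:leung}) with a rewriting argument that eliminates every occurrence of the flip $c$ from an expression representing a morphism out of $W_n$. By Proposition \ref{thm:leung}, any morphism $f: W_n \to V$ can be written as a composite of layers, each layer being a tensor of copies of the generators $\{p,0,+,\ell,c\}$ with identities, together with maps induced by transverse limits (Definition \ref{def:transverse-limit}). The goal is then to show, by induction on the number of flips appearing in such an expression, that $f$ admits an equal expression built only from $\{p,0,+,\ell\}$.

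The central structural fact is that the flip $c: WW \to WW$ can only act on an input carrying (at least) two tensor factors of $W$, whereas the domain $W_n$ has width one. Consequently, in any layered expression for $f$, a flip can never be the leftmost generator applied to the domain: the first layer that produces a width-two factor out of $W_n$ must be a lift $\ell$ or a slot-inclusion of the form $W \cong W \ox \N \xrightarrow{W \ox 0} WW$ (and its variants), all of which are $c$-free. I would therefore slide each flip leftward through the intervening layers until it reaches the point where the relevant width-two factor is first created. Sliding is justified by the naturality of $c$ against the remaining generators, recorded in Equation \ref{eq:tc-2-3}, together with the Yang--Baxter relation \ref{eq:tc-2-2} used to reorder two adjacent flips; passing a flip through a transverse-limit-induced map is legitimate because $c$ is a natural transformation and such maps are universally induced, so $c$ commutes with them up to the corresponding flip on the limit diagram. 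Once a flip meets the creating lift it is absorbed by the symmetric comultiplication law $c \circ \ell = \ell$ of Equation \ref{eq:tc-3-2}; when it meets a slot-inclusion it is replaced by the opposite slot-inclusion, which is again $c$-free; and two flips that collide cancel by the involution law $c \circ c = id$ of Equation \ref{eq:tc-2-1}. Each such move strictly decreases the flip count, so the induction terminates with a $c$-free expression.

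As a sanity check on completeness, one may describe the morphisms concretely: a morphism $W_n \to V$ in $\mathsf{CRig}$ is a rig homomorphism determined by sending each square-zero, mutually annihilating generator of $W_n$ to an element of the augmentation ideal of $V$, and every such assignment is realized by a $c$-free composite of slot-inclusions, lifts, and additions. This confirms that the $c$-free fragment already surjects onto all morphisms out of $W_n$, consistent with the rewriting argument.

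The main obstacle I anticipate is the bookkeeping in the sliding step: making precise the notion of a flip's position within a layered expression, verifying that the naturality equations of \ref{eq:tc-2-3} cover every way $c$ can be tensored and composed with the other generators, and---most delicately---handling the passage of $c$ through maps induced by transverse limits, where one must check that the induced flip on the limit is itself expressible without introducing new, unremovable flips. Organizing the argument so that the flip count is a genuine, monotonically decreasing termination measure (rather than merely relocating flips) is the crux, and it is precisely here that the width-one hypothesis on the domain does the essential work.
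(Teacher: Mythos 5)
Your closing ``sanity check'' paragraph is, in fact, the paper's entire proof: the paper observes that a morphism $W_n \to V$ is a rig homomorphism determined by sending each of the $n$ square-zero, mutually annihilating generators to an element of the augmentation ideal of $V$, writes such a map as a finite sum of monomial-insertion terms, and notes that each term is built from $\{p,0,+,\ell\}$, tensors, and transverse-limit-induced maps, with the terms assembled by $+$. The width-one hypothesis enters only in that no permutation of domain tensor factors is ever required, so $c$ never appears. That direct description is the whole argument; you should promote it from an aside to the proof and supply the one missing justification, namely why each individual generator assignment (insertion into a chosen slot, $\ell$ for a product of variables, the diagonal into $W_2$ for a sum) is realized $c$-free.

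Your primary rewriting argument, by contrast, has a genuine gap in the termination measure. The naturality relation you invoke from Equation \ref{eq:tc-2-3} is $\ell.T \o c = T.c \o c.T \o T.\ell$: sliding a single flip past a lift produces \emph{two} flips on the other side, so the claim that ``each such move strictly decreases the flip count'' is false, and the induction as stated does not terminate. Repairing this would require a more refined measure (for instance, tracking which pairs of tensor strands each flip transposes, in the style of a word-length argument in the symmetric group) together with a careful treatment of commuting $c$ past transverse-limit-induced maps, where naturality only hands you a flip on the limit object that must itself be re-expressed. You correctly identify this bookkeeping as the crux, but identifying the crux is not the same as resolving it; the concrete description of morphisms out of $W_n$ sidesteps all of it.
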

\begin{proof}
    Every map $W_n \to V$ in $\wone$ is the finite sum
    \[
        v \mapsto \sum_{x} (A_x v)\bullet x
    \]
    where $x \in \mathsf{var}(V)$, and $A_x \in \N^n$ so that $A_n v$ is the ordinary dot product.
    Each term can be written without $c$, and the whole term is then constructed by adding each component using the appropriate $U.+.V$-symbols. \footnote{This may also be regarded as a consequence of the graphical notation for maps in $\wone$ in Table 1 on page 308 of \cite{Leung2017}.}
\end{proof}
 It is possible, then, to show that the category of anchored bundles in $\C$ is a full sub-tangent-category of functors $\wone^1 \to \C$; note that $W$ acts as a cone for $(-)^{D}$, so this induces a map
\[
    \anc: F(W) \to T.F(\N).
\]
\begin{proposition}
    \label{prop:nerve-anc-work}
    Every anchored differential bundle in $\C$ determines a functor $\wone^1 \to \C$; an anchored bundle morphism is exactly an enriched natural transformation.
\end{proposition}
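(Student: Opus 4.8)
The plan is to establish a bijection between anchored differential bundles in $\C$ and enriched functors $\wone^1 \to \C$, and then to check that this bijection extends to morphisms. First I would unpack what an enriched functor $F:\wone^1 \to \C$ amounts to. Since $\wone^1$ is the full $\w$-subcategory of $\wone$ on the width-$\le 1$ Weil algebras $\{W_n \mid n \in \N\}$, a $\w$-functor $F$ assigns to $\N$ an object $M := F(\N)$ and to $W$ an object $E := F(W)$, together with action-preserving maps on hom-Weil-spaces. The key structural input is Lemma \ref{lem:writing-maps-in-wone}: every map $W_n \to V$ in $\wone$ is generated by $\{p, 0, +, \ell\}$ without the flip $c$, so the enrichment data of $\wone^1$ is exactly the data $\{p, 0, +, \ell\}$ closed under tensor, composition, and transverse-limit-induced maps. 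This is precisely the data that $\Lambda^+$ records (the lift $\ell$ together with the idempotent splitting giving $p, 0$), augmented by the addition $+$ and the transverse-limit structure.

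Next I would exhibit the correspondence concretely. Given $F:\wone^1 \to \C$, restricting along the inclusion $\Lambda^+ \hookrightarrow \wone^1$ (which picks out $\N, W$ and the lift/projection/section maps) yields a pre-differential bundle by Lemma \ref{lem:lambda-plus-is-pdb}; the $+$-structure and the transverse-limit preservation upgrade this to a differential bundle exactly as in Proposition \ref{prop:Lambda-is-refl-subcat}. The anchor arises as follows: $W$ is a cone for the power $(-)^D$, so naturality of $F$ on the canonical map forces a morphism $\anc: F(W) \to T.F(\N)$, namely the comparison $A.T.0 \to T.A.0$ indicated in the diagram preceding Definition \ref{def:truncated-wone}. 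Conversely, given an anchored differential bundle $(\pi, \xi, \lambda, \anc)$, I would build $F$ on objects by $W_n \mapsto A_n$ (the $n$-fold pullback power) and $\N \mapsto M$, using the differential-bundle structure together with $\anc$ to interpret the generating maps; Lemma \ref{lem:writing-maps-in-wone} guarantees this assignment is well-defined and functorial because no $c$ is required, and preservation of transverse limits follows since $T$-pullback powers of $\pi$ exist for a differential bundle. The two constructions are mutually inverse by the explicit form of the generators.

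For morphisms, I would verify that an enriched natural transformation $\gamma: F \Rightarrow G$ between two such functors is precisely an anchored bundle morphism. The $\w$-naturality square at the hom-objects $\wone^1(\N,W)$ and $\wone^1(W,W)$ recovers, after the adjunction manipulations used in Lemma \ref{lem:cat-of-lifts-iso}, the condition that the underlying pair $(\gamma_W, \gamma_\N)$ is a lift morphism (hence a differential bundle morphism, since it automatically preserves the natural idempotent and thus the splitting), while naturality against the cone map defining $\anc$ yields exactly the anchor-preservation square from Definition \ref{def:anchored_bundles}. So the components $\gamma_W, \gamma_\N$ determine and are determined by an anchored bundle morphism.

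The main obstacle I expect is not the morphism-level check (which is a direct adaptation of Lemma \ref{lem:cat-of-lifts-iso}) but the well-definedness and functoriality of the object-level assignment $W_n \mapsto A_n$: one must confirm that every composite and tensor of generating maps in $\wone^1$ is sent to a genuinely well-defined map of the pullback powers, independent of the (non-unique) way a given morphism is expressed in terms of $\{p,0,+,\ell\}$. This reduces to coherence of the differential bundle axioms together with Leung's presentation (Proposition \ref{thm:leung}) restricted to width-$1$ algebras; the transverse-limit relations must be checked to hold in $\C$, which they do because the relevant pullbacks are $T$-limits preserved by the structure. Once this coherence is in hand, the equivalence on objects and morphisms assembles into the claimed full-and-faithful identification.
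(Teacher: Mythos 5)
Your proposal is correct and follows essentially the same route as the paper: both rely on Lemma \ref{lem:writing-maps-in-wone} to express every map out of $W_n$ without the flip $c$, interpret the generators $\{p,0,+,\ell\}$ via $\{\pi,\xi,+_q,\lambda\}$ to build the functor, obtain the anchor as the universal comparison $F(W)\to T.F(\N)$, and check the morphism-level correspondence through the $\w$-naturality squares as in Lemma \ref{lem:cat-of-lifts-iso}. The only difference is that you also sketch the converse identification of which functors arise this way (which the paper defers to Corollary \ref{cor:anchored-bundle-as-nerve}), and you explicitly flag the well-definedness of the generator-substitution, a point the paper's own proof passes over more quickly.
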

\begin{proof}
    Start with an anchored bundle $(q:E \to M, \xi, \lambda, \anc)$; then for hom-objects with domain $\N$,
    \[
        \wone^1(\N,\N) = 1 \mapsto id_M, \hspace{0.5cm} \wone^1(\N,T) = 1 \mapsto \xi.
    \]
    For the hom-objects with domain $T$, the problem is slightly more difficult, as $\wone^1(T,\N)(T^V) = \wone(T, T^V)$ and $\wone^1(T, T)(T) = \wone(T, T.T^V)$. This part of the  proof amounts to constructing maps
    \[
        \wone(T, T^V) \to \C(E, T^V.M), \hspace{0.5cm}  \wone(T,T^V.T) \to \C(E, T^V.E)/
    \]
    The first mapping is straightforward: send $\theta$ to $\theta.M \o \anc$. For the second map, observe that the following diagrams commute:
    \[
\begin{tikzcd}
	TA & {T^2M} &[-2em] A & TM &[-2em] A & TM &[-2em] {A_2} & {T_2M} \\
	A & TM & M & M & M & M & A & TM
	\arrow["\anc"', from=2-1, to=2-2]
	\arrow["{T.\anc}", from=1-1, to=1-2]
	\arrow["\lambda"{description}, from=2-1, to=1-1]
	\arrow["\ell"{description}, from=2-2, to=1-2]
	\arrow["\xi"{description}, from=2-3, to=1-3]
	\arrow[Rightarrow, no head, from=2-3, to=2-4]
	\arrow["0"{description}, from=2-4, to=1-4]
	\arrow["\anc", from=1-3, to=1-4]
	\arrow["\anc", from=1-5, to=1-6]
	\arrow["\pi"', from=1-5, to=2-5]
	\arrow["p", from=1-6, to=2-6]
	\arrow[Rightarrow, no head, from=2-5, to=2-6]
	\arrow["{+_q}"{description}, from=1-7, to=2-7]
	\arrow["{\anc_2}", from=1-7, to=1-8]
	\arrow["{+.M}"{description}, from=1-8, to=2-8]
	\arrow["\anc"', from=2-7, to=2-8]
\end{tikzcd}
    \]
    The idea is to take an anchored bundle morphism $f$ and rewrite it as a string of compositions that does not include $c$, switching out every occurrence of
    \[
        T^V.\theta.M, \theta \in \{p,0,+,\ell\}
    \]
    and replacing it with $T^V.(\theta')$, where $\theta'$ is the corresponding map in $\{q,\xi,+_q,\lambda\}$. This induces a map \[\wone^1(W,W) \to \C(E,E) \in \w\] and the $\anc$ is exactly the unique $\alpha: F.W \to T.F$ induced by universality.

    For morphisms, the inclusion $(\Lambda^+)^{op} \to \wone$ ensures that any tangent natural transformation will be a linear morphism on the underlying differential bundle, and the tangent natural transformations coherences ensure that a tangent natural transformation will preserve the anchor. Conversely, an anchored bundle morphism will preserve each of the constructed morphisms $E \to T^V.E, E \to T^V.M$ (as it preserves each of $\{ q, +_q, \xi, \lambda\}$), giving a natural tangent transformation. Thus there is a faithful embedding $\mathsf{Anc}(\C) \hookrightarrow [\wone^1, \C]$.
\end{proof}
The converse, identifying those functors $\wone^1 \to \C$ that determine anchored bundles, is immediate.
\begin{corollary}%
    \label{cor:anchored-bundle-as-nerve}
    The category of anchored bundles comprises precisely the $\w$-functors and $\w$-natural transformations
    $
        A: \wone^* \to \C    
    $ (Definition \ref{def:truncated-wone})
    so that the precomposition $\Lambda^+ \to \wone^* \to \C$ determines a differential bundle.
    That is to say, the category of anchored bundles in $\C$ is the following pullback in $\w$Cat:
    \[\begin{tikzcd}
        {\mathsf{Anc}(\C)} & {[\wone^1, \C]} \\
        {\mathsf{DBun}(\C)} & {[\Lambda^+, \C]}
        \arrow[hook, from=1-1, to=1-2]
        \arrow[from=1-2, to=2-2]
        \arrow[hook, from=2-1, to=2-2]
        \arrow[from=1-1, to=2-1]
        \arrow["\lrcorner"{anchor=center, pos=0.125}, draw=none, from=1-1, to=2-2]
    \end{tikzcd}\]
\end{corollary}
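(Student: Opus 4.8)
The plan is to read the asserted isomorphism as a repackaging of Proposition \ref{prop:nerve-anc-work} and Proposition \ref{prop:Lambda-is-refl-subcat} into the pullback square, so I would begin by naming the four legs. The top leg is the fully faithful nerve $N\colon \mathsf{Anc}(\C)\hookrightarrow[\wone^1,\C]$ of Proposition \ref{prop:nerve-anc-work}; the bottom leg is the inclusion of the reflective subcategory $\mathsf{DBun}(\C)\hookrightarrow[\Lambda^+,\C]$ of Proposition \ref{prop:Lambda-is-refl-subcat}, using Lemma \ref{lem:lambda-plus-is-pdb} to identify $[\Lambda^+,\C]$ with pre-differential bundles; the right leg is restriction $\iota^*$ along the $\w$-functor $\iota\colon\Lambda^+\to\wone^1$ that sends the two objects to $\N,W$ and classifies the canonical lift-with-splitting on $W\in\wone^1$ (realized by the element $\ell\in\wone^1(W,W)(W)=\wone(W,W\ox W)$); and the left leg is the forgetful functor sending an anchored bundle to its underlying differential bundle.

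Next I would check that the square commutes, which holds essentially by construction. Restricting the nerve $N(A)$ of an anchored bundle $(\pi\colon A\to M,\xi,\lambda,\anc)$ along $\iota$ selects the images of the $\Lambda^+$-generators, and in the proof of Proposition \ref{prop:nerve-anc-work} these are precisely $\{\pi,\xi,\lambda\}$; hence $\iota^*N(A)$ is the underlying pre-differential bundle of $A$, which is a genuine differential bundle. So $\iota^*\circ N$ agrees with the bottom-then-right composite applied to the underlying differential bundle, and the square commutes.

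To establish that the square is a pullback, it suffices to show the nerve is an isomorphism onto the full subcategory of $[\wone^1,\C]$ cut out by the condition $\iota^*A\in\mathsf{DBun}(\C)$. On morphisms this is immediate: Proposition \ref{prop:nerve-anc-work} identifies $\mathsf{Anc}(\C)$-morphisms with $\w$-natural transformations between nerves, and since $\mathsf{DBun}(\C)\hookrightarrow[\Lambda^+,\C]$ is full (reflective subcategories are full), any such $\w$-natural transformation automatically lies over a differential bundle morphism. On objects, the forward inclusion is exactly Proposition \ref{prop:nerve-anc-work}. For the converse I would start with an arbitrary $\w$-functor $A\colon\wone^1\to\C$ with $\iota^*A\in\mathsf{DBun}(\C)$, put $M:=A(\N)$ and $E:=A(W)$, read off the differential bundle $(q,\xi,\lambda)$ from $\iota^*A$, and recover the anchor as the canonical comparison $\anc\colon E=A(W)\to T(A(\N))=M^{D}$ induced by the hom-Weil-space $\wone^1(W,\N)\cong D$ — namely the image of $\mathrm{id}_W\in D(W)$ — which is precisely the $\alpha$ of Proposition \ref{prop:nerve-anc-work}.

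The hard part will be exactly this converse reconstruction: verifying that the comparison map $\anc$ automatically satisfies all the anchored-bundle axioms, so that no hidden condition beyond ``$\iota^*A$ is a differential bundle'' is required. Here it matters that $\wone^1$ is \emph{not} a tangent category — it lacks the flip $c$ — so I cannot simply invoke tangent-functoriality; instead the argument must lean on Lemma \ref{lem:writing-maps-in-wone}, which guarantees that every structural map out of $W_n$, and hence every coherence the anchor must obey (linearity $T\anc\circ\lambda=\ell\circ\anc$ and the bundle triangle $p\circ\anc=\pi$), can be expressed using only $\{p,0,+,\ell\}$ and therefore already lives inside $\wone^1$. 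With this in hand, $\w$-functoriality of $A$ supplies each required commuting square, the object- and morphism-level bijections assemble, and the commuting square is seen to be a pullback in $\w\mathsf{Cat}$.
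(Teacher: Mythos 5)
Your proposal is correct and follows essentially the same route as the paper, which simply declares the converse ``immediate'' after Proposition \ref{prop:nerve-anc-work}: forward direction from that proposition, restriction along $\Lambda^+\hookrightarrow\wone^1$ recovering the underlying differential bundle, and the anchor recovered from $\wone^1(W,\N)\cong D$ with its coherences supplied by Lemma \ref{lem:writing-maps-in-wone}. You have merely spelled out the reconstruction the paper leaves implicit, so there is nothing to add.
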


\section{Enriched nerve constructions}%
\label{sec:enriched-nerve-constructions}

Nerve constructions present a powerful generalization of the Yoneda functor that sends an object $C \in \C$ to the representable presheaf $\C(-, C) \in[\C^{op},\vv]$ (for a general reference on nerve constructions and their realizations, see Chapter 3 of \cite{Loregian2015}). 
The Yoneda lemma states that this functor is an \emph{embedding} (that is, it is fully faithful), so no information is lost when embedding a category into its category of presheaves. 
Nerve constructions, then, move this towards an \emph{approximation} of the original category $\C$ by some subcategory $\D \hookrightarrow \C$, or more generally by some functor $K:\a \to \C$.
In Section \ref{sec:inf-nerve-of-a-gpd}, the ``infinitesimal'' approximation of a Lie groupoid as a Lie algebroid will be exhibited as approximation by a $\w$-functor $\partial: \wone^{op} \to \mathsf{Gpd}(\w)$.

We work with a $\vv$ that is locally presentable as a monoidal category, such as $\w, \s$, or the category of commutative monoids.
\begin{definition}%
    \label{def:nerve-of-a-functor}
    The \emph{nerve} of a $\vv$-functor $K: \a \to \C$ is the functor
    \[
        N_K: \C \to [\a^{op}, \vv]; \hspace{0.5cm} C \mapsto \C(K-, C)
    \]
    Any presheaf $A:\a \to \vv$ that is in the image of $N_K$ is a \emph{$K$-nerve}.
\end{definition}
\begin{remark}
    The ``$K$-nerve'' terminology seems to go back to Grothendieck/Segals's original intuition for the nerve construction of a category\footnote{Segal published the result, but seems to have credited the theorem to Grothendieck.} (\cite{Segal1974}) and appears in, for example, \cite{Berger2012} and \cite{Bourke2019}. However, the ``approximation'' of a category by a functor $K:\a \to \C$ originally used in topology seems to be a more intuitive description of the functor $N_K$.
\end{remark}

\begin{example}%
    \label{ex:nerve-functors}
    ~\begin{enumerate}[(i)]
        \item The nerve of the identity functor $\C = \C$ is the usual Yoneda embedding $\C \hookrightarrow[\C^{op}, \w]$.
        \item 
        The first example of a nerve construction in the mathematical literature is the simplicial approximation of a topological space by \cite{Kan1958}. Recall the original construction of the simplicial nerve of a topological space $X$, where $X_n = \mathsf{Top}(\Delta_n,X)$. This is exactly the nerve of the functor $\Delta \to \mathsf{Top}$ that sends $n$ to the $n$-simplex
        \[
            \left\{
                x \in \R^n | \sum x_i = 1
            \right\}.
        \]
        \item The following example figures into Segal's original nerve construction for a category, and is revisited in Section \ref{sec:enriched-theories}. Define a reflexive graph to be a presheaf over the full subcategory of $\mathsf{Cat}$ whose objects are the two preorders
        \[
            [0] = 0, \hspace{0.2cm} [1] =  0 < 1.    
        \]
        This is equivalent to the free category with two parallel arrows and a common retract,
        \[\begin{tikzcd}
            {[1]} & {[0]}
            \arrow["t"', shift right=2, from=1-2, to=1-1]
            \arrow["s", shift left=2, from=1-2, to=1-1]
            \arrow["e"{description}, from=1-1, to=1-2]
        \end{tikzcd}\]
        so that a graph in a category has an object-of-vertices $V$ and an object-of-edges $E$, along with source and target maps $s,t:E \to V$; morphisms of graphs are maps $(f_E,f_V)$ that commute with the source and target maps.

        By Corollary \ref{cor:dense-ff-result}, any full subcategory containing the representables will be dense.
        Define the category of \emph{paths}, $\mathsf{Pth}$, to be full subcategory of graphs generated by
        \[ 
\begin{tikzcd}
	& {[0]} && {[0]} \\
	{[1]} && \dots && {[1]} \\
	&& {[n]}
	\arrow["{[t]}"', from=1-2, to=2-3]
	\arrow["{[s]}", from=1-4, to=2-3]
	\arrow[from=2-1, to=3-3]
	\arrow[from=2-5, to=3-3]
	\arrow["{[s]}", from=1-2, to=2-1]
	\arrow["{[t]}"', from=1-4, to=2-5]
	\arrow["\lrcorner"{anchor=center, pos=0.125, rotate=135}, draw=none, from=3-3, to=2-3]
\end{tikzcd}, n \in \mathbb{N} \]
        so that $\mathsf{Gph}([n],G)$ picks out the set of paths of length $n$ in a graph $G$. We call this subcategory $P:\mathsf{Pth} \to \mathsf{Gph}$, and see that $N_P$ sends a graph to its $\mathsf{Pth}$-presheaf of composable paths:
        \[\begin{tikzcd}
            && {E_n} \\
            E && \dots && E \\
            & V & {} & V
            \arrow["t", from=2-1, to=3-2]
            \arrow["s"', from=2-3, to=3-2]
            \arrow["t", from=2-3, to=3-4]
            \arrow["s"', from=2-5, to=3-4]
            \arrow[from=1-3, to=2-1]
            \arrow[from=1-3, to=2-5]
            \arrow["\lrcorner"{anchor=center, pos=0.125, rotate=-45}, draw=none, from=1-3, to=2-3]
        \end{tikzcd}\]
        The pushout $[n]$ is precisely the graph
        \[
            0 \xrightarrow[]{} 1 \xrightarrow[]{}\dots \xrightarrow[]{} (n-1) \xrightarrow[]{} n    
        \]
        where the reflexive map $e$ at each vertex is suppressed.
    \end{enumerate}
\end{example}
Recall the functor sending reflexive graphs to anchored bundles from Example \ref{ex:prolongations}(iv).
This may be restated as a nerve construction.
\begin{proposition}%
    \label{prop:lin-approx-gph}
    There is a functor
    \[
        \partial: \wone^* \to \mathsf{Gph}(\w)  
    \]
    so that $N_\partial: \mathsf{Gpd}(\w) \to [\wone^*, \w]$ is the linear approximation of a reflexive graph as described in Example \ref{ex:prolongations} (iv).
\end{proposition}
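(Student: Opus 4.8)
The plan is to construct the functor $\partial: \wone^* \to \mathsf{Gph}(\w)$ explicitly and then verify that its nerve reproduces the linear approximation from Example \ref{ex:prolongations}(iv). Recall that $\wone^*$ is the full $\w$-subcategory of $\wone$ on the two objects $\{\N, W\}$, and that $\mathsf{Gph}(\w)$ is the category of reflexive graphs internal to $\w$. On objects, I would send $\N$ to the terminal graph (a single vertex with its identity loop) and $W$ to the ``walking edge'' graph, namely the representable reflexive graph $\partial[1]$ whose vertices are $[0]$-shaped and whose edges are $[1]$-shaped, so that $\mathsf{Gph}(\w)(\partial(W), G)$ computes the object of edges of $G$ and $\mathsf{Gph}(\w)(\partial(\N), G)$ computes the object of vertices. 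The key point is that the hom-Weil-space $\wone^*(\N, W)$ and $\wone^*(W, W)$ must be sent to the appropriate hom-objects in $\mathsf{Gph}(\w)$ in a way compatible with composition.

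First I would pin down the $\w$-enriched structure: since $\wone^*(\N, W)(X) = \underline{\wone}(\N, X \ox W)$ and $\wone^*(W,W)(X) = \underline{\wone}(W, X \ox W)$, the generating maps $p, 0, \ell$ (the only structure maps with domain or codomain of width $\le 1$ that survive in $\wone^1$, by Lemma \ref{lem:writing-maps-in-wone}, since $c$ is absent) must be matched to the graph-theoretic source, target, and unit data. Concretely, $\partial$ should send $0: \N \to W$ to the vertex inclusion, $p: W \to \N$ to the retraction collapsing the edge, and the addition $+$ on $W_2$ to the graph-theoretic gluing that produces the object of composable paths. I would then verify functoriality, i.e. that $\partial$ preserves composition and the enriched identities; this reduces to checking the rig relations $p \o 0 = \mathrm{id}$ and the compatibility of $\ell$ with the graph structure, which are routine once the assignments are fixed.

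Next I would compute $N_\partial(G)$ for a reflexive graph $G$ in $\w$ and compare it against the construction in Example \ref{ex:prolongations}(iv) and Example \ref{ex:anchored-bundles}(iii). By definition $N_\partial(G) = \mathsf{Gph}(\w)(\partial(-), G)$, so $N_\partial(G)(\N) = G_0$ (the vertices) and $N_\partial(G)(W) = G_1$ (the edges); the action of $N_\partial(G)$ on the map $\ell: W \to WW$ should reproduce the passage from an edge to a composable pair, recovering exactly the span
\[
\begin{tikzcd}
	{C^\partial} & {T.C} & {T.C}
	\arrow["{e.T_1}", shift left=1, from=1-2, to=1-3]
	\arrow["{T.e^s}"', shift right=1, from=1-2, to=1-3]
	\arrow[from=1-1, to=1-2]
\end{tikzcd}
\]
and the lift on $C^\partial$ described in Example \ref{ex:anchored-bundles}(iii). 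The main obstacle I expect is bookkeeping: matching the enriched hom-spaces of $\wone^*$ (which encode the rig $\N[x]/x^2$ combinatorially) with the internal-hom structure of $\mathsf{Gph}(\w)$ so that the representing graphs $\partial(\N), \partial(W)$ genuinely corepresent vertices and edges, and so that the Weil-space structure of the nerve agrees with the tangent-categorical span produced by the reflexive-graph-to-anchored-bundle functor. Once the corepresentation is verified, functoriality and the identification of $N_\partial$ with the linear approximation follow by the Yoneda lemma applied in the enriched setting.
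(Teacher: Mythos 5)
Your object assignments are where the argument breaks down. You send $W$ to the walking-edge graph $I = \yon[1]$, so that $N_\partial(G)(W) = \mathsf{Gph}(\w)(I,G) = G_1$, the object of edges. But the linear approximation of Example \ref{ex:prolongations}(iv) / Example \ref{ex:anchored-bundles}(iii) does not return the edge object: it returns the equalizer $C^\partial$ of $e.T_1,\ T.e^s : T.C \rightrightarrows T.C$, a subobject of the \emph{tangent bundle} of the edge object (the source-constant tangent vectors). What you have described is the path/simplicial nerve of Example \ref{ex:nerve-functors}(iii) (Grothendieck--Segal), not the infinitesimal one; your readings of $+$ as ``gluing composable paths'' and of $\ell$ as ``passage from an edge to a composable pair'' confirm that the infinitesimal object has dropped out of the construction entirely, so the span you claim to recover at the end does not follow from the functor you built.

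The missing ingredient is the representable tangent structure on $\mathsf{Gph}(\w)$: the discrete graph on $D = \yon W$ corepresents $T$, so $\mathsf{Gph}(\w)(D \x I, G) = T.G_1$. The paper's proof takes the reflexive graph object $s,t : 1 \to I$ in $\mathsf{Gph}(\w)^{op}$, equipped with the dual tangent structure of Proposition \ref{prop:inf-object-tangent-structures}, and applies the graph-to-anchored-bundle construction \emph{there}; dualized back to $\mathsf{Gph}(\w)$ this is the coequalizer
\[
\begin{tikzcd}
	{D\x I} & {D\x I} & \partial
	\arrow["{e \x I}", shift left=1, from=1-1, to=1-2]
	\arrow["{D \x e^s}"', shift right=1, from=1-1, to=1-2]
	\arrow[from=1-2, to=1-3]
\end{tikzcd}
\]
Proposition \ref{prop:nerve-anc-work} then extends this anchored bundle in $\mathsf{Gph}(\w)^{op}$ to a functor on the truncated Weil algebras, and continuity of $\mathsf{Gph}(\w)(-,G)$ turns the defining coequalizer into the equalizer defining $C^\partial$, which is exactly the identification the proposition asserts. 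Your Yoneda/corepresentation strategy is fine in outline, but it only works once $\partial(W)$ is this quotient of $D \x I$ rather than $I$ itself.
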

\begin{proof}
    Let $s,t:C \to M, e:M \to C$ be a reflexive graph, and recall that the anchored bundle $C^\partial \to C$ is induced by the equalizer
    \[\]
    The category $\mathsf{Gph}$ is a representable tangent category, where $D$ is the discrete graph $D = D$. By the Yoneda lemma, we have that
    \[
        [\mathsf{Gph}, \w](\yon 1, G) = G_1,
        [\mathsf{Gph}, \w](\yon 0, G) = G_0,
        [\mathsf{Gph}, \w](D \x \yon(i), G) = T.G_i,
    \]
    so the limit diagram defining the infinitesimal approximation of a graph becomes
    \[\begin{tikzcd}
        {C^\partial} & {\mathsf{Gph}(D \x I, C)} & {\mathsf{Gph}(D \x I, C).}
        \arrow[from=1-1, to=1-2]
        \arrow["{(e \x I)^*}", shift left=1, from=1-2, to=1-3]
        \arrow["{(D \x e^-)^*}"', shift right=1, from=1-2, to=1-3]
    \end{tikzcd}\]
    Now observe that $\mathsf{Gph}(\w)^{op}$ with the dual tangent structure from Proposition \ref{prop:inf-object-tangent-structures} has a reflexive graph object \[s,t:1 \to I, !:I \to 1.\] Construct its linear approximation as in Example \ref{ex:prolongations}(iv) (e.g. take the coequalizer of $\w$-graphs):
    \[\begin{tikzcd}
        {D\x I} & {D\x I} & \partial.
        \arrow["{e \x I}", shift left=1, from=1-1, to=1-2]
        \arrow["{D \x e^s}"', shift right=1, from=1-1, to=1-2]
        \arrow[from=1-2, to=1-3]
    \end{tikzcd}\]
    By Proposition \ref{prop:nerve-anc-work}, this determines a functor
    \[
        \partial: \wone^1 \to \mathsf{Gph}(\w)^{op}. 
    \]
    By the continuity of the hom-functor, the nerve $N_\partial: \mathsf{Gph}(\w) \to [\wone^*, \w]$ lands in the category of anchored bundles. The continuity of the hom-functor ensures that this is indeed the linear approximation from Example \ref{ex:prolongations} (iv).
\end{proof}

The simplicial localization in \cite{Kan1958} has a left adjoint, the \emph{geometric realization}, that constructs a topological space using the data of a simplicial set. This realization may be constructed using a left Kan extension.
\begin{definition}%
    \label{def:realization}
    Let $K: \a \to \C$ be a $\vv$-functor for a cocomplete $\,\C$.
    The \emph{realization} of $K$ is the left Kan extension
    \[
\begin{tikzcd}
    A & {\widehat{A}} \\
    \C
    \arrow["K"', from=1-1, to=2-1]
    \arrow["\yon", from=1-1, to=1-2]
    \arrow["{|-| := Lan_K\yon}"'{pos=1}, from=2-1, to=1-2]
\end{tikzcd}
        \hspace{0.5cm}
        |C|_K := \int^A \C(KA, C) \bullet KA
    \]
    A \emph{nerve/realization context} is a $\vv$-functor $K:\a \to \C$ from a small $A$ to a cocomplete $\C$.   
\end{definition}
The adjunction between simplicial sets and topological spaces follows from general categorical machinery as a nerve/realization context:
\begin{lemma}
    For every nerve/realization context $K: \a \to \C$, the realization of $K$ is left adjoint to the nerve of $K$.
\end{lemma}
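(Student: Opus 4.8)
The plan is to establish the adjunction $|-|_K \dashv N_K$ by producing a $\vv$-natural isomorphism
\[
    \C(|P|_K,\, C) \;\cong\; [\a^{op},\vv]\bigl(P,\, N_K C\bigr)
\]
for all presheaves $P \in [\a^{op},\vv]$ and objects $C \in \C$, and then invoking the standard fact that such an isomorphism, natural in both variables, is precisely the data of an adjunction. The whole argument is a formal application of enriched coend/end calculus, so no feature of the ambient tangent structure enters; the only genuine inputs are that $\a$ is small, $\C$ is cocomplete, and $\vv$ is locally presentable as a monoidal category (so that $\C$ admits copowers $J \bullet X$ by objects $J \in \vv$, as in Definition \ref{def:power}).

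First I would record that, since $\vv$ is locally presentable as a monoidal category and $\C$ is cocomplete, the realization $|-|_K = \mathrm{Lan}_\yon K$ of Definition \ref{def:realization} is computed pointwise by the enriched coend formula for a left Kan extension along the (dense) Yoneda embedding, namely $|P|_K = \int^{A}\! P(A) \bullet KA$ (the displayed formula in Definition \ref{def:realization} is this expression evaluated at the nerve $N_K C$). With that in hand I would run the chain of natural isomorphisms
\begin{align*}
    \C(|P|_K,\, C)
    &= \C\Bigl(\int^{A} P(A) \bullet KA,\ C\Bigr) \\
    &\cong \int_{A} \C\bigl(P(A) \bullet KA,\ C\bigr) \\
    &\cong \int_{A} \vv\bigl(P(A),\ \C(KA,\, C)\bigr) \\
    &= \int_{A} \vv\bigl(P(A),\ N_K C\,(A)\bigr) \\
    &= [\a^{op},\vv]\bigl(P,\, N_K C\bigr),
\end{align*}
where the first isomorphism is continuity of $\C(-,C)$ carrying the defining colimit of the coend to the corresponding end, the second is the defining adjunction of the copower $\C(J \bullet X, Y) \cong \vv(J, \C(X,Y))$ applied under the end, the third is merely the definition of the nerve $N_K$ (Definition \ref{def:nerve-of-a-functor}), and the final equality is the end formula for the internal hom of the presheaf $\vv$-category $[\a^{op},\vv]$.

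I expect the main obstacle to be bookkeeping rather than a conceptual difficulty: one must verify that each step is $\vv$-\emph{natural} in both $P$ and $C$ (not just natural for the underlying ordinary categories), and that the coend defining $|P|_K$ genuinely exists. Existence is immediate from smallness of $\a$, cocompleteness of $\C$, and the availability of copowers; the $\vv$-naturality follows because every isomorphism in the chain is an instance of the enriched (co)Yoneda machinery, whose components are natural by construction. Since the argument is purely formal enriched category theory, the lemma — and hence the nerve/realization adjunction underlying the Lie realization — follows.
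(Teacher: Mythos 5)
Your proof is correct: the chain of isomorphisms (continuity of $\C(-,C)$ turning the defining coend into an end, the copower adjunction $\C(J\bullet X,Y)\cong\vv(J,\C(X,Y))$, the definition of $N_K$, and the end formula for the presheaf hom) is exactly the standard coend-calculus argument, and each step is $\vv$-natural in $P$ and $C$ as you note. The paper states this lemma without proof, deferring to ``general categorical machinery,'' and your argument is precisely the machinery being invoked, so there is nothing to add.
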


\section{Nervous monads and algebroids}%
\label{sec:enriched-theories}

Transposing the characterization of algebroids from Theorem \ref{thm:weil-nerve} to the enriched perspective introduces the challenge of finding an appropriate framework to describe algebroids as enriched structures. Kelly's theory of enriched sketches (see Chapter 6 of \cite{Kelly2005}), small $\vv$-categories equipped with a chosen class of limits cones, seems to be a natural candidate. However, when regarding a tangent functor as a $\w$-functor, the natural part 
\[
    \alpha: F.T \Rightarrow T.F 
\]
is the unique morphism induced by the universality of $T$ as a weighted limit, so it becomes unclear how to translate the condition that $\alpha$ is a cartesian natural transformation (Definition \ref{def:cart-nat}).

A clue for how to proceed may be found in \cite{Kapranov2007}, which proved that Lie algebroids are monadic over anchored bundles (when allowing for infinite-dimensional vector bundles). 
Recent work in \cite{Bourke2019} and \cite{Berger2012} has developed the appropriate notion of (enriched) theories that correspond to monads over general locally presentable categories. 
A critical insight is that for a filtered-colimit-preserving monad $\mathbb{T}$ on $\s$, the opposite category of the Lawvere theory $\th$ is precisely the full subcategory of the Kleisli category whose objects are $[n] = \coprod_n 1$, and the \emph{nerve} of the inclusion
\[
    \th^{op} \hookrightarrow \s^{\mathbb{T}}
\]
is fully faithful; that is, when the functor $K$ is dense.
Formally, a functor is \emph{dense} whenever its nerve behaves like the Yoneda embedding. The theory of dense functors is developed in Chapter 5 of \cite{Kelly2005}. We continue to assume that an arbitrary site of enrichment $\vv$ is locally presentable as a $\vv$-category.
\begin{definition}%
    \label{def:dense}
    A functor $K: \a \to \C$ is \emph{dense} whenever the nerve of $K$ is fully faithful, and a subcategory inclusion that is dense will be called a \emph{dense subcategory}.
\end{definition}

Dense functors are poorly behaved under composition, but there is a useful cancellativity result from Section 5.2 of \cite{Kelly2005}.
\begin{proposition}
    Consider a diagram of $\vv$-categories
    \[
\begin{tikzcd}
    X & Z \\
    Y
    \arrow["K", from=1-1, to=1-2]
    \arrow["F"', from=1-1, to=2-1]
    \arrow[""{name=0, anchor=center, inner sep=0}, "J"', from=2-1, to=1-2]
    \arrow["\alpha"', shorten >=2pt, Rightarrow, from=1-1, to=0]
\end{tikzcd}\]
    where $\alpha$ is a natural isomorphism and $K$ is dense.
    If this diagram exhibits $(\alpha, J)$ as the left Kan extension of $K$ along $F$, then $J$ is also dense.
\end{proposition}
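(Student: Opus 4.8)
The plan is to avoid arguing directly with the nerve and instead use the Kan-extension characterisation of density. Recall the standard equivalence (Chapter~5 of \cite{Kelly2005}): a $\vv$-functor $K\colon X\to Z$ is dense (Definition~\ref{def:dense}, i.e.\ $N_K$ is fully faithful) exactly when the identity $2$-cell $1_K$ exhibits $\mathrm{id}_Z$ as the pointwise left Kan extension $\mathsf{Lan}_K K$. Hence to prove $J$ dense it suffices to show that $1_J$ exhibits $\mathrm{id}_Z$ as $\mathsf{Lan}_J J$. The whole argument then rests on transporting the two hypotheses along the natural isomorphism $\alpha\colon K\cong J\circ F$ and feeding them into the transitivity (pasting) lemma for pointwise left Kan extensions.

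The key steps are: (i) since a left Kan extension depends on the functor being extended only up to isomorphism, the hypothesis $(J,\alpha)=\mathsf{Lan}_F K$ transports along $\alpha$ to exhibit $J$ as $\mathsf{Lan}_F(J F)$; crucially the transported unit $JF\Rightarrow (\mathsf{Lan}_F(JF))\circ F=JF$ is $\alpha\circ\alpha^{-1}=\mathrm{id}_{JF}$. (ii) Transporting the density of $K$ along $\alpha$ in the same way exhibits $\mathrm{id}_Z$ as $\mathsf{Lan}_{JF}(JF)$, again with identity unit. (iii) Now apply transitivity of pointwise Kan extensions: since $\mathsf{Lan}_F(JF)=J$ exists and $\mathsf{Lan}_{JF}(JF)=\mathrm{id}_Z$ exists, the iterated extension $\mathsf{Lan}_J\!\big(\mathsf{Lan}_F(JF)\big)=\mathsf{Lan}_J J$ exists and agrees with $\mathsf{Lan}_{JF}(JF)\cong\mathrm{id}_Z$. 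Reading off the comparison gives that $1_J$ exhibits $\mathrm{id}_Z$ as $\mathsf{Lan}_J J$, so $N_J$ is fully faithful.

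The hard part is not any construction but the bookkeeping of $2$-cells. Transitivity yields an isomorphism of functors $\mathsf{Lan}_J J\cong\mathrm{id}_Z$, whereas density is the stronger claim that the canonical comparison induced by the unit $1_J$ is invertible; I must therefore verify that the unit of $\mathsf{Lan}_J J$ obtained by pasting the units from (i) and (ii) is precisely $1_J$, which is exactly where the identity-unit observations in (i) and (ii) are needed. A secondary technical point is that $Z$ is an arbitrary $\vv$-category, so ``pointwise left Kan extension'' must be read through the representable description $Z(\mathsf{Lan}_F K\,(y),z)\cong \int_{x}[\,Y(Fx,y),\,Z(Kx,z)\,]$ valued in $Z$; since $\vv$ is locally presentable as a monoidal category the relevant ends exist and the pasting lemma is available at this level. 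For completeness one can also run the argument entirely on the nerve side: the Kan-extension hypothesis forces $N_J\cong \mathsf{Ran}_{F^{\mathrm{op}}}\circ N_K$, and combining $N_K\cong [F^{\mathrm{op}},\vv]\circ N_J$ with the adjunction $[F^{\mathrm{op}},\vv]\dashv \mathsf{Ran}_{F^{\mathrm{op}}}$ recovers full faithfulness of $N_J$ from that of $N_K$; but the Kan-extension formulation keeps the $2$-cell tracking the most transparent.
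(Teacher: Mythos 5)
Your argument is correct, but note that the paper offers no proof of this proposition at all --- it simply attributes the result to Section 5.2 of \cite{Kelly2005}, where it appears as Theorem 5.13 --- so there is no in-paper argument to compare against. Your route is essentially the standard (Kelly) one: characterise density by ``$1_J$ exhibits $\mathrm{id}_Z$ as $\mathrm{Lan}_J J$'' (Kelly's Theorem 5.1), transport the two hypotheses along the isomorphism $\alpha$ so that both $J=\mathrm{Lan}_F(JF)$ and $\mathrm{id}_Z=\mathrm{Lan}_{JF}(JF)$ hold with \emph{identity} units, and conclude by transitivity of Kan extensions that $\mathrm{Lan}_J J$ exists and is $\mathrm{id}_Z$. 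The one step you flag but do not carry out --- that the resulting unit is literally $1_J$ rather than some unidentified isomorphism --- does go through, and you should make it explicit: by the universal property of $J=\mathrm{Lan}_F(JF)$ with identity unit, whiskering with $F$ is injective on $2$-cells $J\Rightarrow HJ$, and the pasted unit whiskered with $F$ equals $1_{JF}=1_J F$, which forces it to be $1_J$. That cancellation is the only place the identity-unit bookkeeping is actually used, so omitting it leaves the proof formally incomplete even though nothing fails.

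Two smaller remarks. First, your worry about pointwiseness is unnecessary for the main argument: Kelly's Theorem 5.1 already says that if $\mathrm{id}_Z=\mathrm{Lan}_K K$ with identity unit in the weak ($2$-categorical) sense, then the extension is automatically pointwise and $K$ is dense; so all three steps can be run with weak Kan extensions and no ends need to be computed, making the appeal to local presentability of $\vv$ superfluous there. Second, the ``for completeness'' nerve-side variant is the one place where pointwiseness genuinely matters: the isomorphism $N_J\cong \mathrm{Ran}_{F^{\mathrm{op}}}\circ N_K$ is precisely the assertion that $J$ is the \emph{pointwise} left Kan extension of $K$ along $F$, so that variant requires the stronger reading of the hypothesis (and its own unit-chasing to see that the composite $Z(z,z')\to Z(z,z')$ is the identity), whereas your main argument does not.
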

\begin{corollary}%
    \label{cor:dense-ff-result}
    If $K$ is dense, $F$ is fully faithful, and  $J.F = K$, then $F$ is a dense subcategory.
\end{corollary}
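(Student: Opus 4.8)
The plan is to deduce the result directly from the preceding Proposition, whose hypothesis is that a triangle exhibits a left Kan extension; the whole task therefore reduces to recognising the relevant full subcategory inclusion as such a Kan extension. Throughout I read the statement in the form in which it is actually applied (Example \ref{ex:nerve-functors}(iii) and Proposition \ref{prop:lin-approx-gph}): $K\colon X \to Z$ is dense, the functor $J\colon Y \to Z$ is a full subcategory inclusion (hence fully faithful), $F\colon X \to Y$ is fully faithful, and $J.F = K$; the goal is to show that this inclusion is dense, so that $Y$ is a dense subcategory of $Z$.

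First I would invoke density of $K$ in its weighted-colimit form. Since $N_K$ is fully faithful (Definition \ref{def:dense}), every object $z \in Z$ is recovered as the canonical colimit $z \cong N_K(z)\star K$, where the weight is $N_K(z) = Z(K-,z)$ and $\star$ denotes the $\vv$-weighted colimit of $K$. Applying this to the objects in the image of the inclusion gives $Jy \cong N_K(Jy)\star K$ for each $y \in Y$.

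Next I would identify the weight. Because $J$ is a full subcategory inclusion it is fully faithful, so for each $x \in X$ and $y \in Y$ there is an isomorphism $N_K(Jy)(x) = Z(JFx, Jy) \cong Y(Fx, y) = N_F(y)(x)$, natural in $x$ and $y$; hence $N_K \circ J \cong N_F$ as $\vv$-functors $Y \to [X^{op},\vv]$. Substituting into the colimit presentation yields $Jy \cong N_F(y)\star K = (\mathrm{Lan}_F K)(y)$, the last equality being the pointwise formula for the enriched left Kan extension. The remaining care is to check that these objectwise isomorphisms assemble into a $\vv$-natural isomorphism $\mathrm{Lan}_F K \cong J$ which is the canonical comparison, i.e. that $(\mathrm{id}_K, J)$ genuinely satisfies the universal property of $\mathrm{Lan}_F K$ rather than merely agreeing levelwise. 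Here full faithfulness of $F$ is what does the work: it guarantees that the unit of the pointwise Kan extension is invertible, so that $(\mathrm{Lan}_F K).F \cong K$ compatibly with the given factorisation $J.F = K$.

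With $(\mathrm{id}_K, J)$ exhibited as the left Kan extension of the dense functor $K$ along $F$, the preceding Proposition applies verbatim and yields that $J$ is dense; since $J$ is a full inclusion, $Y$ is a dense subcategory of $Z$. The step I expect to be the main obstacle is the third one, upgrading the pointwise isomorphisms $Jy \cong (\mathrm{Lan}_F K)(y)$ to the statement that the triangle is a genuine (pointwise) enriched left Kan extension, since the compatibility of the weighted colimits with the comparison $2$-cell over $\vv$ must be handled with care; this is precisely where full faithfulness of $F$ is consumed, and once it is in place the rest is an immediate appeal to the Proposition.
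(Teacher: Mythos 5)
Your strategy is the one the paper intends: the corollary is stated without a separate proof, the evident argument being to check that the commuting triangle exhibits $J$ as the pointwise left Kan extension of $K$ along $F$ and then to quote the preceding proposition, and that is exactly what you do. Your reading of the statement is also the right one. As literally written (``$F$ fully faithful $\Rightarrow$ $F$ dense'') the corollary is false --- take $K:1\to\s$ picking out the singleton (dense), $F:1\to\s\x\s$ picking out $(1,1)$ (fully faithful), and $J$ the first projection, so $J.F=K$ but $N_F(A,B)\cong A\x B$ is not fully faithful --- and the uses in Example \ref{ex:nerve-functors} and in the paragraph immediately after the corollary make clear that the fully faithful functor whose density is being asserted is the inclusion $J$.

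The one place your justification goes astray is the last step. Having produced the pointwise isomorphisms $(\mathrm{Lan}_F K)(y)=N_F(y)\star K\cong N_K(Jy)\star K\cong Jy$, you rightly insist on checking that this composite is the canonical comparison induced by $\mathrm{id}_K$, but the ingredient that settles this is not full faithfulness of $F$. The canonical comparison $(\mathrm{Lan}_F K)(y)\to Jy$ is classified, via the universal property of the weighted colimit, by the maps $Y(Fx,y)\to Z(JFx,Jy)$ given by the action of $J$ on hom-objects; full faithfulness of $J$ says these are precisely the isomorphisms $N_F(y)\cong N_K(Jy)$ you already constructed, and density of $K$ says the counit $N_K(Jy)\star K\to Jy$ is invertible. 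The comparison is therefore a composite of two canonical isomorphisms, and nothing more is needed. Full faithfulness of $F$ is never used: what it would buy you is invertibility of the unit $K\Rightarrow(\mathrm{Lan}_F K).F$, which is a different statement and plays no role in applying the proposition. This does not break your proof --- everything you need is already on the page --- but the attribution should be corrected.
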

Generally speaking, we will often refer to dense \emph{subcategories} rather than dense \emph{functors}. 
This is achieved by factoring the dense functor
\[
    K = \a \xrightarrow[]{K'} \mathsf{im}(K) \hookrightarrow \C
\]
where $\mathsf{im}(K)$ is the $\vv$-category whose objects are those of $\a$ and whose hom-objects are given by $\mathsf{im}(K)(A,B) = \C(KA, KB)$,  so that the inclusion of $\mathsf{im}(K)$ into $\C$ is fully faithful and therefore dense by Corollary \ref{cor:dense-ff-result}.

\begin{example}\label{ex:fin-card-def}
    ~\begin{enumerate}[(i)]
        \item The category of finite cardinals $\Sigma$ is the full subcategory of $\,\s$ whose objects are given by finite coproducts of the terminal object, $[n] = \coprod_n 1$. This is a skeleton of the category of finite sets, and a dense subcategory of $\,\s$ (see e.g. \cite{Bourke2019}).
        \item Recall that the category of $\vv$-presheaves on $\a$, $[\a^{op}, \vv]$ is the free colimit completion of $\a$ (see e.g. \cite{Kelly2005}). 
        If $\C$ is cocomplete, and $K:\a \to \C$ is dense, then the realization $|-|_K$ exhibits $\C$ as a reflective subcategory of $[\a^{op}, \vv]$, and is this a locally presentable category\footnote{In fact, an equivalent definition of a locally presentable category is as a cocomplete category with a dense subcategory.}. Conversely, if $\,\C$ is a reflective subcategory of $\,[\a^{op}, \vv]$ that contains the representable functors, then $\a$ is a dense subcategory of $\,\C$. 
        \item By Corollary \ref{cor:Lambda-dense}, the category of differential bundles in $\w$ is a reflective subcategory of $[{\Lambda^+}{op}, \vv]$, so $\yon: \Lambda^+ \hookrightarrow [{\Lambda^+}{op}, \vv]$ is a dense subcategory of differential bundles in $\w$ following the argument in the above example.
    \end{enumerate}
\end{example}

Nervous theories (\cite{Bourke2019}) are generalizations of Lawvere theories that extend to arbitrary locally finitely presentable $\vv$-categories. Recall that a classical Lawvere theory is a bijective-on-objects, product-preserving functor
\[
    t: \Sigma \to \th   
\]
where $\th$ is a cartesian category. Nervous theories replace $\Sigma$ with a dense subcategory of some locally presentable $\vv$-category, and the product preservation condition with conditions on the nerve of the theory map.
\begin{definition}%
    \label{def:nerve-theory}
    Let $K:\a \to \C$ be a dense $\vv$-subcategory of a locally finitely presentable $\,\C$. We call the replete image of $N_K$ in $[\a^{op}, \vv]$ the category of $K$-nerves. An \emph{$\a$-theory} is a bijective-on-objects $\vv$-functor $J: \a \to \th$, where each
    \[
        \th(J-,a): \a \to \vv
    \] is a $K$-nerve.
    The category of models for an $\a$-theory is the pullback in $\vv\mathsf{CAT}$:
    \[
\begin{tikzcd}
	{\C^\th} & {\widehat{\th}} \\
	\C & \widehat\a
	\arrow["{J^*}", from=1-2, to=2-2]
	\arrow["{N_K}", hook, from=2-1, to=2-2]
	\arrow[from=1-1, to=2-1]
	\arrow[from=1-1, to=1-2]
	\arrow["\lrcorner"{anchor=center, pos=0.125}, draw=none, from=1-1, to=2-2]
\end{tikzcd}\]
    (These are called \emph{concrete} models in \cite{Bourke2019}.)
\end{definition}
\begin{remark}
    The category of models for a theory is monadic. The core of the argument is due to Weber's \emph{nerve theorem}, found in \cite{Weber2007}, but an exposition on that result is beyond the scope of this thesis. 
\end{remark}
\begin{example}%
    \label{ex:theories}
    ~\begin{enumerate}[(i)]
        \item A functor $t:\Sigma \hookrightarrow \th$ is a $\Sigma$-theory if and only if $\th$ is a Lawvere theory, where  we use the fact that $\Sigma$ (Example \ref{ex:fin-card-def}) a  skeletal subcategory of finite sets. The nerve conditions in this case identify the models of the Lawvere theory, as the nerve of $\,\Sigma \hookrightarrow \s$ sends a set to the strict product-preserving functor $[n] \mapsto A^n$.
        \item As shown in \cite{Berger2012} and \cite{Bourke2019}, the original nerve construction from \cite{Segal1974} may be restated as saying that small categories arise as models of a $\mathsf{Pth}$-theory (Example \ref{ex:nerve-functors}). The set $\mathsf{Gph}([n], G)$ is the set of paths of through the graph $G$ that have $n$ non-identity elements, as
        \[
            G \ts{t}{s} G \cong G \ts{t}{id} M \ts{id}{s} G \cong G \ts{t}{s \o e \o s} G \ts{t \o e \o t}{s} G.     
        \]

        Next, recall that the category $\Delta$ may be regarded as follows:
        \begin{itemize}
            \item Objects: A strict order $[n] = 0 < 1 < \dots < n$, for $n \ge 0$, regarded as a category.
            \item Maps: Functors.
        \end{itemize}
        There is a bijective-on-objects functor from $\mathsf{Pth} \to \Delta$ that sends the graph $[n]$ to the pre-order $[n]$, as every graph homomorphism between paths will be order-preserving. Now observe that a model is precisely a simplicial set, where 
        \[
            X([0]) = M, 
            X([1]) = C,
            X([2]) = C \ts{t}{s} C,
            X([3]) = C \ts{t}{s} C \ts{t}{s} C.  
        \]
        Furthermore, the map
        \[\begin{tikzcd}[row sep = tiny]
            & 0 \\
            0 \\
            & 1 \\
            1 \\
            & 2
            \arrow[maps to, from=2-1, to=1-2]
            \arrow[maps to, from=4-1, to=5-2]
            \arrow[from=2-1, to=4-1]
            \arrow[from=1-2, to=3-2]
            \arrow[from=3-2, to=5-2]
        \end{tikzcd}\]
        in $\Delta$ becomes a composition map: 
        \[ 
            \infer{C \ts{t}{s} C \to C}{X([2]) \to X([1])}
        \] 
        Associativity and unitality (for the section $e: M \to C$) follow by functoriality. Thus, a model of $\mathsf{Pth} \to \Delta$ is a small category, and a morphism is exactly a functor.
    \end{enumerate}
\end{example}
Corollary \ref{cor:anchored-bundle-as-nerve}, then, gives a monadicity result for $\w$-anchored bundles over $\w$-differential bundles.
\begin{proposition}
    $\w$-anchored bundles are models of the theory $(\Lambda^+)^{op} \to (\wone^*)^{op}$ (see Section \ref{sec:enriched-structures}).
\end{proposition}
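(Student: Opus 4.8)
The plan is to recognize this statement as an instance of the nervous-theory framework of Definition \ref{def:nerve-theory}, taking the arities to be $\a := (\Lambda^+)^{op}$ sitting inside $\C := \mathsf{DBun}(\w)$, the theory to be $\th := (\wone^*)^{op}$, and the theory map $J : (\Lambda^+)^{op} \to (\wone^*)^{op}$ to be the opposite of the inclusion $\Lambda^+ \hookrightarrow \wone^*$ underlying Corollary \ref{cor:anchored-bundle-as-nerve}. Once the three hypotheses of a nervous theory are verified — that $\a$ is a dense subcategory of a locally presentable $\vv$-category, that $J$ is bijective on objects, and that each representable $\th(J-, a)$ is a $K$-nerve — the category of models is, by definition, the pullback of $J^* : [\wone^*, \w] \to [\Lambda^+, \w]$ along the nerve $N_K : \mathsf{DBun}(\w) \hookrightarrow [\Lambda^+, \w]$, which is precisely the pullback computing $\mathsf{Anc}(\w)$ in Corollary \ref{cor:anchored-bundle-as-nerve}. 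So the substance of the argument is to confirm that $J$ genuinely is a nervous theory.

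First I would dispatch the density and bijective-on-objects conditions, which are routine. By Corollary \ref{cor:Lambda-dense} the category $\mathsf{DBun}(\w)$ is locally presentable, and by Proposition \ref{prop:Lambda-is-refl-subcat} it is a reflective subcategory of $[(\Lambda^+)^{op}, \w]$ containing the representables; hence $K : (\Lambda^+)^{op} \hookrightarrow \mathsf{DBun}(\w)$ is dense by Example \ref{ex:fin-card-def}(iii). Since $D$ is a commutative semigroup, $D + 1$ is a commutative monoid, so $\Lambda^+ \cong (\Lambda^+)^{op}$ and the copresheaf/presheaf bookkeeping in Lemma \ref{lem:lambda-plus-is-pdb} is harmless. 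The functor $J$ is the identity on the common object-set $\{\N, W\}$, so it is bijective on objects.

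The crux is the nerve condition: each representable $\th(J-, a)$, namely the presheaf $\wone^*(a, -)$ restricted along $\Lambda^+ \hookrightarrow \wone^*$ for $a \in \{\N, W\}$, must lie in the replete image of $N_K$, i.e.\ must be the nerve of a differential bundle. I would exhibit the two witnessing bundles explicitly. Because $\N$ is terminal in $\wone$, the representable $\wone^*(\N, -)$ is the constant-terminal presheaf, which is $N_K$ of the terminal differential bundle. The representable $\wone^*(W, -)$ restricts to the generic differential bundle, the tangent bundle of the free object of $\wone$ regarded internally; this carries a differential-bundle structure precisely because $\wone$ is a tangent category and every tangent bundle is a differential bundle with its canonical data $(p, 0, +, \ell)$. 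Here I would invoke Lemma \ref{lem:writing-maps-in-wone}, that every map out of $W_n$ is generated by $\{p, 0, +, \ell\}$ without the flip $c$, to confirm that the restriction to $\Lambda^+$ sees exactly the pre-differential-bundle structure splitting to this bundle, so that it agrees with $N_K$ of the corresponding object.

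With the nervous-theory hypotheses secured, the identification of models with anchored bundles is immediate: the defining pullback of $\w^{\th}$ coincides square for square with the pullback of Corollary \ref{cor:anchored-bundle-as-nerve} presenting $\mathsf{Anc}(\w)$, whence the two categories agree. I expect the nerve condition of the preceding paragraph to be the principal obstacle, as it is the only step carrying genuine content; the remaining conditions either cite an earlier result or follow from the two-object, self-dual nature of $\Lambda^+$.
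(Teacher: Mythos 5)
Your proposal is correct and follows essentially the same route as the paper: the paper's (much terser) proof likewise observes that the inclusion $\Lambda^+ \to \wone^*$ restricts to a differential bundle, so the representables satisfy the nerve condition, and then identifies the category of models with the pullback square of Corollary \ref{cor:anchored-bundle-as-nerve}. Your expansion of the density check and the explicit witnesses for the nerve condition (the terminal bundle for $\N$ and the generic tangent bundle for $W$) are just the details the paper leaves implicit.
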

\begin{proof}
    Note that the inclusion $\Lambda^+ \to \wone^1$ is a differential bundle, so it is a nerve. Then the diagram:
    \[\begin{tikzcd}
        {\mathsf{Anc}(\w)} & {[\wone^*,\w]} \\
        {\mathsf{DBun}(\w)} & {[\Lambda^+, \w]}
        \arrow[from=1-2, to=2-2]
        \arrow[hook, from=2-1, to=2-2]
        \arrow[from=1-1, to=2-1]
        \arrow[from=1-1, to=1-2]
        \arrow["\lrcorner"{anchor=center, pos=0.125}, draw=none, from=1-1, to=2-2]
    \end{tikzcd}\]
    exhibits $\w$-anchored bundles as models of a $(\Lambda)^{op}$-theory.
\end{proof}
As a corollary, we see that the category of anchored bundles is locally presentable.
\begin{corollary}
    The category of anchored bundles is locally presentable, and $(\wone^*)^{op} \hookrightarrow \mathsf{Anc}(\w)$ is dense.
\end{corollary}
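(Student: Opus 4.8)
The plan is to read both assertions off the nervous-theory presentation of anchored bundles established in the preceding proposition, together with the general machinery imported from \cite{Bourke2019}. By that proposition, $\mathsf{Anc}(\w)$ is the category of models $\C^{\th}$ of the $\a$-theory $J: (\Lambda^+)^{op} \to (\wone^*)^{op}$, in which the arities are $\a = (\Lambda^+)^{op}$, the theory is $\th = (\wone^*)^{op}$, and the base is $\C = \mathsf{DBun}(\w)$ (with $(\Lambda^+)^{op}$ dense in $\mathsf{DBun}(\w)$ by Example \ref{ex:fin-card-def}(iii)). Since $\th = (\wone^*)^{op}$, the ambient presheaf category is $\widehat{\th} = [\th^{op},\w] = [\wone^*,\w]$, which is precisely the top-right corner of the pullback square defining $\mathsf{Anc}(\w)$ in Corollary \ref{cor:anchored-bundle-as-nerve}.

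For local presentability I would invoke the monadicity remark following Definition \ref{def:nerve-theory}, which (via Weber's nerve theorem) guarantees that the category of models of an $\a$-theory over a locally finitely presentable base is monadic over that base through the induced nervous, hence accessible, monad. The base $\mathsf{DBun}(\w)$ is locally presentable by Corollary \ref{cor:Lambda-dense}, since $\w$ itself is locally presentable by Corollary \ref{cor:properties-of-w}; a category monadic over a locally presentable category via an accessible monad is again locally presentable, giving the first claim. Equivalently, the nervous-theory framework exhibits $\mathsf{Anc}(\w)$ as an accessible reflective localization of $[\wone^*,\w]$, and accessible reflective subcategories of locally presentable categories are locally presentable.

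For density I would apply the converse half of Example \ref{ex:fin-card-def}(ii). The defining condition of an $\a$-theory in Definition \ref{def:nerve-theory} is exactly that each representable $\th(J-,a)$ is a $K$-nerve, so restricting the Yoneda embedding $\th \to [\th^{op},\w]$ along $J$ lands inside $N_K(\C)$; consequently the Yoneda embedding restricts to a functor $\th \hookrightarrow \C^{\th} = \mathsf{Anc}(\w)$, and every representable functor lies in this reflective subcategory of $[\th^{op},\w] = [\wone^*,\w]$. Since a reflective subcategory of a presheaf category $[\a^{op},\w]$ that contains the representables has $\a$ as a dense subcategory, taking $\a = \th = (\wone^*)^{op}$ yields that $(\wone^*)^{op} \hookrightarrow \mathsf{Anc}(\w)$ is dense.

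The main obstacle is not computational but a matter of correctly discharging the hypotheses of the imported results: one must know that the category of models genuinely is an accessible reflective subcategory of the presheaf category, i.e. that the reflector supplied by the nervous-theory framework exists and preserves enough filtered colimits for local presentability to transfer. This is precisely the content of \cite{Bourke2019} and Weber's nerve theorem, so the real work lies in verifying that $J$ satisfies the conditions of an $\a$-theory (bijective-on-objects, with each $\th(J-,a)$ a $K$-nerve) — which is exactly what the preceding proposition established — rather than in anything intrinsic to this corollary.
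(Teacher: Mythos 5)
Your proposal is correct and follows exactly the route the paper intends: the corollary is stated without proof as an immediate consequence of the preceding proposition exhibiting $\mathsf{Anc}(\w)$ as the category of models of the $(\Lambda^+)^{op}$-theory $J:(\Lambda^+)^{op}\to(\wone^*)^{op}$, with local presentability and density of the theory category both supplied by the nervous-theories machinery of \cite{Bourke2019}. Your unpacking — monadicity/accessible reflectivity over the locally presentable base $\mathsf{DBun}(\w)$ for the first claim, and the ``reflective subcategory of $[\wone^*,\w]$ containing the representables'' criterion from Example \ref{ex:fin-card-def}(ii) for density — correctly discharges the hypotheses the paper leaves implicit.
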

In \cite{Bourke2019}, the authors identify exactly those monads on a locally presentable $\vv$-category that correspond to the models of a theory.
Recall the notation that the category of algebras for a monad is $\C^T$ and the category of free coalgebras is $\C_T$. For a dense subcategory $\a \hookrightarrow \C$, use $\a_T$ for the category of free algebras over objects in $\a$. Recall that the Lawvere theory \[K:\mathsf{FinSet} \to \th\] for a filtered-colimit-preserving monad $\mathbb{T}$ on $\s$ may be re-derived as the full subcategory of free algebras over finite sets, $K_T: \mathsf{FinSet}_{\mathbb{T}} \hookrightarrow \s_{\mathbb{T}} \hookrightarrow \s^{\mathbb{T}}$. Nervous monads abstract this property.
\begin{definition}
    Let $\C$ be a locally presentable $\vv$-category, with $K:\a \to \C$ a dense sub-$\vv$-category. 
    A $\vv$-monad $\mathbb{T}$ over $\C$ is \emph{$K$-nervous} if
    \begin{enumerate}
        \item the inclusion $K_T: \a_T \hookrightarrow \C^T$ is dense;
        \item the following diagram is a pullback in $\vv$CAT:
        \[\begin{tikzcd}
            {\C^T} & {[\a_T^{op}, \vv]} \\
            \C & {[\a^{op}, \vv]}
            \arrow[hook, from=2-1, to=2-2]
            \arrow[from=1-2, to=2-2]
            \arrow[from=1-1, to=2-1]
            \arrow[from=1-1, to=1-2]
        \end{tikzcd}\]
    \end{enumerate}
\end{definition}

\begin{theorem}[\cite{Bourke2019}]
    Let $K:\a \hookrightarrow \mathbb{C}$ be a dense sub-$\vv$-category of a cocomplete $\vv$-category $\,\C$.
    There is an equivalence of categories between $\a$-theories and $\a$-nervous monads on $\,\C$.
\end{theorem}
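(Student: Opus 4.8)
The plan is to exhibit mutually inverse constructions passing between $\a$-theories and $\a$-nervous monads, following the strategy of \cite{Bourke2019}; since the statement is quoted from that source, I only sketch the argument and indicate where the real work lies.

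From a nervous monad to a theory: given a $K$-nervous monad $\mathbb{T}$ with free functor $F_T : \C \to \C^T$, I would let $\th_{\mathbb{T}}$ be the full sub-$\vv$-category of $\C^T$ on the free algebras over objects of $\a$, so that $\th_{\mathbb{T}}(a,b) := \C^T(F_T K a, F_T K b)$, with theory map $J$ the evident bijective-on-objects functor $\a \to \th_{\mathbb{T}}$ induced by $F_T K$. The one thing to check is that each presheaf $\th_{\mathbb{T}}(J-, a) = \C^T(F_T K -, F_T K a)$ is a $K$-nerve. This is precisely the content of the nervous conditions: the defining pullback square identifies $\C^T(F_T K-, F_T K a)$ with $\C(K-, U_T F_T K a) = N_K(U_T F_T K a)$ via the free/forgetful adjunction, so the hom-presheaf lands in the replete image of $N_K$, and density of $K_T : \a_T \hookrightarrow \C^T$ guarantees that nothing is lost in restricting to free algebras over $\a$.

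From a theory to a monad: given an $\a$-theory $J : \a \to \th$, I would take $\C^\th$ to be the pullback defining its models and argue that the forgetful functor $U : \C^\th \to \C$ is monadic. A left adjoint exists because $\C^\th$ is a reflective subcategory of the presheaf category $\widehat{\th}$ (using $J^*$ together with density of $K$, so that $N_K$ is fully faithful and the reflection is inherited), whence $\C^\th$ is locally presentable and cocomplete and the realization $|-|_K$ supplies the free functor; monadicity then follows either from Beck's theorem, by checking that $U$ reflects isomorphisms and creates $U$-split coequalizers, or more cleanly by invoking Weber's nerve theorem \cite{Weber2007}. One then verifies the two nervous conditions for the induced monad $\mathbb{T}$: density of $\a_T \hookrightarrow \C^\th$ is inherited from density of $J$ and of $K$, while the required pullback square is exactly the pullback presenting $\C^\th$.

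Mutual inverseness and the main obstacle: to finish I would check that the two round trips are naturally isomorphic to the respective identities — that the models of $\th_{\mathbb{T}}$ recover $\C^T$, and that the theory of free algebras of the model monad recovers $\th$ — both reducing to Yoneda-style identifications guaranteed by density and by the nerve condition on hom-objects. The main obstacle is the monadicity step in the theory-to-monad direction: one must show that $\C^\th$, defined only as a pullback in $\vv\mathsf{CAT}$, is genuinely monadic over $\C$ and that the resulting monad is accessible and $K$-nervous. This is where Weber's nerve theorem carries the weight, simultaneously producing the algebra structure and characterizing the essential image of $N_K$; the delicate point is transporting its hypotheses into the present enriched, locally presentable setting and confirming that the colimits involved are preserved well enough for the nerve to remain fully faithful on algebras.
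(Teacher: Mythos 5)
The paper does not prove this statement at all: it is imported verbatim from \cite{Bourke2019}, and the only commentary the thesis offers is a remark that the monadicity of models rests on Weber's nerve theorem. Your sketch is a faithful outline of the argument in that reference --- restricting to free algebras over $\a$ to extract a theory from a monad, presenting models as a pullback and establishing monadicity via reflectivity in $\widehat{\th}$ and Weber's theorem, then checking the round trips --- so there is nothing to compare against within the paper itself, and your identification of where the real work lies is accurate.

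One small conceptual slip worth flagging: you say that the nerve condition on $\th_{\mathbb{T}}(J-,a)$ ``is precisely the content of the nervous conditions.'' It is not. The isomorphism $\C^T(F_T K-, F_T K a) \cong \C(K-, U_T F_T K a) = N_K(T K a)$ is just the free--forgetful adjunction, so the associated hom-presheaves are $K$-nerves for an \emph{arbitrary} monad, and $\mathbb{T} \mapsto \th_{\mathbb{T}}$ lands in $\a$-theories unconditionally. The nervous conditions (density of $\a_T \hookrightarrow \C^T$ and the pullback square) are instead what make the monad recoverable from its theory, i.e.\ they characterize the essential image of the theory-to-monad functor so that the adjunction between theories and monads restricts to the claimed equivalence. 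Your round-trip paragraph implicitly uses this, but the attribution in the first construction should be corrected.
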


The first step in showing that algebroids are monadic over anchored bundles is to construct a dense subcategory of $\mathsf{Anc}(\w)$ that plays the role of the $V$-prolongations from Definition \ref{def:monoidal-category}. The enriched framework makes this straightforward: prolongations are weighted limits, with which we may freely complete $\wone^*$.

\begin{definition}
    \label{def:prol}
    Consider the category $\mathsf{Anc}(\w)$ as a representable tangent category, and take the dual tangent structure on $\mathsf{Anc}(\w)^{op}$.
    The category $\prol$ is defined as the full subcategory of $\mathsf{Anc}(\w)^{op}$ whose objects are generated by the prolongations of the Yoneda embedding $\yon:\wone^* \to \mathsf{Anc}(\w)$.
\end{definition}

For any small $\w$-category $\C$, the free completion of $\C$ is the opposite $\w$-category of the category of copresheaves on $\C$.\footnote{This is the dual statement of the classical theorem that the category of presheaves is the free cocompletion of a small category $\C$.} Thus, $\prol$ is the free completion of $\wone^1$ with prolongations. Therefore, a choice of prolongations on an anchored bundle $A$ determines a unique functor $\prol \to \C$ given by right Kan extension (see e.g. \cite{Kelly1982}). The continuity of 
\[
    \mathsf{Anc}(\w)(-, A): \mathsf{Anc}(\w)^{op} \to \w 
\]
ensures that $\mathsf{Anc}(\w)(L_V, A)$ is $A_V$. 

By Theorem \ref{thm:weil-nerve}, the category of involution algebroids in $\C$ is a full sub-$\w$-category of $[\wone, \C]$ that has a forgetful functor down to the category of anchored bundles. A consequence of Theorem \ref{thm:iso-of-cats-inv-emcs} is that a functor $\wone \to \C$ is an involution algebroid if and only if the precomposition \[\wone^* \hookrightarrow \wone \to \C\] restricts to an anchored bundle, with each $V$ sent to the $V$-prolongation of this anchored bundle. In other words, algebroids are models of the following enriched theorem (as in Definition \ref{def:nerve-theory}).

\begin{definition}
    \label{def:weil-theory}
    Define the $\prol^{op}$-theory of algebroids as the functor
    \[
        a: \prol \to \wone.
    \]
    This functor is bijective-on-objects by definition, and satisfies the nerve condition because the $V$-prolongation of the tangent bundle is $T^V$, so each presheaf
    \[
        \wone(a-, V):\prol \to \w   
    \] 
    is an $\prol$-nerve.
\end{definition}

The Weil nerve, then, translates to the following characterization of the category of involution algebroids in a tangent category $\C$. 
\begin{theorem}%
    \label{thm:pullback-in-cat-of-cats-inv-algd}
    The category of involution algebroids with chosen prolongations in a tangent category $\C$ is precisely the pullback in $\w\mathsf{CAT}$:
    \begin{equation}\label{eq:prol2}
        \begin{tikzcd}
    {Inv^*(\C)} & {[\wone,\C]} \\
    {Anc^*(\C)} & {[{\prol},\C]}
    \arrow[hook, from=2-1, to=2-2]
    \arrow[from=1-2, to=2-2]
    \arrow[from=1-1, to=2-1]
    \arrow[hook, from=1-1, to=1-2]
    \arrow["\lrcorner"{anchor=center, pos=0.125}, draw=none, from=1-1, to=2-2]
\end{tikzcd}

    \end{equation}
    Consequently, in $\w$ involution algebroids are monadic over anchored bundles.
\end{theorem}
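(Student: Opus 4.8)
The plan is to prove the theorem in two stages: first establishing the pullback square as an enriched reformulation of Theorem \ref{thm:iso-of-cats-inv-emcs}, and then deducing monadicity from the theory--monad correspondence of \cite{Bourke2019}. Throughout I would read the right-hand vertical map of the square as precomposition $a^* = (-)\o a$ with the theory functor $a:\prol \to \wone$ of Definition \ref{def:weil-theory}, and the two horizontal maps as the relevant fully faithful embeddings into $\w$-functor categories.

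For the pullback I would begin from the Weil nerve. By Theorem \ref{thm:weil-nerve} the functor $\mathsf{N}_{\weil}$ embeds $\mathsf{Inv}^*(\C)$ fully faithfully into $[\wone,\C]$, and by Theorem \ref{thm:iso-of-cats-inv-emcs} its replete image is exactly the $T$-cartesian Weil complexes. The commutativity of the square is then the content of Corollary \ref{cor:the-prolongation-description}: restricting the Weil nerve of an involution algebroid $(A,\alpha)$ along $a$ sends the object $L_V \in \prol$ to $A(T^V)$, which by that corollary is the $V$-prolongation of the underlying anchored bundle $A$ restricted to $\wone^1$. Since $\prol$ is (Definition \ref{def:prol}) the free completion of $\wone^1$ under prolongations, this restricted functor is precisely the classifying $\w$-functor of an anchored bundle with chosen prolongations, i.e. an object of $\mathsf{Anc}^*(\C)\hookrightarrow[\prol,\C]$, so the square commutes.

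Next I would verify the universal property. A cone over the cospan is a pair consisting of a lax tangent functor $F:\wone \to \C$ and an anchored bundle with chosen prolongations $G:\prol\to\C$ with $F\o a = G$ inside $[\prol,\C]$. The equation $F\o a = G$ says precisely that every object $F(T^V)$ is the $V$-prolongation of $F$ restricted to $\wone^1$ and that the associated comparison squares are $T$-pullbacks, so that $\alpha$ is $T$-cartesian at the projection $p$. By Proposition \ref{prop:mod-is-cart-if-p} this forces $\alpha$ to be $T$-cartesian for \emph{every} morphism of $\wone$ and $F$ to preserve all transverse limits, whence $F$ is a $T$-cartesian Weil complex. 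By Theorem \ref{thm:iso-of-cats-inv-emcs} such an $F$ lies in the image of $\mathsf{N}_{\weil}$, and the factorisation through $\mathsf{Inv}^*(\C)$ is unique because $\mathsf{N}_{\weil}$ is fully faithful and the square already commutes. This exhibits $\mathsf{Inv}^*(\C)$ as the pullback. For the monadicity clause I would specialise to $\C=\w$, which is locally presentable as a $\w$-category: Definition \ref{def:weil-theory} has already checked that $a:\prol\to\wone$ is bijective-on-objects and that each $\wone(a-,V)$ is an $\prol^{op}$-nerve, so $a$ is an $\prol^{op}$-theory in the sense of Definition \ref{def:nerve-theory} (using that $\prol^{op}\hookrightarrow\mathsf{Anc}(\w)$ is dense and that $\mathsf{Anc}(\w)$ is locally presentable). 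The pullback square of Equation \ref{eq:prol2} with $\C=\w$ is then literally the defining square of the category of models of this theory, and the theory--monad correspondence of \cite{Bourke2019} identifies these models with algebras for an $\prol^{op}$-nervous monad on $\mathsf{Anc}(\w)$; hence $\mathsf{Inv}^*(\w)$ is monadic over $\mathsf{Anc}(\w)$.

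I expect the main obstacle to be the universal-property step, and specifically the promotion of the hypothesis $F\o a=G$ — which a priori only matches the prolongation pullbacks and thus only yields $T$-cartesianness for $p$ — to full $T$-cartesianness together with preservation of all transverse limits. This is exactly where Proposition \ref{prop:mod-is-cart-if-p} is essential, combined with the observation that the weighted limits defining $\prol$ generate, under the theory map $a$, all the transverse limits of $\wone$. Keeping careful track of which structural squares are \emph{forced} to be $T$-pullbacks by the data packaged in $\mathsf{Anc}^*(\C)$, versus which must still be \emph{deduced}, is the delicate bookkeeping that the argument turns on.
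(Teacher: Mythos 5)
Your proposal is correct and follows essentially the same route as the paper: the paper's own (much terser) proof likewise reduces the pullback square to Corollary \ref{cor:the-prolongation-description} and the identification of the replete image of the Weil nerve with $T$-cartesian Weil complexes from Theorem \ref{thm:iso-of-cats-inv-emcs}, and defers monadicity to the Bourke--Garner theory--monad correspondence. Your expansion of the universal-property step — in particular flagging that $T$-cartesianness at $p$ plus the limit data packaged in $\mathsf{Anc}^*(\C)$ is what feeds Proposition \ref{prop:mod-is-cart-if-p} — is exactly the bookkeeping the paper leaves implicit.
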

\begin{proof}
    Recall the correspondence
    \[
        \infer{(\hat A, \alpha): \wone \to \C \in \mathsf{TangCat}_{lax}}
        {\bar A: \wone \to \C \in \w\cat}
    \]
    If
    \[
        \wone^* \to \prol \to \wone \to \C  
    \]
    determines an anchored bundle $(\pi:A \to M, \xi, \lambda, \anc)$ whose $V$-coprolongation is $\hat A.V$, so that $\hat A$ is the nerve of an involution algebroid by Corollary \ref{cor:the-prolongation-description}.
\end{proof}

Thus, we may characterize $\w$-involution algebroids as algebras for a $\prol$-nervous monad on the category of $\w$-anchored bundles.
\begin{corollary}
    The category of involution algebroids in $\w$ is equivalent to the category of algebras for the $\prol$-nervous monad on $\mathsf{Anc}(\w)$ generated by the theory
    \[
        a: \prol \to \wone
    \]
    from Definition \ref{def:weil-theory}, using Theorem 19 from \cite{Bourke2019}.
\end{corollary}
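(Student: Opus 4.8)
The plan is to read the statement off from the Bourke--Garner correspondence between $\a$-theories and $\a$-nervous monads, once its hypotheses have been checked for the arities $\prol^{op} \hookrightarrow \mathsf{Anc}(\w)$ and the theory $a:\prol \to \wone$. Theorem \ref{thm:pullback-in-cat-of-cats-inv-algd} already identifies $\mathsf{Inv}^*(\w)$, the category of involution algebroids with chosen prolongations in $\w$, with the category of models of the $\prol^{op}$-theory $a$ of Definition \ref{def:weil-theory}, realised as the pullback in Equation \ref{eq:prol2}. So it suffices to confirm that $a$ is a genuine $\prol^{op}$-theory in the sense of Definition \ref{def:nerve-theory}, and then to invoke the equivalence of theories with nervous monads (Theorem~19 of \cite{Bourke2019}) together with the fact that the model category of a theory is the algebra category of its associated monad.

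First I would assemble the standing hypotheses of the enriched-theories machinery. The category $\mathsf{Anc}(\w)$ is locally presentable (the corollary following Proposition \ref{prop:nerve-anc-work}), hence cocomplete, which is the ambient requirement. Next I would establish that the inclusion of arities $\prol^{op} \hookrightarrow \mathsf{Anc}(\w)$ is dense. That same corollary records that $(\wone^*)^{op} \hookrightarrow \mathsf{Anc}(\w)$ is dense; since $\prol$ is by Definition \ref{def:prol} the full subcategory of $\mathsf{Anc}(\w)^{op}$ generated under prolongations by the image of $\wone^*$, its opposite $\prol^{op}$ is a full subcategory of $\mathsf{Anc}(\w)$ containing $(\wone^*)^{op}$. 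Thus the dense inclusion factors as $(\wone^*)^{op} \hookrightarrow \prol^{op} \hookrightarrow \mathsf{Anc}(\w)$ with fully faithful second factor, and Corollary \ref{cor:dense-ff-result} yields that $\prol^{op} \hookrightarrow \mathsf{Anc}(\w)$ is dense.

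With density in hand, Definition \ref{def:weil-theory} already supplies the two remaining conditions: $a:\prol \to \wone$ is bijective-on-objects, and each representable $\wone(a-,V):\prol \to \w$ is a $\prol$-nerve because the $V$-prolongation of the canonical tangent involution algebroid is $T^V$. Hence $a$ is a $\prol^{op}$-theory. Applying the Bourke--Garner equivalence to the dense sub-$\w$-category $\prol^{op} \hookrightarrow \mathsf{Anc}(\w)$ produces a $\prol$-nervous monad on $\mathsf{Anc}(\w)$ whose algebra category is, by that theorem, the model category of $a$; combining this with Theorem \ref{thm:pullback-in-cat-of-cats-inv-algd} identifies those algebras with $\mathsf{Inv}^*(\w)$, which is the claim.

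The main obstacle I expect is the density step: one must be sure that freely completing $\wone^*$ under the prolongation weighted limits (Definition \ref{def:prol}) preserves density, that is, that enlarging a dense subcategory to the full subcategory of its prolongation limit-cylinders remains dense. This is exactly where Corollary \ref{cor:dense-ff-result} does the work, and the only points to handle carefully are that $\prol$ is taken as a \emph{full} subcategory of $\mathsf{Anc}(\w)^{op}$ (so the relevant inclusion really is fully faithful) and that the continuity of the hom-functor $\mathsf{Anc}(\w)(-,A)$ genuinely computes $\mathsf{Anc}(\w)(L_V,A)$ as the $V$-prolongation $A_V$, which is what makes the models of $a$ agree on the nose with anchored bundles carrying a chosen prolongation and hence with $\mathsf{Inv}^*(\w)$. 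Everything else is a direct citation of Definitions \ref{def:prol} and \ref{def:weil-theory} and of Theorem \ref{thm:pullback-in-cat-of-cats-inv-algd}.
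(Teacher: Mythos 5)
Your proposal is correct and follows the same route the paper intends: the corollary is stated without a separate proof precisely because it is the composite of Definition \ref{def:weil-theory} (that $a$ is a $\prol^{op}$-theory), Theorem \ref{thm:pullback-in-cat-of-cats-inv-algd} (that its models are the involution algebroids), and the Bourke--Garner theory/nervous-monad equivalence. Your additional checks --- local presentability of $\mathsf{Anc}(\w)$ and density of $\prol^{op} \hookrightarrow \mathsf{Anc}(\w)$ via Corollary \ref{cor:dense-ff-result} --- are exactly the hypotheses the paper leaves implicit, so you have simply made the cited argument explicit.
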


\begin{remark}
    The construction of involution algebroids as the category of models for a $\prol$-theory is remarkably similar to the original nerve construction for categories in \cite{Segal1974}, with the symmetric nerve construction for groupoids replacing $\Delta$ with the category of finite sets $\Sigma$ (see Example 44 (iv) of \cite{Bourke2019}). Each construction truncates the original category to two objects and builds a new category with a bijective set of objects by freely adding limits to the two-object truncation.
\end{remark}

\section{The infinitesimal approximation of a groupoid}%
\label{sec:inf-nerve-of-a-gpd}

This section contains the main theorem of the chapter, namely that there is an adjunction
\[\begin{tikzcd}
    {\mathsf{Gpd}(\w)} & {\mathsf{Inv}(\w)}
    \arrow[""{name=0, anchor=center, inner sep=0}, curve={height=-12pt}, from=1-1, to=1-2]
    \arrow[""{name=1, anchor=center, inner sep=0}, curve={height=-12pt}, from=1-2, to=1-1]
    \arrow["\dashv"{anchor=center, rotate=-90}, draw=none, from=0, to=1]
\end{tikzcd}\]
This adjunction follows by inducing a nerve/realization context on the category of groupoids in $\w$:
\[
    \partial: \wone^{op} \to \mathsf{Gpd}(\w).   
\]
A simplified version of this functor appeared in Proposition \ref{prop:lin-approx-gph}, the linear approximation of a reflexive graph. 
In fact, $\partial$ will be exactly the free groupoid over the graph $\partial$ from Proposition \ref{prop:lin-approx-gph} (if we had worked in the category of reflexive graphs equipped with an involution).
Recall that groupoids and Weil spaces  are both models of a sketch (recall Section \ref{sec:tang-cats-enrichment}). 
The symmetry of the theories gives two equivalent presentations of groupoids in $\w$:
\begin{definition}%
    \label{def:w-gpd}
    The tangent category of $\w$-groupoids is equivalently
    \begin{enumerate}[(i)]
        \item the tangent category of internal groupoids in $\w$, where the tangent structure is computed pointwise;
        \item the tangent category of transverse-limit-preserving functors $\wone \to \mathsf{Gpd}(\s)$ with the cofree tangent structure from Observation \ref{obs:cofree-tangent-cat}.
    \end{enumerate}
\end{definition}
It is important to note that the tangent structure on $\w$-groupoids is representable, as it is a cartesian closed category with an infinitesimal object given by $\wone \hookrightarrow \w \hookrightarrow \mathsf{Gpd}(\w)$ (a proof that $\mathsf{Gpd}(\C)$ is cartesian closed for a cartesian closed category $\C$ with sufficient limits a may be found in Section B2.3 of \cite{Johnstone2002}).

There are two classes of ``trivial'' $\w$-groupoids that will be useful.
\begin{example}
    ~\begin{enumerate}[(i)]
        \item Every small groupoid $G$ may be regarded as a groupoid in $\w$ as the constant functor $\wone \to \mathsf{Gpd}(\s)$ sending $V$ to the groupoid $G$ preserves transverse limits, we may then apply the symmetry of theories to find a groupoid in the category of Weil spaces. 
        \item Every object $M$ in a category has a cofree groupoid given by $s,t:M = M$, this is the \emph{discrete} groupoid whose only morphisms are the identity maps. Thus, every Weil space $M \in \w$ has a corresponding discrete groupoid (this will often be written $M = M$ to denote the discrete groupoid over $M$).
    \end{enumerate}
\end{example}

Using the cartesian closure of $\w$ and the full subcategories of trivial and discrete groupoids, we have the following:
\begin{observation}
    The category of $\w$-groupoids is both
    \begin{enumerate}[(i)]
        \item a $\w$-category, with powers by $\w$. The power of a $\w$-groupoid $\mathcal{G} = s,t:G \to M$ by a Weil space $E$ is given by the $\w$-groupoid
        \[
            [E, \mathcal{G}] = \{[E,s], [E, t]: [E,G] \to [E, M], \hspace{0.15cm} [E,e]: [E, M] \to [E, G]  \};
        \]
        \item a $\mathsf{Gpd}$-enriched category with powers by small groupoids. The power of a $\w$-groupoid $\mathcal{G}:\mathsf{Gpd} \to \w$ by a groupoid $\mathcal{H}$ is the $\w$-groupoid
        \[
            [\mathcal{H}, \mathcal{G}](V) = [\mathcal{H}, \mathcal{G}(V)]_{\mathsf{Gpd}}.
        \]
    \end{enumerate}
\end{observation}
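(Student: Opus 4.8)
The plan is to treat the two enrichments separately, in each case building the power by applying a cartesian-closed internal-hom functor of the appropriate base and then checking that the result lands back among $\w$-groupoids because right adjoints preserve the limits that define the structure. Throughout I would move freely between the two presentations of $\mathsf{Gpd}(\w)$ recorded in Definition \ref{def:w-gpd}, passing between internal groupoids in $\w$ and transverse-limit-preserving functors $\wone \to \mathsf{Gpd}$ via the symmetry of the sketches for groupoids and for Weil spaces.

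For part (i), I would start from the fact that $\mathsf{Gpd}(\w)$ is a representable tangent category, with infinitesimal object $\wone \hookrightarrow \w \hookrightarrow \mathsf{Gpd}(\w)$, so by Garner's Theorem \ref{thm:2-cat-tangcat} it is already a $\w$-category, with hom-Weil-space $\underline{\mathsf{Gpd}(\w)}(\mathcal{D},\mathcal{G})(V) = \mathsf{Gpd}(\w)(\mathcal{D}, T^V\mathcal{G})$. To upgrade ``powers by representables'' to ``powers by all of $\w$'', I would define $[E,\mathcal{G}]$ by applying the $\w$-internal-hom $[E,-]\colon \w \to \w$ (which exists by cartesian closure, Corollary \ref{cor:properties-of-w}) to each component of $\mathcal{G} = (s,t\colon G \to M, e, m, (-)^{-1})$. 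Since $[E,-]$ is a right adjoint it preserves the pullbacks $G \times_M G$ and the finite limits witnessing the groupoid axioms, so $[E,\mathcal{G}]$ is again an internal groupoid. The power universal property
\[
    \mathsf{Gpd}(\w)(\mathcal{D}, [E,\mathcal{G}]) \cong \w\bigl(E,\ \underline{\mathsf{Gpd}(\w)}(\mathcal{D},\mathcal{G})\bigr)
\]
then follows by transposing across the cartesian-closed adjunction of $\w$ componentwise, using that a map of internal groupoids is exactly a pair of maps commuting with the structure; the coherence $([E,\mathcal{G}])^{E'} = [E \times E', \mathcal{G}]$ is immediate from the coherence $[E', [E, -]] \cong [E' \times E, -]$ in $\w$.

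For part (ii), I would use presentation (ii) and first record that $\mathsf{Gpd}$ is itself cartesian closed: the functor groupoid $[\mathcal{H}, \mathcal{K}]_{\mathsf{Gpd}}$ is genuinely a groupoid, since any natural transformation between functors valued in a groupoid is invertible. Then for a small groupoid $\mathcal{H}$ and a $\w$-groupoid $\mathcal{G}\colon \wone \to \mathsf{Gpd}$ I would set $[\mathcal{H}, \mathcal{G}](V) := [\mathcal{H}, \mathcal{G}(V)]_{\mathsf{Gpd}}$. Because $[\mathcal{H},-]\colon \mathsf{Gpd} \to \mathsf{Gpd}$ is a right adjoint it preserves the transverse limits of Definition \ref{def:transverse-limit}, so $[\mathcal{H}, \mathcal{G}]$ is again transverse-limit-preserving, hence a $\w$-groupoid. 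The $\mathsf{Gpd}$-power universal property is checked pointwise from the cartesian closure of $\mathsf{Gpd}$, and the coherence of iterated powers again reduces to $[\mathcal{K}, [\mathcal{H}, -]] \cong [\mathcal{K} \times \mathcal{H}, -]$ in $\mathsf{Gpd}$.

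The main obstacle I anticipate is not the construction of either power---each is a pointwise application of a right adjoint---but rather verifying that the two enriched structures are genuinely the enrichments claimed, that is, that the hom-objects arising through the representable tangent structure in (i) and through the pointwise groupoid structure in (ii) really satisfy the power adjunctions with respect to the correct bases $\w$ and $\mathsf{Gpd}$. The delicate bookkeeping is confirming that the $\w$-enrichment supplied by Garner's theorem agrees with the one implicitly used when forming $[E,\mathcal{G}]$, and that passing through the symmetry of theories in Definition \ref{def:w-gpd} identifies the two descriptions of a $\w$-groupoid compatibly with these hom-objects. Once this identification is pinned down, the power axioms follow formally from the cartesian closure of each base together with the preservation of limits by right adjoints.
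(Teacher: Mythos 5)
Your proposal is correct and follows essentially the same route the paper takes: the paper states this as an observation justified by the cartesian closure of $\w$ (Corollary \ref{cor:properties-of-w}) and of $\mathsf{Gpd}$, with the powers built by applying the relevant internal hom componentwise/pointwise and using that these right adjoints preserve the finite limits defining an internal groupoid and the transverse limits defining a Weil space. Your identification of the remaining bookkeeping — matching the componentwise construction against the hom-Weil-spaces supplied by the representable tangent structure and the symmetry of theories in Definition \ref{def:w-gpd} — is exactly the part the paper leaves implicit, and it goes through by the Yoneda computation $[D(V),\mathcal{G}] \cong T^V\mathcal{G}$ already recorded in Section \ref{sec:tang-cats-enrichment}.
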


The following two $\w$-groupoids form the basic building block for the main result.
\begin{example}
    ~\begin{enumerate}[(i)]
        \item Set $D := \yon W$ in $\w$. The discrete groupoid $D = D$ represents the tangent functor internally, and the copresheaf $\mathsf{Gpd}(D, -): \mathsf{Gpd}(\w) \to \w$ sends a groupoid $s,t:G \to M$ to $TM$.
        \item The arrow groupoid is the free groupoid generated by the graph:
        \[\bullet \to \bullet\]
        Write the trivial $\w$-groupoid on this groupoid as $I$. Note that the power by $I$ will send a groupoid to its ``arrow groupoid'' $\mathcal{G}^\to$, the groupoid whose objects are arrows in $\mathcal{G}$, with a map $u \to v$ being a commuting square.
        It follows that $\mathsf{Gpd}(I, \mathcal{G})$ is the space of arrows of the groupoid, $\mathsf{Gpd}(I \x I, \mathcal{G})$ the space of commuting squares, and so on.
    \end{enumerate}
\end{example}

The next proposition gives the necessary properties of $I$ to construct the Lie derivative.
\begin{lemma}%
    \label{lem:arrow-gpd-facts}
    ~\begin{enumerate}[(i)]
        \item For every groupoid $\mathcal{G} = s,t:G \to M$, there is an isomorphism
        \[
            G \ts{t}{t} G \ts{s}{t} G \cong \mathsf{Gpd}(\w)(I \x I, \mathcal{G});
        \]
        this corresponds to the unique filler for the diagram
        \[\begin{tikzcd}
            \bullet & \bullet \\
            \bullet & \bullet
            \arrow["u", from=2-1, to=1-1]
            \arrow["v", from=1-1, to=1-2]
            \arrow["w"', from=2-2, to=1-2]
            \arrow["{\exists! w^{-1} \o v \o u}"', dashed, from=2-1, to=2-2]
        \end{tikzcd}\]
        \item The arrow groupoid has a semigroup with a zero structure (like an infinitesimal object $D$), and the multiplication is the coequalizer
        \[\begin{tikzcd}
            & {1 \x I} \\
            {I \x I} && {I \x I} & I \\
            & {I \x 1}
            \arrow["{! \x I}", from=2-1, to=1-2]
            \arrow["{s \x I}", from=1-2, to=2-3]
            \arrow["{I \x !}"', from=2-1, to=3-2]
            \arrow["{I \x s}"', from=3-2, to=2-3]
            \arrow["m", from=2-3, to=2-4]
        \end{tikzcd}\]

    \end{enumerate}
\end{lemma}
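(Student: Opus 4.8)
The plan is to exploit the fact that $I$ is the \emph{codiscrete} (indiscrete) groupoid on two objects: the walking isomorphism has exactly one morphism between any ordered pair of its objects $\{0,1\}$. Since a product of codiscrete groupoids is again codiscrete, $I \times I$ is the codiscrete groupoid on the four objects $\{(0,0),(0,1),(1,0),(1,1)\}$. The engine of both parts is the standard fact that a functor out of a codiscrete groupoid $\mathcal{K}$ into any groupoid $\mathcal{G}$ is freely and uniquely determined by its values on the objects of $\mathcal{K}$ together with its values on the edges of any chosen spanning tree of the complete graph on $\mathrm{Ob}(\mathcal{K})$; all remaining morphisms are forced by functoriality (composites and inverses), and \emph{every} such assignment extends, because $\mathcal{G}$ is a groupoid, so the required composites exist and commutativity is automatic.

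For part (i), I would take the spanning tree of $I \times I$ consisting of the three edges $u, v, w$ of the displayed square, leaving the diagonal $BL \to BR$ as the unique non-tree edge. A $\w$-groupoid map $I \times I \to \mathcal{G}$ is then exactly a choice of three morphisms $u, v, w$ of $\mathcal{G}$ subject only to the source/target matchings forced by the shared vertices $TL$ and $TR$, namely $t(u) = s(v)$ and $t(v) = t(w)$. These are precisely the defining equations of the iterated fibre product, so (after the evident reindexing of the three legs) the assignment $(u,v,w)$ identifies the space of such maps with $G \ts{t}{t} G \ts{s}{t} G$; the image of the non-tree edge is forced to equal $w^{-1} \o v \o u$, the unique filler of the square. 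Because $I \times I$ is a constant $\w$-groupoid and the iterated fibre product is a finite (transverse) limit, this bijection is natural in the probing object and hence an isomorphism of Weil spaces; I would phrase it through the universal property of the power $\mathcal{G}^{I \times I}$ to avoid committing to a pointwise description of the enriched hom.

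For part (ii), I would take the source endpoint $0 := s \colon 1 \to I$ as the candidate absorbing element and the ``meet'' map $m \colon I \times I \to I$ (sending $(i,j)$ to $0$ unless $i = j = 1$) as the semigroup multiplication; this is the commutative, associative, idempotent monoid structure on $\{0 < 1\}$ with unit $1$ and absorbing element $0$, mirroring the semigroup-with-zero axioms $[\mathrm{IO}.2]$ of an infinitesimal object. The two composites in the displayed fork collapse, respectively, the first and the second coordinate onto $0$, so a map out of $I \times I$ coequalizes them exactly when it is constant along the ``cross'' $\{0\}\times I \cup I \times \{0\}$; this is precisely the assertion that $0$ is absorbing for $m$. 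To see that $m$ \emph{is} the coequalizer rather than merely a map that coequalizes, I would argue directly from the universal property: the congruence generated by the fork identifies the three objects $(0,0),(0,1),(1,0)$ of $I \times I$ into a single class while leaving $(1,1)$ untouched, and a quotient of a codiscrete groupoid by an object-collapse is again codiscrete; thus the coequalizer is codiscrete on two objects, that is, $I$ itself, with comparison map $m$.

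The main obstacle is the colimit computation in the second part: coequalizers of internal groupoids can in general introduce new morphisms through the generated congruence, and here the colimit must be taken in $\mathsf{Gpd}(\w)$, where colimits are not simply computed pointwise. I expect to handle this not by computing the colimit abstractly but by verifying the universal property by hand --- showing that any $\w$-groupoid map $I \times I \to \mathcal{K}$ coequalizing the fork factors uniquely through $m$, using once more that a map out of the codiscrete $I \times I$ (and out of the codiscrete $I$) is pinned down by its effect on objects together with spanning-tree data. A secondary point is keeping the enrichment honest in part (i): the established bijection of underlying sets must upgrade to an isomorphism of Weil spaces compatible with the $\w$-structure, which follows because both sides are finite limits preserved by the relevant representable functors.
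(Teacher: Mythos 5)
Your part (i) is correct and is essentially the argument the paper intends: a $\w$-functor out of the codiscrete groupoid $I\times I$ is freely determined by three sides of the square subject to the endpoint matchings, the fourth side being the forced filler $w^{-1}\o v\o u$; the paper states this without elaboration, so your spanning-tree phrasing is a legitimate fleshing-out, and your remark about upgrading the bijection to an isomorphism of Weil spaces (naturality in the probing Weil algebra, both sides being transverse limits) is the right way to keep the enrichment honest.

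For part (ii) you take a genuinely different route. The paper does not compute the coequalizer in $\mathsf{Gpd}(\w)$ at all: it dualizes, showing that for \emph{every} groupoid $G$ the functor $\mathsf{Gpd}(\w)(-,G)$ sends the fork to an equalizer $G \xrightarrow{G^m} G^\square \rightrightarrows G^\square$, by observing that the two induced endomorphisms of the groupoid of commuting squares $G^\square$ are the idempotents collapsing the horizontal and vertical directions, that a square is equalized by them iff $u=q=\mathrm{id}$ (forcing $w=v$), and that such degenerate squares are exactly the image of $G^m$. Since the representables jointly reflect colimits, this establishes the coequalizer, and it is also exactly the form of the statement used downstream in Theorem \ref{thm:inf-nerve}. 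Your primary route --- computing the quotient directly --- buys concreteness, but it leans on a general principle that is false: a quotient of a codiscrete groupoid by an object-collapse is \emph{not} in general codiscrete. Identifying the two objects of $I$ itself yields $B\mathbb{Z}$, not the trivial group, because the collapsed arrow becomes a freely generated loop. Your specific computation survives only because the fork identifies more than objects: it identifies every arrow of the cross $\{0\}\times I \cup I\times\{0\}$ with an identity (take $\alpha=\mathrm{id}_0$ or $\beta=\mathrm{id}_0$ in $(\mathrm{id}_0,\beta)\sim(\alpha,\mathrm{id}_0)$), which kills the would-be free loops; you should make that arrow-level step explicit rather than appealing to the general principle. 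Your stated fallback --- verifying by hand that any map $I\times I\to\mathcal{K}$ coequalizing the fork factors uniquely through $m$ --- is sound and is, in substance, the paper's representable argument, so the proposal as a whole is repairable to a correct proof.
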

The dual result of part $(ii)$ is somewhat more obvious to see, as the following fork is always an equalizer in $\w$ for a groupoid $G$:
\[\begin{tikzcd}
    G & {G^\square} & {G^\square}
    \arrow["{G^{e[s] \x I}}", shift left=2, from=1-2, to=1-3]
    \arrow["{G^{I \x e[s]}}"', shift right=1, from=1-2, to=1-3]
    \arrow["{G^{m}}", from=1-1, to=1-2]
\end{tikzcd}\]
(where we write $G$ as the object of arrows and $\mathsf{Gpd}(\w)(I\x I, G)$ as $G^\square$). 
Note that $G^{e[s] \x I}$ and $G^{I \x e[s]}$ correspond to the idempotents on $G^\square$:
\[\begin{tikzcd}
    \bullet & \bullet & \bullet & \bullet & \bullet & \bullet \\
    \bullet & \bullet & \bullet & \bullet & \bullet & \bullet
    \arrow[""{name=0, anchor=center, inner sep=0}, "q"', from=1-3, to=2-3]
    \arrow["u", from=1-3, to=1-4]
    \arrow[""{name=1, anchor=center, inner sep=0}, "v", from=1-4, to=2-4]
    \arrow["w"', from=2-3, to=2-4]
    \arrow["u", from=1-5, to=1-6]
    \arrow[""{name=2, anchor=center, inner sep=0}, Rightarrow, no head, from=1-5, to=2-5]
    \arrow[Rightarrow, no head, from=1-6, to=2-6]
    \arrow["u"', from=2-5, to=2-6]
    \arrow["q"', from=1-1, to=2-1]
    \arrow[""{name=3, anchor=center, inner sep=0}, "q", from=1-2, to=2-2]
    \arrow[Rightarrow, no head, from=1-1, to=1-2]
    \arrow[Rightarrow, no head, from=2-1, to=2-2]
    \arrow["{G^{I \x e[s]}}", shorten <=10pt, shorten >=10pt, Rightarrow, maps to, from=1, to=2]
    \arrow["{G^{e[s] \x I}}"', shorten <=10pt, shorten >=10pt, Rightarrow, maps to, from=0, to=3]
\end{tikzcd}\]
Also note that a commuting square is equalized by these two maps if and only if $u = q = id$, which forces $w = v$; these commuting squares are exactly the image of $G^m$.

The semigroup structure on $I$ represents the map sending
\[\begin{tikzcd}
    Y & X & Y \\
    X & X & X
    \arrow[""{name=0, anchor=center, inner sep=0}, "u"{description}, from=2-1, to=1-1]
    \arrow["u"{description}, from=1-2, to=1-3]
    \arrow["u"{description}, from=2-3, to=1-3]
    \arrow[""{name=1, anchor=center, inner sep=0}, Rightarrow, no head, from=1-2, to=2-2]
    \arrow[Rightarrow, no head, from=2-2, to=2-3]
    \arrow[shorten <=13pt, shorten >=10pt, Rightarrow, maps to, from=0, to=1]
\end{tikzcd}\]
Now look at the $\partial$ defined in Proposition \ref{prop:lin-approx-gph}, and take the same colimit in $\mathsf{Gpd}(\w)$. 
Groupoids are monadic over reflexive graphs with an involution, so this is essentially the free groupoid over that graph (as the free functor is a left adjoint and therefore preserves colimits).
\begin{definition}%
    \label{def:partial-gpd}
    Define the $\w$-groupoid $\partial$ to be the $\w$-coequalizer
    \[\begin{tikzcd}
        {D\x I} & {D\x I} & \partial
        \arrow["{e \x I}", shift left=1, from=1-1, to=1-2]
        \arrow["{D \x e[s]}"', shift right=1, from=1-1, to=1-2]
        \arrow[from=1-2, to=1-3]
    \end{tikzcd}\]
    where $e = 0 \o !, e[s] = i \o s$. Note that just as in Proposition \ref{prop:lin-approx-gph}, $\partial$ is an anchored bundle.
\end{definition}

The properties of the arrow groupoid make it possible to prove the following theorem.

\begin{theorem}%
    \label{thm:inf-nerve}
    The object $\partial$ determines a cartesian $\w$-functor
    \[
        \partial(-): \wone^{op} \to \mathsf{Gpd}(\w)    
    \]
    that is both an infinitesimal object in $\w$-groupoids and an involution algebroid in $\mathsf{Gpd}(\w)^{op}$.
\end{theorem}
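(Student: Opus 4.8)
The plan is to realize $\partial$ as an infinitesimal object in the cartesian closed tangent category $\mathsf{Gpd}(\w)$, and then transport this—via the duality of Corollary \ref{cor:dual-tangent-structure} and the identification of involution algebroids in Theorem \ref{thm:iso-of-cats-inv-emcs}—into the statement that the associated functor $\partial(-):\wone^{op}\to\mathsf{Gpd}(\w)$ is an involution algebroid in $\mathsf{Gpd}(\w)^{op}$. Since $\mathsf{Gpd}(\w)$ is cartesian closed, Proposition \ref{prop:monoidal-functor-inf-obj} tells us that producing an infinitesimal object is exactly the same as producing a strict symmetric monoidal functor out of $\wone$ (with the cartesian monoidal structure on the target), so the first task is to build this functor from the generating data of $\wone$ described in Proposition \ref{thm:leung}.

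First I would define $\partial(-)$ on generators: set $\partial(W):=\partial$, the $\w$-groupoid of Definition \ref{def:partial-gpd}, and $\partial(\N):=1$, the terminal (discrete) groupoid. The images of the five generators $\{p,0,+,\ell,c\}$ are assembled from two sources: the infinitesimal structure of $D=\yon W$ in $\w$, which handles $p,0,+$ through the additive-bundle structure of $D$, and the semigroup-with-zero structure of the arrow groupoid $I$ from Lemma \ref{lem:arrow-gpd-facts}(ii) together with the unique-filler isomorphism of Lemma \ref{lem:arrow-gpd-facts}(i), which handle $\ell$ and $c$. The coequalizer presentation of $\partial$ in Definition \ref{def:partial-gpd} then guarantees these assignments are well-defined. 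Checking that this data satisfies the infinitesimal-object axioms of Definition \ref{def:inf-object}—commutative semigroup-with-zero, the comonoid $\delta$, coadditivity, and the final coequalizer condition—reduces each axiom to the corresponding equation for the arrow-groupoid semigroup and to the defining coequalizer of $\partial$; the dual equalizer noted just after Lemma \ref{lem:arrow-gpd-facts} supplies exactly the last coequalizer axiom. This simultaneously shows $\partial(U\ox V)=\partial(U)\x\partial(V)$, i.e. that $\partial(-)$ is a cartesian $\w$-functor.

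With the infinitesimal object in hand, Corollary \ref{cor:dual-tangent-structure} automatically endows $\mathsf{Gpd}(\w)^{op}$ with the dual tangent structure and makes $\partial(-)$ a tangent functor $\wone\to\mathsf{Gpd}(\w)^{op}$. To upgrade this to an involution algebroid I would invoke Theorem \ref{thm:iso-of-cats-inv-emcs}, which requires two conditions: that $\partial(-)$ preserve transverse limits of $\wone$, and that its structural natural transformation $\alpha$ be $T$-cartesian. Preservation of transverse limits follows from strict monoidality together with continuity of the hom-functor, exactly as in Proposition \ref{prop:lin-approx-gph}: the transverse squares of Definition \ref{def:transverse-limit} are sent to the (co)limit squares cutting out the prolongations of the underlying anchored bundle. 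For $T$-cartesianness, Proposition \ref{prop:mod-is-cart-if-p} lets me reduce to a single naturality square, the one for the tangent projection $p$, which becomes a pushout condition on the arrow groupoid $I$ in $\mathsf{Gpd}(\w)$ (dual to a $T$-pullback in $\mathsf{Gpd}(\w)^{op}$).

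The main obstacle will be this last $T$-cartesian verification. Everything else is essentially bookkeeping once the generating maps are written down, but confirming that the naturality square for $p$ is a $T$-pullback in $\mathsf{Gpd}(\w)^{op}$ is where the genuine structure of groupoids enters: it is precisely the statement that composable pairs of infinitesimal arrows are recovered as a pushout of single infinitesimal arrows, encoded in the semigroup coequalizer of Lemma \ref{lem:arrow-gpd-facts}(ii) and its dual equalizer. I expect the delicate point to be checking that this square remains universal after every application of the dual tangent functor $\partial(W)\x(-)$, so that the condition is genuinely a $T$-pullback rather than an ordinary one; here the stability of the arrow-groupoid constructions under powers by $D$, and the commutativity of limits with the pointwise tangent structure on $\mathsf{Gpd}(\w)$, do the work.
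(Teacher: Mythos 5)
Your proposal is correct in substance, and the hard computational core you isolate is exactly the one the paper relies on; the difference is in how the two halves of the statement are organized. The paper's proof has two steps: (1) an induction on the width of $V$ establishing $\partial(V)\cong\partial_V$, the $V$-prolongation of the underlying anchored bundle, with the base case $\partial\x\partial\cong\partial_{WW}$ carried by the unique-filler property of commuting squares in a groupoid (Lemma \ref{lem:arrow-gpd-facts}(i)) and the inductive step by cocontinuity of $\partial_U\x(-)$ in the cartesian closed $\mathsf{Gpd}(\w)$; and (2) the construction of the multiplication $\boxdot$ on the coequalizer presentation of $\partial$ by commutativity of colimits, giving the commutative semigroup with zero. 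Both conclusions of the theorem are then read off directly from (1) and (2). You instead build the strict monoidal functor and verify the infinitesimal-object axioms first, and then obtain the involution-algebroid half by appealing to the general machinery of Chapter 4 (Theorem \ref{thm:iso-of-cats-inv-emcs} together with the reduction of $T$-cartesianness to the single square for $p$ in Proposition \ref{prop:mod-is-cart-if-p}). That route is legitimate and arguably cleaner conceptually, since it makes the second half a corollary of an already-proved characterization rather than a fresh verification; what it costs is that the $T$-cartesianness of the naturality square for $p$ in $\mathsf{Gpd}(\w)^{op}$ is literally the statement $\partial\x\partial\cong\partial_{WW}$, so you have not avoided the paper's width-$2$ computation, only relocated it. Note also that this computation rests on Lemma \ref{lem:arrow-gpd-facts}(i) (the unique filler), not primarily on the semigroup coequalizer of part (ii) as your last paragraph suggests; part (ii) is what gives the lift $\ell$ its semigroup axioms, while part (i) is what makes composable pairs of infinitesimal arrows a pushout of single ones. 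Finally, be careful in the higher-width inductive step: you need the isomorphism $\partial(U\ox V)\cong\partial(U)\x\partial(V)$ to agree with the \emph{iterated} span composition defining $\partial_{UV}$, which the paper secures by writing $A_{UVZ}$ as a composition over $A_U$ and invoking cocontinuity of $\partial_U\x(-)$; a bare appeal to strict monoidality does not by itself deliver this compatibility.
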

\begin{proof}
    Write the pushout powers of $\xi:1 \to \partial$ as $\partial(n)$, and for any Weil algebra $V = W_{n(1)} \ox \dots \ox W_{n(k)}$, write $\partial(V) = \partial(n_1) \x \dots \x \partial(n_k)$. Composition for internal categories will be written in the diagramattic order, writing composition as an infix semicolon ``;'' to keep it distinct from composition of morphisms in $\w$. 
    
    The proof has two main steps:
    \begin{enumerate}
        \item For every Weil algebra $V$, there is an isomorphism
        \[
            \partial_V \cong \partial(V)    
        \]
        where $\partial_V$ is the $V$-prolongation of the anchored bundle in $\mathsf{Gpd}(\w)^{op}$.
        \item The universal lift for the anchored bundle, given as a coequalizer
        \[\begin{tikzcd}
            {\partial \x \partial } & {\partial\x \partial} & \partial
            \arrow["{e^\partial \x \partial}", shift left=2, from=1-1, to=1-2]
            \arrow["{e^\partial \x \partial }"', shift right=2, from=1-1, to=1-2]
            \arrow["\boxdot", from=1-2, to=1-3]
        \end{tikzcd}\]
        is a semigroup.
    \end{enumerate}
    From (1) and (2), we may infer that the object $\partial$ is an infinitesimal object. The semigroup map $\boxdot$ is commutative and has a zero by (2), and satisfies all of the couniversality axioms by (1). 
    \begin{enumerate}
        \item   The proof of this step follows by induction on the width (Definition \ref{def:truncated-wone}) of the Weil algebra $V$, where the cases $0, 1$ both hold by definition.
        In the case $n=2$, we need only check this holds for $\partial \x \partial \cong \partial_{WW}$:
        \[
\begin{tikzcd}
	& {D \x \partial} \\
	D && {L_2} & {\partial \x \partial} \\
	& \partial
	\arrow["{(id,0)}", from=2-1, to=1-2]
	\arrow["\delta"', from=2-1, to=3-2]
	\arrow[from=1-2, to=2-3]
	\arrow[from=3-2, to=2-3]
	\arrow["\lrcorner"{anchor=center, pos=0.125, rotate=-135}, draw=none, from=2-3, to=2-1]
	\arrow["{(\delta,id)}"{description}, curve={height=-6pt}, from=1-2, to=2-4]
	\arrow["{(id,0)}"{description}, curve={height=6pt}, from=3-2, to=2-4]
	\arrow[dashed, from=2-3, to=2-4]
\end{tikzcd}\]
        For any $G$, a map $X: \partial \x \partial \to G$ corresponds to a commuting square in $T^2G$ of the form
        \[
\begin{tikzcd}
	\bullet & \bullet \\
	\bullet & \bullet
	\arrow["{T.e \o u}", from=1-1, to=1-2]
	\arrow["v"{description}, from=1-2, to=2-2]
	\arrow["{e.T \o v}"{description}, from=1-1, to=2-1]
	\arrow["u"{description}, from=2-1, to=2-2]
\end{tikzcd}\]
        so that $T.e \o v = id$ and $e.T \o u = id$.
        It follows that $u = (e.T \o v)^{-1};(T.e \o u);v$ (Lemma \ref{lem:arrow-gpd-facts} (i)), and that precomposition with the uniquely induced map determines $(\bar{u},v):X \to G^{L(2)}$, where
        \[ T.0 \o \bar{u} = u.\]
        Observe that any pair $(\bar{u}, v):X \to  G^{L(2)}$ determines a square
        \[
\begin{tikzcd}
	\bullet & \bullet \\
	\bullet & \bullet
	\arrow["{T.0 \o \bar{u}}", from=1-1, to=1-2]
	\arrow["v"{description}, from=1-2, to=2-2]
	\arrow["{e.T \o v}"{description}, from=1-1, to=2-1]
	\arrow[dashed, from=2-1, to=2-2]
\end{tikzcd}
        \]
        where $T.e \o v = id$ and $T.e \o \bar{u} = id$.
        Note that $T.0 \o T.p \o T.0 \o \bar{u} = T.0 \o \bar{u}$, and that the bottom horn is $u = (e.T \o v)^{-1};(T.0 \o \bar{u});v$; now check
        \begin{align*}
            T.e \o u
                & = (T.e \o e.T \o v)^{-1};(T.e \o T.0 \o \bar{u});(T.e \o v) \\
                & = id; T.0 \o \bar{u}; T.e \o v = T.0 \o \bar{u}
        \end{align*}
        and
        \begin{align*}
            e.T \o u
                & = e.T \o (e.T \o v)^{-1};(T.0 \o \bar{u});v                 \\
                & = (e.T \o e.T \o v)^{-1};(e.T \o T.0 \o \bar{u});(e.T \o v) \\
                & = (e.T \o v)^{-1};id;(e.T \o v) = id
        \end{align*}
        thus determining a map $X \to G^{\partial \x \partial}$.
        The two maps are inverse to each other, giving an isomorphism.
    
        For the inductive case, look at the prolongations of anchored bundles and recall that
        \[
            A_{UV} \boxtimes_M A_Z  =
            A_{UV} \boxtimes_{\prol(U,A)} A_{UZ}
        \]
        Where $A_{UV}$ and $A_{UZ}$ are treated as spans over $A_U$, so that
        \begin{gather*}
            A_U \xleftarrow[]{id \boxtimes \pi^V} A_{UV} \xrightarrow[]{anc^U \boxtimes A_V} T^U.A_V\\
            A_U \xleftarrow[]{id \boxtimes \pi^Z} A_{UZ} \xrightarrow[]{anc^U \boxtimes A_Z} T^U.A_Z
        \end{gather*}
        and observe that their span composition is
        \[
\begin{tikzcd}
	&& {A_{UVZ}} \\
	& {A_{UV}} && {T^V.A_{UZ}} \\
	{A_U} && {T^V.A_U} && {T^Z.T^V.A_U}
	\arrow[from=2-2, to=3-1]
	\arrow["{\anc^U\boxtimes A_V}"{description}, from=2-2, to=3-3]
	\arrow[from=2-4, to=3-3]
	\arrow["{T^U.(\anc^U \boxtimes A_Z)}"{description}, from=2-4, to=3-5]
	\arrow[from=1-3, to=2-2]
	\arrow[from=1-3, to=2-4]
	\arrow["\lrcorner"{anchor=center, pos=0.125, rotate=-45}, draw=none, from=1-3, to=3-3]
\end{tikzcd}
        \]
        So it now suffices to prove that the diagram
        \[
\begin{tikzcd}
	{\partial_U \x D_V} & {\partial_U \x (D_V \x \partial_Z)} \\
	{\partial_U \x \partial_V} & {\partial_U \x (\partial_V \x \partial_Z)}
	\arrow[from=1-1, to=2-1]
	\arrow[from=1-1, to=1-2]
	\arrow[from=1-2, to=2-2]
	\arrow[from=2-1, to=2-2]
	\arrow["\lrcorner"{anchor=center, pos=0.125, rotate=180}, draw=none, from=2-2, to=1-1]
\end{tikzcd}\]
        is a pushout. But by the inductive hypothesis,
        \[
\begin{tikzcd}
	{ D_V} & {D_V \x \partial_Z} \\
	{\partial_V} & {\partial_V \x \partial_Z}
	\arrow[from=1-1, to=2-1]
	\arrow[from=1-1, to=1-2]
	\arrow[from=1-2, to=2-2]
	\arrow[from=2-1, to=2-2]
	\arrow["\lrcorner"{anchor=center, pos=0.125, rotate=180}, draw=none, from=2-2, to=1-1]
\end{tikzcd}\]
        so the result follows by the cocontinuity of $\partial_U \x (-)$, and $\mathsf{Gpd}(\w)$ is a cartesian closed category (so $X \x (-)$ is cocontinuous).
        \item Recall that the fork
        \[\begin{tikzcd}
            {I \x I} & {I \x I} & I
            \arrow["m", from=1-2, to=1-3]
            \arrow["{e[s] \x I}", shift left=1, from=1-1, to=1-2]
            \arrow["{I \x e[s]}"', shift right=1, from=1-1, to=1-2]
        \end{tikzcd}\]
        is a coequalizer. 
        Also note that $e^\partial$ is the coequalizer of $e, e[s]$. The commutativity of colimits then ensures that there is a multiplication map induced by the following diagram:
        \[\begin{tikzcd}
            {(D \x I)^2} & {(D \x I)^2} & {\partial^2} \\
            {(D \x I)^2} & {(D \x I)^2} & {\partial^2} \\
            {D\x I} & {D\x I} & \partial
            \arrow[shift left=1, from=1-1, to=1-2]
            \arrow[shift right=1, from=1-1, to=1-2]
            \arrow[from=1-2, to=1-3]
            \arrow[shift right=1, from=1-1, to=2-1]
            \arrow[shift left=1, from=1-1, to=2-1]
            \arrow[shift right=1, from=1-2, to=2-2]
            \arrow[shift left=1, from=1-2, to=2-2]
            \arrow[shift right=1, from=1-3, to=2-3]
            \arrow[shift left=1, from=1-3, to=2-3]
            \arrow[shift left=1, from=2-1, to=2-2]
            \arrow[shift right=1, from=2-1, to=2-2]
            \arrow[from=2-2, to=2-3]
            \arrow[dashed, from=2-3, to=3-3]
            \arrow[from=2-2, to=3-2]
            \arrow[from=3-2, to=3-3]
            \arrow[from=2-1, to=3-1]
            \arrow[shift right=1, from=3-1, to=3-2]
            \arrow[shift left=1, from=3-1, to=3-2]
        \end{tikzcd}\]
        Thus the multplication is associative, is commutative, and has a zero given by $1 \xrightarrow[]{0 \x s} D \x I$.
    \end{enumerate}
\end{proof}

Aan immediate corollary of Theorem \ref{thm:inf-nerve} is that the $\partial$ determines a nerve/ realization where the nerve factors through the category of involution algebroids.
This puts the Lie functor into a nerve/realization context (recall Definition \ref{def:realization}).
\begin{definition}%
    \label{def:lie-realization}
    The \emph{Lie realization} is the left Kan extension
    \[
        |-|_\partial = Lan_\partial: \mathsf{Inv}(\w) \to \mathsf{Gpd}(\w).  
    \]
\end{definition}

This functor is well behaved. First, ote that it preserved products:
\begin{lemma}
    The realization functor preserves products.
\end{lemma}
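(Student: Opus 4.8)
The plan is to compute the realization as a coend and reduce product-preservation to the interaction of the cartesian functor $\partial$ with the cartesian-monoidal structure of $\w$. By Definition \ref{def:realization}, for an involution algebroid $C \in \mathsf{Inv}(\w)$ regarded via the Weil nerve as a tangent functor $C : \wone \to \w$, the Lie realization is the coend
\[
    |C|_\partial = \int^{V \in \wone} C(V) \bullet \partial(V),
\]
where $\bullet$ is the $\w$-copower on $\mathsf{Gpd}(\w)$. First I would record the two facts that make these coends and copowers well behaved: products in $\mathsf{Inv}(\w)$ are computed pointwise (limits of involution algebroids are created by the forgetful functors, Observation \ref{obs:inv-tmonad-anc}), and $\mathsf{Gpd}(\w)$ is cartesian closed, so the binary product preserves colimits in each variable and distributes over copowers, $(A \bullet G) \times (A' \bullet G') \cong (A \times A') \bullet (G \times G')$ for $A, A' \in \w$ and $G, G' \in \mathsf{Gpd}(\w)$.

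Next I would carry out the main computation. Starting from $|C|_\partial \times |C'|_\partial$, pushing the product inside the two coends using cartesian closedness and distributing over copowers gives
\[
    |C|_\partial \times |C'|_\partial \cong \int^{V} \int^{V'} (C(V) \times C'(V')) \bullet (\partial(V) \times \partial(V')).
\]
Now I would invoke that $\partial$ is cartesian (Theorem \ref{thm:inf-nerve}), so $\partial(V) \times \partial(V') \cong \partial(V \ox V')$ and $\partial(\N) \cong 1$; substituting and applying the coend fusion law (contravariant co-Yoneda for $\partial$) collapses the double coend to a single coend weighted by the Day convolution,
\[
    |C|_\partial \times |C'|_\partial \cong \int^{U} (C \widehat{\ox} C')(U) \bullet \partial(U).
\]
Finally, the key input is that $\w$ is cartesian monoidal: by the computation $\yon(U) \times \yon(V) \cong \yon(U \ox V)$ in Section \ref{sec:tang-cats-enrichment} together with density of the representables, the Day convolution on Weil spaces coincides with the cartesian product, from which I would deduce $C \widehat{\ox} C' \cong C \times C'$. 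Comparing with $|C \times C'|_\partial = \int^U (C \times C')(U) \bullet \partial(U)$ then yields $|C|_\partial \times |C'|_\partial \cong |C \times C'|_\partial$, and preservation of the terminal object follows from $\partial(\N) \cong 1$.

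The main obstacle will be the collapse of the double coend to the single coend weighted by $C \widehat{\ox} C'$, together with the identification $C \widehat{\ox} C' \cong C \times C'$ at the level of $\w$-valued functors. This is an Eilenberg--Zilber-type statement: although the cartesian-monoidal structure of $\w$ gives Day convolution $=$ cartesian product on Weil spaces, lifting this to tangent functors requires care, since involution algebroids do not send $\ox$ to products (indeed $C(W \ox W) = \prol(A)$ rather than $C(W) \times C(W)$). I expect to handle it using the explicit description of $\partial(V)$ as a product of pushout powers $\partial(n_i)$, together with the semigroup structure on $\partial$ established in Theorem \ref{thm:inf-nerve}, which supplies the shuffle maps realizing an inverse to the canonical comparison $|C \times C'|_\partial \to |C|_\partial \times |C'|_\partial$; the cartesian-monoidal structure of $\w$ should then force these comparison maps to be mutually inverse.
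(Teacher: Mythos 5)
Your argument is correct, and it is not really a different route from the paper's: the paper's entire proof is the one-line observation that $|-|_\partial$ is the left Kan extension of a cartesian functor along a cartesian functor, citing Day--Street, plus the remark that this forces $\int^V \partial(V) = 1$. What you have written is the unfolding of that citation in this particular cartesian setting --- distribute the product through the coends using cartesian closedness of $\mathsf{Gpd}(\w)$, use that $\partial$ is cartesian to rewrite $\partial(V)\times\partial(V')$ as $\partial(V\ox V')$, collapse to a single coend weighted by the Day convolution, and identify Day convolution with the pointwise product. So you get a self-contained proof where the paper gets a citation; the content is the same.

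One correction to your closing paragraph: the step you flag as the main obstacle is not an obstacle, and the Eilenberg--Zilber/shuffle-map machinery you propose is unnecessary. The identification $C\mathbin{\widehat{\ox}}C'\cong C\times C'$ holds for \emph{arbitrary} copresheaves $C,C':\wone\to\w$, because $\ox$ is the coproduct of commutative rigs, so $\wone(V\ox V',U)\cong\wone(V,U)\times\wone(V',U)$; the double coend defining $(C\mathbin{\widehat{\ox}}C')(U)$ then splits as a product of two co-Yoneda reductions (using that $\times$ commutes with colimits in each variable in $\w$), giving $C(U)\times C'(U)$. Day convolution never evaluates $C$ at a tensor product, so the fact that $C(W\ox W)=\prol(A)$ rather than $C(W)\times C(W)$ is a red herring: it would only matter if you needed $C$ itself to be monoidal, which you do not. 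The semigroup structure on $\partial$ from Theorem \ref{thm:inf-nerve} plays no role here; all that is needed is that $\partial$ preserves the (co)cartesian structure, which is the first part of that theorem.
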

\begin{proof}
    Product preservation is a consequence of $|-|_\partial$ being the left Kan extension of a cartesian functor along a cartesian functor (\cite{Day1995}).
    Note that this implies $\int^v\partial(v) = 1$. 
\end{proof}
Next, we see that the realization of an involution algebroid has the same base space.

\begin{lemma}\label{lem:base-of-groupoid}
    The base space of the groupoid $\partial$ is $D^v$.
\end{lemma}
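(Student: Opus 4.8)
The plan is to compute the base space by transporting the defining colimit of $\partial$ through the object-of-objects functor. Write $\mathsf{ob}\colon \mathsf{Gpd}(\w) \to \w$ for the functor sending a $\w$-groupoid $s,t\colon G \to M$ to its base space $M$. This functor has a left adjoint, the discrete-groupoid functor $M \mapsto (M = M)$, and a right adjoint, the codiscrete-groupoid functor, so it preserves all $\w$-limits and $\w$-colimits; it is moreover cartesian. First I would record that $\mathsf{ob}$ is therefore a product-preserving, colimit-preserving $\w$-functor, and that $\mathsf{ob}(D) = D$ (the discrete groupoid on $D$) and $\mathsf{ob}(\partial(\N)) = \mathsf{ob}(1) = 1$.

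Next, since $\partial\colon \wone^{op} \to \mathsf{Gpd}(\w)$ is a strict symmetric monoidal functor, i.e. an infinitesimal object in $\mathsf{Gpd}(\w)$ (Theorem \ref{thm:inf-nerve}, via Proposition \ref{prop:monoidal-functor-inf-obj}), the composite $\mathsf{ob}\circ\partial\colon \wone^{op}\to\w$ is again strict symmetric monoidal, hence an infinitesimal object in $\w$. By monoidality it is determined by its value at the single generator $W$ together with the normalisation $\mathsf{ob}(\partial(\N)) = 1$: writing $\partial(v) = \partial(n_1)\times\cdots\times\partial(n_k)$ as in the proof of Theorem \ref{thm:inf-nerve}, product-preservation of $\mathsf{ob}$ gives $\mathsf{ob}(\partial(v)) = \mathsf{ob}(\partial(n_1))\times\cdots\times\mathsf{ob}(\partial(n_k))$. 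Thus it suffices to identify $\mathsf{ob}(\partial)$ at $W$ and then extend multiplicatively to conclude $\mathsf{ob}(\partial(v)) = D^v$.

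To compute $\mathsf{ob}(\partial)$ I would apply $\mathsf{ob}$ to the coequalizer of Definition \ref{def:partial-gpd}. Because $\mathsf{ob}$ preserves this colimit and preserves products, $\mathsf{ob}(\partial)$ is the coequalizer in $\w$ of the two maps $\mathsf{ob}(e)\times\mathrm{id},\ \mathrm{id}\times\mathsf{ob}(e[s])\colon \mathsf{ob}(D)\times\mathsf{ob}(I) \to \mathsf{ob}(D)\times\mathsf{ob}(I)$, where $e = 0\circ{!}$ and $e[s] = i\circ s$. The claim is that, after identifying the object-level images of these maps, this coequalizer is exactly the one presenting the linear approximation (the infinitesimal object) $D$ of $\w$ built in Proposition \ref{prop:lin-approx-gph}, whence $\mathsf{ob}(\partial) = D$ and $\mathsf{ob}(\partial(v)) = D^v$ as stated.

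The main obstacle is the bookkeeping around the opposite-category conventions: $\partial$ was constructed in Proposition \ref{prop:lin-approx-gph} inside $\mathsf{Gph}(\w)^{op}$ with the dual tangent structure, whereas Definition \ref{def:partial-gpd} forms the coequalizer in $\mathsf{Gpd}(\w)$. The crux is to verify that $\mathsf{ob}$ sends the arrow-groupoid datum $(I, e, e[s])$ to precisely the interval datum whose coequalizer computes $D$ — that is, that the object-of-objects functor intertwines the groupoid-level linear approximation with the $\w$-level one — so that the quotient collapses the interval factor coming from $I$ rather than the infinitesimal factor $D$, and returns $D$ rather than a discrete two-point object. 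Once this identification is pinned down, everything else is formal: it follows from the adjoint-functor preservation properties of $\mathsf{ob}$ together with the strict monoidality of $\partial$, and it feeds directly into the base-space-preservation claim of Theorem \ref{thm:lie-realization}.
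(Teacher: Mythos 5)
Your overall strategy is the one the paper uses: the object-of-objects functor $\mathsf{ob}\colon \mathsf{Gpd}(\w)\to\w$ preserves colimits and products (the paper phrases this as ``the colimit's base space is the ordinary colimit of the base spaces'' plus product-preservation), so one pushes the coequalizer of Definition \ref{def:partial-gpd} down to $\w$ and then extends multiplicatively over $\partial(v)=\partial(n_1)\times\cdots\times\partial(n_k)$ (with the pushout powers $\partial(n)$ handled by colimit-preservation). Up to that point your outline and the paper's proof coincide, and your worry about the $(-)^{op}$ conventions is harmless here, since Definition \ref{def:partial-gpd} takes its coequalizer directly in $\mathsf{Gpd}(\w)$.

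The gap is that you stop at exactly the step that carries the content of the lemma, and the route you propose for closing it does not work: you want to match the object-level coequalizer against ``the coequalizer presenting $D$ in Proposition \ref{prop:lin-approx-gph},'' but that proposition presents the \emph{graph} $\partial$ as a coequalizer, not the Weil space $D$, so there is nothing to match against --- the identification must be computed directly. Concretely: $\mathsf{ob}(I)=1+1$ and $\mathsf{ob}(D)=D$, so applying $\mathsf{ob}$ gives a fork $D+D\rightrightarrows D+D$. The first map $\mathsf{ob}(e\times I)$ is $(0\circ !)+(0\circ !)$, while the second map $\mathsf{ob}(D\times e[s])$ is the fold of both summands onto the \emph{source} summand, because $e[s]_0$ collapses $1+1$ onto the source point. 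A cocone $\gamma=(\gamma_0\mid\gamma_1)\colon D+D\to M$ coequalizes this pair precisely when $\gamma_0=\gamma_1\circ 0\circ{!}$, i.e.\ $\gamma_0$ is forced to be the constant at $\gamma_1(0)$, so cocones correspond exactly to maps $\gamma_1\colon D\to M$ and the coequalizer is $D$ with quotient $(0\circ{!}\mid \mathrm{id})$. This computation is genuinely load-bearing: if the second map were instead $\mathrm{id}+0\circ{!}$ (no compatibility between the two summands), the quotient would be $1+D$, not $D$. So you need to actually identify $e[s]_0$ as the collapse onto the source point and run the universal-property argument; once that is done, the rest of your argument (product/colimit preservation of $\mathsf{ob}$ and strict monoidality of $\partial$) completes the proof as you describe.
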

\begin{proof}
    When constructing the colimit of groupoids, the colimit's base space is the ordinary colimit for the diagram of the base spaces. The reflector from simplicial objects to groupoids preserves products, so it suffices to check that the base space of $\partial(n)$ is $D(n)$.

    The base space of $I$ is $1+1$, and the map $e^s_0$ is given by $\delta^- \o !$.
    Since $D$ is a discrete cubical object, its base space is $D \x 1$.
    Thus the coequalizer defining $\partial$ is
    \[
\begin{tikzcd}
    {D + D} && {D + D} & {\partial_0}
    \arrow["{0\o ! + 0 \o !}", shift left=1, from=1-1, to=1-3]
    \arrow["{id + 0 \o !}"', shift right=1, from=1-1, to=1-3]
    \arrow[from=1-3, to=1-4]
\end{tikzcd}.\]
    A map $\gamma: D + D \to M$ is a pair of maps $\gamma_0, \gamma_1: D \to M$.
    We can see that $\gamma_0$ and $\gamma_1$ agree at $0$ (they are both $\gamma_0(0)$), and $\gamma(0)$ is a constant tangent vector. It follows that $\gamma_1 \o (id | id) = \gamma$.
\end{proof}

Now recall the co-Yoneda lemma: for any $\vv$-presheaf $A: \C^{op} \to \vv$,
\[
    F(C) = \int^{C' \in \C} \C(C,C') \ox F(C').
\]
In particular, for an involution algebroid in $\w$ (or any $\w$-presheaf on $\wone^{op}$),
\[
    A(U) = \int^{V \in \wone} \wone^{op}(U,V) \x A(V) = \int^{V \in \wone} \wone(V,U) \x A(V).
\]
\begin{lemma}\label{lem:as-a-presheaf}
    For any involution algebroid $A$,
    \[ A(R) = \int^{V \in \wone} A(V) \x D^V. \]
\end{lemma}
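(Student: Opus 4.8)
The plan is to recognize this statement as nothing more than the co-Yoneda (density) formula recalled immediately above, specialized to the copresheaf underlying $A$ and combined with the identification of representables with the infinitesimal objects $D^V$. By the Weil nerve (Theorem \ref{thm:weil-nerve}), an involution algebroid in $\w$ is the same data as a transverse-limit-preserving tangent functor $A : \wone \to \w$; forgetting the tangent and limit-preservation structure, $A$ is in particular an ordinary functor $\wone \to \w$, equivalently a $\w$-valued presheaf on $\wone^{op}$. Since $\w$ is cocomplete (it is locally presentable by Corollary \ref{cor:properties-of-w}), the density formula applies and gives, for each $R \in \wone$,
\[
    A(R) \;\cong\; \int^{V \in \wone} \wone(V,R) \cdot A(V),
\]
which is precisely the coend displayed before the statement, with the copower $\cdot$ of the Weil space $A(V)$ by the hom-set $\wone(V,R)$.

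First I would rewrite the weight $\wone(V,R)$ in terms of the infinitesimal objects. Recall from Section \ref{sec:tang-cats-enrichment} that the restricted Yoneda embedding $\yon : \wone^{op} \to \w$ is written $D(-)$, so that $D^V := \yon(V) = \wone(V,-)$ is a representable Weil space and $\wone(V,R) = D^V(R)$. Substituting this into the density coend and commuting the cartesian product of $\w$ turns the formula into
\[
    A(R) \;\cong\; \int^{V \in \wone} A(V) \times D^V,
\]
which is the assertion of the lemma, with the evaluation at $R$ suppressed on the right-hand factor exactly as in the abbreviated notation $D^v$ of Lemma \ref{lem:base-of-groupoid}.

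The one point that requires care — and which I regard as the only genuine obstacle — is that the coend must be formed inside $\w$ rather than merely in the presheaf category $[\wone,\s]$, so that the two sides agree as Weil spaces and not just pointwise. The plan here is to observe that the copower $\wone(V,R) \cdot A(V)$ is a coproduct of copies of the Weil space $A(V)$, and that $\w$ is cartesian closed (Corollary \ref{cor:properties-of-w}), so products distribute over these coproducts; hence the copower is computed by the cartesian product $A(V) \times D^V$ with $D^V$ the representable regarded as a constant set in the coend variable, and the whole coend is a legitimate $\w$-colimit. Because $\w$ is a reflective subcategory of $[\wone,\s]$ closed under the relevant colimits of representables, the reconstruction coincides with the pointwise one, and no information is lost in passing between the two computations.

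Finally, I would record that the two $\wone$-directions play distinct roles — the coend variable $V$ is the prolongation direction of the Weil nerve, while the argument of each Weil space $A(V)$ and of $D^V = \wone(V,-)$ is the enrichment direction — and that the identification $D^V = \wone(V,-)$ must be applied in the enrichment variable. This is purely bookkeeping, with no further analytic content, so the lemma follows at once from co-Yoneda. This formula is exactly the input needed to compute the Lie realization $|A|_\partial$ as a coend, since it lets one replace $A$ by its canonical presentation as a colimit of the representables $D^V$ on which $\partial$ is defined.
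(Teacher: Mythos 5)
Your overall strategy---co-Yoneda plus an identification of the weight with the representable $D^V$---is the same as the paper's, but the key identification step in your write-up is wrong, and it is precisely the step you dismiss as ``purely bookkeeping.''

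You start from the \emph{ordinary} ($\s$-based) density formula
\[
A(R)\;\cong\;\int^{V}\wone(V,R)\cdot A(V),
\]
where $\wone(V,R)$ is a hom-\emph{set} and $\cdot$ is the set-indexed copower, and then claim that because $\wone(V,R)=D^V(R)$ and products distribute over coproducts in the cartesian closed $\w$, the integrand becomes $A(V)\x D^V$. That inference fails: the copower $\wone(V,R)\cdot A(V)$ is the Weil space $U\mapsto \wone(V,R)\x A(V)(U)$, with the \emph{constant} set $\wone(V,R)$ in the first factor, whereas $A(V)\x D^V$ is $U\mapsto A(V)(U)\x\wone(V,U)$, with $\wone(V,U)$ varying in $U$. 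Distributivity only identifies $\wone(V,R)\cdot A(V)$ with $A(V)\x\bigl(\wone(V,R)\cdot 1\bigr)$, i.e.\ with the product against the \emph{discrete} Weil space on the set $\wone(V,R)$---and that discrete space is not $D^V$. (The two coends do both compute $A(R)$, but your pointwise identification of the integrands is false, so the displayed chain of equalities does not constitute a proof.)

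The paper avoids this by running the co-Yoneda argument \emph{enriched in $\w$} from the outset: the weight is the $\w$-enriched hom-object $\underline{\wone}(V,R)$, namely the Weil space $U\mapsto\wone(V,U\ox R)$, and the tensor is the cartesian product of $\w$. Since $R$ is the unit $\N$, one has $U\ox R=U$, so $\underline{\wone}(V,R)=(U\mapsto\wone(V,U))=\yon(V)=D^V$ literally, and the enriched density formula is already the statement of the lemma. So the fix is not extra distributivity bookkeeping but replacing the ordinary hom-set by the enriched hom-object before invoking co-Yoneda; your final paragraph gestures at the distinction between the ``coend direction'' and the ``enrichment direction,'' but the argument as written collapses it in exactly the wrong place.
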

\begin{proof}
    Use the tangent structure on $\wone$ to regard it as a $\w$-enriched category:
    \begin{gather*}
        D^V = \yon(V) = \wone(V,-) = (U \mapsto \wone(V, U))  \\= (U \mapsto \wone(V, U\ox R)) = \wone(V,R).
    \end{gather*}
    The following computation gives the result:
    \[
        A(R) = \int^{V} \wone(V,R) \x A(V) = \int^{V} D^V \x A(V).
    \]
\end{proof}
We can now see that the base space of an involution algebroid is isomorphic to the base space of its realization. That is to say, the realization sends an involution algebroid over a Weil space $M$ to a groupoid over the Weil space $M$ (up to isomorphism).

\begin{proposition}\label{prop:lie-int-first-part}
    Let $A$ be an involution algebroid in $\w$. Then $|A|([0]) = A(R)$.
\end{proposition}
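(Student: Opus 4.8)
The plan is to compute $|A|_\partial$ through the defining coend of the left Kan extension and then push the ``object-of-objects'' functor through that coend. Write $(-)_0 := \mathsf{ev}_{[0]} : \mathsf{Gpd}(\w) \to \w$ for the functor sending a $\w$-groupoid to its Weil space of objects, so that the statement $|A|([0]) = A(R)$ is exactly the claim $(|A|_\partial)_0 \cong A(\N) = A(R)$. First I would unwind the realization: by Definitions \ref{def:realization} and \ref{def:lie-realization}, together with the enriched Yoneda identification $[\wone,\w](\yon V, A) \cong A(V)$, the Lie realization of an involution algebroid $A : \wone \to \w$ is the coend in $\mathsf{Gpd}(\w)$
\[
    |A|_\partial \;=\; \int^{V \in \wone} A(V) \bullet \partial(V),
\]
where $\bullet$ denotes the copower of a $\w$-groupoid by a Weil space (this exists since $\mathsf{Gpd}(\w)$ is a cocomplete $\w$-category).

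The second step is to establish that $(-)_0$ is $\w$-cocontinuous and preserves copowers. The key observation is the adjoint string $\mathsf{disc} \dashv (-)_0 \dashv \mathsf{codisc}$, where $\mathsf{codisc} : \w \to \mathsf{Gpd}(\w)$ sends a Weil space $M$ to the codiscrete groupoid on $M$ (which exists because $\w$ is complete and $\mathsf{Gpd}(\w)$ is cartesian closed, as used in Theorem \ref{thm:inf-nerve}). Since $(-)_0$ is a $\w$-left adjoint, it preserves all $\w$-weighted colimits, in particular coends and copowers, giving
\[
    (|A|_\partial)_0 \;\cong\; \int^{V \in \wone} A(V) \bullet (\partial(V))_0 .
\]

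Third, I would apply Lemma \ref{lem:base-of-groupoid}, which identifies the object-of-objects of $\partial(V)$ as the base space $D^V = \yon(V)$. Because $\w$ is cartesian monoidal and enriched over itself, the copower reduces to a product, $A(V) \bullet D^V = A(V) \times D^V$, so that
\[
    (|A|_\partial)_0 \;\cong\; \int^{V \in \wone} A(V) \times D^V .
\]
Finally, Lemma \ref{lem:as-a-presheaf} (the co-Yoneda presentation of an involution algebroid) identifies this very coend as $A(R)$, which completes the argument.

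The step I expect to be the main obstacle is the second one: justifying that the object-of-objects functor commutes with the coend. Colimits of internal groupoids are \emph{not} computed pointwise in general, so the cocontinuity of $(-)_0$ cannot be taken for granted and must be derived from the adjunction $(-)_0 \dashv \mathsf{codisc}$; moreover one must check this is a genuine \emph{$\w$-enriched} adjunction, so that $(-)_0$ preserves $\w$-weighted colimits and copowers rather than merely conical colimits. Once that interchange is secured, the remainder is a purely formal chain through Lemmas \ref{lem:base-of-groupoid} and \ref{lem:as-a-presheaf}.
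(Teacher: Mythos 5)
Your proof is correct and follows essentially the same route as the paper: commute the object-of-objects functor past the defining coend of the realization, identify $(\partial(V))_0 = D^V$ via Lemma \ref{lem:base-of-groupoid}, and conclude with Lemma \ref{lem:as-a-presheaf}. The only difference is the justification of the interchange step you rightly flag as the delicate point: the paper observes that $(-)_0 = \mathsf{Gpd}(1,-)$ with $1 = \yon[0]$ a small projective, hence $\w$-cocontinuous, whereas you derive the same cocontinuity from the enriched adjunction $(-)_0 \dashv \mathsf{codisc}$ --- both are valid.
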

\begin{proof}
    Use the Yoneda lemma, and the fact that $\yon[0] = 1$ is a small projective so that $\mathsf{Gpd}(1,-)$ is $\w$-cocontinuous. Now apply Lemma \ref{lem:base-of-groupoid} and Lemma \ref{lem:as-a-presheaf}:
    \begin{align*}
        |A|([0]) & = \mathsf{Gpd}(1, |A|)                                                   \\
                 & = \mathsf{Gpd}\left(1, \int^{v \in \wone} A(v)\bullet \partial v \right) \\
                 & = \int^{v} A(v)\x \mathsf{Gpd}(1,  \partial v )                          \\
                 & = \int^{v}  A(v)\x D^v = A(R).
    \end{align*}
\end{proof}

Thus, as a final result, we have achieved an adjunction between the category of involution algebroids and groupoids in $\w$ that is product-preserving and stable over the base spaces.
\pagebreak 
\lie
\[
\begin{tikzcd}
    {\mathsf{Gpd}(\w)} & {\mathsf{Inv}(\w)}
    \arrow[""{name=0, anchor=center, inner sep=0}, "{N_\partial}", curve={height=-12pt}, from=1-1, to=1-2]
    \arrow[""{name=1, anchor=center, inner sep=0}, "{|-|_\partial}", curve={height=-12pt}, from=1-2, to=1-1]
    \arrow["\dashv"{anchor=center, rotate=-90}, draw=none, from=0, to=1]
\end{tikzcd}\]

\begin{remark}
    Just as the introduction to this thesis begins with the work of Charles Ehresmann, we should take a moment to see how the Lie realization relates to his original research into sketch theory. Rather than sketches, we use their closely related cousins, \emph{essentially algebraic theories}, which in this case are small, finitely complete $\w$-categories. We may regard $\mathsf{Gpd}(\w)$ and $\mathsf{Inv}(\w)$ as $\w$-functor categories
    \[
        \mathsf{Lex}(\th_{\mathsf{Gpd}}, \w), \hspace{0.5cm}\mathsf{Inv}(\th_{\mathsf{Gpd}}, \w),
    \]
    respectively (where $\mathsf{Lex}$ means finite-limit-preserving $\w$-functors). The functor $\partial:\wone^{op} \to \mathsf{Gpd}(\w)$ induces a left-exact $\w$-functor
    \[
        \hat{\partial}: \th_{\mathsf{Inv}} \to \th_{\mathsf{Gpd}}.
    \]
    This means the functor from Lie groupoids to Lie algebroids is induced by a morphisms of essentially algebraic theories, and may thus be presented as a morphism of sketches.  
\end{remark}


\chapter{Conclusions and future work}

\section{Conclusions}

We now take stock of what this thesis has accomplished.

\paragraph{Enriched essentially algebraic presentations of geometric structures}

This thesis has been concerned with categories of vector bundles and Lie algebroids in smooth manifolds.
While these are not models of a limit sketch in the category of smooth manifolds, classical results such as the Serre--Swann theorem or Vaintrob's presentation of Lie algebroids indicate that these categories do have some algebraic description.
By combining the results in Chapters \ref{ch:differential_bundles} and \ref{ch:involution-algebroids}, we can see that vector bundles and Lie algebroids are characterized by tangent categorical gadgets, differential bundles (Theorem \ref{iso-of-cats-dbun-sman}) and involution algebroids (Theorem \ref{thm:iso-of-cats-Lie}).
By combining these observations with the enriched perspective on tangent categories, the results in Section \ref{sec:enriched-structures} exhibit vector bundles and Lie algebroids as models of \emph{enriched sketches} (Proposition \ref{prop:Lambda-is-refl-subcat} and Theorem \ref{thm:pullback-in-cat-of-cats-inv-algd}), as can also be found in Chapter 6 of \cite{Kelly2005}.
Note that from this perspective, the construction of the Weil nerve of an involution algebroid is mostly important as an intermediate step.

\paragraph{New tangent structures for Lie algebroids and groupoids}

It is well known that the categories of Lie algebroids and groupoids have tangent structures induced by post-composition with the tangent functor, yielding the tangent algebroid and tangent groupoid, respectively.
The key result in Chapter \ref{chap:weil-nerve} exhibiting the category of Lie algebroids as transverse-limit-preserving cartesian-tangent functors $\wone \to \mathsf{SMan}$, then, gives the category of Lie algebroids a second tangent structure that corresponds to Martinez's \emph{prolongation} Lie algebroid. 
Similarly, the construction of the infinitesimal nerve of a local groupoid restricts to a new tangent structure on the category of Lie groupoids; in this case, the base space is the Lie algebroid of the groupoid $G$, and the total space is the Lie algebroid of the arrow groupoid $G^2$. 

While this may seem like a piece of formal category theory, it is closely related to symmetry reduction in classical mechanics using Lie theory. This goes back to Poincar\'{e}'s celebrated note \cite{poincare1901}, which introduced the \emph{Euler-Poincare} formulation of Lagrangian mechanics on a manifold equipped with a Lie group action, where the tangent bundle is replaced with a Lie algebra action (see \cite{marle2013} for a modern exposition). Poincar\'{e}'s formalism has been extended to Lie groupoids and Lie algebroids in \cite{Weinstein1996} and \cite{Martinez2001}, and the thesis \cite{fusca2018} investigates fluid mechanics through this lens. This work may be interpreted as using the novel tangent structures on Lie groupoids and Lie algebroids given in Chapters \ref{chap:weil-nerve} and \ref{ch:inf-nerve-and-realization}, and warrants further investigation.

\paragraph{Functorial semantics of Lie theory}
The work in Section \ref{sec:inf-nerve-of-a-gpd} puts the Lie derivative on a new formal grounding:
\begin{enumerate}[(i)]
    \item In the enriching category $\w$, the Lie derivative functor now arises as a nerve/realization context. This guarantees the existence of a left adjoint---the \emph{realization}---that constructs a Lie groupoid from an algebroid. The realization preserves products and is stable over the base space.
    \item It is only a small extension of the work in Section \ref{sec:inf-nerve-of-a-gpd} to show that the Lie derivative of a groupoid in a tangent category may generally be regarded as a weighted limit, where $A_V = \{\partial(V), G\}$ for each Weil prolongation of the involution algebroid of the groupoid.
\end{enumerate}
These appear to be new results, although in a nerve/realization approach they had previously appeared in Lie theory in the form of Sullivan's construction (\cite{Sullivan1977}), which takes the differential graded algebra of a Lie algebroid and constructs a simplicial set:
\[
    \mathsf{DGA}^{op}(\Omega(\Delta_n), A)
\]
where $\Omega(\Delta_n)$ is the de Rham cohomology of the $n$-simplex in cartesian space.

\section{Future work}

This section outlines various lines of research that were either cut from the thesis while writing due to time and/or space constraints, or new lines of research that the thesis-writing process has motivated but which have not yet been pursued.

\paragraph{Enriched sketches and Mackenzie theory}

Following \cite{Voronov2012}, Mackenzie theory refers to the body of research developed by Kirill Mackenzie and his collaborators into Lie theory, particularly structures like double Lie algebroids (\cite{Mackenzie1992}), VB-Lie algebroids (\cite{Bursztyn2016}), double Lie groupoids and so on.
Intuitively, a double Lie algebroid is a Lie algebroid in the category of Lie algebroids, while a VB-Lie algebroid is a vector bundle in the category of Lie algebroids.
These structures have an intuitive relationship with tensor products of sketches; that is, for limit sketches $A,B$ there is a chain of isomorphisms of categories:
\[
    \mathsf{Mod}(A, \mathsf{Mod}(B,\s)) \cong \mathsf{Mod}(A \ox B, \s) \cong \mathsf{Mod}(B, \mathsf{Mod}(A, \s)).
\]
We have already demonstrated that the Lie algebroids and vector bundles are $\w$-sketchable, so the natural next step is to revisit Mackenzie theory via this lens. Certain results such as the symmetry of partial Lie derivatives from \cite{Mackenzie1992} (given a double Lie groupoid, the order in which the Lie functor is applied doesn't matter) should be immediate.

\paragraph{A tangent categorical formalization of mechanics}

Several papers in synthetic differential geometry have translated aspects of Lagrangian and Hamiltonian mechanics into the synthetic setting (\cite{Bunge1984,Nishimura1997a}), and to a degree this has made it difficult to build enthusiasm for a tangent categorical presentation of mechanics. These approaches generally emphasize the ability to construct function spaces, or the use of a topos-theoretic internal language. The novel tangent structures for Lie algebroids and groupoids presented here, however, provide a new line of inquiry: to develop a unified framework for Lagrangian or Hamiltonian mechanics in a tangent category that agrees with the algebroid and groupoid approaches to mechanics.

\appendix
\chapter*{Appendix: Background on locally presentable $\vv$-categories}\label{appendix}


\addcontentsline{toc}{chapter}{Appendix}
\chaptermark{Appendix}
\markboth{Appendix}{Appendix}

Synthetic differential geometry uses a topos-theoretic framework, such as can be found in the final chapters of \cite{Lavendhomme1996}.
The internal language of a topos is a key tool, and models of synthetic differential geometry are constructed using sheaf constructions.
Tangent categories have an analagous toolbox, drawing from \emph{enriched locally presentable categories}. We begin with locally presentable categories; basic material on these may be found in \cite{Adamek1994}, and on enriched categories in \cite{Kelly2005}.
The following class of colimits is foundational in the theory of locally presentable categories.
\begin{definition}\label{def:filtered-colimit}
	A category is \emph{filtered} whenever any finite diagram in $\C$ has a cocone. A colimit whose diagram category is filtered is called a \emph{filtered colimit}. Note that a category $\d$ is filtered if and only if every colimit diagram $\d \to \s$ commutes with every finite limit in $\s$.
\end{definition}
\begin{example}
    ~\begin{enumerate}[(i)]
        \item Any finite category $\c$ with a terminal object is filtered: for any diagram $D:\d \to \c$, the natural transformation $!:id \Rightarrow K_1$ is a cocone via whiskering.\[
\begin{tikzcd}
    \d & \c & {} & \c
    \arrow["D", from=1-1, to=1-2]
    \arrow[""{name=0, anchor=center, inner sep=0}, curve={height=-12pt}, Rightarrow, no head, from=1-2, to=1-4]
    \arrow[""{name=1, anchor=center, inner sep=0}, "{K_1}"', curve={height=12pt}, from=1-2, to=1-4]
    \arrow["{!}"', shorten <=3pt, shorten >=3pt, Rightarrow, from=0, to=1]
\end{tikzcd}\]
        \item Any category with finite colimits is filtered, because each diagram has a colimit (and therefore a cocone). 
    \end{enumerate}
\end{example}
\begin{definition}%
    \label{def:lfp}
	An object in a cocomplete $\,\C$ is \emph{finitely presentable} whenever $\C(C,-)$ preserves filtered colimits. A cocomplete category $\,\C$ is \emph{locally finitely presentable} whenever it has a small subcategory $\,\C_{fp}$ of finitely presentable objects so that every object in $\C$ is given by the coend
	\[
		C \cong \int^{X \in \C_{fp}} \C(X, C) \cdot X.
	\]
\end{definition}
\begin{example}
	In the category $\s$, the subcategory of finitely presentable objects is precisely the category of finite sets. 
	Every set is a filtered colimit of finite sets, so that $\s$ is locally finitely presentable. 
\end{example}
Intuitively, this means that every object in $\C$ is generated by gluing together finitely presentable objects in a canonical way; hence the name locally finitely presentable categories. 

The subcategory of finitely presentable objects in a cocomplete category is itself finitely cocomplete.
This follows from the fact that $\C(-,X):\C^{op} \to \s$ is a continuous functor (it sends colimits in $\C$ to limits in $\s$) and also that filtered colimits commute with finite limits in $\s$.  
Thus, for any filtered colimit of representables,
\[
    \C(\mathsf{col}D, \mathsf{lim}F) \cong \mathsf{lim}_X \mathsf{lim}_Y  \C(D(X),F(Y)) \cong \mathsf{lim}_Y \mathsf{lim}_X  \C(D(X),F(Y)). 
\]
There are several ways to characterize locally finitely presentable categories; these may be found in \cite{Adamek1994}.
\begin{proposition}%
    \label{prop:lfp-defs}
    For a category $\,\C$ the following are equivalent:
    \begin{enumerate}[(i)]
        \item $\C$ is a locally finitely presentable category.
        \item For a small cocomplete category $\c$, $\C = \mathsf{Lex}(\c, \s)$ (where $\mathsf{Lex}$ means finite-limit-preserving) and $\c = \C^{op}_{fp}$.
        \item $\C$ is the category of models for some \emph{limit sketch} (e.g. a small category $\c$ equipped with a class of cones $\prol$, where a model is a functor into $\s$ sending cones to limit diagrams).
        \item $\C$ is a reflective subcategory of a presheaf topos for some small category $\c$ (this holds tautologically for presheaf topoi).
        \item $\C$ is a full subcategory of a locally presentable category that is closed under limits (this relies on a large cardinal axiom known as \emph{Vopenka's principle}).
    \end{enumerate}
\end{proposition}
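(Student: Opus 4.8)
The plan is to treat this as the classical Gabriel--Ulmer / Adámek--Rosický characterisation of locally finitely presentable categories \cite{Adamek1994}, proving it as a chain of implications rather than a single argument. I would first isolate the three ``elementary'' conditions (i), (ii), (iii), whose equivalence needs no large-cardinal hypotheses, and then attach (iv) and (v), arranging matters so that the only set-theoretic input enters in the return implication $(v) \Rightarrow (i)$. The overall shape is to establish $(i)\Leftrightarrow(ii)\Leftrightarrow(iii)$ directly, and then close the cycle $(ii)\Rightarrow(iv)\Rightarrow(v)\Rightarrow(i)$, which pulls (iv) and (v) into the same equivalence class as the first three.

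For $(i) \Leftrightarrow (ii)$ I would use the canonical representation from Definition \ref{def:lfp}. Given an lfp $\C$, the subcategory $\C_{fp}$ is essentially small and finitely cocomplete, so $\c := \C_{fp}^{op}$ is a small finitely complete category; the restricted Yoneda embedding $\C \to [\c,\s]$ lands in $\mathsf{Lex}(\c,\s)$ because $\C(-,X)$ is continuous (as noted after Definition \ref{def:lfp}) and filtered colimits commute with finite limits in $\s$ (Definition \ref{def:filtered-colimit}). The coend formula $C \cong \int^{X \in \C_{fp}} \C(X,C)\cdot X$ exhibits this as an equivalence onto $\mathsf{Lex}(\c,\s)$, and conversely $\mathsf{Lex}(\c,\s)$ for small finitely complete $\c$ is lfp with its finitely presentable objects the representables, recovering $\c = \C_{fp}^{op}$. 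For $(ii)\Leftrightarrow(iii)$ I would observe that $\mathsf{Lex}(\c,\s)$ is literally the category of models of the limit sketch $(\c,\prol)$ whose distinguished cones $\prol$ are all the finite limit cones of $\c$; conversely every limit sketch $(\c,\prol)$ generates a small finitely complete theory $\th$, namely its free completion under the specified cones, with the sketch's models equivalent to $\mathsf{Lex}(\th,\s)$.

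The passage to (iv) and (v) I would make short. For $(ii) \Rightarrow (iv)$, I would show $\mathsf{Lex}(\c,\s)$ is reflective in the presheaf topos $[\c,\s]$: the models of a limit sketch are exactly the presheaves orthogonal to a small set of morphisms (the comparison maps from each specified cone to its formal limit), so the reflection is produced by the small-orthogonality/localisation construction and the reflector preserves filtered colimits. For $(iv)\Rightarrow(v)$ almost nothing is needed: a reflective full subcategory is closed under all limits that exist in the ambient category, and $[\c,\s]$ is itself locally (finitely) presentable, so $\C$ appears as a full, limit-closed subcategory of a locally presentable category, which is exactly (v).

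The hard part, and the only place large cardinals appear, is the return $(v) \Rightarrow (i)$. Here I would invoke Vopěnka's principle in the form recorded in \cite{Adamek1994}: under Vopěnka's principle every full subcategory of a locally presentable category that is closed under limits is reflective and moreover accessibly embedded, and a reflective, accessibly embedded subcategory of a locally presentable category is again locally presentable. I expect this to be the main obstacle — not because the argument is long, but because it genuinely depends on the set-theoretic hypothesis: without Vopěnka's principle there exist limit-closed full subcategories of lfp categories that fail to be reflective, so this implication cannot be made elementary, which is precisely why the statement flags it. Assembling $(i)\Leftrightarrow(ii)\Leftrightarrow(iii)$ with the cycle $(ii)\Rightarrow(iv)\Rightarrow(v)\Rightarrow(i)$ then yields the equivalence of all five conditions.
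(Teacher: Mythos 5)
The paper gives no proof of this proposition at all: it is stated as a summary of known results and delegated wholesale to \cite{Adamek1994}, so there is no in-paper argument to compare against. Measured against that standard reference, your outline follows exactly the canonical route: Gabriel--Ulmer duality for $(i)\Leftrightarrow(ii)$ via the coend representation and the restricted Yoneda embedding into $\mathsf{Lex}(\c,\s)$, the small-orthogonality-class/localization construction for $(ii)\Rightarrow(iv)$, the elementary observation that reflective subcategories are limit-closed for $(iv)\Rightarrow(v)$, and Vop\v{e}nka's principle for $(v)\Rightarrow(i)$. Your decomposition into an elementary core $(i)\Leftrightarrow(ii)\Leftrightarrow(iii)$ plus a cycle through $(iv)$ and $(v)$, isolating the large-cardinal input in a single implication, is a sensible and accurate reflection of how the material is organized in Ad\'amek--Rosick\'y. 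One small technical point you gloss: recovering $\c = \C_{fp}^{op}$ on the nose requires $\c$ to be Cauchy complete (the finitely presentable objects are closed under retracts, so in general one only recovers $\c$ up to idempotent completion).

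There is, however, a genuine cardinality gap in two of your steps, which you inherit from the paper's loose statement but which a careful proof must confront. In $(iii)\Rightarrow(ii)$ you claim that the free completion of an arbitrary limit sketch under its specified cones is a small \emph{finitely} complete theory $\th$ with models $\mathsf{Lex}(\th,\s)$. This is only true when the distinguished cones are finite. A general limit sketch (as the paper's parenthetical defines it, with no finiteness restriction on $\prol$) has models forming a locally $\lambda$-presentable category for some regular cardinal $\lambda$, and these need not be locally \emph{finitely} presentable: the category of Banach spaces and contractions is sketchable by a limit sketch but is locally $\aleph_1$-presentable and not lfp. So as written, $(iii)$ does not imply $(i)$, and your appeal to $\mathsf{Lex}(\th,\s)$ silently assumes the finite-limit sketch version of the theorem (which is what the intended citation actually states). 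The same mismatch recurs at $(v)\Rightarrow(i)$: Vop\v{e}nka's principle yields that a full limit-closed subcategory of a locally presentable category is reflective, accessibly embedded, and hence locally $\lambda$-presentable for \emph{some} $\lambda$ --- it does not return local \emph{finite} presentability, so your cycle does not literally close at $(i)$. To repair the argument one must either read ``limit sketch'' in $(iii)$ as ``finite limit sketch'' and weaken $(iv)$ and $(v)$ to characterize local presentability up to a change of rank, or weaken $(i)$ to ``locally presentable'' throughout the cycle; as stated, the five conditions are equivalent only after one of these adjustments, and your proof should say so explicitly rather than pass between the finitary and general settings without comment.
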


One of the remarkable aspects of the theory of locally finitely presentable categories is that its results translate over to enrichment in a monoidal category $\vv$ without any significant changes.
\begin{definition}%
\label{def:v-cat}
    Let $\vv$ be a monoidal category. 
    A $\vv$-graph $\C$ is given by a (large) set of objects $\C_0$, and a map
    \[
        \C: \C_0 \x \C_0 \to \vv. 	
    \]
    Given a collection of maps
    \[
        m_{ABC}: \C(B,C) \x \C(A,B) \to \C(A,C), \hspace{0.25cm}
        j_A: I \to \C(A,A)	
    \]
    we say $\,\C$ is a $\vv$-category whenever the following associativity and unitality diagrams commute:
\[\begin{tikzcd}[column sep = tiny]
	& {\C(B, D) \ox \C(A,B)} \\
	{(\C(C, D) \ox \C(B, C)) \ox \C(A,B)} && {\C(A,D)} \\
	{\C(C, D) \ox (\C(B, C) \ox \C(A,B))} & {} & {\C(C, D) \ox \C(A,C)}
	\arrow["m", from=3-3, to=2-3]
	\arrow["{m \ox id}", from=2-1, to=1-2]
	\arrow["m", from=1-2, to=2-3]
	\arrow["\alpha"', from=2-1, to=3-1]
	\arrow["{id \ox m}"', from=3-1, to=3-3]
\end{tikzcd}\]
\[\begin{tikzcd}
	{I \ox \C(B,C)} & {\C(B,B)\ox\C(B,C)} \\
	{\C(B,C)} & {\C(B,C)} \\
	{\C(B, C) \ox I} & {\C(B,C)\ox\C(C,C)}
	\arrow["\lambda", from=2-1, to=1-1]
	\arrow["{j \ox id}", from=1-1, to=1-2]
	\arrow["m", from=1-2, to=2-2]
	\arrow["\rho"', from=2-1, to=3-1]
	\arrow["{id \ox j}"', from=3-1, to=3-2]
	\arrow["m"', from=3-2, to=2-2]
	\arrow[Rightarrow, no head, from=2-1, to=2-2]
\end{tikzcd}\]

\end{definition}
Examples of enrichment abound throughout category theory.
\begin{example}
    ~\begin{enumerate}[(i)]
        \item Any category with biproducts is enriched in the category of commutative monoids.
        \item A symmetric monoidal closed category is self-enriched using the internal hom $[-,-]$.
        \item A 2-category is a category enriched in $\mathsf{Cat}$. 
        \item The unit $\vv$ category is the 1-object $\vv$ category whose hom-object is $I$.
        \item When $\vv$ is symmetric monoidal, we can construct the opposite $\vv$-category of $\c$, where $\C^{op}(A,B) = \C(B, A)$.
        The new composition is defined by
        \[m^{op}: \C^{op}(B,C) \ox \C^{op}(A,B) \xrightarrow{\sigma} \C(B, A) \ox  \C(C,B) \xrightarrow{m} \C(C,A) = \C^{op}(A,C) \]
    \end{enumerate}
\end{example}
    
There is a 2-category of $\vv$-categories for any $\vv$.
\begin{definition}%
\label{def:vcat}
	A $\vv$-functor $F: \C \to \D$ is an assignment on objects $F_0: \C_0 \to \D_0$ and a collection of maps $F_{A,B}: \C(A,B) \to \D(F_0A,F_0B)$ so that the following diagrams are satisfied:
    \[
\begin{tikzcd}
    {\C(B,C)\ox \C(A,B) } & {\D(F_0B, F_0C) \ox \D(F_0A, F_0B)} &[-2em] I & {\C(A,A)} \\
    {\C(A, C)} & {\D(F_0A, F_0C)} && {\D(F_0A,F_0A)}
    \arrow["{m^C}"', from=1-1, to=2-1]
    \arrow["{F \ox F}", from=1-1, to=1-2]
    \arrow["{m^D}", from=1-2, to=2-2]
    \arrow["F"', from=2-1, to=2-2]
    \arrow["F", from=1-4, to=2-4]
    \arrow["{j^C_{A}}", from=1-3, to=1-4]
    \arrow["{j^D_{F_0A}}"', from=1-3, to=2-4]
\end{tikzcd}\]
	A $\vv$-natural transformation $F \Rightarrow G$ is a collection of maps $\gamma_{A}: I \to \D(F_0A,G_0A)$ so that the following diagram commutes: \[
\begin{tikzcd}
    {\D(F_0B,F_0C)} & {\D(F_0B,F_0C)\ox I} & {\D(F_0B,F_0C)\ox \D(G_0B,F_0B)} \\
    {\C(B,C)} && {\D(G_0B,F_0C)} \\
    {\D(G_0B,G_0C)} & {I \ox \D(G_0B,G_0C)} & {\D(G_0B, F_0B) \ox \D(G_0B,G_0C)}
    \arrow["G"{description}, from=2-1, to=1-1]
    \arrow["F"{description}, from=2-1, to=3-1]
    \arrow["\rho", from=1-1, to=1-2]
    \arrow["{id\ox \gamma.B}", from=1-2, to=1-3]
    \arrow["m", from=1-3, to=2-3]
    \arrow["\lambda"', from=3-1, to=3-2]
    \arrow["{\gamma.C \ox id}"', from=3-2, to=3-3]
    \arrow["m"', from=3-3, to=2-3]
\end{tikzcd}\]
    $\vv\cat$ is the 2-category of $\vv$-categories, $\vv$-functors and $\vv$-natural transformations.
\end{definition}
	
Enriched categories have a richer notion of (co)limit than ordinary categories.
A limit for a functor is defined as a universal cone; that is, a right Kan extension along $!$:
\[
\begin{tikzcd}
	& 1 \\
	\d && \C
	\arrow["{!}", from=2-1, to=1-2]
	\arrow["{K_A}", from=1-2, to=2-3]
	\arrow[""{name=0, anchor=center, inner sep=0}, "F"', from=2-1, to=2-3]
	\arrow["\beta"', shorten <=5pt, shorten >=3pt, Rightarrow, from=1-2, to=0]
\end{tikzcd}\]
so that the universal property
\[
    \forall C:
    \hspace{0.15cm}
    \C(C, \lim F) \cong [\C,\s](K_C, F-)
\]
is satisfied.
Dually, a colimit is a universal cocone, and so a left Kan extension:
\[
\begin{tikzcd}
	\d && \C \\
	& 1
	\arrow[""{name=0, anchor=center, inner sep=0}, "F", from=1-1, to=1-3]
	\arrow["{!}"', from=1-1, to=2-2]
	\arrow["{K_A}"', from=2-2, to=1-3]
	\arrow["\beta", shorten <=3pt, shorten >=5pt, Rightarrow, from=0, to=2-2]
\end{tikzcd}\]
where the couniversal property
\[
    \forall C:
    \hspace{0.15cm}
    [\C^{op},\s](F-, K_C) \cong \C(\mathsf{colim}F, C) 
\]
is satisfied.
This does not translate over to $\vv$-categories, for a few reasons. 
An obvious one is that $\vv$ need not be cartesian monoidal, so there is no reason a $\vv$-category should have a functor $\C \to 1$.
The notion of a \emph{weighted} colimit is more appropriate:
\begin{definition}
	Let $\d$ be a $\vv$-category, and call $W: \d \to \vv$ the weight and $D: \d \to \C$ the diagram.
	The limit of $D$ weighted by $W$, or the weighted limit $\{ W, \d \}$, is an object satisfying the following universal property:
	\[
        \forall C: \hspace{0.15cm} \vv(W, [\C,\vv](F-, K_C)) \cong \C({F,W}, C).
	\]
	Dually, given a diagram $P: \d^{op} \to \C$, the colimit of $P$ weighted by $W$, $W*P$, is an object in $\C$ satisfying the universal property
	\[
        \forall C: \hspace{0.15cm}  \vv(W, [\C,\vv](K_C, F-)) \cong \C(C, F*W ).
	\]
\end{definition}
\begin{example}
	~\begin{enumerate}[(i)]
		\item A conical (co)limit is weighted by the constant functor into the unit $I$, $K_I: \d \to w$.
		This coincides with the definition of ordinary (co)limits.
		\item The power of $C$ by an object $V \in \vv$ is an object $C^V$ satisfying the universal property that
		\[
			\C(B, C^V) \cong \vv(V, \C(B,C)).
		\]
		Dually, the copower by $V$ is given by 
		\[
			\C(B\bullet V, C) \cong \vv(V, \C(B, C)). 
		\]
		\item In ordinary categories, the power of an object $C$ by $V$ is given by the (possibly infinitary) $V$-indexed coproduct:
		\[
			C^V \cong \coprod_{v\in V} C	
		\]
		whereas the copower is the possibly infinitary $V$-index product of $C$:
		\[
			V\bullet C \cong \prod_{v \in V} C.
		\]
		\item In a symmetric monoidal closed category $\vv$, the power by $\vv$ is given by $[V,-]$ and the copower by $V \ox \_$.
	\end{enumerate}
\end{example}

Following Kelly's work, we observe that every finite weighted limit is composed of powers and conical limits.
\begin{proposition}
	Every weighted (co)limit is the composition of conical (co)limits and (co)powers.
\end{proposition}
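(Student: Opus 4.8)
The plan is to reduce an arbitrary weighted limit to an \emph{end of powers} by means of the density (co-Yoneda) presentation of the weight, and then to observe that ends and powers are themselves assembled from conical limits and powers. I would carry out the argument for the limit case; the colimit case follows by dualizing, i.e. by passing to the opposite $\vv$-category $\C^{op}$ (which is again a $\vv$-category when $\vv$ is symmetric), under which weighted colimits become weighted limits and copowers become powers.

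First I would record the two structural isomorphisms that control how the weighted-limit functor $\{-, D\}$ behaves in its weight argument. From the defining universal property $\C(C, \{W, D\}) \cong [\d, \vv](W, \C(C, D-))$, the functor $\{-, D\}$ is contravariant in $W$ and so sends conical colimits of weights to conical limits, and sends copowers of weights to powers:
\[
\{ V \bullet W, D\} \cong \{W, D\}^{V}, \qquad \{ \textstyle\int^{d} W_d,\ D\} \cong \int_{d} \{W_d, D\}.
\]
Together with the enriched Yoneda lemma $\{ \d(d, -), D\} \cong D(d)$, these are all the ingredients needed. Now I would apply the density formula, which expresses any weight $W : \d \to \vv$ as a coend of copowers of representables, $W \cong \int^{d} W(d) \bullet \d(d, -)$. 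Since a coend is a particular conical colimit (a coequalizer of a pair of maps between coproducts), combining the displayed facts gives
\[
\{W, D\} \cong \left\{ \int^{d} W(d) \bullet \d(d, -),\ D \right\} \cong \int_{d} \left\{ W(d) \bullet \d(d, -),\ D \right\} \cong \int_{d} D(d)^{W(d)}.
\]
This exhibits $\{W, D\}$ as an end of powers $D(d)^{W(d)}$; since an end is an equalizer of a pair of morphisms between products, and equalizers and products are conical limits, the weighted limit is built entirely from conical limits and powers, as claimed. In the finite setting one restricts the density presentation to a finite diagram, so the products, the equalizer, and the powers are all finite, which is precisely the content of the preceding remark.

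The main obstacle I anticipate is justifying the two ``continuity in the weight'' isomorphisms with full enriched-categorical rigour. The subtlety is that $\{-, D\}$ takes values in $\C^{op}$, so one must check that the representable isomorphism $\C(C, \{W, D\}) \cong [\d, \vv](W, \C(C, D-))$ is natural enough to transport the coend presentation of $W$ through it: concretely, that $[\d, \vv](-, X)$ converts the coend $\int^{d} W(d) \bullet \d(d,-)$ into the end $\int_{d} \vv(W(d), X(d))$ and turns copowers in the weight into powers inside $\vv$. This is a bookkeeping exercise in the (co)end calculus combined with the enriched Yoneda lemma rather than anything genuinely deep; once those isomorphisms are in hand, the rest of the argument is purely formal.
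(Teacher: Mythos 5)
Your proof is correct and is exactly the standard argument the thesis is implicitly invoking: the paper states this proposition without proof, citing Kelly, and Kelly's proof is precisely your reduction $\{W,D\}\cong\int_{d}D(d)^{W(d)}$ via the density presentation of the weight, followed by the presentation of the end as an equalizer of (powers of) products. The only point worth making explicit is that the coend $\int^{d}W(d)\bullet\d(d,-)$ is itself a coequalizer of copowered coproducts rather than a purely conical colimit, but since you then pass it through the contravariant, continuous functor $\{-,D\}$ it lands exactly in the class of conical limits and powers, so the argument closes as you describe.
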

\begin{corollary}
	Weighted limits are exactly conical/ordinary limits in ordinary categories.
\end{corollary}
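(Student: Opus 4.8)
The plan is to read the corollary off the immediately preceding proposition, which writes every weighted (co)limit as a composite of conical (co)limits and (co)powers, once we observe that over $\vv = \s$ both of these ingredients are themselves ordinary limits. That conical limits are ordinary limits is already recorded in the conical-limit example above, since a conical limit is one weighted by the constant functor $K_I$ into the unit $I$, and for $\vv = \s$ the unit is the singleton $1$, so the defining universal property collapses to the classical universal-cone notion of a limit. Hence the only real content is to identify powers with ordinary products.

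To do this I would unwind the universal property of the power $C^V$ from Definition \ref{def:power}. For a set $V$ and every test object $B$ one has $\C(B, C^V) \cong \s(V, \C(B,C))$; but a function from the set $V$ into the hom-set $\C(B,C)$ is exactly a $V$-indexed family of morphisms, so $\s(V, \C(B,C)) \cong \prod_{v \in V} \C(B,C) \cong \C\bigl(B, \prod_{v\in V} C\bigr)$. By the Yoneda lemma this forces $C^V \cong \prod_{v\in V} C$, the $V$-indexed product. A product over a discrete index is a conical (ordinary) limit, so in an $\s$-category every power is an ordinary limit.

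Combining the two observations, the proposition exhibits any weighted limit in an ordinary category as a composite of ordinary limits and ordinary products, and an iterated limit of ordinary limits is again an ordinary limit; this gives the inclusion of weighted limits into ordinary limits. The reverse inclusion is immediate: an ordinary limit is the weighted limit for the constant weight $K_1$, again by the conical-limit example. Together these yield the claimed coincidence of weighted and conical/ordinary limits over $\s$.

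The main obstacle is not a genuine computation but a bookkeeping point needed to justify the word \emph{exactly}: one must check that the composite of conical limits and powers furnished by the proposition can be reorganized as a single ordinary limit diagram, so that the equivalence holds on the nose rather than merely up to iterated limits. I would package this via the standard fact that, for a weight $W \colon \d \to \s$, the weighted limit $\{W, D\}$ is the ordinary limit of $D$ reindexed along the projection $\int W \to \d$ from the category of elements of $W$, which folds the whole argument into one limit diagram and settles both directions simultaneously.
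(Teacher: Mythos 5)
Your proof is correct and is essentially the argument the paper intends: the corollary is stated there without proof as an immediate consequence of the preceding proposition, and you supply exactly the missing details (conical limits over $\s$ are ordinary limits, powers by a set are products, iterated ordinary limits are again ordinary limits), with the category-of-elements reformulation $\{W,D\}\cong \lim(D\circ\pi)$, $\pi:\textstyle\int W\to \d$, serving as a clean one-step packaging of both directions. One small point worth flagging: your identification $C^V\cong\prod_{v\in V}C$, derived correctly from the universal property $\C(B,C^V)\cong\s(V,\C(B,C))$, is the right one, even though the paper's own example just above the proposition (erroneously) writes the power as a coproduct and the copower as a product.
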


The previous definitions relating to locally presentable categories transfer over mostly unchanged.
\begin{definition}%
    \label{def:v-lfp}
	~\begin{enumerate}[(i)]
		\item A monoidal closed $\vv$ is \emph{presentable as a closed category} if $\vv_{fp}$ is a monoidal subcategory.
		\item Given $\vv$ as in (i), a weight $W: \d \to \vv$ is \emph{filtered} whenever $W\star (-): \widehat\d \to \vv$ preserves finite limits.
		\item An object in a cocomplete $\vv$ category is \emph{finitely presentable} when $\C(C,-)$ preserves filtered weighted colimits.
		\item A cocomplete $\vv$-category $\c$ is \emph{locally finitely presentable} whenever every object $C$ is the coend 
			\[C \cong \int^{X \in \C_{fp}}\C(X,C)\cdot X. \]
	\end{enumerate}
\end{definition}
\begin{proposition}[\cite{Freyd1972, Kelly2005}]%
    \label{prop:v-lfp}
	All of the previous results about locally presentable categories lift up to locally presentable $\vv$-categories, provided that $\vv$ is presentable as a closed category.
\end{proposition}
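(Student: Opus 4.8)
The plan is to prove the transfer result by establishing a single ``dictionary'' between the unenriched and enriched theories and then verifying that each equivalence in Proposition \ref{prop:lfp-defs} survives the translation. The substitutions are: replace $\s$ by $\vv$, ordinary presheaf categories $[\c^{op},\s]$ by $\vv$-presheaf categories $[\c^{op},\vv]$, finite limits by finite weighted limits, and filtered colimits by the filtered weighted colimits of Definition \ref{def:v-lfp}(ii). The entire argument rests on the decomposition result stated just above the statement --- that every weighted (co)limit is built from conical (co)limits and (co)powers --- which lets me reduce genuinely enriched claims to claims about ordinary conical colimits interleaved with powers $[V,-]$ and copowers $V \ox (-)$.

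First I would prove the central commutation lemma: in $\vv$, filtered weighted colimits commute with finite weighted limits. By the decomposition result it suffices to treat conical finite limits and finite powers separately. For conical limits this reduces, via the underlying ordinary category of $\vv$, to the elementary fact that filtered colimits commute with finite limits in $\s$, once filteredness of the weight is unwound through the requirement that $W\star(-)$ preserve finite limits (Definition \ref{def:v-lfp}(ii)). For powers one uses the hypothesis that $\vv$ is presentable as a closed category --- that is, $\vv_{fp}$ is a monoidal subcategory (Definition \ref{def:v-lfp}(i)) --- so that $V \ox (-)$ preserves finite presentability and filtered weighted colimits, and the interaction of $[V,-]$ with filtered colimits then follows by closedness. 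This is the step where the monoidal hypothesis on $\vv_{fp}$ is genuinely used.

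With the commutation lemma in hand, each characterization lifts. The finitely presentable objects (Definition \ref{def:v-lfp}(iii)) and the coend presentation $C \cong \int^{X\in\C_{fp}} \C(X,C)\cdot X$ (Definition \ref{def:v-lfp}(iv)) are the enriched analogues of Proposition \ref{prop:lfp-defs}(i); the equivalence with $\mathsf{Lex}(\C^{op}_{fp},\vv)$ (the analogue of (ii)) follows from the enriched co-Yoneda lemma together with the commutation lemma, since continuity of each $\C(-,X)\colon \C^{op}\to\vv$ lets finite weighted limits pass through the filtered colimit defining a general object. The limit-sketch characterization (iii) and the reflective-subcategory-of-$\vv$-presheaves characterization (iv) are then formal consequences, the reflector being realized as the $\vv$-left Kan extension of the inclusion $\C_{fp}\hookrightarrow\C$ along the enriched Yoneda embedding $\C_{fp}\hookrightarrow[\C_{fp}^{op},\vv]$.

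The main obstacle will be the commutation lemma, and within it the careful handling of powers: in the unenriched case ``filtered colimits commute with finite limits in $\s$'' is immediate, but enriched weighted limits mix conical data with powering by objects of $\vv$, and it is precisely the requirement that $\vv_{fp}$ be closed under $\ox$ and contain the unit $I$ that makes the power components compatible with filtered weighted colimits. Verifying that the decomposition of an arbitrary finite weighted limit into conical limits and powers can be performed uniformly --- so that the two separate commutation arguments compose without obstruction --- is the delicate point; everything else is a bookkeeping translation of the unenriched proofs of Proposition \ref{prop:lfp-defs}, and so I would ultimately present the result as a verification that these proofs are stable under the dictionary above, exactly as in \cite{Kelly2005}.
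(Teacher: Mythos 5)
The paper offers no proof of Proposition \ref{prop:v-lfp}: it is stated as background and deferred entirely to the cited references \cite{Freyd1972, Kelly2005}. Your sketch is in essence a reconstruction of the argument those citations point to (Kelly's treatment of $\vv$-categories locally presentable over a base presentable as a closed category), so there is no divergence to report: the decomposition of weighted limits into conical limits and powers, the commutation of filtered colimits with finite weighted limits, and the deployment of the monoidal hypothesis on $\vv_{fp}$ exactly at the powers step are the standard route, and you have located the one place the hypothesis is genuinely used correctly.

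Two points of precision are worth fixing in a written-out version. First, with the paper's Definition \ref{def:v-lfp}(ii) a weight is \emph{filtered} precisely when $W \star (-)$ preserves finite limits, so your ``central commutation lemma'' is definitional for filtered \emph{weighted} colimits; its real content is the claim that \emph{conical} filtered colimits are filtered in this sense, i.e.\ that ordinary filtered colimits in $\vv$ are flat weights, and that is exactly what your two-case analysis establishes --- conical finite limits via the ordinary lfp theory, and powers via closure of $\vv_{fp}$ under $\ox$, since $[V,-]$ preserves filtered colimits precisely when $W \ox V$ is finitely presentable for every finitely presentable $W$. Second, for the decomposition to stay in the finite regime you must pin down what a \emph{finite} weight is (finitely many objects, finitely presentable hom-objects, finitely presentable values of $W$), so that the decomposition yields only finite conical limits and powers by finitely presentable objects; and note that item (v) of Proposition \ref{prop:lfp-defs} depends on Vopěnka's principle, which your dictionary should either carry along explicitly or exclude from ``all of the previous results.''
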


\backmatter

\printpagenotes

\bibliography{References}




\end{document}